\documentclass{article}

\usepackage[a4paper, top=2cm, bottom=2cm, left=3cm, right=3cm, heightrounded, centering]{geometry}

\usepackage[backend=biber, style=numeric,,maxbibnames=99,maxalphanames=99]{biblatex}
\usepackage{graphicx}
\usepackage{float}
\usepackage[T1]{fontenc}
\usepackage[dvipsnames]{xcolor}
\usepackage[bookmarksopen,bookmarksdepth=2]{hyperref}
\hypersetup{colorlinks=true,citecolor=NavyBlue,linkcolor=NavyBlue,urlcolor=Green} 
\usepackage{amsmath,amsthm,amssymb,amscd,mathtools}
\usepackage{amsfonts}
\usepackage{lastpage}
\usepackage{fancyhdr}
\usepackage{mathrsfs}
\usepackage{tikz-cd}
\usepackage{microtype}
\usepackage{stmaryrd}
\usepackage{tocbibind}

\newcommand{\mbb}[1]{\mathbb{#1}}
\newcommand{\mcal}[1]{\mathcal{#1}}

\newcommand{\NN}{\mathbb{N}}
\newcommand{\RR}{\mathbb{R}}
\newcommand{\ZZ}{\mathbb{Z}}
\newcommand{\QQ}{\mathbb{Q}}
\newcommand{\CC}{\mathbb{C}}

\newcommand{\GG}{\mathbb{G}}

\newcommand{\Gal}{\mathrm{Gal}}

\DeclareMathOperator{\im}{Im}

\DeclareMathOperator{\Hom}{Hom}

\DeclareMathOperator{\Spf}{Spf}
\DeclareMathOperator{\Spec}{Spec}

\DeclareMathOperator{\Lie}{Lie}
\DeclareMathOperator{\Mod}{Mod}

\newcommand{\GL}{\mathrm{GL}}

\newcommand{\Partial}[2]{\frac{\partial {#1}}{\partial {#2}}}

\DeclareMathOperator{\Fil}{Fil}

\newcommand{\p}{\mathfrak{p}}

\DeclareMathOperator{\pr}{pr}
\newcommand{\mO}{\mathcal{O}}
\newcommand{\mP}{\mathbb{P}}
\newcommand{\mA}{\mathbb{A}}

\newcommand{\mm}{\mathfrak{m}}

\DeclareMathOperator{\SL}{SL}

\DeclareMathOperator{\End}{End}

\newcommand{\id}{\mathrm{id}}
\newcommand{\mcorner}{\arrow[dr,phantom,"\ulcorner",very near start]}
\newcommand{\mF}{\mathcal{F}}

\DeclareMathOperator{\Ker}{Ker}

\newtheorem{thm}{Theorem}[subsection]
\newtheorem{prop}[thm]{Proposition}

\newtheorem{lem}[thm]{Lemma}
\newtheorem{cor}[thm]{Corollary}

\newtheorem{question}[thm]{Question}
\theoremstyle{remark}
\newtheorem{rmk}[thm]{Remark}
\newtheorem{eg}[thm]{Example}
\theoremstyle{definition}
\newtheorem{dfn}[thm]{Definition}
\newtheorem{notation}[thm]{Notation}
\newtheorem{construction}[thm]{Construction}

\theoremstyle{plain}

\theoremstyle{remark}

\theoremstyle{definition}

\newcommand{\mX}{\mathcal{X}}

\newcommand{\et}{\mathrm{\acute{e}t}}

\newcommand{\proket}{\mathrm{prok\acute{e}t}}

\newcommand{\dR}{\mathrm{dR}}

\newcommand{\OBdR}{\mO\mbb{B}_\dR}
\newcommand{\OBdRlog}{\mO\mbb{B}_{\dR,\log}}

\newcommand{\OBdRKp}{\mO\mbb{B}_{\dR,K^{p}}}
\newcommand{\OBdRlogKp}{\mO\mbb{B}_{\dR,\log,K^{p}}}
\newcommand{\BdR}{\mbb{B}_\dR}

\newcommand{\BdRKp}{\mbb{B}_{\dR,K^{p}}}
\newcommand{\BdRX}[1]{\mbb{B}_{\dR,#1}}
\DeclareMathOperator{\gr}{gr}
\newcommand{\HT}{\mathrm{HT}}
\newcommand{\GM}{\mathrm{GM}}

\newcommand{\Fl}{{\mathscr{F}\!l}}

\DeclareMathOperator{\Spa}{Spa}
\newcommand{\TT}{\mbb{T}}

\newcommand{\fn}{\mathfrak{n}}

\newcommand{\fg}{\mathfrak{g}}
\newcommand{\sm}{{\mathrm{sm}}}
\newcommand{\la}{{\mathrm{la}}}
\newcommand{\an}{{\mathrm{an}}}

\newcommand{\Kpp}{K^pK_p}

\newcommand{\mrm}[1]{\mathrm{#1}}
\newcommand{\mfk}[1]{\mathfrak{#1}}

\newcommand{\RHom}{R\!\Hom}

\newcommand{\mscr}[1]{\mathscr{#1}}
\newcommand{\fib}{\mrm{fib}}

\newcommand{\sen}{\mrm{sen}}

\newcommand{\Sym}{\mrm{Sym}}

\newcommand{\Rlim}{R\!\varprojlim}
\newcommand{\Fib}{\mrm{Fib}}

\DeclareMathOperator{\Fr}{Fr}
\newcommand{\QCoh}{\mrm{QCoh}}
\newcommand{\Vect}{\mrm{Vect}}

\newcommand{\rla}{\mrm{rla}}
\newcommand{\tor}{\mrm{tor}}
\DeclareMathOperator{\Rep}{Rep}

\newcommand{\VB}{V\!B}

\newcommand{\tVBn}{\tilde{V\!B}^{\text{naïve}}_{c,K^{p}}}
\newcommand{\tVB}{\tilde{\VB}_{c,K^{p}}}
\newcommand{\VBn}{\VB^{\text{naïve}}_{K^{p}}}

\newcommand{\tVBfu}{\tilde{V\!B}_{K^{p}}}
\newcommand{\tVBfun}{\tilde{V\!B}^{\text{naïve}}_{K^{p}}}

\DeclareMathOperator{\Spd}{Spd}
\DeclareMathOperator{\AnSpec}{AnSpec}

\usepackage{mathtools}

\newcommand{\ti}[1]{\tilde{#1}}
\newcommand{\Perf}{\mrm{Perf}}

\newcommand{\rF}{\mrm{FF}}

\newcommand{\FLT}[1]{\Fl_{G_{#1},\mu_{#1}}}

\DeclareMathOperator{\alg}{alg}

\newcommand{\pdR}{\mrm{alg}.\dR}

\newcommand{\hatotimes}{\mathbin{\hat{\otimes}}}
\newcommand{\unl}[1]{\underline{#1}}
\newcommand{\std}{\mrm{std}}
\newcommand{\Per}{\mscr{P}\!er}
\newcommand{\mFl}{\mrm{Fl}}
\newcommand{\hpp}{\hat{+}\hat{+}}
\newcommand{\hp}{\hat{+}}

\usepackage{subfiles}
\usepackage{xr} 
\begin{document}
\title{Differential operators on locally analytic Shimura varieties}
\author{Yuanyang Jiang}
\date{}

\maketitle
\begin{abstract}
We investigate infinite-level Shimura varieties within the framework of analytic stacks of Clausen-Scholze, developing their smooth, completed, locally analytic, and de Rham realizations. 
We formulate a Grothendieck-Messing-Hodge-Tate period map, and establish a Grothendieck-Messing theory for locally analytic infinite-level Shimura varieties. This theory, combined with a reformulation of Riemann-Hilbert correspondence, implies that the locally analytic infinite-level Shimura variety can be fully reconstructed purely from its perfectoid counterpart and its \( \BdR^{+} \)-thickening. 
Building upon this geometric structure, we systematically construct differential operators generalizing those of Pan, and we introduce a Bernstein-Gelfand-Gelfand-Fontaine complex based on dual BGG complexes, conjecturing its automorphic properties. These constructions will be used to establish a locally analytic Jacquet-Langlands correspondence in a companion paper (\cite{Jiang2025HMF}).
\end{abstract}
\vspace{0.5cm} 
\noindent {\small 
\textbf{Keywords:} Analytic stacks, de Rham stacks, Shimura varieties, D-modules \\
\textbf{Mathematics Subject Classification:} Primary 11G18, 14F10; Secondary 14G22, 22E50}
\tableofcontents

\section{Introduction}\label{sectionIntro}

This work originated from the author's attempt to understand the work of Lue Pan (\cite{Pan22}, \cite{Pan2209.06II}) in a more geometric framework.
In this article, we will discuss various realizations of the infinite-level Shimura varieties in the category of \emph{analytic stacks} developed by Clausen-Scholze.

In a companion paper (\cite{Jiang2025HMF}), we apply the results here to construct a \emph{locally analytic Jacquet-Langlands correspondence} (\cite[Theorem \ref{2-thmJacquetLanglandsLA}]{Jiang2025HMF}). This correspondence relates cohomology of different quaternionic Shimura varieties, and serves as a crucial ingredient for showing a classicality theorem in the setting of Hilbert modular forms.

Given a Shimura datum \( (G,X) \) with its Hodge cocharacter \( \mu \),
we have the associated tower of Shimura varieties \( \{X_{K}\}_{K\subset G(\mA_{f})} \) over the reflex field \( E\subset\CC \).
We fix a prime \( p \) and \( \iota:\CC\cong\bar{\QQ}_{p} \), 
and let \( E_{\p} \)
be the closure of \( E\subset^{\iota}\bar{\QQ}_{p} \).
Let \( \mX_{K,E_{\p}} \)
be the analytification of \( X_{K}\otimes_{E}E_{\p} \), and let \( \mX_{K}:=\mX_{K,E_{\p}}\otimes_{E_{\p}}\CC_{p} \). Let \( \mFl_{G,\mu} \) be the partial flag variety defined over \( E \), let \( \Fl_{G,\mu,E_{\p}}:=(\mFl_{G,\mu}\otimes_{E}E_{\p})^{\an} \), and \( \Fl_{G,\mu}:=\Fl_{G,\mu,E_{\p}}\otimes_{E_{\p}} \CC_{p} \).
See \S \ref{subsecSetUpSV} for details.

Perfectoid infinite-level Shimura varieties were first introduced by \cite{Scholze15}, and subsequently generalized by Scholze in \cite{CS17}, \cite{Shen2017perfectoid}, \cite{HansenJohansson2020perfectoid}. 
For any neat compact open subgroup \( K^{p}\subset G(\mA_{f}^{p}) \), one define \[
\mX_{K^{p}}^{\diamond}:=\varprojlim_{K_{p}\subset G(\QQ_{p})}\mX_{\Kpp}^{\diamond},
\] where the limit is taken in the category of diamonds,
which admits a Hodge-Tate period map \[ \pi_{\HT}:\mX_{K^{p}}^{\diamond}\to \Fl_{G,\mu}^{\diamond}. \]
The aforementioned works show that \( \mX_{K^{p}}^{\diamond} \)
is representable by a perfectoid space \( \mX_{K^{p}} \), when \( (G,X) \)
is of pre-abelian type (\cite{HansenJohansson2020perfectoid}).

Since the test objects of the category of diamonds are perfectoid spaces, 
there is an implicit \( p \)-adic completion in the limit process above. Prior to the introduction of the analytic stacks by Clausen-Scholze, this seemed to be the only plausible construction, which is well-suited for the study of \'etale or pro\'etale cohomology. 

On the other hand, after the works of Lue Pan (\cite{Pan22}, \cite{Pan2209.06II}) and Rodr{\'\i}guez Camargo (\cite{Juan2022.09locallyShi}), the subspace of \( G(\QQ_{p}) \)-locally analytic vectors in the completed cohomology of Shimura varieties (\cite{Emerton06})
has drawn considerable attention. Over this subspace with its finer topology, it is possible to realize several \( p \)-adic Hodge theoretical constructions, such as arithmetic Sen operators and Fontaine operators, which provides a general framework for proving classicality theorems.

Moreover, these constructions admit an enhancement on the level of sheaves on \( \mX_{K^{p}} \). A central role is played by \( \mO^{\la}_{K^{p}} \),
the subsheaf of the structure sheaf \( \mO_{\mX_{K^{p}}} \) over \( \mX_{K^{p}} \) consisting of \( G(\QQ_{p}) \)-locally analytic vectors. 
In \cite{Pan2209.06II}, Pan further defines several  (logarithmic) differential operators on certain subsheaves of \( \mO^{\la}_{K^{p}} \) that describe the Fontaine operator. 

This shows that \( \mO^{\la}_{K^{p}} \) has quite interesting and non-trivial differential structures, which can not be seen directly on the perfectoid Shimura varieties, since perfectoid spaces always have trivial sheaves of differentials (relative to \( \CC_{p} \)). 
Our primary goal is to give a geometrical reformulation of these differential operators using a more refined locally analytic version of infinite-level Shimura varieties in the category of analytic stacks. 

\begin{rmk}
It is worth noting the recent work of Sean Howe (\cite{Howe2025inscription}), which provides an alternative framework for studying differential operators on infinite-level Shimura varieties.
\end{rmk}

\subsection{Shimura varieties as analytic stacks}\label{subsecSVasAnStkIntro}
It is natural to consider a ringed space \( \mX^{\la}_{K^{p}} \) whose underlying topological space is \( |\mX_{K^{p}}| \) and whose structure sheaf is \( \mO^{\la}_{K^{p}} \). Because it is crucial to remember the topological structure of \( \mO^{\la}_{K^{p}} \), the theory of Clausen-Scholze analytic stacks provides the ideal framework. 

In broad terms, Clausen and Scholze define analytic rings as (animated) topological rings with specified convergence conditions on their modules. These rings are quite general and, notably, need not be \( p \)-adically complete. Analytic stacks are then defined as sheaves (of anima) on the site of analytic rings equipped with the \( ! \)-topology (see \S \ref{subsectionTateStack} for details). For an analytic ring \( R \), we write \( \AnSpec(R) \) for the associated analytic stack via the Yoneda lemma.

As we will explain in \S \ref{subsecEgSolidStack}, schemes and rigid varieties can be realized as analytic stacks. In particular, we have \( X_{\Kpp} \) and \( \mX_{\Kpp} \) as analytic stacks.

We utilize the flexibility of analytic stacks to define various versions of infinite-level Shimura varieties:
\begin{itemize}
\item (Smooth version, Definition \ref{dfnNonCompletedInfiniteLevel})
Define \( \mX_{K^{p}}^{\sm}:=\varprojlim_{K_{p}}\mX_{\Kpp} \), where the limit is taken in the category of analytic stacks.
\item (Completed version, Lemma \ref{lemTorCompacAsTateStack})
We define an analytic stack \( \mX_{K^{p}} \) over \( \CC_{p} \). When \( (G,X) \) is of pre-abelian type, \( \mX_{K^{p}} \) coincides with the perfectoid space representing the diamond \( \mX_{K^{p}}^{\diamond} \).
\item (Locally analytic version, Definition \ref{dfnLocallyAnSV})
The sheaf \( \mO^{\la}_{K^{p}}\in D_{\an}(\mX_{K^{p}},\CC_{p}) \) naturally lifts to a quasi-coherent sheaf \( \mO^{\la}_{K^{p}}\in\QCoh(\mX_{K^{p}}^{\sm}) \), and we define \( \mX^{\la}_{K^{p}}:=\unl{\AnSpec}_{\mX^{\sm}_{K^{p}}}(\mO^{\la}_{K^{p}}) \), where \( \unl{\AnSpec} \) denotes the relative spectrum (see \S \ref{subsecAffineProperMorph}).
\item (\( \BdR^{+} \)-version, Lemma \ref{lemBdRXtorKp})
We define an analytic stack \( \BdR^{+}(\mX_{K^{p}}) \) over \( B_{\dR}^{+} \), which is a thickening of \( \mX_{K^{p}} \). When \( (G,X) \) is of pre-abelian type, \( \BdR^{+}(\mX_{K^{p}}) \) is given by gluing \( \Spf(\BdR^{+}(A)) \) for open affinoid subspaces \( \Spa(A,A^{+})\subset \mX_{K^{p}} \).
\end{itemize}

These spaces are closely related to the completed cohomology introduced by Emerton in \cite{Emerton06}. For a completed extension \( L/\QQ_{p} \), let
\[
\ti{H}^{i}(K^{p},L):=\left(\varprojlim_{n}\varinjlim_{K_{p}}H^{i}_{\et}(X_{K,\bar{E}},\ZZ/p^{n})\right)\hatotimes_{\ZZ_{p}}L,
\] 
and let \( \ti{H}^{i}(K^{p},L)^{\la} \) denote the subspace of \( G(\QQ_{p}) \)-locally analytic vectors.

If the varieties \( X_{K} \) are proper, the primitive comparison theorem (\cite{Scholze13}) implies that:
\begin{align*}
H^{i}(\mX^{\sm}_{K^{p}})&\cong \varinjlim_{K_{p}}H^{i}(\mX_{\Kpp},\mO_{\mX_{\Kpp}})\in \mrm{Rep}^{\sm}(G(\QQ_{p})),\\
H^{i}(\mX_{K^{p}})&\cong \ti{H}^{i}(K^{p},\CC_{p})\in \mrm{Rep}^{\mrm{Ban}}(G(\QQ_{p})),
\\
H^{i}(\mX_{K^{p}}^{\la})& \cong \ti{H}^{i}(K^{p},\CC_{p})^{\la}\in \mrm{Rep}^{\la}(G(\QQ_{p})),
\\
H^{i}(\BdR^{+}(\mX_{K^{p}}))&\cong \ti{H}^{i}(K^{p},\QQ_{p})\hatotimes B_{\dR}^{+},
\end{align*}
When \( X_{K} \) are not proper, we define logarithmic analogues \( \mX^{\tor}_{K^{p}} \), \( \mX^{\tor,\la}_{K^{p}} \), \( \mX^{\tor,\sm}_{K^{p}} \), and \( \BdR^{+}(\mX^{\tor}_{K^{p}}) \) for the toroidal compactifications \( X_{K}^{\tor} \), which satisfy analogous comparison theorems with completed cohomology.

\subsection{de Rham stacks}\label{subsecDRstkIntro}
The constructions in \S \ref{subsecSVasAnStkIntro} are largely formal. The first non-trivial input for our study is the theory of de Rham stacks, which unifies various aspects of the Fontaine operators. 

The concept of de Rham stacks first appeared in the work of Simpson (\cite{Simpson1996algebraicgeometricnstacks}) and has seen renewed interest following the works of Drinfeld (\cite{Drinfeld2022stacky}, \cite{Drinfeld2024prismatization}) and Bhatt-Lurie (\cite{BhattLurie2022prismatization}, \cite{BhattLurie2022prismatic}). Rodr{\'\i}guez Camargo (\cite{Juan2024analyticdeRham}) generalized this construction to the setting of analytic stacks.
  He introduces two variants: algebraic de Rham stacks \( X^{\pdR} \) and analytic de Rham stacks \( X^{\dR} \) (see \S \ref{subsecAndRStack}). 
For a map \( f:X\to Y \), there are also relative versions \( X^{\pdR/Y} \)
and \( X^{\dR/Y} \).
The analytic variant is further developed in \cite{AnschützBoscoLeBrasCamargoScholze2025analyticrhamstacksfarguesfontaine}.

\begin{rmk}
To work with toroidal compactifications of Shimura varieties, 
we will define in \S \ref{subsecProperAlgLogDRstack} a logarithmic version of algebraic de Rham stacks \( X^{\pdR/L}_{\log} \)
for a log smooth scheme or rigid variety \( X \) over \( L \), and its filtered (resp. filtered complete) variant \( X^{\pdR/L,+}_{\log} \) (resp. \( X^{\pdR/L,\hp}_{\log} \)). 
\end{rmk}

The de Rham stacks possess at least the following useful features:

(A) The quasicoherent sheaves over (logarithmic) algebraic de Rham stacks are equivalent to (logarithmic) \( D \)-modules (Lemma \ref{lemDRcomplexVSpushforward}).

(A)'  (Lemma \ref{lemDiffOpeToLogStack}) For \( X \) log smooth over \( L \), consider the natural map \( h^{+}:X\times [\mA^{1}/\GG_{m}]\to X^{\pdR/L,+}_{\log} \). For vector bundles  \( V_{1},V_{2} \) on \( X \), and \( n\in\NN \), \[ \Hom_{X^{\pdR/L,+}_{\log}}(h^{+}_{*}(p_{1}^{*}V_{1}\{-n\}),h^{+}_{*}(p_{1}^{*}V_{2}))\cong \mrm{Diff}_{\log}^{\le n}(V_{1},V_{2}), \]
where the RHS is the space of \emph{logarithmic differential operators} from \( V_{1} \)
to \( V_{2} \) of degree \( \le n \). 
Here \( \{i\} \)
denotes twisting by the \( i \)-th power of the tautological line bundle on \( [\mA^{1}/\GG_{m}]\).

(B) Trivial deformation theory: if \( Y\hookrightarrow Y' \) is a nilpotent thickening, then \( \Hom(Y,X^{\pdR})\cong \Hom(Y',X^{\pdR}) \). 
A similar statement holds for \( X^{\dR} \) (Definition \ref{dfnAnDRStack}).

(C) The analytic de Rham stacks ``decomplete'' diamonds (\cite{AnschützBoscoLeBrasCamargoScholze2025analyticrhamstacksfarguesfontaine}): for example, \( (\AnSpec(\CC_{p}))^{\dR}\cong \AnSpec(\bar{\QQ}_{p}) \), and when \( \mX_{K^{p}}^{\diamond }\) is perfectoid, \( (\mX_{K^{p}})^{\dR}\cong \varprojlim_{K_{p}}\mX_{\Kpp}^{\dR} \) (Theorem \ref{thmAnDRStackGeneral}).

Using (B), the natural morphisms \[ 
\mX_{\Kpp,E_{\p}}^{\pdR}
\leftarrow
\mX_{\Kpp,E_{\p}}
\leftarrow
\mX_{K^{p}}\to \Fl_{G,\mu,E_{\p}} \xrightarrow{\pi_{\HT}} \Fl_{G,\mu,E_{\p}}^{\pdR} \] uniquely extend to \[  \mX_{\Kpp,E_{\p}}^{\pdR}
\leftarrow \BdR^{+}(\mX_{K^{p}})\to\Fl_{G,\mu,E_{\p}}^{\pdR}. \]
Moreover, the maps admit filtered enhancements, whose (filtered completed) pull-back along \( \mX_{\Kpp,E_{\p}}\to \mX_{\Kpp,E_{\p}}^{\pdR} \)
recovers the period sheaf \( \OBdR^{+} \) of \cite{Scholze12} and \cite{DLLZ2022logarithmicJAMS}.
This interpretation of \( \BdR^{+} \) and \( \OBdR^{+} \)
is strongly motivated by \cite{Beilinson2012p} and \cite{GuoLi2021period}.

Similarly,
we obtain a natural morphism \( \beta^{\BdR^{+}}:\BdR^{+}(\mX_{K^{p}})\to \mX_{K^{p}}^{\dR} \), and we define \[ \BdRKp^{+}:=\beta^{\BdR^{+}}_{*}(\mO_{\BdR^{+}(\mX_{K^{p}})})\in \QCoh(\mX_{K^{p}}^{\dR}). \] This gives a natural lift of the de Rham period sheaf from the topological space \( |\mX_{K^{p}}| \) to the de Rham stack \( \mX_{K^{p}}^{\dR} \), which, combined with (A)', partially explains why Fontaine operators can be described as differential operators in \cite{Pan2209.06II}. 

\subsection{Grothendieck-Messing-Hodge-Tate period maps}\label{subsecGMtheoryIntro}
In this part, we explain a Grothendieck-Messing theory for locally analytic Shimura varieties, showing that its infinitesimal structure is equivalent to what we term the ``Grothendieck-Messing-Hodge-Tate period domain''. This aligns with the expectation that \( p \)-adic Shimura varieties parametrize variations of \( p \)-adic Hodge structures.  

The classical Grothendieck-Messing theory and Serre-Tate theory show that the deformation theory of abelian varieties is controlled by flag varieties. For Shimura varieties, let \( \mFl_{G,\mu}^{\std} \) be the standard partial flag variety over \( E \), and \( \Fl_{G,\mu,E_{\p}}^{\std}:=(\mFl_{G,\mu}^{\std}\otimes_{E}E_{\p})^{\an} \) (\S \ref{subsecSetUpSV}).  
Then infinitesimally, \( X_{K} \)
and \( \mFl_{G,\mu}^{\std} \) are expected to be isomorphic. 
Using de Rham stacks, it can be reformulated as follows: 
\begin{prop}[Propositions \ref{propAlgGMtheory} \& \ref{propAnalyticGMTheory}]\label{propFiniteLevelGMtheoryIntro}---

(1) (Algebraic Grothendieck-Messing theory) We have a Cartesian diagram over \( E \) \[
\begin{tikzcd}
X^{\tor}_{K}\times [\mA^{1}/\GG_{m}]\arrow[r,"\pi_{\GM,K}"] \arrow[d,"h_{K}"]& 
{\left[\mrm{Fl}^{\std}_{G,\mu}/G^{c}\right]\times [\mA^{1}/\GG_{m}]}\arrow[d,"h_{\mrm{Fl}^{\std}}"]
\\
(X^{\tor}_{K})^{\pdR/E,+}_{\log}\arrow[r,"\pi_{\GM,K}^{\pdR,+}"] &
{\left[(\mrm{Fl}^{\std}_{G,\mu})^{\pdR/E,+}/G^{c}\right]}.
\end{tikzcd}
\]

(2) (Analytic Grothendieck-Messing theory) We have a Cartesian diagram over \( E_{\p} \) \[
\begin{tikzcd}
\mX_{K,E_{\p}}\arrow[r,"\pi_{\GM,K}"] \arrow[d,"h_{K}^{\an}"]& 
{[\Fl^{\std}_{G,\mu,E_{\p}}/G^{c,\an}_{E_{\p}}]}\arrow[d,"h_{\Fl^{\std}}^{\an}"]
\\
\mX_{K,E_{\p}}^{\dR}\arrow[r,"\pi_{\GM,K}^{\dR}"] &
{[\Fl^{\std,\dR}_{G,\mu,E_{\p}}/G^{c,\an}_{E_{\p}}]}.
\end{tikzcd}
\]
\end{prop}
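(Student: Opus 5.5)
The strategy is to reduce both Cartesian squares to a statement about automorphic vector bundles with integrable connection, via the equivalence (A) between quasi-coherent sheaves on (log) de Rham stacks and (log) $D$-modules. First I construct the horizontal maps. The canonical $G^{c}$-torsor $P_{\dR,K}$ over $X^{\tor}_{K}$ (the canonical extension of the de Rham torsor) has a $P_\mu^{\std}$-reduction given by the Hodge filtration and a log-flat connection, the Gauss--Manin connection. The torsor with its reduction gives $\pi_{\GM,K}:X^{\tor}_{K}\to[\mFl^{\std}_{G,\mu}/G^{c}]$. By the Tannakian form of (A), a $G^{c}$-torsor on $(X^{\tor}_{K})^{\pdR/E}_{\log}$ is the same as a $G^{c}$-torsor on $X^{\tor}_K$ with an integrable log connection. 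So $(P_{\dR,K},\nabla)$ descends to a map $(X^{\tor}_{K})^{\pdR/E}_{\log}\to [*/G^{c}]$, and $[(\mFl^{\std})^{\pdR}/G^{c}]$ classifies such a torsor together with a section of the associated flag bundle up to nilpotent thickening. For the filtered version I use Griffiths transversality of the Hodge filtration: a filtered torsor with transversal flat connection over $X\times[\mA^1/\GG_m]$ defines the map $\pi^{\pdR,+}_{\GM,K}$, with $\{i\}$-twists matching $\Fil^i$. The analytic statement (2) is built in the same way, using the rigid analytification and the map $X^{\pdR}\to X^{\dR}$.

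Second, I reduce Cartesianness to a local-diffeomorphism (Kodaira--Spencer) statement. The fibre of $h_{\mFl^{\std}}$ over a point of the de Rham stack is the formal completion of $\mFl^{\std}$ at that point, with its filtered version. Similarly, $h_K$ has fibres equal to formal completions of $X^{\tor}_K$, log-completions at the boundary. So the comparison map $X^{\tor}_K\times[\mA^1/\GG_m]\to \text{fibre product}$ is a morphism over $(X^{\tor}_K)^{\pdR,+}_{\log}$. By (B), any nilpotent thickening lifts uniquely along both de Rham stacks, so checking it is an isomorphism amounts to showing $\pi_{\GM,K}$ induces an isomorphism of (log) formal neighbourhoods relative to the de Rham stacks. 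Equivalently, $d\pi_{\GM,K}:T_{X^{\tor}_K}(-\log D)\to \pi_{\GM,K}^*T_{\mFl^{\std}/G^c}$, computed relative to the flat structure, must be an isomorphism.

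Third, I identify this differential with the Kodaira--Spencer map $T(-\log D)\to \mathrm{Lie}(G^c)/\mathrm{Lie}(P_\mu)$ twisted by $P_{\dR}$. It is an isomorphism for Shimura varieties, classically for PEL/Hodge type by Grothendieck--Messing and Deligne, and in general by the log-version of Harris, Milne and Lan. In the filtered setting I also check compatibility of gradings: after Rees degeneration ($\mA^1$ coordinate $t\to0$), the statement becomes that $\gr$ of the Kodaira--Spencer map is an isomorphism of graded Higgs data, which again follows from the isomorphism together with transversality. I expect the main obstacle to be this filtered/log step: one must check that the deformation to the normal cone built into $X^{\pdR,+}_{\log}$ matches on both sides. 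At the boundary I would use the toroidal local charts, on which the log-de Rham stack is explicit.

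For (2) the analytic de Rham stack carries overconvergent rather than formal neighbourhoods. Cartesianness then reduces to $\pi_{\GM,K}$ being a local isomorphism of rigid spaces on neighbourhoods of the fibres of $X\to X^{\dR}$. I would establish this with the rigid-analytic inverse function theorem, applied to the étale map obtained from the Kodaira--Spencer isomorphism (there is no boundary here since $\mX_{K,E_\p}$ is not compactified). Finally, I would check that $G^{c,\an}$-equivariance is preserved after analytification.
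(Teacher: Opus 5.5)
For (1) your architecture is the paper's. You descend the canonical extension to $(X^{\tor}_K)^{\pdR/E,+}_{\log}$ via the filtered log $D$-module description and Tannaka. You test Cartesianness after base change along $h_K$, where it becomes the filtered isomorphism $(\pi_{\GM,K}^{*}p_{1,*}\mO_{\hat\Delta^{+}_{\mFl^{\std}_{G,\mu}}})^{\wedge}_{\Fil}\cong p_{1,*}\mO_{\hat\Delta^{+}_{X^{\tor}_K,\log}}$. Since both sides are filtered complete with $\gr^\bullet=\Sym^\bullet\gr^1$, this reduces to Kodaira--Spencer on $\gr^1$ (Lemma \ref{lemIsoOmega1KS}). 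This also disposes of the filtered step you flagged as the main obstacle.

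Two justifications in (1) need repair. First, property (B) does not hold for the log de Rham stack. It is a groupoid quotient, not a nil-invariant functor of points. For $\mA^1$ with log structure at $0$, the log diagonal is the action groupoid of the formal multiplicative group acting by scaling, and the $k[\epsilon]$-points $a\epsilon$ over the origin are not identified for distinct $a$. The reduction you want follows instead from $h_K$ being a $!$-surjection whose self-fibre product is $\hat\Delta^{+}_{X^{\tor}_K,\log}$ by construction. Second, $\pi^{\pdR,+}_{\GM,K}$ is asserted rather than built. The paper obtains it by applying relative filtered de Rham stacks along the diagonals to the map of groupoids $\hat\Delta_{X^{\tor}_K,\log}\to\hat\Delta_{\mFl^{\std}_{G,\mu}}/G^c$, and identifying the outputs with $\hat\Delta^{+}$ via Lemma \ref{lemClosedImmeAlgDRFil}.

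The genuine gap is in (2). ``Built in the same way, using $X^{\pdR}\to X^{\dR}$'' hides the one non-formal input. There is no analytic analogue of (A): for $g_K:\mX^{\pdR}_{K,E_\p}\to\mX^{\dR}_{K,E_\p}$ one only knows that $g_K^*$ is fully faithful. So the flat torsor on $\mX_{K,E_\p}$ does not formally give a $G^{c}$-torsor on $\mX^{\dR}_{K,E_\p}$. Without it, neither $\pi^{\dR}_{\GM,K}$ nor the map of groupoids $\Delta(\mX_{K,E_\p})^\dagger\to\Delta(\Fl^{\std}_{G,\mu,E_\p})^\dagger/G^{c,\an}_{E_\p}$ exists.

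What you need is that $g_{K,*}$ sends vector bundles to vector bundles with $g_K^*g_{K,*}\cong\mathrm{id}$ on them (Proposition \ref{propAnToAlgDRJuan}~(3)). Concretely, Gauss--Manin parallel transport must converge on a strict neighbourhood of the diagonal. The paper proves this in Lemma \ref{lemDModOnGaDagger}, using Fredholmness of $A\{x_1,\ldots,x_n\}^\dagger$, the horizontal projector $f\mapsto\sum_{\underline{j}}\frac{(-1)^{|\underline{j}|}}{\underline{j}!}D_{\underline{j}}(f)\,\underline{x}^{\underline{j}}$ with $p$-adic estimates, and the Poincar\'e lemma for $\GG_a^\dagger$. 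Your inverse function theorem presupposes exactly this. The map $\pi_{\GM,K}$ lands in a quotient stack, and there is no map of rigid spaces to $\Fl^{\std}_{G,\mu}$ until the torsor is flatly trivialized on such a neighbourhood.

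Moreover, a pointwise inverse function theorem does not give $\Delta(\mX_{K,E_\p})^\dagger\cong\mX_{K,E_\p}\times_{[\Fl^{\std}_{G,\mu,E_\p}/G^{c,\an}_{E_\p}]}[\Delta(\Fl^{\std}_{G,\mu,E_\p})^\dagger/G^{c,\an}_{E_\p}]$. It must hold uniformly over an affinoid chart, as in Corollary \ref{corOverConvergentNbhdNotTooDifficult}. There one writes $df_i\mapsto Y_i+\epsilon_i$ with $\epsilon_i\in(Y_1,\ldots,Y_n)^2$, rescales so that $\epsilon_i\in p(Y_1,\ldots,Y_n)^2\mO^+\langle Y\rangle$, and compares $\mO^+\langle df/p^t\rangle$ with $\mO^+\langle Y/p^t\rangle$ modulo $p$ for every $t$.
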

A crucial subtlety is the usage of \( (\mFl^{\std}_{G,\mu})^{\pdR/E}/G^{c} \), as it is easier to construct a period map towards \( (\mFl^{\std}_{G,\mu}/G^{c})^{\pdR/E} \).
The construction of \( \pi^{\pdR,+}_{\GM,K} \) uses the feature (A) to descend the de Rham \( G^{c} \)-torsor on \( X_{K}^{\tor} \) to \( (X_{K}^{\tor})^{\pdR/E}_{\log} \).
This construction was first suggested to us by Qixiang Wang. Once we have constructed the diagram, verifying that it is Cartesian boils down to the classical Kodaira-Spencer isomorphism. 

Using Proposition \ref{propFiniteLevelGMtheoryIntro}, we will give a simple reconstruction of BGG complexes on Shimura varieties in \S \ref{subsubsecBGGSV} using BGG complexes on \( \Fl^{\std}_{G,\mu,E} \), where the latter is constructed in \S \ref{subsubsecBGGComplex}.

Our main result establishes a similar theorem for the locally analytic infinite-level Shimura varieties \( \mX^{\la}_{K^{p}} \). 
We first learned the following period domain from \cite{DospinescuJuan2024jacquet} in the setting of local Shimura varieties.
\begin{dfn}[Definition \ref{dfnGMHTperDomain}]
We define the \emph{Grothendieck-Messing-Hodge-Tate period domain} as the following rigid variety over \( \CC_{p} \) \[
\Per_{G,\mu}:=M^{c,\an}_{\mu}\backslash
\left((N_{\mu}^{\an}\backslash G^{c,\an}_{\CC_{p}})\times (N_{\mu}^{\std,\an}\backslash G^{c,\an}_{\CC_{p}})\right).
\] 
See \S \ref{subsecSetUpSV} for undefined notation.
We denote the right action of \(G^{c,\an}_{\CC_{p}}\) on the first (resp. second) factor of \(\Per_{G,\mu}\) as \(*_{\HT}\) (resp. \(*_{\GM}\)). Moreover,
we have natural maps \[
\Fl_{G,\mu}\xleftarrow{\pi^{per}_{\HT}}\Per_{G,\mu}
\xrightarrow{\pi^{per}_{\GM}}\Fl^{\std}_{G,\mu},\]
where for \(\mrm{XX}\in\{\GM,\HT\}\),
 \(\pi_{\mrm{XX}}^{per}\)
is \(G^{c,\an}_{\CC_{p}}\)-equivariant for the \(*_{\mrm{XX}}\)-action on \(\Per_{G,\mu}\).
\end{dfn}
\begin{thm}[Theorem \ref{thmLAstrucGMHTper}]
    \label{thmLAstrucGMHTperIntro}
(1) 
We have the following commutative diagram \[
\begin{tikzcd}
\mX^{\tor,\la}_{K^{p}}
\arrow[d,"\pi^{\la}_{\sm}"]
\arrow[r,"\pi^{\la}_{\GM\HT}"] & 
\Per_{G,\mu}/G^{c,\an}_{\CC_{p}} 
\arrow[r,"\pi_{\HT}^{per}"] \arrow[d,"h_{\Per/\Fl^{\std}}"] &[-0.8cm] 
\Fl_{G,\mu}\arrow[d,"\ti{h}_{\Fl_{G,\mu}}"]\\
\mX^{\tor,\sm}_{K^{p}}
\arrow[d,"h^{\sm}"] \arrow[r,"\pi^{\sm}_{\GM\HT}"] & \Per_{G,\mu}^{\dR/\Fl^{\std}_{G,\mu}}/G^{c,\an}_{\CC_{p}} \arrow[r,"\pi_{\HT}^{per,\dR}"]\arrow[d,"\ti{h}_{\Fl^{\std}}"]\arrow[dr,"\pi_{\GM}^{per}"] & \Fl_{G,\mu}/(\fg^{0}/\fn^{0})^{\dagger}\\
(\mX^{\tor}_{K^{p}})_{\log}^{\pdR/\CC_{p},\sm}\arrow[rrd,bend right=10,"\pi_{\GM}^{\pdR}"]
\arrow[r,"\pi^{\pdR}_{\GM\HT}"] & (\Per_{G,\mu}^{\dR/(\Fl^{\std,\pdR/\CC_{p}}_{G,\mu})})/G^{c,\an}_{\CC_{p}}\arrow[dr,"\pi_{\GM}^{per,\pdR}"] & \Fl_{G,\mu}^{\std}/G^{c,\an}_{\CC_{p}}\arrow[d,"h_{\Fl^{\std}}"]\\
& & \Fl_{G,\mu}^{\std,\pdR/\CC_{p}}/G^{c,\an}_{\CC_{p}},
\end{tikzcd}
\] where \((\mX^{\tor}_{K^{p}})_{\log}^{\pdR/\CC_{p},\sm}:=\varprojlim_{K_{p}}(\mX^{\tor}_{\Kpp})_{\log}^{\pdR/\CC_{p}}\), the actions of \( G^{c,\an}_{\CC_{p}} \) on the middle column are induced by \( *_{\GM} \), and all the squares are \emph{Cartesian}. 

(2) 
We have the following Cartesian diagram \[
\begin{tikzcd}
\mX^{\la}_{K^{p}}\arrow[r,"\pi^{\la}_{\GM\HT}"]\arrow[d,"\beta^{\la}"]
& \Per_{G,\mu}/(G^{c,\an}_{\CC_{p}},*_{\GM})\arrow[d]
\\
\mX^{\dR,\sm}_{K^{p}}\arrow[r,"\pi^{\dR}_{\GM\HT}"]
& \Per_{G,\mu}^{\dR}/(G^{c,\an}_{\bar{\QQ}_{p}},*_{\GM}),
\end{tikzcd}
\] for \(\mX^{\dR,\sm}_{K^{p}}:=\varprojlim_{K_{p}}\mX^{\dR}_{\Kpp}\).
If \( \mX_{K^{p}}^{\diamond} \) is represented by a perfectoid space, then \( \mX^{\dR,\sm}_{K^{p}}\cong (\mX_{K^{p}})^{\dR} \). 
\end{thm}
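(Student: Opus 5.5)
The plan is to build the diagram from the bottom up, get Cartesianness at the lowest level from Proposition \ref{propFiniteLevelGMtheoryIntro}, and propagate it upward by base change. The only square needing genuinely new input is the top one. Throughout, the middle column is identified with fibres of the quotients of $\Per_{G,\mu}$ over $\Fl^{\std}_{G,\mu}$, or over its de Rham stacks, for the $*_{\GM}$-action.

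\textbf{Bottom square.} The map $\pi^{\pdR}_{\GM}$ comes from passing to the limit over $K_p$ in Proposition \ref{propFiniteLevelGMtheoryIntro}(1), after base change to $\CC_p$ and restriction to the fibre $1\in[\mA^1/\GG_m]$. At finite level, Proposition \ref{propFiniteLevelGMtheoryIntro}(1) says that $X^{\tor}_K\to (X^{\tor}_K)^{\pdR}_{\log}$ is the pullback of $h_{\Fl^{\std}}$. Cofiltered limits commute with fibre products, so the outer bottom region is Cartesian:
\[\mX^{\tor,\sm}_{K^p}\cong(\mX^{\tor}_{K^p})^{\pdR,\sm}_{\log}\times_{\Fl^{\std,\pdR}/G^c}\Fl^{\std}/G^c.\]
The middle and lower-left squares are then handled by formal base change. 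By definition,
\[\Per^{\dR/\Fl^{\std,\pdR}}\times_{\Fl^{\std,\pdR}}\Fl^{\std}\cong\Per^{\dR/\Fl^{\std}},\]
and this holds because relative de Rham stacks are compatible with base change along the target. This gives Cartesianness of the square containing $\ti{h}_{\Fl^{\std}}$.

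\textbf{Construction of $\pi^{\pdR}_{\GM\HT}$.} Note that $\Per_{G,\mu}\to\Fl^{\std}_{G,\mu}$ is, $*_{\GM}$-equivariantly, the torsor $N_\mu\backslash G^c$ twisted by $M^c_\mu$. So a map to $\Per^{\dR/\Fl^{\std,\pdR}}/G^c$ over $\Fl^{\std,\pdR}/G^c$ is the data of a de Rham $G^c$-torsor together with a Hodge-Tate reduction to $N_\mu$ that is horizontal up to infinitesimal directions. This data comes from two sources. The de Rham torsor descends to the algebraic de Rham stack via feature (A). The Hodge-Tate filtration $\pi_{\HT}$ is defined on the completed level, and it is extended using feature (B): the map $\BdR^+(\mX_{K^p})\to\Fl^{\pdR}$ identifies with the relative comparison of $\OBdR^+$. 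Once this map exists, Cartesianness of the bottom-left square reduces, via the previous paragraph, to the fact that both squares sit over the same Cartesian outer rectangle (pasting lemma).

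\textbf{Top square (main obstacle).} Here one must show
\[\mX^{\tor,\la}_{K^p}\cong\mX^{\tor,\sm}_{K^p}\times_{\Per^{\dR/\Fl^{\std}}/G^c}\Per/G^c.\]
Since $\Per\to\Per^{\dR/\Fl^{\std}}$ is affine, being a quotient by the formal completion of the relative tangent, this is an equality of relative $\AnSpec$'s over $\mX^{\tor,\sm}_{K^p}$. Concretely, the pushforward of $\mO$ must be $\mO^{\la}_{K^p}$. I would verify this locally, using Pan's description of $\mO^{\la}_{K^p}$ as a completed union of smooth functions adjoined with locally analytic coordinates of $\pi_{\HT}$, namely the functions $x,e_1,e_2$ generalized to the entries of $N_\mu\backslash G^c$. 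On the other side, the fibre of $\Per\to\Per^{\dR/\Fl^{\std}}$ is the germ of $N_\mu\backslash G^c$ fibrewise over $\Fl^{\std}$. Its pullback therefore adjoins exactly convergent power series in these Hodge-Tate coordinates, that is, $G(\QQ_p)$-locally analytic expansions. This matching of Pan's expansion with the structure sheaf of the analytic germ is the hard step. I would first prove it on the $\pi_{\HT}$-affinoid cover at finite $K_p$-level, and then pass to the colimit in $K_p$.

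\textbf{Part (2).} Part (2) follows by the same pattern, using the analytic Grothendieck-Messing theory of Proposition \ref{propFiniteLevelGMtheoryIntro}(2) in place of (1), with $\dR$ replacing $\pdR$ throughout. The final identification $\mX^{\dR,\sm}_{K^p}\cong(\mX_{K^p})^{\dR}$ in the perfectoid case is Theorem \ref{thmAnDRStackGeneral}, i.e.\ feature (C).
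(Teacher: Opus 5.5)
Your lower half matches the paper. The outer lower rectangle is Cartesian by taking the limit over $K_p$ of the finite-level algebraic Grothendieck--Messing square, and the square containing $\ti{h}_{\Fl^{\std}}$ is formal. Pasting then gives the lower-left square. The trouble is that this rests on maps $\pi^{\sm}_{\GM\HT}$ and $\pi^{\pdR}_{\GM\HT}$ that you never construct. These maps leave the \emph{smooth-level} stacks, while the Hodge--Tate datum lives only on $\mX^{\tor}_{K^p}$ or $\mX^{\tor,\la}_{K^p}$. So you must \emph{descend} along non-nilpotent $!$-surjections, such as $\mX^{\tor,\la}_{K^p}\to\mX^{\tor,\sm}_{K^p}$ or $\BdR^{+}(\mX_{K^p})\to(\mX^{\tor}_{K^p})^{\pdR/\CC_p,\sm}_{\log}$. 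Feature (B) only \emph{extends} maps along nilpotent thickenings, so it cannot do this. Moreover, the Hodge--Tate side of the target is an \emph{analytic} de Rham stack. The fibre of $\Per_{G,\mu}\to\Per^{\dR/\Fl^{\std}}_{G,\mu}$ is the dagger neighbourhood $(\fg^{0}/\fn^{0})^{\dagger}$, not a formal completion as you state. What has to descend is therefore only a datum in $\Fl_{G,\mu}/(\fg^{0}/\fn^{0})^{\dagger}$, which is a $\dagger$-reduction statement that (B) and the map $\BdR^{+}(\mX_{K^p})\to\Fl^{\pdR}$ do not provide.

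You rightly call the top-left square the main point, but you leave it as a plan: generalize Pan's local expansion of $\mO^{\la}_{K^p}$. For general $(G,X)$ that expansion is exactly what needs proof, namely Rodr\'iguez Camargo's geometric Sen theory. Your plan also omits its essential input, that $\fn^{0}$ acts trivially on $\mO^{\la}_{K^p}$. The missing idea is the following chain:
\begin{itemize}
\item $C^{\la}\cong\mO_{\Fl}\{(\fg^{c,0}/\fn^{0})^{\vee}\}^{\dagger}$ by Lemma \ref{lemClaConcentraDeg0};
\item $\tVBfu(C^{\la})\cong\mO^{\la}_{K^p}$ by Corollary \ref{corVBCla=Ola};
\item $(\pi^{\la}_{\sm})^{*}\tVBfu(\mF)\cong(\pi^{\la}_{\HT})^{*}\mF$ by Theorem \ref{thmMainThmGeomSenVB}.
\end{itemize}
Together these give $\mO^{\la}_{K^p}\otimes_{\mO^{\sm}_{K^p}}\mO^{\la}_{K^p}\cong\mO^{\la}_{K^p}\{(\fg^{c,0}/\fn^{0})^{\vee}\}^{\dagger}$, and this one identity does both jobs you are missing.

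\emph{First}, it shows that $(\mX^{\tor,\la}_{K^p})^{\dR}\to(\mX^{\tor,\sm}_{K^p})^{\dR}$ is a monomorphism, so $(\mX^{\tor,\la}_{K^p})^{\dR/\mX^{\tor,\sm}_{K^p}}\cong\mX^{\tor,\sm}_{K^p}$. Hence $\pi^{\sm}_{\GM\HT}$ is obtained by applying relative de Rham stacks to $\pi^{\la}_{\GM\HT}$ (Proposition \ref{propdRstackOfShLA}). Then $\pi^{\pdR}_{\GM\HT}$ follows by doing the same along $h^{\sm}$ (Lemma \ref{lemSmDRStackProperty}(2)).

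\emph{Second}, it identifies $\mX^{\tor,\la}_{K^p}\times_{\mX^{\tor,\sm}_{K^p}}\mX^{\tor,\la}_{K^p}$ with $\mX^{\tor,\la}_{K^p}\times_{\Fl_{G,\mu}/(\fg^{0}/\fn^{0})^{\dagger}}\Fl_{G,\mu}$. The top squares are then Cartesian by checking after base change along the $!$-surjection $\mX^{\tor,\la}_{K^p}\to\mX^{\tor,\sm}_{K^p}$ (Proposition \ref{propLAvsSmoothSV}), with no explicit formula for $\mO^{\la}_{K^p}$ needed. The top-right square, which you skip, is formal by Lemma \ref{lemCompareDRStac}(3).

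So the argument has to run top-down rather than bottom-up, and your part (2) inherits the same gap.
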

The proof relies on Proposition \ref{propFiniteLevelGMtheoryIntro} alongside the calculation of \( \mO^{\la}_{K^{p}} \) using the geometric Sen theory developed by \cite{Juan2022.05GeoSen} and \cite{Juan2022.09locallyShi}. 

When \( \mX_{K^{p}}^{\diamond} \)
is represented by a perfectoid space,
Theorem \ref{thmLAstrucGMHTperIntro} shows that \( \mX^{\la}_{K^{p}} \)
is determined by \( \mX_{K^{p}} \)
and \( \pi_{\GM\HT}^{\dR} \), 
and the latter is determined by the descent of the de Rham \( G^{c} \)-torsor on \( \mX_{K^{p}} \) to \( \mX^{\dR}_{K^{p}} \).
The following theorem demonstrates how to recover this last missing piece, and provides a reformulation of the Riemann-Hilbert correspondence of \cite{LiuZhu2017rigidity} and \cite{DLLZ2022logarithmicJAMS}.
\begin{thm}[Corollary \ref{corReformulateRiemmanHilbPerfdCase}]\label{thmReformulateRiemmanHilbPerfdCaseIntro}
Assume that \( \mX_{K^{p}}^{\diamond} \)
is represented by a perfectoid space. 
 Then for any \( V\in\Rep(G^{c}) \), 
we have 
\begin{align*}
(\pi^{\dR}_{K_{p}})^{*}
\mcal{\bar{G}}^{c,\an}_{\dR,\Kpp}(V)&\cong \RHom_{\fg^{c}}(V^{\vee},E_{0}(D_{\sen}(\BdRKp^{\la})))\in \QCoh(\mX^{\dR}_{K^{p}}),
\end{align*} where 
\begin{itemize}
\item \( \pi^{\dR}_{K_{p}}:\mX_{K^{p}}^{\dR}\to \mX_{\Kpp,E_{\p}}^{\dR} \), and \( \mcal{\bar{G}}^{c,\an}_{\dR,\Kpp} \) 
is the de Rham \( G^{c} \)-torsor on \( \mX_{\Kpp,E_{\p}}^{\dR} \) (Proposition \ref{propAnalyticGMTheory});
\item \( \BdRKp^{\la}:=(\BdRKp^{+}[1/t])^{\la} \), with \( \BdRKp^{+} \) as in \S \ref{subsecDRstkIntro}, and with \( (-)^{\la} \) (as in Notation \ref{notationRlaConvention}) taken in the filtered sense;
\item 
\( D_{\sen}(-):=\varinjlim_{L}(-)^{H_{L},\Gamma_{L}-\la} \) (Notation \ref{notationEpInftyHandGamma})
with the colimit taken over all the finite extensions \( E_{\p}\subset L\subset \bar{\QQ}_{p} \);
\item \( E_{0} \) denotes the generalized eigenspace for \( \Theta=0 \) (Definition \ref{dfnE0functor}), where \( \Theta \)
is a fixed generator of \( \Lie(\Gamma_{E_{\p}}) \).
\end{itemize}
\end{thm}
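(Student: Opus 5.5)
Since \( \mX_{K^{p}}^{\diamond} \) is perfectoid, Theorem \ref{thmLAstrucGMHTperIntro}(2) identifies \( \mX^{\dR,\sm}_{K^{p}} \) with \( \mX_{K^{p}}^{\dR} \). My approach is to compute both sides of the claimed isomorphism after pulling back along the Cartesian square of Theorem \ref{thmLAstrucGMHTperIntro}(2), and then descend.

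\textbf{Step 1 (the left-hand side as a functor on \( \Per \)).} By Proposition \ref{propAnalyticGMTheory}, \( (\pi^{\dR}_{K_{p}})^{*}\mcal{\bar{G}}^{c,\an}_{\dR,\Kpp}(V) \) is the vector bundle obtained from the \( G^{c} \)-torsor \( \pi^{\dR}_{\GM\HT} \) via \( V \). Equivalently, it is the pullback along \( \pi^{\dR}_{\GM\HT} \) of the tautological bundle \( \mcal{V} \) on \( \Per_{G,\mu}^{\dR}/(G^{c,\an}_{\bar{\QQ}_{p}},*_{\GM}) \).

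\textbf{Step 2 (the right-hand side over \( \mX^{\la}_{K^{p}} \)).} By construction, \( \BdRKp^{+} = \beta^{\BdR^{+}}_{*}\mO \). I will use feature (B) and the factorization \( \BdR^{+}(\mX_{K^{p}})\to \mX^{\la}_{K^{p}}\to \mX_{K^{p}}^{\dR} \) of the maps above. The expected output is that \( (\BdRKp^{+}[1/t])^{\la} \), as a filtered object, equals \( \beta^{\la}_{*} \) of an explicit sheaf on \( \mX^{\la}_{K^{p}} \). That sheaf should be the pullback under \( \pi^{\la}_{\GM\HT} \) of the locally analytic de Rham period ring of \( \Per \): functions on the formal completion of \( \Per \) along the fibres of \( \Per\to\Per^{\dR} \), tensored with \( B_{\dR} \).

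This is the geometric Sen theory computation (Rodríguez Camargo, Pan), in which \( \OBdR \)-type sheaves become polynomial rings in the \( \fn^{0} \)-directions with Sen operator given by the horizontal action. The Cartesian square lets me check it locally on \( \Per \), where it reduces to a computation on \( G^{c,\an} \) itself.

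\textbf{Step 3 (applying \( D_{\sen} \) and \( E_{0} \)).} Taking \( H_{L} \)-invariants and \( \Gamma_{L} \)-locally analytic vectors, then passing to the colimit over \( L \), decompletes \( B_{\dR} \) to \( \bar{\QQ}_{p}((t)) \)-type objects. The Sen operator \( \Theta \) then acts via the grading by powers of \( t \) twisted by the geometric Sen operator. Taking the generalized \( \Theta=0 \) eigenspace \( E_{0} \) should isolate exactly the \( \fg^{c} \)-module of functions on the fibre \( G^{c}/N^{\std}_{\mu} \)-direction. Its \( \fg^{c} \)-derived homomorphisms from \( V^{\vee} \) then recover \( \mcal{V} \), by a Borel–Weil/algebraic Peter–Weyl argument: \( \RHom_{\fg^{c}}(V^{\vee},\mO(G^{c})^{\la}) \) computes \( V \) concentrated in degree \( 0 \), via a Lie algebra cohomology vanishing for the analytic functions on a neighborhood of the identity.

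\textbf{Main obstacle.} The hardest step is Step 3. I must show that \( E_{0}\circ D_{\sen} \) commutes with \( \beta^{\la}_{*} \) and with the relevant colimits, and that the generalized eigenspace kills all \( t^{k} \)-contributions for \( k\neq 0 \) uniformly. I would first treat \( V \) trivial, showing \( E_{0}(D_{\sen}(\BdRKp^{\la}))^{\fg^{c}}\cong\mO \), then handle general \( V \) by twisting with the torsor. The derived \( \fg^{c} \)-cohomology vanishing will be checked via the Cartesian base change to \( \Per \).
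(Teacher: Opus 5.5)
There is a genuine gap: nothing in your argument relates the \'etale $G^{c}$-structure on $\BdRKp$ to the de Rham torsor, and that comparison is the actual content of the statement. Here the \'etale structure means the local systems $\unl{V}$, trivialised on $\mX_{K^{p}}$, on which $\fg^{c}$ acts through $G(\QQ_{p})$. Your step ``handle general $V$ by twisting with the torsor'' presupposes an isomorphism $V\otimes\BdRKp\cong(\pi^{\dR}_{K_{p}})^{*}\bar{\mcal{G}}^{c,\an}_{\dR,\Kpp}(V)\otimes\BdRKp$. That is, it assumes the \'etale and de Rham $G^{c}$-torsors agree over $\BdR(\mX_{K^{p}})$. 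This is the $p$-adic de Rham comparison for the automorphic local systems, and it does not follow formally from anything you cite.

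\begin{itemize}
\item The Cartesian square of Theorem \ref{thmLAstrucGMHTper}(2) only controls $\mO^{\la}_{K^{p}}$, hence only the graded pieces $\gr^{i}_{t}\BdRKp^{\la}\cong\mO^{\la}_{K^{p}}(i)$. It is built from the Hodge--Tate comparison $i_{\mrm{RH}}$ of Theorem \ref{thmIdenTwoTorsorsOverLa}, which is an isomorphism of $M^{c}$-torsors only. It does not see the $t$-adic extension data of $\BdRKp^{\la}$, so the ``explicit sheaf'' of your Step 2 is unsupported.
\item Feature (B) does not help. It extends maps into de Rham stacks, and $\mX^{\la}_{K^{p}}$ is not one. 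A factorization $\BdR^{+}(\mX_{K^{p}})\to\mX^{\la}_{K^{p}}$ would in particular give a canonical lift of $\mO_{\mX_{\Kpp}}\to\hat{\mO}$ to $\BdR^{+}$. Such lifts do not exist, which is exactly why $\OBdRlog$ is introduced.
\item Even for trivial $V$, the input you need is $\BdRKp^{R-\sm}\cong\mO\otimes B_{\dR}$. This comes from $R\nu_{*}\OBdRlog\cong\mO\hatotimes B_{\dR}$ (Proposition \ref{propPushForwardOBdR}) together with the Poincar\'e lemma, not from geometric Sen theory.
\item Without the comparison you also have no morphism between the two sides on which to run a graded-pieces check.
\end{itemize}

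The paper gets the missing input from the logarithmic Riemann--Hilbert correspondence of DLLZ, $\unl{V}\otimes\OBdRlog\cong\mcal{G}^{c}_{\dR,K_{p}}(V)\otimes\OBdRlog$ (filtered and horizontal), combined with Proposition \ref{propPushForwardOBdR}. This is computed on toric charts via $\RHom_{\fg^{c}}(V^{\vee},W^{R-\la})\cong(V\otimes W)^{R-\sm}$ and descended via Lemmas \ref{lemBdRLaProperty} and \ref{lemQCohSMsv}. The result is Theorem \ref{thmReformulateRHCorr} on $(\mX^{\tor}_{K^{p},E_{\p}})^{\pdR/E_{\p},\hat{+},\sm}_{\log}$. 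The corollary then follows by transporting to $\mX^{\dR}_{K^{p}}$ with Proposition \ref{propAnToAlgDRJuan} and checking that $g_{*}$ commutes with $(-)^{R-\la}$, $D_{\sen,L}$ and $E_{0}$.

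Note also the order of operations in the paper. $E_{0}\circ D_{\sen}$ is applied only after $\RHom_{\fg^{c}}(V^{\vee},\BdRKp^{\la})$ has been identified with $(\pi^{\sm,\pdR}_{K_{p}})^{*}\bar{\mcal{G}}^{c}_{\dR,\Kpp}(V)\otimes s^{*}(B_{\dR})$. The two operations commute because $\RHom_{\fg^{c}}(V^{\vee},-)$ is computed by a finite resolution. On $\gr^{j}\bar{\mcal{G}}^{c}_{\dR}(V)\otimes\gr^{i-j}B_{\dR}$ the operator $\Theta$ acts by $i-j$, so $E_{0}$ keeps only $j=i$, and Ax--Sen--Tate (Lemma \ref{lemDsenCommuteWithFlatBC}) finishes.

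Your order instead requires computing $E_{0}(D_{\sen}(\BdRKp^{\la}))$ itself. That needs the arithmetic Sen operator on $\mO^{\la}$ and the missing extension classes. Your Borel--Weil step also tacitly identifies the $*_{\HT}$-action (coming from $G(\QQ_{p})$) with the $*_{\GM}$-action on the torsor. That identification is again only justified after the de Rham comparison.
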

For the proof, we first establish a similar theorem over 
\( (\mX^{\tor}_{K^{p}})^{\pdR/\CC_{p},+,\sm}_{\log} \), which is a direct consequence of the logarithmic Riemann-Hilbert correspondence of \cite{DLLZ2022logarithmicJAMS}. We then use the equivalence between the category of vector bundles over algebraic and analytic de Rham stacks (Proposition \ref{propAnToAlgDRJuan}) to deduce Theorem \ref{thmReformulateRiemmanHilbPerfdCaseIntro}.

Theorem \ref{thmReformulateRiemmanHilbPerfdCaseIntro} shows that the \( G^{c,\an} \)-torsor \( (\pi^{\dR}_{K_{p}})^{*}\bar{\mcal{G}}^{c,\an}_{\dR,\Kpp} \) on \( \mX^{\dR}_{K^{p}} \) can be reconstructed from the RHS, namely, from \( \BdRKp^{\la} \) with the natural Galois action, without any additional information about finite-level Shimura varieties. 

Combined, Theorem \ref{thmLAstrucGMHTperIntro} and Theorem \ref{thmReformulateRiemmanHilbPerfdCaseIntro} illustrate how to reconstruct \( \mX^{\la}_{K^{p}} \) from the perfectoid data, i.e. \( \mX_{K^{p}} \) and \( \BdR^{+}(\mX_{K^{p}}) \) with the action of \( G(\QQ_{p})\times \Gal_{E_{\p}} \). This will be the key ingredient for constructing a locally analytic Jacquet-Langlands correspondence in \cite{Jiang2025HMF}. In fact, \cite{Jiang2025HMF} will first establish a perfectoid Jacquet-Langlands correspondence using Igusa stacks (\cite{Zhang2023pel}, \cite{DanielsHoftenKimZhang2024igusa}, \cite{Kim2025uniquenessfunctorialityigusastacks}, \cite{DanielVanHoftenKimZhangs2026igusastackscohomologyshimura}), and then deduce the locally analytic version using the idea above. 
\subsection{Differential operators}
We use 
Theorem \ref{thmLAstrucGMHTperIntro} to construct and generalize the differential operators \( d^{k} \) and \( \bar{d}^{k} \) in \cite{Pan2209.06II}. 

We briefly recall the construction in \cite{Pan2209.06II}. 
We consider the action of \( \fg:=\Lie(G(\QQ_{p})) \)
on \( \mO^{\la}_{K^{p}} \), and we linearly extend it to an action of \( \fg^{0}:=\mO_{\Fl}\otimes\fg  \). 
Since \( \Fl_{G,\mu} \)
parametrizes parabolic subgroups of \( G \) that are conjugate to \( P_{\mu} \), \( \fg^{0} \)
admits sub-bundles \( \fn_{\mu}^{0}\subset \p_{\mu}^{0}\subset \fg^{0} \), corresponding to the Lie algebra of the universal parabolic subgroup and its nilpotent radical. Then \cite{Pan22} and \cite{Juan2022.09locallyShi} show that the action of \( \fn_{\mu}^{0} \) is trivial on \( \mO^{\la}_{K^{p}} \), and induces an action of \( \mm_{\mu}^{0}:=\p_{\mu}^{0}/\fn_{\mu}^{0} \) on \( \mO^{\la}_{K^{p}} \). 

Let \( \mO^{\la,0}_{K^{p}} \)
denote the subsheaf of \( \mO^{\la}_{K^{p}} \)
that is the kernel of the \( \mm_{\mu}^{0} \)-action, which contains \( \pi_{\HT}^{-1}(\mO_{\Fl}) \)
and \( \pi_{K_{p}}^{-1}(\mO_{\mX_{\Kpp}^{\tor}}) \).
In the case of modular curves, \cite{Pan2209.06II} constructs differential operators between various twists of \( \mO^{\la,0}_{K^{p}} \) by line bundles over \( \mX^{\tor}_{\Kpp} \)
and \( \Fl_{G,\mu} \). 

\begin{prop}[Corollary \ref{corXLa0WithTwoFlag}, Lemma \ref{lemKunnFlSm}]\label{propHorizontalDRIntro}
Let \[ \mX^{\tor,\la,0}_{K^{p}}:=\unl{\AnSpec}_{\mX^{\sm}_{K^{p}}}(\mO^{\la,0}_{K^{p}}) \text{ (Definition \ref{dfnHorizontalLAsv}),} \] let \( (\mX^{\tor}_{K^{p}})^{\pdR/\CC_{p},\hp,\sm}_{\log}:=\varprojlim_{K_{p}}(\mX^{\tor}_{\Kpp})^{\pdR/\CC_{p},\hp}_{\log} \), and let \[
(\mX^{\tor,\la,0}_{K^{p}})^{\pdR/\CC_{p},\hpp}_{\log}:=\Fl^{\pdR/\CC_{p},\hp}_{G,\mu}\times_{\Fl^{\dR/\CC_{p}}_{G,\mu}}
(\mX^{\tor}_{K^{p}})^{\pdR/\CC_{p},\hp,\sm}_{\log}\text{}
\] as in Definition \ref{dfnBiFilAgLa0}, where the map \( (\mX^{\tor}_{K^{p}})^{\pdR/\CC_{p},\hp,\sm}_{\log}\to\Fl^{\dR/\CC_{p}}_{G,\mu} \)
is induced by \( \pi_{\HT} \). Then the following hold:

(1) We have the following Cartesian diagram \[
\begin{tikzcd}[column sep=40pt]
{\mX^{\tor,\la,0}_{K^{p}}\times [\hat{\mA}^{1}/\GG_{m}]^{2}}
\arrow[r,"\pi^{\la,0}_{\HT}\times \pi^{\la,0}_{\sm}"]
\arrow[d,"h^{\la,0,\hpp}"] &
{(\Fl_{G,\mu}\times [\hat{\mA}^{1}/\GG_{m}])}
\times  
{(\mX^{\tor,\sm}_{K^{p}}\times [\hat{\mA}^{1}/\GG_{m}])}
\arrow[d,"h_{\Fl}^{\hp}\times h^{\sm,\hp}"]
\\ 
(\mX^{\tor,\la,0}_{K^{p}})^{\pdR/\CC_{p},\hpp}_{\log}
\arrow[r,"\pi^{\pdR,\hpp}_{\GM\HT}"] 
& \Fl_{G,\mu}^{\pdR/\CC_{p},\hp}\times  (\mX^{\tor}_{K^{p}})^{\pdR/\CC_{p},{\hp},\sm}_{\log}.
\end{tikzcd}
\]  

(2) Let \( V \in \QCoh(
\Fl_{G,\mu})\) and let \( W \in \QCoh(\mX^{\tor,\sm}_{K^{p}}) \). Then the natural morphism in \( \QCoh((\mX^{\tor,\la,0}_{K^{p}})^{\pdR/\CC_{p},\hpp}_{\log}) \) 
\begin{align*}
(\pi^{\pdR,\hpp}_{\GM\HT})^{*}\left(
(h^{\hat{+}}_{\Fl})_{*}(V)\boxtimes (h^{\sm,\hat{+}})_{*}(W)\right)\to\\ 
(h^{\la,0,\hpp})_{*}\left(
	(\pi^{\la,0}_{\HT})^{*}(V)\otimes (\pi^{\la,0}_{\sm})^{*}(W)
\right)
\end{align*}
is an isomorphism.
\end{prop}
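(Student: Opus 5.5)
Deduce both parts from Theorem \ref{thmLAstrucGMHTperIntro}(1) by restricting to the $\mm^{0}_{\mu}$-invariant part, then pass to the filtered-complete variants.

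\textbf{Step 1: describe $\mX^{\tor,\la,0}_{K^{p}}$.} Theorem \ref{thmLAstrucGMHTperIntro}(1) identifies $\mX^{\tor,\la}_{K^{p}}$ with the base change of $\Per_{G,\mu}/G^{c,\an}_{\CC_{p}}\to \Per^{\dR/\Fl^{\std}}_{G,\mu}/G^{c,\an}_{\CC_{p}}$ along $\pi^{\sm}_{\GM\HT}$. The $\mm^{0}_{\mu}$-action on $\mO^{\la}_{K^{p}}$ corresponds to the infinitesimal action of the Levi $M^{c}_{\mu}$ on $\Per_{G,\mu}$. Taking derived invariants, i.e.\ forming the de Rham quotient along the $M^{c,\an}_{\mu}$-orbits, should give
\[
\mX^{\tor,\la,0}_{K^{p}}\cong \Fl_{G,\mu}\times_{\Fl_{G,\mu}^{\dR/\CC_{p}}}\mX^{\tor,\sm}_{K^{p}}.
\]
This is presumably the content of Corollary \ref{corXLa0WithTwoFlag}. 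To justify it, first check on $\Per_{G,\mu}$ that the $M^{c}$-invariant part of the relative de Rham structure is exactly the product of the two flag varieties over the de Rham stack. Then pull back along $\pi^{\sm}_{\GM\HT}$. Affineness of $\unl{\AnSpec}$ ensures that invariants commute with this base change. The map $\mX^{\tor,\sm}_{K^{p}}\to\Fl^{\dR/\CC_{p}}_{G,\mu}$ is $\ti{h}_{\Fl}\circ\pi_{\HT}$, which is well defined by feature (B).

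\textbf{Step 2: the Cartesian square in (1).} Take the product with $[\hat{\mA}^{1}/\GG_{m}]^{2}$ and use the identification of Step 1. Square (1) then becomes the base change of
\[
h^{\hp}_{\Fl}\times h^{\sm,\hp}\colon (\Fl\times[\hat{\mA}^{1}/\GG_{m}])\times(\mX^{\tor,\sm}_{K^{p}}\times[\hat{\mA}^{1}/\GG_{m}])\to \Fl^{\pdR,\hp}\times(\mX^{\tor}_{K^{p}})^{\pdR,\hp,\sm}_{\log}
\]
along the structure maps over $\Fl^{\dR/\CC_{p}}_{G,\mu}$. By Definition \ref{dfnBiFilAgLa0} the bottom-left corner is a fiber product over $\Fl^{\dR/\CC_{p}}$. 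So the claim reduces to the formal compatibility $\Fl\times[\hat{\mA}^{1}/\GG_{m}]\cong \Fl^{\pdR,\hp}\times_{\Fl^{\dR}}\Fl$ on the flag factor, via pasting of Cartesian squares. The same identity is needed on the $\mX$ factor after composing with $\pi_{\HT}$: the filtered-complete algebraic de Rham stack maps to the analytic one, and the fiber of $h^{\hp}$ over $\Fl^{\dR}$ recovers $\Fl\times[\hat{\mA}^{1}/\GG_{m}]$. At finite level $K_{p}$ this is the defining property of the filtered-complete variant. For $\mX^{\sm}$, pass to the limit over $K_{p}$, using that limits commute with fiber products.

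\textbf{Step 3: the base change isomorphism in (2).} Given (1), this is a base change statement for $h^{\la,0,\hpp}$ against $h^{\hp}_{\Fl}\times h^{\sm,\hp}$, combined with a Künneth formula $(h^{\hp}_{\Fl}\times h^{\sm,\hp})_{*}(V\boxtimes W)\cong (h^{\hp}_{\Fl})_{*}V\boxtimes(h^{\sm,\hp})_{*}W$ (Lemma \ref{lemKunnFlSm}).

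\begin{itemize}
\item \emph{Base change.} The maps $h^{\hp}$ are relatively affine, or at least proper in the six-functor sense: their fibers are formal completions of the diagonal, i.e.\ $\AnSpec$ of completed symmetric algebras. For such maps, $*$-pushforward satisfies base change in $\QCoh$ of analytic stacks, so (1) gives the displayed isomorphism.
\item \emph{Künneth.} First prove it at finite level $K_{p}$ using affineness of the $h$'s. Then pass to the colimit over $K_{p}$, using that $\QCoh$ of $\mX^{\tor,\sm}_{K^{p}}$ is the colimit of the finite-level categories along pullbacks.
\end{itemize}

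The main obstacle is the infinite-level Künneth and base change step. The stack $\mX^{\tor,\sm}_{K^{p}}$ is a limit, and one must check that pushforward along $h^{\sm,\hp}$ commutes with pullback along the transition maps, so that it is compatible with the colimit presentation of $\QCoh$. I would first try to verify this using that the transition maps are finite étale, so base change holds levelwise, and then conclude by continuity of $\QCoh$ in limits of affine morphisms.
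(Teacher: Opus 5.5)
Your Step 3 has a genuine gap. The maps $h^{\hp}_{\Fl}$ and $h^{\sm,\hp}$ are neither affine nor prim. The fiber product of $X\times[\hat{\mA}^{1}/\GG_{m}]$ with itself over $X^{\pdR/\CC_{p},\hp}_{\log}$ is (the restriction of) the formal groupoid $\hat{\Delta}^{+}_{X,\log}=\unl{\Spf}^{+}_{X}(\mO_{X}\hatotimes_{\dR_{\log}(\mO_{X})}\mO_{X})$ of Definition \ref{dfnLogDiagnal}. This is a colimit over $n$ of relative $\unl{\AnSpec}$'s of the quotients by $\Fil^{n}$, not $\unl{\AnSpec}$ of the completed algebra. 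Correspondingly, $h$ is a suave $!$-cover (Lemma \ref{lemBasicFiltered}, Lemma \ref{lemSmDRStackProperty}(1)). Its $*$-pushforward is computed by inverse limits over $n$ and does not commute with colimits, so $h$ is not prim; the paper says right after the statement that these vertical maps are not proper. Hence neither proper base change (Lemma \ref{lemSmBaseChange}(2)) nor the $*$-K\"unneth formula (Lemma \ref{lemKunnethProper}) applies. Smooth base change is not available either: the map you pull back along, $\pi^{\pdR,\hpp}_{\GM\HT}$, is a base change of the diagonal of $\Fl^{\dR/\CC_{p}}_{G,\mu}$, which is co-smooth (compare Lemma \ref{lemFiniteLevelProper}(2)) rather than smooth. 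A sanity check confirms the argument cannot be right as stated. It never uses filtered completeness, and the square in (1) is equally Cartesian over $[\mA^{1}/\GG_{m}]^{2}$ (Corollary \ref{CordfnBiFilAgLa0}). So it would also prove the uncompleted version of (2), which the paper says is false. In particular, the passage to infinite level is not where the difficulty lies.

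The missing idea is to check the map on graded pieces, as in the proof of Lemma \ref{lemKunnFlSm}. First pull back along $h^{\la,0,\hpp}$; this is conservative because, by (1), it is a base change of the suave $!$-cover $h^{\hp}_{\Fl}\times h^{\sm,\hp}$. Then pull back along $(B\GG_{m})^{2}\to[\hat{\mA}^{1}/\GG_{m}]^{2}$, which is conservative exactly because everything lives on the filtered complete locus (Lemma \ref{lemPerfComplexA1Gm}(1)); this is where completion enters. Over $(B\GG_{m})^{2}$ the stacks become quotients by formal completions of the twisted tangent bundles (Lemma \ref{lemAlgebHTStack}, and for $(\mX^{\tor,\la,0}_{K^{p}})^{\pdR/\CC_{p},\hpp}_{\log}$ via the square in (1)). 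Smooth base change along $B\GG_{m}\to[\hat{\mA}^{1}/\GG_{m}]$ and along the $h$'s themselves then identifies $\gr^{(i,j)}$ of both sides with $(\pi^{\la,0}_{\HT})^{*}(V\otimes\Sym^{i}\Omega^{1}_{\Fl_{G,\mu}})\otimes(\pi^{\la,0}_{\sm})^{*}(W\otimes\Sym^{j}\Omega^{1,\sm}_{\log,K^{p}})$. The same computation gives the K\"unneth isomorphism.

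Two smaller points. In Step 2, the identity $\Fl_{G,\mu}\times[\hat{\mA}^{1}/\GG_{m}]\cong\Fl^{\pdR/\CC_{p},\hp}_{G,\mu}\times_{\Fl^{\dR/\CC_{p}}_{G,\mu}}\Fl_{G,\mu}$ is false. The analogous fiber of $\Fl^{\pdR/\CC_{p}}_{G,\mu}\to\Fl^{\dR/\CC_{p}}_{G,\mu}$ over $\Fl_{G,\mu}$ is $(\Delta(\Fl_{G,\mu})^{\dagger})^{\pdR/\Fl_{G,\mu}}$, locally $\Fl_{G,\mu}\times(\GG_{a}^{\dagger}/\hat{\GG}_{a})^{n}$; see the proof of Proposition \ref{propAnToAlgDRJuan}. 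You do not need it, though. Since $\pi^{\pdR,\hpp}_{\GM\HT}$ is the base change of the diagonal of $\Fl^{\dR/\CC_{p}}_{G,\mu}$, the fiber product in (1) is $(\Fl_{G,\mu}\times[\hat{\mA}^{1}/\GG_{m}])\times_{\Fl^{\dR/\CC_{p}}_{G,\mu}}(\mX^{\tor,\sm}_{K^{p}}\times[\hat{\mA}^{1}/\GG_{m}])$, with both maps factoring through the projections. So (1) is pure pasting once you know $\mX^{\tor,\la,0}_{K^{p}}\cong\Fl_{G,\mu}\times_{\Fl^{\dR/\CC_{p}}_{G,\mu}}\mX^{\tor,\sm}_{K^{p}}$ (Proposition \ref{propCartesianLA0}). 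For that identification, your route of taking $\mm^{c,0}$-invariants in the $\mX^{\tor,\la}_{K^{p}}$-square differs from the paper. The paper proves it directly from $\mO^{\la,0}_{K^{p}}\otimes_{\mO^{\sm}_{K^{p}}}\mO^{\la,0}_{K^{p}}\cong\mO^{\la,0}_{K^{p}}\{(\fg^{c,0}/\p^{c,0})^{\vee}\}^{\dagger}$ and $!$-surjectivity of $\mX^{\tor,\la,0}_{K^{p}}\to\mX^{\tor,\sm}_{K^{p}}$. Your route can be made to work, but the invariants step requires two things you do not supply. One is a genuine relative Poincar\'e-lemma computation, which is the content of Lemma \ref{lemClaConcentraDeg0}. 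The other is matching Pan's horizontal action with the vertical $M^{c}_{\mu}$-action on $\Per_{G,\mu}\to\Fl_{G,\mu}\times\Fl^{\std}_{G,\mu}$.
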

Statement (1) follows from Theorem \ref{thmLAstrucGMHTperIntro}. However, (2) is not a direct consequence of (1), since the vertical maps are not proper. Indeed, the statement is false without taking the filtered completion.

By Lemma \ref{lemDiffOpeToLogStack} and Proposition \ref{propHorizontalDRIntro} (2), we have constructed a natural map \begin{align}\label{alignIntroConstDiff}
&\mrm{Diff}_{\Fl_{G,\mu}}^{\le n}(V_{1},V_{2})\otimes 
\mrm{Diff}_{\mX^{\tor}_{\Kpp},\log}^{\le m}(W_{1},W_{2})
\\&\to \mrm{Diff}_{\mX^{\tor,\la,0}_{K^{p}},\log}^{\le (n,m)}\left((\pi^{\la,0}_{\HT})^{*}(V_{1})\otimes (\pi^{\la,0}_{K_{p}})^{*}(W_{1}),(\pi^{\la,0}_{\HT})^{*}(V_{2})\otimes (\pi^{\la,0}_{K_{p}})^{*}(W_{2})\right),
\end{align} where \( \pi_{K_{p}}^{\la,0}:\mX^{\tor,\la,0}_{K^{p}}\to \mX^{\tor}_{\Kpp} \), and the RHS 
denotes the space of logarithmic differential operators that are of degree \( \le m \) over \( \mO_{\mX_{\Kpp}^{\tor}} \), and of degree \( \le n \) over \( \mO_{\Fl_{G,\mu}} \).
\begin{eg}\label{egRecoverPanConstruction}
In the case of modular curves, if we take \[ V_{1}=V_{2}=\omega^{1-k}_{\Fl_{G,\mu}}, W_{1}=\omega^{1-k}_{\mX^{\tor}_{\Kpp}}, W_{2}=\omega^{1+k}_{\mX^{\tor}_{\Kpp}},k\in\ZZ_{\ge1},\] then \( 1\otimes \theta^{k}_{\mX^{\tor}_{\Kpp}} \) is sent to \( d^{k} \)
defined in \cite{Pan2209.06II}. Similarly, if we take \[ V_{1}=\omega^{1-k}_{\Fl_{G,\mu}},V_{2}=\omega^{1+k}_{\Fl_{G,\mu}}, W_{1}=W_{2}=\omega^{1-k}_{\mX^{\tor}_{\Kpp}},k\in\ZZ_{\ge1}, \]
then \( \theta^{k}_{\Fl_{G,\mu}}\otimes 1 \) is sent to \( \bar{d}^{k} \) loc. cit. Thus if we take \[ V_{1}=\omega^{1-k}_{\Fl_{G,\mu}},V_{2}=\omega^{1+k}_{\Fl_{G,\mu}}, W_{1}=\omega^{1-k}_{\mX^{\tor}_{\Kpp}}, W_{2}=\omega^{1+k}_{\mX^{\tor}_{\Kpp}},k\in\ZZ_{\ge1},\]
then \( \theta^{k}_{\Fl_{G,\mu}}\otimes \theta^{k}_{\mX^{\tor}_{\Kpp}} \)
is sent to the Fontaine operator by \cite[Theorem 1.2.10]{Pan2209.06II}. 
\end{eg}
The main source of differential operators on \( \Fl_{G,\mu} \) and \( \mX^{\tor}_{\Kpp} \) is provided by dual BGG complexes, which we will discuss in Corollary \ref{corFilterDualBGG} and Proposition \ref{propBGGFilSV} respectively. 
Motivated by Example \ref{egRecoverPanConstruction} and \cite{Pan2209.06II}, we will define in \S \ref{subsubsecBGGFcomplex} a certain \emph{Bernstein-Gelfand-Gelfand-Fontaine complex} on \( (\mX^{\tor,\la,0}_{K^{p}})_{\log}^{\pdR/\CC_{p},\hpp} \) by combining the dual BGG complexes on \( \Fl_{G,\mu} \) and \( \mX^{\tor}_{\Kpp} \), and propose some related questions in Question \ref{conjFontaineComplexIsAutomorphic}.





\subsection{Organization of the paper}
The paper consists of two parts. Section \ref{sectionAnDRStacks} provides the general prerequisites including general discussions about the theory of Clausen-Scholze analytic stacks and its solid and Tate variants (\cite{Juan2024analyticdeRham}), the construction of a logarithmic version of algebraic de Rham stacks, and the study of filtered BGG complexes on partial flag varieties. Section \ref{sectionLAVectorSV} applies the general results to the setting of Shimura varieties. We refer the readers to the introduction to each section for more details. 

\subsection{Acknowledgment}
I would like to express my gratitude to my advisor Vincent Pilloni for many invaluable discussions and for his constant encouragement and support, without which this work wouldn't be possible. 
I would like to thank Johannes Anschütz, Dustin Clausen, Arthur-C\'esar Le Bras, Lue Pan, Juan Esteban Rodr\'iguez Camargo, and Qixiang Wang, from whom I learn many important ideas that are presented in this work.  
I would like to thank Xingzhu Fang, Valentin Hernandez, Pol van Hoften, Dongryul Kim, 
Shizhang Li, Benchao Su, Matteo Tamiozzo, Longke Tang, Xiangqian Yang, Mingjia Zhang, and Daming Zhou for helpful discussions.
Part of this work was done during my visits to Princeton University and to Beijing International Center for Mathematical Research,
and I want to thank both institutes and my hosts Lue Pan, Yiwen Ding and Liang Xiao for their hospitality. 
The work was done while the author was a PhD student at Université Paris-Saclay, under the
specific doctoral contract for normaliens (CDSN), and I would like to thank Université Paris-Saclay for its support.

\subsection{Notation}
We will work with solid modules in the sense of \cite{CS19}. To ease the notation, all the tensor products will be taken in the solid sense, which we will simply denote as \(-\hatotimes-\).

We fix \(\bar{\QQ}\subset\CC\),
and we fix an isomorphism \(\iota:\CC\cong\bar{\QQ}_{p}\).
Denote by \(\CC_{p}\) the completion of \(\bar{\QQ}_{p}\) for the \(p\)-adic topology.
By Frobenius, we will always refer to the geometric Frobenius. 
Let \(\chi_{\mrm{cycl}}:\Gal_{\QQ}\to\ZZ_{p}^{\times}\)
denote the cyclotomic character, sending \(\Fr_{l}\) to \(l^{-1}\). By our convention, \(\chi_{\mrm{cycl}}\)
has Hodge-Tate weight \(-1\). Our convention for local class field theory takes the uniformizer to a lift of the geometric Frobenius.  

\section{Preliminary on analytic stacks}\label{sectionAnDRStacks}

This section collects some technical results on solid and Tate stacks, algebraic and analytic de Rham stacks, and a log version of algebraic de Rham stacks.
 The main references are \cite{Juan2024analyticdeRham} and \cite{AnschützBoscoLeBrasCamargoScholze2025analyticrhamstacksfarguesfontaine}. Many results 
presented here are due to Clausen-Scholze.


The section is structured as follows: 

Subsection \ref{subsectionTateStack} introduces the general definition of analytic stacks of Clausen-Scholze and \cite{AnschützBoscoLeBrasCamargoScholze2025analyticrhamstacksfarguesfontaine}. We also discuss general six functor formalisms, following \cite{Mann2022padic6functorformalismrigidanalytic}, \cite{Mann20226functorformalismmathbbzell}, \cite{Scholze2022sixFunctor}, \cite{HeyerMann20246functorformalismssmoothrepresentations}, and \cite{Juan2024analyticdeRham}.

Subsection \ref{subsecSolidTateStack} introduces the solid and Tate stacks of \cite{Juan2024analyticdeRham}. Since we need to work with both types of stacks, we also study the relation between the two categories, and introduce so-called ``essentially Tate'' stacks, for which the functor into Tate stacks is fully faithful.

In Subsection \ref{subsecAffineProperMorph},
we define so-called ``affine proper morphisms", and provide a solid stack version of the relative Spec construction. Many spaces that later arise will come from this construction.

In Subsection \ref{subsecFiltrationFormalSpec}, we recall the Rees construction realizing filtered objects as quasi-coherent sheaves on \( [\mA^{1}/\GG_{m}] \) as in \cite{MoulinosTasos2021Tgof}, \cite{BhattLurie2022prismatic}. We further define a (relative) Spf construction associate a solid stack over \( [\mA^{1}/\GG_{m}] \) to a filtered complete algebra.

In Subsection \ref{subsecEgSolidStack}, we give some examples of solid stacks, notably various geometric realizations of \( p \)-adic Lie groups and vector bundles.

In Subsection \ref{subsecAndRStack}, 
we give the definition of the algebraic and analytic de Rham stacks of \cite{Juan2024analyticdeRham}, and prove some basic properties. Many of the results in this subsection are essentially proven in \cite{AnschützBoscoLeBrasCamargoScholze2025analyticrhamstacksfarguesfontaine}, except that we work with Tate stacks where (compared with Gelfand stacks) all the test objects are not equipped with induced analytic structures. 
We do not claim any originality 
over these results. 

In Subsection \ref{subsecProperAlgLogDRstack}
we give a log version of filtered algebraic de Rham stack. Given a log smooth rigid variety \( X \) over \( L \), \( X^{\pdR/L,+}_{\log} \)
is a solid stack over \( [\mA^{1}/\GG_{m}] \). We also prove that the category of quasi-coherent sheaves on \( X^{\pdR/L,+}_{\log} \) is equivalent to that of filtered \( D_{X/L,\log} \)-modules.
We also show that the filtered complete version \( X^{\pdR/L,\hat{+}}_{\log} \) is affine when \( X=\Spa(A,A^{+}) \), in the sense that the category of quasi-coherent sheaves on \( X^{\pdR/L,\hat{+}}_{\log} \) is equivalent to that of filtered complete \( \dR_{\log}(A) \)-modules. 
The two interpretations have their own benefits. 

In Subsection \ref{subsecAnalyticVSAlgeDR}, we compare the relation between analytic and algebraic de Rham stacks. The main new result is a proof of an identification of the categories of vector bundles on analytic and algebraic de Rham stacks.

In Subsection \ref{subsecRepTheory}, we prove some results in representation theory which will be used later. In \S
\ref{subsecPartialFl}, we discuss the de Rham stacks of partial flag varieties. One key construction is \( \Fl_{P}/\widehat{(\fg^{0}/\fn^{0})} \) as in \cite{Scholze2024RealLanglands}, which should relate to twisted \( D \)-modules via Beilinson-Bernstein localization.
In \S \ref{subsubsecRepLieAlg}, we show how to realize the representations of Lie algebras on a suitable classifying stack, following \cite{Scholze2024RealLanglands}. We also construct the infinitesimal action using geometry.
In \S \ref{subsubsecParBGGCatO}, we define the parabolic BGG category as a subcategory of the category of equivariant \( D \)-modules on partial flag varieties, and we compare this definition to more classical definitions. Finally, in \S \ref{subsubsecBGGComplex}, we construct the (dual) BGG complexes on partial flag varieties following \cite{Faltings1983CohoOfLocSymmetric}.

In Subsection \ref{subsecDeRhamPeriodSheaves},
we realize the de Rham period sheaf of \cite{Scholze13} and \cite{DLLZ2022logarithmicJAMS}
as a quasicoherent sheaf on the filtered algebraic log de Rham stacks. We actually give two realizations: one depends on the period sheaf \( \OBdRlog \), while the other is by deformation. 
The key observation is that the two coincide.
Later in Subsections \ref{subsecBdRSV} and \ref{subsecRiemannHilbert}, we will reformulation the logarithmic Riemann-Hilbert correspondence over Shimura varieties over the filtered algebraic log de Rham stacks.

\subsection{Analytic stacks}\label{subsectionTateStack}
The main reference is the YouTube video of the course given by Clausen-Scholze,  
\cite{Juan2024analyticdeRham} and \cite{AnschützBoscoLeBrasCamargoScholze2025analyticrhamstacksfarguesfontaine}.
Later, we will work mainly with the solid and Tate stacks introduced in \cite{Juan2024analyticdeRham} which we will introduce in \S \ref{subsecSolidTateStack}.
\begin{dfn}[Clausen-Scholze]
	A \emph{pre-analytic ring}
	is a pair \(A:=(A^{\rhd},D_{\ge 0}(A))\)
	where \(A^{\rhd}\)
	is an animated condensed ring,
	and \(D_{\ge0}(A)\) is a full subcategory of \(D_{\ge0}(A^{\rhd})\)
	such that after stabilization,
	\(D(A)\)
	is a full stable subcategory closed under all limits, colimits, and internal Hom's, and \(D_{\ge0}(A)=D_{\ge 0}(A^{\rhd})\cap D(A)\). 

	We say \(A\) is an \emph{analytic ring}
	if the unit \(A^{\rhd}\in D_{\ge0}(A^{\rhd})\)
	lies in the subcategory \(D_{\ge0}(A)\). 

	We denote by \((-)^{\wedge}_{A}:D(A^{\rhd})\to D(A)\)
	the left adjunction to the inclusion functor. Note that \(D(A)\)
	has a symmetric monoidal structure.
\end{dfn}
\begin{dfn}
	A map between analytic rings \(A\to B\)
	is a map \(f:A^{\rhd}\to B^{\rhd}\)
	with the condition that the forgetful functor \(D(B^{\rhd})\to D(A^{\rhd})\)
	takes the full subcategory  \(D(B)\) to \(D(A)\).

	In this case, we denote by \(f_{*}:D(B)\to D(A)\)
	the forgetful functor,
	and by \(f^{*}\) its left adjoint, that is, the pull-back functor \(M\mapsto (B^{\rhd}\otimes_{A}M)^{\wedge}_{B}\).
\end{dfn}
\begin{dfn}\label{dfnPartSixFunctor}
	We say  \(f:A\to B\) is \emph{proper}	
	if \(B\) has the induced analytic structure, in the sense that \(D_{\ge0}(B)\subset \mrm{Mod}_{B^{\rhd}}(D_{\ge0}(A))\).
	In other word,
	\(M\in D_{\ge0}(B^{\rhd})\)
	lies in \(D_{\ge0}(B)\)
	if and only if 
	\(M|_{A^{\rhd}}\in D_{\ge0}(A)\).
	In this case, we write \(B:=(B^{\rhd},A)\), which we refer to as the \emph{analytic structure induced from \(A\)}.

	Given any \(f:A\to B\),
	there is a unique decomposition \[A\xrightarrow{p} (B^{\rhd},A)\xrightarrow{j}B,\]
	such that \(p\) is proper.

	We say \(f:A\to B\) is an \emph{open immersion} if \(f^{*}:D(A)\to D(B)\)
	admits a left adjoint that is fully faithful, which we denote as \[f_{!}:D(B)\to D(A).\]

	We say that \(f\) is \emph{!-able}
	if the induced map \(j:(B^{\rhd},A)\to B\) 
	is an open immersion. In this case, using the decomposition \(f=p\circ j\),
	we define \[f_{!}:=p_{*}\circ j_{!}:D(B)\to D(A).\]

	It commutes with colimits, and thus has a right adjoint, which we denote as \(f^{!}:D(A)\to D(B)\).
\end{dfn}
\begin{dfn}
	A map between analytic rings \(A\to B\)
	is a \(!\)-cover if it is \(!\)-able, and it is a \(\mcal{D}\)-cover for the six-functor formalism \(D(-)\) as in \cite[Definition 5.13]{Scholze2022sixFunctor}. In other words,
	we have \(D(A)\cong \varprojlim_{i}^{!}D(B^{\otimes_{i}A})\) in \(\mrm{Pr}^{R}\)
	and \(D(A)\cong \varprojlim_{i}^{*}D(B^{\otimes_{i}A})\) in \(\mrm{Pr}^{L}\).
	Here \(\varprojlim^{?}\)
	means that the transition maps are given by \(f^{?}\) for \(?\in\{*,!\}\),
	and \(\mrm{Pr}^{L}\) (resp. \(\mrm{Pr}^{R}\))
	is the category of presentable categories with morphisms given by left (resp. right) adjunctions as in \cite[Definition 5.5.3.1]{Lurie2009HTT}.
\end{dfn}


\begin{dfn}[{\cite[Definition 4.2.1]{AnschützBoscoLeBrasCamargoScholze2025analyticrhamstacksfarguesfontaine}}]\label{dfnAnStkOverC}
	Let \(\mscr{C}^{op}\) be a small subcategory of analytic rings, closed under push-outs and finite products, and let \( \mscr{C}^{!} \) be the non-full subcategory of \( \mscr{C} \) 
	 consisting of \( \mrm{Obj}(\mscr{C}) \) and \( ! \)-able maps. Denote by \(\mcal{P}(\mscr{C})\) the presheaf category, i.e. the category of functors from \(\mscr{C}^{op}\)
	to anima.

	For any \(X\in \mcal{P}(\mscr{C})\),
	define \[\QCoh(X):=
	\varprojlim_{(A\in\mscr{C}^{op},s\in X(A))}^{!} 
	D(A),
	\] where the transition maps are given by \((-)^{!}\).

	A map \(f:X\to Y\)
	in \(\mcal{P}(\mscr{C}^{!})\)
	is a \(!\)-equivalence if for 
	any iterated diagonal \(\Delta_{f}^{(n)}\) of \(f\), \(\Delta_{f}^{(n),!}\) induces an equivalence on \(\QCoh(-)\). 

	An \emph{analytic stack} over \(\mscr{C}\) is a presheaf \(F\in\mcal{P}(\mscr{C})\), such that for any \(!\)-equivalence \(f:X\to Y\) between \(\omega_{1}\)-compact objects in \(\mcal{P}(\mscr{C}^{!})\), \(\mrm{Map}(Y,F)\cong \mrm{Map}(X,F)\). 
	We denote the category of analytic stacks over \(\mscr{C}\) by \(\mrm{AnStk}(\mscr{C})\).

	By \cite[Remark 4.2.8]{AnschützBoscoLeBrasCamargoScholze2025analyticrhamstacksfarguesfontaine}, 
	the category of analytic stacks over \(\mscr{C}\) with \(\QCoh(-)\) can be endowed with a six-functor formalism, such that the functors \((f^{*},f_{*})\) and \((f_{!},f^{!})\) coincides with those in Definition \ref{dfnPartSixFunctor} in the setting there.
	We further extend the class of \(!\)-able morphisms by \cite[Theorem 5.19]{Scholze2022sixFunctor}. 
	
	Also note that \( \QCoh(X)\cong \varprojlim^{*}_{(A\in\mscr{C},s\in X(A))}D(A) \)  
	by \cite[Remark 4.2.3]{AnschützBoscoLeBrasCamargoScholze2025analyticrhamstacksfarguesfontaine}. In particular, \( \QCoh(X) \) has a symmetric monoidal structure with the unit \( 1_{X}:=\mO_{X} \). We will sometimes omit the subscript and write \( 1 \)  when it causes no confusion.

	As in \cite[Remark 4.2.4]{AnschützBoscoLeBrasCamargoScholze2025analyticrhamstacksfarguesfontaine},
	it follows from 
	\cite[Theorem 4.3.3]{AnelBiedermannFinsterJoyall2022left} that \(\mrm{AnStk}(\mscr{C})\)
	is an \(\infty\)-topos. In particular, the corresponding ``\(!\)-sheafification''
	commutes with finite limits by \cite[Corollary 6.2.1.6, Proposition 6.2.2.7]{Lurie2009HTT}. We will refer to epimorphisms in this \( \infty \)-topos as \emph{\(!\)-surjections}. 	
\end{dfn}
\begin{dfn}\label{dfnVariousPropertyMorphiDescent}
Let \( f:X\to Y \) in \( \mrm{AnStk}(\mscr{C}) \). We will often consider the following pull-back diagram: \begin{equation}\label{equationPullBack}
\begin{tikzcd}
X'\arrow[r,"f'"]\arrow[d,"g'"] & Y'\arrow[d,"g"]\\
X\arrow[r,"f"] & Y
\end{tikzcd}
\end{equation}

(1) We say that \( f \) satisfies \emph{\( * \)-descent} if \( D(Y)\cong \varprojlim^{*}_{[n]\in\Delta}D(X^{n/Y}) \).    We say that \( f \) satisfies \emph{universal \( * \)-descent} if for any pull-back diagram (\ref{equationPullBack}), \( f' \) satisfies \( * \)-descent.

(2)
We say that \( f \) satisfies \emph{\( ! \)-descent} if \( f \) is \( ! \)-able, and \[ D(Y)\cong \varprojlim^{!}_{[n]\in\Delta}D(X^{n/Y}). \]   We say that \( f \) satisfies \emph{universal \( ! \)-descent} if for any pull-back diagram (\ref{equationPullBack}), \( f' \) satisfies \( ! \)-descent.

(3) Assume that \( f \) is \( ! \)-able. 
We say that \( A\in\QCoh(X) \) is \emph{\( f \)-suave} (\cite[Definition 4.4.1 (a)]{HeyerMann20246functorformalismssmoothrepresentations}) if \( A\in\Hom_{\mrm{LZ}}(X,Y) \) is a left adjoint. 

Equivalently, by \cite[Proposition 6.5 
\& 6.6]{Scholze2022sixFunctor}, let \( \mbb{D}_{f}(A):=\unl{\Hom}(A,f^{!}(\mO_{Y})) \), then for any pull-back diagram (\ref{equationPullBack}), 
the natural morphism \[ g^{\prime,*}\mbb{D}_{f}(A)\otimes f^{\prime,*}(-)\to \unl{\Hom}(g^{\prime,*}A,f^{\prime,!}(-)) \] is an equivalence. 
Moreover, it suffices to verify the case \( g=f,\;Y'=X \). 

We will say that \( f \) is \emph{suave} if \( \mO_{X} \) is \( f \)-suave.  

(4)
We say that \( f \) satisfies \emph{suave  \(!\)-descent} and \( f \) is a \emph{suave \( ! \)-cover}  if \( f \) is \( ! \)-able and suave, 
and \( f^{*}:\QCoh(Y)\to\QCoh(X) \) is conservative.

(5) Assume that \( f \) is \( ! \)-able. 
 We say that \( A\in\QCoh(X) \) is \emph{\( f \)-prim} (\cite[Definition 4.4.1 (b)]{HeyerMann20246functorformalismssmoothrepresentations}) if \( A\in\Hom_{\mrm{LZ}}(Y,X) \) is a left adjoint. 

 Equivalently, by \cite[Proposition 6.8 
\& 6.9]{Scholze2022sixFunctor}, let \[ \mbb{P}_{f}(A):=p_{2,*}\unl{\Hom}(p_{1}^{*}A,\Delta_{f,!}(\mO_{X})), \] then for any pull-back diagram (\ref{equationPullBack}),
the natural morphism \[ f'_{!}(g^{\prime,*}\mbb{P}_{f}(A)\otimes -)\to f'_{*}\unl{\Hom}(g^{\prime,*}A,-) \] is an equivalence. 
Moreover, it suffices to verify the case \( g=\id,\;Y'=Y \). 

We will say that \( f \) is \emph{prim} if \( \mO_{X} \) is \( f \)-prim.  

(6)
We say that \( f \) satisfies \emph{prim \(!\)-descent} and \( f \) is a \emph{prim \( ! \)-cover} if \( f \) is \( ! \)-able and prim,
and \( f_{*}(\mO_{X})\in\mrm{CAlg}(\QCoh(Y)) \) is descendable (\cite[Definition 3.18]{Mathew2016galois}).

(7) 
We say \(f\) is a \emph{monomorphism} if \(\Delta_{f}:X\to X\times_{Y}X\)
is an isomorphism.

(8)
We say \(f\) is an \emph{open immersion} if \(f\) is a \( ! \)-able  monomorphism, 
and the canonical map \( f^{!}\to f^{*} \) (\cite[Proposition 6.13]{Scholze2022sixFunctor}) is an equivalence.

(9)
We say \(f\) is \emph{cohomologically smooth} if \(f\) is \(!\)-able, \( \mO_{X} \) is \( f \)-suave, and \( \mbb{D}_{f}(\mO_{X}) \) is invertible.

(10)
We say \(f\) is \emph{cohomologically co-smooth} if \(f\) is \(!\)-able, \( \mO_{X} \) is \( f \)-prim, and \( \mbb{P}_{f}(\mO_{X}) \) is invertible.
\end{dfn}
\begin{thm}\label{thmVariousDescent}

Let \( f:X\to Y \) in \( \mrm{AnStk}(\mscr{C}) \).

(1) If \( f \) satisfies {universal \( ! \)-descent}, then \( f \) satisfies {universal \( * \)-descent}.

(2) If \( f \) is a suave \( ! \)-cover,  then \( f \) satisfies {universal \( ! \)-descent}.

(3) If \( f \) is a prim \( ! \)-cover,  then \( f \) satisfies {universal \( ! \)-descent}.

(4) Consider the pull-back diagram (\ref{equationPullBack}). Let \( A\in \QCoh(X) \).   If \( g \) satisfies universal \( * \)-descent, and  \( g^{\prime,*}(A) \) is \( f' \)-suave (resp. \( f \)-prim), then \( A \) is \( f \)-suave (resp. \( f \)-prim).   

(5) Consider \( g:Z\to X \). If \( g \) is suave (resp. prim), then  
\( A\in\QCoh(X) \) is \( f \)-suave (resp. \( f \)-prim) implies that \( g^{*}(A) \) is \( (f\circ g) \)-suave (resp. \( f\circ g \)-prim). The inverse is true if \( g \) is a suave (resp. prim) \( ! \)-cover.  
In that case, \( \mbb{D}_{f\circ g}(g^{*}(A))\cong g^{*}\mbb{D}_{f}(A)\otimes \mbb{D}_{g}(\mO_{Z}) \)
(resp. \( \mbb{P}_{f\circ g}(g^{*}(A))\cong g^{*}\mbb{P}_{f}(A)\otimes \mbb{P}_{g}(\mO_{Z}) \)).

In particular, the class of cohomological smooth (resp. co-smooth) maps is closed under compositions.

(6) If \( f \) is a \( ! \)-surjection, then \( f \) satisfies universal \( * \)-descent.   

(7) If \( f \) satisfies {suave \( ! \)-descent}, and \( X \) and \( Y \) are \( \omega_{1} \)-compact,   then \( f \) 
is a \( ! \)-surjection. 
\end{thm}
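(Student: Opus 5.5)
Most items of Theorem \ref{thmVariousDescent} are formal consequences of the six-functor formalism of \cite{Scholze2022sixFunctor} and \cite{HeyerMann20246functorformalismssmoothrepresentations}, transported to \(\mrm{AnStk}(\mscr{C})\) via \cite[Remark 4.2.8]{AnschützBoscoLeBrasCamargoScholze2025analyticrhamstacksfarguesfontaine}. I would treat them in the order (1), (2), (3), (5), (4), (6), (7), since later items use earlier ones.

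For (1), universal \(!\)-descent gives \(\QCoh(Y)\cong\varprojlim^{!}\QCoh(X^{n/Y})\) in \(\mrm{Pr}^{R}\). Passing to left adjoints identifies this with a colimit in \(\mrm{Pr}^{L}\) along the \(f_{!}\)'s. To reach \(\varprojlim^{*}\), I would use the standard argument of \cite[Proposition 5.14]{Scholze2022sixFunctor}: for \(! \)-able maps the pair \((f^{*},f_{*})\) is obtained from \((f_{!},f^{!})\) by twisting with the internal Hom/projection formula, and descent for \(*\) reduces to the statement that \(M\mapsto \varprojlim f_{n,*}f_{n}^{*}M\) is an equivalence. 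This follows by checking against \(!\)-descent on \(\unl{\Hom}(-,-)\). For (2) and (3), I would cite \cite[Lemma 4.7.4, Proposition 4.7.2]{HeyerMann20246functorformalismssmoothrepresentations}. In the suave case, \(f^{!}\cong f^{*}\otimes \mbb{D}_{f}(\mO_{X})\) commutes with colimits, and conservativity of \(f^{*}\) gives a comonadic descent via Lurie's Barr–Beck. In the prim case, \(f_{*}\cong f_{!}(\mbb{P}_{f}\otimes-)\), and descendability of \(f_{*}\mO_{X}\) gives descent by \cite{Mathew2016galois}. Both hypotheses are stable under base change, so the descent is universal.

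Item (5) is the composition lemma: left adjoints in the Lurie–Zheng \(2\)-category compose, which yields the forward direction together with the formula for \(\mbb{D}\) and \(\mbb{P}\). For the converse, I would use \cite[Lemma 4.5.8]{HeyerMann20246functorformalismssmoothrepresentations}. Item (4) is descent of suaveness and primness along universal \(*\)-covers. The criterion in Definition \ref{dfnVariousPropertyMorphiDescent}(3), respectively (5), asks that a natural map be an isomorphism. By (1), its formation commutes with base change, so it can be checked after pulling back along \(g\), which is conservative by \(*\)-descent.

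For (6), a \(!\)-surjection is an effective epimorphism in the \(\infty\)-topos, so \(Y\) is the geometric realization of the Čech nerve. Because \(\QCoh\) sends colimits of stacks to limits along \(*\)-pullback (it is defined as a limit via \(*\) by \cite[Remark 4.2.3]{AnschützBoscoLeBrasCamargoScholze2025analyticrhamstacksfarguesfontaine}), we get \(*\)-descent. This is stable under pullback because effective epimorphisms in a topos are.

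The main obstacle is (7). Here one must show that a suave \(!\)-cover is an effective epimorphism of \(!\)-sheaves, which means showing that the Čech-nerve realization \(|X^{\bullet/Y}|\to Y\) is a \(!\)-equivalence between \(\omega_{1}\)-compact presheaves. I would first form the realization \(Z\) in \(\mcal{P}(\mscr{C}^{!})\); it remains \(\omega_{1}\)-compact since the Čech nerve is countable. Then \(\QCoh(Z)\cong\varprojlim^{!}\QCoh(X^{n/Y})\cong\QCoh(Y)\) by (2). The same holds for all iterated diagonals, because the diagonal of \(Z\to Y\) is again a realization of the base-changed cover, which is still a suave \(!\)-cover. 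The definition of analytic stack in Definition \ref{dfnAnStkOverC} then forces \(Z\to Y\) to become an isomorphism after sheafification. The delicate point is checking the iterated diagonals; I would handle it inductively using the universality in (2).
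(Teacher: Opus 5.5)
Your handling of (1)--(3), (5) and (6) matches the paper. The paper simply cites the six-functor literature for (1)--(5) and argues (6) exactly as you do. The problem is (4).

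\textbf{The argument for (4) is circular.} You propose to verify the criterion of Definition \ref{dfnVariousPropertyMorphiDescent}(3), with the test square \(g=f\). That criterion asks that \(p_{1}^{*}\mbb{D}_{f}(A)\otimes p_{2}^{*}(-)\to\unl{\Hom}(p_{1}^{*}A,p_{2}^{!}(-))\) be an equivalence on \(X\times_{Y}X\). You want to check this after pulling back along the base change of \(g\), asserting that ``by (1) its formation commutes with base change''. Item (1) gives nothing of this kind: it upgrades \(!\)-descent to \(*\)-descent, and in (4) you are only given \(*\)-descent. More seriously, the comparison map is built from \(\mbb{D}_{f}(A)=\unl{\Hom}(A,f^{!}\mO_{Y})\), from \(\unl{\Hom}\), and from \(p_{2}^{!}\). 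None of these commutes with \(*\)-pullback along \(Y'\to Y\) in general. There is no a priori isomorphism \(g^{\prime,*}\mbb{D}_{f}(A)\cong\mbb{D}_{f'}(g^{\prime,*}A)\), and no base change isomorphism for \(p_{2}^{!}\). Those identifications are consequences of \(A\) being \(f\)-suave. So the pulled-back map is not the comparison map for \((f',g^{\prime,*}A)\), and the hypothesis that \(g^{\prime,*}A\) is \(f'\)-suave tells you nothing about it. The same objection applies verbatim to the prim case with \(\mbb{P}_{f}\).

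\textbf{What is needed instead is descent of adjunctions.} This is what \cite[Corollary 7.8]{Mann20226functorformalismmathbbzell} and \cite[Proposition 3.1.4]{Juan2024analyticdeRham}, cited in the paper, provide. Since \(g\) has \emph{universal} \(*\)-descent, all Hom-categories \(\QCoh(X\times_{Y}T)\) of the Lurie--Zheng \(2\)-category over \(Y\) are limits of the corresponding categories over the \v{C}ech nerve of \(g\). The base change \(2\)-functors preserve adjunctions. On each level of the \v{C}ech nerve, the right adjoint of the pullback of \(A\) is its \(\mbb{D}\) relative to the base-changed map. Because these objects are already suave, the formation of these duals \emph{is} compatible with the transition maps. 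Hence the right adjoint, unit and counit descend and exhibit \(A\) as a left adjoint. You also omit that \(f\) must first be shown to be \(!\)-able for ``\(f\)-suave'' to make sense. The paper obtains this from locality on the target in \cite[Theorem 5.19]{Scholze2022sixFunctor}.

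\textbf{Your route for (7) differs from the paper's and is sound.} It has the advantage of not depending on (4). The paper forms the image \(Y'\) of \(f\) in \(\mrm{AnStk}(\mscr{C})\) and uses (4)--(6) to show that the monomorphism \(Y'\to Y\) is a suave \(!\)-cover, hence a \(!\)-equivalence. You instead realize the \v{C}ech nerve in \(\mcal{P}(\mscr{C}^{!})\) and use \(!\)-descent directly. The iterated diagonals you call delicate are in fact harmless. In the presheaf \(\infty\)-topos, the realization \(Z\) of the \v{C}ech nerve of \(f\) maps to \(Y\) by a monomorphism. So all iterated diagonals of \(Z\to Y\) are equivalences, and only \(\QCoh(Y)\simeq\QCoh(Z)\) remains, which is exactly \(!\)-descent. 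Both routes also tacitly need the realization to be \(\omega_{1}\)-compact. Countability of \(\Delta^{op}\) gives this only once the terms \(X^{n/Y}\) are known to be \(\omega_{1}\)-compact.
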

\begin{proof}
(1)-(3) are the contents of \cite[Theorem 5.12 \& Propositions 6.19 \& 6.20]{Scholze2022sixFunctor} respectively. (4) is given by \cite[Corollary 7.8]{Mann20226functorformalismmathbbzell} and \cite[Proposition 3.1.4]{Juan2024analyticdeRham}.
(5) is given by \cite[Corollary 8.6]{Mann20226functorformalismmathbbzell} and \cite[Proposition 3.1.24]{Juan2024analyticdeRham}.
(6) follows from \cite[Remark 4.2.3]{AnschützBoscoLeBrasCamargoScholze2025analyticrhamstacksfarguesfontaine} since 
\( ``\varinjlim_{[n]\in\Delta^{op}}"X^{n/Y}\to Y \) is a \( ! \)-equivalence. 
Note that to prove \( ! \)-ability, in (4) we use ``local on the target'' 
and in (5) we use ``local on the source'' of \cite[Theorem 5.19]{Scholze2022sixFunctor}.

For (7), consider the \v Cech nerve \( \{X^{n/Y}\}_{n\in\Delta^{op}} \)  associated to \( f \) 
and let \( Y':=\varinjlim_{n\in\Delta^{op}}X^{n/Y} \). 
Then by (4) and (6), \( X\to Y' \) 
also satisfies suave \( ! \)-descent. 
In particular, the monomorphism \( i:Y'\to Y \) satisfies suave \( ! \)-descent by (5). This implies that \( Y'\to Y \) is a \( ! \)-equivalence. 
Since they are both \( \omega_{1} \)-compact, \( Y'\cong Y \)  
in \( \mrm{AnStk}(\mscr{C}) \). 
\end{proof}
\begin{dfn}\label{dfnPerfectComplex}
Let \( X \) 
be an analytic stack over \(\mscr{C}\). 

(1)
\(\mF\in\QCoh(X)\)
is defined to be a \emph{perfect complex}
if it is dualizable, and we denote the corresponding subcategory as \(\mrm{Perf}(X)\subset \QCoh(X)\). 

(2)
\( \mF\in \QCoh(X) \) is defined to be a \emph{vector bundle} (of rank \(d\)) if there exists a \( ! \)-surjection \( f:Y\to X \), such that \( f^{*}(\mF)\cong \mO_{Y}^{\oplus d} \).
We denote the corresponding subcategory as \(\Vect(X)\subset \QCoh(X)\). 
By Theorem \ref{thmVariousDescent} (6), we have \( \Vect(X)\subset \Perf(X) \). 
\end{dfn}
\begin{lem}[Projection formula for perfect complexes]\label{lemProjectionFormuaForPerf}
For any \( f:X\to Y \), \( \mF\in\QCoh(X) \), and \( \mcal{G}\in \Perf(Y) \), then the natural morphism \( f_{*}(\mcal{F})\otimes \mcal{G}\to
f_{*}(\mcal{F}\otimes f^{*}\mcal{G}) \) is an equivalence.   
\end{lem}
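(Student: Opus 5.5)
The plan is the standard dualizability argument. Since \( \mcal{G}\in\Perf(Y) \) is dualizable with dual \( \mcal{G}^{\vee} \), tensoring with \( \mcal{G} \) has both adjoints: its right adjoint is \( -\otimes\mcal{G}^{\vee} \), and \( \unl{\Hom}(\mcal{G},-)\cong -\otimes\mcal{G}^{\vee} \). Because \( f^{*} \) is symmetric monoidal, \( f^{*}\mcal{G} \) is dualizable on \( X \) with dual \( f^{*}(\mcal{G}^{\vee}) \). The natural map is the one adjoint to
\[ f^{*}(f_{*}\mF\otimes\mcal{G})\cong f^{*}f_{*}\mF\otimes f^{*}\mcal{G}\to \mF\otimes f^{*}\mcal{G}, \]
given by the counit. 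To show it is an equivalence, I will check that it induces an isomorphism on \( \Hom(\mcal{H},-) \) for every \( \mcal{H}\in\QCoh(Y) \) and then apply Yoneda. Working with mapping anima suffices, since \( \QCoh(Y) \) is stable.

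The computation runs as follows, using only adjunctions and the duality isomorphisms:
\begin{align*}
\Hom_{Y}(\mcal{H},f_{*}\mF\otimes\mcal{G})&\cong\Hom_{Y}(\mcal{H}\otimes\mcal{G}^{\vee},f_{*}\mF)\cong\Hom_{X}(f^{*}\mcal{H}\otimes f^{*}\mcal{G}^{\vee},\mF)\\
&\cong\Hom_{X}(f^{*}\mcal{H},\mF\otimes f^{*}\mcal{G})\cong\Hom_{Y}(\mcal{H},f_{*}(\mF\otimes f^{*}\mcal{G})).
\end{align*}
The only thing to check is that this composite agrees with postcomposition by the natural map above. That is a formal compatibility between the counits of \( f^{*}\dashv f_{*} \) and the coevaluation/evaluation of the duality, and it follows from the triangle identities together with the monoidality of \( f^{*} \).

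I do not expect a real obstacle. The only subtlety is that \( \QCoh(X) \) is defined as a limit over \( ! \)-pullbacks, so one should confirm that \( f^{*} \) is symmetric monoidal and left adjoint to \( f_{*} \) in this generality. This holds by the six-functor formalism recalled in Definition \ref{dfnAnStkOverC}, via the description \( \QCoh(X)\cong\varprojlim^{*}D(A) \). Once that is in place, the proof above is purely formal and does not need the perfect complex to be a vector bundle or any descent input.
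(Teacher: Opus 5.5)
Your proof is correct and follows the paper's argument: the natural map is adjoint to the counit \(f^{*}f_{*}\mF\otimes f^{*}\mcal{G}\to \mF\otimes f^{*}\mcal{G}\), and the equivalence comes from the same chain of \((f^{*},f_{*})\)-adjunctions and duality isomorphisms for \(\mcal{G}\), followed by Yoneda. You are also slightly more careful than the paper, since you note that the composite isomorphism has to be identified with postcomposition by the natural map, a point the paper leaves implicit.
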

\begin{proof}
The natural morphism is induced by \[
f^{*}(f_{*}(\mcal{F})\otimes \mcal{G})\to 
\mF\otimes f^{*}(\mcal{G}).
\] Moreover, for any \( M\in \QCoh(Y) \),
\begin{align*}
\Hom(M,f_{*}(\mF\otimes f^{*}\mcal{G}))
&\cong \Hom(f^{*}M,\mF\otimes f^{*}\mcal{G})
\cong \Hom(f^{*}M\otimes f^{*}\mcal{G}^{\vee},\mF)\\
&\cong \Hom(M\otimes \mcal{G}^{\vee},f_{*}\mF)
\cong \Hom(M,\mcal{G}^{\vee}\otimes f_{*}\mF).
\end{align*}
Thus by Yoneda Lemma, \( f_{*}(\mcal{F})\otimes \mcal{G}\cong
f_{*}(\mcal{F}\otimes f^{*}\mcal{G}) \). 
\end{proof}
\begin{notation}\label{dfnFnaturalFunctor}
For any cohomologically smooth map \(h\), we write \(h_{\natural}(-)\cong h_{!}(-\otimes h^{!}1)\)
for the left adjoint of \(h^{*}\). 
\end{notation}
\begin{lem}\label{lemSmBaseChange}
Assume that we have a Cartesian diagram (\ref{equationPullBack}).

(1) (Smooth base change) If \(g\) is cohomologically smooth, then
the natural morphisms \(g^{*}f_{*}\to f'_{*}g^{\prime,*}\), \( g'_{\natural}f^{\prime,*}\to f^{*}g_{\natural} \), and \( g_{\natural}(\mF\otimes g^{*}(\mcal{G}))\to g_{\natural}(\mF)\otimes \mcal{G} \) 
are isomorphisms. 

(2) (Proper base change) If \( f \) is prim, then the natural morphisms \( g^{*}f_{*}\to f'_{*}g^{\prime,*} \) and \( f_{*}(\mF)\otimes \mcal{G}\to f_{*}(\mF\otimes f^{*}(\mcal{G})) \)
 are isomorphisms.  
If in addition \( g \) is \( ! \)-able, then the natural morphism \( g_{!}f'_{*}\to f_{*}g'_{!} \) is an isomorphism.
\end{lem}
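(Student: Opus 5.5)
We reduce everything to the formal properties of the six-functor formalism and to statements already established. Throughout we use the adjunctions \(g^{*}\dashv g_{*}\), \(g_{!}\dashv g^{!}\), and, for cohomologically smooth \(g\), the left adjoint \(g_{\natural}=g_{!}(-\otimes g^{!}1)\) of \(g^{*}\) from Notation \ref{dfnFnaturalFunctor}.

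\textbf{Part (1).} First we show that \(g^{!}\cong g^{*}\otimes g^{!}1\) for cohomologically smooth \(g\). This holds because \(\mO\) is \(g\)-suave: Definition \ref{dfnVariousPropertyMorphiDescent} (3) with \(A=\mO\) gives \(g^{*}\mbb{D}_{g}(\mO)\otimes g^{*}(-)\cong g^{!}(-)\), and \(\mbb{D}_{g}(\mO)\) is invertible. Since suaveness and invertibility of the dualizing object are stable under base change, the same formula holds for \(g'\), with \(g'^{!}1\cong f'^{*}g^{!}1\).

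Next we prove \(g'_{\natural}f'^{*}\cong f^{*}g_{\natural}\). Writing \(g_{\natural}=g_{!}(-\otimes g^{!}1)\), this follows from proper base change for shrieks, \(f^{*}g_{!}\cong g'_{!}f'^{*}\), which is part of the six-functor formalism of Definition \ref{dfnAnStkOverC}, together with \(f'^{*}(g^{!}1)\cong g'^{!}1\).

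The projection formula \(g_{\natural}(\mF\otimes g^{*}\mcal{G})\cong g_{\natural}\mF\otimes\mcal{G}\) follows in the same way from the \(!\)-projection formula \(g_{!}(\mF\otimes g^{*}\mcal{G})\cong g_{!}\mF\otimes\mcal{G}\).

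Finally, \(g^{*}f_{*}\to f'_{*}g'^{*}\) is obtained by passing to right adjoints. Its left adjoint is the map \(g'_{\natural}f'^{*}\to f^{*}g_{\natural}\) just shown to be an isomorphism, since \(g^{*}f_{*}\) is right adjoint to \(f^{*}g_{\natural}\) and \(f'_{*}g'^{*}\) is right adjoint to \(g'_{\natural}f'^{*}\). One should check that the mates coincide with the canonical base change maps. This is the standard compatibility of mates, and it is the only bookkeeping point in this part.

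\textbf{Part (2).} Let \(f\) be prim. Definition \ref{dfnVariousPropertyMorphiDescent} (5) with \(A=\mO_{X}\) gives, for every base change, a natural isomorphism
\[
f'_{*}(-)\cong f'_{!}\bigl(g'^{*}\mbb{P}_{f}(\mO_{X})\otimes -\bigr).
\]
Therefore \(f_{*}\) is expressed through \(f_{!}\) twisted by an object pulled back from \(X\).

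Base change for \(f_{*}\) then follows from base change for \(f_{!}\): we have \(g^{*}f_{!}\cong f'_{!}g'^{*}\), and \(g'^{*}\) commutes with tensor products. Likewise, the projection formula for \(f_{*}\) follows from the \(!\)-projection formula for \(f_{!}\). We must again verify that these isomorphisms agree with the canonical maps. Where needed, this is done using the equivalent characterization of primness as the statement that \(\mO_{X}\in\Hom_{\mrm{LZ}}(Y,X)\) is a left adjoint.

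Now assume in addition that \(g\) is \(!\)-able. We compose the isomorphism \(f'_{*}\cong f'_{!}(P'\otimes-)\) with \(g_{!}\), where \(P'=g'^{*}\mbb{P}_{f}(\mO_{X})\). This gives
\[
g_{!}f'_{*}\cong g_{!}f'_{!}(P'\otimes-)\cong f_{!}g'_{!}(g'^{*}P\otimes -)\cong f_{!}(P\otimes g'_{!}-)\cong f_{*}g'_{!},
\]
where \(P=\mbb{P}_{f}(\mO_{X})\) and the third isomorphism is the \(!\)-projection formula for \(g'\).

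The main obstacle is the compatibility of all these abstract isomorphisms with the canonical natural transformations. I would handle it by working in the \((\infty,2)\)-category \(\mrm{LZ}\) of Lu–Zheng correspondences. There, both base change statements are instances of the general fact, cf.\ Scholze's six-functor notes Propositions 6.6 and 6.9, that adjoints of correspondences are compatible with base change.
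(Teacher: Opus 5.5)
Your proposal is correct and essentially reproduces the paper's proof. Part (2) is the same chain of isomorphisms through \(f'_{*}\cong f'_{!}(g^{\prime,*}\mbb{P}_{f}(\mO_{X})\otimes -)\), together with base change and the projection formula for \(f_{!}\). In part (1) you only run the paper's argument in the adjoint direction. The paper proves \(g^{*}f_{*}\cong f'_{*}g^{\prime,*}\) from \(g^{!}f_{*}\cong f'_{*}g^{\prime,!}\) and \(g^{!}\cong g^{*}\otimes g^{!}1\), then takes left adjoints to get the \(\natural\)-statement. You instead prove the \(\natural\)-statement from \(f^{*}g_{!}\cong g'_{!}f^{\prime,*}\) and pass to right adjoints, which spares you Lemma \ref{lemProjectionFormuaForPerf} for moving \((g^{!}1)^{-1}\) through \(f'_{*}\).
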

\begin{proof}
For (1),
the natural morphism
is induced by \(f_{*}\to g_{*}f_{*}'g^{\prime,*}\cong f_{*}g'_{*}g^{\prime,*}.\)
Now note that 
\begin{align*}
f_{*}'g^{\prime,*}(\mF)&\cong f_{*}'(g^{\prime,!}(\mF)\otimes (g^{\prime,!}1)^{-1})
\cong f_{*}'(g^{\prime,!}(\mF)\otimes (f^{\prime,*}g^{!}1)^{-1})\\
&\cong
f_{*}'g^{\prime,!}(\mF)\otimes (g^{!}1)^{-1}
\cong
g^{!}f_{*}(\mF)\otimes (g^{!}1)^{-1}
\cong 
g^{*}f_{*}(\mF),
\end{align*}
where the third isomorphism is given by Lemma \ref{lemProjectionFormuaForPerf}.
The second isomorphism is given by taking left adjoint. 
The last isomorphism is given by projection formula.

For (2), we have 
\begin{align*}
&g^{*}f_{*}(\mF)
\cong g^{*}f_{!}(\mF\otimes \mbb{P}_{f}(\mO_{X}))
\cong f'_{!}g^{\prime,*}(\mF\otimes \mbb{P}_{f}(\mO_{X}))\\
&\cong f'_{!}(g^{\prime,*}(\mF)\otimes g^{\prime,*}(\mbb{P}_{f}(\mO_{X})))
\cong f'_{!}(g^{\prime,*}(\mF)\otimes \mbb{P}_{f'}(\mO_{X}))
\cong f'_{*}g^{\prime,*}(\mF).
\end{align*}
Moreover, 
\begin{align*}
f_{*}(\mF)\otimes \mcal{G}
\cong f_{!}(\mF\otimes \mbb{P}_{f}(\mO_{X}))
\otimes \mcal{G}
\cong f_{!}(\mF\otimes \mbb{P}_{f}(\mO_{X})\otimes f^{*}(\mcal{G}))
\cong f_{*}(\mF\otimes f^{*}(\mcal{G})).
\end{align*}
If \( g \) is \( ! \)-able, we have \[
g_{!}f'_{*}\cong g_{!}f'_{!}(-\otimes \mbb{P}_{f'}(\mO_{X'}))
\cong f_{!}g'_{!}(-\otimes (g')^{*}\mbb{P}_{f}(\mO_{X}))
\cong f_{!}(g'_{!}(-)\otimes \mbb{P}_{f}(\mO_{X}))
\cong f_{*}g'_{!}.
\]
\end{proof}
\begin{lem}[K\"unneth formula]
	\label{lemKunnethProper}
Let \( Z \) be an analytic stack over \( \mscr{C} \). 
Consider 
\( f:X\to X' \) and \( g:Y\to Y' \) in \( \mrm{AnStk}(\mscr{C})_{/Z} \). Let \( \mF\in \QCoh(X) \)
and \( \mcal{G}\in \QCoh(Y) \). Then \[
(f\times g)_{!}(\mF\boxtimes \mcal{G})\cong f_{!}(\mF)\boxtimes g_{!}(\mcal{G})\in\QCoh(X'\times_{Z}Y').
\]

If \( f \)
and \( g \) are prim, then \[
(f\times g)_{*}(\mF\boxtimes \mcal{G})\cong f_{*}(\mF)\boxtimes g_{*}(\mcal{G})\in\QCoh(X'\times_{Z}Y').
\]
\end{lem}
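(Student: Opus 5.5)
The plan is to factor \( f\times g \) as a composite of two base changes,
\[
f\times g=(\id_{X'}\times g)\circ(f\times \id_{Y}):X\times_{Z}Y\to X'\times_{Z}Y\to X'\times_{Z}Y',
\]
and to treat each factor as a pull-back of \( f \) (resp. \( g \)) along a projection. Write \( p_{1},p_{2} \) for the projections on the various fibre products over \( Z \). By definition \( \mF\boxtimes\mcal{G}=p_{1}^{*}\mF\otimes p_{2}^{*}\mcal{G} \). The square
\[
\begin{tikzcd}
X\times_{Z}Y\arrow[r,"f\times\id"]\arrow[d,"p_{1}"] & X'\times_{Z}Y\arrow[d,"p_{1}"]\\
X\arrow[r,"f"] & X'
\end{tikzcd}
\]
is Cartesian, and the projection \( p_{2}:X\times_{Z}Y\to Y \) factors through \( X'\times_{Z}Y \).

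For the \( ! \)-statement, I would first apply the projection formula for \( (f\times\id)_{!} \), which holds in any six-functor formalism:
\[
(f\times\id)_{!}\bigl(p_{1}^{*}\mF\otimes (f\times\id)^{*}p_{2}^{*}\mcal{G}\bigr)\cong (f\times\id)_{!}(p_{1}^{*}\mF)\otimes p_{2}^{*}\mcal{G}.
\]
Proper base change for \( ! \)-pushforward, which is again part of the six-functor formalism of Definition \ref{dfnAnStkOverC}, then identifies \( (f\times\id)_{!}p_{1}^{*}\mF \) with \( p_{1}^{*}f_{!}\mF \). This gives \( f_{!}\mF\boxtimes\mcal{G} \) on \( X'\times_{Z}Y \). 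Running the same argument for \( \id\times g \) and composing gives the first claim. One point needs care: \( f\times\id \) must be \( ! \)-able. This holds because it is a base change of the \( ! \)-able map \( f \), and \( ! \)-able maps are stable under base change. Here \( f,g \) are implicitly \( ! \)-able, since \( f_{!} \) and \( g_{!} \) appear in the statement.

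For the \( * \)-statement with \( f,g \) prim, I would run the identical argument with Lemma \ref{lemSmBaseChange}(2) in place of the six-functor axioms. That lemma supplies both needed inputs for a prim map \( f \): the base change \( p_{1}^{*}f_{*}\cong (f\times\id)_{*}p_{1}^{*} \), and the projection formula \( f_{*}(\mF)\otimes\mcal{G}\cong f_{*}(\mF\otimes f^{*}\mcal{G}) \) for arbitrary \( \mcal{G} \). The projection formula must be applied to the base-changed map \( f\times\id \), not to \( f \) itself. So the main obstacle is to check that primness is stable under base change. I would prove this directly from Definition \ref{dfnVariousPropertyMorphiDescent}(5). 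The criterion there, that \( f'_{!}(g^{\prime,*}\mbb{P}_{f}(\mO_{X})\otimes -)\to f'_{*}(-) \) is an equivalence for every pull-back, is stated for all base changes simultaneously. Hence it transfers to \( f' \), with \( \mbb{P}_{f'}(\mO)\cong g^{\prime,*}\mbb{P}_{f}(\mO) \) as already used in the proof of Lemma \ref{lemSmBaseChange}(2).

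With \( f\times\id \) and \( \id\times g \) both prim, the computation becomes
\[
(f\times g)_{*}(\mF\boxtimes\mcal{G})\cong(\id\times g)_{*}\bigl(p_{1}^{*}f_{*}\mF\otimes p_{2}^{*}\mcal{G}\bigr)\cong f_{*}\mF\boxtimes g_{*}\mcal{G}.
\]
Finally, I would check that the composite isomorphism agrees with the natural Künneth map. This is routine bookkeeping of adjunction units.
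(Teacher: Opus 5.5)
Your proposal is correct and follows essentially the same route as the paper: factor \( f\times g \) through \( f\times\id \) and \( \id\times g \), then combine the projection formula with base change, using Lemma \ref{lemSmBaseChange}(2) in the prim case. Your explicit check that primness (and \( ! \)-ability) is stable under base change, needed to apply the projection formula to \( f\times\id \), fills in a step the paper leaves implicit.
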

\begin{proof}
We can reduce to the case when \( f=\id:X\to X'=X \)
or \( g=\id \). Without loss of generality, let us assume that \( g=\id \). Then \[
(f\times 1)_{!}(\mF\boxtimes \mcal{G})
\cong 
(f\times 1)_{!}(p_{1}^{*}\mF\otimes (f\times 1)^{*}p_{2}^{*}\mcal{G})
\cong (f\times 1)_{!}p_{1}^{*}(\mF)\otimes p_{2}^{*}\mcal{G}
\cong p_{1}^{*}f_{!}(\mF)\otimes p_{2}^{*}\mcal{G},
\] where the second isomorphism is given by projection formula, and the last isomorphism is given by proper base change.
The proof of the other isomorphism is similar, using Lemma \ref{lemSmBaseChange} (2).
\end{proof}

We will also need the following notion:
\begin{dfn}\label{dfnFredholm}
An analytic ring \( A=(A^{\rhd},D_{\ge0}(A)) \) 
is \emph{Fredholm} if all perfect complexes over \( A \) (i.e. dualizable objects in \( D(A) \))
arises from (discrete) perfect complexes over \( A^{\rhd}(*) \). 
\end{dfn}
As an example:
\begin{thm}\label{thmSolidTateIsFredholm}
If \( A \) is a solid analytic ring, such that \( A^{\rhd} \) is Tate (i.e. \(  A^{\rhd} \) has a ring of definition  \( A^{\rhd,\circ} \) and a pseudo-uniformizer \( \varpi \) such that \( A^{\rhd,\circ} \) is \( \varpi \)-complete and \( A^{\rhd}=A^{\rhd,\circ}[1/\varpi]  \)),
then \( A \) is Fredholm.
\end{thm}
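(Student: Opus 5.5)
We must show that every dualizable object of $D(A)$, for a solid analytic ring $A$ with $A^{\rhd}$ Tate, arises from a discrete perfect complex over $A^{\rhd}(*)$. The plan is to reduce to the \emph{induced} analytic structure $(A^{\rhd},\ZZ_{\blacksquare})$ and then run a lattice and approximation argument. The key input is the classical result of Clausen--Scholze, also in Andreychev's work on the solid nuclear category, that for a complete Tate Huber pair the dualizable objects of $D((A,A^{+})_{\blacksquare})$ are exactly the perfect complexes over $A(*)$.

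\textbf{Step 1 (descent to the ring of definition).} Let $P\in D(A)$ be dualizable. Write $A^{\circ}:=A^{\rhd,\circ}$, and choose a compact generator $Q$ of the form $A\otimes_{\ZZ_{\blacksquare}}\prod_{I}\ZZ$. Dualizability gives $P\otimes -\cong\unl{\Hom}(P^{\vee},-)$, so $P$ is compact in $D(A)$. A solid ring is generated by the compact projectives $A\otimes\prod_I\ZZ$, so $P$ is a retract of a finite complex built from such objects.

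\textbf{Step 2 (showing $P$ is discrete).} The next goal is to show that $P$ is already discrete, i.e.\ that $P\cong A\otimes_{A(*)}P(*)$. The strategy is to use the trace pairing. The identity of $P$ factors through $P\otimes P^{\vee}\otimes P$, and compactness lets us write the coevaluation $1\to P\otimes P^{\vee}$ as a map from $A$ into a finite complex of $A\otimes\prod_I\ZZ$'s. Then:
\begin{itemize}
\item Replace $A$ by $A^{\circ}$ and use $\varpi$-completeness. The image of $1$ in $(A^{\circ}\otimes\prod_I\ZZ)[1/\varpi]\cong\prod_I A^{\circ}[1/\varpi]$ (computed solidly) is bounded.
\item Using the Tate structure, approximate the coevaluation modulo $\varpi^{N}$ by an element with finite support. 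A Banach open-mapping style perturbation argument, available because $\varpi$ is topologically nilpotent, then shows that $\id_P$ factors through a finite free $A$-module up to a perturbation $\id_P+\varpi^N u$.
\item For $N$ large this perturbation is invertible by a geometric series. The series converges because $A^{\circ}$ is $\varpi$-complete and because $u$ is defined over $A^{\circ}$ after rescaling.
\end{itemize}
Consequently $P$ is a retract of a finite complex of finite free $A$-modules, hence comes from a perfect complex over $A(*)$.

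\textbf{Step 3 (general analytic structure).} For a general solid analytic structure $D(A)\subset D((A^{\rhd},\ZZ_{\blacksquare}))$, dualizable objects of $D(A)$ need not be dualizable in the induced structure. Instead, run the Step 2 argument directly inside $D(A)$; it only used compact generation by $A\otimes\prod_I\ZZ$ and the completeness of $A^{\circ}$.

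\textbf{Main obstacle.} The hardest part is the perturbation in Step 2: controlling the coevaluation inside $(\prod_I A^{\circ})[1/\varpi]$ well enough to truncate to finitely many coordinates. The first attempt will be to reduce modulo $\varpi$, where the argument lives over the discrete ring $A^{\circ}/\varpi$. There, solid compact objects are retracts of finite complexes of $\prod_I$-modules, and dualizability forces finiteness, by the known fact that $\prod_I\ZZ$ is not dualizable unless $I$ is finite. One then lifts by completeness, in the manner of the standard derived Nakayama argument.
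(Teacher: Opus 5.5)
\textbf{Relation to the paper.} The paper proves this by a one-line citation of Andreychev's Theorem~5.50, which is exactly the ``key input'' you quote. Relying on it finishes the proof, after matching the solid structure with $(A^{\rhd},A^+)_\square$ for a Tate Huber pair. Steps~1--3 are then superfluous.

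\textbf{The gap in Step~2.} Read as a self-contained argument, Steps~1--3 fail at the decisive point. The approximation in Step~2 is false. An element of $\prod_I A^{\circ}$ such as $(1)_{i\in I}$ is not congruent modulo $\varpi$, let alone modulo $\varpi^N$, to a finitely supported element. Compactness of $P$ and $P^\vee$ only ever places the coevaluation in (finite complexes of) such products.

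\textbf{The missing idea.} You need to use compactness of $P^\vee$ on the dual side. Then $P\simeq\unl{\Hom}(P^\vee,A)$ is a retract of a finite complex $H$ of objects
\[
\unl{\Hom}_A(A[T],A)=C(T,A)\cong\widehat{\bigoplus}_J A^{\circ}[1/\varpi].
\]
This holds for an honest Tate Huber ring, whatever the solid structure. Hence $\id_P$ factors through a map $\psi\colon F\to H$, where $F$ is a finite complex of compact projectives $A[S_i]$. Maps $A[S]\to C(T,A)$ are elements of $C(S\times T,A)$. After scaling, these are locally constant modulo $\varpi^N$, so they factor through finite free modules. This ``compact operator'' property, not any finiteness inside $\prod_I A^{\circ}$, is what a Riesz--Fredholm perturbation argument can use.

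\textbf{The geometric-series step.} Even granting the above, this step is unjustified. To invert $\id_P+\varpi^N u$ you need $u$ to lie in a $\varpi$-adically complete subring of $\pi_0\End(P)$, i.e.\ you need some integral model of $P$. Three things stand in the way:
\begin{itemize}
\item the maps exhibiting $P$ as a retract of $F$ and of $H$ exist only over $A$;
\item the error $u$ depends on $N$, so making it integral by rescaling may cost as many powers of $\varpi$ as you gained;
\item the finite-rank approximation of $\psi$ must be kept compatible with the differentials.
\end{itemize}
Supplying this integral control is the real content of the cited theorem.

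\textbf{Remaining issues.} Your mod-$\varpi$ fallback presupposes the same lattice. Also, non-dualizability of $\prod_I\ZZ$ for infinite $I$ says nothing about retracts of finite complexes of such objects. Over a discrete ring $R$ the correct argument is that $\unl{\Hom}(R[S],R)\cong\bigoplus_I R$ is discrete, so $P^\vee$ is discrete and compact, hence perfect. Finally, $A^{\circ}\otimes_{\ZZ_\square}\prod_I\ZZ\cong\prod_I A^{\circ}$ fails in general. For $A^\circ=\ZZ_p\langle T\rangle$ with the induced structure, the two sides already differ modulo $p$: $\bigoplus_n\prod_I\mathbb{F}_pT^n$ versus $\prod_I\mathbb{F}_p[T]$. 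Since the compact generators $A[S]$ depend on the solid structure, Step~3 cannot simply be ``run inside $D(A)$''.
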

\begin{proof}
This is \cite[Theorem 5.50]{Andreychev21pseudocoherent}.
\end{proof}
\begin{lem}\label{lemFredholmVectorBundle}
If \( A \) is Fredholm, then any vector bundle in \( \QCoh(\AnSpec(A))=D(A) \) (Definition \ref{dfnPerfectComplex} (2)) arises from a finite projective module over \( A^{\rhd}(*) \)
\end{lem}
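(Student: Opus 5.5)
Let \( \mF \) be a vector bundle of rank \( d \) in \( D(A) \), witnessed by a \( ! \)-surjection \( f:Y\to\AnSpec(A) \) with \( f^{*}\mF\cong\mO_{Y}^{\oplus d} \). The plan is to first use the Fredholm hypothesis to reduce to an algebraic perfect complex, and then use the local triviality to show that this complex has Tor-amplitude in degree \( 0 \).

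\textbf{Step 1: reduction to a discrete perfect complex.} By Definition \ref{dfnPerfectComplex} (2) and Theorem \ref{thmVariousDescent} (6), \( \mF \) is dualizable in \( D(A) \). By Definition \ref{dfnFredholm}, \( \mF\cong (A^{\rhd}\otimes_{A^{\rhd}(*)}P)^{\wedge}_{A} \) for some discrete perfect complex \( P \) over \( R:=A^{\rhd}(*) \). It therefore suffices to show that \( P \) is concentrated in degree \( 0 \), since then it is finite projective.

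\textbf{Step 2: fibers at residue fields.} A perfect complex \( P \) over a ring \( R \) has Tor-amplitude in \( [0,0] \) if and only if, for every point \( x\in\Spec R \), the complex \( P\otimes_{R}^{L}k(x) \) is concentrated in degree \( 0 \). Standard commutative algebra reduces this to checking at every prime.

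\textbf{Step 3: transport of the local triviality to each fiber.} This is the step I expect to be the main obstacle. The residue field \( k(x) \) is not obviously an object of \( \mscr{C} \). I would instead argue through the functor \( \mF\mapsto \mF\otimes(-) \) and the \( t \)-structure. The plan is to show that \( \mF \) is flat in the sense that \( \mF\otimes- \) is \( t \)-exact on \( D(A) \). This should follow because \( f^{*} \) is conservative and compatible with \( t \)-structures up to descent, and \( f^{*}\mF \) is free.

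More concretely, I would use the dual: the Homs \( \unl{\Hom}(\mF,-) \) and \( \mF\otimes- \) commute with pullback along \( f \) and along its Čech nerve, by Lemma \ref{lemProjectionFormuaForPerf}. On each term of the nerve these functors are given by a finite direct sum of copies of the identity, hence preserve connective and coconnective objects. Since \( D(A) \) is the limit over the nerve, the \( t \)-structure issue has to be handled carefully. Here I would use \( * \)-descent for the connective part, because \( f^{*} \) is right \( t \)-exact.

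Given this, \( \mF\cong P\otimes A \) has \( \pi_{i}\mF=0 \) for \( i\ne0 \). One also has \( \pi_{0}(\mF\otimes M)\cong\pi_{0}\mF\otimes\pi_{0}M \). Evaluating at \( * \), Tor-amplitude of \( P \) can be read off from \( P\otimes_{R}M(*) \) for discrete \( R \)-modules \( M \), viewed as condensed \( A^{\rhd} \)-modules and completed. Since \( A \) is an analytic ring, \( A^{\wedge}_{A}=A \), so for perfect \( P \) one gets \( (P\otimes_{R}A^{\rhd})(*)=P\otimes_{R}R=P \), using finiteness. Flatness of \( \mF \) thus gives that \( P=\mF(*) \) is flat of finite presentation, i.e. finite projective.
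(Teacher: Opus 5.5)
Your Step~3 does not work. To get $t$-exactness of $\mF\otimes-$, or even just connectivity of $\mF$, on $D(A)$, the pullbacks along the \v{C}ech nerve of $f$ would have to \emph{detect} connectivity. Right $t$-exactness of $f^{*}$ gives only the opposite implication: $X$ connective $\Rightarrow$ $f^{*}X$ connective. And $f^{*}$ is not left $t$-exact, so the coconnective half of your claim fails even in that direction. $*$-descent does not repair this. The formula $\mF\cong\varprojlim_{[n]\in\Delta}f_{n,*}f_{n}^{*}\mF$ is a totalization, and a totalization of connective (even static) objects is in general not connective; the cosimplicial cochains computing group cohomology are an example. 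Since $!$-covers need not be flat, nothing in the formalism makes $f^{*}$ detect the $t$-structure of $D(A)$ for arbitrary objects. Note that your Step~3 never uses that $P$ is perfect, and without that input this descent of connectivity is not available.

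Your final step also has a loose end. For a discrete $R$-module $N$, the object $A^{\rhd}\otimes_{R}N$ need not be static, so $t$-exactness on $D(A)$ would not immediately give flatness of $P$. It is cleaner to aim, as the paper does, for ``$P$ and $P^{\vee}$ are connective''. For a perfect complex this is equivalent to being finite projective, and $\mF^{\vee}$ is again a vector bundle, coming from $P^{\vee}$.

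The missing idea is that the top cohomology of a perfect complex commutes with base change. This is also how to carry out your Step~2 without residue fields.
\begin{itemize}
\item Suppose $m>0$ is maximal with $N:=H^{m}(P)\neq0$. Then $N$ is a finitely presented $\pi_{0}R$-module.
\item For every $g:\AnSpec(B)\to Y$, the pullback of $\mF$ is $B^{\rhd}\otimes_{R}P$, which is free. Its $H^{m}$ is $\pi_{0}B^{\rhd}\otimes_{\pi_{0}R}N$, so this vanishes, and evaluating at $*$ gives $\pi_{0}B^{\rhd}(*)\otimes_{\pi_{0}R}N=0$.
\item Put $T:=\pi_{0}R/\mathrm{Ann}(N)$, which is nonzero. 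Since $N$ is finitely generated, a support argument gives $T\otimes_{\pi_{0}R}\pi_{0}B^{\rhd}(*)=0$. Hence the connective $\mbb{E}_{\infty}$-ring $B^{\rhd}\otimes_{R}T$ has vanishing $\pi_{0}$ and is zero.
\item As this holds for every such $B$, $f^{*}(A^{\rhd}\otimes_{R}T)=0$. But $(A^{\rhd}\otimes_{R}T)(*)=T\neq0$, contradicting conservativity of $f^{*}$.
\end{itemize}
So $P$ is connective. The same argument applied to $\mF^{\vee}$ shows $P^{\vee}$ is connective, and therefore $P$ is finite projective.

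The paper's proof passes from the descent formula to ``$M^{\delta}$ is connective'' in one line. Written out, that step needs exactly this perfectness input as well.
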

\begin{proof}
Assume that \( M \) arises from a perfect complex \( M^{\delta}\in D(A(*)) \), such that there is a \( ! \)-surjection \( \AnSpec(B)\to \AnSpec(A) \), \( M\otimes_{A}B\cong B^{\oplus d} \), 
then by \( * \)-descent (Theorem \ref{thmVariousDescent} (6)),  \( M\cong \varprojlim_{n}f_{n,*}f_{n}^{*}(M) \) for \( f_{n}:\AnSpec(B^{\otimes_{A}(n+1)})\to \AnSpec(A) \), and 
\( f_{n}^{*}(M)\cong (B^{\otimes_{A}(n+1)})^{\oplus d} \). 
Thus \( M^{\delta} \)     
is connective. Applying the same argument to \( M^{\vee} \), 
 \( (M^{\delta})^{\vee} \) is also connective. These implies that \( M^{\delta} \) is a finite projective module, as desired.
\end{proof}

\subsection{Solid and Tate stacks}\label{subsecSolidTateStack}
We will mainly work over the following classes of analytic rings. Restricting to these classes allows us to define algebraic and analytic de Rham stacks as in \cite{Juan2024analyticdeRham}.
\begin{dfn}[{\cite[Definition 2.6.1, 2.6.6, 2.6.10]{Juan2024analyticdeRham}}]\label{dfnBoundedRing}---

	(1)
	For any analytic ring \(A\) over \( \ZZ_{\square} \),
	define \(A^{+}:=\underline{\mrm{Map}}(\ZZ[T]_{\square},A)\),
	where 
	\( \ZZ[T]_{\square} \) 
	is the analytic ring defined in \cite[Example 7.3]{CS19}, and 
	the \(\underline{\mrm{Map}}\)
	is the internal Hom space of analytic rings, which has an inherited animated ring structure.

	(2) An analytic ring \( A=(A^{\rhd},D_{\ge0}(A)) \) over \( \ZZ_{\square} \)  is \emph{solid} if  
	natural map \( (A^{\rhd},\pi_{0}(A^{+}))_{\square}\to A \) is an equivalence of analytic rings. In other words, \( \forall M\in D_{\ge0}(A^{\rhd}) \), 
	\( M\in D_{\ge0}(A) \) 
	if and only if \( \forall f\in \pi_{0}(A^{+})(*) \) inducing \( \ZZ[T]\to A^{\rhd} \),  
	\( M|_{\ZZ[T]}\in D_{\ge0}(\ZZ[T]_{\square}) \).

	(3) 
	An {animated condensed ring} \(A^{\rhd}\) over \( \QQ_{p} \)  is \emph{bounded}  if \(A^{\rhd}\cong A^{\rhd,\circ}[1/p]\)
	with \(A^{\rhd,\circ}(S):=\mrm{Map}(\ZZ_{p}\langle \NN[S]\rangle_{\square},A)\) for \(S\) extremely disconnected. 
	
	(4)
	An analytic ring \(A=(A^{\rhd},D_{\ge0}(A))\) over \((\QQ_{p},\ZZ_{p})_{\square}\)
	is \emph{bounded} if \( A \) is solid and \( A^{\rhd} \) is bounded.

	(5) We fix an uncountable cutoff cardinal \( \kappa \)  (\cite[Definition 2.9.11]{Mann2022padic6functorformalismrigidanalytic}). A solid analytic ring \( A \) is \emph{\(\kappa\)-small} 
	if the underlying condensed ring \( A^{\rhd} \) 
	is \( \kappa \)-small. 
\end{dfn}
\begin{dfn}[Solid stacks, {\cite[Definition 3.2.7]{Juan2024analyticdeRham}}]\label{dfnSolidStk}
Let \(\mscr{C}:=\mrm{AnRing}_{\ZZ_{\square},\kappa}\) be
	the category of \( \kappa \)-small solid analytic rings  over \(\ZZ_{\square}\). A (\( \kappa \)-small) \emph{solid stack} is defined to be an analytic stack over \(\mrm{AnRing}_{\ZZ_{\square},\kappa}\) (Definition \ref{dfnAnStkOverC}), and we denote \(\mrm{SolidStk}_{\kappa}:=\mrm{AnStk}(\mrm{AnRing}_{\ZZ_{\square},\kappa})\). We will omit the subscript \( \kappa \) from now on. 

	For \( A\in \mrm{AnRing}_{\ZZ_{\square}} \), let \( \AnSpec(A) \) be the sheaf co-represented by \( A \) by Yoneda Lemma. 
\end{dfn}
\begin{dfn}[Tate stacks, {\cite[Definition 3.2.10]{Juan2024analyticdeRham}}]\label{dfnTateStk}
Let \(\mscr{C}:=\mrm{AnRing}^{b}_{\QQ_{p},\kappa}\) be
	the category of \( \kappa \)-small bounded analytic rings over \(\QQ_{p}\). A (\( \kappa \)-small) \emph{Tate stack} over \(\QQ_{p}\) is defined to be an analytic stack over \(\mrm{AnRing}^{b}_{\QQ_{p},\kappa}\) (Definition \ref{dfnAnStkOverC}), and we denote \(\mrm{TateStk}_{\QQ_{p}}:=\mrm{AnStk}(\mrm{AnRing}^{b,op}_{\QQ_{p},\kappa})\).
	We will omit the subscript \( \kappa \) from now on. 

	For \( A\in \mrm{AnRing}_{\QQ_{p}}^{b} \), let \( \AnSpec^{b}(A) \) be the sheaf co-represented by \( A \) by Yoneda Lemma. 
\end{dfn}

\begin{lem}\label{lemSolidToTateAnalyticficationFunctor}
(1)
There is a morphism of \( \infty \)-topoi whose right adjoint is given by \[(-)^{Tate}:\mrm{SolidStk}\to \mrm{TateStk}_{\QQ_{p}},\;X\mapsto X|_{\mrm{AnRing}^{b}_{\QQ_{p}}},
\] and whose left exact left adjoint is given by \[
(-)_{Solid}:\mrm{TateStk}_{\QQ_{p}}\to \mrm{SolidStk},\;Y\mapsto \varinjlim_{\AnSpec^{b}(A)\to Y} \AnSpec(A).
\]

(2)
For \( A\in \mrm{AnRing}_{\QQ_{p}}^{b} \), \[ (\AnSpec(A))^{Tate}\cong\AnSpec^{b}(A), (\AnSpec^{b}(A))_{Solid}\cong \AnSpec(A). \]

(3) For any \( X\in\mrm{TateStk}_{\QQ_{p}} \),  \(\QCoh((X_{Solid})^{Tate}) \cong \QCoh(X_{Solid})\cong\QCoh(X) \), 
which is compatible with six functor formalisms.
\end{lem}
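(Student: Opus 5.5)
The plan is to derive everything from the general theory of morphisms of sites of analytic rings. Let \(\iota:\mrm{AnRing}^{b}_{\QQ_{p}}\to\mrm{AnRing}_{\ZZ_{\square}}\) be the inclusion; bounded analytic rings are in particular solid analytic rings over \(\ZZ_{\square}\). I expect the main work to be in parts (1) and (3). Part (2) should be essentially formal.

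\textbf{Part (1).} I would first check the hypotheses needed for restriction to preserve sheaves.

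First, \(\iota\) preserves push-outs and finite products. A push-out of bounded rings over \(\QQ_{p}\) computed in solid rings stays over \(\QQ_{p}\) and stays bounded, since \((-)^{\circ}\) is compatible with solid tensor products after inverting \(p\).

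Second, \(\iota\) preserves \(!\)-able maps, and \(D(A)\) does not depend on the ambient category. This makes the restriction of \(\QCoh\) along \(\iota\) agree with \(\QCoh\) computed in either setting.

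Granting these two points, \(\iota\) sends \(!\)-equivalences between \(\omega_{1}\)-compact presheaves on \(\mrm{AnRing}^{b}_{\QQ_{p}}\) to \(!\)-equivalences of solid presheaves. Here we use left Kan extension along \(\iota\) on presheaves, which preserves \(\omega_{1}\)-compactness and commutes with iterated diagonals, because \(\iota\) preserves finite limits in the opposite category. Hence restriction \(X\mapsto X|_{\mrm{AnRing}^{b}}\) sends solid stacks to Tate stacks.

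The left adjoint is then the left Kan extension followed by \(!\)-sheafification, which is the formula given in the statement. It is left exact because:
\begin{itemize}
\item left Kan extension along a functor preserving finite limits of the indexing category is left exact (the comma categories are cofiltered);
\item sheafification commutes with finite limits (Definition \ref{dfnAnStkOverC}).
\end{itemize}
This gives the geometric morphism.

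\textbf{Part (2).} This follows from Yoneda together with the fact that \(\AnSpec(A)\) is already a sheaf. The equality \(\AnSpec(A)^{Tate}=\AnSpec^{b}(A)\) holds by definition. The equality \((\AnSpec^{b}(A))_{Solid}=\AnSpec(A)\) holds because the colimit defining \((-)_{Solid}\) has a final object, namely \(\id_{A}\).

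\textbf{Part (3).} I would write \(X=\varinjlim \AnSpec^{b}(A)\) and use that \(\QCoh\) turns colimits of stacks into limits along \(!\)-pullback in both settings. Then
\[
\QCoh(X_{Solid})=\varprojlim D(A)=\QCoh(X).
\]
Here I use that \(\QCoh(\AnSpec^{b}(A))=D(A)=\QCoh(\AnSpec(A))\) and that \(\QCoh\) commutes with sheafification by construction.

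For \((X_{Solid})^{Tate}\), the unit \(X\to (X_{Solid})^{Tate}\) is an equivalence in this case. The reason is that a map \(\AnSpec(B)\to X_{Solid}\) with \(B\) bounded factors locally through some \(\AnSpec(A)\) with \(A\) bounded, and on representables we have \(\iota\) fully faithful.

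Compatibility with the six functors reduces to the affine case. There, \(f^{*}\), \(f_{*}\), \(f_{!}\) and \(f^{!}\) are defined intrinsically from \(D(-)\) (Definition \ref{dfnPartSixFunctor}) and agree in the two settings, and the extension of \(!\)-able maps is by descent via \cite[Theorem 5.19]{Scholze2022sixFunctor}.

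\textbf{Main obstacle.} The hardest step is showing that \(!\)-equivalences and \(!\)-covers are compatible with \(\iota\), and in particular that the unit \(X\to (X_{Solid})^{Tate}\) is an isomorphism after sheafification. To handle it, I would show that a \(!\)-cover of a bounded ring by solid rings can be refined by bounded rings. The route is to pass to \(A\otimes_{\QQ_{p}}(-)\) and take the bounded part \((-)^{\circ}[1/p]\).
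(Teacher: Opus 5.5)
Your parts (1), (2) and the identification \(\QCoh(X_{Solid})\cong\QCoh(X)\) follow the paper's route. That route is left Kan extension along \(\iota\), which commutes with fiber products and with \(\QCoh\) and preserves \(!\)-equivalences. One minor point: \(\iota\) does not send the initial bounded ring \((\QQ_{p},\ZZ_{p})_{\square}\) to \(\ZZ_{\square}\), so the comma categories are empty over solid rings that are not over \(\QQ_{p}\). Your cofilteredness argument therefore only gives compatibility with fiber products, i.e.\ left exactness over \(\AnSpec(\QQ_{p})\), which is also all the paper checks.

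The genuine gap is in (3). There you assert that the unit \(X\to (X_{Solid})^{Tate}\) is an equivalence for every Tate stack \(X\), i.e.\ that \((-)_{Solid}\) is fully faithful.

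Nothing supports this in general, and the paper deliberately avoids it. It treats this as a nontrivial property: Lemma \ref{lemBasicPropertyEsseTate}~(6) proves \(Y\cong (Y_{Solid})^{Tate}\) only when \(Y\) admits a suave \(!\)-cover whose \v{C}ech nerve already has the property. Corollary \ref{corAnDRstackEssenTate} obtains \(X^{\dR/K}\cong((X^{\dR/K})_{Solid})^{Tate}\) only through a prim \(!\)-cover argument. Both results would be vacuous if your claim held.

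Your justification does not close the gap. A \(B\)-point of \(X_{Solid}\) factors through bounded rings only locally for the \emph{solid} topology. So the covering rings \(C\), and the terms \(C\otimes_{B}C,\dots\) of the \v{C}ech nerve on which the gluing data live, may be non-bounded. Replacing \(C\) by its bounded part \(C^{b}:=C^{\circ}[1/p]\) does not give a Tate descent datum, for two reasons:
\begin{itemize}
\item there is no reason for \((C\otimes_{B}C)^{b}\) to agree with \(C^{b}\otimes_{B}C^{b}\);
\item restriction to bounded rings is a right adjoint and has no reason to preserve effective epimorphisms.
\end{itemize}
Moreover, the sheaf condition is with respect to arbitrary \(!\)-equivalences of \(\omega_{1}\)-compact presheaves, not only \v{C}ech covers by representables.

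The statement only asks for an identification of \(\QCoh\)-categories, and that is all the paper argues for, from the formal properties of the left Kan extension. You should argue at that level, or restrict to stacks for which the unit is known to be an isomorphism, rather than going through an equivalence of stacks.
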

\begin{proof}
The proof is the same as that of \cite[Lemma 4.2.7]{AnschützBoscoLeBrasCamargoScholze2025analyticrhamstacksfarguesfontaine}.
We have a full faithful colimit-preserving embedding \( \iota:\mrm{AnRing}^{b}_{\QQ_{p}}\hookrightarrow \mrm{AnRing}_{\ZZ_{\square}} \), 
which induces  \[ \iota_{*}: \mcal{P}(\mrm{AnRing}^{op}_{\ZZ_{\square}}) \to
\mcal{P}(\mrm{AnRing}^{b,op}_{\QQ_{p}}),\;F\mapsto F|_{\mrm{AnRing}^{b}_{\QQ_{p}}}.\]
This functor admits a left adjoint \[
\iota^{*}:\mcal{P}(\mrm{AnRing}^{b,op}_{\QQ_{p}})\to \mcal{P}(\mrm{AnRing}^{op}_{\ZZ_{\square}}),\;
F\mapsto \iota^{*}(F):= \varinjlim_{\AnSpec^{b}(A)\to F}\AnSpec(A).
\] It suffices to show that the functor commutes with fiber products, and \( \QCoh(-) \), and takes \( ! \)-equivalences to \( ! \)-equivalences, which is clear.
\end{proof}
\begin{dfn}\label{dfnEssentialTate}
We say that \( X\in\mrm{SolidStk} \) 
is \emph{essentially Tate} (over \( \QQ_{p} \))
if the natural morphism \( (X^{Tate})_{Solid}\to X \) is an isomorphism. We denote the full subcategory of essentially Tate solid stacks as \( \mrm{SolidStk}^{b}_{\QQ_{p}}\subset \mrm{SolidStk} \). 
\end{dfn}
\begin{lem}\label{lemBasicPropertyEsseTate}
(1) If \( X\in\mrm{SolidStk}^{b}_{\QQ_{p}} \), then \( \QCoh(X)\cong \QCoh(X^{Tate}) \), and \( \Perf(X)\cong \Perf(X^{Tate}) \) which is compatible with six functor formalisms. 

If, in addition,
there is a \( ! \)-surjection \( f:\coprod_{i}\AnSpec(A_{i})\to X \) such that \( A_{i} \)
are bounded and Fredholm, 
 then \( \Vect(X)\cong \Vect(X^{Tate}) \).

(2) The subcategory \( \mrm{SolidStk}^{b}_{\QQ_{p}}\subset \mrm{SolidStk} \) 
is closed under finite limits.

(3) For any \( A\in \mrm{AnRing}^{b}_{\QQ_{p}} \), then \( \AnSpec(A)\in \mrm{SolidStk}^{b}_{\QQ_{p}} \).

(4) \( (-)^{Tate}|_{\mrm{SolidStk}^{b}_{\QQ_{p}}} \) is fully faithful. More generally,
if \( X\in \mrm{SolidStk}^{b}_{\QQ_{p}} \) and \( Y\in \mrm{SolidStk} \),  
then \( \Hom_{\mrm{SolidStk}}(X,Y)\cong \Hom_{\mrm{TateStk}_{\QQ_{p}}}(X^{Tate},Y^{Tate})  \). 

(5) If \( f:X\to Y \) in \( \mrm{SolidStk}^{b}_{\QQ_{p}} \) 
satisfies suave (resp. prim) \( ! \)-descent (Definition \ref{dfnVariousPropertyMorphiDescent}), then \( f^{Tate}:X^{Tate}\to Y^{Tate} \)  
also satisfies suave (resp. prim) \( ! \)-descent.

(6) Assume that \( f:X\to Y \) is a suave \( ! \)-cover between \( \omega_{1} \)-compact Tate stacks. For any \( n\in \ZZ_{\ge1} \), assume that the \( n \)-th fiber product  \( X^{n/Y} \) satisfies the property that the natural map \( X^{n/Y}\to ((X^{n/Y})_{Solid})^{Tate} \)     
is an isomorphism. Then \( Y\to( Y_{Solid})^{Tate} \) is also an isomorphism. In particular, \( Y_{Solid}\in \mrm{SolidStk}^{b}_{\QQ_{p}} \). 
\end{lem}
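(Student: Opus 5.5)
We treat the six parts in turn, working throughout with the adjunction $(-)_{Solid}\dashv(-)^{Tate}$ of Lemma \ref{lemSolidToTateAnalyticficationFunctor}.

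For (1), essential Tateness gives $X\cong (X^{Tate})_{Solid}$. Lemma \ref{lemSolidToTateAnalyticficationFunctor} (3) then yields $\QCoh(X)\cong \QCoh(X^{Tate})$, compatibly with the six functors. In particular the symmetric monoidal structures agree, so dualizable objects agree and $\Perf(X)\cong\Perf(X^{Tate})$. For vector bundles, one direction is immediate: a $!$-surjection onto $X^{Tate}$ trivializing $\mF$ maps under $(-)_{Solid}$ to a $!$-surjection onto $X$, because a left exact left adjoint of topoi preserves epimorphisms. For the converse, we restrict to $\mF|_{\AnSpec(A_i)}$. By Lemma \ref{lemFredholmVectorBundle} this restriction comes from a finite projective $A_i^{\rhd}(*)$-module, hence is Zariski-locally free on $\AnSpec^b(A_i)$. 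Since $f^{Tate}$ is still a $!$-surjection, $\mF$ is a vector bundle on $X^{Tate}$.

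For (2), note that $(-)_{Solid}$ is left exact and $(-)^{Tate}$ preserves all limits. Hence for a finite diagram of essentially Tate stacks, $((\lim X_i)^{Tate})_{Solid}\cong \lim (X_i^{Tate})_{Solid}\cong\lim X_i$. For (3), we use Lemma \ref{lemSolidToTateAnalyticficationFunctor} (2): $(\AnSpec(A)^{Tate})_{Solid}\cong(\AnSpec^b(A))_{Solid}\cong\AnSpec(A)$. For (4), the adjunction gives
\[
\Hom(X^{Tate},Y^{Tate})\cong\Hom((X^{Tate})_{Solid},Y)\cong\Hom(X,Y).
\]

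For (5), we must check that $!$-ability, suaveness (resp. primness), conservativity of $f^*$ and descendability all pass from $f$ to $f^{Tate}$. Each of these is formulated purely in terms of $\QCoh$ and the six functors on $f$ and its base changes. The key point is that base changes along Tate test objects $\AnSpec^b(B)\to Y^{Tate}$ correspond, under $(-)_{Solid}$ and by (2), (3), to base changes along $\AnSpec(B)\to Y$, and these remain essentially Tate. By (1), $\QCoh$ and the six functors are identified on both sides, so the property transfers. The main obstacle is $!$-ability in the Tate topology. Here we would reduce, via locality on target and source \cite[Theorem 5.19]{Scholze2022sixFunctor}, to the case of maps between bounded affinoids. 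For such maps, $!$-ability means the same thing in both settings, since the decomposition into a proper map followed by an open immersion stays within bounded rings.

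For (6), let $Y'=((Y_{Solid})^{Tate})$, equipped with the unit map $Y\to Y'$. Since $(-)_{Solid}$ is left exact, $(X^{n/Y})_{Solid}\cong (X_{Solid})^{n/Y_{Solid}}$, and $(-)^{Tate}$ preserves these fiber products. The hypothesis therefore identifies the Čech nerve of $X\to Y$ with that of $X'\to Y'$, where $X'=((X_{Solid})^{Tate})\cong X$. The map $X_{Solid}\to Y_{Solid}$ is a suave $!$-cover, since suaveness and conservativity are expressed through $\QCoh$ (Lemma \ref{lemSolidToTateAnalyticficationFunctor} (3)). By Theorem \ref{thmVariousDescent} (7), both $Y$ and $Y'$ are then colimits of the same Čech nerve in $\mrm{TateStk}_{\QQ_p}$, which gives $Y\cong Y'$. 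Finally, $(Y_{Solid})^{Tate}\cong Y$ implies $((Y_{Solid})^{Tate})_{Solid}\cong Y_{Solid}$, so $Y_{Solid}\in\mrm{SolidStk}^b_{\QQ_p}$. The delicate step here is verifying that $X\to Y'$ is still a $!$-surjection, so that Theorem \ref{thmVariousDescent} (7) applies; for this we would again invoke (5).
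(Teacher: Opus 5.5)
Parts (1)–(5) follow the paper's route. You use the adjunction and left exactness from Lemma \ref{lemSolidToTateAnalyticficationFunctor}, and Lemma \ref{lemFredholmVectorBundle} for vector bundles; for (5) you are more explicit than the paper's ``follows from (1) and (2)''. Your reduction in (6) is also the paper's. With $Y'=(Y_{Solid})^{Tate}$, the hypothesis identifies the \v{C}ech nerve of $X\to Y$ with that of $X\to Y'$, so everything hinges on showing that $X\to Y'$ is a $!$-surjection.

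That step is where the proposal breaks, because your appeal to (5) is circular. Statement (5) concerns morphisms in $\mrm{SolidStk}^b_{\QQ_p}$, so applying it to $X_{Solid}\to Y_{Solid}$ presupposes that $Y_{Solid}$ is essentially Tate, which is the conclusion of (6). The circularity is substantive, not formal. The proof of (5) controls the base change of $X\to Y'$ along an arbitrary test object $\AnSpec^b(B)\to Y'$. It does so by identifying $\QCoh$ of $(X_{Solid}\times_{Y_{Solid}}\AnSpec(B))^{Tate}$ with that of $X_{Solid}\times_{Y_{Solid}}\AnSpec(B)$, which is (2) for a diagram with vertex $Y_{Solid}$. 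A $B$-point of $Y'$ is not known to come from $Y$, so nothing about $X\to Y$ governs these base changes. Knowing that $X_{Solid}\to Y_{Solid}$ is a suave $!$-cover in $\mrm{SolidStk}$ does not help, for the same reason.

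The missing idea is to test suaveness of $X\to Y'$ only along the cover itself, which is what the paper does.
\begin{itemize}
\item By Theorem \ref{thmVariousDescent} (4), since $X\to Y'$ satisfies universal $*$-descent, it suffices to check suaveness after base change along $X\to Y'$. The paper obtains the descent from the $!$-surjection $X_{Solid}\to Y_{Solid}$ and Lemma \ref{lemSolidToTateAnalyticficationFunctor}.
\item For that base change, $X\times_{Y'}X\cong X\times_Y X$ by the hypothesis with $n=2$. Its projections are suave $!$-covers, transported along $((-)_{Solid})^{Tate}$ via Lemma \ref{lemSolidToTateAnalyticficationFunctor} (3) and the criterion in Definition \ref{dfnVariousPropertyMorphiDescent} (3) that it suffices to check against the self-fiber product.
\item Hence $X\to Y'$ satisfies suave $!$-descent.
\item The map $Y\cong|X^{\bullet/Y'}|\to Y'$ is a monomorphism, being the image of $X\to Y'$. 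Theorem \ref{thmVariousDescent} (5) makes it a suave $!$-cover, and Theorem \ref{thmVariousDescent} (7) then makes it a $!$-surjection, hence an isomorphism.
\end{itemize}
Alternatively, you could apply (7) to $X\to Y'$ directly and finish with your \v{C}ech-nerve comparison.
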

\begin{rmk}\label{rmkEssTateOmitTate}
By Lemma \ref{lemBasicPropertyEsseTate} (4), we will identify \( \mrm{SolidStk}^{b}_{\QQ_{p}} \)  with its essential image under \( (-)^{Tate}|_{\mrm{SolidStk}^{b}_{\QQ_{p}}} \). 
For \( X\in \mrm{SolidStk}^{b}_{\QQ_{p}} \), by abuse of notation, we write \( X:=X^{Tate} \).  
\end{rmk}
\begin{proof}
For (1), \( \QCoh(X^{Tate})\cong \QCoh((X^{Tate})_{Solid}) \) by Lemma \ref{lemSolidToTateAnalyticficationFunctor}, which is symmetric monoidal by \( * \)-descent. So  \( \Perf(X^{Tate})\cong \Perf((X^{Tate})_{Solid}) \).

If there is a \( ! \)-surjection \( f:\coprod_{i}\AnSpec(A_{i})\to X \) such that \( A_{i} \)
are bounded and Fredholm, 
then for any \( \mF\in \Perf(X) \) (resp. \( \mF\in\Perf(X^{Tate}) \)), \( \mF \)
is a vector bundle if and only if \( f^{*}(\mF)\in \Vect(\coprod_{i}\AnSpec(A_{i})) \) is (resp. \( \Vect(\coprod_{i}\AnSpec^{b}(A_{i})) \)). However, by Lemma \ref{lemFredholmVectorBundle}, we have \( \Vect(\coprod_{i}\AnSpec(A_{i}))\cong \Vect(\coprod_{i}\AnSpec^{b}(A_{i}))\cong \Vect(A^{\rhd}(*)) \).

The statements (2)-(4) follow immediately from Lemma \ref{lemSolidToTateAnalyticficationFunctor}
since  and both \( (-)^{Tate} \) 
and \( (-)_{Solid} \) 
commute with finite limits.
(5) follows from (1) and (2).


For (6), we have \(\varinjlim_{[n]\in\Delta^{op}}(X^{n/Y})\cong Y \), \[  \varinjlim_{[n]\in\Delta^{op}}(X^{n/Y})_{Solid}\cong \varinjlim_{[n]\in\Delta^{op}}(X_{Solid})^{n/Y_{Solid}}
\cong
Y_{Solid} \]
and 
\begin{align*}
i:Y\cong 
\varinjlim_{[n]\in\Delta^{op}}((X^{n/Y})_{Solid})^{Tate}\cong \varinjlim_{[n]\in\Delta^{op}}((X_{Solid})^{Tate})^{n/(Y_{Solid})^{Tate}}\\\hookrightarrow (Y_{Solid})^{Tate},
\end{align*}
where the last morphism is a monomorphism.
Note that by Lemma \ref{lemSolidToTateAnalyticficationFunctor}, \[ i^{*}:\QCoh((Y_{Solid})^{Tate})\cong \QCoh(Y). \]

We claim that \( i \)  satisfies suave \( ! \)-descent. This will conclude the proof by Lemma \ref{thmVariousDescent} (7).
We know that \( p_{i}:X\times_{Y}X\to X \) satisfy suave \( ! \)-descent and are \( ! \)-surjections 
for \( i=1,2 \). 
By the equivalent definitions in Definition \ref{dfnVariousPropertyMorphiDescent}
and by Lemma \ref{lemSolidToTateAnalyticficationFunctor}, 
\( p_{i}:((X\times_{Y}X)_{Solid})^{Tate}\to (X_{Solid} )^{Tate}\) also satisfy suave \( ! \)-descent. 
By Lemma \ref{lemSolidToTateAnalyticficationFunctor}, \( X\to (Y_{Solid})^{Tate} \) 
also satisfies universal \( * \)-descent. 
Therefore, \( X\to (Y_{Solid})^{Tate} \) also satisfies suave \( ! \)-descent by Lemma \ref{thmVariousDescent} (4). By Lemma \ref{thmVariousDescent} (5), \( i \) 
is also a suave \( ! \)-cover, as desired. 
\end{proof}
\begin{eg}\label{egEssentialTate}
(1) If \( A\in\mrm{AnRing}^{b}_{\QQ_{p}} \), 
then \( \AnSpec(A)\)  is essentially Tate. 

(2) Consider \( \mA^{1}:=\AnSpec(\QQ_{p,\square}[t]) \), 
then \[ (\mA^{1})^{Tate}\cong \mA^{1,\an}:=\varinjlim_{n}\AnSpec^{b}(\QQ_{p}\langle p^{n}t\rangle), \]  and \(\mA^{1,\an} \) is essentially Tate.

Similarly, for \( \GG_{m}:=\AnSpec(\QQ_{p,\square}[t^{\pm1}]) \), 
\( \GG_{m}^{\an}:=((\GG_{m})^{Tate})_{Solid}\cong \varinjlim\AnSpec(\QQ_{p}\langle p^{n}t,\frac{p^{n}}{t}\rangle) \) is essentially Tate.

(3) Consider \( \hat{\mA}^{1}:=\varinjlim_{n}\AnSpec(\QQ_{p,\square}[t]/t^{n}) \). 
Then \( \hat{\mA}^{1}\to \mA^{1,\an} \)   
is a monomorphism, 
and \( 0=\AnSpec(\QQ_{p,\square})\to \hat{\mA}^{1} \) 
is a prim \( ! \)-cover, being colimits of descendable covers by \cite[Proposition 3.35]{Mathew2016galois}. 
Thus for any \( B\in\mrm{AnRing}^{b}_{\QQ_{p}} \), 
\( \hat{\mA}^{1}(B) \) 
is the full sub-anima of \( \mA^{1}(B) \), whose \( \pi_{0} \)  consists of \( f\in \pi_{0}(B^{\rhd}) \)  
such that \( B^{\rhd}\to B^{\rhd}/f \) 
is descendable. Note that \( B^{\rhd}/f \) 
is also bounded. Therefore, \( (\hat{\mA}^{1})^{Tate}\cong \varinjlim_{n} \AnSpec^{b}(\QQ_{p,\square}[t]/t^{n}) \), and \( \hat{\mA}^{1} \)  
is essentially Tate.

(4) Assume that there is a squence of bounded rings \( A_{n}\in\mrm{AnRing}^{b}_{\QQ_{p}} \) 
\[
\AnSpec(A_{0})\hookrightarrow \AnSpec(A_{1})\hookrightarrow \cdot \hookrightarrow\AnSpec(A_{n})\hookrightarrow\cdot
\] such that the transition maps are monomorphisms. Then \(X:=\varinjlim_{n}\AnSpec(A_{n})\)
is essentially Tate:

Given any \( B\in\mrm{AnRing}^{b}_{\QQ_{p}} \), 
and \( f:\AnSpec(B)\to X \), there exists a \( ! \)-cover \(g: \AnSpec(C) \to \AnSpec(B)\) and a factorization of \( f\circ g \) as \( \AnSpec(C)\to \AnSpec(A_{n}) \).   
Thus \( g \) 
factors through \( \AnSpec(B)\times_{X}\AnSpec(A_{n})\subset \AnSpec(B) \).
Since \( g \) is a \( ! \)-surjection and \( \AnSpec(A_{n})\to X \) is a monomorphism, \( f \) uniquely
factors through \( \AnSpec(A_{n}) \).  This implies that 
\( X^{Tate}\cong \varinjlim_{n}\AnSpec^{b}(A_{n}) \), and thus \( (X^{Tate})_{Solid}\cong X \). 

(5) By (2), (3) and Lemma \ref{lemBasicPropertyEsseTate} (7), we know that \( B\GG_{m}^{\an}\), \( \hat{\mA}^{1}/\GG_{m}^{\an} \)  and \( \mA^{1,\an}/\GG_{m}^{\an}\) are essentially Tate.
\end{eg}







\subsection{Affine proper morphisms}\label{subsecAffineProperMorph}
For our later purpose,
we need the following class of morphisms:
\begin{dfn}\label{dfnAffineProper}
Let \(f:X\to Y\)
be a morphism between solid stacks.
We say \(f\)
is \emph{affine proper}
if there exists a \(!\)-surjection \(Y'=\coprod_{i\in I}\AnSpec(A_{i})\to Y\),
such that \(X\times_{Y}\AnSpec(A_{i})\cong \AnSpec(B_{i})\),
where \(A_{i}\to B_{i}\) are 
proper morphisms between analytic rings.

We say that \(f\) is \emph{affine proper and descendable} (of index \(\le n\))
if in addition, one can take \(Y'\to Y\) to be an open covering, and for each \(i\), \(A_{i}\to B_{i}\)
is descendable (of index \(\le n\)) in the sense of \cite{Mathew2016galois}.
\end{dfn}
For later purpose,
we introduce the following notion:
\begin{dfn}
Let \(f:X\to Y\)
be a morphism between solid stacks. We say that \(f\) is \emph{essentially affine proper and descendable} (of index \(\le n\)), if there exists \(g:X'\to X\)
such that \(f\circ g:X'\to Y\)
is affine proper, and descendable (of index \(\le n\)).
\end{dfn}
\begin{prop}[Clausen-Scholze]
	\label{propAffineProperCover}
	If \(f\)
	is essentially affine proper and descendable,
	then \(f\) is a \(!\)-surjection and is cohomologically co-smooth.
\end{prop}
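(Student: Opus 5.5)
The proof has three stages: show that each affine piece \(\AnSpec(B_{i})\to\AnSpec(A_{i})\) has both properties, glue over the open cover \(Y'\to Y\), and pass from \(f\circ g\) back to \(f\).

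\emph{Affine case.} Let \(A\to B\) be proper with induced analytic structure \(B=(B^{\rhd},A)\), and put \(h:\AnSpec(B)\to\AnSpec(A)\).

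1. Since the structure is induced, \(h_{!}=h_{*}\) is the forgetful functor. Its right adjoint is \(h^{!}=\underline{\Hom}_{A}(B,-)\).
2. The object \(\mbb{P}_{h}(\mO)\) is computed from \(\Delta_{!}\mO\), which is simply \(B\) regarded as a \(B\otimes_{A}B\)-module. Hence \(\mbb{P}_{h}(\mO)=\underline{\Hom}_{B\otimes_A B}(B\otimes_A B,B)\cong B\), which is invertible.
3. The prim criterion of Definition \ref{dfnVariousPropertyMorphiDescent} (5) then reduces to the identity \(h_{!}(-)\cong h_{*}(-)\). So \(h\) is cohomologically co-smooth.
4. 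Since \(A\to B\) is descendable, \(h\) is a prim \(!\)-cover in the sense of Definition \ref{dfnVariousPropertyMorphiDescent} (6).
5. By Theorem \ref{thmVariousDescent} (3), \(h\) satisfies universal \(!\)-descent. In particular \(A\) is the totalization of the Čech nerve \(B^{\otimes_{A}(\bullet+1)}\), and this holds universally.
6. This realizes \(\AnSpec(A)\) as the geometric realization of the Čech nerve of \(h\) in the \(!\)-topology. Concretely, the augmented simplicial diagram is a \(!\)-equivalence after any base change, exactly as in the proof of Theorem \ref{thmVariousDescent} (6)–(7). Therefore \(h\) is a \(!\)-surjection, i.e. an epimorphism in \(\mrm{AnStk}\).

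\emph{Globalization.} Both properties are \(!\)-local on the target.

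1. Fix \(Y'=\coprod\AnSpec(A_i)\to Y\) as in Definition \ref{dfnAffineProper}; it is an open covering, so it is a \(!\)-surjection.
2. Epimorphisms in an \(\infty\)-topos are detected after base change along an epimorphism. Hence \(f\circ g\) is a \(!\)-surjection.
3. For co-smoothness, use Theorem \ref{thmVariousDescent} (4): \(!\)-ability is local on the target by \cite[Theorem 5.19]{Scholze2022sixFunctor}, and primness descends along the universal \(*\)-descent map \(Y'\to Y\).
4. The dualizing object \(\mbb{P}\) is compatible with base change, and invertibility can be checked locally. So \(f\circ g\) is cohomologically co-smooth.

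\emph{Passing to \(f\).} The statement is about \(f\) itself, not only \(f\circ g\).

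1. For surjectivity this is formal: if \(f\circ g\) is an epimorphism, so is \(f\).
2. For co-smoothness I would use the composition statement Theorem \ref{thmVariousDescent} (5), applied to \(f\circ g\) and \(f\). It needs \(g\) to be a prim \(!\)-cover.
3. I would try to show this by base-changing \(f\circ g\) along \(f\). The map \(X'\times_{Y}X\to X\) is affine proper and descendable, hence a prim \(!\)-cover by the above.
4. The map \(g\) is a section-composite of this map with the diagonal-type map \(X'\to X'\times_{Y}X\). That diagonal map is a base change of \(\Delta_f\), which is exactly where the argument gets stuck.
5. If this fails, I would fall back on the reading that "cohomologically co-smooth" is meant for \(f\circ g\). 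Alternatively, I would establish primness of \(f\) directly, via Theorem \ref{thmVariousDescent} (4) with the cover \(X'\to X\), which needs only universal \(*\)-descent. That property holds for \(g\) once \(g\) is shown to be a \(!\)-surjection by the same base-change trick (Theorem \ref{thmVariousDescent} (6)).

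\emph{Main obstacle.} I expect the hardest step to be this last one: controlling \(g\), and in particular \(\Delta_f\), well enough to transfer primness from \(f\circ g\) to \(f\).
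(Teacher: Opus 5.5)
Your affine case and your globalization along the open cover are exactly the paper's proof. The paper reduces to a proper descendable \(\AnSpec(B)\to\AnSpec(A)\), gets a \(!\)-cover from descendability and universal \(!\)-descent from Theorem \ref{thmVariousDescent} (3), and obtains co-smoothness from Theorem \ref{thmVariousDescent} (4) and (6). Your remark that \(!\)-surjectivity passes from \(f\circ g\) to \(f\) is also correct: if a composite is an effective epimorphism, so is the outer map \(f\). This supplies what the paper leaves implicit.

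The last step, co-smoothness of \(f\) itself, is a genuine gap, and no argument can close it: with the definition as stated the claim is false. Here is a counterexample.
Take \(Y=\mA^{1}=\AnSpec(\ZZ_{\square}[T])\) and \(X=\mA^{1}\sqcup U_{0}\), with \(j_{0}:U_{0}\to\mA^{1}\) as in Lemma \ref{lemBasicCoverOfA1}. Put \(f=\id\sqcup j_{0}\) and let \(g\) be the inclusion of the first summand. Then \(f\circ g=\id\) is affine proper and descendable, so \(f\) is essentially affine proper and descendable.
Suppose \(\mO_{X}\) were \(f\)-prim, and write \(P=\mbb{P}_{f}(\mO_{X})|_{U_{0}}\). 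Applying the criterion of Definition \ref{dfnVariousPropertyMorphiDescent} (5) to \(\mO_{U_{0}}\) on the second summand would give \(j_{0,!}(P)\cong j_{0,*}\mO_{U_{0}}=\ZZ[T]\). But \(j_{0,!}(P)=P\otimes(A_{0}/\ZZ[T])[-1]\) is killed by \(-\otimes A_{0}\), because \(A_{0}\) is idempotent. Meanwhile \(\ZZ[T]\otimes A_{0}=A_{0}\neq0\), a contradiction.

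In this example \(\Delta_{f}\) is a clopen immersion and \(g\) is even prim, so your diagnosis via \(\Delta_{f}\) is misplaced. What fails is that nothing in the definition forces \(g\) to be a cover, whereas Theorem \ref{thmVariousDescent} (5) needs \(g\) to be a prim \(!\)-cover. Your fallback through Theorem \ref{thmVariousDescent} (4) does not work either: (4) descends primness along a cover of the target of \(f\), not along a map into its source.

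The paper's ``by definition, we are reduced to the case\ldots'' makes the same unjustified reduction. What is actually provable is the following:
\begin{itemize}
\item \(f\) is a \(!\)-surjection;
\item \(f\circ g\) is cohomologically co-smooth;
\item \(f\) is co-smooth whenever it is itself affine proper, or whenever \(g\) is a prim \(!\)-cover.
\end{itemize}
This covers the paper's actual uses, e.g.\ \(\mX^{\tor,\sm}_{K^{p}}\to\mX^{\tor}_{\Kpp}\) and \(X\to X^{\dR/Y}\).
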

\begin{proof}
By definition,
we are reduced to the case of a proper and descendable morphism \(f:X=\AnSpec(B)\to Y= \AnSpec(A)\). It is a \(!\)-cover directly by the descendability assumption. Then we conclude by Theorem \ref{thmVariousDescent} (3).
The cohomologically co-smooth is given by Theorem \ref{thmVariousDescent} (4) and (6). 
\end{proof}

We will need the following results:
\begin{lem}\label{lemAffProperProp}
(1) The class of affine proper morphisms is closed under pull-back. Moreover,
if \(Y_{0}\to Y\)
is any \(!\)-surjection,
such that \(X\times_{Y}Y_{0}\to Y_{0}\)
is affine proper,
then so is \(X\to Y\).

(2) If \(Y=\AnSpec(A)\),
and \(f:X\to Y\)
is affine proper,
such that \(B^{\rhd}=f_{*}\mO_{X}\in D(A)\)
lies in \(D_{\ge0}(A)\),
then \(X\cong \AnSpec(B^{\rhd},A)\),
where we recall that \((B^{\rhd},A)\)
denotes the analytic ring whose underlying ring is \(B^{\rhd}\)
and equipped with the analytic structure induced by \(A\).

(3) (Relative \(\unl{\AnSpec}\) construction) Let \(X\) be
a solid stack over \(\QQ\),
and \(B\in \mrm{CAlg}(\QCoh(X))\). 
Then the presheaf \[ \mcal{P}(\mrm{AnRing}_{\ZZ_{\square}})_{/X}\to \mrm{Ani},\;(Z\xrightarrow{s_{Z}}X)\mapsto \Hom_{\mrm{CAlg}(\QCoh(X))}(B,s_{Z,*}(\mO_{Z})) \] 
is a solid stack over \(X\), which we denote as \( \underline{\AnSpec}_{X}(B) \). 

For any \( f:Y\to X \), we have \( Y\times_{X}\unl{\AnSpec}_{X}(B)\cong \unl{\AnSpec}_{Y}(f^{*}(B)) \).  

Moreover, if \( X=\AnSpec(A) \) 
and \( B\in D_{\ge0}(A) \), then  
\( \unl{\AnSpec}_{X}(B)\cong \AnSpec(B,A) \), where the RHS is as in (2). 
\end{lem}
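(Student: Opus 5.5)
We prove the three parts in order, using (1) to reduce (2) to the affine case, and obtaining (3) by unwinding definitions plus descent.

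For (1), stability under pull-back is formal. Let \(X\to Y\) be affine proper via a \(!\)-surjection \(\coprod_i \AnSpec(A_i)\to Y\), and let \(Y''\to Y\) be arbitrary. Cover \(Y''\times_Y\AnSpec(A_i)\) by affinoids \(\AnSpec(C_{ij})\), which gives a \(!\)-surjection since \(!\)-surjections are stable under pull-back and composition in the \(\infty\)-topos. Over each such affinoid the pull-back is \(\AnSpec(B_i\otimes_{A_i}C_{ij})\). Its analytic structure is induced from \(C_{ij}\), because proper maps (maps with induced analytic structure) are stable under push-out of analytic rings.

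For the descent statement, take a \(!\)-surjection \(\coprod \AnSpec(A_i)\to Y_0\) witnessing affine properness of \(X\times_Y Y_0\to Y_0\). The composite \(\coprod\AnSpec(A_i)\to Y_0\to Y\) is again a \(!\)-surjection, and \(X\times_Y\AnSpec(A_i)\cong (X\times_YY_0)\times_{Y_0}\AnSpec(A_i)\cong\AnSpec(B_i)\). This is exactly the definition.

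For (2), write \(X=\unl{\AnSpec}\) locally: choose a \(!\)-surjection \(\coprod\AnSpec(A_i)\to \AnSpec(A)\) with \(X_i:=X\times\AnSpec(A_i)\cong\AnSpec(B_i)\) and \(B_i=(B_i^{\rhd},A_i)\). Base change of \(f_*\) along \(\AnSpec(A_i)\to\AnSpec(A)\) holds because \(f\) is prim, which is Lemma \ref{lemSmBaseChange} (2). Primness of \(f\) follows from Theorem \ref{thmVariousDescent} (4), since proper maps of analytic rings are prim: there \(f_*=f_!\). Hence \(B^{\rhd}\otimes_A A_i\cong B_i^{\rhd}\).

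We then build the comparison map \(X\to\AnSpec(B^{\rhd},A)\) from the algebra map \(B^{\rhd}\to f_*\mO_X\). Its pull-back to each \(\AnSpec(A_i)\) is \(\AnSpec(B_i^{\rhd},A_i)\to\AnSpec((B^{\rhd},A)\otimes_AA_i)\). The connectivity hypothesis \(B^{\rhd}\in D_{\ge0}(A)\) is what makes \((B^{\rhd},A)\) an honest analytic ring, whose push-out with \(A_i\) has underlying ring \(B^{\rhd}\otimes_AA_i=B_i^{\rhd}\) with induced structure. So the comparison is an isomorphism after a \(!\)-surjective base change, and hence an isomorphism, since \(\mrm{AnStk}\) is an \(\infty\)-topos and \(!\)-surjections are effective epimorphisms.

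For (3), the main work is checking the sheaf condition. Write the functor as \(\Hom(B,s_{Z,*}\mO_Z)\cong\Hom_{\mrm{CAlg}(\QCoh(Z))}(s_Z^*B,\mO_Z)\). We must show it sends \(!\)-equivalences \(Z'\to Z\) over \(X\) between \(\omega_1\)-compact objects to equivalences. Since \(!\)-equivalences induce equivalences \(\QCoh(Z)\cong\QCoh(Z')\) compatibly with \(*\)-pullback and the symmetric monoidal structure (Definition \ref{dfnAnStkOverC}), the Hom spaces of commutative algebras agree. The same description shows \(\unl{\AnSpec}_X(B)(Z)\) depends only on \(s_Z^*B\), so base change \(Y\times_X\unl{\AnSpec}_X(B)\cong\unl{\AnSpec}_Y(f^*B)\) follows by Yoneda.

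Finally, for \(X=\AnSpec(A)\) and \(B\in D_{\ge0}(A)\), maps \(\AnSpec(C)\to\AnSpec(B,A)\) over \(A\) are exactly maps of analytic rings \((B,A)\to C\) under \(A\). These are algebra maps \(B\to C^{\rhd}\) in \(D(A)\), since \((B,A)\) carries the induced structure and \(C\) is an \(A\)-analytic ring. This matches \(\Hom(B,s_*\mO)\).

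The main obstacle is this last identification together with the sheaf property: we must be sure that \(\QCoh\) of a presheaf is invariant under \(!\)-equivalences for \(*\)-pullbacks, not only for \(!\)-pullbacks. This is handled by \cite[Remark 4.2.3]{AnschützBoscoLeBrasCamargoScholze2025analyticrhamstacksfarguesfontaine}, as quoted in Definition \ref{dfnAnStkOverC}.
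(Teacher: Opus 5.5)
Your proposal is correct and follows essentially the same route as the paper: pull-back and composition of \(!\)-surjections for (1), proper base change plus checking the comparison map \(X\to\AnSpec(B^{\rhd},A)\) after a \(!\)-surjective base change for (2), and invariance of \(\QCoh\) under \(!\)-equivalences together with the \((f^{*},f_{*})\) adjunction for (3). The only real difference is that you spell out why \(f\) is prim, via Theorem~\ref{thmVariousDescent} (4) and (6), which the paper leaves implicit when it invokes proper base change.
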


\begin{rmk}
	Note that we are working over \(\QQ\).
	Since \(f_{*}\)
	is lax symmetric monoidal,
	\(B^{\rhd}\)
	has a natural \(\mbb{E}_{\infty}\)-structure,
	which is the same as an animated \(A^{\rhd}\)-algebra structure in characteristic \(0\).
\end{rmk}
\begin{proof}
For (1), if \( X\to Y \) is affine proper with a \(!\)-surjection \( Y'\to Y \) as in Definition \ref{dfnAffineProper}.
Then 
we can find a \(!\)-surjection \(Y_{0}'\to Y_{0}\)
such that the morphism \(Y_{0}\to Y\)
lifts to \(Y_{0}'\to Y'\).
After further refinement of \(Y_{0}'\),
we can assume that \(Y_{0}'\cong \coprod_{i\in I,j\in J_{i}}\AnSpec(A_{i,j})\)
with \(\AnSpec(A_{i,j})\)
maps to \(\AnSpec(A_{i})\)
via \(Y_{0}'\to Y_{0}\).
Then the pull-back of \(X\times_{Y}Y_{0}\to Y_{0}\)
to \(\AnSpec(A_{i,j})\)
is given by 
\(\AnSpec(A_{i,j}\otimes_{A_{i}}B_{i})\).
This shows that \(X\times_{Y}Y_{0}\to Y_{0}\)
is affine proper. The other direction is obvious.

For
(2), since \(Y\) is quasi-compact,
we can find a \( ! \)-surjection  \(h:Y'=\AnSpec(A')\to Y\),
such that \(X\times_{Y}Y'\cong \AnSpec(B^{\prime,\rhd},A)\).
By proper base change,
we know \(h^{*}B^{\rhd}\cong B^{\prime,\rhd}\). Denote \(Z:=\AnSpec(B^{\rhd},A)\),
then there is a natural factorization \(X\to Z\to Y,\)
such that \(X\times_{Y}Y'\cong Z\times_{Y}Y'\). Since \( Y'\to Y \) is a \( ! \)-surjection, we know that \( X\cong Z \).  

For (3), note that \(
(Z\xrightarrow{s_{Z}}X)\mapsto \Hom_{\mrm{CAlg}(\QCoh(X))}(B,s_{Z,*}(\mO_{Z}))
\) takes \( ! \)-equivalences to equivalences: if \( g:Z\to Z' \)  
is a \( ! \)-equivalences over \( X \),  then \( g^{*}:\QCoh(Z')\to \QCoh(Z) \) 
is an equivalence (\cite[Remark 4.2.3]{AnschützBoscoLeBrasCamargoScholze2025analyticrhamstacksfarguesfontaine}), 
with the inverse given by \( g_{*} \), 
and thus \( \mO_{Z'}\to g_{*}\mO_{Z} \)  is an isomorphism.

The compatibility with pull-back's follows from the adjunction \( (f^{*},f_{*}) \). Finally, if \( X=\AnSpec(A) \) 
and \( B\in D_{\ge0}(A) \), there is a natural map \( \AnSpec(B,A)\to \unl{\AnSpec}_{X}(B) \). To verify that it is an equivalence, one can verify at each affinoid, which is clear. 
\end{proof}


\begin{dfn}\label{dfnUniformAffineProper}
We say a tower of solid stacks \[
\cdots\xrightarrow{f_{m}} X_{m}\xrightarrow{f_{m-1}} X_{m-1}\xrightarrow{f_{m-2}} \cdots \xrightarrow{f_{1}} X_{1}\xrightarrow{f_{0}} X_{0}.
\] is \emph{uniformly affine proper} if 
there exists \( m_{0}\in\NN \) 
and a map \[Y_{m_{0}}=\coprod_{i\in I}\AnSpec(A_{i}^{[m_{0}]})\to X_{m_{0}}\] that satisfies universal \( ! \)-descent,
such that for any \( m\ge m_{0} \), 
 \(X_{m}\times_{X_{m_{0}}}\AnSpec(A_{i}^{[m_{0}]})\cong \AnSpec(A_{i}^{[m]})\),
and \(A_{i}^{[m_{0}]}\to A_{i}^{[m]}\) is a 
proper morphism between analytic rings.
\end{dfn}
\begin{lem}\label{lemUniformAffineProper}
Let \((X_{m},f_{m})\) be a uniformly affine proper tower of solid stacks. Let \( X^{\sm}:=\varprojlim_{m}X_{m} \).  
Then we have an isomorphism in \( \mrm{Pr}^{L} \)  \[
\QCoh(X^{\sm})\cong \varinjlim_{m}\QCoh(X_{m}),
\] where the transition maps are given by \((-)^{*}\).
\end{lem}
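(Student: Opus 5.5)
First reduce to the affine situation by descent. Fix $m_{0}$ and the cover $Y_{m_{0}}=\coprod_{i}\AnSpec(A_{i}^{[m_{0}]})\to X_{m_{0}}$ satisfying universal $!$-descent, as in Definition \ref{dfnUniformAffineProper}. Base change along $X^{\sm}\to X_{m_{0}}$ and along each $X_{m}\to X_{m_{0}}$ ($m\ge m_{0}$); a cofinality argument lets us discard the $X_{m}$ with $m<m_{0}$. By Theorem \ref{thmVariousDescent} (1), all these pullbacks satisfy $*$-descent. Limits commute with limits, so the whole Čech nerve of $Y_{m_{0}}$ over $X_{m_{0}}$ stays inside the uniformly affine proper setting.

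The $n$-fold fiber products $Y_{m_{0}}^{n/X_{m_{0}}}$ need not be affine. We will handle this by applying the argument twice: first for affine bases, then for bases that are unions of affines, via the same descent. The result is that $\QCoh(X^{\sm})$ is the limit over $\Delta$ of $\QCoh$ of the pulled-back levels.

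The main issue is exchanging the colimit over $m$ in $\mrm{Pr}^{L}$ with the totalization. Write $\varinjlim$ in $\mrm{Pr}^{L}$ as the limit in $\mrm{Pr}^{R}$ along the right adjoints $f_{m,*}$. Both sides are then limits of categories along compatible functors, and these limits commute, provided the right adjoints $f_{m,*}$ commute with the $*$-pullbacks of the Čech nerve. This is exactly the proper base change of Lemma \ref{lemSmBaseChange} (2), since affine proper maps are prim (the case $B=(B,A)$ with induced structure). So we reduce to the case where every $X_{m}=\AnSpec(A^{[m]})$ with proper transition maps $A^{[m]}\to A^{[m+1]}$.

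In the affine case, $X^{\sm}=\varprojlim\AnSpec(A^{[m]})$ is $\AnSpec(A^{[\infty]})$, where $A^{[\infty]}=\varinjlim_{m}A^{[m]}$ with the analytic structure induced from $A^{[m_{0}]}$. This holds because the colimit of induced analytic rings is again induced, and $\AnSpec$ turns filtered colimits of analytic rings into limits, as maps out of $\varinjlim A^{[m]}$ are compatible systems. So $\QCoh(X^{\sm})=\Mod_{A^{[\infty]}}(D(A^{[m_{0}]}))$, and each $\QCoh(X_{m})=\Mod_{A^{[m]}}(D(A^{[m_{0}]}))$.

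It then remains to show the standard fact that $\Mod_{\varinjlim A^{[m]}}(\mcal{C})\cong\varinjlim_{m}\Mod_{A^{[m]}}(\mcal{C})$ in $\mrm{Pr}^{L}$ for a presentable symmetric monoidal $\mcal{C}=D(A^{[m_{0}]})$ and a filtered diagram of commutative algebras. We prove this through the $\mrm{Pr}^{R}$ limit description: an object of the limit is a compatible system $M_{m}$ with $M_{m}\cong M_{m+1}|_{A^{[m]}}$. Such a system determines a single $\mcal{C}$-object carrying compatible actions of all $A^{[m]}$, and hence an action of the colimit, since $\Mod$ over a filtered colimit of $\mbb{E}_{\infty}$-algebras is the limit of the module categories (Lurie, HA). This also shows that pullback to $X^{\sm}$ realizes the colimit with the transition maps $(-)^{*}$.

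We expect the main obstacle to be the descent step: checking that proper base change makes the $\mrm{Pr}^{R}$-diagrams, taken along the cover and along $m$, into a genuine bicartesian system, so that the two limits can be interchanged. We will handle the non-affine higher Čech terms by iterating the affine argument once more.
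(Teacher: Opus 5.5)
Your affine step and your interchange both work; the genuine gap is the treatment of the non-affine \v{C}ech terms, and the fix is to state the affine step relatively.

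\textbf{What matches or works.} Your affine step is the paper's. You realize $X^{\sm}$ as $\AnSpec$ of $\varinjlim_{m}A^{[m]}$ with the induced analytic structure, then identify $\Mod_{\varinjlim A^{[m]}}(\mcal{C})$ with $\varinjlim_{m}\Mod_{A^{[m]}}(\mcal{C})$ through the $\mrm{Pr}^{R}$-limit along restrictions.

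Your interchange is a legitimate dual of the paper's. The paper writes $!$-descent as a colimit in $\mrm{Pr}^{L}$ along $(-)_{!}$ and swaps two colimits. That only uses the six-functor base change $f^{*}g_{!}\simeq g'_{!}f'^{*}$, with coherence supplied by the formalism. You write $*$-descent as a limit and the $m$-colimit as a $\mrm{Pr}^{R}$-limit along $f_{m,*}$. That needs proper base change $g^{*}f_{*}\simeq f'_{*}g'^{*}$ and a Beck--Chevalley (adjointability) argument to obtain a coherent $\Delta\times\NN$-diagram. This is available because affine proper maps are prim.

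\textbf{The gap: higher \v{C}ech terms.} The problem is $Y^{[n]}_{m_{0}}:=Y_{m_{0}}^{n/X_{m_{0}}}$ for $n\ge1$. Your claim that the \v{C}ech nerve ``stays inside the uniformly affine proper setting'' is not justified. Nothing provides a cover of $Y^{[n]}_{m_{0}}$ by affines satisfying universal $!$-descent. Even if you choose one, its own \v{C}ech terms are again arbitrary fiber products over $Y^{[n]}_{m_{0}}$. So ``iterating the affine argument once more'' never terminates, and you never actually reach the case where every level is $\AnSpec(A^{[m]})$.

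\textbf{The missing idea.} The base need not be affine; only the tower must be pulled back from an affine one. Via the first projection $Y^{[n]}_{m_{0}}\to Y_{m_{0}}$ one has $Y^{[n]}_{m}\cong Y^{[n]}_{m_{0}}\times_{Y_{m_{0}}}Y_{m}$ with $Y_{m}=\coprod_{i}\AnSpec(A_{i}^{[m]})$. So the tower over $Y^{[n]}_{m_{0}}$ is a base change of the affine tower over $Y_{m_{0}}$.

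You should therefore prove the affine step in relative form. Take an arbitrary solid stack $W\to\AnSpec(A^{[m_{0}]})$ and set $W_{m}:=W\times_{\AnSpec(A^{[m_{0}]})}\AnSpec(A^{[m]})$. Then $\QCoh(W_{m})\simeq\Mod_{A^{[m]}}(\QCoh(W))$ and $\QCoh(\varprojlim_{m}W_{m})\simeq\Mod_{\varinjlim A^{[m]}}(\QCoh(W))$. This follows from Barr--Beck--Lurie: pushforward along a base change of a proper map is conservative, preserves colimits, and satisfies the projection formula. Your module-category argument then applies verbatim with $\mcal{C}=\QCoh(W)$.

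The coproduct over $i$ is harmless. $\QCoh$ turns it into a product, and products in $\mrm{Pr}^{L}$ are also coproducts, so they commute with the colimit over $m$. This is exactly step (1) of the paper's proof. It is stated for an arbitrary $X_{m_{0}}$ mapping to an affine tower precisely so that it applies to the \v{C}ech terms in step (2).
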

\begin{proof}
(1)
Assume first that we can find a tower \( (Z_{m},g_{m}) \) such that \( Z_{m}\cong\AnSpec(B_{m}) \), \( B_{m}\to B_{m+1} \) is proper, with a map \( X_{m_{0}}\to Z_{m_{0}} \), such that 
\( X_{m}\cong X_{m_{0}}\times_{Z_{m_{0}}}Z_{m} \).
Then \( \QCoh(X_{m})\cong \mrm{CAlg}_{B_{m}}(\QCoh(X_{m_{0}})) \).

Taking \( \varprojlim_{i} \), \( Z^{\sm}\cong \AnSpec(B^{\sm}) \) with \( B^{\sm}\cong \varinjlim_{m}B_{m} \), and \( X^{\sm}\cong X_{m,0}\times_{Z_{m,0}}Z^{\sm} \). Thus \( \QCoh(X^{\sm})\cong\mrm{CAlg}_{B^{\sm}}(\QCoh(X_{m_{0}}))  \).  
Then 
\begin{align*}
\QCoh(X^{\sm})\cong \mrm{CAlg}_{B^{\sm}}(\QCoh(X_{m_{0}})) \cong \varinjlim_{m}\mrm{CAlg}_{B_{m}}(\QCoh(X_{m_{0}}))\\\cong \varinjlim_{m}\QCoh(X_{m}),
\end{align*}
where the second isomorphism follows from the equivalence \(\mrm{Pr}^{L}\cong(\mrm{Pr}^{R})^{op}\) (\cite[Corollary 5.5.3.4]{Lurie2009HTT}) and \cite[Proposition 7.1.2.7]{Lurie2017HA}.

(2) In general, we consider the \v Cech nerve associated to \( Y_{m_{0}}\to X_{m_{0}} \). 
Let \( Y_{m_{0}}^{[n]}:=Y_{m_{0}}^{n/X_{m_{0}}} \), 
 \( Y_{m}^{[n]}:=Y_{m_{0}}^{[n]}\times_{X_{m_{0}}}X_{m} \) and \(Y^{\sm,[n]}:=\varprojlim_{m}Y_{m}^{[n]}\cong Y_{m_{0}}^{[n]}\times_{X_{m_{0}}}X^{\sm}\). 

 In what follows, transition maps in \(\varinjlim^{!}\) are given by \((-)_{!}\) and those in \(\varinjlim^{*}\) are given by \((-)^{*}\).
 
 Then by \( ! \)-descent, \[
 \QCoh(X_{m})\cong \varinjlim^{!}_{[n]\in\Delta^{op}}\QCoh(Y_{m}^{[n]}).
 \] 

 On the other hand, by (1), \[
 \QCoh(Y^{\sm,[n]})\cong \varinjlim^{*}_{m}\QCoh(Y_{m}^{[n]}).
 \]
Thus by proper base change,
\begin{align*}
\QCoh(X^{\sm})
&\cong \varinjlim^{!}_{[n]\in\Delta^{op}}\QCoh(Y^{\sm,[n]})
\cong \varinjlim^{!}_{[n]\in\Delta^{op}}\varinjlim^{*}_{m}\QCoh(Y^{[n]}_{m})\\
&\cong
\varinjlim^{*}_{K_{p}}\varinjlim^{!}_{[n]\in\Delta^{op}}\QCoh(Y^{[n]}_{m})
\cong \varinjlim^{*}_{K_{p}}\QCoh(X_{m}),
\end{align*} as desired.
\end{proof}

\subsection{Filtration and formal spectrum}\label{subsecFiltrationFormalSpec}
We recall the following discussion about the stacky approach to the filtered object. The result below is in the algebraic setting.
\begin{dfn}[Category of filtered objects]\label{dfnFiltraObj}
Let \(N\in \ZZ_{\ge1}\), and \(\ZZ^{N}\) denote the nerve of the 1-category, corresponding to the partially ordered set \(\ZZ^{N}\), with \(\underline{m}=(m_{1},\cdots,m_{N})\ge \underline{m}'= (m_{1}',\cdots,m_{N}')\) if and only if \(m_{i}\ge m_{i}'\) for any \(1\le i\le N\).
Let 
\((\ZZ^{N})^{ds}\) be the discrete category corresponds to the set \(\ZZ^{N}\),

Let \(\mscr{C}\)
be any stable \(\infty\)-category. We define the category of \emph{\(\ZZ^{N}\)-filtered objects} (resp. \( \ZZ^{N} \)-graded objects) in \(\mscr{C}\) as the category of functors from \((\ZZ^{N})^{op}\) (resp. \( (\ZZ^{N})^{ds} \)) to \(\mscr{C}\), which we denote by \(\Fil^{\ZZ^{N}}(\mscr{C})\) (resp. \( \gr^{\ZZ^{N}}(\mscr{C}) \)). We will refer to ``\(\ZZ\)-filtered" simply as ``filtered". 

For \( M\in\Fil^{\ZZ^{N}}(\mscr{C})  \) (resp. \( M\in \gr^{\ZZ^{N}}(\mscr{C}) \)), we write \( \Fil^{\unl{i}}(M) \) (resp. \( \gr^{\unl{i}}(M) \))
for the image of \( \unl{i}\in\ZZ^{N} \).



We denote by \(\Fil^{\NN^{N}}(\mscr{C})\) the full subcategory consisting of \(C\in \Fil^{\ZZ^{N}}(\mscr{C})\) such that \(C\cong \Fil^{\underline{j}}(C)\) unless \(\underline{j}> \underline{0}\). 

Assume in addition that \(\mscr{C}\) is co-complete, and is equipped with a symmetric monoidal structure. 
Then we can define the Day convolution symmetric monoidal structure on \(\Fil^{\ZZ^{N}}(\mscr{C})\), given by \[\Fil^{\underline{n}}
(C\otimes D):=\varinjlim_{\underline{i}+\underline{j}\ge \underline{n}}\Fil^{\underline{i}}(C)\otimes \Fil^{\underline{j}}(D).
\] 
Note that \(\Fil^{\NN^{N}}(\mscr{C})\subset \Fil^{\ZZ^{N}}(\mscr{C})\) is closed under the Day convolution tensor product, and \(\gr^{\underline{0}}:\Fil^{\NN^{N}}(\mscr{C})\to \mscr{C}\) is symmetric monoidal. 

Given the symmetric monoidal structure, we can talk about filtered commutative algebras \(A\in \mrm{CAlg}(\Fil^{\ZZ^{N}}(\mscr{C}))\), and for such an algebra \(A\), we can talk about the category of filtered modules \(\Mod_{A}(\Fil^{\ZZ^{N}}(\mscr{C}))\) over \(A\).

For \(n=1\), by \cite[Proposition 3.2.1]{Lurie2015rotation}, we have a symmetric monoidal functor \[\gr^{\bullet}:\Fil^{\ZZ}(\mscr{C})\to \gr^{\ZZ}(\mscr{C}),M\mapsto (i\mapsto  \mrm{cofib}(\Fil^{i+1}M\to \Fil^{i}M)).\]
Note that \( \Fil^{\ZZ}(\Fil^{\ZZ^{N-1}}(\mscr{C}))\cong\Fil^{\ZZ^{N}}(\mscr{C}) \), so we can define inductively a symmetric monoidal functor \( \gr^{\bullet}:\Fil^{\ZZ^{N}}(\mscr{C})\to \gr^{\ZZ^{N}}(\mscr{C}) \). We will write \( \gr^{\unl{i}}(M):=\gr^{\unl{i}}(\gr^{\bullet}(M)) \)
for \( M\in \Fil^{\ZZ^{N}}(\mscr{C}) \).
\end{dfn}

\begin{thm}[{\cite{MoulinosTasos2021Tgof}, \cite[Proposition 2.2.6]{BhattLurie2022prismatic}}]
	\label{thmFilterdatA1modGm}
Let \(R\) be a commutative ring, and 
\(\mA^{1}:=\Spec(R[t])\) and \( \GG_{m}:=\Spec(R[t^{\pm1}]) \).
Consider the stack \(\mA^{1}/\GG_{m}\) over \(R\). 

(1) There is a symmetric monoidal equivalence (``Rees construction'')
\[\mrm{Rees}:\Fil^{\ZZ}(D(R))\cong \QCoh([\mA^{1}/\GG_{m}]),\;(\mF,\Fil^{\bullet})\mapsto \bigoplus_{i\in\ZZ} \Fil^{i}(\mF)\cdot t^{-i},
\] 
such that for the maps in \[
\begin{tikzcd}
i:B\GG_{m}\arrow[r,hook] & \mA^{1}/\GG_{m}\arrow[d,"f"] & \arrow[l,hook]\GG_{m}/\GG_{m}\cong \Spec(R):j,\\
& \Spec(R)
\end{tikzcd}
\] 

(a) The functor \(i^{*}:\QCoh([\mA^{1}/\GG_{m}])\to \QCoh(B\GG_{m})\) is compatible with \( \gr^{\bullet}:\Fil^{\ZZ}(D(R))\to \gr^{\ZZ}(D(R)) \) via a symmetric monoidal equivalence \[\gr^{\ZZ}(D(R)):=\mrm{Fun}(\ZZ^{ds},D(R))\cong \QCoh(B\GG_{m}).
\] 

(b) \(j^{*}\) corresponds to the forgetful functor \( \Fil^{\ZZ}(D(R))\to D(R) \). 

(c) \( f_{*}: \QCoh([\mA^{1}/\GG_{m}])\to D(R) \) corresponds to \( M\mapsto \Fil^{0}(M )\), and
\( f^{*}:D(R)\to \QCoh([\mA^{1}/\GG_{m}]) \) corresponds to \( M\mapsto  \mrm{triv}^{0}(M)\) with \[
\mrm{triv}^{0}(M):i\mapsto \begin{cases}
M,&i\ge 0;\\
0,&i<0.
\end{cases}
\]

(2) \((\mF,\Fil^{\bullet})\in \Fil^{\ZZ}(D(R))\) is filtered complete if and only if \(\mrm{Rees}(\mF,\Fil^{\bullet})\) 
is derived \(t\)-complete, and we have an equivalence \[
\mrm{Rees}:\widehat{\Fil}^{\ZZ}(D(R))\cong \QCoh([\hat{\mA}^{1}/\GG_{m}]),
\] where \(\widehat{\Fil}^{\ZZ}(D(R))\)
denotes the subcategory of \(\Fil^{\ZZ}(D(R))\)
consisting of filtered complete objects,
and \(\hat{\mA}^{1}\) denotes the formal completion of \(\mA^{1}\) along the origin.

The natural inclusion \(\widehat{\Fil}^{\ZZ}(D(R))\subset {\Fil}^{\ZZ}(D(R))\)
is induced by \[\widehat{j}_{*}:\QCoh([\hat{\mA}^{1}/\GG_{m}])\to \QCoh([\mA^{1}/\GG_{m}])\]
for \(\widehat{j}:\hat{\mA}^{1}/\GG_{m}\to \mA^{1}/\GG_{m}\).
\end{thm}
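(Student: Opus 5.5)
The plan is to prove (1) by descent from the graded picture, and then (2) by identifying the essential image of the filtered complete objects inside $\QCoh([\mA^{1}/\GG_{m}])$.

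For (1) I would start from the classical identification
\[
\QCoh(B\GG_{m})\cong \gr^{\ZZ}(D(R)).
\]
A $\GG_{m}$-equivariant $R$-module is the same as a $\ZZ$-graded $R$-module, by the weight decomposition of comodules over $R[t^{\pm1}]$, and this extends to derived categories. Since $\mA^{1}=\Spec R[t]$ is affine, the same argument gives
\[
\QCoh([\mA^{1}/\GG_{m}])\cong \{\text{graded } R[t]\text{-modules}\},
\]
with $t$ placed in degree $-1$, matching the convention $\Fil^{i}\cdot t^{-i}$.

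A graded $R[t]$-module $\bigoplus_{i} M_{i}t^{-i}$ is precisely a diagram
\[
\cdots\to M_{i+1}\xrightarrow{t} M_{i}\to\cdots,
\]
that is, an object of $\Fil^{\ZZ}(D(R))$. I would make this an $\infty$-categorical equivalence by noting that both sides are compactly generated. On the $\QCoh$ side the generators are the twists $\mO\{n\}$, corresponding to $R[t]\cdot t^{-n}$. On the filtered side they are the objects $\mrm{triv}^{n}(R)$, equal to $R$ in degrees $\ge n$ and $0$ otherwise. I would match mapping spectra between these generators, and check monoidality by comparing the Day convolution with the tensor product of graded $R[t]$-modules on generators.

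The properties (a)–(c) are then computations on graded modules:
\begin{itemize}
\item For (a), $i^{*}$ is $-\otimes_{R[t]}R[t]/t$, which yields the cofibers $\Fil^{i}/\Fil^{i+1}$.
\item For (b), $j^{*}$ inverts $t$ and takes degree $0$, which is $\varinjlim_{i\to-\infty}\Fil^{i}$. This is the underlying object under the paper's convention that the underlying object is the colimit.
\item For (c), $f_{*}$ takes $\GG_{m}$-invariants, i.e. the weight-$0$ piece $\Fil^{0}$, and its left adjoint $f^{*}$ is $M\mapsto M\otimes R[t]$, which is $\mrm{triv}^{0}(M)$.
\end{itemize}

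For (2), I would describe $\QCoh([\hat{\mA}^{1}/\GG_{m}])$ as the colimit (limit of categories) over $n$ of graded $R[t]/t^{n}$-modules. The quasi-coherent sheaves on the formal completion are then identified with derived $t$-complete graded $R[t]$-modules, using the standard equivalence between $t$-complete modules and compatible systems of modules over $R[t]/t^{n}$. Here $\widehat{j}_{*}$ is the fully faithful inclusion, and $\widehat{j}^{*}$ is derived $t$-completion.

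It remains to show that derived $t$-completeness of a graded module is equivalent to filtered completeness, i.e. $\Rlim_{i}\Fil^{i}=0$. Degreewise, the derived $t$-completion fiber is $\Rlim(\cdots\xrightarrow{t}M\xrightarrow{t}M)$. In degree $k$, this limit is $\Rlim_{m}\Fil^{k+m}$, so vanishing in all degrees is exactly filtered completeness.

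The main obstacle is this last step, namely that derived completeness can be checked gradewise. The grading makes the sum $\bigoplus$ interact with $\Rlim$, so I would compute the $\Rlim$ in the category of graded objects, where limits are computed degreewise, rather than in plain $R[t]$-modules. Everything else is standard, and I would cite \cite{MoulinosTasos2021Tgof} and \cite[Proposition 2.2.6]{BhattLurie2022prismatic} for the details.
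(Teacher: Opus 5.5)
Your route is the standard argument behind the cited results, and it is sound; the paper itself gives no proof, only the citations to \cite{MoulinosTasos2021Tgof} and \cite{BhattLurie2022prismatic}. In (2) you are right that derived $t$-completeness must be tested in graded objects. For the trivial filtration on $R$ the Rees module is $R[t]$: it is complete in the graded sense, but it is not derived $t$-complete as a plain $R[t]$-module.

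\textbf{The slip.} One step fails as written: you have the compact generators on the filtered side backwards.
\begin{itemize}
\item Multiplication by $t$ sends $\Fil^{i+1}t^{-i-1}$ to $\Fil^{i}t^{-i}$. So the degree-$i$ piece of $R[t]\cdot t^{-n}$ is $R$ for $i\le n$ and $0$ for $i>n$.
\item Hence $\mO\{n\}$ corresponds to the representable object $h_n$, with $\Fil^{i}=R$ for $i\le n$ and $\Fil^{i}=0$ for $i>n$. It is a compact generator because $\mrm{Map}(h_n,F)\simeq\Fil^{n}F$.
\item The object you wrote down, $R$ in degrees $\ge n$ and $0$ below, is the cofiber of $h_{n-1}\to(\text{constant diagram with value }R)$. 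Maps out of it are $\fib(\Rlim_{i}\Fil^{i}F\to\Fil^{n-1}F)$.
\item So these objects are not compact, and they do not generate: every constant filtration is right orthogonal to all of them.
\end{itemize}

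For the same reason, $f^{*}M=M\otimes_R R[t]$ is $M$ in degrees $\le 0$ and $0$ in positive degrees. This is the trivial filtration of Remark~\ref{rmkReesToTrivial}. The displayed formula for $\mrm{triv}^{0}$ in (c) should therefore be read with $i\le 0$, and your check of (c) should say that.

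\textbf{The comparison functor.} ``Matching mapping spectra'' and ``checking monoidality on generators'' need an actual functor to act on. Take the Rees functor of the statement, which is restriction along $\ZZ^{ds}\to\ZZ^{op}$. This functor is lax symmetric monoidal for Day convolution and lands in modules over $R[t]$, the image of the unit. Full faithfulness and strong monoidality can then be checked on the corrected generators $h_n$. With that, the argument goes through.
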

\begin{rmk}\label{rmkReesToTrivial}
For any \( (\mF,\Fil^{\bullet})\in \Fil^{\NN}(D(R)) \) (i.e. \( \Fil^{n}\mF \cong \mF\) for \( n\in\ZZ_{\le 0} \)), then
there is a natural \( \GG_{m} \)-equivariant morphism \[ 
\mF\otimes_{R} R[t]\to \mrm{Rees}(\mF)\cong\bigoplus_{i\in\ZZ}\Fil^{i}(\mF)\cdot t^{-i},\] 
that becomes an isomorphism after inverting \( t \).
\end{rmk}

We go back to the setting of solid stacks:
\begin{dfn}
Let \( \mA^{1}:=\AnSpec(\ZZ_{\square}[t]) \), \( \GG_{m}:=\AnSpec(\ZZ_{\square}[t^{\pm1}]) \),  
and \[ \hat{\mA}^{1}:=\varinjlim_{n}\AnSpec(\ZZ_{\square}[t]/t^{n}) .\]
\end{dfn}
\begin{cor}\label{corFilterdatA1modGm}
Let \( X \) be any solid stack.  

(1) There is an equivalence 
\[\mrm{Rees}:\Fil^{\ZZ}(\QCoh(X))\cong \QCoh(X\times [\mA^{1}/\GG_{m}]),\;(\mF,\Fil^{\bullet})\mapsto \bigoplus_{i\in\ZZ} \Fil^{i}(\mF)\cdot t^{-i},
\] 
such that for the maps in \[
i:B\GG_{m}\hookrightarrow \mA^{1}/\GG_{m}\hookleftarrow \GG_{m}/\GG_{m}\cong \Spec(R):j,
\] \(i^{*}\) is compatible with \( \gr^{\bullet} \) an equivalence 
\[\gr^{\ZZ}(\QCoh(X))\cong \QCoh(X\times B\GG_{m}),
\] 
and 
\(j^{*}\) correponds to forgetting the filtration. 

(2) \((\mF,\Fil^{\bullet})\in \Fil^{\ZZ}(\QCoh(X))\) is filtered complete if and only if \(\mrm{Rees}(\mF,\Fil^{\bullet})\) 
is derived \(t\)-complete, and we have an equivalence \[
\mrm{Rees}:\widehat{\Fil}^{\ZZ}(\QCoh(X))\cong \QCoh(X\times [\hat{\mA}^{1}/\GG_{m}]),
\] where \(\widehat{\Fil}^{\ZZ}(\QCoh(X))\)
denotes the subcategory of \(\Fil^{\ZZ}(\QCoh(X))\)
consisting of filtered complete objects.

The natural inclusion \(\widehat{\Fil}^{\ZZ}(\QCoh(X))\subset {\Fil}^{\ZZ}(\QCoh(X))\)
is induced by \(\widehat{j}_{*}:\QCoh(X\times [\hat{\mA}^{1}/\GG_{m}])\to \QCoh(X\times [\mA^{1}/\GG_{m}])\)
for \(\widehat{j}:\hat{\mA}^{1}/\GG_{m}\to \mA^{1}/\GG_{m}\).
\end{cor}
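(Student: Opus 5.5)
The plan is to deduce Corollary \ref{corFilterdatA1modGm} from Theorem \ref{thmFilterdatA1modGm}, applied with \(R=\ZZ\), by base change along the structure map \(X\to\AnSpec(\ZZ_{\square})\). First I treat the affine case \(X=\AnSpec(A)\) with \(A\) a solid analytic ring over \(\ZZ_{\square}\). Write \(\mA^{1}/\GG_{m}\) as the geometric realization of its bar construction \([n]\mapsto \mA^{1}\times \GG_{m}^{n}\), so that
\[
\QCoh(X\times[\mA^{1}/\GG_{m}])\cong \varprojlim_{[n]\in\Delta}D(A\otimes_{\ZZ_{\square}}\ZZ_{\square}[t,s_{1}^{\pm1},\dots,s_{n}^{\pm1}]).
\]
This identifies the left side with \(\GG_{m}\)-equivariant \(A[t]_{\square}\)-modules, i.e. \(\ZZ\)-graded \(A\)-modules \(M=\bigoplus_{i}M_{i}\) together with a degree \(+1\) endomorphism \(t\). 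I then need that \(\GG_{m}\)-comodules in \(D(A)\) are \(\ZZ\)-graded objects. This follows because \(\ZZ_{\square}[s^{\pm1}]\) is, as a solid module, the discrete free module \(\bigoplus_{i}\ZZ s^{i}\), and the solid tensor product commutes with direct sums. So the algebraic proof in \cite{MoulinosTasos2021Tgof} goes through verbatim in \(D(A)\). Indeed, that argument only uses that the coalgebra \(\ZZ[s^{\pm1}]\) splits as a direct sum of characters, together with colimit-preservation of \(\otimes\). A graded \(A[t]\)-module then corresponds to the diagram \(\cdots\to M_{-i-1}\xrightarrow{t}M_{-i}\to\cdots\), which is exactly a filtered object with \(\Fil^{i}=M_{-i}\). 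This is the Rees functor. Its symmetric monoidality is the Day convolution computation, as in the theorem.

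Next I globalize. Both sides satisfy \(*\)-descent in \(X\). For the right side this holds by definition of \(\QCoh\) as \(\varprojlim^{*}\) (Definition \ref{dfnAnStkOverC}). For the left side, \(\Fil^{\ZZ}(\varprojlim_{k}\mscr{C}_{k})\cong\varprojlim_{k}\Fil^{\ZZ}(\mscr{C}_{k})\), since \(\Fil^{\ZZ}(-)=\mrm{Fun}((\ZZ)^{op},-)\) commutes with limits of categories. The affine equivalences are natural in \(A\) with respect to pullback \(A\to B\), because base change commutes with the bar construction. Hence they glue to the claimed equivalence for all \(X\). The descriptions of \(i^{*}\) and \(j^{*}\) are then checked affine-locally. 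For \(j^{*}\), we have \(j^{*}M=M[t^{-1}]_{0}=\varinjlim_{i}\Fil^{-i}\), which is the underlying object of the filtration; I regard this as a colimit \(\varinjlim_{i\to-\infty}\Fil^{i}\) in accordance with the convention. For \(i^{*}\), we have \(i^{*}M=M/t\) as a graded object, and its degree \(-i\) piece is \(\mrm{cofib}(\Fil^{i+1}\to\Fil^{i})=\gr^{i}\).

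For (2), the substantive point is to identify \(\QCoh(X\times[\hat{\mA}^{1}/\GG_{m}])\) with the full subcategory of derived \(t\)-complete objects, embedded by \(\hat{j}_{*}\). Affine-locally, \(\hat{\mA}^{1}=\varinjlim_{n}\AnSpec(\ZZ_{\square}[t]/t^{n})\), so that
\[
\QCoh(X\times\hat{\mA}^{1})\cong\varprojlim_{n}D(A[t]/t^{n}),
\]
with transition maps given by \(*\)-pullback. The right adjoint \(\hat{j}_{*}\) sends a compatible system to \(\varprojlim_{n}M_{n}\). It is fully faithful, and its essential image is the \(t\)-complete modules, by the standard argument \(M\mapsto (M\otimes A[t]/t^{n})_{n}\). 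The one check needed is that the unit \(M\to\varprojlim_{n} M/t^{n}\) is the derived \(t\)-completion in \(D(A[t])\), which holds in any presentable stable setting since \(A[t]/t^{n}\) is perfect over \(A[t]\). Passing to \(\GG_{m}\)-equivariant objects, \(t\)-completeness of \(\bigoplus_{i}\Fil^{i}t^{-i}\) is a graded condition. In degree \(-i\) it reads \(\Fil^{i}\cong\varprojlim_{n}\Fil^{i}/\Fil^{i+n}\), and this is equivalent to \(\varprojlim_{n}\Fil^{n}=0\), i.e. filtered completeness.

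I expect the main obstacle to be this last point: one must make sure the graded \(t\)-adic limit is computed degreewise. Inside the \(\GG_{m}\)-equivariant category the limit is taken of graded objects, so it is degreewise. It is not the limit in \(D(A[t])\), where infinite products of graded pieces would appear. I would verify this by computing \(\hat{j}_{*}\) through the bar resolution, where each term is degreewise.
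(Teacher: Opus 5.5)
Your argument is correct in outline, but it takes a different route from the paper. The paper never works affine-locally with comodules. It identifies \(\QCoh(X\times\mA^{1}\times\GG_{m}^{n})\) with \(\QCoh(X)\otimes_{D(\ZZ)}\bigl(D(\ZZ[t])\otimes_{D(\ZZ)}D(\ZZ[t^{\pm1}])^{\otimes n}\bigr)\), using dualizability of these module categories over \(D(\ZZ)\) and \(*\)-descent. It then passes the cosimplicial limit through and invokes Theorem \ref{thmFilterdatA1modGm} over \(R=\ZZ\), together with \(\QCoh(X)\otimes_{D(\ZZ)}\Fil^{\ZZ}(D(\ZZ))\cong\Fil^{\ZZ}(\QCoh(X))\). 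Part (2) is declared ``similar'', via the torsion model \(\QCoh(\hat{\mA}^{1})\cong D(\ZZ_{\square})\otimes_{D(\ZZ)}D_{t^{\infty}\text{-torsion}}(\ZZ[[t]])\). That route gets naturality in \(X\) and the descriptions of \(i^{*},j^{*}\) for free from the algebraic theorem. Yours re-proves the algebraic statement inside \(D(A)\) and then glues, so you must make the affine equivalences coherently natural in \(A\). In exchange, you make explicit what the paper leaves implicit: ``derived \(t\)-complete'' and \(\hat{j}_{*}\) have to be read in the \(\GG_{m}\)-equivariant (graded) category.

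On that point, the verification you propose in your last sentence would not work as stated. You cannot compute \(\hat{j}_{*}\) termwise along the bar resolution. The coface functors are \(*\)-pullbacks which, on underlying objects, are \(\bigoplus_{\ZZ}\) (tensoring with \(\ZZ[s^{\pm1}]\)). These do not commute with \(\varprojlim_{m}\), so the termwise right adjoints do not form a cartesian section. Concretely, take \(\mO_{X}\) with the trivial filtration, i.e. the graded system \((A[t]/t^{m})_{m}\). The level-\(0\) termwise answer is \(A[[t]]\), whereas the degreewise limit, which is the correct one, gives back \(A[t]\), consistent with Lemma \ref{lemPerfComplexA1Gm} (2).

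The correct justification is the one you state just before. Apply the bar construction to each \(A[t]/t^{m}\) and interchange the two limits to get \(\QCoh(X\times[\hat{\mA}^{1}/\GG_{m}])\cong\varprojlim_{m}\Mod_{A[t]/t^{m}}(\gr^{\ZZ}(D(A)))\). Then \(\hat{j}^{*}\) is \(M\mapsto(M/t^{m})_{m}\) on graded \(A[t]\)-modules, and its right adjoint is the limit taken in graded objects, which is computed degreewise in \(\gr^{\ZZ}(D(A))\). The standard argument you cite then applies verbatim in the graded category: \(-/t^{n}\) is a finite colimit, so it commutes with that limit. This gives full faithfulness of \(\hat{j}_{*}\), with essential image the graded \(t\)-complete objects, i.e. those with \(\varprojlim_{n}\Fil^{n}=0\).
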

\begin{proof}
Note that \begin{align*}
\QCoh(\mA^{1})\cong D(\ZZ_{\square})\otimes_{D(\ZZ)}(\mA^{1}),\;
\QCoh(\GG_{m})\cong D(\ZZ_{\square})\otimes_{D(\ZZ)}D(\ZZ[t]),\\
\QCoh(\hat{\mA}^{1})\cong D(\ZZ_{\square})\otimes_{D(\ZZ)}D_{t^{\infty}-\mrm{torsion}}D(\ZZ[[t]]).
\end{align*}
Therefore, \begin{align*}
\QCoh(X\times[\mA^{1}/\GG_{m}])&\cong \varprojlim^{*}_{n}\QCoh(X\times \mA^{1}\times \GG_{m}^{n})
\\&\cong \varprojlim^{*}_{n}\QCoh(X)\otimes_{D(\ZZ)}(D(\ZZ[t])\otimes_{D(\ZZ)} D(\ZZ[t^{\pm1}])^{\otimes n}) 
\\&\cong \QCoh(X)\otimes_{D(\ZZ)}\Fil^{\ZZ}(D(\ZZ))\cong \Fil^{\ZZ}(\QCoh(X)),
\end{align*} where the second isomorphism follows from the dualizability of \( D(\ZZ[t]) \) 
and \( D(\ZZ[t^{\pm1}]) \) 
over \( D(\ZZ) \) 
and the \( * \)-descent of \( \QCoh(X) \),  
and the third isomorphism follows from Theorem \ref{thmFilterdatA1modGm}.
The calculations of \( \QCoh(X\times B\GG_{m}) \) 
and \( \QCoh(X\times \hat{\mA}^{1}/\GG_{m}) \) 
are similar, which we omit here.
\end{proof}
\begin{notation}\label{notationTwistByAGm}\label{rmkEmbedFilterComplete}
(1)
We write \(\mO\{-1\}\) be the tautological line bundle on \([\mA^{1}/\GG_{m}]\) with an embedding \(\mO\{-1\}\hookrightarrow \mO\). 
For any \(f:X\to [\mA^{1}/\GG_{m}]\), and \(M\in\QCoh(X)\), we write \(M\{i\}:=M\otimes f^{*}(\mO\{-1\}^{\otimes(-i)})\). 

(2)
Given a solid stack \( X\to [\mA^{1}/\GG_{m}] \), and \( M\in \QCoh(X\times_{\mA^{1}/\GG_{m}}\hat{\mA}^{1}/\GG_{m}) \), we will always regard \( M \)   
as an object in \( \QCoh(X) \) 
via \( \widehat{j}_{*} \) for \( \widehat{j}: X\times_{\mA^{1}/\GG_{m}}\hat{\mA}^{1}/\GG_{m}\to X\). 

(3) Given \( M\in\QCoh(X) \), we  will always regard \( M \) (equipped with the trivial filtration)  
as an object in \( \QCoh(X\times [\mA^{1}/\GG_{m}]) \) 
by \( p_{1}^{*} \)  
for \( p_{1}: X\times [\mA^{1}/\GG_{m}]\to X\). 
\end{notation}


\begin{lem}\label{lemPerfComplexA1Gm}
Let \(X\in\mrm{SolidStk}\), and \[
X\times B\GG_{m}\xrightarrow{i}
X\times \hat{\mA}^{1}/\GG_{m}\xrightarrow{\widehat{j}} X\times \mA^{1}/\GG_{m}.
\] 

(1) \(i\) is a \(!\)-surjection, and \(i^{*}\) is conservative.



(2)
The adjunction
\( (\widehat{j}^{*},\widehat{j}_{*}) \) 
induces equivalences \( \Perf(X\times \hat{\mA}^{1}/\GG_{m}) \cong \Perf(X\times \mA^{1}/\GG_{m})	 \) and \( \Vect(X\times \hat{\mA}^{1}/\GG_{m}) \cong \Vect(X\times \mA^{1}/\GG_{m})	 \).
In particular, the natural map \(\mO_{X\times \mA^{1}/\GG_{m}}\to \widehat{j}_{*}(\mO_{X\times \hat{\mA}^{1}/\GG_{m}})\) is an isomorphism.
\end{lem}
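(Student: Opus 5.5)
For (1), I plan to reduce to the universal case $X=*$ by base change. The map $i$ is the base change of $0\colon \AnSpec(\ZZ_{\square})\to \hat{\mA}^{1}$ along $\hat{\mA}^{1}\to \hat{\mA}^{1}/\GG_{m}$, further pulled back to $X$. By the argument in Example \ref{egEssentialTate} (3), $0\to\hat{\mA}^{1}$ is a prim $!$-cover, since it is a colimit of the descendable maps $\ZZ[t]/t^{n}\to\ZZ$. Each $\ZZ[t]/t^{n}\to \ZZ$ is descendable because $t$ is nilpotent: the ideal filtration has finitely many steps, with graded pieces $\ZZ$-modules, i.e. modules over the target. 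Prim $!$-covers are stable under base change by Definition \ref{dfnVariousPropertyMorphiDescent} and Theorem \ref{thmVariousDescent} (3). Moreover, $!$-surjectivity can be checked after pulling back along the $!$-surjection $X\times\hat{\mA}^{1}\to X\times\hat{\mA}^{1}/\GG_{m}$. Hence $i$ satisfies universal $!$-descent, so by Theorem \ref{thmVariousDescent} (1) it satisfies $*$-descent, and in particular $i^{*}$ is conservative. For the $!$-surjection claim, if one wants the topos-theoretic epimorphism rather than descent, I would instead argue that $\hat{\mA}^{1}=\varinjlim\AnSpec(\ZZ_{\square}[t]/t^{n})$ and that each $\AnSpec(\ZZ[t]/t^{n})\to *$ admits the section $0$ with a nilpotent, descendable structure. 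Concretely, the Čech nerve of $0$ over $\AnSpec(\ZZ[t]/t^{n})$ has geometric realization $!$-equivalent to the target, so $0$ is an effective epimorphism after $!$-sheafification. Alternatively, conservativity of $i^{*}$ follows directly through Corollary \ref{corFilterdatA1modGm}: $i^{*}$ is $\gr^{\bullet}$, and $\gr^{\bullet}$ is conservative on complete filtered objects.

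For (2), I would work through the Rees equivalence of Corollary \ref{corFilterdatA1modGm}. There $\widehat{j}^{*}$ is completion of the filtration and $\widehat{j}_{*}$ is the inclusion of complete objects, and both are symmetric monoidal on dualizable objects. The key claim is that every dualizable object of $\Fil^{\ZZ}(\QCoh(X))$ is already complete. To prove it, I would check locally with the $*$-descent from the $!$-surjection $X\times\mA^{1}\to X\times\mA^{1}/\GG_{m}$, reducing to the statement that a dualizable $M$ over $\QCoh(X)\otimes D(\ZZ[t])$ is derived $t$-complete. This follows from $M\cong \unl{\Hom}(M^{\vee},\mO)$ together with the fact that $\mO_{X\times\mA^{1}/\GG_{m}}$, i.e. the trivial filtration $\mrm{triv}^{0}$ by Theorem \ref{thmFilterdatA1modGm} (1)(c), is complete: $\Fil^{i}=0$ for $i>0$. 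Since internal Homs into a complete object are complete (completeness is $\varprojlim_{i}\Fil^{i}=0$, which is preserved by $\unl{\Hom}$), $M$ is complete.

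With this claim, $\widehat{j}_{*}\widehat{j}^{*}M\cong M$ for perfect $M$, and $\widehat{j}^{*}\widehat{j}_{*}\cong\id$ because $\widehat{j}$ is a monomorphism and $\widehat{j}_{*}$ is fully faithful. Conversely, I must show that $\widehat{j}_{*}$ preserves perfectness. Given dualizable $N$ on the completion, I would take $\widehat{j}_{*}N^{\vee}$ as the candidate dual and verify the evaluation and coevaluation via the projection formula, Lemma \ref{lemProjectionFormuaForPerf}. I expect this to be the main obstacle, since it amounts to showing that the tensor product of complete filtered objects, when one of them is dualizable, needs no recompletion. I would handle it with the conservative functor $i^{*}$ from (1), reducing to the graded pieces, where it is clear. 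Vector bundles then match because both $\widehat{j}^{*}$ and $\widehat{j}_{*}$ preserve local triviality, checked after pullback along $i$. The final assertion is the special case $M=\mO$.
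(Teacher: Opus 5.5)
Your part (1) agrees with the paper. Your proof that perfect complexes on $X\times\mA^{1}/\GG_{m}$ are complete, via $M\cong\unl{\Hom}(M^{\vee},\mO)$, is a valid substitute for the paper's projection-formula argument $\widehat{j}_{*}\widehat{j}^{*}M\cong\widehat{j}_{*}\widehat{j}^{*}(\mO)\otimes M\cong M$. The step showing that $\widehat{j}_{*}$ preserves perfect complexes, however, has a genuine gap.

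The comparison map $\widehat{j}_{*}N\otimes\widehat{j}_{*}N'\to\widehat{j}_{*}(N\otimes N')$ is just the completion map of its source. It is therefore an isomorphism on $\gr^{\bullet}$ for \emph{all} complete $N,N'$, dualizable or not, so checking graded pieces carries no information. The conservativity from (1) is conservativity of $i^{*}$ on $\QCoh(X\times\hat{\mA}^{1}/\GG_{m})$, i.e.\ on complete objects. By contrast, $(\widehat{j}\circ i)^{*}$ is not conservative on $\QCoh(X\times\mA^{1}/\GG_{m})$: it kills the Rees module of any constant filtration, e.g.\ anything pushed forward from $X\times\GG_{m}/\GG_{m}$. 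Since completeness of the source is exactly what is at stake, the argument is circular. The projection formula (Lemma \ref{lemProjectionFormuaForPerf}) is likewise unavailable, because it presupposes that $\widehat{j}_{*}N$ or $\widehat{j}_{*}N^{\vee}$ is already perfect.

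What is missing is a substantive use of the dualizability of $N$. Both $\Perf$'s are limits over affines $\AnSpec(A)\to X$, and $\widehat{j}^{*}$ commutes with pullback. Given your full faithfulness, it therefore suffices to prove essential surjectivity of $\widehat{j}^{*}$ on $\Perf$ for $X=\AnSpec(A)$. Then $\widehat{j}_{*}N\cong\widehat{j}_{*}\widehat{j}^{*}M\cong M$ is perfect.

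For $X=\AnSpec(A)$, the object $i^{*}N=\gr^{\bullet}N$ is dualizable in $\gr^{\ZZ}(D(A))$. Since the unit $A$ is compact, the coevaluation factors through finitely many degrees. The triangle identities then force $\gr^{n}N=0$ for all but finitely many $n$, and the remaining $\gr^{n}N$ are perfect. Completeness then gives $\Fil^{n}N=0$ for $n\gg0$, while $\Fil^{n}N$ is constant for $n\ll0$. So $N$ is a finite extension of twists $p_{1}^{*}(\gr^{n}N)\{m\}$ (Notation \ref{notationTwistByAGm}), which are perfect and already complete on $\AnSpec(A)\times\mA^{1}/\GG_{m}$. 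The same finiteness, together with local splitting of the finite filtration once its graded pieces are free, gives the vector bundle statement.

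Two minor corrections:
\begin{itemize}
\item Drop the reduction to $X\times\mA^{1}$ in your completeness argument. Forgetting the grading loses the relevant notion of completeness: $\mO_{X}[t]$ itself is not derived $t$-complete. Your internal-Hom argument works directly in $\Fil^{\ZZ}(\QCoh(X))$.
\item The isomorphism $\widehat{j}^{*}\widehat{j}_{*}\cong\id$ comes from Corollary \ref{corFilterdatA1modGm} (2), where $\widehat{j}_{*}$ is the inclusion of complete objects. It does not follow from $\widehat{j}$ being a monomorphism.
\end{itemize}
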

\begin{proof}
For 
(1), the first part  follows from the fact that \(\AnSpec(\QQ_{p})\to \hat{\mA}^{1}\)
is a \(!\)-surjection, being colimits of descendable covers by \cite[Proposition 3.35]{Mathew2016galois}. 



For (2), 
note that \( \mO_{X} \) with the trivial filtration is filtered complete, and by Corollary, \ref{corFilterdatA1modGm} thus \( \widehat{j}_{*}(\widehat{j}^{*}(\mO_{X}))\cong \mO_{X} \). 
By Lemma \ref{lemProjectionFormuaForPerf}, for any \( M\in\Perf(X\times \mA^{1}/\GG_{m}) \), 
the natural map \( M\to \widehat{j}_{*}(\widehat{j}^{*}(M)) \) is an isomorphism. Thus \( \widehat{j}^{*}:\Perf(X\times \mA^{1}/\GG_{m})\to \Perf(X\times \widehat{\mA}^{1}/\GG_{m}) \) is fully faithful. Moreover, by Corollary \ref{corFilterdatA1modGm}, \( \widehat{j}_{*} \) 
also preserves perfect complexes, and \( \widehat{j}^{*}\circ \widehat{j}_{*}\cong \id \) 
when restricted to perfect complexes. 
\end{proof}

\begin{dfn}\label{dfnSpfFunctor}

%

(1) (\( \Spf \) construction)

Let \( \mrm{AniAlg}(\widehat{\Fil}^{\NN}(D(\ZZ_{\square}))) \) be the category of animated ring objects \( R^{\rhd} \)  in \( \widehat{\Fil}^{\NN}(D(\ZZ_{\square})) \) (i.e. 
\( R^{\rhd}\in\widehat{\Fil}^{\ZZ}(D(\ZZ_{\square}))\) such that \(\gr^{i}\cong 0\) for \( i<0 \), and \( R^{\rhd}/\Fil^{i} \) is equipped with a condensed animated ring structure). 
Let  \( \mrm{AnCAlg}(\widehat{\Fil}^{\NN}(D(\ZZ_{\square}))) \) be the category of \( R^{\rhd}\in \mrm{AniAlg}(\widehat{\Fil}^{\NN}(D(\ZZ_{\square}))) \) 
equipped with a solid analytic ring structure  \( \gr^{0}(R) \) on \( \gr^{0}(R^{\rhd}) \) (Definition \ref{dfnBoundedRing} (2)), such that \( \gr^{i}(R^{\rhd})\in D(\gr^{0}(R)),\forall i\in\NN \). 
By \cite[Proposition 12.23]{CS20}, 
there is a unique analytic ring structure \(R/\Fil^{n}\) on \(R^{\rhd}/\Fil^{n}\) such that the natural map lifts to a map between analytic rings \(R/\Fil^{n}\to R/\Fil^{1}\). 

We define a functor  
\begin{align*}
&\AnSpec^{+}_{n}(R):\mrm{AnCAlg}(\widehat{\Fil}^{\NN}(D(\ZZ_{\square})))^{op}\to (\mrm{SolidStk})_{/[\mA^{1}/\GG_{m}]}\\
&\AnSpec^{+}_{n}(R):=
\AnSpec^{+}(R/\Fil^{n}):=
\left[\left(\AnSpec\left(\mrm{Rees}(R/\Fil^{n})\right)\right)/\;\GG_{m}\right], 
\end{align*}
whose fiber over \([\GG_{m}/\GG_{m}]\)
is isomorphic to \(\AnSpec(R/\Fil^{n})\). By Remark \ref{rmkReesToTrivial}, we have a natural map \( \AnSpec^{+}(R/\Fil^{n})\to \AnSpec(R/\Fil^{n})\times [\mA^{1}/\GG_{m}] \), which is an isomorphism when restricted to \( [\GG_{m}/\GG_{m}] \).

We further a functor \( \Spf^{+}: \mrm{AnCAlg}(\widehat{\Fil}^{\NN}(D(\ZZ_{\square})))^{op}\to (\mrm{SolidStk})_{/[\mA^{1}/\GG_{m}]} \) 
via \[\Spf^{+}(R):=\varinjlim_{n}\AnSpec^{+}(R/\Fil^{n})\to [\mA^{1}/\GG_{m}],
\] and its fiber over \([\GG_{m}/\GG_{m}]\) is
\[\Spf(R):=\varinjlim_{n}\AnSpec(R/\Fil^{n}).
\]

(2) (Relative \( \unl{\Spf} \) construction)
Let \(X\) be a solid stack over \((\QQ,\ZZ)_{\square}\). 


We define a functor \(  \) via
\begin{align*}
&\unl{\AnSpec}_{X,n}^{+}:\mrm{CAlg}(\widehat{\Fil}^{\NN}(\QCoh(X)))^{op}\to (\mrm{SolidStk})_{/X\times[\mA^{1}/\GG_{m}]}\\
&\unl{\AnSpec}_{X,n}^{+}(R):=
\unl{\AnSpec}^{+}_{X}(R/\Fil^{n}):=\left[
		\unl{\AnSpec}_{X\times \mA^{1}}\left(\mrm{Rees}(R/\Fil^{n})\right)
		/\;\GG_{m}\right]
\end{align*}
whose fiber over \([\GG_{m}/\GG_{m}]\)
is isomorphic to \(\unl{\AnSpec}_{X}(R/\Fil^{n})\to X\),
where \( \unl{\AnSpec} \) is defined as in Lemma \ref{lemAffProperProp} (3).

We further define a functor 
\begin{align*}
&\unl{\Spf}_{X}^{+}:\mrm{CAlg}(\widehat{\Fil}^{\NN}(\QCoh(X)))^{op}\to (\mrm{SolidStk})_{/X\times[\mA^{1}/\GG_{m}]}\\
&\unl{\Spf}_{X}^{+}(R):=\varinjlim_{n}\unl{\AnSpec}_{X}^{+}(R/\Fil^{n})\to X\times [\mA^{1}/\GG_{m}],
\end{align*}
and its fiber over \([\GG_{m}/\GG_{m}]\) is
\[\unl{\Spf}_{X}(R):=\varinjlim_{n}\unl{\AnSpec}_{X}(R/\Fil^{n})\to X.
\] 
\end{dfn}
\begin{rmk}\label{rmkFromFilterToNoFil}
By Remark \ref{rmkReesToTrivial}, we have a natural map \( \Spf^{+}(R)\to \Spf(R)\times [\mA^{1}/\GG_{m}] \)
that is an isomorphism over \( [\GG_{m}/\GG_{m}] \). Similar results hold for the relative \( \unl{\Spf} \).
\end{rmk}
\begin{lem}\label{lemSpfCommuteWithLimit}
The functors \( \Spf^{+} \) and \(\unl{\Spf}^{+}_{X}\) in Definition \ref{dfnSpfFunctor}
commute with fiber products.
\end{lem}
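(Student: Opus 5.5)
We reduce the statement to three facts: the Rees construction is symmetric monoidal, $\AnSpec$ and $\unl{\AnSpec}$ send pushouts of algebras to fiber products, and the colimit in the $\infty$-topos $\mrm{SolidStk}$ commutes with fiber products. Fix a diagram $R_{2}\leftarrow R_{1}\to R_{3}$ in $\mrm{AnCAlg}(\widehat{\Fil}^{\NN}(D(\ZZ_{\square})))$. The fiber product in the opposite category is the pushout $R:=R_{2}\hatotimes_{R_{1}}R_{3}$. Here the tensor product is the filtered-completed Day convolution tensor product. Its analytic structure on $\gr^{0}$ is the solid pushout $\gr^{0}(R_{2})\otimes_{\gr^{0}(R_{1})}\gr^{0}(R_{3})$, using that $\gr^{0}$ is symmetric monoidal on $\Fil^{\NN}$. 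We must show
\[
\Spf^{+}(R)\cong \Spf^{+}(R_{2})\times_{\Spf^{+}(R_{1})}\Spf^{+}(R_{3})
\]
over $[\mA^{1}/\GG_{m}]$.

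\textbf{Finite level.} The first step is to show that for each $n$ the truncation $R/\Fil^{n}$ is the pushout $R_{2}/\Fil^{n}\otimes_{R_{1}/\Fil^{n}}R_{3}/\Fil^{n}$, again truncated at $\Fil^{n}$. This holds because $\Fil^{\ge n}$ is an ideal for Day convolution, so $\Fil^{n}$ of the completed tensor product agrees with $\Fil^{n}$ of the uncompleted one modulo terms that die after truncation. Since Rees is symmetric monoidal (Corollary \ref{corFilterdatA1modGm}), $\mrm{Rees}(R/\Fil^{n})$ is the corresponding pushout of $\mrm{Rees}(R_{i}/\Fil^{n})$, modulo the $t$-adic truncation. $\AnSpec$ turns pushouts of analytic rings into fiber products, being a Yoneda embedding of the opposite category. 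Quotienting by $\GG_{m}$ commutes with fiber products over $B\GG_{m}$. This gives a comparison
\[
\AnSpec^{+}_{n}(R)\to \AnSpec^{+}_{n}(R_{2})\times_{\AnSpec^{+}_{n}(R_{1})}\AnSpec^{+}_{n}(R_{3}).
\]
This map is not an isomorphism for fixed $n$: the pushout of the truncations differs from the truncation of the pushout by the higher filtration pieces $\Fil^{\ge n}$ of the uncompleted tensor product.

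\textbf{Passing to the colimit.} The remedy is cofinality. The natural maps
\[
R_{2}/\Fil^{n}\otimes_{R_{1}/\Fil^{n}}R_{3}/\Fil^{n}\;\longrightarrow\; R/\Fil^{n}\;\longrightarrow\; R_{2}/\Fil^{m}\otimes_{R_{1}/\Fil^{m}}R_{3}/\Fil^{m}\quad(m\le n)
\]
exhibit the two ind-systems $\{R/\Fil^{n}\}_{n}$ and $\{R_{2}/\Fil^{n}\otimes_{R_{1}/\Fil^{n}}R_{3}/\Fil^{n}\}_{n}$ as mutually cofinal. Hence the two colimits $\varinjlim_{n}$ of the corresponding $\AnSpec^{+}$'s agree. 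Finally, $\varinjlim_{n}$ is a filtered colimit in an $\infty$-topos, so it commutes with finite limits, in particular with fiber products. This identifies $\Spf^{+}(R)$ with the fiber product of the $\Spf^{+}(R_{i})$.

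\textbf{The relative case and the main obstacle.} For $\unl{\Spf}^{+}_{X}$ the same argument applies verbatim. One uses Lemma \ref{lemAffProperProp} (3): $\unl{\AnSpec}_{X\times\mA^{1}}$ sends pushouts in $\mrm{CAlg}(\QCoh)$ to fiber products, which is immediate from its definition by the mapping-space formula. Alternatively, one checks locally on an affine $!$-cover of $X$ using its compatibility with base change. The main obstacle is the cofinality step: one has to check that $\Fil^{m}$ of the pushout of the $n$-truncations lies in the image of $\Fil^{m}$ of the pushout. The key input is that $\Fil^{\ge n}\cdot(\text{anything})\subset\Fil^{\ge n}$ for Day convolution, which makes the truncated tensor product independent of $n\ge m$ modulo $\Fil^{m}$. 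Everything else is formal.
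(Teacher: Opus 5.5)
Your proposal is the paper's argument: identify $\Spf^{+}$ of the pushout with $\varinjlim_{n}$ of the fiber products of the truncated $\AnSpec^{+}$'s, using mutual cofinality of the pro-systems $\{R/\Fil^{n}\}_{n}$ and $\{R_{2}/\Fil^{n}\otimes_{R_{1}/\Fil^{n}}R_{3}/\Fil^{n}\}_{n}$, then use that filtered colimits in the $\infty$-topos $\mrm{SolidStk}$ commute with fiber products, and argue the same way for $\unl{\Spf}^{+}_{X}$. One small correction: for $n=m$ there need be no map $R/\Fil^{n}\to R_{2}/\Fil^{m}\otimes_{R_{1}/\Fil^{m}}R_{3}/\Fil^{m}$ compatible with the structure maps (take $R_{1}=\QQ$, $R_{2}=\QQ[[x]]$, $R_{3}=\QQ[[y]]$, $n=m=2$: $xy=0$ on the left would have to go to $xy\neq 0$); such a map exists only for $n$ large relative to $m$ (classically $n\ge 2m-1$, since $a+b\ge 2m-1$ forces $a\ge m$ or $b\ge m$ in the Day convolution), and that is all cofinality requires.
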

\begin{proof}
Since \( \mrm{SolidStk} \) 
is an \( \infty \)-topoi by Definition \ref{dfnAnStkOverC}, the colimits commute with fiber products by Giraud's axioms (\cite[Theorem 6.1.0.6]{Lurie2009HTT}). Given a pushout diagram \(
R=B\otimes_{A}C
\) in \( \mrm{AnCAlg}(\widehat{\Fil}^{\NN}(D(\ZZ_{\square})))^{b} \), the pro-systems
\( (R/\Fil^{i})_{i\in\NN} \) and \( B/\Fil^{i}\otimes_{A/\Fil^{i}}C/\Fil^{i} \) are cofinal to each other. Thus  \begin{align*}
\Spf^{+}(R)&\cong \varinjlim_{i}(\AnSpec^{+}(B/\Fil^{i})\times_{\AnSpec^{+}(A/\Fil^{i})}\AnSpec^{+}(C/\Fil^{i}))\\&\cong 
(\varinjlim_{i}\AnSpec^{+}(B/\Fil^{i}))\times_{\varinjlim_{i}\AnSpec^{+}(A/\Fil^{i})}(\varinjlim_{i}\AnSpec^{+}(C/\Fil^{i}))
\\&\cong \Spf^{+}(B)\times_{\Spf^{+}(A)}
\Spf^{+}(C),
\end{align*} as desired. The proof for \( \unl{\Spf}_{X}^{+} \) 
is similar.
\end{proof}

\subsection{Examples}\label{subsecEgSolidStack}
\begin{eg}[Schemes]\label{egScheme}
Let \( X \) be an algebraic scheme.
 If \( X=\Spec(A) \), then we can realize \( X \) as the solid stack \( \AnSpec((A,\ZZ)_{\square}) \). A Zariski open immersion \( \Spec(A[1/f])\to \Spec(A) \) induces a proper morphism 
\( \AnSpec((A[1/f],\ZZ)_{\square})\to \AnSpec((A,\ZZ)_{\square}) \). A Zariski open covering \( \{\Spec(A[1/f_{i}])\}_{i=1}^{n} \) of \( \Spec(A) \) induces a prim \( ! \)-cover by the \v Cech resolution \[
0\to A\to \prod_{i=1}^{n}A[1/f_{i}]\to \cdots A[1/(f_{1}\cdots f_{n})]\to 0.
\] Thus we can realize general schemes as solid stacks.
By construction, \( X\to \AnSpec(\ZZ_{\square}) \) is cohomologically co-smooth by Theorem \ref{thmVariousDescent} (5) and Proposition \ref{propAffineProperCover}.
\end{eg}
\begin{lem}\label{lemBasicCoverOfA1}
Let
\[
U_{0}:=\AnSpec(\ZZ[T]_{\square}),\;U_{\infty}:=\AnSpec((\ZZ[T^{\pm1}],\ZZ[T^{-1}]_{\square})).
\] For \( ?\in\{0,\infty\} \), \( j_{?}:U_{?}\to \mA^{1} \) is an open immersion, and \( U_{0}\coprod U_{\infty}\to \mA^{1}=\AnSpec(\ZZ_{\square}[T]) \) is a suave \( ! \)-cover.
\end{lem}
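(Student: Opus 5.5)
The plan is to deduce everything from the open/closed recollement that idempotent algebras give on $D((\ZZ[T],\ZZ)_{\square})$. Suaveness will follow formally from the open immersions, and the real content is the conservativity needed for the $!$-cover.

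\emph{Step 1: $j_{0}$ is an open immersion.} I would use the Clausen--Scholze description of $\ZZ[T]_{\square}$ as a localization of $(\ZZ[T],\ZZ)_{\square}$. For $M\in D((\ZZ[T],\ZZ)_{\square})$,
\[
M\otimes\ZZ[T]_{\square}\cong \RHom_{\ZZ[T]}\bigl(\ZZ((T^{-1}))/\ZZ[T],M\bigr)[1],
\]
and $\ZZ((T^{-1}))$ is an idempotent $(\ZZ[T],\ZZ)_{\square}$-algebra. Put $Q:=\mathrm{fib}(\ZZ[T]\to\ZZ((T^{-1})))$, which is the idempotent coalgebra complementary to $\ZZ((T^{-1}))$. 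Then:
\begin{itemize}
\item $D(\ZZ[T]_{\square})$ is the full subcategory of $M$ with $\RHom(\ZZ((T^{-1})),M)=0$;
\item $j_{0}^{*}=\RHom(Q,-)$;
\item $j_{0}^{*}$ has the fully faithful left adjoint $j_{0,!}=Q\otimes-$.
\end{itemize}
This is exactly the definition of an open immersion of analytic rings in Definition \ref{dfnPartSixFunctor}. It also shows that $j_{0}$ is a $!$-able monomorphism with $j_{0}^{!}\cong j_{0}^{*}$, which is the stack-theoretic notion of Definition \ref{dfnVariousPropertyMorphiDescent} (8).

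\emph{Step 2: $j_{\infty}$ is an open immersion.} I would factor $j_{\infty}$ as
\[
U_{\infty}\to (\ZZ[T^{\pm1}],\ZZ)_{\square}\to (\ZZ[T],\ZZ)_{\square}.
\]
The second map inverts $T$. Inverting an element is a smashing localization: its kernel is the $T$-nilpotent modules, generated by $\ZZ[T]/T$. I would check directly that its pullback has a fully faithful left adjoint; this is the Zariski-open case of Example \ref{egScheme}. The first map is the base change along $\ZZ[T^{-1}]\to\ZZ[T^{\pm1}]$ of the open immersion $\AnSpec(\ZZ[T^{-1}]_{\square})\to\AnSpec((\ZZ[T^{-1}],\ZZ)_{\square})$ from Step 1, applied to the variable $T^{-1}$. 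Open immersions are stable under base change and composition, so $j_{\infty}$ is an open immersion.

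\emph{Step 3: the cover.} Open immersions are cohomologically smooth, since $j^{!}\cong j^{*}$ and so the dualizing sheaf is $\mO$. Hence $U_{0}\coprod U_{\infty}\to\mA^{1}$ is $!$-able and suave. By Definition \ref{dfnVariousPropertyMorphiDescent} (4), it remains to show that $(j_{0}^{*},j_{\infty}^{*})$ is jointly conservative. Suppose $j_{0}^{*}M=0$. By the recollement of Step 1, $M$ is then a $\ZZ((T^{-1}))$-module. Now $\ZZ((T^{-1}))=\ZZ[[T^{-1}]][T]$ is $\ZZ[T^{-1}]_{\square}$-solid and $T$ acts invertibly on it. So the map $(\ZZ[T],\ZZ)_{\square}\to\ZZ((T^{-1}))$ factors through $U_{\infty}$, and $M$ already lies in the essential image of $j_{\infty,*}$. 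Therefore $M\cong j_{\infty,*}j_{\infty}^{*}M$, and $j_{\infty}^{*}M=0$ forces $M=0$.

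I expect the main obstacle to be this last factorization. One has to verify carefully that $\ZZ((T^{-1}))$, with the analytic structure induced from $(\ZZ[T],\ZZ)_{\square}$, really is an algebra over the analytic ring $(\ZZ[T^{\pm1}],\ZZ[T^{-1}]_{\square})$, i.e.\ that it is $\ZZ[T^{-1}]_{\square}$-solid. Then $j_{\infty}^{*}$ restricts to the identity on $\ZZ((T^{-1}))$-modules. I would check this from the explicit presentation $\ZZ[[T^{-1}]]=\prod_{n}\ZZ\,T^{-n}$ together with the description of $\ZZ[T^{-1}]_{\square}$-solid modules in \cite{CS19}.
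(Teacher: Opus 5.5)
Your Step 1, and the reduction of suaveness to joint conservativity, are fine and agree with the paper. Step 2, however, rests on a false claim. The map $\AnSpec((\ZZ[T^{\pm1}],\ZZ)_{\square})\to\AnSpec((\ZZ[T],\ZZ)_{\square})$ is not an open immersion in the sense of Definition \ref{dfnPartSixFunctor}. A smashing localization makes the pullback $M\mapsto M[T^{-1}]$ the \emph{left} adjoint of a fully faithful forgetful functor, so the target carries the induced analytic structure and the map is proper. This is exactly what Example \ref{egScheme} says about Zariski open immersions, so that citation cuts the other way. In fact the pullback has no left adjoint at all, because it does not preserve products. Each $\ZZ[T]/T^{n}$ localizes to $0$, yet $(1,1,\ldots)\in\prod_{n}\ZZ[T]/T^{n}$ is killed by no power of $T$. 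Hence $(\prod_{n}\ZZ[T]/T^{n})[T^{-1}]\neq0=\prod_{n}(\ZZ[T]/T^{n})[T^{-1}]$, and you cannot conclude by composing open immersions. The fix is the paper's route: identify $U_{\infty}$ directly as the open complement of the idempotent algebra $\ZZ[[T]]$. For instance, the exact sequence $0\to\ZZ[T]\to\ZZ[[T]]\oplus\ZZ[T^{\pm1}]\to\ZZ((T))\to0$ gives $\fib(\ZZ[T]\to\ZZ[[T]])\simeq\fib(\ZZ[T^{\pm1}]\to\ZZ((T)))$. The condition $\RHom_{\ZZ[T]}(\ZZ[[T]],M)=0$ forces $T$ to act invertibly on $M$, because $\ZZ[T]/T$ is a $\ZZ[[T]]$-module. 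Once $T$ is invertible, that condition is exactly $\ZZ[T^{-1}]_{\square}$-solidity, $\RHom(\ZZ((T)),M)=0$. This yields $j_{\infty,!}=\fib(\ZZ[T]\to\ZZ[[T]])\otimes-$.

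Your conservativity argument in Step 3 is a valid repackaging of the paper's, but the check you propose for its key step would not suffice. Knowing that the \emph{ring} $\ZZ((T^{-1}))$ is $\ZZ[T^{-1}]_{\square}$-solid with $T$ invertible does not make every $\ZZ((T^{-1}))$-module in $D((\ZZ[T],\ZZ)_{\square})$ solid. For comparison, $\ZZ[T^{-1}]$ is a solid algebra, but $\ZZ((T))$ is a non-solid module over it. The missing input is the vanishing (with $S=T^{-1}$)
\[
\ZZ((T^{-1}))\otimes_{(\ZZ[T],\ZZ)_{\square}}\ZZ[[T]]\cong\ZZ[[S,T]][S^{-1}]/(ST-1)=0,
\]
which follows from \cite[Proposition 6.3]{CS19} and the invertibility of $1-ST$; this is the heart of the paper's proof. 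With it, every $\ZZ((T^{-1}))$-module $M$ satisfies $\RHom_{\ZZ[T]}(\ZZ[[T]],M)\cong\RHom_{\ZZ((T^{-1}))}(\ZZ((T^{-1}))\otimes\ZZ[[T]],M)=0$, and your argument closes. The paper uses the vanishing even more directly: $j_{0}^{*}M=0=j_{\infty}^{*}M$ gives $M\cong M\otimes\ZZ((T^{-1}))\otimes\ZZ[[T]]=0$.
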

\begin{proof}
By definition (\cite[Observation 8.10-8.11]{CS19}), these open subspaces are
defined 
by the idempotent algebras \( A_{0}:=\ZZ((T^{-1})) \) and \( A_{\infty}:=\ZZ[[T]] \) in \( D(\ZZ_{\square}[T]) \) respectively. 
More precisely, by the results loc. cit. \( M\in D(\ZZ_{\square}[T]) \) lies in \( D(\ZZ[T]_{\square}) \) if and only if \( M\otimes A_{0}\cong 0 \), and \( j_{0}^{*} \) has a fully faithful left adjoint \( j_{0,!}:M\mapsto M\otimes( A_{0}/\ZZ[T])[-1]  \). Thus \( j_{0} \) is an open immersion. A similar argument works for \( j_{\infty} \).
Finally, to show it is a suave \( ! \)-cover, it suffices to show that \( (j_{0}\coprod j_{\infty})^{*} \) is conservative. If \((j_{0}\coprod j_{\infty})^{*}  M\cong 0 \), then \( M\otimes A_{?}/\ZZ[T]\cong 0 \),
so \( M\cong M\otimes A_{?}\cong M\otimes A_{0}\otimes A_{\infty} \). Thus \( M\cong 0 \) because by \cite[Proposition 6.3]{CS19} \[ A_{0}\otimes A_{\infty}\cong \ZZ((S,T))/(ST-1)\cong
0 \]
since \( 1-ST \) is invertible in \( \ZZ((S,T)) \).
\end{proof}
\begin{eg}[Analytic adic spaces]\label{egAnAdicSpace}
Let \( X \) be a sheafy analytic adic space over \( \Spa(\QQ_{p},\ZZ_{p}) \). If \( X=\Spa(A,A^{+}) \), we can realize \( X \) as the solid stack \( \AnSpec((A,A^{+})_{\square}) \). An analytic open immersion \( X\left(\frac{f_{1},\ldots,f_{n}}{g}\right)\hookrightarrow X \) induces an open immersion. 
For this, it suffices to consider \( X\left(\frac{1}{f}\right) \) or \( X\left(\frac{f}{1}\right) \). This follows from Lemma \ref{lemBasicCoverOfA1} because 
\( X\left(\frac{f}{1}\right)\hookrightarrow X \) (resp. \( X\left(\frac{1}{f}\right)\hookrightarrow X \)) is the pull-back of \( U_{0} \) (resp. \(  U_{\infty}  \)) along  \( \ZZ_{\square}[T]\to (A,A^{+})_{\square} \) induced by \( T\mapsto f \). 

An analytic open covering of \( \Spa(A,A^{+}) \) induces a suave \( ! \)-cover. For this, it remains to show that pulling back is conservative, which in showed in \cite[Proposition 4.12 (5)]{Andreychev21pseudocoherent}. Thus the construction globalizes to realizations of arbitrary sheafy analytic adic spaces as solid stacks. Since \( X \) is analytic, \( X \) is essentially Tate by Lemma \ref{lemBasicPropertyEsseTate} (3).
\end{eg}

Let \(L\) be a \(p\)-adic field
 (i.e. a Banach extension of \(\QQ_{p}\) equipped with the \(p\)-adic topology).

 \begin{prop}[Analytic GAGA, Clausen-Scholze]\label{propAnGAGA}
Let \( X \) be a scheme of finite type over \( L \), and let \( X^{\an} \) be its analytification. We regard \( X \) and \( X^{\an} \) as solid stacks in Examples \ref{egScheme} and \ref{egAnAdicSpace} respectively.
Then there is a monomorphism of solid stacks \( i:X^{\an}\to X \). 
Moreover,

(1)
If \( X \) is projective over \( L \), then \( i \) is an isomorphism.

(2)
If \( X \) is smooth over \( L \), then \( X^{\an}\cong (X^{Tate})_{Solid} \), where the functors on the RHS are as in Lemma \ref{lemSolidToTateAnalyticficationFunctor}.
\end{prop}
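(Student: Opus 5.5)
We will first construct $i$ in the affine case and then glue. Let $X=\Spec(A)$ with $A$ of finite type over $L$, and present $A=L[T_1,\dots,T_n]/I$. Then $X^{\an}$ is the union of the affinoids $\Spa(A_m,A_m^{+})$ with $A_m:=L\langle p^{m}T_1,\dots,p^{m}T_n\rangle/I$. The maps $(A,\ZZ)_{\square}\to(A_m,A_m^{+})_{\square}$ are maps of analytic rings, so they give a compatible system $\AnSpec((A_m,A_m^+)_\square)\to \AnSpec((A,\ZZ)_\square)$. Gluing these defines $i$.

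To see that $i$ is a monomorphism, it suffices to check that $\AnSpec((A_m,A_m^+)_\square)\to X$ is a monomorphism for each $m$; the colimit over the nested open immersions (Example \ref{egAnAdicSpace}) then remains a monomorphism. After base change along the closed immersion $X\hookrightarrow\mA^n_L$ we reduce to $A=L[T_1,\dots,T_n]$, and by taking products to $n=1$. For $n=1$, $L\langle p^mT\rangle$ with the analytic structure $(L\langle p^mT\rangle,\ZZ_p\langle p^mT\rangle)_\square$ is obtained from $(L[T],\ZZ)_\square$ by two steps. First we impose solidity in $p^mT$, which is the open immersion $U_0$ of Lemma \ref{lemBasicCoverOfA1} pulled back along $T\mapsto p^mT$. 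Second we impose $p$-adic completion, which is a base change along $\ZZ_\square\to\ZZ_{p,\square}$ and an idempotent localization. Both steps are given by idempotent algebras, so the resulting map is a monomorphism, i.e. its diagonal is an isomorphism. Zariski gluing (Example \ref{egScheme}) globalizes this, since monomorphisms are local on the target.

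For (1), $X$ projective, we reduce via the closed immersion to $X=\mP^N_L$, because the pullback of $i$ along $X\hookrightarrow \mP^N$ is the $i$ for $X$. For $\mP^N$ we use the standard covering by the closed unit polydiscs $\{|x_j/x_k|\le1\ \forall j\}$, one for each $k$. On the solid side these correspond to the opens $U_0$ defined by $\ZZ[x_j/x_k]_\square$-solidity, and together they cover $\mP^N$ as a solid stack. This is the key point: $U_0\cup U_\infty=\mA^1$ from Lemma \ref{lemBasicCoverOfA1}, applied repeatedly, shows that the loci ``$|x_j/x_k|\le1$ for all $j$'' cover over $\ZZ$. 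On each such piece, $(L[x_j/x_k],\ZZ[x_j/x_k])_\square$ becomes $(L\langle x_j/x_k\rangle,\ZZ_p\langle\cdots\rangle)_\square$ after base change to $\ZZ_{p,\square}$ and $L$. So each chart of $X^{\an}$ agrees with the corresponding solid open of $X$. Hence $i$ is a monomorphism that is surjective on a $!$-cover, i.e. an isomorphism.

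For (2), $X$ smooth, both sides are local on $X$, and $(-)^{Tate}$ and $(-)_{Solid}$ commute with finite limits (Lemma \ref{lemBasicPropertyEsseTate}). Using étale coordinates we reduce to $\mA^n$, and by products to $\mA^1$. Here Example \ref{egEssentialTate}(2) gives $(\mA^1)^{Tate}\cong\mA^{1,\an}$ and $\mA^{1,\an}$ essentially Tate. The main obstacle is this reduction. Smoothness is used to write $X$ locally as étale over $\mA^n$, and we then need that $(-)^{Tate}$ commutes with base change along étale maps. We would try to prove that by computing $\Hom(\AnSpec(B),X)$ for bounded $B$. Any $L$-algebra map $A\to B$ from a finitely generated smooth $A$ lands in power-bounded elements after rescaling the coordinates, because $B$ is bounded. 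Therefore it factors through some $A_m$, and this factorization is unique by the monomorphism property.
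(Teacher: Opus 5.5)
Your treatment of the monomorphism and of (1) matches the paper. Two remarks. First, there is no separate ``$p$-adic completion'' step: pulling $U_0$ back to $(L[T],\ZZ)_\square$ already has underlying ring $L\langle p^mT\rangle$, because solidifying in $T$ over a $p$-complete base completes automatically. That is why the paper calls this map an open immersion outright. Second, your ``applied repeatedly'' is the paper's $U_I$, $W_{I,J}$ argument. The point to make explicit is that on each refined piece the relation ``$x_i/x_j$ lies in the $+$-ring'' is a total preorder, so a maximal index exists and the piece lies in that chart's unit polydisc.

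For (2) you take a different route from the paper. The paper uses smoothness only to write $X$ Zariski-locally as the zero locus of a regular sequence in $\mA^n$. Then $X\cong\mA^n\times_{\mA^r}\{0\}$ holds as a derived fibre product of solid stacks, and since $((-)^{Tate})_{Solid}$ commutes with finite limits, the paper reduces to $\mA^1$ and Example \ref{egEssentialTate}(2).

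Your \'etale-coordinate reduction is not what carries your argument. $(-)^{Tate}$ is a restriction of sheaves, so it commutes with all limits; ``commuting with \'etale base change'' is therefore automatic and does not by itself relate $X$ to $X^{\an}$. Indeed, for a non-finite \'etale map such as $\GG_m\subset\mA^1$, one does not have $X^{\an}\cong X\times_{\mA^n}\mA^{n,\an}$.

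What does the work is your last observation. For bounded $B$, a map $A\to B$ sends each $p^mT_i$ to a power-bounded element for $m\gg0$. It therefore factors through $A_m=A\otimes_{L[T]}L\langle p^mT\rangle$, uniquely by the monomorphism property. Hence $X^{Tate}\cong(X^{\an})^{Tate}$, and $(X^{Tate})_{Solid}\cong((X^{\an})^{Tate})_{Solid}\cong X^{\an}$ because $X^{\an}$ is essentially Tate (Example \ref{egAnAdicSpace}).

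This needs only the closed embedding into $\mA^n$ you already used for the monomorphism. It bypasses the \'etale coordinates and in fact never uses smoothness. The paper's argument stays purely formal (finite limits plus the one-variable case), at the price of the lci hypothesis, which it needs so that the scheme-theoretic fibre product agrees with the derived one.
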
 
\begin{rmk}
For (1), it suffices to assume that \( X \) is proper rather than projective. See \cite[Theorem 1.2.2]{Wang2026relative}.
\end{rmk}
\begin{proof}
For \( X=\Spec(L[f_{1},\ldots,f_{n}]/I) \), \[ X^{\an}=V(I)\subset \varinjlim_{m} \Spa(L\langle p^{m}f_{1},\ldots,p^{m}f_{n}\rangle,\mO_{L}\langle p^{m}f_{1},\ldots,p^{m}f_{n}\rangle). \]
Thus realized as a solid stack as in Example \ref{egAnAdicSpace}, 
\begin{align*}
X^{\an}&\cong \varinjlim_{m}V(I)\cap
\Spa(L\langle p^{m}f_{1},\ldots,p^{m}f_{n}\rangle,\mO_{L}\langle p^{m}f_{1},\ldots,p^{m}f_{n}\rangle)\\
&\cong \varinjlim_{m}\AnSpec((L\langle p^{m}f_{1},\ldots,p^{m}f_{n}\rangle,\mO_{L})_{\square}),
\end{align*} 
which admits a natural map to \( X \), and is isomorphic to \( X\times_{\mA^{n}}\mA^{n,\an} \). 

We claim that \( \mA^{n,\an}\to \mA^{n} \) is a monomorphism.
We reduce easily to the case when \( n=1 \).
Note that \( \AnSpec(L\langle T\rangle, \mO\langle T\rangle)\to \AnSpec(L_{\square}[T]) \) is an open immersion.
Thus \( \mA^{1,\an} \) is a filtered colimit of open immersions, so it is a monomorphism by Giraud's axioms (\cite[Theorem 6.1.0.6]{Lurie2009HTT}).

If we have
\( U=\Spec(A[1/f])\hookrightarrow X=\Spec(A) \), then we clearly have a commutative diagram \[
\begin{tikzcd}
U^{\an}\arrow[r]\arrow[d] & X^{\an}\arrow[d]\\
U\arrow[r] & X.
\end{tikzcd}
\] Moroever, \( U^{\an}\to X^{\an} \)
is an analytic open subspace, so by Example \ref{egAnAdicSpace}, \( U^{\an}\to X^{\an} \) is an open immersion of solid stacks, and so is the composition \( U^{\an}\to X \).
Thus the construction \( X^{\an}\to X \) globalizes to any finite type scheme. Moreover, it is a monomorphism, being a colimit of monomorphisms.

If \( X\subset Y \) is a closed immersion, then \( X^{\an}\cong Y^{\an}\times_{Y}X \). Thus if \( X \) is projective, to show \( X^{\an}\cong X \), we can reduce to the case when \( X=\mbb{P}^{n} \). Since \( X^{\an}\to X \) is a monomorphism, it suffices to show that it is a \( ! \)-surjection. Consider \( \mbb{P}^{n}=\mrm{Proj}(L[S_{0},\ldots,S_{n}]) \), and consider the open \( \mA^{n}=\Spec(L[T_{1},\ldots,T_{n}]) \)
for \( T_{i}=S_{i}/S_{0} \). Without loss of generality, it suffices to show that \( \mA^{n}\times_{\mP^{n}}\mP^{n,\an}\to \mA^{n} \) is a \( ! \)-surjection. 

Let \[V_{0}:=
\AnSpec((L\langle T_{1},\ldots,T_{n}\rangle,\mO_{L}\langle T_{1},\ldots,T_{n}\rangle)_{\square})
\] and for \( 1\le i\le n \)
\[
V_{i}:=\AnSpec\left(\left(
L\left\langle\frac{1}{T_{i}},
	\frac{T_{1}}{T_{i}},\ldots,\frac{T_{n}}{T_{i}}
	\right\rangle[T_{i}],
\mO_{L}\left\langle\frac{1}{T_{i}},
	\frac{T_{1}}{T_{i}},\ldots,\frac{T_{n}}{T_{i}}
	\right\rangle[T_{i}]\right)_{\square}
\right).
\] Then \( V_{0} \) is contained in \( \mA^{n,\an} \), and for \( 1\le i\le n \), \( V_{i} \) is 
contained in \( (\Spec(L[S_{0}/S_{i},\ldots,S_{n}/S_{i}]))^{\an} \).
Thus the claim boils down to proving that \( \{V_{i}\}_{i=0}^{n} \)
is a suave \( ! \)-cover of \( \mA^{n} \).

For this, let \( f_{i}:\mA^{n}\to \mA^{1},(T_{1},\ldots,T_{n})\mapsto T_{i} \). We will use the notation of Lemma \ref{lemBasicCoverOfA1}.
For any \( I\subset \{1,\ldots,n\} \), let \(U_{I}\subset \mA^{n}\) be the intersection of \( f_{i}^{-1}(U_{0}) \)
for \( i\in I \) and \( f_{i}^{-1}(U_{\infty}) \)
for \( i\notin I \). Then by Lemma \ref{lemBasicCoverOfA1}, \( \{U_{I}\}_{I\subset \{1,\ldots,n\}} \) forms a suave \( ! \)-cover of \( \mA^{n} \).
Note that \( U_{\emptyset}=V_{0} \), and for \( i\notin I \), \( 1/T_{i}\in \mO(U_{I}) \), and for any \( (i,j)\in (I^{c} )^{2}\), define \( f_{i,j}:U_{I}\to \mA^{1},(T_{1},\ldots,T_{n})\mapsto T_{i}/T_{j} \). 
For fixed \( I \), and \( J\subset (I^{c})^{2} \), we define \( W_{I,J} \) to be the intersection of 
\( f_{i,j}^{-1}(U_{0}) \) for \( (i,j)\in J \) and \( f_{i,j}^{-1}(U_{\infty}) \) for \( (i,j)\notin J \).
Then \( \{W_{I,J}\}_{J\subset (I^{c})^{2}} \) forms a suave \( ! \)-cover of \( U_{I} \) by Lemma \ref{lemBasicCoverOfA1}. But such \( J \) will define a partial order in \( I^{c} \), and if \( i\in I^{c} \) is a maximal element for this partial order, then \( W_{I,J}\subset V_{i} \).

For the last part, 
by Example \ref{egAnAdicSpace}, \( ((X^{\an})^{Tate})_{Solid}\cong X^{\an} \). By Lemma \ref{lemSolidToTateAnalyticficationFunctor}, \( ((-)^{Tate})_{Solid} \) commutes with finite limits, and thus \( X^{\an}\cong ((X^{\an})^{Tate})_{Solid}\to (X^{Tate})_{Solid}  \) is also a monomorphism. We claim that \( X^{\an}\to (X^{Tate})_{Solid}  \) is a \( ! \)-surjection. By Lemma \ref{lemBasicPropertyEsseTate} (5), we can localize to the case where \( X \) is affine.
Since \( X \) is smooth, after further Zariski localization, we can embed \( X \) into \( \mA^{n} \) via a regular sequence. Thus using commutativity with finite limits, we can reduce to the case where \( X=\mA^{n} \), and further to \( X=\mA^{1} \). 
Then \( X^{\an}\to (X^{Tate})_{Solid}  \) is an isomorphism by Example \ref{egEssentialTate} (1).
\end{proof}



 \begin{eg}[Dagger neighborhood]\label{egDaggerNBHD}
Let \(X\) be an analytic adic space over \(L\) regarded as a solid stack, and \(Z\subset |X|\)
is a closed set. Then we define \((Z\subset X)^{\dagger}:=\varprojlim_{Z\subset U\subset X}U\)
where the limit is taken in \(\mrm{TateStk}_{\QQ_{p}}\)
and is taken over all the analytic open subspaces \(U\) of \(X\) which
contain \(Z\). We regard \(X\backslash Z\) as an analytic open subspace of \(X\).
Then \[i:(Z\subset X)^{\dagger}\hookrightarrow X\hookleftarrow
X\backslash Z:j
\] satisfies the excision: \( j \) is \( ! \)-able, and   \[j_{!}j^{!}(\mF)\to \mF\to 
i_{*}i^{*}(\mF)
\] is a fiber sequence for any \(\mF\in\QCoh(X)\). For this, 
we note that \( X\backslash Z\cong \varinjlim_{U'\subset X,|U'|\cap Z=\emptyset}U' \), 
where the colimits are taken over all the open subspaces \( U' \) 
of \( X \) that is disjoint from \( Z \). Since \( U'\to X \)   
is \( ! \)-able, so \( X\backslash Z\to X \) is also \( ! \)-able by \cite[Theorem 5.19]{Scholze2022sixFunctor}. 
Moreover, \( U'\hookrightarrow X\hookleftarrow (X\backslash U')^{\dagger} \)   
satisfies the excision, as 
one can prove by reducing to the affine case, and using the relation of open and closed subspaces with idempotent algebras. See for example \cite[Definition 2.2.2, Remark 2.2.3]{Juan2024analyticdeRham}. 
We will refer to \((Z\subset X)^{\dagger}\)
and \(X\backslash Z\) as complements of each other in \(
X\). 
\end{eg}

\begin{dfn}\label{dfnIncarnationGroup}
	Let \(G\) be a locally profinite group.
	
	(1) Define \emph{the continuous realization} of \( G \)  over \(L\) \[\underline{G}_{L}:=\varinjlim_{K}\AnSpec(\mrm{Cont}(K,L),\mrm{Cont}(K,\mO_{L})),\]
	where \(\mrm{Cont}(-)\)
	stands for continuous functions, and the colimit is taken along all the open compact subspaces \(K\) of \(G\).

	(2)
	Define \emph{the smooth realization} of \( G \) 
	over \( L \) 
	as \[G^{\sm}_{L}:=\varinjlim_{K}\AnSpec(\mrm{LocConst}(K,L),\mrm{LocConst}(K,\mO_{L})),\]
	where \(\mrm{LocConst}(-)\)
	stands for locally constant functions, and the colimit is as in (1).

	(3)
	We define \((1\subset \underline{G}_{L})^{\dagger}\)
	to be the kernel of the natural map \(\underline{G}_{L}\to G^{\sm}_{L}\). Note that \(\underline{G}\to G^{\sm}\) is a descendable cover by Theorem \ref{thmAnDRStackGeneral} (5) below.

	(4) Assume in addition that \(G\) is a \(p\)-adic Lie group. Then we define \emph{the locally analytic realization} of \( G \) 
	over \( L \) as \[
	G^{\la}_{L}:=\varinjlim_{K}\AnSpec(C^{\la}(K,L),C^{\la}(K,\mO_{L})),
	\] 
	where \(C^{\la}(-)\)
	stands for locally analytic functions, and the colimit is as in (1).

	We have natural maps \(\unl{G}_{L}\to G^{\la}_{L}\to G^{\sm}_{L}\).
	We define \((1\subset G_{L}^{\la})^{\dagger}\)
	to be the kernel of \({G}_{L}^{\la}\to G^{\sm}_{L}\). 
	We denote $\mO_{G_{L},1}:=\varinjlim_{K\subset G}C^{\an}(K,L)$ with the colimit as in (1).
	Then \(\mO_{G_{L},1}\cong R\Gamma((1\subset G_{L}^{\la})^{\dagger})\).

	Note that these spaces are essentially Tate by Example \ref{egEssentialTate} (4),
	and we will follow the convention of Remark \ref{rmkEssTateOmitTate}.

	We will omit the subscript \(L\) when it causes no confusion.
\end{dfn}

\begin{dfn}\label{dfnVBvariousNBHD}
 Let \(V\) be a vector bundle on an analytic adic space \(X\) over \(\Spa(L,\mO_{L})\). 

 (1)
 We define \(\mO_{X}[[V]]\in \mrm{Fil}^{\ZZ}(\QCoh(X))\) as the filtered completion of \(\bigoplus_{n\in\NN}\Sym^{n}_{\mO_{X}}V\)
with respect to the (descending) degree filtration. We define \[
\hat{V}:=\unl{\Spf}_{X}(\mO_{X}[[V^{\vee}]]),
\] where the RHS is defined in Definition \ref{dfnSpfFunctor}.


(2) Let \(V_{0}\) be a locally finite free \(\mO^{+}_{X}\)-module. We define \(\mO_{X}\langle V_{0}\rangle\in \QCoh(X)\) via \[\mO_{X}\langle V_{0}\rangle:=\left(
	\bigoplus_{i\in\NN}\Sym^{i}_{\mO^{+}_{X}}(V_{0})
\right)^{\wedge}_{p}\left[\frac{1}{p}\right].
\]

(3) 
We define \(\mO_{X}\{V\}^{\dagger}\in \QCoh(X)
\) via \[\mO_{X}\{V\}^{\dagger}:=\varinjlim_{n}\mO_{X}\left\langle \frac{V_{0}}{p^{n}}\right\rangle,
\] where \(V_{0}\subset V\) is a locally chosen lattice. Note that the colimit is independent of the choice of the lattice, so the construction glues to a global construction. We define \[
V^{\dagger}:=\unl{\AnSpec}_{X}(\mO_{X}\{V^{\vee}\}^{\dagger}),
\] where the RHS is defined in Lemma \ref{lemAffProperProp} (3).

(4)  We define \(V^{\an}\) to be the analytification of the algebraic  total space of \(V\). Concretely, \[V^{\an}:=\varinjlim_{n}\unl{\AnSpec}_{X}(\mO_{X}\langle p^{n}V_{0}\rangle),\]
where \(V_{0}\subset V\) is a locally chosen lattice. Note that the colimit is independent of the choice of the lattice, so the construction glues to a global construction. 

We have natural inclusions \(\mO_{X}\langle V_{0}\rangle \subset \mO_{X}\{V\}^{\dagger}\subset \mO_{X}[[V]]\), which induces natural inclusions \(\hat{V}\hookrightarrow V^{\dagger}\hookrightarrow V^{\an}\). 

Also note that these solid stacks are essentially Tate: after localizing to an affinoid \( \Spa(A,A^{+})\subset X \) by Lemma \ref{lemBasicPropertyEsseTate} (6),  \( A\{V\}^{\dagger} \) is bounded by construction, and \( V^{\an} \) is essentially Tate by Example \ref{egEssentialTate}.
 \end{dfn}

 \begin{dfn}\label{dfnDaggerGroupObject}
Assume that \(\fg\) is a finite dimensional Lie algebra over \(L\).
We can consider \(L\{\fg^{\vee}\}^{\dagger}\) and \(L[[\fg^{\vee}]]\),
which has a Hopf algebra structure given by the Baker-Campbell-Hausdorff formula (see for example \cite[Theorem 5.3]{Hall2015LgLaGTM}). Thus \[\fg^{\dagger}:=\AnSpec(L\{\fg^{\vee}\}^{\dagger}),\;\hat{\fg}^{+}:=\Spf^{+}(L[[\fg^{\vee}]])\] have group structures. Here \( \Spf^{+} \) is as in Definition \ref{dfnSpfFunctor}. Let \( \hat{\fg}:=\hat{\fg}^{+}|_{\GG_{m}/\GG_{m}} \).

If there is a \(p\)-adic Lie group \(G\) such that \(\Lie(G)\otimes L=\fg\),
then we have an isomorphism \(\mO_{G,1}\cong L\{\fg^{\vee}\}^{\dagger}\),
as Hopf algebras, where the group structure of \(G\)
gives a Hopf algebra structure on \(\mO_{G,1}\). Therefore, \(\fg^{\dagger}\cong (1\subset G^{\la})^{\dagger}\times L\), where the RHS is defined in Definition \ref{dfnIncarnationGroup}.
\end{dfn}

\subsection{de Rham stacks}\label{subsecAndRStack}
We will recall the definition of the filtered de Rham stacks from \cite{Juan2024analyticdeRham}. 
\begin{dfn}[Algebraic de Rham stack, {\cite[Definition 5.1.1]{Juan2024analyticdeRham}}]\label{dfnAlgDRJuan}
Given a solid stack \(X\) (over \(\ZZ_{\square}\)), 
	we define \(X^{\pdR,+,pre}\)
	as the presheaf sending \(A\) over \(\mA^{1}/\GG_{m}\) to \(\varinjlim_{I}X(A/I\{-1\})\), where the colimit is taken over all the uniformly nilpotent ideals \(I\) of \(A\) (\cite[Definition 2.5.8]{Juan2024analyticdeRham}), and \( \{-1\} \) is as in Notation \ref{notationTwistByAGm}. We define the \emph{filtered algebraic de Rham stack}
	\(X^{\pdR,+}\to [\mA^{1}/\GG_{m}]\)
	as the \emph{solid stack} associated to \(
	X^{\pdR,+,pre}\). 

	We define the \emph{algebraic de Rham stack} \(X^{\pdR}\) (resp. \emph{algebraic Hodge stack} \(X^{\alg-\mrm{Hodge}}\)) as the restriction of \(X^{\pdR,+}\)
to \([\GG_{m}/\GG_{m}]\subset [\mA^{1}/\GG_{m}]\) (resp. \(B\GG_{m}\subset [\mA^{1}/\GG_{m}]\)). 

	Given a morphism \(f:X\to Y\),
	we define the \emph{relative filtered algebraic de Rham stack over \(Y\)}
	as \(X^{\pdR/Y,+}:=Y\times_{Y^{\pdR,+}}X^{\pdR,+}\). We define similarly \(X^{\pdR/Y}\) and \(X^{\alg-\mrm{Hodge}/Y}\). 
\end{dfn}
\begin{rmk}\label{rmkAlgDRStackOFFilter}
We can define a functor \( (-)^{\pdR,+,\mA^{1}/\GG_{m}}:\mrm{SolidStk}_{/[\mA^{1}/\GG_{m}]}\to \mrm{SolidStk}_{/[\mA^{1}/\GG_{m}]} \), sending \( X \)
to the sheafification of the presheaf \[
(\AnSpec(A)\to \mA^{1}/\GG_{m})\mapsto \varinjlim_{I}\Hom_{[\mA^{1}/\GG_{m}]}(\AnSpec(A/I\{-1\}),X)
\] where \( I \)
and \( A/I\{-1\} \)
are as in Definition \ref{dfnAlgDRJuan}. There is a natural transformation \( \id\to (-)^{\pdR,+,\mA^{1}/\GG_{m}} \) over \( [\mA^{1}/\GG_{m}] \).

By definition, for \( X\in\mrm{SolidStk} \), \( (X\times [\mA^{1}/\GG_{m}])^{\pdR,+,\mA^{1}/\GG_{m}}\cong X^{\pdR,+} \).
Many properties of \( (-)^{\pdR,+} \)
also hold for \( (-)^{\pdR,+,\mA^{1}/\GG_{m}} \). 
\end{rmk}

\begin{dfn}[Analytic de Rham stack, {\cite[Definition 2.6.1, 5.2.2]{Juan2024analyticdeRham}}]\label{dfnAnDRStack}
Given \(A\in\mrm{AnRing}^{b}_{\QQ_{p}}\), we define
\(\mrm{Rad}^{\dagger}(A)\) as \[(\mrm{Rad}^{\dagger}(A))(S):=\Hom(\QQ_{p}\{\NN[S]\}^{\dagger},A),\]
for \(S\) extremely disconnected.

	Given a Tate stack \(X\) over \(\QQ_{p}\), 
	we define 
	as the presheaf 
	\[ X^{\dR,+,pre}:(\AnSpec^{b}(A)\to \mA^{1,\an}/\GG_{m}^{\an})\mapsto X(A/\mrm{Rad}^{\dagger}(A)\{-1\})
	\] where \( \{-1\} \) is as in Notation \ref{notationTwistByAGm}. We define the \emph{filtered analytic de Rham stack}
	\(X^{\dR,+}\)
	as the \emph{Tate stack} associated to \(
	X^{\dR,+,pre}\).

	We define the \emph{analytic de Rham stack} \(X^{\dR}\) (resp. \emph{analytic Hodge stack} \(X^{\mrm{Hodge}}\)) as the restriction of \(X^{\dR,+}\)
to \([\GG_{m}^{\an}/\GG_{m}^{\an}]\subset [\mA^{1,\an}/\GG_{m}^{\an}]\) (resp. \(B\GG_{m}^{\an}\subset [\mA^{1,\an}/\GG_{m}^{\an}]\)). 

	Given a morphism \(f:X\to Y\),
	we define the \emph{relative filtered analytic de Rham stack over \(Y\)}
	as \(X^{\dR/Y,+}:=Y\times_{Y^{\dR,+}}X^{\dR,+}\).
	We define similarly \(X^{\dR/Y}\) and \(X^{\mrm{Hodge}/Y}\). 
\end{dfn}

We will need the following results:

\begin{dfn}
For any pro-objects \(C=``\varprojlim"_{i\in I}C_{i}\),
we say \(C\)
is \emph{light}
if the index category \(I\)
is countable. 
\end{dfn}
\begin{thm}[\cite{Juan2024analyticdeRham},
\cite{AnschützBoscoLeBrasCamargoScholze2025analyticrhamstacksfarguesfontaine}]---\label{thmAnDRStackGeneral}

(1) The functors \(X\mapsto X^{\dR,+}\), \(X\mapsto X^{\pdR,+}\) and  \( (-)^{\pdR,+,\mA^{1}/\GG_{m}} \) in Remark \ref{rmkAlgDRStackOFFilter}
commute with finite limits. 

(2) If \(i:X\to Y\) 
is a monomorphism, in the sense that \(\Delta:X\to X\times_{Y}X\)
is an isomorphism,
then \(X\to X^{\dR/Y}\) is a monomorphism. If \(i:X\to Y\)
is moreover \(\dagger\)-formally smooth (\cite[Definition 3.7.2]{Juan2024analyticdeRham}),
then \(X\cong X^{\dR/Y}\).

(3) For \(f:X\to Y\) is a smooth morphism between rigid varieties,
then \(X\to X^{\dR/Y}\)
is a \(!\)-surjection and affine proper, and in particular, it is cohomogically co-smooth.
Moreover, \(X\times_{X^{\dR/Y}}X\cong \Delta(X)^{\dagger}:=(X\xhookrightarrow{\Delta}X\times_{Y} X)^{\dagger}\). 

Similarly, \(X\to X^{\pdR/Y}\)
is a \(!\)-surjection, and \(X\times_{X^{\pdR/Y}}X\cong \hat{\Delta}_{X}:=(X\xhookrightarrow{\Delta}X\times_{Y} X)^{\wedge}\).

(4) For \(f:X\to Y\)
a light pro-finite-\'etale cover or an \'etale cover between perfectoid spaces,
then \(f\)
induces a \(!\)-surjection for the associated Tate stacks. More precisely,
if \(f\) is light pro-finite \'etale,
then \(f\) is affine proper,
and descendable of index \(\le 4\).

In particular,
if \(G\) is a locally light profinite group,
and \(\ti{X}\to X\) is a pro\'etale \(G\)-torsor between perfectoid spaces over \(\Spa(\CC_{p},\mO_{\CC_{p}})\),
then \(X\cong [\ti{X}/\underline{G}]\)
as Tate stacks.

(5) If \(X_{i}=\Spa(A_{i},A_{i}^{+})\) are smooth rigid varieties over \(\CC_{p}\) for 
\(i\in\NN\),
such that the transition maps \(X_{i}\to X_{i-1}\)
are finite \'etale, and if \(X=\Spa(A,A^{+})\) is perfectoid,
for \(A^{+}:=(\varinjlim_{i}A_{i})^{\wedge}_{p}\)
and \(A:=A^{+}[1/p]\),
then the natural maps \(X\to X^{\sm}\to X_{i}\)
are both proper affine and descendable,
and
we have \[X^{\dR}\cong (X^{\sm})^{\dR}\cong \varprojlim_{i}X_{i}^{\dR},\]
where \(X^{\sm}:=\varprojlim_{i}X_{i}\cong \AnSpec^{b}(A^{\sm},A^{\sm,+})\)
with \(A^{\sm,+}:=\varinjlim_{i}A_{i}\)
and \(A^{\sm}:=A^{\sm,+}[1/p]\). 
We also have \(X^{\sm}\cong X^{\sm,\dR}\times_{X^{\dR}_{i}}X_{i}\).

(6) We have \((\AnSpec^{b}(\CC_{p},\mO_{\CC_{p}}))^{\dR}\cong \AnSpec^{b}(\bar{\QQ}_{p},\bar{\ZZ}_{p})\). In particular,
\(\AnSpec^{b}(\CC_{p},\mO_{\CC_{p}})\to \AnSpec^{b}(\CC_{p},\mO_{\CC_{p}})^{\dR}\)
is a \(!\)-surjection.
\end{thm}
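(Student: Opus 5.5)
This theorem collects results of \cite{Juan2024analyticdeRham} and \cite{AnschützBoscoLeBrasCamargoScholze2025analyticrhamstacksfarguesfontaine}. I would cite them where they apply directly and only check that they transfer to the Tate-stack setting used here, where test objects are bounded rings without induced analytic structure.

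\textbf{Parts (1)--(2).} For (1), the presheaf $X^{\dR,+,pre}$ is evaluation of $X$ at $A/\mrm{Rad}^{\dagger}(A)\{-1\}$. Evaluation commutes with all limits, and sheafification in the $\infty$-topos commutes with finite limits (Definition \ref{dfnAnStkOverC}). The same argument works for $X^{\pdR,+}$, because the colimit over uniformly nilpotent ideals $I$ is filtered, and for $(-)^{\pdR,+,\mA^{1}/\GG_{m}}$. For (2), the diagonal of $X\to X^{\dR/Y}$ is computed by (1). If $i$ is a monomorphism, two lifts over $A/\mrm{Rad}^{\dagger}$ that agree in $Y(A)$ coincide, so the diagonal is an isomorphism. $\dagger$-formal smoothness supplies lifts locally, and the lifts are unique by the monomorphism property, so $X\to X^{\dR/Y}$ is also a $!$-surjection and hence an isomorphism.

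\textbf{Part (3).} I would reduce to $X$ étale over $\mA^{n}_{Y}$. Then I would use (1) and (2) to reduce to $\mA^{n,\an}\to(\mA^{n,\an})^{\dR}$, which is the quotient of $\mA^{n,\an}$ by the overconvergent group $\hat{\mA}^{n,\dagger}=(0\subset\mA^{n,\an})^{\dagger}$. The fiber product $X\times_{X^{\dR/Y}}X$ is then $\Delta(X)^{\dagger}$ by definition of $\mrm{Rad}^{\dagger}$. Affine properness comes from Definition \ref{dfnVBvariousNBHD}: the algebra $\mO\{V\}^{\dagger}$ has the induced analytic structure. $!$-surjectivity comes from descendability of $A\to A\{T\}^{\dagger}/(T)$; I would argue this via the projection onto the constants, where the overconvergent ring retracts in the solid sense. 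Co-smoothness then follows from Proposition \ref{propAffineProperCover}. The algebraic statement is identical with $\hat{\Delta}$ and Example \ref{egEssentialTate} (3).

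\textbf{Parts (4)--(6).} For (4), the light pro-finite-étale case writes $\ti X=\varprojlim X_{n}$ with each $X_{n}$ finite étale over $X$. Affine properness is clear. Descendability of index $\le 4$ is Mathew-style for countable towers of faithfully flat finite étale maps together with almost purity. The torsor statement then follows by descent, Theorem \ref{thmVariousDescent}. For (5), affine proper descendability of $X\to X^{\sm}\to X_{i}$ uses (4) and the trace splitting. For $(X^{\sm})^{\dR}\cong\varprojlim X_{i}^{\dR}$, I use (1) and the fact that $\dR$ commutes with the cofiltered limit: each $X_{i}\to X_{i-1}$ is étale, so $X_{i}\cong X_{i-1}\times_{X_{i-1}^{\dR}}X_{i}^{\dR}$ by (2). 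This same base-change identity gives $X^{\sm}\cong X^{\sm,\dR}\times_{X_{i}^{\dR}}X_{i}$.

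\textbf{Main obstacle.} The key step is $X^{\dR}\cong(X^{\sm})^{\dR}$, together with its special case (6). Here I would invoke the decompletion theorem of \cite{AnschützBoscoLeBrasCamargoScholze2025analyticrhamstacksfarguesfontaine}: the kernel of $A^{\sm}\to A$, modulo $\mrm{Rad}^{\dagger}$, is absorbed, because $A/\mrm{Rad}^{\dagger}(A)$ agrees for the completed and uncompleted rings; for $\CC_{p}$ this reads $\CC_{p}/\mrm{Rad}^{\dagger}\simeq\bar{\QQ}_{p}$. I would then check that their Gelfand-stack argument transfers to bounded test rings via Lemma \ref{lemBasicPropertyEsseTate}.
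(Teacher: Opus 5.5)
Your proposal has genuine gaps. The most serious is in your plan for the key step of (5)--(6), where the mechanism you describe does not exist. A map \(\QQ_{p}\{T\}^{\dagger}\to\CC_{p}\) with \(T\mapsto f\) forces \(|f|\le|p|^{n}\) for every \(n\). Hence \(\mrm{Rad}^{\dagger}(\CC_{p})=0\) and \(\CC_{p}/\mrm{Rad}^{\dagger}(\CC_{p})=\CC_{p}\); moreover \(\bar{\QQ}_{p}\) is not a quotient of \(\CC_{p}\) at all. Likewise \(A^{\sm}\to A\) is injective, so there is no kernel to absorb.

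The statement that actually drives (5) and (6) concerns the diagonal.
\begin{itemize}
\item \emph{Monomorphism.} The kernel of \(A\otimes_{A^{\sm}}A\to A\) is generated by the elements \(a\otimes1-1\otimes a\). Approximate \(a\in A(S)\) by some \(a_{n}\in A^{\sm}(S)\) modulo \(p^{n}A^{+}(S)\), using a finite partition of \(S\) and density. This puts \(a\otimes1-1\otimes a\) in \(p^{n}(A^{+}\otimes_{A^{\sm,+}}A^{+})(S)\) for all \(n\), hence in the \(\dagger\)-nilradical. With (1) this gives \((X\times_{X^{\sm}}X)^{\dR}\cong X^{\dR}\), so \(X^{\dR}\to X^{\sm,\dR}\) is a monomorphism. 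The same argument for \(K_{i}\subset\bar{\QQ}_{p}\) gives a monomorphism \(\AnSpec^{b}(\CC_{p},\mO_{\CC_{p}})^{\dR}\hookrightarrow\AnSpec^{b}(\bar{\QQ}_{p},\bar{\ZZ}_{p})\).
\item \emph{Surjectivity, for (6).} This is a separate input you do not supply. The map \(\bar{\QQ}_{p}\to\CC_{p}\) is descendable of index \(\le2\), using the splittings of \(K_{i}\to\CC_{p}\).
\item \emph{Surjectivity, for (5).} First, \(X\to X^{\sm}\) is a \(!\)-surjection. This needs the perfectoid toric towers \(X_{i,\infty}\) together with (4); it does not follow from (4) directly, because \(X^{\sm}\) is not perfectoid. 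Second, \(X^{\sm}\) is \(\dagger\)-formally smooth as a limit of the smooth \(X_{i}\), so \(X^{\sm}\to X^{\sm,\dR/\CC_{p}}\) is surjective. Finally, (6) makes \(X^{\sm,\dR/\CC_{p}}\to X^{\sm,\dR}\) surjective.
\item \emph{The limit.} \(X^{\sm,\dR}\cong\varprojlim X_{i}^{\dR}\) is not a consequence of (1), because sheafification only commutes with finite limits. The paper writes \(X^{\sm}\cong(\varprojlim X_{i}^{\dR})\times_{X_{0}^{\dR}}X_{0}\) and deduces \(X^{\sm,\dR}\cong(\varprojlim X_{i}^{\dR})^{\dR}\). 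It then shows that \(X^{\sm,\dR}\to\varprojlim X_{i}^{\dR}\) is a \(!\)-surjective retraction of the natural map.
\end{itemize}
None of this requires transferring a Gelfand-stack argument.

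In (3), your surjectivity argument is circular. As written, \(A\{T\}^{\dagger}/(T)\cong A\). If you instead mean the split map \(A\to A\{T\}^{\dagger}\), i.e.\ \(p_{1}:\Delta(X)^{\dagger}\to X\), that is the base change of \(X\to X^{\dR/Y}\) along itself. Such a base change always has a section, so its descendability says nothing about \(!\)-surjectivity. What is needed is a lifting statement: \(!\)-locally, a \(B\)-point of \(Y\) with a compatible \(B/\mrm{Rad}^{\dagger}(B)\)-point of \(X\) lifts to a \(B\)-point of \(X\). This is \(\dagger\)-formal smoothness of smooth maps, \cite[Proposition 5.2.3 (2)]{Juan2024analyticdeRham}, which the paper cites.

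Relatedly, (2) concerns monomorphisms. It cannot reduce \(X\) étale over \(\mA^{n}_{Y}\) to \(\mA^{n}\), nor yield \(X_{i}\cong X_{i-1}\times_{X_{i-1}^{\dR}}X_{i}^{\dR}\) in (5); both need the same Hensel-type input for étale maps. The paper also does not obtain the fiber product ``by definition''. It rewrites it as \((\Delta(X)^{\dagger})^{\dR/X\times_{Y}X}\) via (1). It then applies (2) to the \(\dagger\)-formally smooth monomorphism \(\Delta(X)^{\dagger}\hookrightarrow X\times_{Y}X\), which is a limit of open neighbourhoods. Affine properness comes from the induced analytic structure on \(A\otimes_{B}A\{I\}^{\dagger}\), where \(I\) is the kernel of the multiplication map \(A\otimes_{B}A\to A\).
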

\begin{proof}
	(1) follows 
	from 
	 \cite[Proposition 5.1.2 (3) \& 5.2.3 (3)]{Juan2024analyticdeRham}, as the sheafification functor commutes with finite limits by \cite[Corollary 6.2.1.6, Proposition 6.2.2.7]{Lurie2009HTT}. Note that the same proof also works for \( X^{\pdR,+,\mA^{1}/\GG_{m}} \).

	For (2), by (1),
	we know that \(X^{\dR}\to Y^{\dR}\)
	is also a monomorphism,
	and thus so is \(X^{\dR/Y}\to Y\).
	We have a factorization \(X\to X^{\dR/Y}\to Y\),
	and thus \(X\to X^{\dR/Y}\)
	is also a monomorphism.
	If \(X\to Y\) is \(\dagger\)-formally smooth, \(X\to X^{\dR/Y}\) is \(!\)-surjective,
	and thus an isomorphism.

	For (3), the surjection is given by \cite[Proposition 5.2.3 (2)]{Juan2024analyticdeRham}. Now we know that as Tate stacks, 
	\begin{align*}
	&X\times_{X^{\dR/Y}}X\cong X^{\dR/Y}\times_{X^{\dR/Y}\times X^{\dR/Y}}(X\times X)\\&\cong (\Delta(X)^{\dagger})^{\dR/Y}\times_{(X\times X)^{\dR/Y}}X\times X\cong (\Delta(X)^{\dagger})^{\dR/X\times_{Y} X}.
	\end{align*}
	Now by (2), we need to prove that \(\Delta(X)^{\dagger}\hookrightarrow X\times_{Y} X\) is a \(\dagger\)-formally smooth monomorphism. This follows from the fact that \(\Delta(X)^{\dagger}\)
	is the inverse limit of all 
	its open neighborhood in \(X\times_{Y} X\),
	and by \cite[Proposition 3.7.4(2)]{Juan2024analyticdeRham},
	inverse limits of \(\dagger\)-formally smooth morphisms are still \(\dagger\)-formally smooth. The same proof works for \(X^{\pdR/Y}\) using \cite[Proposition 5.1.4]{Juan2024analyticdeRham}.

We are left to prove that the morphism is affine proper.
By Lemma \ref{lemAffProperProp} (1), it suffices to show that \(X\times_{X^{\dR/Y}}X\xrightarrow{p_{1}} X \)
	is affine proper.
We know \(X\times_{X^{\dR/Y}}X\cong \Delta(X)^{\dagger}:=(X\xhookrightarrow{\Delta}X\times_{Y}X)^{\dagger}\).
	So we want to prove that \(p_{1}:\Delta(X)^{\dagger}\to X\)
	is affine proper.
	
	By localizing at 
	\(X\) and \(Y\),
	we can assume that \(X=\Spa(A,A^{+})\)
	and \(Y=\Spa(B,B^{+})\).
	Then \(X\times_{Y}X=\AnSpec^{b}(A\otimes_{B}A,A^{+}\otimes_{B^{+}}A^{+})\).
	Let \(I:=\Ker(A\otimes_{B}A\to A)\).
	Then \[\Delta(X)^{\dagger}=\AnSpec^{b}(A\otimes_{B}A\{I\}^{\dagger},B),\]
	with \[A\otimes_{B}A\{I\}^{\dagger}:=\varinjlim_{n}A\otimes_{B}A\left\langle \frac{I}{p^{n}}\right\rangle. \]
	Note that \(\Delta(X)^{\dagger}\to X\)
	induces an isomorphism after taking \(\dagger\)-reduction,
	and thus the (solid) analytic structure is automatically induced from \(X\), by \cite[Proposition 2.6.16 (1)]{Juan2024analyticdeRham}.
	Thus, \(\Delta(X)^{\dagger}\to X\)
	is affine proper, as desired.

For (4), by localizing, we can assume that \(Y\) is perfectoid affinoid. In particular, \(Y\) is qcqs,
and by further localizing on \(X\),
we can assume that \(X\) is also affinoid perfectoid.

Assume first that \(X\to Y\) is a pro-finite-\'etale cover. We can then write \(X\) as the inverse limit (in the category of diamonds) of a finite \'etale tower \(Y_{i}=\Spa(A_{i},A_{i}^{+})\),
with \(Y=Y_{0}\),
and all the transition maps are finite \'etale. By \cite[Theorem 1.10]{Scholze12},
we that know \(A_{j}^{+}/p\) 
is almost finite \'etale over \(A_{i}^{+}/p\)
for any \(j>i\in\NN\). Then we know that the map \(A_{i}^{+}/p^{n}\to A_{j}^{+}/p^{n}\)
almost splits, for any \(j>i\),
for example, by \cite[Remark 4.3.5]{Bhatt17}. In particular, the map is almost descendable of index \(\le 1\).
Here, ``almost descendable" means that \(A_{j}^{+}/p^{n}\) is descendable in the category of almost \(A_{i}^{+}/p^{n}\)-modules in the sense of \cite{Mathew2016galois}.
Then by \cite[Lemma 11.22]{BhattScholze2017projectivity},
\(A_{i}^{+}/p^{n}\to A^{\sm,+}/p^{n}\)
is almost descendable of index \(\le 2\).
By the exact same proof as that of  \cite[Lemma 11.22]{BhattScholze2017projectivity},
we know that after taking limits along \(n\), \(A_{i}^{+}\to A^{+}\)
is almost descendable of index \(\le 4\). Inverting \(p\),
we know that \(A_{i}\to A\)
is descendable of index \(\le 4\).
Therefore, we know that the map is a \(!\)-surjection.

Now if \(X\to Y\) is \'etale, by localizing on \(X\),
we can assume that it is of the form \(\coprod_{i=1}^{m}U_{i}\)
and \(U_{i}=\Spa(B_{i},B_{i}^{+})\)
is affinoid perfectoid, and \'etale over \(Y=\Spa(A,A^{+})\).
If all the morphisms \(U_{i}\to Y\) are open immersions,
we are done.
Now we do induction on the number of \(i\)
such that \(U_{i}\to Y\) is \emph{not} an open immersion. Fix one such \(i_{0}\),
then we can find a finite open affinoid covering \(V_{j}\)
of \(Y\) such that \(U_{i_{0}}|_{V_{j}}\to V_{j}\)
admits a finite \'etale
compactification. By replacing \(Y\) with \(V_{j}\),
we assume that \(Y=V_{j}\),
and \(U_{i_{0}}\xrightarrow{j} \bar{U}_{i_{0}}\xrightarrow{\bar{f}_{i_{0}}} Y\),
with \(j\) an open immersion,
and \(\bar{f}_{i_{0}}\)
finite \'etale.
Since we have proven above that finite \'etale morphisms give \(!\)-surjection,
we can replace \(Y\) by \(\bar{U}_{i_{0}}\),
and thus assume that \(U_{i_{0}}\to Y\) is an open immersion. Then we are done by induction.

Now if \(\ti{X}\to X\)
is a pro\'etale \(G\)-torsor,
fixing \(K\subset G\) an open compact subgroup,
then \(\ti{X}\to \ti{X}/K\)
is pro-finite \'etale,
and \(\ti{X}/K\to X\) is \'etale, by \cite[Corollary 9.11]{Scholze2017etaleDiamond}. Then we know from the above that \(\ti{X}\to X\)
is a \(!\)-surjection.
Now we finish by noting that the \(i\)-th fiber product of \(\ti{X}\) over \(X\) \(\ti{X}^{i/X}\cong \ti{X}\times G^{i-1}\) for \(
i\in \ZZ_{\ge1}\).
	
For (5), we first prove that
\(X^{\sm}\to X_{i}\)
is a \(!\)-surjection. 
It is clearly affine proper, 
since \(A^{\sm,+}\)
is the integral closure of \(A_{i}^{+}\)
in \(A^{\sm}\). We know that \(A_{i}\to A_{j}\)
is descendable of index 
\(1\),
since it has a splitting,
given by, for example, \cite[Construction 4.3.3]{Bhatt17}. Therefore, again by \cite[Lemma 11.22]{BhattScholze2017projectivity},
\(A_{i}\to A^{\sm}\)
is descendable of index \(\le 2\).
This finishes the proof that \(X^{\sm}\to X_{i}\) is a \(!\)-surjection.

We now continue to prove that \(X\to X^{\sm}\) is a \(!\)-surjection. 
Since  \(A^{\sm,+}\) is dense in \(A^{+}\),
we know that the map is affine proper.
To prove it is a \(!\)-surjection,
upon localizing,
we can assume that there exists an \'etale morphism 
\(X_{i}\to \mbb{T}^{n}\),
with \[\mbb{T}^{n}:=\Spa(\CC_{p}\langle T_{1}^{\pm1},\ldots,T_{n}^{\pm1}\rangle,\mO_{\CC_{p}}\langle T_{1}^{\pm1},\ldots,T_{n}^{\pm1}\rangle).\]
Consider the finite \'etale tower given by \[\mbb{T}^{n}_{i}:=\Spa(\CC_{p}\langle T_{1}^{\pm\frac{1}{p^{i}}},\ldots,T_{n}^{\pm\frac{1}{p^{i}}}\rangle,\mO_{\CC_{p}}\langle T_{1}^{\pm\frac{1}{p^{i}}},\ldots,T_{n}^{\pm\frac{1}{p^{i}}}\rangle),\]
and \[\mbb{T}^{n}_{\infty}:=\Spa(\CC_{p}\langle T_{1}^{\pm\frac{1}{p^{\infty}}},\ldots,T_{n}^{\pm\frac{1}{p^{\infty}}}\rangle,\mO_{\CC_{p}}\langle T_{1}^{\pm\frac{1}{p^{\infty}}},\ldots,T_{n}^{\pm\frac{1}{p^{\infty}}}\rangle).\]
Then we know that \(\mbb{T}^{n}_{\infty}\to \mbb{T}^{n}_{i}\)
is affine proper, and of descendable index \(\le 1\),
since we can construct an obvious splitting. 
Pulling back the tower along \(X_{i}\to \TT^{n}\),
we have a profinite \'etale map \(g_{i}:X_{i,\infty}\to X_{i}\),
such that \(X_{i,\infty}\)
is perfectoid,
and \(g_{i}\) is an affine proper morphism of descendable index \(\le 1\). 
Consider the diagram \[\begin{tikzcd}
	X\times_{X_{i}}X_{i,\infty}
	\arrow[r,"f_{i}'"]\arrow[d] & X_{i,\infty}
	\arrow[d,"g_{i}"]\\
	X \arrow[r,"f_{i}"] & X_{i},
\end{tikzcd}\]
where by (4), \(f_{i}'\) (resp. \(g_{i}\))
is affine proper and descendable 
of index \(\le 4\) (resp. \(\le 1\)).

Taking limit along \(i\), we have 
\[\begin{tikzcd}
	X\times_{X^{\sm}}X_{\infty}^{\sm}
	\arrow[r,"f'"]\arrow[d] & X^{\sm}_{\infty}
	\arrow[d,"g"]\\
	X \arrow[r,"f"] & X^{\sm},
\end{tikzcd}\]
where by \cite[Lemma 11.22]{BhattScholze2017projectivity},
\(f'\) (resp. \(g\))
is affine proper and descendable 
of index \(\le 8\) (resp. \(\le 2\)).
In particular, \(g\)
and \(f'\)
are both \(!\)-surjections,
from which we see that \(f\) is also a \(!\)-surjection.

Now we go on to prove the statement about analytic de Rham stacks.
we have natural morphisms \[X^{\dR}\to (X^{\sm})^{\dR}\to \varprojlim_{i}X_{i}^{\dR}.\]
We want to prove that both morphisms are isomorphisms.

First, since \(X\mapsto X^{\dR}\)
commutes with finite limits by (1),
we know \(X^{\dR}\times_{X^{\sm,\dR}}X^{\dR}\cong (X\times_{X^{\sm}}X)^{\dR}\).
Now \(X\times_{X^{\sm}}X\cong \AnSpec^{b}(A\otimes_{A^{\sm}}A,A^{+}\otimes_{A^{\sm,+}}A^{+})\).
We claim that 
\((A\otimes_{A^{\sm}} A)^{\dagger-red}\cong A\).
In other words, the kernel of the natural map \(A\otimes_{A^{\sm}}A\to A \)
lies in the \(\dagger\)-nilradical.
The kernel is generated by elements of the form \(a\otimes 1-1\otimes a\)
for \(a\in A(S)\) for \(S\) profinite. For any 
\(n\),
we can find \(a_{n}\in A^{\sm}(S)\),
such that \(a-a_{n}\in p^{n}A^{+}(S)\): consider \(a:S\to A\), and as \(a\) is continuous and \(S\) is compact,
we can find a finite partition \(S=\coprod_{i}S_{i}\) such that \(f(S_{i})\in c_{i}+p^{n}A^{+}\)
for some \(c_{i}\in A\),
and then by density, we can assume that 
\(c_{i}\in A_{m}\),
for a fixed \(m\).
Then we can define \(a_{n}\in A_{m}(S)\), mapping \(S_{i}\) to \(\{c_{i}\}\).
As a result, \(a\otimes 1-1\otimes a\in p^{n}(A^{+}\otimes_{A^{\sm,+}}A^{+})\)(S).
Since this is true for any \(n\),
we know it lies in the \(\dagger\)-nilradical.
Now by the definition of \(X\mapsto X^{\dR}\),
the functor only depends on the \(\dagger\)-reduction,
and thus \(\Delta:X\to X\times_{X^{\sm}}X\) induces an isomorphism  \[(X\times_{X^{\sm}}X)^{\dR}\cong X^{\dR},\]
which implies that \(X^{\dR}\to X^{\sm,\dR}\)
is a monomorphism.

We take a digression to prove (6).
The argument about being monomorphism also works in the special case of \(X_{i}=\Spa(K_{i},\mO_{K_{i}})\)
where \(K_{i}\)
are finite extensions of \(\QQ_{p}\),
and \(\bigcup_{i}K_{i}=\bar{\QQ}_{p}\). In particular,
we have proven that \(\AnSpec^{b}(\CC_{p},\mO_{\CC_{p}})^{\dR}\to \AnSpec^{b}(\bar{\QQ}_{p},\bar{\ZZ}_{p})^{\dR}\cong \AnSpec^{b}(\bar{\QQ}_{p},\bar{\ZZ}_{p})\)
is a monomorphism.
But since \(K_{i}\to \CC_{p}\)
splits,
it is affine proper and descendable of index \(\le 1\),
so \(\bar{\QQ}_{p}\to \CC_{p}\)
is descendable of index \(\le 2\). Therefore,
\(\AnSpec^{b}(\CC_{p},\mO_{\CC_{p}})\to \AnSpec^{b}(\bar{\QQ}_{p},\bar{\ZZ}_{p})^{\dR}\cong \AnSpec^{b}(\bar{\QQ}_{p},\bar{\ZZ}_{p})\)
is a surjection,
which implies that \[\AnSpec^{b}(\CC_{p},\mO_{\CC_{p}})^{\dR}\cong \AnSpec^{b}(\bar{\QQ}_{p},\bar{\ZZ}_{p}),\]
finishing the proof of (6). As a corollary, we know that \(\AnSpec^{b}(\CC_{p},\mO_{\CC_{p}})\to \AnSpec^{b}(\CC_{p},\mO_{\CC_{p}})^{\dR}\)
is a \(!\)-surjection,
and thus for any \(X\)
over \(\CC_{p}\),
\(X^{\dR/\CC_{p}}\to X^{\dR}\)
is a \(!\)-surjection.

We now continue to prove (5).
We have shown that \(X\to X^{\sm}\)
is a \(!\)-surjection. Moreover, we know that \(X_{i}\) are \(\dagger\)-formally smooth over \(\CC_{p}\), and thus by \cite[Proposition 3.7.4(2)]{Juan2024analyticdeRham},
\(X^{\sm}\) is also \(\dagger\)-formally smooth over \(\CC_{p}\). In particular,
by \cite[Proposition 5.2.3(2)]{Juan2024analyticdeRham},
\(X^{\sm}\to X^{\sm,\dR/\CC_{p}}\)
is a \(!\)-surjection.
Combined with (6), this implies that \(X^{\dR}\to X^{\sm,\dR}\)
is also a surjection,
and thus an isomorphism.

Moreover, since transition maps are \'etale,
we know \(X_{j}\cong X_{j}^{\dR}\times_{X_{i}^{\dR}}X_{i}\)
for \(j>i\),
and thus taking limits along \(j\), we have 
\(X^{\sm}\cong (\varprojlim_{i}X_{i}^{\dR})\times_{X_{0}^{\dR}}X_{0} \). Taking de Rham stacks on both sides, and using (1),
we know \[X^{\sm,\dR}\cong (\varprojlim_{i}X_{i}^{\dR})^{\dR}\times_{X_{0}^{\dR}}X_{0}^{\dR}\cong (\varprojlim_{i}X_{i}^{\dR})^{\dR} \]
Since \(X_{0}\to X_{0}^{\dR}\)
is a \(!\)-surjection,
we know that \(X^{\sm}\to \varprojlim_{i}X_{i}^{\dR}\)
is also a \(!\)-surjection.
In particular, \(X^{\sm,\dR}\to \varprojlim_{i}X_{i}^{\dR}\)
is a \(!\)-surjection. Moreover, it gives a retract for the natural map  \[\varprojlim_{i}X_{i}^{\dR}\to(\varprojlim_{i}X_{i}^{\dR})^{\dR}\cong X^{\sm,\dR}, \]
so it gives an isomorphism,
as desired.
\end{proof}

\begin{cor}\label{corTorsorDRSmTorsor}
Let \(G\) be a locally light profinite group, and \(X\to Y\)
be a pro\'etale \(G\)-torsor, where \(X\) is a  perfectoid space, and \(Y\) is either a smooth rigid variety or a perfectoid space over \(\CC_{p}\). Assume that \(Y\to Y^{\dR/\CC_{p}}\) is a \(!\)-surjection.

(1) We have \(G^{\sm}\cong \underline{G}^{\dR/\CC_{p}}\).

(2) \(X^{\dR/\CC_{p}}\to Y^{\dR/\CC_{p}}\)
is a \(G^{\sm}\)-torsor.
\end{cor}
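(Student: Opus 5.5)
For (1), I will reduce to a compact open subgroup. Both $\underline{G}_{\CC_p}$ and $G^{\sm}_{\CC_p}$ are filtered colimits over compact open $K\subset G$ along open immersions. Theorem \ref{thmAnDRStackGeneral} (1) gives compatibility with finite limits, and I expect compatibility with these disjoint-union-type colimits as well, since $\underline{G}=\coprod_{gK}\underline{gK}$ locally. So it suffices to treat $K$ light profinite, $K=\varprojlim_i K/K_i$ with $K/K_i$ finite.

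Then I apply Theorem \ref{thmAnDRStackGeneral} (5) in a degenerate form, with $X_i=\coprod_{K/K_i}\Spa(\CC_p)$ and $X=\underline{K}$, which is perfectoid. The tower is finite étale and its colimit ring $A^{\sm}=\mrm{LocConst}(K,\CC_p)$, so $X^{\sm}=K^{\sm}_{\CC_p}$. Working relative to $\CC_p$ (fibre over $\CC_p^{\dR}$, using (1) of that theorem), I get
\[
\underline{K}^{\dR/\CC_p}\cong \varprojlim_i X_i^{\dR/\CC_p}.
\]
For finite discrete sets, $\coprod\Spa(\CC_p)\to\Spa(\CC_p)$ is étale, so $X_i^{\dR/\CC_p}=X_i$ by Theorem \ref{thmAnDRStackGeneral} (2). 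Hence the right side is $\varprojlim_i X_i=K^{\sm}_{\CC_p}$, which gives $\underline{K}^{\dR/\CC_p}\cong K^{\sm}$, compatibly with the group structures by functoriality.

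For (2), Theorem \ref{thmAnDRStackGeneral} (4) shows $X\to Y$ is a $!$-surjection with $X\times_Y X\cong X\times\underline{G}$. Applying $(-)^{\dR/\CC_p}$, which commutes with fibre products, gives
\[
X^{\dR/\CC_p}\times_{Y^{\dR/\CC_p}}X^{\dR/\CC_p}\cong X^{\dR/\CC_p}\times G^{\sm},
\]
using (1). It remains to show $X^{\dR/\CC_p}\to Y^{\dR/\CC_p}$ is a $!$-surjection. The composite $X\to Y\to Y^{\dR/\CC_p}$ is a $!$-surjection, because $Y\to Y^{\dR/\CC_p}$ is one by hypothesis. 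This composite factors through $X^{\dR/\CC_p}$, so the second map is a $!$-surjection, and hence a $G^{\sm}$-torsor.

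The main obstacle is the reduction step in (1): I must check that $(-)^{\dR/\CC_p}$ commutes with the colimit over compact opens, and that the relative version of Theorem \ref{thmAnDRStackGeneral} (5) holds over $\CC_p$ rather than absolutely. For the first, I would try noting that $\underline{G}\to\underline{G}/\underline{K}=G/K$ (discrete) exhibits $\underline{G}$ as a disjoint union of translates of $\underline{K}$. A disjoint union of copies is preserved by the de Rham functor, since nilpotent thickenings do not change $\pi_0$ of idempotents.
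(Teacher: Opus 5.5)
Your part (1) follows the paper's argument. You restrict to a compact open $K=\varprojlim_i K_i$ and apply Theorem~\ref{thmAnDRStackGeneral}~(5) to the finite \'etale tower of finite sets, which gives $\underline{K}^{\dR/\CC_p}\cong\varprojlim_i K_i\cong K^{\sm}$. Your extra bookkeeping (passing from $K$ to $G$, relative versus absolute de Rham stacks) only makes explicit what the paper leaves implicit.

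Part (2) has a genuine gap when $Y$ is a smooth rigid variety, a case the statement explicitly allows. You take both the $!$-surjectivity of $X\to Y$ and the identification $X\times_Y X\cong X\times\underline{G}$ of Tate stacks from Theorem~\ref{thmAnDRStackGeneral}~(4). That result only concerns morphisms between perfectoid spaces: its proof localizes to a perfectoid affinoid base and uses almost purity. Likewise, the torsor statement $X\cong[\tilde{X}/\underline{G}]$ is asserted only when both spaces are perfectoid.

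Over a smooth rigid $Y$, the Tate-stack fibre product $X\times_Y X$ is computed by a solid tensor product over a non-perfectoid ring, and nothing you cite identifies it with $X\times\underline{G}$. Compare the proof of (5): for the perfectoid $X$ over $X^{\sm}$, the map $A\otimes_{A^{\sm}}A\to A$ is only shown to be an isomorphism modulo the $\dagger$-nilradical. In other words, agreement is only obtained after passing to de Rham stacks, which is precisely what you would need to prove. As written, your argument proves (2) only for perfectoid $Y$.

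The paper handles the smooth case by a separate reduction:
\begin{enumerate}
\item Localize $Y$ so that it admits a toric chart, and take the pro-finite-\'etale $K\cong\ZZ_p^n$-torsor $\tilde{Y}\to Y$ with $\tilde{Y}$ perfectoid.
\item Apply the perfectoid case to $X\times_Y\tilde{Y}\to\tilde{Y}$ and to $X\times_Y\tilde{Y}\to X$.
\item Use Theorem~\ref{thmAnDRStackGeneral}~(5) to see that $\tilde{Y}^{\dR/\CC_p}\to Y^{\dR/\CC_p}$ is a $K^{\sm}$-torsor.
\item Descend the $G^{\sm}$-torsor structure along these torsors to $X^{\dR/\CC_p}\to Y^{\dR/\CC_p}$.
\end{enumerate}
Without such a reduction, the smooth case remains uncovered. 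The alternative would be a direct argument, along the lines of the proof of (5), that $X\to Y$ is a $!$-surjection and that $(X\times_Y X)^{\dR/\CC_p}\cong X^{\dR/\CC_p}\times G^{\sm}$.
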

\begin{proof}
Let \(K\subset G\)
be an open compact subset.
Then \[\underline{K}=\AnSpec^{b}(\mrm{Cont}(K,\CC_{p}),\mrm{Cont}(K,\mO_{\CC_{p}})).\]
Assume that as a light profinite group, \(K=\varprojlim_{i\in\NN}K_{i}\)
with \(K_{i}\)
a finite group,
then \(\underline{K_{i}}=\AnSpec^{b}(\mrm{Cont}(K_{i},\CC_{p}),\mrm{Cont}(K_{i},\mO_{\CC_{p}}))\)
and \[\mrm{Cont}(K,\mO_{\CC_{p}})\cong \left(\varinjlim_{i}\mrm{Cont}(K_{i},\mO_{\CC_{p}})\right)^{\wedge}_{p}.\]
Thus we can apply Theorem \ref{thmAnDRStackGeneral} (5)
to conclude that \[\underline{K}^{\dR/\CC_{p}}\cong (K^{\sm})^{\dR/\CC_{p}}\cong \varprojlim_{i}K_{i}^{\dR/\CC_{p}}\cong \varprojlim_{i}K_{i}\cong K^{\sm}.\]

Now for (2), we first consider the case where \(Y\) is perfectoid.
Since \(X\to Y\)
is a pro\'etale \(G\)-torsor,
we have an isomorphism \[X\times_{Y}X\cong \underline{G}\times_{\CC_{p}}X,\]
and by Theorem \ref{thmAnDRStackGeneral} (1),
we know \[X^{\dR/\CC_{p}}\times_{Y^{\dR/\CC_{p}}}X^{\dR/\CC_{p}}\cong G^{\sm}\times_{\CC_{p}}X^{\sm}.\]
Since 
\(Y\to Y^{\dR/\CC_{p}}\)
is a \(!\)-surjection by assumption,
and \(X\to Y\)
is also a \(!\)-surjection by Theorem \ref{thmAnDRStackGeneral} (4),
we know \(X^{\dR/\CC_{p}}\to Y^{\dR/\CC_{p}}\)
is also a \(!\)-surjection,
from which we see that \(X^{\dR/\CC_{p}}\to Y^{\dR/\CC_{p}}\)
is a \(G^{\sm}\)-torsor.

Now if \(Y\) is smooth over \(\CC_{p}\), by
localizing at \(Y\) and finding a toric chart, we can assume that there exists a pro-finite-\'etale \(K\)-torsor \(\ti{Y}\to Y\)
for \(K\cong \ZZ_{p}^{n}\).
Then we know from the perfectoid case that \((X\times_{Y}\ti{Y})^{\dR/\CC_{p}}\to \ti{Y}^{\dR/\CC_{p}} \) is a \(G^{\sm}\)-torsor,
where the fiber product is taken in the category of diamonds. Moreover,
\((X\times_{Y}\ti{Y})^{\dR/\CC_{p}}\to X^{\dR/\CC_{p}}\) is a \(K^{\sm}\)-torsor.
It suffices to show finally that \(\ti{Y}^{\dR/\CC_{p}}\to Y^{\dR/\CC_{p}}\) is a \(K^{\sm}\)-torsor.
We are then done by Theorem \ref{thmAnDRStackGeneral} (5).
\end{proof}
The following is a filtered version of \cite[Proposition 5.1.4]{Juan2024analyticdeRham}. It will be used in the next subsection for the comparison of two definitions of algebraic de Rham stacks (Lemma \ref{lemTwoAlgDRSame}):
\begin{lem}\label{lemClosedImmeAlgDRFil}
Let \( \Spec(A/I)\hookrightarrow \Spec(A) \) be a Zariski closed immersion defined by a finite regular sequence. 
Then we have a natural isomorphism \[
\Spf^{+}(A,I^{\bullet})\cong \AnSpec((A/I,\ZZ)_{\square})^{\pdR/(\AnSpec((A,\ZZ)_{\square})),+},
\] where the LHS is defined in Definition \ref{dfnSpfFunctor}, and \( I^{\bullet} \) 
denotes the \( I \)-adic  filtration on \( A \). 
\end{lem}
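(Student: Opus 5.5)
We construct a natural map from the left side to the right side, then reduce the claim to a computation on the regular-sequence model, following the proof of \cite[Proposition 5.1.4]{Juan2024analyticdeRham}. Write \(X:=\AnSpec((A/I,\ZZ)_{\square})\), \(Y:=\AnSpec((A,\ZZ)_{\square})\) and \(Y^{+}:=Y\times[\mA^{1}/\GG_{m}]\). By definition, \(X^{\pdR/Y,+}=Y\times_{Y^{\pdR,+}}X^{\pdR,+}\).

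\emph{Constructing the map.} A point of \(\AnSpec^{+}(A/I^{n})\) over \(\AnSpec(B)\to\mA^{1}/\GG_{m}\) (with line bundle \(\mO\{-1\}\to\mO\)) is a filtered map \(\mrm{Rees}(A/I^{n})\to B\). Concretely, it is a ring map \(A\to B\) sending \(I\) into \(J:=\mO\{-1\}\cdot(\text{image})\), compatibly with powers. Reducing modulo the ideal generated by \(I\cdot\mO\{1\}\) gives \(A/I\to B/J\{-1\}\), where \(J\) is nilpotent because \(I^{n}\) maps to \(0\). Hence we obtain a point of \(X^{\pdR,+}\), together with the map \(A\to B\), i.e.\ a point of \(Y\); the two agree in \(Y^{\pdR,+}\). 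Passing to the colimit in \(n\) gives \(\Spf^{+}(A,I^{\bullet})\to X^{\pdR/Y,+}\).

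\emph{Reduction to the universal case.} Both sides commute with base change along \(A'\to A\) whenever \(I=I'A\) with \(I'\) regular and flat base change holds. For the left side this follows from Lemma \ref{lemSpfCommuteWithLimit} together with the fact that Rees algebras of regular-sequence filtrations commute with such base change (\(\gr^{\bullet}\) is a polynomial algebra). For the right side it follows from Theorem \ref{thmAnDRStackGeneral} (1). So we reduce to \(A=\ZZ[x_{1},\dots,x_{r}]\otimes A_{0}\) with \(I=(x_{i})\), writing the regular sequence as a pullback of \(0\to\mA^{r}\) along \(Y\to\mA^{r}\). The Koszul/derived pullback is discrete by regularity, which is where that hypothesis is used. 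Using commutation with products again, we reduce to \(0\hookrightarrow\mA^{1}\) over \(\ZZ\).

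\emph{The key computation.} We must show
\[
(0)^{\pdR/\mA^{1},+}\cong\Spf^{+}(\ZZ[t],(t)^{\bullet})=\varinjlim_{n}\bigl[\AnSpec(\ZZ[t,u^{-1}t]/\ldots)/\GG_{m}\bigr],
\]
i.e.\ that the deformation-to-the-normal-cone formal completion is correct. Over a test ring \(B\) with \(\mO\{-1\}\xrightarrow{u}\mO\), a point of the right side is an element \(f\in B\) (the image of \(x\)) such that \(f\) vanishes in \(B/\mrm{nil}\{-1\}\). That is, \(f=u\cdot g\) with \(g\in\mO\{-1\}\) locally nilpotent, up to the colimit over uniformly nilpotent ideals. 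A point of the left side is the same data: \(g\) corresponds to the Rees generator \(t u^{-1}\), and nilpotence of \(g\) corresponds to factoring through some \(\AnSpec^{+}(A/I^{n})\).

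The main obstacle is that the datum \(g\) with \(ug=f\) need not be unique when \(u\) is a zero divisor, and that the right side is defined as a sheafified colimit over all uniformly nilpotent ideals rather than the specific ideal \((g)\). I would handle this as follows. First, use the fiber-product description to identify the right side with the functor of pairs \((f,\text{nilpotent ideal }J\subset\mO\{-1\})\) with \(f\in uJ\). Then show that the colimit over \(J\) is cofinally computed by \(J=(g)\). Morphisms are checked as in the unfiltered case of \cite[Proposition 5.1.4]{Juan2024analyticdeRham}: restricting to \(\GG_{m}/\GG_{m}\) recovers that proposition, and restricting to \(B\GG_{m}\) gives the Hodge version, namely the normal bundle formal group \(\widehat{N}\) versus \(\Spf\) of \(\gr^{\bullet}=\Sym(I/I^{2})\). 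Since both sides are flat over \([\mA^{1}/\GG_{m}]\) and the map is compatible with these two restrictions, an isomorphism on both loci should suffice. I would justify this by conservativity of pullback to \(\GG_{m}\sqcup B\GG_{m}\) on the relevant derived-complete quasi-coherent categories (cf.\ Lemma \ref{lemPerfComplexA1Gm}) applied to the pushforward of the structure sheaf to \(Y^{+}\). Both sides are affine proper over \(Y^{+}\) after completion, so by Lemma \ref{lemAffProperProp} (2) it is enough to compare these algebras.
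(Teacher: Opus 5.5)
The reductions are fine and agree in substance with the paper: you pass to $0\hookrightarrow\mA^{1}$ over $\ZZ$ via Theorem \ref{thmAnDRStackGeneral} (1), compatibility of $\Spf^{+}$ with fibre products, and regularity of the sequence. Your comparison map $\Spf^{+}(A,I^{\bullet})\to X^{\pdR/Y,+}$ is also fine. The gap is in showing that this map is an isomorphism.

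You describe the right-hand side as pairs $(f,J)$ with $f\in uJ$. That discards exactly the data that matters. Over $(B,\mO\{-1\}\xrightarrow{u}\mO)$, a point of $X^{\pdR/Y,+}$ is, after the colimit over $J$, a \emph{nullhomotopy} of $f$ in $B/^{\mbb{L}}(J\{-1\})$. That is roughly a choice of $g'\in J\{-1\}$ with $ug'=f$, not the mere property $f\in uJ\{-1\}$. With your description the right side would be a subfunctor of $Y^{+}$. But over $B\GG_{m}$, where $u=0$, the fibres of $\Spf^{+}\to Y^{+}$ are formal completions of the normal line, not points. The sentence ``the colimit over $J$ is cofinally computed by $J=(g)$'' does not supply the statement actually needed: for each point of $\Spf^{+}$, the space of compatible lifts is contractible.

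The fallback you sketch does not close this, for four reasons.
\begin{itemize}
\item $\Spf^{+}(A,I^{\bullet})=\varinjlim_{n}\AnSpec^{+}(A/I^{n})$ is an ind-object, not affine proper over $Y^{+}$. So Lemma \ref{lemAffProperProp} (2) does not apply, and the stack is not determined by the pushforward of its structure sheaf. Already $\hat{\mA}^{1}$ and $\AnSpec(\ZZ[[t]],\ZZ_{\square}[t])$ have the same pushforward $\ZZ[[t]]$ to $\mA^{1}$ but are different stacks.
\item Conservativity of restriction to $\GG_{m}/\GG_{m}\sqcup B\GG_{m}$ is a statement about sheaves, not about stacks.
\item Flatness of the de Rham side over $[\mA^{1}/\GG_{m}]$ is not established.
\item The identification of the $B\GG_{m}$-locus with $\widehat{N}$ is just the $u=0$ case of the same computation. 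It cannot be quoted from Lemma \ref{lemAlgebHTStack}, whose link to Definition \ref{dfnAlgDRJuan} goes through Lemma \ref{lemTwoAlgDRSame}, which uses the present lemma.
\end{itemize}

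The paper avoids all of this as follows.
\begin{itemize}
\item By \cite[Proposition 2.5.7]{Juan2024analyticdeRham}, $\Spf^{+}(A,x^{\bullet})\to[\AnSpec(\mrm{Rees}(A,x^{\bullet}))/\GG_{m}]$ is a monomorphism. The target has points $\Hom((x)\otimes_{A}B,\mcal{L})$, and the image is the locus where $(x)\otimes\mcal{L}^{-1}$ is $!$-locally uniformly nilpotent.
\item The de Rham stack maps in by composing with $B/^{\mbb{L}}(\mcal{N}\otimes\mcal{L})\to B/^{\mbb{L}}\mcal{L}$.
\item The fibre over a point of $\Spf^{+}$ is $\varinjlim_{\mcal{N}}\Hom((x),\mcal{N}\otimes\mcal{L})\times_{\Hom((x),\mcal{L})}*$. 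It is non-empty: take $\mcal{N}=(x)\otimes\mcal{L}^{-1}$.
\item For the cofinal system of $\mcal{N}$ pulled back from $\pi_{0}(B)$, each term is empty or contractible. This is because $\Hom((x),\mcal{N}\otimes\mcal{L})\to\Hom((x),\mcal{L})$ is a monomorphism, via $B/(\mcal{N}\otimes\mcal{L})\cong B/\mcal{N}\times_{(B/\mcal{N})/^{\mbb{L}}y}B/^{\mbb{L}}y$ and reduction to discrete $B$.
\end{itemize}
This handles the non-uniqueness of $g$ uniformly over $[\mA^{1}/\GG_{m}]$, with no stratification.
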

\begin{proof}
By the construction in Example \ref{egScheme}, the association \( \Spec(A)\mapsto \AnSpec((A,\ZZ)_{\square}) \) commutes with fiber product. Thus by Theorem \ref{thmAnDRStackGeneral} (1), we can reduce to the case where \( A=\ZZ[x_{1},\ldots,x_{n}] \) and \( I=(x_{1},\ldots,x_{n}) \). We can further reduce easily to the case where 
\( A=\ZZ[x] \) and \( I=(x) \).

Recall from Theorem \ref{thmFilterdatA1modGm} that as graded \( A[t] \)-algebras  \[ \mrm{Rees}(A,x^{\bullet})\cong A[t]\oplus (x\cdot t^{-1})\oplus (x^{2}\cdot t^{-2})\oplus\cdots\oplus (x^{N}\cdot t^{-N})\oplus\cdots\cong A[t,t']/(tt'-x),\]
and
\begin{align*}
\mrm{Rees}(A/x^{N},x^{\bullet})\cong A/(x)^{N}[t]\oplus ((x)/(x)^{N}\cdot t^{-1})
\oplus\cdots\oplus ((x)^{N-1}/(x)^{N}\cdot t^{-N+1})\\\cong A[t,t']/(tt'-x,t^{\prime,N}).
\end{align*}
Thus by \cite[Proposition 2.5.7]{Juan2024analyticdeRham}, \[
\varinjlim_{N}\AnSpec(\mrm{Rees}(A/x^{N},x^{\bullet}))\to \AnSpec(\mrm{Rees}(A,x^{\bullet}))
\] is a monomorphism,
and thus \[\Spf^{+}(A,x^{\bullet})=
\varinjlim_{N}\AnSpec(\mrm{Rees}(A/x^{N},x^{\bullet}))/\GG_{m}\to \AnSpec(\mrm{Rees}(A,x^{\bullet}))/\GG_{m}
\] is also a monomorphism. 


Unwinding the definition, in terms of functor of points over \( \AnSpec(A)\times [\mA^{1}/\GG_{m}] \), \( [\AnSpec(\mrm{Rees}(A,x^{\bullet}))/\GG_{m}] \) represents \[
(B\supset \mcal{L})\mapsto \Hom_{A}(A/(x), B/^{\mbb{L}}\mcal{L})\cong\Hom((x)\otimes_{A}B,\mcal{L}),
\] where the second \( \Hom \) 
is taken in the category of quasi-ideals of \( B \). 
Such a point lies in the full sub-anima \( \Spf^{+}(A,x^{\bullet})(B\supset \mcal{L}) \) if and only if 
there exists a \( ! \)-surjection \( B\to B' \),  such that
the corresponding \( (x)\otimes \mcal{L}^{-1}\to B' \) is a (uniformly) nilpotent ideal.  

We define  \(i: \Hom_{A}(A/x,B/^{\mbb{L}}(\mcal{N}\otimes \mcal{L}))\to \Hom_{A}(A/x,B/^{\mbb{L}}\mcal{L}) \) (for some uniformly nilpotent ideal \(\mcal{N}\))
by composing with \( B/^{\mbb{L}}(\mcal{N}\otimes \mcal{L})\to B/^{\mbb{L}}\mcal{L} \). Taking colimits along \( \mcal{N} \) and taking sheafification, we obtain a map \[ i:\AnSpec(A/x)^{\pdR/(\AnSpec(A)),+}\to \AnSpec(\mrm{Rees}(A,x^{\bullet}))/\GG_{m}. \]
Moreover, note that \( ! \)-locally,  the corresponding map of quasi-ideals \( (x)\to \mcal{L} \) 
factors through \( \mcal{N}\otimes\mcal{L} \) 
for some uniformly nilpotent ideal \( \mcal{N} \). Thus \( (x)\otimes\mcal{L}^{-1} \) is also uniformly nilpotent, and \( i \)  
has a unique factorization \[ i:\AnSpec(A/x)^{\pdR/(\AnSpec(A)),+}\to \Spf^{+}(A,x^{\bullet}). \]

Let us study the fiber of \( i \). Assume that we have \(\AnSpec(B)\to \Spf^{+}(A,x^{\bullet})\), corresponding to \( f:A/(x)\to B/^{\mbb{L}}\mcal{L} \)  
such that \( (x)\otimes\mcal{L}^{-1}\to B \) 
is uniformly nilpotent. We can further localize and assume that \( \mcal{L}\cong B \), and \(B\cong \mcal{L}\to B \) corresponds to \( y\in \pi_{0}(B) \). 
Then the fiber 
\begin{align*}
\AnSpec(A/x)^{\pdR/(\AnSpec(A)),+,pre}\times_{\Spf^{+}(A,x^{\bullet})}\AnSpec(B)
\\\cong 
\varinjlim_{\mcal{N}\subset B}\Hom((x),\mcal{N}\otimes\mcal{L})\times_{\Hom((x),\mcal{L})}*,
\end{align*}
where the colimit is taken over all the uniformly nilpotent ideals \(\mcal{N}\), and \( * \to \Hom((x),\mcal{L})\) corresponds to \(f:(x)\to \mcal{L}\).
Taking \( \mcal{N}=(x)\otimes\mcal{L}^{-1} \), we know that this anima is non-empty.

On the other hand, we can choose \( \mcal{N} \) such that it is pulled back from uniformly nilpotent ideal of \( \pi_{0}(B) \), as such \( \mcal{N} \) form a cofinal system. Then  \( \Hom((x),\mcal{N}\otimes\mcal{L})\to \Hom((x),\mcal{L}) \) is a monomorphism.
To be more precise, by definition, \[
\Hom((x),\mcal{N}\otimes\mcal{L})\cong \Hom_{A}(A/x,B/(\mcal{N}\otimes\mcal{L})),
\] 
Since \[ B/(\mcal{N}\otimes\mcal{L})\cong B/\mcal{N}\times_{(B/\mcal{N})/^{\mbb{L}}y}B/^{\mbb{L}}y, \]
up to replacing \( B \) by \( B/\mcal{N} \), 
it suffices to show that \[
\Hom_{A}(A/x,B)\to \Hom_{A}(A/x,B/^{\mbb{L}}y)
\] is a monomorphism when \( B\cong\pi_{0}(B) \). But this is clear:
 \( B\times_{B/^{\mbb{L}}y}B\) is \( 0 \)-truncated, 
and
\( \Hom_{A}(A/x,B\times_{B/^{\mbb{L}}y})\cong \Hom_{A}(A/x,B) \), since both are \( 0 \)-truncated, and are singletons if \( x=0\in \pi_{0}(B) \).   
\end{proof}

\subsection{Logarithmic de Rham stack}\label{subsecProperAlgLogDRstack}
In this subsection, we introduce a version of the ``algebraic logarithmic de Rham stack" (Definition \ref{dfnFilterDRAlg}). 

Before we start, let us explain the idea of our construction: for smooth rigid varieties \(\mX=\Spa(A,A^{+})\) over \(L\), by Theorem \ref{thmAnDRStackGeneral} (1)(3),
the algebraic de Rham stack (relative to \(L\)) is isomorphic to
\(\mX/\hat{\Delta}_{X}\),
where \(\hat{\Delta}_{X}\) is the completion along the Zariski closed immersion \(\Delta:X\hookrightarrow X\times_{L} X\) regarded as an ind-rigid varieties. 
Therefore, to define a log version of the algebraic de Rham stack,
we want a log version of \(\hat{\Delta}_{X}\). 

One way of reconstructing \(\hat{\Delta}_{X}\) 
is to consider the fiber product \(X\times_{X^{\pdR}}X\).
On the other hand, one expects (the filtered complete version of) \(X^{\pdR}\) to be affine, and isomorphic to \(\Spa(\dR(A))\),
for example in the sense of \cite{MathewMondal25} or \cite[Definition 7.15]{BhattLurie2022prismatization} (see Lemma \ref{lemLogDRStackIsAffine} below). 
Therefore, one may expect that \(\hat{\Delta}_{X}\cong \Spf(A\otimes_{\dR(A)}A)\) in a suitable sense. This turns out to be true (see Lemma \ref{lemDiagonalLogProperty} (2)). This also enables us to define a log version of \(\hat{\Delta}_{X}\),
by simply replacing \(\dR(A)\) by \(\dR_{\log}(A)\).
\begin{rmk}
When the log structure is induced by a strict normal crossings divisor, the log de Rham stack has been constructed in \cite{Barz2025logarithmicrhamstacksnonabelian}, following an idea of Martin Olsson. In general, one expects to define it as the relative de Rham stack with respect to the stack of log structures, where the latter has been studied in \cite{Zhang2026derivedlogarithmicdeformationtheory}.
\end{rmk}
\subsubsection{Construction}
We work in the one of the following settings:
\begin{enumerate}
\item (Algebraic setting) Let \(L\) be an algebraic field.
Let \(X\) be a fine log smooth (algebraic) variety over \(L\) of dimension \(n\) (\cite[Definition 3.1.1]{OgusArthur2018Lola}) realized as a solid stack by Example \ref{egScheme}.
\item (Analytic setting) Let \(L\) be a \(p\)-adic field over \(\QQ_{p}\).
Let \(X\) be an fs log smooth rigid variety over \(L\) of dimension \(n\) (\cite[Definition 3.1.9]{DLLZ2019logarithmicFoundational}) realized as a solid stack by Example \ref{egAnAdicSpace}.
\end{enumerate}

Then by \cite[Definition IV.1.2.1, Proposition 3.2.1]{OgusArthur2018Lola} and \cite[Definition 3.3.6, Theorem 3.3.17]{DLLZ2019logarithmicFoundational}, we have the locally free sheaf of log differentials \(\Omega^{1}_{X,\log}:=\Omega^{1}_{X/L,\log}\) 
with a connection \[\nabla_{\log}:\mO_{X}\to \Omega^{1}_{X,\log}.\]
\begin{dfn}
We define \[\dR_{\log}(\mO_{X}):=\dR_{\log}(\mO_{X}/L):=\left[\mO_{X}\to \Omega^{1}_{X,\log}\to \Omega^{2}_{X,\log}
\to \cdots\to \Omega^{n}_{X,\log}
\right],\]
which is a sheaf of \(\mbb{E}_{\infty}\)-algebras on the analytic site \(X_{\an}\). We endow \(\dR_{\log}(\mO_{X})\)
with the filtration defined by the stupid truncation,
that is, \[\gr^{i}(\dR_{\log}(\mO_{X}))\cong \begin{cases}
	\Omega^{i}_{X,\log},&0\le i\le n;\\
	0,&\mrm{else}.
\end{cases}\]
\end{dfn}

\begin{lem}\label{lemDiagonalLogProperty}
(1)
Consider the \emph{filtered complete} tensor product \(\mO_{X}\hatotimes_{\dR_{\log}(\mO_{X})}\mO_{X}\). To be more precise,
we first consider the tensor product \(\mO_{X}\otimes_{\dR_{\log}(\mO_{X})}\mO_{X}\), which has an induced filtration, by putting the trivial filtration on both \(\mO_{X}\),
and then we take the filter completion.

Then \(\mO_{X}\hatotimes_{\dR_{\log}(\mO_{X})}\mO_{X}\) is concentrated in degree 0, and the graded pieces are of the form \[\gr^{i}(\mO_{X}\hatotimes_{\dR_{\log}(\mO_{X})}\mO_{X})\cong \Sym^{i}_{\mO_{X}}(\Omega^{1}_{X,\log}),i\in \NN,\]
and \(\Sym^{i}\gr^{1}\to \gr^{i}\) is an isomorphism. 
In particular, analytic locally, we have a non-canonical filtered isomorphism \(\mO_{X}\hatotimes_{\dR_{\log}(\mO_{X})}\mO_{X}\cong \mO_{X}[[\Omega^{1}_{X,\log}]]\).

(2) If \(X\) is endowed with the trivial log structure,
then \[\mO_{X}\hatotimes_{\dR(\mO_{X})}\mO_{X}\cong (\mO_{X}\otimes_{L}\mO_{X})^{\wedge}_{I},\]
where \(I\) is the ideal defining the Zariski closed embedding \(X\hookrightarrow X\times X\).


(3)
We can endow 
\(\mO_{X}\hatotimes_{\dR_{\log}(\mO_{X})}\mO_{X}\)
with a bi-\(\mO_{X}\)-Hopf algebra structure via 
\begin{align*}
\mO_{X}\hatotimes_{\dR_{\log}(\mO_{X})}\mO_{X}&
\xrightarrow{f\otimes f'\mapsto f\otimes 1\otimes f'}
\mO_{X}\hatotimes_{\dR_{\log}(\mO_{X})}\mO_{X}\hatotimes_{\dR_{\log}(\mO_{X})}\mO_{X} \\
&\cong
(\mO_{X}\hatotimes_{\dR_{\log}(\mO_{X})}\mO_{X})\otimes_{\mO_{X}}(\mO_{X}\hatotimes_{\dR_{\log}(\mO_{X})}\mO_{X}),\end{align*}
and we have an isomorphism of sheaves of bi-\(\mO_{X}\)-algebras \[\RHom_{\mO_{X}}(\mO_{X}\hatotimes_{\dR_{\log}(\mO_{X})}\mO_{X},\mO_{X})
\cong D_{X,\log},\]
the algebra of log differential operators. Here the \(\RHom(-)\)
 is taken in the filtered sense,
 that is, \[
	\RHom_{\mO_{X}}(\mO_{X}\hatotimes_{\dR_{\log}(\mO_{X})}\mO_{X},\mO_{X}):=\varinjlim_{n}
	\RHom_{\mO_{X}}(\mO_{X}\hatotimes_{\dR_{\log}(\mO_{X})}\mO_{X}/\Fil^{n},\mO_{X}).
 \]
\end{lem}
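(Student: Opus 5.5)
The plan is to compute everything on associated graded pieces and then lift to the filtered complete objects. The basic tool is that $\gr^{\bullet}$ is symmetric monoidal (Definition \ref{dfnFiltraObj}) and that passing to the filter completion does not change $\gr^{\bullet}$. Since $\Omega^{1}_{X,\log}$ is locally free and we are in characteristic $0$, the associated graded algebra
\[
\gr^{\bullet}\dR_{\log}(\mO_{X})\cong\bigoplus_{i}\Omega^{i}_{X,\log}[-i]
\]
is the free graded-commutative $\mO_{X}$-algebra $\Sym_{\mO_{X}}(\Omega^{1}_{X,\log}[-1])$, with $\Omega^{1}_{X,\log}[-1]$ placed in weight $1$. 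Here the differential disappears on $\gr$ precisely because the filtration is the stupid one. Both copies of $\mO_{X}$ carry the trivial filtration, so they are concentrated in weight $0$, and the augmentation kills the generators. The Koszul computation then gives
\[
\gr^{\bullet}\bigl(\mO_{X}\otimes_{\dR_{\log}(\mO_{X})}\mO_{X}\bigr)\cong\mO_{X}\otimes_{\Sym(\Omega^{1}_{X,\log}[-1])}\mO_{X}\cong\Sym_{\mO_{X}}(\Omega^{1}_{X,\log}),
\]
with $\Sym^{i}$ in weight $i$ and cohomological degree $0$, because the bar construction shifts $[-1]$ to $[0]$.

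For (1), after filter completion every $\Fil^{n}/\Fil^{n+1}$ sits in degree $0$. Hence each quotient by $\Fil^{n}$ is in degree $0$, and the transition maps between these quotients are surjective. The Milnor sequence then shows that the limit is concentrated in degree $0$ with the stated graded pieces. The multiplication is compatible with $\gr$ and the computation above is a free symmetric algebra, so the map $\Sym^{i}\gr^{1}\to\gr^{i}$ is an isomorphism. For the non-canonical local isomorphism, choose a local basis $\mathrm{dlog}\,t_{j}$ of $\Omega^{1}_{X,\log}$ and lift the corresponding elements of $\gr^{1}$ to $\Fil^{1}$. This produces a filtered map $\mO_{X}[[\Omega^{1}_{X,\log}]]\to\mO_{X}\hatotimes_{\dR_{\log}(\mO_{X})}\mO_{X}$ that is an isomorphism on $\gr$. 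Both sides are complete, so the map is an isomorphism.

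For (2), the structure map $L\to\dR(\mO_{X})$ induces a map $\mO_{X}\otimes_{L}\mO_{X}\to\mO_{X}\hatotimes_{\dR(\mO_{X})}\mO_{X}$. The element $f\otimes1-1\otimes f$ dies in $\gr^{0}$, and its image in $\gr^{1}\cong\Omega^{1}_{X}$ is $df$; this is the homotopy in the tensor product furnished by the relation $d f$ in $\dR$. Hence $I$ maps into $\Fil^{1}$, and the map extends to a filtered map from $(\mO_{X}\otimes_{L}\mO_{X})^{\wedge}_{I}$ with the $I$-adic filtration. On $\gr$ it becomes $I^{i}/I^{i+1}\cong\Sym^{i}\Omega^{1}_{X}\to\Sym^{i}\Omega^{1}_{X}$; the identification on the left uses smoothness, and the map is the identity by the computation $f\otimes1-1\otimes f\mapsto df$. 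Both filtrations are complete, so the map is an isomorphism.

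For (3), the comultiplication is the stated map followed by the base-change isomorphism $P\hatotimes_{\dR_{\log}}P\cong P\otimes_{\mO_{X}}P$, where $P:=\mO_{X}\hatotimes_{\dR_{\log}(\mO_{X})}\mO_{X}$. By (1) this is a filtered complete tensor product of locally free objects. Coassociativity and the counit (multiplication $P\to\mO_{X}$) are formal consequences of the associativity of the relative tensor products. Dualizing levelwise, $\varinjlim_{n}\RHom_{\mO_{X}}(P/\Fil^{n},\mO_{X})$ sits in degree $0$, because $P/\Fil^{n}$ is locally free of finite rank by (1). The coalgebra structure dualizes to an algebra structure. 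To identify this algebra with $D_{X,\log}$, I will identify $P/\Fil^{n+1}$ with the sheaf $P^{n}_{\log}$ of log principal parts of order $n$, i.e. the structure sheaf of the $n$-th infinitesimal neighbourhood of the log diagonal (Ogus), and invoke the classical identification $D^{\le n}_{X,\log}=\Hom(P^{n}_{\log},\mO_{X})$ together with its composition law. To construct the comparison map, I will use that $\mathrm{dlog}\,t\in\Omega^{1}_{\log}$ provides, in $P$, a canonical unit $u_{t}$ with $t\otimes1=(1\otimes t)u_{t}$ and $u_{t}-1\in\Fil^{1}$ mapping to $\mathrm{dlog}\,t$. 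This gives a map from the log diagonal's principal parts. It is an isomorphism on $\gr$ by (1), and the Hopf structures match because both are determined by the same universal property on generators. This last identification, namely producing the map from the log principal parts and checking compatibility with the comultiplication, is the main obstacle. I would first do it in an explicit chart $X\to\Spec L[P]$ for a toric monoid, and then glue by functoriality.
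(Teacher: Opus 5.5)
Your treatment of (1) and (2) is essentially the paper's. In (1), symmetric monoidality of $\gr^{\bullet}$ reduces everything to the Koszul computation $\mO_X\otimes_{\bigoplus_i\wedge^i\Omega^1_{X,\log}[-i]}\mO_X\cong\bigoplus_i\Sym^i\Omega^1_{X,\log}$. In (2), the map from $(\mO_X\otimes_L\mO_X)^{\wedge}_I$ is checked on $\gr^1$ via the Barr-complex computation $a\otimes 1-1\otimes a\mapsto \pm da$.

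The gap is in (3). Your whole argument there depends on producing, inside $P:=\mO_X\hatotimes_{\dR_{\log}(\mO_X)}\mO_X$, units $u_t$ with the following properties:
\begin{itemize}
\item $t\otimes 1=(1\otimes t)u_t$ and $u_t-1\in\Fil^1$ lifts $\mathrm{dlog}\,t$;
\item $u_t$ is multiplicative in the chart monoid;
\item the resulting map from Ogus's log principal parts $P^n_{\log}$ to $P/\Fil^{n+1}$ is compatible with the comultiplications.
\end{itemize}
You only assert that $\mathrm{dlog}\,t$ ``provides'' such a unit, and you yourself flag this as the main obstacle, so the identification with $D_{X,\log}$ is not established. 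One could try $u_t=\exp(\ell_t)$, with $\ell_t$ the degree-$0$ bar class of the closed $1$-form $\mathrm{dlog}\,t$. Verifying $t\otimes 1=(1\otimes t)\exp(\ell_t)$ would then require controlling all the higher bar terms. You would also still need the principal-parts description of $D_{X,\log}$ in the rigid log setting.

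The paper avoids principal parts altogether. By adjunction,
\[
\RHom_{\mO_X}(\mO_X\otimes_{\dR_{\log}(\mO_X)}\mO_X,\mO_X)\cong\RHom_{\dR_{\log}(\mO_X)}(\mO_X,\mO_X),
\]
and under this isomorphism the algebra structure dual to the comultiplication becomes composition. Both sides are in degree $0$ by (1). Restriction along $L\to\dR_{\log}(\mO_X)$ embeds $\End_{\dR_{\log}(\mO_X)}(\mO_X)$ into $\End_L(\mO_X)$, where $D_{X,\log}$ also lives.

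A log vector field $D$ lies in this subalgebra, because it is the composite
\[
b\mapsto 1\otimes b\mapsto (b,db)\in P/\Fil^2\cong\mO_X\oplus\Omega^1_{X,\log}\xrightarrow{(0,D)}\mO_X,
\]
which is the same Barr computation as in (2). Since $D_{X,\log}$ is generated by $\mO_X$ and $T_{X,\log}$, this gives a filtered inclusion $D_{X,\log}\subset\End_{\dR_{\log}(\mO_X)}(\mO_X)$. The inclusion is an isomorphism on $\gr^0$ and $\gr^1$. By (1) the graded dual is $\Sym(T_{X,\log})$, which is generated in degree $1$, so the inclusion is an isomorphism on every $\gr^i$ and hence an isomorphism. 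This yields the algebra isomorphism with no charts, no units $u_t$, and no appeal to the classical principal-parts description.
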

\begin{proof}
(1) 
Since we have taken filtered completion, it suffices to show that \[
\gr^{*}(\mO_{X}\times_{\dR_{\log}(\mO_{X})}\mO_{X})\cong \bigoplus_{i}\Sym^{i}(\Omega^{1}_{X,\log}).
\]
Note that by \cite[Proposition 3.2.1]{Lurie2015rotation}, the LHS is isomorphic to \(\mO_{X}\otimes_{\gr^{*}(\dR_{\log}(\mO_{X}))}\mO_{X}\),
where \(\mO_{X}\) is endowed with the trivial grading.
We also have \[
\gr^{*}(\dR_{\log}(\mO_{X}))
\cong \bigoplus_{i}\wedge^{i}(\Omega^{1}_{X,\log})[-i].
\] as a graded commutative algebra.

Thus it suffices to show the following algebraic theorem:
if \(R\) is an \(\mbb{E}_{\infty}\)-ring over \(\QQ\), \(M\) is a vector bundle over \(R\),
then we have a natural isomorphism of graded algebras \begin{align}\label{alignKoszulDuality}
R\otimes_{\bigoplus_{i}\wedge^{i}M[-i]}R\cong \bigoplus_{i}\Sym^{i}M.
\end{align} Now note that by the argument in \cite[Lemma 7.8]{BhattLurie2022prismatization}, \[
\bigoplus_{i}\wedge^{i}M[-i]\cong R\Gamma(BM,\mO_{BM})
\] as graded \(\mbb{E}_{\infty}\)-algebras,
and we have a natural map \(R\otimes_{\bigoplus_{i}\wedge^{i}M[-i]}R\to \bigoplus_{i}\Sym^{i}M\) by the isomorphism \(\Spec(R)\times_{BM}\Spec(R)\cong M\). 
To verify that the natural map is an isomorphism, we can localize to assume that \(M\) is free over \(R\) of rank \( 1 \). Then it follows from an explicit calculation using Barr construction (\cite[Construction 4.4.2.7, Theorem 4.4.2.8]{Lurie2017HA}).

Alternatively, by the Cartier duality (\cite[Proposition 2.4.4]{BhattLurie2022prismatic}), 
\(\QCoh(B\GG_{a})\) is identified with the category of \(k[t]\)-modules where \(t\) acts nilpotently, which implies that \(\mO_{B\GG_{a}}\) is a compact generator of \(\QCoh(B\GG_{a})\).
Thus by Barr-Beck-Lurie theorem (\cite[Theorem 4.7.3.5]{Lurie2017HA}), \[
\QCoh(B\GG_{a})\cong \Mod(R\Gamma(B\GG_{a},\mO_{B\GG_{a}})),
\] and then 
\begin{align*}
\Mod(\Sym^{*}M)
\cong \QCoh(M)
\cong \Mod(R)
\otimes_{\QCoh(BM)}\Mod(R)\\
\cong \Mod(R\otimes_{R\Gamma(BM,\mO_{BM})}R),
\end{align*}
where the second isomorphism holds by applying \cite[Theorem 4.8.5.16 (4)]{Lurie2017HA} to
\( \Mod(R)\cong \QCoh(M/M)\cong\Mod_{\Sym^{*}_{R}M^{\vee}}(\QCoh(BM)) \), 
and \[ \QCoh(M)\cong \QCoh((M\times M)/M)\cong \Mod_{\Sym^{*}_{R}M^{\vee}\otimes \Sym^{*}_{R}M^{\vee}}(\QCoh(BM)), \]
and the last isomorphism also follows from \cite[Theorem 4.8.5.16 (4)]{Lurie2017HA}. 
This implies the desired isomorphism (\ref{alignKoszulDuality}).

(2)
We now consider \(X\) with the trivial log structure,
and \(X=\Spa(A,A^{+})\).
Then there is a natural map \(A\hatotimes_{L}A\to A\hatotimes_{\dR(A)}A\)
mapping \(f\otimes f'\)
to \(f\otimes f'\).
For any element \(a\otimes 1-1\otimes a\in I\),
we see that it is mapped into \(\Fil^{1}(A\hatotimes_{\dR(A)}A)\),
so the map extends uniquely to a morphism between filtered colimit algebras \[(A\hatotimes_{L}A)^{\wedge}_{I}\to A\hatotimes_{\dR(A)}A.\]
Note that on both sides, \(\gr^{i}\)
is the \(i\)-th symmetric power of \(\gr^{1}\).
Now in order to check that it is an isomorphism,
it suffices to show that there is an isomorphism on
\(\gr^{1}(-)\). We can find analytic locally a \'etale map to the torus (\cite[Corollary 1.6.10]{Huber13}), and thus reduce to the dimension 1 case.
We now consider the Barr complex computing \(A\hatotimes_{\dR(A)}A/\Fil^{2}\) (\cite[Construction 4.4.2.7, Theorem 4.4.2.8]{Lurie2017HA}). Explicitly, it is given by the fiber of the morphism from the complex formed by the first row to that of the second row \[\begin{tikzcd}\cdots \arrow[r] &[-0.3cm]
	A\otimes	A\otimes A\otimes A\arrow[r]\arrow[d] &
	A\otimes A\otimes A\arrow[r,"f"]\arrow[d,"1\otimes d\otimes 1"] & [-0.3cm] A\otimes A\arrow[d]
	\\
	\cdots \arrow[r] &
(A\otimes	\Omega^{1}_{A}\otimes A\otimes A)
\oplus (A\otimes	A\otimes \Omega^{1}_{A}\otimes A)
\arrow[r] &
A\otimes \Omega^{1}_{A}\otimes A\arrow[r] &0.
\end{tikzcd}\]
Now for \(a\otimes 1-1\otimes a\in A\otimes A\),
\(1\otimes a\otimes 1\in A\otimes A\otimes A\)
is a lift along \(f\),
and its image in 
\(A\otimes \Omega^{1}_{A}\otimes A\)
is \(1\otimes da\otimes 1\).
This shows that \(a\otimes 1-1\otimes a\in\gr^{1}((A\hatotimes_{L}A)^{\wedge}_{I})\)
is mapped to \(da\in \gr^{1}(A\hatotimes_{\dR(A)}A)\),
which implies that the map induces an isomorphism on \(\gr^{1}(-)\) via the classical isomorphism \(I/I^{2}\cong \Omega^{1}_{X}\).

(3) 
Note that by adjunction, \[
\RHom_{\mO_{X}}(\mO_{X}\otimes_{\dR_{\log}(\mO_{X})}\mO_{X},\mO_{X})
\cong \RHom_{\dR_{\log}(\mO_{X})}(\mO_{X},\mO_{X}),
\] which is an isomorphism of bi-\( \mO_{X} \)-algebras.
Moreover, both sides 
are concentrated in degree 0, as (non-canonically) \((\mO_{X}\hatotimes_{\dR_{\log}(\mO_{X})}\mO_{X})/\Fil^{n}\cong \bigoplus_{i=0}^{n-1}\Sym^{i}(\Omega^{1}_{X,\log})\).
Therefore, it suffices to construct an isomorphism \( D_{X,\log}\to \pi_{0}\RHom_{\dR_{\log}(\mO_{X})}(\mO_{X},\mO_{X}) \).

After localization, we assume that \( X \) is affine or affinoid. 
Then both 
\( H^{0}(X,D_{X,\log}) \)
and \( H^{0}(X,\End_{\dR_{\log}(\mO_{X})}(\mO_{X})) \) can be regarded as subalgebras of \( H^{0}(X,\End_{L}(\mO_{X})) \). Clearly, \( H^{0}(X,\mO_{X}) \) is contained in both subalgebras. For any first order log differential \( D\in H^{0}(X,\Omega^{1,\vee}_{X,\log})\subset H^{0}(X,D_{X,\log}) \),
\( D:\mO_{X}\to \mO_{X} \) is induced by the composition 
\begin{align*}
&\mO_{X}\xrightarrow{a\mapsto 1\otimes a}
\mO_{X}\otimes_{\dR_{\log}(\mO_{X})}\mO_{X}
\xrightarrow{a\otimes b\mapsto (ab,ad(b))}\\&
(\mO_{X}\otimes_{\dR_{\log}(\mO_{X})}\mO_{X})/\Fil^{2}
\cong \mO_{X}\oplus \Omega^{1}_{X,\log}
\xrightarrow{(0,D)}
\mO_{X},
\end{align*}
where the description of the second map is given by the argument in (2),
and the third map is induced by the splitting \[
\mO_{X}\to 
(\mO_{X}\otimes_{\dR_{\log}(\mO_{X})}\mO_{X})/\Fil^{2},a\mapsto a\otimes 1
\]

Therefore, \( H^{0}(X,\Omega^{1,\vee}_{X,\log})\subset \End_{\dR_{\log}(\mO_{X})}(\mO_{X}) \), and induces an isomorphism on \( \gr^{1} \).
Since \( H^{0}(X,\Omega^{1,\vee}_{X,\log}) \)
is generated by \( H^{0}(X,\Omega^{1,\vee}_{X,\log}) \)
over \( H^{0}(X,\mO_{X}) \), 
we know that \[
H^{0}(X,D_{X,\log}) \xhookleftarrow{i_{0}} H^{0}(X,\End_{\dR_{\log}(\mO_{X})}(\mO_{X})) \subset H^{0}(X,\End_{L}(\mO_{X})).
\] 
Since by (1), \( \gr^{\bullet}(\mO_{X}\otimes_{\dR_{\log}(\mO_{X})}\mO_{X})\cong \mO_{X}[\Omega^{1}_{X}] \) as a graded bi-\( \mO_{X} \)-Hopf algebra, 
\[ \gr^{\bullet}(\RHom_{\mO_{X}}(\mO_{X}\otimes_{\dR_{\log}(\mO_{X})}\mO_{X},\mO_{X}))\cong  \mO_{X}[\Omega^{1,\vee}_{X}] \]
as a graded bi-\( \mO_{X} \)-algebra, with \( \gr^{n} \) generated by \( \gr^{1} \). 
Since \( i_{0} \) induces isomorphisms on \( \gr^{0} \)
and \( \gr^{1} \), we know that it also induces isomorphisms on \( \gr^{i} \) for any \( i\in\NN \), as desired.
\end{proof}

\begin{dfn}\label{dfnLogDiagnal}
We define \[\hat{\Delta}_{X,\log}^{+}:=\unl{\Spf}^{+}_{X}((\mO_{X}\hatotimes_{\dR_{\log}
(\mO_{X})}\mO_{X}),\]
where the RHS is as in Definition \ref{dfnSpfFunctor} (2).
Then \(\hat{\Delta}_{X,\log}^{+}\)
has two projections \(p_{1},p_{2}\) to \(X\times[\mA^{1}/\GG_{m}]\),
with a section \(\Delta:X\times [\mA^{1}/\GG_{m}]\to \hat{\Delta}_{X,\log}^{+}\).

We denote by  \(\hat{\Delta}_{X,\log}\) the restriction of \(\hat{\Delta}^{+}_{X,\log}\)
to \([\GG_{m}/\GG_{m}]\subset [\mA^{1}/\GG_{m}]\), which also has \(p_{1},p_{2}:\hat{\Delta}_{X,\log}\to X\)
and \(\Delta:X\to \hat{\Delta}_{X,\log}\).
\end{dfn}
\begin{lem}\label{lemThisIsGroupoid}
	Consider \(U:N(\Delta)^{op}\to \mrm{SolidStk}\),
	given by \(U([0]):=X\),
	\(U([1]):=\hat{\Delta}_{X,\log}^{+}\),
	and \( U([n]) \) be the \(n\)-th fiber product \[\hat{\Delta}_{X,\log}^{+}\times_{p_{1},X\times[\mA^{1}/\GG_{m}],p_{2}}\hat{\Delta}_{X,\log}^{+}\times_{p_{1},X\times[\mA^{1}/\GG_{m}],p_{2}}\cdots \times_{p_{1},X\times[\mA^{1}/\GG_{m}],p_{2}}\hat{\Delta}_{X,\log}^{+}.\] Then \(U\) defines a groupoid object in the sense of \cite[Definition 6.1.2.7]{Lurie2009HTT} in the category of solid stacks over \(\mA^{1}/\GG_{m}\).
\end{lem}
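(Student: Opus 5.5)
Since $\unl{\Spf}^{+}_{X}$ commutes with fiber products (Lemma \ref{lemSpfCommuteWithLimit}), I will work entirely on the algebra side. A groupoid object in the sense of \cite[Definition 6.1.2.7]{Lurie2009HTT} is a simplicial object satisfying the Segal conditions, so it suffices to produce a cosimplicial object in $\mrm{CAlg}(\widehat{\Fil}^{\NN}(\QCoh(X)))$ whose image under $\unl{\Spf}^{+}_{X}$ is $U$, and then check the groupoid (Segal) conditions there. The natural candidate is the filtered complete Čech conerve
\[
C^{[n]}:=\mO_{X}\hatotimes_{\dR_{\log}(\mO_{X})}\mO_{X}\hatotimes_{\dR_{\log}(\mO_{X})}\cdots\hatotimes_{\dR_{\log}(\mO_{X})}\mO_{X}\quad(n+1\text{ factors}),
\]
that is, the filtered completion of the Čech conerve of the map $\dR_{\log}(\mO_{X})\to\mO_{X}$, with $\mO_{X}$ given the trivial filtration. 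This is a genuine cosimplicial filtered $\mbb{E}_{\infty}$-algebra over $\dR_{\log}(\mO_{X})$, since it is the completion of a cosimplicial object in the $\infty$-category of filtered algebras. Applying $\unl{\Spf}^{+}_{X}$ then gives a simplicial object $U'$ over $X\times[\mA^{1}/\GG_{m}]$ with $U'([0])=X\times[\mA^{1}/\GG_{m}]$ (because $\unl{\Spf}^{+}_{X}(\mO_{X})=X\times[\mA^{1}/\GG_{m}]$) and $U'([1])=\hat{\Delta}^{+}_{X,\log}$.

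The key step is the Segal condition. For the conerve before completion, $\mO_{X}\otimes_{\dR}\mO_{X}\otimes_{\dR}\mO_{X}\cong(\mO_{X}\otimes_{\dR}\mO_{X})\otimes_{\mO_{X}}(\mO_{X}\otimes_{\dR}\mO_{X})$ tautologically. I need the same identity after filtered completion, namely
\[
C^{[n]}\cong C^{[1]}\hatotimes_{\mO_{X}}\cdots\hatotimes_{\mO_{X}}C^{[1]}.
\]
I will deduce this from the fact that filtered completion is symmetric monoidal after completing, i.e.\ $\widehat{A\otimes B}\cong\hat{A}\hatotimes\hat{B}$. This in turn follows because completion is detected on $\gr^{\bullet}$, and $\gr^{\bullet}$ is symmetric monoidal (Definition \ref{dfnFiltraObj}). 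This is the same identification already used in defining the Hopf structure in Lemma \ref{lemDiagonalLogProperty} (3). Then Lemma \ref{lemSpfCommuteWithLimit} turns the completed tensor product over $\mO_{X}$ into the iterated fiber product over $X\times[\mA^{1}/\GG_{m}]$, which identifies $U'([n])$ with $U([n])$ as defined in the statement, compatibly with the face maps. The remaining Segal condition for a groupoid, that $U([2])\to U([1])\times_{d_{0},U([0]),d_{0}}U([1])$ is an equivalence (invertibility), is checked the same way: on algebras it reads
\[
C^{[1]}\hatotimes_{\mO_{X}}C^{[1]}\cong C^{[2]}
\]
with $\mO_{X}$ acting through the same tensor factor, and this holds by the same completion argument applied to the uncompleted conerve, where it is formal.

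The main obstacle is making the completion step rigorous. Two points need care. First, the completion must commute with these relative tensor products while staying inside $\widehat{\Fil}^{\NN}$. Second, $\unl{\Spf}^{+}_{X}$ is defined as a colimit over $\Fil^{n}$-quotients, so I must check that the pro-systems $(C^{[n]}/\Fil^{m})_{m}$ and $\bigl((C^{[1]}/\Fil^{m})^{\otimes_{\mO_{X}} n}\bigr)_{m}$ are cofinal. This cofinality is exactly the argument in the proof of Lemma \ref{lemSpfCommuteWithLimit}, using that everything is $\NN$-filtered. Lemma \ref{lemDiagonalLogProperty} (1) shows the graded pieces are the vector bundles $\Sym^{i}\Omega^{1}_{X,\log}$, concentrated in degree $0$. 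Hence all the quotients $C^{[n]}/\Fil^{m}$ are finite iterated extensions of vector bundles, and the comparisons can be checked on $\gr^{\bullet}$, where they reduce to the graded identity $\Sym^{\bullet}(\Omega^{1})^{\otimes n}\cong\Sym^{\bullet}((\Omega^{1})^{\oplus n})$.
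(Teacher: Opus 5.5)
Your argument is essentially the paper's. There too, $U([n])$ is identified via Lemma \ref{lemSpfCommuteWithLimit} with $\Spf^{+}$ of the filtered complete \v{C}ech conerve of $\dR_{\log}(A)\to A$. The paper then gets the groupoid property by citing \cite[Proposition 6.1.2.11]{Lurie2009HTT} in the opposite of the category of filtered complete $\mbb{E}_{\infty}$-rings, where pushouts are completed tensor products; your hand verification of the Segal conditions is exactly an unwinding of that citation.

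One correction to your setup. $C^{[\bullet]}$ is a cosimplicial object of filtered complete $\dR_{\log}(\mO_X)$-algebras, not of $\mrm{CAlg}(\widehat{\Fil}^{\NN}(\QCoh(X)))$. Its coface maps cannot all be $\mO_X$-linear for any fixed $\mO_X$-structure. For the same reason, $U$ cannot be a simplicial object over $X\times[\mA^1/\GG_m]$: the two face maps $U([1])\to U([0])$ are $p_1\neq p_2$. To fix this, do as the paper does: localize to $X=\AnSpec(A)$ and apply the absolute $\Spf^{+}$. Each $C^{[n]}/\Fil^{m}$ carries the unique analytic structure extending that of $\gr^{0}(C^{[n]})=A$ \cite[Proposition 12.23]{CS20}, so $U$ lives over $[\mA^1/\GG_m]$ only.
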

\begin{proof}
We easily reduce to the case where 
\(X=\AnSpec(A)\) is affine.
By \cite[Proposition 6.1.2.6]{Lurie2009HTT},
it suffices to verify that for any \(n\ge 0\)
and \([n]=S\cup S'\)
with \(S\cap S'=\{s\}\),
the natural morphism \(U([n])\to U(S)\times_{U(\{s\})}U(S')\) is an isomorphism.
By Lemma \ref{lemSpfCommuteWithLimit}, \(U([n])\)
is canonically associated to the filtered complete algebra \(A^{\otimes_{\dR_{\log}(A)}n}\) via the functor \(\unl{\Spf}^{+}_{X}(-)\)
in Definition \ref{dfnSpfFunctor},
and the cosimplicial system \(A^{\otimes_{\dR_{\log}(A)}n}\) is precisely the \v Cech nerve associated to the morphism \(\dR_{\log}(A^{\rhd})\to A^{\rhd}\) in the category of filtered complete \(\mbb{E}_{\infty}\)-rings,
which then is necessarily a groupoid object by applying \cite[Proposition 6.1.2.11]{Lurie2009HTT} to the opposite category of filtered colimit \(\mbb{E}_{\infty}\)-rings.
\end{proof}
We will now define the (filtered) algebraic logarithmic de Rham stack.
\begin{dfn}[Algebraic logarithmic de Rham stack]\label{dfnFilterDRAlg}
We define the \emph{filtered algebraic logarithmic de Rham stack} (relative to \(L\)) as the quotient of \(X\times [\mA^{1}/\GG_{m}]\) by the groupoid object in Lemma \ref{lemThisIsGroupoid}: 
\[X^{\pdR/L,+}_{\log}:=(X\times[\mA^{1}/\GG_{m}])/\hat{\Delta}_{X,\log}^{+}.\]
Note that by \cite[Theorem 6.1.0.6]{Lurie2009HTT},
we know that \[(X\times [\mA^{1}/\GG_{m}])\times_{X^{\pdR/L,+}_{\log}}(X\times [\mA^{1}/\GG_{m}])\cong \hat{\Delta}_{X,\log}^{+}.\]

We define the \emph{algebraic logarithmic de Rham stack} \(X^{\pdR/L}_{\log}\) (resp. \emph{algebraic logarithmic Hodge stack} \(X^{\alg-\mrm{Hodge}/L}_{\log}\), resp. \emph{filtered complete algebraic logarithmic de Rham stack} \(X^{\pdR/L,\hat{+}}_{\log}\)) as the restriction of \(X^{\pdR/L,+}_{\log}\)
to \([\GG_{m}/\GG_{m}]\subset [\mA^{1}/\GG_{m}]\) (resp. \(B\GG_{m}\subset [\mA^{1}/\GG_{m}]\), resp. \([\hat{\mA}^{1}/\GG_{m}]\subset [\mA^{1}/\GG_{m}]\)). 
\end{dfn}
\begin{rmk}\label{rmkFilDRtoDR}
By Remark \ref{rmkFromFilterToNoFil},
we have natural morphisms \[
X\times [\mA^{1}/\GG_{m}]\to X^{\pdR/L,+}_{\log}\to X^{\pdR/L}_{\log}\times [\mA^{1}/\GG_{m}]\to [\mA^{1}/\GG_{m}].
\]
\end{rmk}

The following lemma ensures that there is no confliction of notation.
\begin{lem}\label{lemTwoAlgDRSame}
(1) 
There is a natural factorization \( X\to X^{\pdR/L}_{\log}\to X^{\pdR/L} \), where \( X^{\pdR/L} \) is as in Definition \ref{dfnAlgDRJuan}. 

(2)
If \(X\) is smooth over \(L\) endowed with the trivial log structure, then \(X^{\pdR/L,+}_{\log}\) (Definition \ref{dfnFilterDRAlg})
coincides with \(X^{\pdR/L,+}\) (Definition \ref{dfnAlgDRJuan}).
\end{lem}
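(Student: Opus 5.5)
Both parts will be deduced from the groupoid description of Definition \ref{dfnFilterDRAlg}: a map out of a quotient by a groupoid is the same as a map out of \(X\times[\mA^{1}/\GG_{m}]\) together with an identification of its two pull-backs along \(p_{1},p_{2}\) on \(\hat{\Delta}^{+}_{X,\log}\), compatible with the higher simplices.

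For (1), the target is the composite \(X\to X^{\pdR/L}\). Since \(X^{\pdR/L}_{\log}\) is the quotient of \(X\) by \(\hat{\Delta}_{X,\log}\), it suffices to show that \(p_{1},p_{2}:\hat{\Delta}_{X,\log}\to X\to X^{\pdR/L}\) agree compatibly. Write \(\hat{\Delta}_{X,\log}=\varinjlim_{n}\unl{\AnSpec}_{X}(R/\Fil^{n})\) with \(R=\mO_{X}\hatotimes_{\dR_{\log}(\mO_{X})}\mO_{X}\). The multiplication \(R\to\mO_{X}\) is the diagonal, and its kernel \(\Fil^{1}\) is nilpotent in each \(R/\Fil^{n}\). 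Consequently \(\Delta\) exhibits \(X\) as a nilpotent thickening-source of each \(\unl{\AnSpec}_{X}(R/\Fil^{n})\) (uniformly nilpotent after localizing to affines), and \(p_{1}\circ\Delta=p_{2}\circ\Delta=\id\). By the defining property (B) of \(X^{\pdR}\) (Definition \ref{dfnAlgDRJuan}: points only depend on \(A/I\)), the two maps to \(X^{\pdR/L}\) are canonically identified. The same argument applied to all \(U([n])\) gives the higher coherences, since each \(U([n])\) is again an ind-nilpotent thickening of \(X\) by Lemma \ref{lemSpfCommuteWithLimit} and Lemma \ref{lemDiagonalLogProperty}(1). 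The filtered variant works identically with \(I\{-1\}\).

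For (2), with the trivial log structure, Lemma \ref{lemDiagonalLogProperty}(2) identifies \(R\) with \((\mO_{X}\otimes_{L}\mO_{X})^{\wedge}_{I}\) with its \(I\)-adic filtration. Hence \(\hat{\Delta}^{+}_{X,\log}\cong\unl{\Spf}^{+}(\mO_{X\times X},I^{\bullet})\). Localizing so that the diagonal is cut out by a regular sequence (smoothness, via an étale chart), Lemma \ref{lemClosedImmeAlgDRFil} identifies this with \(X^{\pdR/(X\times X),+}\), which by Theorem \ref{thmAnDRStackGeneral}(1) equals \((X\times[\mA^{1}/\GG_{m}])\times_{X^{\pdR/L,+}}(X\times[\mA^{1}/\GG_{m}])\). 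In the analytic setting one uses the algebraic Lemma \ref{lemClosedImmeAlgDRFil} after base change along the \(!\)-cover, or its rigid analogue. One must check that this identification is compatible with the groupoid structures; both are Čech nerves of \(\dR(\mO_{X})\to\mO_{X}\), respectively of \(X\to X^{\pdR,+}\), and the comparison map of (1) intertwines them. The map \((X\times[\mA^{1}/\GG_{m}])/\hat{\Delta}^{+}\to X^{\pdR/L,+}\) of (1) is then a monomorphism, since it is an isomorphism on the Čech nerve, and an isomorphism once \(X\times[\mA^{1}/\GG_{m}]\to X^{\pdR/L,+}\) is a \(!\)-surjection. This surjectivity is the filtered version of Theorem \ref{thmAnDRStackGeneral}(3), which I expect to follow from \(\dagger\)-formal smoothness, lifting points \(A/I\{-1\}\to X\) \(!\)-locally to \(A\).

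The main obstacle is this last surjectivity in the filtered setting, together with making the identification in (2) with Lemma \ref{lemClosedImmeAlgDRFil} precise globally, since the regular-sequence presentation is only local.
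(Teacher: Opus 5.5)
Your proposal is correct and follows the paper's route. For (1), the paper likewise notes that $\Hom(\hat{\Delta}_{X,\log},X^{\pdR/L})\cong\Hom(X,X^{\pdR/L})$, so the simplicial system becomes constant; for (2), it identifies $\hat{\Delta}^{+}_{X}$ with $(X\times[\mA^{1}/\GG_{m}])\times_{X^{\pdR/L,+}}(X\times[\mA^{1}/\GG_{m}])\cong X^{\pdR/(X\times_{L}X),+}$ via Lemma \ref{lemClosedImmeAlgDRFil} as groupoids, and then passes to simplicial colimits. The $!$-surjectivity of $X\times[\mA^{1}/\GG_{m}]\to X^{\pdR/L,+}$ that you flag as the main obstacle is not proved in the paper but imported from \cite[Proposition 5.1.4]{Juan2024analyticdeRham}, which is the formal-smoothness lifting statement you expected to use.
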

\begin{proof}
For (1), note that by  Definition \ref{dfnAlgDRJuan}, \[ \Hom(\hat{\Delta}_{X,\log},X^{\pdR/L})\cong \Hom(X,X^{\pdR/L}). \] Thus the simplicial system in Definition \ref{dfnFilterDRAlg} becomes constant after applying \( \Hom(-,X^{\pdR/L}) \), which induces \( X^{\pdR/L}_{\log}\to X^{\pdR/L} \).

For (2),
by \cite[Proposition 5.1.4]{Juan2024analyticdeRham}, we know that \( X\times[\mA^{1}/\GG_{m}]\to X^{\pdR/L,+} \) 
is a \( ! \)-surjection, and \[
(X\times[\mA^{1}/\GG_{m})]\times_{X^{\pdR/L,+}}(X\times[\mA^{1}/\GG_{m})]
\cong X^{\pdR/(X\times_{L}X),+}\cong \hat{\Delta}_{X}^{+},
\] where the last isomorphism is given by Lemma \ref{lemClosedImmeAlgDRFil}.
In particular, after pulling back to \( X\times \mA^{1} \), both sides are \( 0 \)-truncated, 
and is an isomorphism of groupoid objects over \( X\times\mA^{1} \). After taking simplicial colimits, we obtain the desired isomorphism. 
\end{proof}

\subsubsection{Properties}
\begin{lem}\label{lemAlgebHTStack}
We have an isomorphism \[X^{\alg-\mrm{Hodge}/L}_{\log}\cong [(X\times B\GG_{m})/\hat{T}_{X,\log}\{-1\}],\]
where 
\(\hat{T}_{X,\log}\{-1\}\)
denotes the formal completion of 
\(T_{X,\log}\{-1\}=(\Omega^{1}_{X,\log})^{\vee}\{-1\}\),
along the zero section, 
and \(\{-1\}\) is as in Notation \ref{notationTwistByAGm}.
\end{lem}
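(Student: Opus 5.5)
Since \(X^{\pdR/L,+}_{\log}\) is defined as the quotient of \(X\times[\mA^{1}/\GG_{m}]\) by the groupoid \(\hat{\Delta}^{+}_{X,\log}\), and quotients by groupoids in an \(\infty\)-topos commute with base change (Giraud's axioms, \cite[Theorem 6.1.0.6]{Lurie2009HTT}), the plan is to pull everything back along \(B\GG_{m}\hookrightarrow[\mA^{1}/\GG_{m}]\). This gives
\[
X^{\alg-\mrm{Hodge}/L}_{\log}\cong (X\times B\GG_{m})/\bigl(\hat{\Delta}^{+}_{X,\log}\times_{[\mA^{1}/\GG_{m}]}B\GG_{m}\bigr).
\]
It then suffices to identify the restricted groupoid with the action groupoid of the formal group \(\hat{T}_{X,\log}\{-1\}\) over \(X\times B\GG_{m}\) acting trivially on \(X\times B\GG_{m}\). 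That is, we need an isomorphism of groupoid objects whose source and target maps both become the projection \(\hat{T}_{X,\log}\{-1\}\to X\times B\GG_{m}\), and whose composition is addition.

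To do this, we compute the restriction via Rees. By Lemma \ref{lemSpfCommuteWithLimit} and the definition of \(\unl{\Spf}^{+}_{X}\), the fibre of \(\hat{\Delta}^{+}_{X,\log}\) over \(B\GG_{m}\) is \(\varinjlim_{n}\unl{\AnSpec}_{X\times B\GG_{m}}\) of \(\gr^{\bullet}\bigl((\mO_{X}\hatotimes_{\dR_{\log}(\mO_{X})}\mO_{X})/\Fil^{n}\bigr)\), viewed as a graded algebra, i.e.\ as an algebra in \(\QCoh(X\times B\GG_{m})\) by Corollary \ref{corFilterdatA1modGm}. By Lemma \ref{lemDiagonalLogProperty} (1), this graded algebra is \(\bigoplus_{i<n}\Sym^{i}_{\mO_{X}}(\Omega^{1}_{X,\log})\), with \(\Sym^{i}\) placed in weight \(i\). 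Under Rees, weight \(i\) corresponds to the twist \(\{\pm i\}\), so the colimit is the formal completion along zero of the vector bundle whose dual is \(\Omega^{1}_{X,\log}\{1\}\), namely \(\hat{T}_{X,\log}\{-1\}\). The sign convention should be checked against Notation \ref{notationTwistByAGm}.

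Next we check the groupoid structure. Both projections \(p_{1},p_{2}\) become the structure map. On the graded level, the two \(\mO_{X}\)-module structures on \(\gr^{\bullet}\) agree, since the difference \(f\otimes1-1\otimes f\) lies in \(\Fil^{1}\). The comultiplication of Lemma \ref{lemDiagonalLogProperty} (3), on \(\gr^{\bullet}\), is the primitive coproduct on \(\Sym^{\bullet}\gr^{1}\). Indeed, the coproduct is a filtered algebra map, it is determined by its value on \(\gr^{1}\), which is generated by the classes \(d a\) (and \(d\log\) of monoid elements), and on those classes it sends \(x\mapsto x\otimes1+1\otimes x\), because \(a\otimes1\otimes1-1\otimes1\otimes a\) is the sum of the two differences. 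Hence the graded Hopf algebra is \(\Sym^{\bullet}\Omega^{1}_{X,\log}\) with the additive structure, matching the group law of \(\hat{T}_{X,\log}\{-1\}\). The \v{C}ech/groupoid isomorphisms of Lemma \ref{lemThisIsGroupoid} restrict compatibly, so the groupoid is the action groupoid, and its quotient is \([(X\times B\GG_{m})/\hat{T}_{X,\log}\{-1\}]\).

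I expect the main obstacle to be making this identification coherent as a map of \(\infty\)-groupoid objects rather than just on \(U([0]),U([1])\). I would handle it by working affine-locally on \(X\) with the explicit graded algebras, which are all discrete. Lemma \ref{lemDiagonalLogProperty} (1) gives this, since every \(U([n])\) over \(B\GG_{m}\) is \(\unl{\Spf}\) of a discrete graded algebra, namely \(\Sym^{\bullet}\) of \(\Omega^{1}_{X,\log}{}^{\oplus n}\). The comparison is then a \(1\)-categorical check of simplicial identities, and it globalizes by canonicity.
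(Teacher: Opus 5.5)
Your proposal is correct and follows the paper's argument. Restrict the groupoid $\hat{\Delta}^{+}_{X,\log}$ to $B\GG_{m}$, use Lemma \ref{lemDiagonalLogProperty} (1) to identify it with $\varinjlim_{i}\unl{\AnSpec}_{X\times B\GG_{m}}(\Sym^{*}(\Omega^{1}_{X,\log}\{1\})/\Fil^{i})\cong\hat{T}_{X,\log}\{-1\}$, and observe that $p_{1}=p_{2}$ is the natural projection; the paper stops there, and your further checks of the additive group law and of coherence are sound supplements (for primitivity on $\gr^{1}$, including the $d\log$ classes you only assert, the counit axioms alone already force $x\mapsto x\otimes 1+1\otimes x$).
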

\begin{proof}
By Lemma \ref{lemDiagonalLogProperty} (1), we know \[\hat{\Delta}^{+}_{X,\log}|_{X\times B\GG_{m}}\cong \varinjlim_{i}\unl{\AnSpec}_{X\times B\GG_{m}} (\Sym^{*}(\Omega^{1}_{X,\log}\{1\})/\Fil^{i})\cong \widehat{T}_{X,\log}\{-1\},
\] and \(p_{1},p_{2}\)
are both given by the natural projection. We thus have the desired isomorphism.
\end{proof}
\begin{lem}\label{lemCalDiagonalSmooth}
Consider \(p_{1},p_{2}:\hat{\Delta}_{X,\log}^{+}\to X\times [\mA^{1}/\GG_{m}]\). Then \(p_{i}\) are suave \(!\)-covers (Definition \ref{dfnVariousPropertyMorphiDescent}) for \(i=1,2\), and \(p_{1}^{!}(\mO_{X\times [\mA^{1}/\GG_{m}]})\) is a line bundle shifted to (cohomological) degree \(-\dim(X)\).
\end{lem}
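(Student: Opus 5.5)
We reduce to an explicit local model and then verify suaveness and conservativity there. Both properties are local on the target, by Theorem \ref{thmVariousDescent} (4) together with the fact that analytic (resp. Zariski) open covers are suave \(!\)-covers (Examples \ref{egScheme}, \ref{egAnAdicSpace}). The base \([\mA^{1}/\GG_{m}]\) can also be handled \(!\)-locally via \(\mA^{1}\to[\mA^{1}/\GG_{m}]\). So we may assume \(X=\Spa(A,A^{+})\) (or \(\Spec A\)) is affinoid, \(\Omega^{1}_{X,\log}\) is free with basis \(d\log\) or \(d\) of coordinates \(x_{1},\dots,x_{n}\), and we work over \(X\times\mA^{1}\). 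By symmetry (the groupoid inverse of Lemma \ref{lemThisIsGroupoid} exchanges \(p_{1},p_{2}\)) it suffices to treat \(p_{1}\).

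By Lemma \ref{lemDiagonalLogProperty} (1), we fix a non-canonical filtered isomorphism of left \(\mO_{X}\)-algebras
\[
\mO_{X}\hatotimes_{\dR_{\log}(\mO_{X})}\mO_{X}\cong\mO_{X}[[\Omega^{1}_{X,\log}]].
\]
Passing through the Rees construction (Corollary \ref{corFilterdatA1modGm}), this identifies \(p_{1}\) over \(X\times\mA^{1}\) with
\[
\varinjlim_{m}\unl{\AnSpec}_{X\times\mA^{1}}\bigl(A[t][y_{1},\dots,y_{n}]/(y)^{m}\bigr)
\]
with \(y_{i}=t^{-1}\omega_{i}\). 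That is, \(p_{1}\) is the projection \(X\times\mA^{1}\times\hat{\mA}^{n}\to X\times\mA^{1}\), up to a base change. Concretely, the Rees algebra of the \(I\)-adic-type filtration is \(A[t,y]/(\dots)\) completed along \(y\), as in the proof of Lemma \ref{lemClosedImmeAlgDRFil}. The upshot is that \(p_{1}\) is a base change of \(\hat{\mA}^{n}\to\mathrm{pt}\), so it suffices to handle \(\hat{\mA}^{1}\to\AnSpec(\ZZ_{\square})\) (or over \(\QQ\)) and take products using Lemma \ref{lemKunnethProper} and Theorem \ref{thmVariousDescent} (5).

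For \(q:\hat{\mA}^{1}\to\mathrm{pt}\), we compute directly. We have \(\QCoh(\hat{\mA}^{1})\cong\) \(t\)-nilpotent \(\ZZ[t]\)-modules (solid), and \(q_{!}\) is the colimit of the pushforwards from \(\AnSpec(\ZZ[t]/t^{k})\); these are proper, so \(q_{!}\) is the forgetful functor. Its right adjoint is
\[
q^{!}(M)=\varinjlim_{k}\RHom_{\ZZ}(\ZZ[t]/t^{k},M)\cong M\otimes\ZZ((t))/\ZZ[[t]]\,[-1]\cong M\otimes q^{!}\mathbf 1,
\]
and \(q^{!}\mathbf 1\cong(\ZZ[t^{\pm1}]/\ZZ[t])[-1]\) is the ``dualizing'' module \(\mO_{\hat{\mA}^{1}}\cdot dt^{-1}\), which is invertible in \(\QCoh(\hat{\mA}^{1})\). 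Its tensor inverse is \(t\)-shifted local cohomology, i.e.\ the twist by the normal bundle. To conclude suaveness via Definition \ref{dfnVariousPropertyMorphiDescent} (3), we check that \(q^{*}(-)\otimes q^{!}\mathbf 1\to q^{!}(-)\) is an isomorphism after arbitrary base change; this holds because the formula above is manifestly compatible with base change and \(\ZZ[t]/t^{k}\) is perfect. The \(n\)-fold product then gives \(p_{1}^{!}\mO\cong\det(T_{X,\log}\{-1\})\)-type line bundle in degree \(-n\). The shift \([-1]\) per coordinate is in homological convention, so the cohomological degree is \(-\dim X\) after matching conventions. I expect this to be the main obstacle: the sign and degree bookkeeping, together with showing that the limit over \(m\) is harmless. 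For the latter we would use that the transition maps are prim (descendable, Example \ref{egEssentialTate} (3)).

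Conservativity of \(p_{1}^{*}\) follows from the existence of the section \(\Delta\), since \(\Delta^{*}p_{1}^{*}=\id\). This makes \(p_{1}\) a suave \(!\)-cover. Finally, invertibility and the degree of \(p_{1}^{!}\mO\) are local statements, so they glue to the global claim.
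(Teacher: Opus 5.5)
Your route is the paper's. You use Lemma \ref{lemDiagonalLogProperty} (1) to identify \(p_{1}\) locally with the formal completion \(\widehat{T}_{X,\log}\{-1\}\) of the twisted log tangent bundle along its zero section, show this is cohomologically smooth, and get conservativity from the section \(\Delta\). The paper works directly over \(X\times[\mA^{1}/\GG_{m}]\) and simply cites \cite[Proposition 4.2.4 (2)]{Juan2024analyticdeRham} for the smoothness of \(\widehat{T}_{X,\log}\{-1\}\). You instead pass to \(\mA^{1}\) and re-derive it from the rank-one case \(q:\hat{\mA}^{1}\to\mathrm{pt}\) plus products. That reduction is legitimate: \(\hat{\mA}^{n}\cong(\hat{\mA}^{1})^{n}\), and \(\mA^{1}\to[\mA^{1}/\GG_{m}]\) is a \(!\)-surjection, so Theorem \ref{thmVariousDescent} (4), (6) apply.

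Your rank-one computation, however, gets the shift wrong, and the shift is exactly the quantitative half of the lemma.

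In the torsion-module model of \(\QCoh(\hat{\mA}^{1})\) (the \(!\)-colimit along the proper pushforwards), \(\varinjlim_{k}\RHom_{\ZZ}(\ZZ[t]/t^{k},M)\cong M\otimes_{\ZZ}(\ZZ[t^{\pm1}]/\ZZ[t])\) lives in the same degree as \(M\); there is no \([-1]\). The \([-1]\) belongs to the unit. For \(j_{k}:\AnSpec(\ZZ[t]/t^{k})\to\hat{\mA}^{1}\), the functor \(j_{k}^{*}\) is \(-\otimes^{L}_{\ZZ[t]}\ZZ[t]/t^{k}\). Since \(t^{k}\) acts surjectively on \(\ZZ[t^{\pm1}]/\ZZ[t]\) with kernel \(\ZZ[t]/t^{k}\), the structure sheaf is \(\mO_{\hat{\mA}^{1}}=R\Gamma_{t}(\ZZ[t])\cong(\ZZ[t^{\pm1}]/\ZZ[t])[-1]\).

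So the object you call \(q^{!}\mathbf{1}\) is \(\mO_{\hat{\mA}^{1}}\) itself; your formula for \(q^{!}(M)\) actually computes \(q^{*}(M)\). Taken literally, it gives a line bundle in degree \(0\), not \(-1\). The correct answer is \(q^{!}\mathbf{1}\cong\ZZ[t^{\pm1}]/\ZZ[t]\cong\mO_{\hat{\mA}^{1}}[1]\). You can check this by comparing \(j_{k}^{!}q^{!}\mathbf{1}=\RHom_{\ZZ}(\ZZ[t]/t^{k},\ZZ)\), which is in degree \(0\), with \(j_{k}^{!}\mO_{\hat{\mA}^{1}}\cong(\ZZ[t]/t^{k})[-1]\).

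Invoking ``homological convention'' does not repair this. The paper is cohomological throughout (cf.\ \(h^{!}\mO\cong\omega_{X,\log}\{d\}[d]\) in Lemma \ref{lemBasicFiltered}), and it is the correct \([1]\) per coordinate that yields degree \(-\dim X\). With this fixed, your base-change check that \(q'^{*}(-)\otimes q'^{!}\mathbf{1}\to q'^{!}(-)\) is an isomorphism goes through as you describe, and the rest of your argument is fine.
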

\begin{proof}
Locally after fixing a lift of \(\gr^{1}\), 
we know by Lemma \ref{lemDiagonalLogProperty} (1) that \[\hat{\Delta}_{X,\log}\cong \varinjlim_{n}\underline{\AnSpec}_{X\times [\mA^{1}/\GG_{m}]}(\mO_{X}[\Omega^{1}_{X,\log}\{1\}]/(\Sym^{n}\Omega^{1}_{X,\log})) \]
is isomorphic to the completion at the zero section of the tangent bundle \(T_{X,\log}\{-1\}=\unl{\AnSpec}_{X\times[\mA^{1}/\GG_{m}]}(\mO_{X}[\Omega^{1}_{X,\log}\{1\}])\), with the natural morphism to \( X\times [\mA^{1}/\GG_{m}] \) being induced by \( p_{1} \). 
We can conclude as \(\widehat{T}_{X,\log}\{-1\}\) is cohomologically smooth over \(X\times [\mA^{1}/\GG_{m}]\) by \cite[Proposition 4.2.4 (2)]{Juan2024analyticdeRham}. It has a section, so is a \( ! \)-surjection. Thus it is a suave \( ! \)-cover.  
\end{proof}

\begin{lem}\label{lemBasicFiltered}
The map \(h:X\times [\mA^{1}/\GG_{m}]\to X_{\log}^{\pdR/L,+}\)
is a suave \( ! \)-cover (Definition \ref{dfnVariousPropertyMorphiDescent}) 
and 
\(h^{!}(\mO_{X_{\log}^{\pdR/L,+}})\cong \omega_{X,\log}\{d\}[d]\),
with \(d:=\dim(X)\), \(\omega_{X,\log}:=\wedge^{d}(\Omega^{1}_{X,\log})\), and \(\{d\}\) is as in Notation \ref{notationTwistByAGm}.
In particular, \( h \) satisfies universal \( ! \)-descent and universal \( * \)-descent.  
\end{lem}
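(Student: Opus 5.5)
The plan is to test everything after base change along \(h\) itself, using that \(X^{\pdR/L,+}_{\log}\) is by Definition \ref{dfnFilterDRAlg} the quotient of \(X\times[\mA^{1}/\GG_{m}]\) by the groupoid \(\hat{\Delta}^{+}_{X,\log}\), and that its fiber square is \(\hat{\Delta}^{+}_{X,\log}\), so the base change of \(h\) along \(h\) is \(p_{1}\) (or \(p_{2}\)).

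\emph{Step 1: \(h\) is a \(!\)-surjection.} This holds because \(h\) is the quotient map of the groupoid in the \(\infty\)-topos \(\mrm{SolidStk}\): it is an effective epimorphism by \cite[Theorem 6.1.0.6]{Lurie2009HTT}. By Theorem \ref{thmVariousDescent} (6), \(h\) therefore satisfies universal \(*\)-descent.

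\emph{Step 2: suaveness and \(!\)-ability.} By Lemma \ref{lemCalDiagonalSmooth}, the base change \(p_{1}\) of \(h\) along \(h\) is a suave \(!\)-cover. Since \(h\) satisfies universal \(*\)-descent, Theorem \ref{thmVariousDescent} (4), with \(g=h\) and \(A=\mO\), shows that \(\mO\) is \(h\)-suave. For \(!\)-ability I would use ``local on the target'' (\cite[Theorem 5.19]{Scholze2022sixFunctor}), applied to the \(!\)-cover \(h\) and the \(!\)-able base change \(p_{1}\). This is the step where I expect some care: one must check that \(h\) is a cover for the extended class of \(!\)-able maps. I would argue inductively, since the iterated fiber products \(U([n])\) are built from \(p_{1}\) and hence are \(!\)-able over \(X\times[\mA^{1}/\GG_{m}]\).

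\emph{Step 3: conservativity.} If \(h^{*}M=0\), then \(M=0\) by \(*\)-descent. Hence \(h\) is a suave \(!\)-cover, and Theorem \ref{thmVariousDescent} (2) and (1) give universal \(!\)- and \(*\)-descent.

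\emph{Step 4: computing the dualizing sheaf.} Smooth base change of \(h^{!}\mO\) along \(h\) gives \(p_{2}^{*}h^{!}\mO\cong p_{1}^{!}\mO\); equivalently, \(\mbb{D}_{h}(\mO)\) is compatible with pullback, as in Definition \ref{dfnVariousPropertyMorphiDescent} (3). So it suffices to identify \(p_{1}^{!}\mO\), together with its descent datum, and then descend. Locally \(\hat{\Delta}^{+}_{X,\log}\cong\widehat{T}_{X,\log}\{-1\}\), the formal completion of a rank \(d\) vector bundle \(V=T_{X,\log}\{-1\}\). For the formal completion \(\hat{V}\to S\), I expect \(f^{!}\mO\cong \wedge^{d}V^{\vee}[d]\), up to the sign and orientation conventions of \cite[Proposition 4.2.4]{Juan2024analyticdeRham}. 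Here this becomes \(\wedge^{d}(\Omega^{1}_{X,\log}\{1\})[d]=\omega_{X,\log}\{d\}[d]\). The computation reduces to \(\hat{\mA}^{1}\), whose dualizing complex is the local cohomology \(L[[t]]^{\vee}\) shifted, invariantly \(\mcal{I}/\mcal{I}^{2}\) dualized appropriately. I would then check that the answer is canonical, namely \(\det\) of the conormal \(\gr^{1}=\Omega^{1}_{X,\log}\{1\}\) by Lemma \ref{lemDiagonalLogProperty} (1), and hence independent of the choice of lift of \(\gr^{1}\).

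\emph{Main obstacle.} The hardest part is showing that the two pullbacks \(p_{1}^{*},p_{2}^{*}\) of \(\omega_{X,\log}\{d\}[d]\) are identified compatibly with the groupoid structure. This is what is needed to descend \(h^{!}\mO\) to the stated line bundle and not merely to some twist of it by a character of the groupoid. To handle it, I would restrict to \(B\GG_{m}\) using Lemma \ref{lemAlgebHTStack}. There the groupoid is the action of the commutative formal group \(\hat{T}\{-1\}\), which acts trivially on \(\det\) by translation invariance, and Lemma \ref{lemPerfComplexA1Gm} lets us conclude on \([\hat{\mA}^{1}/\GG_{m}]\). I expect to handle the generic fiber by the canonicity argument above rather than by explicit computation.
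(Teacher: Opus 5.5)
Your Steps 1--3 coincide with the paper's proof. There \(h\) is a \(!\)-surjection as the quotient map of a groupoid, so \(h^{*}\) is conservative and \(h\) has universal \(*\)-descent. Its base change along itself is \(p_{1}\), a suave \(!\)-cover by Lemma \ref{lemCalDiagonalSmooth}. Theorem \ref{thmVariousDescent} (4) then makes \(\mO\) \(h\)-suave (its proof already gets \(!\)-ability from locality on the target), and (2), (1) give the descent statements.

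The gap is in Step 4. The ``main obstacle'' you isolate is an identification \(p_{1}^{*}(\omega_{X,\log}\{d\})\cong p_{2}^{*}(\omega_{X,\log}\{d\})\) compatible with the groupoid. That is precisely a descent datum for \(\omega_{X,\log}\{d\}\) along \(h\). Restricting to \([\GG_{m}/\GG_{m}]\) and using Lemma \ref{lemDRcomplexVSpushforward}, it would give an integrable log connection on \(\omega_{X,\log}\). This does not exist in general: for \(X=\mP^{1}\) with trivial log structure, \(\omega_{X}=\mO(-2)\) admits no connection. So that step would fail. Your remedy could not supply it either. On the Hodge fibre the two projections agree and the trivial datum works, but conservativity of restriction to \(X\times B\GG_{m}\) (Lemma \ref{lemPerfComplexA1Gm}) only tests whether an already given map is an isomorphism. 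It cannot extend descent data to \([\hat{\mA}^{1}/\GG_{m}]\).

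Fortunately nothing has to be descended. \(h^{!}\mO\) lives on the source \(X\times[\mA^{1}/\GG_{m}]\), and \(p_{2}\) has the diagonal section \(\Delta\), so \(h^{!}\mO\cong\Delta^{*}p_{2}^{*}h^{!}\mO\cong\Delta^{*}p_{1}^{!}\mO\). Restricted to the zero section, the relative dualizing complex of the formal completion \(\hat{\Delta}^{+}_{X,\log}\) is canonically the determinant of the conormal bundle \(\gr^{1}=\Omega^{1}_{X,\log}\{1\}\) (Lemma \ref{lemDiagonalLogProperty} (1)), shifted by \([d]\). That is, it is \(\omega_{X,\log}\{d\}[d]\). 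Changing the lift of \(\gr^{1}\) alters the local chart by an automorphism tangent to the identity along \(\Delta\), which acts trivially there. This is just your local computation plus your canonicity remark, now applied along \(\Delta\).

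With this repair, your route differs from the paper's only in where the computation happens. The paper knows \(h^{!}\mO\) is a shifted line bundle and computes only its restriction to \(X\times B\GG_{m}\). It does this through the Hodge stack \([X\times B\GG_{m}/\hat{T}_{X,\log}\{-1\}]\) (Lemma \ref{lemAlgebHTStack}) and the dualizing sheaf of a classifying stack of a formal vector group \cite[Proposition 4.2.5 (3)]{Juan2024analyticdeRham}. It then uses that such a line bundle on \(X\times[\mA^{1}/\GG_{m}]\) is determined by its graded piece (Lemma \ref{lemPerfComplexA1Gm}). Your corrected argument works over all of \([\mA^{1}/\GG_{m}]\) at once and skips that rigidity step. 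The cost is that you need the canonical zero-section form of the dualizing complex of \(\hat{V}\).
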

\begin{proof}
\(h\) is a \(!\)-surjection by construction, and thus \( h^{*} \) 
is conservative.
We also have by Definition \ref{dfnFilterDRAlg} that \[(X\times [\mA^{1}/\GG_{m}])\times_{X^{\pdR/L,+}_{\log}}(X\times [\mA^{1}/\GG_{m}])\cong \hat{\Delta}_{X,\log}^{+}.\]
Then the cohomological smoothness of \(h\) follows from Lemma \ref{lemCalDiagonalSmooth} and Theorem \ref{thmVariousDescent}. 
By Lemma \ref{lemBasicLogDRSta}, we know \(h^{!}(\mO_{\mX_{\log}^{\pdR/L,+}})\) is a line bundle over \(X\times [\mA^{1}/\GG_{m}]\) shifted to degree \(-d\). Consider the restriction to \(i:B\GG_{m}\to \mA^{1}/\GG_{m}\). Then \[
\gr^{*}(h^{!}(\mO_{\mX_{\log}^{\pdR/L,+}}))\cong h^{\prime,!}(\mO_{X^{\alg-\mrm{Hodge}/L}_{\log}}),
\] for \(\gr^{*}(-):=i^{*}\), and \[
h':X\times B\GG_{m}\to X^{\alg-\mrm{Hodge}/L}_{\log}\cong [X\times B\GG_{m}/\hat{T}_{X,\log}(-1)],
\] where the last isomorphism is given by Lemma \ref{lemAlgebHTStack}.
By \cite[Proposition 4.2.5 (3)]{Juan2024analyticdeRham}, we know \[
h^{\prime,!}(\mO_{X^{\alg-\mrm{Hodge}/L}_{\log}})\cong (\wedge^{d}(T_{X,\log}(-1))[-d])^{-1}\cong \omega_{X,\log}\{d\}[d].
\] Since \(h^{!}(\mO_{X_{\log}^{\pdR/L,+}})\) is a perfect complex, we conclude that \(h^{!}(\mO_{\mX^{\pdR/L,+}_{\log}})\cong \omega_{X,\log}\{d\}[d]\) by Lemma \ref{lemPerfComplexA1Gm}.

The last statment follows from Theorem \ref{thmVariousDescent}.
\end{proof}

\begin{lem}\label{lemBasicLogDRSta}
(1) The map \(h:X\to X_{\log}^{\pdR/L}\)
is a suave \( ! \)-cover  and \(!\)-surjective,
and 
\(h^{!}(\mO_{X_{\log}^{\pdR/L}})\cong \omega_{X,\log}[d]\),
with \(d:=\dim(X)\), and \(\omega_{X,\log}:=\wedge^{d}(\Omega^{1}_{X,\log})\).

Moreover, \( X^{\pdR/L,+}_{\log}\times_{[\mA^{1}/\GG_{m}]}[\mA^{1,\an}/\GG_{m}^{\an}] \) and \( X^{\pdR/L}_{\log} \) 
are essentially Tate (Definition \ref{dfnEssentialTate}), and we have equivalences \( \Vect(X^{\pdR/L}_{\log})\cong \Vect((X^{\pdR/L}_{\log})^{Tate}) \).

By abuse of notation, we will write \( X^{\pdR/L}_{\log}:=(X^{\pdR/L}_{\log})^{Tate} \). 

(2) Assume in addition that \(X\) is smooth over \(L\).
We have a natural factorization \[X\to X^{\pdR/L}_{\log}\to X^{\dR},\]
and the induced map \(X^{\dR}\to (X^{\pdR/L}_{\log})^{\dR}\) is a monomorphism,
and
admits a natural retract.

(3) 
If \(f:X'\to X\) is a log smooth morphism, then we have a natural morphism \(f^{\pdR/L}:X^{\prime,\pdR/L,+}_{\log}\to X^{\pdR/L,+}_{\log}\) such that there is a commutative diagram of solid stacks \[\begin{tikzcd}
	X'\times {[\mA^{1}/\GG_{m}]}\arrow[r]\arrow[d] & X^{\prime,\pdR/L,+}_{\log}\arrow[d]\\
	X\times {[\mA^{1}/\GG_{m}]}\arrow[r] & X^{\pdR/L,+}_{\log}.
\end{tikzcd}\]
If \(f\) is log \'etale,
then the diagram is Cartesian.
\end{lem}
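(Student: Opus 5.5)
For (1), I would restrict the filtered statement of Lemma \ref{lemBasicFiltered} along \([\GG_{m}/\GG_{m}]\subset[\mA^{1}/\GG_{m}]\). Suave \(!\)-covers are stable under base change (Theorem \ref{thmVariousDescent} (4),(5)), and \(X\to X^{\pdR/L}_{\log}\) is the pullback of \(h\) along this open immersion. Hence \(X\to X^{\pdR/L}_{\log}\) is a suave \(!\)-cover, and it is \(!\)-surjective because it is a quotient map by a groupoid. The dualizing complex \(h^{!}\mO\) is computed by \(\mbb{D}\), which is compatible with base change. So it restricts to \(\omega_{X,\log}\{d\}[d]|_{\GG_{m}/\GG_{m}}=\omega_{X,\log}[d]\). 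I note that Lemma \ref{lemBasicFiltered} cites the present lemma only for the fact that \(h^{!}\mO\) is a shifted line bundle. That fact follows directly from Lemma \ref{lemCalDiagonalSmooth} and Theorem \ref{thmVariousDescent} (5), so there is no circularity.

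For essential Tateness, I would apply Lemma \ref{lemBasicPropertyEsseTate} (6) to the suave \(!\)-cover \(X\times[\mA^{1,\an}/\GG_{m}^{\an}]\to X^{\pdR/L,+}_{\log}\times_{[\mA^{1}/\GG_{m}]}[\mA^{1,\an}/\GG_{m}^{\an}]\). This requires the iterated fiber products to be essentially Tate. These are the \(n\)-fold products of \(\hat{\Delta}^{+}_{X,\log}\), which analytic-locally are formal completions of the vector bundles \(T_{X,\log}\{-1\}^{\oplus n}\) over \(X\times[\mA^{1,\an}/\GG_{m}^{\an}]\). By Lemma \ref{lemDiagonalLogProperty} (1), they are colimits along monomorphisms of \(\AnSpec\) of bounded rings, namely \(A[\Omega^{1}]/\Fil^{k}\) over bounded \(A\). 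Example \ref{egEssentialTate} (4),(5) and Lemma \ref{lemBasicPropertyEsseTate} (2) then give essential Tateness. The algebraic setting is not Tate, so I would read this part as concerning the analytic setting. The same argument applies to the \([\GG_{m}/\GG_{m}]\) restriction. For \(\Vect\), I would use the second half of Lemma \ref{lemBasicPropertyEsseTate} (1). The cover is affinoid and Fredholm by Theorem \ref{thmSolidTateIsFredholm}, and it is \(!\)-surjective.

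For (2), I would use the universal property of the quotient. To map \(X^{\pdR/L}_{\log}\to X^{\dR}\), it suffices to make \(\hat{\Delta}_{X,\log}\rightrightarrows X\to X^{\dR}\) compatible. The two maps \(\hat{\Delta}_{X,\log}\to X\) agree modulo a nilpotent ideal, since they agree on the reduction \(X\). Because \(X^{\dR}\) sees only \(\dagger\)-reductions (property (B)), they agree after composing to \(X^{\dR}\), and compatibly on all higher simplices by the same argument. Alternatively, I would compose the map \(X^{\pdR/L}_{\log}\to X^{\pdR/L}\) of Lemma \ref{lemTwoAlgDRSame} (1) with \(X^{\pdR}\to X^{\dR}\). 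Applying \((-)^{\dR}\) to \(X\to X^{\pdR/L}_{\log}\to X^{\dR}\) gives \(X^{\dR}\to(X^{\pdR/L}_{\log})^{\dR}\to(X^{\dR})^{\dR}=X^{\dR}\), whose composite is the identity. This supplies the retract and shows that the first map is a split monomorphism.

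For (3), I would functorially build a map of groupoids \(\hat{\Delta}^{+}_{X',\log}\to\hat{\Delta}^{+}_{X,\log}\) from the filtered map \(\dR_{\log}(\mO_{X})\to f_{*}\dR_{\log}(\mO_{X'})\) and Lemma \ref{lemSpfCommuteWithLimit}. Passing to quotients then gives the commutative square. If \(f\) is log étale, then \(f^{*}\Omega^{1}_{X,\log}\cong\Omega^{1}_{X',\log}\). Using Lemma \ref{lemDiagonalLogProperty} (1) on graded pieces, I would show \(\hat{\Delta}^{+}_{X',\log}\cong X'\times_{p_{1},X}\hat{\Delta}^{+}_{X,\log}\) and the corresponding statement for \(p_{2}\). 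In other words, the groupoid of \(X'\) is the restriction of that of \(X\); I expect this to be the main obstacle. For a groupoid in an \(\infty\)-topos, Giraud's axioms then make the square Cartesian. To verify the identification, I would check that the filtered complete map \(\mO_{X'}\hatotimes_{\mO_{X}}(\mO_{X}\hatotimes_{\dR_{\log}}\mO_{X})\to\mO_{X'}\hatotimes_{\dR_{\log}}\mO_{X'}\) is an isomorphism on \(\gr^{1}\), which reduces to \(\Omega^{1}\) base change, and then extend to all \(\gr^{i}=\Sym^{i}\gr^{1}\).
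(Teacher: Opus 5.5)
Your treatment of (1) and (3) follows the paper's proof. For (1) you restrict Lemma \ref{lemBasicFiltered} along \([\GG_{m}/\GG_{m}]\subset[\mA^{1}/\GG_{m}]\) and use Lemma \ref{lemBasicPropertyEsseTate} (6) and (1) for the Tate statements. For log \'etale \(f\) in (3) you compare \(\gr^{i}=\Sym^{i}\gr^{1}\) of the two log diagonals. Your observation that the line-bundle property of \(h^{!}\mO\) follows directly from Lemma \ref{lemCalDiagonalSmooth} is correct and worth keeping: pull back \(p_{1}^{!}\mO\) along the diagonal section. It removes the mutual citation between Lemma \ref{lemBasicFiltered} and this lemma. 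Your factorization in (2) and your retraction are also fine, whether you use the universal property of the quotient or Lemma \ref{lemTwoAlgDRSame} (1). The paper instead uses the groupoid map \(\hat{\Delta}_{X,\log}\to\hat{\Delta}_{X}\to\Delta(X)^{\dagger}\) together with Theorem \ref{thmAnDRStackGeneral} (3).

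The gap is the monomorphism claim in (2). In the paper, ``monomorphism'' means the diagonal is an isomorphism (Definition \ref{dfnVariousPropertyMorphiDescent} (7)), i.e.\ the map is \((-1)\)-truncated in the \(\infty\)-topos of solid or Tate stacks. Having a left inverse does not imply this. Already in anima, \(\ast\to BG\) for a nontrivial discrete group \(G\) has the retraction \(BG\to\ast\), yet its fibre is \(G\). So calling the map a ``split monomorphism'' gives you only the retract, not the monomorphism. What is missing is a computation of the diagonal. Since \((-)^{\dR}\) commutes with finite limits (Theorem \ref{thmAnDRStackGeneral} (1)),
\[
X^{\dR}\times_{(X^{\pdR/L}_{\log})^{\dR}}X^{\dR}\cong\bigl(X\times_{X^{\pdR/L}_{\log}}X\bigr)^{\dR}\cong(\hat{\Delta}_{X,\log})^{\dR}.
\]
Moreover \((\hat{\Delta}_{X,\log})^{\dR}\cong X^{\dR}\), because \(\hat{\Delta}_{X,\log}\) is an ind-nilpotent thickening of \(X\). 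Equivalently, \(\mO_{X}\hatotimes_{\dR_{\log}(\mO_{X})}\mO_{X}\to\mO_{X}\) becomes an isomorphism after \(\dagger\)-reduction. This is the same ``\(X^{\dR}\) only sees \(\dagger\)-reductions'' principle you already use for the factorization. Hence the diagonal of \(X^{\dR}\to(X^{\pdR/L}_{\log})^{\dR}\) is an isomorphism, which is what the statement asserts. The retract is a separate piece of information.
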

\begin{proof}
The first part of
(1) follows from Lemma \ref{lemBasicFiltered}. 
Note that by Lemma \ref{lemDiagonalLogProperty} (1), locally after fixing a lift of \( \gr^{1} \) and a trivialization of \( \Omega^{1}_{X,\log} \),  \[ \hat{\Delta}_{X,\log}^{+}\times_{[\mA^{1}/\GG_{m}]}[\mA^{1,\an}/\GG_{m}^{\an}]\cong X\times(\hat{\mA}^{1})^{d}/\GG_{m}^{\an},
 \] which is essentially Tate by Example \ref{egEssentialTate} (3) and Lemma \ref{lemBasicPropertyEsseTate} (6), and thus
\( X^{\pdR/L,+}_{\log} \) 
is also essentially Tate by Lemma \ref{lemBasicPropertyEsseTate} (6).
Moreover, \( X\to X^{\pdR/L}_{\log} \) is a \( ! \)-surjection, and \( X \) can be covered by \( \AnSpec(A_{i}) \)
for \( A_{i} \) bounded and Fredholm by Theorem \ref{thmSolidTateIsFredholm}. Thus by Lemma \ref{lemBasicPropertyEsseTate} (1), we have  \( \Vect(X^{\pdR/L}_{\log})\cong \Vect((X^{\pdR/L}_{\log})^{Tate}) \).

(2) follows from Theorem \ref{thmAnDRStackGeneral} (3),
as we have a factorization \(\hat{\Delta}_{X,\log}\to \hat{\Delta}_{X}\to \Delta(X)^{\dagger}\). Also note that as we have seen, \(
\mO_{X}\hatotimes_{\dR_{\log}(\mO_{X})}\mO_{X}\to \mO_{X}
\) induces an isomorphism after \(\dagger\)-reduction,
and thus \[X^{\dR}\times_{(X^{\pdR/L}_{\log})^{\dR}}X^{\dR}\cong (\hat{\Delta}_{X,\log})^{\dR}\cong X^{\dR},\]
so the map is a monomorphism.
The natural retract is induced by \(X^{\pdR/L}_{\log}\to X^{\dR}\). 

For (3), by localizing, we can assume that \(X=\Spa(A,A^{+})\)
and \(X'=\Spa(A',A^{\prime,+})\), then 
\(f\) induces an exact sequence \[
0\to f^{*}\Omega^{1}_{A,\log}\to \Omega^{1}_{A',\log}\to \Omega^{1}_{A'/A,\log}\to 0,
\] and thus a filtered morphism \(\dR_{\log}(A)\to \dR_{\log}(A')\) whose \(\gr^{0}\)
is given by \(A\to A'\). This gives the desired functoriality. 

If \(f\) is log \'etale, \(f^{*}\Omega^{1}_{X,\log}\cong \Omega^{1}_{X',\log}\). Therefore, we have a natural map \[f^{*}(\mO_{X}\hatotimes_{\dR_{\log}(\mO_{X})}\mO_{X})\to \mO_{X'}\hatotimes_{\dR_{\log}(\mO_{X'})}\mO_{X'}\]
induces the natural map \[f^{*}\Sym^{i}_{\mO_{X}}\Omega^{1}_{X,\log}\to \Sym^{i}_{\mO_{X'}}\Omega^{1}_{X',\log}\] on \(\gr^{i}(-)\)
which is thus also an isomorphism. This implies that \(f^{*}\hat{\Delta}_{X,\log}^{+}\cong \hat{\Delta}_{X',\log}^{+}\), and thus we have the desired Cartesian diagram.
\end{proof}

\subsubsection{Logarithmic D-modules}


\begin{lem}\label{lemDRcomplexVSpushforward}
(1)
For \(h:X\times [\mA^{1}/\GG_{m}]\to X^{\pdR/L,+}_{\log}\), the functor 
\[h^{*}:\QCoh(X^{\pdR/L,+}_{\log})\to \QCoh(X\times [\mA^{1}/\GG_{m}])\]
lifts canonically to a \(\QCoh([\mA^{1}/\GG_{m}])\)-linear equivalence of symmetric monoidal categories \[
\QCoh(X^{\pdR/L,+}_{\log})\cong \Mod_{D_{X,\log}}(\Fil^{\ZZ}(\QCoh(X))),
\] where \(D_{X,\log}\) is endowed with the degree filtration \(\Fil^{-i}D_{X,\log}:=\mrm{Diff}^{\le i}_{\log}(\mO_{X},\mO_{X})\).

This functor identifies the subcategory \(\Vect(X^{\pdR/L,+}_{\log})\) of vector bundles on \(X^{\pdR/L,+}_{\log}\) with the subcategory of filtered vector bundles equipped with integrable log connections satisfying Griffiths transversality.

(2)
Similarly, \(h':X\to X^{\pdR/L}_{\log}\)
induces an equivalence of symmetric monoidal categories \[
\QCoh(X^{\pdR/L}_{\log})\cong \Mod_{D_{X,\log}}(\QCoh(X))=: D_{X,\log}-\Mod.
\] 
This functor identifies the subcategory \(\Vect(X^{\pdR/L}_{\log})\)with the subcategory of vector bundles equipped with integrable log connections.

(3) 
For any 
\(M\in \QCoh(X^{\pdR/L}_{\log})\) (resp. \(M\in\QCoh(X^{\pdR/L,+}_{\log})\)), we have an isomorphism (resp. an isomorphism in \( \QCoh([\mA^{1}/\GG_{m}]) \)) \[
R\Gamma(X^{\pdR/L}_{\log},M)\cong R\Gamma_{\log-\dR}(X,h^{*}M):=\RHom_{D_{X,\log}}(\mO_{X},h^{*}(M)).
\]
\end{lem}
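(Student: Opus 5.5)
We will prove the lemma by descent along the suave \(!\)-cover \(h\) of Lemma \ref{lemBasicFiltered}, combined with a Barr--Beck--Lurie argument identifying the resulting comonad with the dual of the Hopf algebroid computed in Lemma \ref{lemDiagonalLogProperty} (3). By Lemma \ref{lemBasicFiltered}, \(h\) satisfies universal \(*\)-descent. Hence
\[
\QCoh(X^{\pdR/L,+}_{\log})\cong \varprojlim_{[n]\in\Delta}\QCoh(U([n])),
\]
where \(U\) is the groupoid of Lemma \ref{lemThisIsGroupoid}. Since \(U([n])\) is the \(n\)-fold fiber product of \(\hat{\Delta}^{+}_{X,\log}\) over \(X\times[\mA^{1}/\GG_{m}]\), this limit is the category of comodules in \(\QCoh(X\times[\mA^{1}/\GG_{m}])\cong \Fil^{\ZZ}(\QCoh(X))\) (Corollary \ref{corFilterdatA1modGm}) for the comonad \(h^{*}h_{*}\cong p_{2,*}p_{1}^{*}\). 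The isomorphism \(h^{*}h_{*}\cong p_{2,*}p_{1}^{*}\) comes from smooth base change, Lemma \ref{lemSmBaseChange} (1), applied to the Cartesian square defining \(\hat{\Delta}^{+}_{X,\log}\).

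The comonad needs to be identified as follows. Locally, \(\hat{\Delta}^{+}_{X,\log}\) is the formal completion \(\hat{T}_{X,\log}\{-1\}\), by the proof of Lemma \ref{lemCalDiagonalSmooth}. So \(p_{2,*}p_{1}^{*}(M)\) is the filtered completed tensor product \(M\hatotimes_{\mO_{X}}\mathcal{P}\), where \(\mathcal{P}:=\mO_{X}\hatotimes_{\dR_{\log}(\mO_{X})}\mO_{X}\) carries its Hodge filtration shifted by the twist \(\{-1\}\). Comodules over the pro-coalgebra \(\mathcal{P}=\varprojlim\mathcal{P}/\Fil^{n}\) are then converted into modules over its continuous dual \(\varinjlim_{n}\RHom_{\mO_{X}}(\mathcal{P}/\Fil^{n},\mO_{X})\). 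By Lemma \ref{lemDiagonalLogProperty} (3), this dual is \(D_{X,\log}\) with its order filtration.

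\textbf{Main obstacle.} The hardest step is making the comodule-to-module conversion rigorous for arbitrary, possibly unbounded and non-complete, \(M\). Each \(\mathcal{P}/\Fil^{n}\) is a finite locally free \(\mO_{X}\)-module by Lemma \ref{lemDiagonalLogProperty} (1), so coaction maps \(M\to M\otimes\mathcal{P}/\Fil^{n}\) correspond to actions of \(\mathrm{Diff}^{\le n}_{\log}\). The issue is passing to the limit in \(n\). I would handle it as follows.
\begin{itemize}
\item First check that \(p_{2,*}p_{1}^{*}\) commutes with colimits; this should follow from \(p_{2}\) being prim-like locally, as \(p_{2,*}\) of a colimit of \(\AnSpec\)'s.
\item Then conclude via Barr--Beck--Lurie monadicity: \(h^{*}\) is conservative and colimit-preserving, and it has the left adjoint \(h_{\natural}\), since \(h\) is cohomologically smooth.
\item This gives \(\QCoh(X^{\pdR/L,+}_{\log})\simeq\Mod_{h^{*}h_{\natural}}\). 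The monad \(h^{*}h_{\natural}=p_{2,!}(p_{1}^{*}(-)\otimes p_{1}^{!}\mO)\) is computed locally as \(D_{X,\log}\otimes_{\mO_{X}}-\) with the order filtration. The dualizing line bundle \(\omega\{d\}[d]\) from Lemma \ref{lemBasicFiltered} cancels against the local cohomology of the formal completion along the zero section, which yields distributions \(=\) \(\Sym^{\bullet}T_{X,\log}\{1\}\).
\end{itemize}

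The symmetric monoidal structure and \(\QCoh([\mA^{1}/\GG_{m}])\)-linearity come from \(h^{*}\) being symmetric monoidal and linear over the base.

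For vector bundles, \(h^{*}\) of a vector bundle is a filtered vector bundle, meaning a vector bundle on \(X\times[\mA^{1}/\GG_{m}]\). A \(D_{X,\log}\)-module structure is determined by \(\Fil^{-1}D_{X,\log}\), that is, by log vector fields lowering the filtration by at most one. This is exactly an integrable log connection with Griffiths transversality, integrability encoding the relations of \(D_{X,\log}\). Conversely, such data give a descent datum whose descended object is locally free after \(h^{*}\), hence a vector bundle, because \(h\) is a \(!\)-surjection. Part (2) follows by base change to \([\GG_{m}/\GG_{m}]\), which forgets the filtration.

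For (3), we have
\[
R\Gamma(X^{\pdR/L}_{\log},M)=\RHom(\mO,M)\cong\RHom_{D_{X,\log}}(h^{*}\mO,h^{*}M)=\RHom_{D_{X,\log}}(\mO_{X},h^{*}M),
\]
using full faithfulness of the equivalence. The filtered version works the same way, with \(\RHom\) valued in \(\QCoh([\mA^{1}/\GG_{m}])\) via the linear structure.
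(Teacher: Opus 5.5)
Your proposal is correct, and once you drop the comodule detour it is essentially the paper's argument: Barr--Beck--Lurie for \(h^{*}\) with its left adjoint \(h_{\natural}\); smooth base change giving \(h^{*}h_{\natural}\cong p_{2,\natural}p_{1}^{*}\); identification of this monad with \(D_{X,\log}\otimes_{\mO_{X}}-\), as a filtered algebra via duality with \(p_{1,*}p_{2}^{*}\mO_{X}\cong \mO_{X}\hatotimes_{\dR_{\log}(\mO_{X})}\mO_{X}\) and Lemma \ref{lemDiagonalLogProperty} (3); then \(!\)-surjectivity of \(h\) for vector bundles, base change to \([\GG_{m}/\GG_{m}]\) for (2), and full faithfulness for (3). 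The differences are minor: the paper computes \(p_{2,\natural}p_{1}^{*}\) as \(\varinjlim_{n}(p_{2}\circ f_{n})_{\natural}(p_{1}\circ f_{n})^{*}\) over the finite flat truncations \(f_{n}\) of \(\hat{\Delta}^{+}_{X,\log}\) rather than through your distribution computation, and the twist in your formula for \(h^{*}h_{\natural}\) should be \(p_{2}^{!}\mO\), not \(p_{1}^{!}\mO\), for it to equal \(p_{2,\natural}p_{1}^{*}\).
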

\begin{rmk}\label{rmkResolveObyDX}
Note that \(\mO_{X}\) admits a resolution \[0\to
D_{X,\log}\otimes\wedge^{\dim(X)}(\Omega^{1}_{X,\log})^{\vee}\to \cdots
\to 
D_{X,\log}\otimes (\Omega^{1}_{X,\log})^{\vee}
\to D_{X,\log}\to \mO_{X}\to 0
\] which can be used for calculating \(\RHom_{D_{X,\log}}(\mO_{X},h^{*}(M)).\)
\end{rmk}




\begin{proof}
By Barr-Beck-Lurie theorem (\cite[Theorem 4.7.3.5]{Lurie2017HA}), the argument in \cite[Proposition 3.1.27]{Juan2024analyticdeRham} and Lemma \ref{lemBasicLogDRSta} (1), \(h^{*}\)
lifts canonically to an equivalence \[
\QCoh(X^{\pdR/L,+}_{\log})\cong \Mod_{h^{*}h_{\natural}}(\QCoh(X\times[\mA^{1}/\GG_{m}])).
\] 
We need to understand the monoid \( h^{*}h_{\natural} \).
We have
 \[
h^{*}h_{\natural}(\mF)\cong h^{*}h_{!}(\mF\otimes (h^{!}1)^{-1})
\cong p_{2,!}p_{1}^{*}(\mF\otimes (h^{!}1)^{-1})
\cong p_{2,\natural}p_{1}^{*}(\mF).
\]
Note that \(p_{2,\natural}p_{1}^{*}(\mO_{X})\) is a bi-\(\mO_{X}\)-module,
with an additional right action \(*_{r}\) of \(\mO_{X}\) induced via functoriality by its action on \(\mO_{X}\).

We now separate the proof into several steps:

\emph{Step (A)}: 
We claim that for \(\mF\in\QCoh(X\times[\mA^{1}/\GG_{m}]),
\)
\begin{align}\label{alignFindMonodialByAlg}
p_{2,\natural}p_{1}^{*}(\mF)\cong p_{2,\natural}p_{1}^{*}(\mO_{X})\otimes_{*_{r},\mO_{X}}\mF\cong D_{X,\log}\otimes_{*_{r},\mO_{X}}\mF.
\end{align}
Note that (\ref{alignFindMonodialByAlg}) implies in particular that \( p_{2,\natural}p_{1}^{*}(\mO_{X}) \) is reflexive for the na\"ive dual 
\[
(-)^{\vee}:=\unl{\RHom}_{X\times [\mA^{1}/\GG_{m}]}(-,\mO_{X}).
\]

For the proof of (\ref{alignFindMonodialByAlg}), denote \[
f_{n}:\unl{\AnSpec}_{X\times[\mA^{1}/\GG_{m}]}(\mO_{X}\hatotimes_{\dR_{\log}(\mO_{X})}\mO_{X}/\Fil^{n})
\hookrightarrow \hat{\Delta}_{X,\log}^{+},
\] which is cohomologically smooth, This can be seen by finding a lift of the \(\gr^{1}\) as in Lemma \ref{lemDiagonalLogProperty} (1), fixing a trivialization of \(\Omega_{X,\log}^{1}\), and reducing to the general statement that \(\AnSpec(L)\hookrightarrow \AnSpec(L\langle t\rangle)\)
is cohomologically smooth by \cite[Proposition 3.6.9]{Juan2024analyticdeRham}.

Then by \(!\)-descent, \((p_{2}\circ f_{n})_{\natural}\cong \varinjlim_{n}(p_{2}\circ f_{n})_{\natural}f_{n}^{*}.\) 
Note that by Lemma \ref{lemDiagonalLogProperty} (1), \(p_{2}\circ f_{n}\) is a finite flat map, and thus 
\begin{align*}
\RHom((p_{2}\circ f_{n})_{\natural}(p_{1}\circ f_{n})^{*}(\mF),&\mcal{G})\cong \RHom(\mF,(p_{1}\circ f_{n})_{*}(p_{2}\circ f_{n})^{*}(\mcal{G}))\\
&\cong \RHom(\mF,(\mO_{X}\hatotimes_{\dR_{\log}(\mO_{X})}\mO_{X})/\Fil^{n}\otimes_{\mO_{X}}\mcal{G})
\\
&\cong \RHom(((\mO_{X}\hatotimes_{\dR_{\log}(\mO_{X})}\mO_{X})/\Fil^{n})^{\vee}\otimes\mF,\mcal{G}).
\end{align*}
Hence \( (p_{2}\circ f_{n})_{\natural}(p_{1}\circ f_{n})^{*}(\mF)\cong (\mO_{X}\hatotimes_{\dR_{\log}(\mO_{X})}\mO_{X})/\Fil^{n})^{\vee}\otimes_{\mO_{X}}\mF \cong D_{X,\log}\otimes_{*_{r},\mO_{X}}\mF\).
Thus 
\begin{align*}
p_{2,\natural}p_{1}^{*}(\mF)&\cong \varinjlim_{n}
(p_{2}\circ f_{n})_{\natural}(p_{1}\circ f_{n})^{*}(\mF)\\
&\cong\varinjlim_{n}
((\mO_{X}\hatotimes_{\dR_{\log}(\mO_{X})}\mO_{X})/\Fil^{n})^{\vee }\otimes_{\mO_{X}}\mF\cong D_{X,\log}\otimes_{\mO_{X}}\mF.
\end{align*}

\emph{Step (B)}: 
Via (\ref{alignFindMonodialByAlg}), the monoid structure of \( p_{2,\natural}p_{1}^{*} \) induces a filtered bi-\(\mO_{X}\)-module on \( p_{2,\natural}p_{1}^{*}(\mO_{X}) \).
We claim that as a filtered bi-\(\mO_{X}\)-modules,
\begin{align}\label{alignIdentWithDXlog}
p_{2,\natural}p_{1}^{*}(\mO_{X})\cong D_{X,\log}. 
\end{align} 


Now to identify \( p_{2,\natural}p_{1}^{*}(\mO_{X}) \) canonically with \( D_{X,\log} \),
we consider
\begin{align}\label{alignDualDXtoDiagnal}
(p_{2,\natural}p_{1}^{*}(\mO_{X}))^{\vee}
&\cong p_{2,*}\unl{\RHom}_{\hat{\Delta}_{X,\log}^{+}}(p_{1}^{*}(\mO_{X}),p_{2}^{*}(\mO_{X}))\\
&\cong 
p_{1,*}p_{2}^{*}(\mO_{X})
\cong \mO_{X}\hatotimes_{\dR_{\log}(\mO_{X})}\mO_{X},
\end{align} 
along which the monoid structure on \( p_{2,\natural}p_{1}^{*} \) is translated into the comonoid structure of \( p_{1,*}p_{2}^{*} \).
Therefore, by Lemma \ref{lemDiagonalLogProperty} (3), we have 
\(p_{2,\natural}p_{1}^{*}(\mO_{X})\cong D_{X,\log}\)
as a filtered bi-\(\mO_{X}\)-algebra, as desired.

(\ref{alignFindMonodialByAlg}) and 
(\ref{alignDualDXtoDiagnal}) implies the desired description of the category of quasi-coherent sheaves.


Finally, since \(h\) is a \(!\)-surjection (Lemma \ref{lemBasicLogDRSta} (1)), \(M\in\QCoh(X^{\pdR/L,+}_{\log})\)
is a vector bundle if and only if \(h^{*}(M)\)
is a vector bundle on \(X\times [\mA^{1}/\GG_{m}]\). We thus obtain the identification of vector bundles.
This finishes the proof of (1). 

The statement of (2) follows from (1) by base change along \(\QCoh([\mA^{1}/\GG_{m}])\to\QCoh([\GG_{m}/\GG_{m}])\).


(3) follows immediately from (2).
\end{proof}
\begin{lem}\label{lemPerfFilDRtoFilCompDR}
Consider \( \widehat{j}:X_{\log}^{\pdR/L,\hat{+}}\to X_{\log}^{\pdR/L,+} \). Then \( (\widehat{j}^{*},\widehat{j}_{*}) \)  
induces equivalences \( \Perf(X^{\pdR/L,\hat{+}}_{\log})\cong \Perf(X^{\pdR/L,+}_{\log}) \) 
and \( \Vect(X^{\pdR/L,\hat{+}}_{\log})\cong \Vect(X^{\pdR/L,+}_{\log}) \). 
\end{lem}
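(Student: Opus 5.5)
The plan is to reduce the statement to Lemma \ref{lemPerfComplexA1Gm} (2) by base change along the map to $[\mA^{1}/\GG_{m}]$, working $!$-locally through the cover $h$. By Definition \ref{dfnFilterDRAlg}, $X^{\pdR/L,\hat{+}}_{\log}=X^{\pdR/L,+}_{\log}\times_{[\mA^{1}/\GG_{m}]}[\hat{\mA}^{1}/\GG_{m}]$. Hence the square
\[
\begin{tikzcd}
X\times[\hat{\mA}^{1}/\GG_{m}]\arrow[r,"\widehat{j}_{X}"]\arrow[d,"\hat{h}"] & X\times[\mA^{1}/\GG_{m}]\arrow[d,"h"]\\
X^{\pdR/L,\hat{+}}_{\log}\arrow[r,"\widehat{j}"] & X^{\pdR/L,+}_{\log}
\end{tikzcd}
\]
is Cartesian. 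By Lemma \ref{lemBasicFiltered}, $h$ is a suave $!$-cover, so $\hat{h}$ is one as well. It follows that both $h$ and $\hat{h}$ satisfy universal $*$-descent (Theorem \ref{thmVariousDescent}).

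The first step is base change: smooth base change (Lemma \ref{lemSmBaseChange} (1)) applies to the cohomologically smooth map $h$, so $h^{*}\widehat{j}_{*}\cong \widehat{j}_{X,*}\hat{h}^{*}$. Full faithfulness of $\widehat{j}^{*}$ on perfect complexes then follows. For $M\in\Perf(X^{\pdR/L,+}_{\log})$, the unit $M\to\widehat{j}_{*}\widehat{j}^{*}M$ becomes, after applying the conservative functor $h^{*}$, the unit $h^{*}M\to \widehat{j}_{X,*}\widehat{j}_{X}^{*}h^{*}M$. The object $h^{*}M$ is perfect, so this unit is an isomorphism by Lemma \ref{lemPerfComplexA1Gm} (2). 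Alternatively, one can argue directly: $\widehat{j}_{*}\mO\cong\mO$ by this same base-change argument, and the projection formula (Lemma \ref{lemProjectionFormuaForPerf}) then gives $\widehat{j}_{*}\widehat{j}^{*}M\cong M\otimes\widehat{j}_{*}\mO\cong M$.

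The second step, essential surjectivity, is where I expect the main difficulty. Given $N\in\Perf(X^{\pdR/L,\hat{+}}_{\log})$, I would show that $\widehat{j}_{*}N$ is perfect and that the counit $\widehat{j}^{*}\widehat{j}_{*}N\to N$ is an isomorphism. Both properties can be tested after pullback along $h$, respectively $\hat{h}$, by $*$-descent, since dualizability is $*$-local. Using $h^{*}\widehat{j}_{*}N\cong\widehat{j}_{X,*}\hat{h}^{*}N$ and Lemma \ref{lemPerfComplexA1Gm} (2) applied to $\hat{h}^{*}N$, we get that $h^{*}\widehat{j}_{*}N$ is perfect. For the counit we also need $\hat{h}^{*}\widehat{j}^{*}\cong \widehat{j}_{X}^{*}h^{*}$, which holds trivially. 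With that, the counit pulls back to $\widehat{j}_{X}^{*}\widehat{j}_{X,*}$ applied to a perfect object, which is again an isomorphism by Lemma \ref{lemPerfComplexA1Gm}.

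The subtle point is that "perfect" is defined as dualizable, so it must be checked that it descends along $h$. I would handle this by writing $\QCoh(X^{\pdR/L,+}_{\log})$ as the limit of the symmetric monoidal categories on the Čech nerve. By Theorem \ref{thmVariousDescent} (1)/(6) this is a limit in symmetric monoidal categories, and in such a limit dualizability is detected termwise. The terms of the Čech nerve are the iterated fibre products of $\hat{\Delta}^{+}_{X,\log}$ over $X\times[\mA^{1}/\GG_{m}]$, and the $[\hat{\mA}^{1}/\GG_{m}]$-version of Lemma \ref{lemPerfComplexA1Gm} must also hold on these terms. Since each of them is $*$-covered by $X\times[\mA^{1}/\GG_{m}]$, via the section $\Delta$ and the suave projections $p_{i}$, the previous argument iterates.

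Finally, the statement for vector bundles follows from the statement for perfect complexes. By Definition \ref{dfnPerfectComplex} (2) and the fact that $h$ and $\hat{h}$ are $!$-surjections, being a vector bundle can be checked after pullback. On $X\times[\mA^{1}/\GG_{m}]$ versus $X\times[\hat{\mA}^{1}/\GG_{m}]$, the equivalence of vector bundles is exactly Lemma \ref{lemPerfComplexA1Gm} (2).
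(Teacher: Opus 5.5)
Your proposal is correct and follows the paper's own argument. The paper uses the same Cartesian square relating \(\widehat{j}\) to \(X\times[\hat{\mA}^{1}/\GG_{m}]\to X\times[\mA^{1}/\GG_{m}]\), applies smooth base change along the cohomologically smooth \(!\)-cover \(h\), and reduces to Lemma \ref{lemPerfComplexA1Gm} (2), including the projection-formula argument for full faithfulness. One simplification: when you descend dualizability along \(h\), you need not revisit the higher \v{C}ech terms, because once \(h^{*}\widehat{j}_{*}N\) is dualizable, its pullbacks to all higher terms are dualizable automatically.
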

\begin{proof}
Consider the Cartesian diagram \[
\begin{tikzcd}
X\times \hat{\mA}^{1}/\GG_{m}\arrow[d]
\arrow[r,"\widehat{j}"]
& X\times \mA^{1}/\GG_{m}\arrow[d]\\
X_{\log}^{\pdR/L,\hat{+}}\arrow[r,"\widehat{j}"] & X_{\log}^{\pdR/L,+}.
\end{tikzcd}
\]
Then the result follows formally from Lemma \ref{lemPerfComplexA1Gm}, by Lemma \ref{lemSmBaseChange} (1)
and Lemma \ref{lemBasicLogDRSta} (1), as in the proof of Lemma \ref{lemPerfComplexA1Gm} (2).
\end{proof}

\begin{cor}\label{corDRStackPrim}
If \( s:X \to\AnSpec(L_{\square}) \) is prim, then 
\( s^{\pdR}:X^{\pdR/L}_{\log}\to \AnSpec{L_{\square}} \) is also prim.
\end{cor}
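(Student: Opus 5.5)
We want to show that $\mO_{X^{\pdR/L}_{\log}}$ is $s^{\pdR}$-prim. The plan is to reduce this to primness of $X$ over $L$ by descent along $h':X\to X^{\pdR/L}_{\log}$, using Theorem \ref{thmVariousDescent} (5). That statement, in the ``prim'' form, requires the factor map to be prim, or a prim $!$-cover. Our $h'$ is only a suave $!$-cover (Lemma \ref{lemBasicLogDRSta} (1)), so Theorem \ref{thmVariousDescent} (5) cannot be applied to it directly. Instead we compute $s^{\pdR}_{*}$ and $s^{\pdR}_{!}$ through $h'$ and check the defining property of primness by hand.

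The first step uses that $h'$ is cohomologically smooth with invertible dualizing object $h^{\prime,!}\mO\cong\omega_{X,\log}[d]$. Hence $h'_{\natural}\cong h'_{!}(-\otimes\omega_{X,\log}^{-1}[-d])$ is left adjoint to $h^{\prime,*}$. By $!$-descent, $\QCoh(X^{\pdR/L}_{\log})$ is generated under colimits by the objects $h'_{\natural}(\mF)$ with $\mF\in\QCoh(X)$.

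It therefore suffices to show that the natural map
\[ s^{\pdR}_{!}(M\otimes \mathbb{P})\to s^{\pdR}_{*}M \]
is an isomorphism and that $s^{\pdR}_{*}$ commutes with colimits. Here $\mathbb{P}:=\mathbb{P}_{s^{\pdR}}(\mO)$. Alternatively, we can verify primness via the criterion in Definition \ref{dfnVariousPropertyMorphiDescent} (5) with $g=\id$.

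The second step computes $s^{\pdR}_{*}$ on generators. For $\mF\in\QCoh(X)$, adjunction and the $D$-module description of Lemma \ref{lemDRcomplexVSpushforward} (2) give
\[ s^{\pdR}_{*}h'_{\natural}\mF=\RHom(\mO,h'_{\natural}\mF)\cong R\Gamma_{\log\text{-}\dR}(X,D_{X,\log}\otimes_{\mO_X}\mF). \]
Using the Spencer resolution of Remark \ref{rmkResolveObyDX}, this is a finite complex whose terms are $s_{*}$ of sheaves on $X$. Since $s$ is prim, $s_{*}$ commutes with colimits and is computed as $s_{!}(-\otimes\mathbb{P}_s)$. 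Hence $s^{\pdR}_{*}$ commutes with colimits on these generators, and therefore everywhere.

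On the $!$-side, $s^{\pdR}_{!}h'_{\natural}\mF\cong s_{!}(\mF\otimes\omega_{X,\log}^{-1}[-d])$. The expected identification is $\mathbb{P}_{s^{\pdR}}\cong h'_{\natural}$-descent of $\mathbb{P}_s\otimes\omega_{X,\log}$, shifted. This is the log analogue of Verdier duality for $D$-modules.

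The main obstacle is producing the comparison map and checking that it is an isomorphism. The approach is to use the formula $\mathbb{P}_f(A)=p_{2,*}\unl{\Hom}(p_1^{*}A,\Delta_!\mO)$ on $X^{\pdR/L}_{\log}\times X^{\pdR/L}_{\log}$ and pull it back along the suave cover $h'\times h'$. Using smooth base change (Lemma \ref{lemSmBaseChange} (1)) and the Cartesian square $X\times_{X^{\pdR/L}_{\log}}X\cong\hat\Delta_{X,\log}$, this reduces to computing $\Delta_{!}$ for $\hat\Delta_{X,\log}\to X\times X$. That map is a colimit of finite flat thickenings of the diagonal of $X$, so the computation reduces to primness of $X$ together with the finite flatness of the $f_n$ in the proof of Lemma \ref{lemDRcomplexVSpushforward}.

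If this direct route becomes messy, a fallback is to use that primness can be checked after the suave $!$-cover $h'$, by mimicking the proof of Theorem \ref{thmVariousDescent} (4) with suave covers in place of $*$-descent. The key input there is that $h^{\prime,*}$ commutes with $s^{\pdR}_{*}$-type pushforwards by smooth base change.
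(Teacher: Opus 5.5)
Your argument stops exactly where the work is. Everything hinges on the map \(s^{\pdR}_{!}(\mathbb{P}\otimes h'_{\natural}\mF)\to s^{\pdR}_{*}h'_{\natural}\mF\) being an isomorphism. However, \(\mathbb{P}=\mathbb{P}_{s^{\pdR}}(\mO)\) is only guessed, and neither the \(\Delta_{!}\)-computation over \(\hat{\Delta}_{X,\log}\) nor the fallback is carried out.

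The fallback is also not something the formalism gives you. Theorem \ref{thmVariousDescent} (4) descends primness along covers of the \emph{target}, while on the source side (5) needs a \emph{prim} \(!\)-cover, which \(h'\) is not. Abstractly, \(\mO_{X^{\pdR/L}_{\log}}\) is only a geometric realization of objects \(h'_{\natural}(-)\), and prim objects are not closed under such infinite colimits. So some finiteness input is indispensable.

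There are also two smaller slips.
\begin{itemize}
\item By Notation \ref{dfnFnaturalFunctor}, \(h'_{\natural}=h'_{!}(-\otimes h^{\prime,!}\mO)=h'_{!}(-\otimes\omega_{X,\log}[d])\), not \(h'_{!}(-\otimes\omega_{X,\log}^{-1}[-d])\). So the \(\omega_{X,\log}\) in your guessed \(\mathbb{P}\) is spurious; matching both sides on generators instead points to \(h^{\prime,*}\mathbb{P}\cong\mathbb{P}_{s}[-2d]\).
\item The step ``\(s^{\pdR}_{*}\) commutes with colimits on generators, hence everywhere'' is not a valid inference. The conclusion still holds, by applying Lemma \ref{lemDRcomplexVSpushforward} (3) and the Spencer resolution to an arbitrary \(M\).
\end{itemize}

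The missing idea is that you never need to compute \(\mathbb{P}_{s^{\pdR}}\). Instead, propagate primness along \(h'_{\natural}\) and then along a finite resolution:
\begin{itemize}
\item Since \(\mO_X\) is \(s\)-prim, every \(V\in\Perf(X)\) is \(s\)-prim.
\item Since \(h'\) is cohomologically smooth, \(h'_{\natural}(V)\) is \(s^{\pdR}\)-prim, by \cite[Proposition 3.1.29 (2)]{Juan2024analyticdeRham}.
\item The \(s^{\pdR}\)-prim objects form a thick stable subcategory, because the defining condition is exact in the object.
\item The Spencer resolution of Remark \ref{rmkResolveObyDX}, transported through Lemma \ref{lemDRcomplexVSpushforward} (2) (where \(h^{\prime,*}h'_{\natural}(V)\cong D_{X,\log}\otimes V\)), exhibits \(\mO_{X^{\pdR/L}_{\log}}\) as a finite complex in the objects \(h'_{\natural}(\wedge^{i}(\Omega^{1}_{X,\log})^{\vee})\), \(0\le i\le d\), whose arguments are perfect.
\end{itemize}
Hence \(\mO_{X^{\pdR/L}_{\log}}\) is \(s^{\pdR}\)-prim. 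This is the paper's proof, and it uses only ingredients already present in your outline.
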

\begin{proof}
By Lemma \ref{lemBasicLogDRSta}, \( h:X\to X^{\pdR/L} \)
is cohomologically smooth. 
Since \( \mO_{X} \) is \( s \)-prim by assumption, it follows that any \( V\in \Perf(X) \) is \( s \)-prim. 
By \cite[Proposition 3.1.29 (2)]{Juan2024analyticdeRham}, \( h_{\natural}(V) \) is 
\( s^{\pdR} \)-prim for any \( V\in \Perf(X) \). Then by Lemma \ref{lemDRcomplexVSpushforward} and Remark \ref{rmkResolveObyDX}, we can find 
\( \mO_{X^{\pdR/L}_{\log}} \)
in the stable subcategory generated by \( \{h_{\natural}(V):V\in\Perf(X)\} \). Thus \( \mO_{X^{\pdR/L}_{\log}} \) is also \( s^{\pdR} \)-prim, that is, \( s^{\pdR} \)
is prim.
\end{proof}

\begin{lem}[Realizing differential operators via \( \natural \)-push-forward]
	\label{lemDiffOpeToLogStack}
Given two vector bundles \(V_{1}\)
and \(V_{2}\) on \(X\),
let
\(\mrm{Diff}_{\log}(V_{1},V_{2})\)
denote the space of the log differential operators from \(V_{1}\)
to \(V_{2}\),
and let \(\mrm{Diff}_{\log}^{\le n}(V_{1},V_{2})\) denote the subspace of log differential operators of order \(\le n\).

(1) There is an isomorphism \[\mrm{Diff}_{\log}(V_{1},V_{2})\cong \Hom_{X_{\log}^{\pdR/L}}\left(h_{\natural}V_{2}^{\vee},h_{\natural}V_{1}^{\vee}\right)\cong \Hom_{X_{\log}^{\pdR/L}}\left(h_{*}V_{1},h_{*}V_{2}\right),\]
for \(h:X\to X^{\pdR/L}_{\log}.\) Here the second isomorphism is induced by \[ \unl{\Hom}(h_{\natural}V,\mO_{X_{\log}^{\pdR/L}})\cong h_{*}(V^{\vee}). \]

(2) There is an isomorphism \[
\mrm{Diff}_{\log}^{\le n}(V_{1},V_{2})\cong \Hom_{X_{\log}^{\pdR/L,+}}\left(h_{*}(V_{1})\{-n\},h_{*}(V_{2})\right)
\]
for \( h:X\times [\mA^{1}/\GG_{m}]\to X^{\pdR/L,+}_{\log} \), and \( \{-n\} \) is as in Notation \ref{notationTwistByAGm}.
This isomorphism is compatible with that in (1) along the natural map \( \mrm{Diff}^{\le n}_{\log}(V_{1},V_{2})\to \mrm{Diff}^{\le n}(V_{1},V_{2}) \) on the LHS, and the forgetful map on the RHS.
\end{lem}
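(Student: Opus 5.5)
The plan is to compute both Hom spaces by adjunction, reducing everything to the identifications already made in Lemma \ref{lemDRcomplexVSpushforward}.

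\emph{Step 1: the \( \natural \)-version in (1).} Since \( h \) is cohomologically smooth (Lemma \ref{lemBasicLogDRSta} (1)), \( h_{\natural} \) is left adjoint to \( h^{*} \). Hence
\[ \Hom(h_{\natural}V_{2}^{\vee},h_{\natural}V_{1}^{\vee})\cong \Hom_{X}(V_{2}^{\vee},h^{*}h_{\natural}V_{1}^{\vee}). \]
In the proof of Lemma \ref{lemDRcomplexVSpushforward} we identified \( h^{*}h_{\natural}(\mF)\cong p_{2,\natural}p_{1}^{*}\mF\cong D_{X,\log}\otimes_{*_{r},\mO_{X}}\mF \). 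So the right-hand side becomes
\[ \Hom_{\mO_{X}}(V_{2}^{\vee},D_{X,\log}\otimes_{\mO_{X}}V_{1}^{\vee}). \]
This is concentrated in degree \( 0 \) because \( D_{X,\log} \) is a filtered colimit of finite locally free modules. Its \( \pi_{0} \) is the classical description
\[ \mrm{Diff}_{\log}(V_{1},V_{2})\cong \varinjlim_{n}\Hom_{\mO_{X}}\bigl(V_{2}^{\vee},(\mcal{P}^{n}_{\log})^{\vee}\otimes V_{1}^{\vee}\bigr), \]
where \( \mcal{P}^{n}_{\log}:=(\mO_{X}\hatotimes_{\dR_{\log}(\mO_{X})}\mO_{X})/\Fil^{n+1} \) plays the role of the sheaf of log principal parts. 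Equivalently, a differential operator is an \( \mO_{X} \)-linear map \( \mcal{P}^{n}_{\log}\otimes V_{1}\to V_{2} \); dualizing gives the above, using Lemma \ref{lemDiagonalLogProperty} (3).

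\emph{Step 2: the \( * \)-version in (1).} For the second isomorphism I would use the duality
\[ \unl{\Hom}(h_{\natural}V,\mO)\cong h_{*}\unl{\Hom}(V,\mO)=h_{*}V^{\vee}, \]
which follows from the adjunction \( h_{\natural}\dashv h^{*} \) in its internal form (smooth projection formula, Lemma \ref{lemSmBaseChange} (1)). I then need a map
\[ \Hom(h_{\natural}V_{2}^{\vee},h_{\natural}V_{1}^{\vee})\to \Hom(h_{*}V_{1},h_{*}V_{2}), \]
obtained by applying \( \unl{\Hom}(-,\mO) \), and I need it to be bijective. It suffices that \( h_{\natural}V^{\vee} \) is reflexive. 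Under Lemma \ref{lemDRcomplexVSpushforward} this object is the induced \( D \)-module \( D_{X,\log}\otimes V^{\vee} \). Its dual is \( h_{*}V \), which corresponds to \( \mcal{P}^{\infty}_{\log}\otimes V \) with its natural \( D \)-module structure. Reflexivity is the statement already noted in Step (A) of that proof (the dual of \( \mcal{P}^{\infty}_{\log}\otimes V \) back is \( D_{X,\log}\otimes V^{\vee} \)), computed filtration piece by filtration piece. Alternatively, one can directly compute
\[ \Hom(h_{*}V_{1},h_{*}V_{2})\cong \Hom_{X}(h^{*}h_{*}V_{1},V_{2})=\Hom(p_{2,*}p_{1}^{*}V_{1},V_{2}). \]
This uses \( h^{*}\dashv h_{*} \) after checking \( h_{*}\cong h_{\natural} \) up to twist. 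It is cleaner to use \( \Hom_{X}(h^{*}h_{*}V_{1},V_{2}) \) only when \( h^{*} \) has a right adjoint, which is \( h_{*} \), giving \( \Hom(h^{*}h_{*}V_{1},V_{2}) \) via base change. Here \( h^{*}h_{*}V_{1}\cong p_{2,*}p_{1}^{*}V_{1}=\mcal{P}^{\infty}_{\log}\otimes V_{1} \) as a pro-object. Maps out of it that are continuous (i.e. factor through some \( \mcal{P}^{n} \)) are exactly differential operators.

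\emph{Step 3: part (2).} This is the same computation over \( X\times[\mA^{1}/\GG_{m}] \), using the filtered version of Lemma \ref{lemDRcomplexVSpushforward} (1) and the Rees dictionary of Corollary \ref{corFilterdatA1modGm}. By adjunction the Hom equals filtered maps \( V_{2}^{\vee}\to D_{X,\log}\otimes V_{1}^{\vee}\{n\} \), where the source carries the trivial filtration. A filtered map from a trivially filtered object lands in \( \Fil^{0} \) of the target, and \( \Fil^{0}(D_{X,\log}\{n\})=\Fil^{-n}D_{X,\log}=\mrm{Diff}^{\le n} \). Compatibility with (1) follows from restriction along \( j:[\GG_{m}/\GG_{m}]\to[\mA^{1}/\GG_{m}] \), which forgets the filtration (Corollary \ref{corFilterdatA1modGm}). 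The sign conventions for \( \{-n\} \) must be matched with Notation \ref{notationTwistByAGm}.

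\emph{Main obstacle.} I expect the main difficulty to be the \( * \)-pushforward identification: controlling \( h_{*}V \), which is a pro-system \( \varprojlim_{n}\mcal{P}^{n}_{\log}\otimes V \), and showing that its Hom into \( h_{*}V_{2} \) only sees finite-order pieces. I would handle this through the reflexivity/duality route rather than directly.
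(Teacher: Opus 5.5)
Your Step 1 is exactly the paper's argument. Step 3 is also fine. In the filtered setting the duality between the $h_\natural$- and $h_*$-versions holds degree by degree, and the filtration forces finite order; the paper proves (2) directly from $h^*\dashv h_*$ and $h^*h_*V_1\cong D^\vee_{X,\log}\otimes V_1$.

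The gap is in the second isomorphism of (1). Your ``alternative'' there is the paper's route, but one sentence assumes what has to be proved: ``maps out of it that are continuous (i.e.\ factor through some $\mcal{P}^n$) are exactly differential operators.'' The object $h^*h_*V_1$ is not a pro-object. It is an honest object of $\QCoh(X)$, namely the limit $\varprojlim_n\mcal{P}^n_{\log}\otimes V_1$, and the Hom in the lemma is computed in $\QCoh(X)$. So you must show that every morphism $\varprojlim_n\mcal{P}^n_{\log}\otimes V_1\to V_2$ in $\QCoh(X)$ factors through a finite stage. This is not formal: for abstract modules it is false, since the algebraic dual of $A[[x]]$ is far larger than $A[x^\vee]$. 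A genuinely solid input is needed.

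The reflexivity route does not avoid this. The reflexivity remarked in Step (A) of the proof of Lemma \ref{lemDRcomplexVSpushforward} is for the naive dual on $X\times[\mA^1/\GG_m]$, i.e.\ in filtered objects. There it does hold degree by degree, because filtered maps into the trivially filtered $\mO_X$ kill some $\Fil^m$. It does not give the unfiltered statement on $X^{\pdR/L}_{\log}$. Reflexivity of $h_\natural V^\vee$ there, checked after $h^*$, amounts to $\unl{\Hom}_{\mO_X}(\varprojlim_n\mcal{P}^n_{\log}\otimes V,\mO_X)\cong D_{X,\log}\otimes V^\vee$, which is the same missing fact.

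The paper supplies this fact by a local computation:
\begin{itemize}
\item Trivialize $\Omega^1_{X,\log}$ and choose a lift of $\gr^1$. The completed principal parts then become $A[[x_1,\ldots,x_d]]$.
\item Prove $\unl{\Hom}_{A_\square}(A[[x_1,\ldots,x_d]],A)\cong A[x_1^\vee,\ldots,x_d^\vee]$.
\item For $d=1$, write $A[[x]]$ via the free solid module on $\NN\cup\{\infty\}$ modulo the point $\infty$. Hom out of it is then a fiber of evaluation at $\infty$, which equals $A^{\oplus\NN}$.
\item For general $d$, use the solid K\"unneth formula $A[[x_1,\ldots,x_d]]\cong A[[x_1]]\otimes_A\cdots\otimes_A A[[x_d]]$, tensor--Hom adjunction and induction.
\end{itemize}
With that in hand, $\Hom_X(D_{X,\log}^\vee\otimes V_1,V_2)\cong\Hom_X(V_2^\vee,D_{X,\log}\otimes V_1^\vee)$, and your Step 2 closes.
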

\begin{proof}
For (1),
by the proof of Lemma \ref{lemDRcomplexVSpushforward}, we have \( h^{*}h_{\natural}(\mF)\cong D_{X,\log}\otimes \mF \). Thus \[
\Hom_{X_{\log}^{\pdR/L}}\left(h_{\natural}V_{2}^{\vee},h_{\natural}V_{1}^{\vee}\right)\cong \Hom_{X}\left(V_{2}^{\vee},h^{*}h_{\natural}V_{1}^{\vee}\right)\cong 
 \Hom_{X}\left(V_{2}^{\vee},D_{X,\log}\otimes V_{1}^{\vee}\right),
\] which is the usual linearization of differential operators. For the second isomorphism, it suffices to show that the natural morphism \[
\Hom_{X}\left(V_{2}^{\vee},D_{X,\log}\otimes V_{1}^{\vee}\right)\to \Hom_{X}\left(D_{X,\log}^{\vee}\otimes V_{1},V_{2}\right)
\] is an isomorphism.
Choosing analytic locally on \( X \)
a trivialization of \( \Omega^{1}_{X,\log} \) and a lift of \( \gr^{1}(\mO_{X}\hatotimes_{\dR_{\log}(\mO_{X})}\mO_{X}) \), it suffices to show that the natural map induces an isomorphism \[
A[x_{1}^{\vee},\ldots,x_{d}^{\vee}]\cong \Hom_{A_{\square}}(A[[x_{1},\ldots,x_{d}]],A)
\] for \( A \) (topologically) of finite type over \( L \). If \( d=1 \), this is because \( A[[x_{1},\ldots,x_{d}]]\cong A_{\square}[\NN\cup\{\infty\}]/A[\{\infty\}] \), 
and for any \( M\in D(A_{\square}) \), \[
\unl{\Hom}_{A_{\square}}(A[[x]],M)\cong \fib(\unl{\Hom}(\NN\cup\{\infty\},M)\xrightarrow{\text{evaluate at \( \infty \)}} M
)\cong M^{\bigoplus \NN}.
\] In general, using \cite[Example 6.4]{CS19}, \( A[[x_{1},\ldots,x_{d}]]\cong A[[x_{1}]]\otimes_{A}\cdots\otimes_{A}A[[x_{d}]] \), so 
\begin{align*}
\unl{\Hom}_{A_{\square}}(A[[x_{1},\ldots,x_{d}]],A)&\cong \unl{\Hom}_{A_{\square}}(A[[x_{1},\ldots,x_{d-1}]],\unl{\Hom}(A[x_{d}],A))\\
&\cong \unl{\Hom}_{A_{\square}}(A[[x_{1},\ldots,x_{d-1}]],A[x_{d}^{\vee}]),
\end{align*}
and one can conclude the proof by induction.

For (2), 
we have 
\begin{align*}
\Hom_{X_{\log}^{\pdR/L,+}}(h_{*}(V_{1})\{-n\},&h_{*}(V_{2}))
\cong \Hom_{X\times \mA^{1}/\GG_{m}}(h^{*}h_{*}(V_{1})\{-n\},V_{2})\\
&\cong \Hom_{\Fil^{\ZZ}(\QCoh(X))}(D_{X,\log}^{\vee}\otimes V_{1}\{-n\},V_{2})\\
&\cong \Hom_{X}((D_{X,\log}^{\le n})^{\vee}\otimes V_{1},V_{2})\\
&\cong \Hom_{X}(V_{2}^{\vee}, D_{X,\log}^{\le n}\otimes V_{1}^{\vee})\cong \mrm{Diff}^{\le n}_{\log}(V_{1},V_{2}).
\end{align*}
The compatibility with (1) is clear from the construction.
\end{proof}


The following shows that filtered complete logarithmic de Rham stack is ``affine'' in a suitable sense.
\begin{lem}\label{lemLogDRStackIsAffine}
(1)
Let \( s \) denote the structure map \(s:X^{\pdR/L,\hat{+}}_{\log}\to [\hat{\mA}^{1}/\GG_{m}]\).
Then if \(X=\Spa(A,A^{+})\), the functor \(s_{*}\)
induces an equivalence of symmetric monoidal categories \[
\QCoh(X^{\pdR/L,\hat{+}}_{\log})\cong \Mod_{\dR_{\log}(A)}(\widehat{\Fil}^{\ZZ}(D(L_{\square}))).
\] 

(2) Consider \(
h:X\times [\hat{\mA}^{1}/\GG_{m}]\to X^{\pdR/L,\hat{+}}_{\log}.
\) Then 
\[
h^{*}:\QCoh(X^{\pdR/L,\hat{+}}_{\log})\rightleftharpoons \QCoh(X\times [\hat{\mA}^{1}/\GG_{m}]):h_{*}
\] coincides with \[
-\hatotimes_{\dR_{\log}(A)}A:\Mod_{\dR_{\log}(A)}(\widehat{\Fil}^{\ZZ}(D(L_{\square})))\rightleftharpoons \Mod_{A}(\widehat{\Fil}^{\ZZ}(D(L_{\square}))):\mrm{Forget}.
\]
\end{lem}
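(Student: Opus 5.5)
We plan to prove (1) by descent along $h:X\times[\hat{\mA}^{1}/\GG_{m}]\to X^{\pdR/L,\hat{+}}_{\log}$ and a Barr--Beck--Lurie argument on the side of $s_{*}$. By base change of Lemma \ref{lemBasicFiltered} along $[\hat{\mA}^{1}/\GG_{m}]\to[\mA^{1}/\GG_{m}]$, $h$ is a suave $!$-cover, so $\QCoh(X^{\pdR/L,\hat{+}}_{\log})\cong\varprojlim^{*}_{[n]\in\Delta}\QCoh(U([n])\times_{[\mA^{1}/\GG_{m}]}[\hat{\mA}^{1}/\GG_{m}])$. Here $U$ is the groupoid of Lemma \ref{lemThisIsGroupoid}, and its terms are $\unl{\Spf}^{+}_{X}$ of the filtered complete algebras $A^{\hatotimes_{\dR_{\log}(A)}(n+1)}$ (Lemma \ref{lemSpfCommuteWithLimit}). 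The first step is to show that for such a $\unl{\Spf}^{+}$ of a filtered complete $A$-algebra $R$ with $\gr^{0}R=A$ and finite projective graded pieces (Lemma \ref{lemDiagonalLogProperty} (1)), the pushforward to $[\hat{\mA}^{1}/\GG_{m}]$ gives
\[
\QCoh\bigl(\unl{\Spf}^{+}_{X}(R)\times_{[\mA^{1}/\GG_{m}]}[\hat{\mA}^{1}/\GG_{m}]\bigr)\cong\Mod_{R}(\widehat{\Fil}^{\ZZ}(D(L_{\square}))).
\]
This follows from Corollary \ref{corFilterdatA1modGm} (2) applied over the affinoid $X$, since $\QCoh(X)\cong D((A,A^{+})_{\square})$, together with the colimit presentation of $\unl{\Spf}^{+}$, which turns $\QCoh$ into a limit of modules over the $R/\Fil^{n}$. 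The resulting limit is exactly the filtered complete modules over $R$.

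The second step is to identify the totalization. We get
\[
\QCoh(X^{\pdR/L,\hat{+}}_{\log})\cong\varprojlim_{[n]}\Mod_{A^{\hatotimes_{\dR_{\log}(A)}(n+1)}}(\widehat{\Fil}^{\ZZ}(D(L_{\square}))),
\]
which is the category of descent data along $\dR_{\log}(A)\to A$ in filtered complete modules. We conclude by showing that $\dR_{\log}(A)\to A$ is descendable in $\mrm{CAlg}(\widehat{\Fil}^{\ZZ}(D(L_{\square})))$, in the sense of \cite{Mathew2016galois}, and then applying \cite[Proposition 3.22]{Mathew2016galois}. To check descendability, we reduce to associated gradeds, since the filtrations are complete and $\dR_{\log}(A)$ has finite length $n=\dim X$. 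On $\gr$ the map becomes $\bigoplus_{i}\wedge^{i}\Omega^{1}_{A,\log}[-i]\to A$, the augmentation of an exterior algebra on a finite projective module placed in weights $1,\dots,n$. Its fiber ideal $I$ satisfies $I^{\otimes(n+1)}\to\gr\dR_{\log}(A)$ null, because it lands in weight $>n$, where everything vanishes. This gives descendability of index $\le n+1$ on graded pieces, and hence filtered-wise by completeness. The equivalence is realized by $s_{*}$, because $s_{*}$ of the unit is $\dR_{\log}(A)$ (computed via Lemma \ref{lemDRcomplexVSpushforward} (3) and Remark \ref{rmkResolveObyDX}), and symmetric monoidality is inherited from the descent description.

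The main obstacle is checking that the filtered completed tensor products match the descent terms, i.e. that $\mO_{U([n])}$ pushes forward to $A^{\hatotimes_{\dR_{\log}(A)}(n+1)}$, and that descendability of the graded map upgrades to the complete filtered category, where tensor products must be completed. The first point is exactly Lemma \ref{lemDiagonalLogProperty} (1) together with the definition of $\hat{\Delta}^{+}_{X,\log}$. For the second we use that $\gr$ is conservative and symmetric monoidal on $\widehat{\Fil}^{\ZZ}$.

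Part (2) then follows formally. Under the equivalences, $h^{*}$ is the functor from descent data to the $[0]$-term, which is $-\hatotimes_{\dR_{\log}(A)}A$, and its right adjoint $h_{*}$ is the forgetful functor.
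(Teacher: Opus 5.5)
Your overall route matches the paper's. You use $*$-descent along the groupoid of Lemma \ref{lemThisIsGroupoid}, identify each term with filtered complete $A_n$-modules, and totalize. Your descendability of $\dR_{\log}(A)\to A$ usefully supplies the step $\Mod_{\dR_{\log}(A)}(\widehat{\Fil}^{\ZZ})\simeq\mrm{Tot}\,\Mod_{A_{\bullet}}(\widehat{\Fil}^{\ZZ})$, which the paper uses without comment. One correction there: argue nullity of $I^{\hatotimes(n+1)}\to\dR_{\log}(A)$ directly. $I^{\hatotimes(n+1)}$ is complete with $\gr^{\le n}=0$, so the map factors through $j\mapsto\Fil^{\max(j,n+1)}\dR_{\log}(A)=0$. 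Conservativity of $\gr$ does not detect null maps.

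\textbf{The gap is your first step}, which the paper's proof also asserts without argument. The colimit presentation gives $\QCoh(\unl{\Spf}^{+}_{X}(R)\times_{[\mA^{1}/\GG_{m}]}[\hat{\mA}^{1}/\GG_{m}])\simeq\varprojlim_{k}\Mod_{R/\Fil^{k}}(\widehat{\Fil}^{\ZZ})$, and this is not $\Mod_{R}(\widehat{\Fil}^{\ZZ})$. In Rees terms it consists of graded modules complete both for $t$ and for the weight-$\geq1$ ideal of $\mrm{Rees}(R)$. The pullbacks in the descent diagram are the correspondingly completed base changes. For example, $\gr^{j}(p_{1}^{*}M_{0})=\prod_{b\ge0}\gr^{j-b}M_{0}\otimes\Sym^{b}\Omega^{1}_{X,\log}$, whereas $M_{0}\hatotimes_{A}A_{1}$ gives $\bigoplus_{b}$. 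Concretely, take $R=A[[y]]$ with the $y$-adic filtration, and let $N$ have $\Fil^{j}N=A$ for all $j$, zero transition maps, and $y$ acting as the identity $\Fil^{j}N\to\Fil^{j+1}N$. Then $N$ is filtered complete and nonzero, but $N\hatotimes_{R}R/\Fil^{k}=0$ for every $k$, because $\gr(y)$ acts invertibly on $\gr N$.

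This cannot be patched, because (1) is false as stated once $\dim X\ge1$. Take $X$ a closed disc and let $M$ have $\Fil^{j}M=\mO_{X}$ for all $j$, zero transition maps, and $\nabla:\Fil^{j}M\to\Fil^{j-1}M\otimes\Omega^{1}_{X}$ the identity. Leibniz holds because $df\otimes m$ dies under the transition map. By Lemma \ref{lemDRcomplexVSpushforward} this is a nonzero object of $\QCoh(X^{\pdR/L,\hat{+}}_{\log})$, since it is already filtered complete. Yet $s_{*}M$ is the filtered de Rham complex $[\Fil^{j}M\xrightarrow{\sim}\Fil^{j-1}M\otimes\Omega^{1}_{X}]\simeq0$, so $s_{*}$ is not conservative. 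On the Hodge fibre this is the familiar failure of Koszul duality for $\Sym$-modules on which the symbol acts invertibly.

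What your argument does prove is (1), and then (2), on the full subcategories of objects whose filtration is bounded below ($\gr^{j}=0$ for $j\ll0$). There each weight of $\gr(M_{0}\hatotimes_{A}A_{n})$ is a finite sum, the two completions agree, and your descendability argument goes through. This restricted version is what is needed for $\OBdRlog^{+}(B)$ in Lemma \ref{lemOBdRIsPushoutOfBdR}.
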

\begin{proof}
By \( * \)-descent \[
\QCoh(X^{\pdR/L,\hat{+}}_{\log})\cong \varprojlim_{[n]\in\Delta}\QCoh(U([n])\times_{[\mA^{1}/\GG_{m}]}[\hat{\mA}^{1}/\GG_{m}])
\] where \(U([n])\) is as in Lemma \ref{lemThisIsGroupoid}. By construction of \(\unl{\Spf}^{+}\) (Definition \ref{dfnSpfFunctor}) and Lemma \ref{lemSpfCommuteWithLimit}, the simplicial diagram above coincides with the simplicial diagram \[
\Mod_{\dR_{\log}(X)}(\widehat{\Fil}^{\ZZ}(D(L_{\square})))\cong \varprojlim_{[n]\in\Delta}\Mod_{A_{n}}(\widehat{\Fil}^{\ZZ}(D(L_{\square}))),
\] where \(A_{n}\) is the \((n+1)\)-th filter completed tensor product of \(A\) over \(\dR_{\log}(A)\). This induces the desired equivalence on the LHS. 

For the last part, by adjunction, it suffices to show that \(h_{*}\) coincides with the forgetful functor. This follows from the fact that \(h_{*}\) is compatible with \( s_{*} \). 
\end{proof}

\subsection{Analytic versus algebraic de Rham stacks}\label{subsecAnalyticVSAlgeDR}
We also need the following relation between analytic and algebraic de Rham stacks. 
The new result is the identification of the categories of vector bundles.
Recall that \( X^{\pdR/L} \) is essentially Tate by Lemma \ref{lemBasicLogDRSta},
and \( X^{\pdR/L}:=(X^{\pdR/L})^{Tate} \).

\begin{prop}\label{propAnToAlgDRJuan}
Let \(X\) be a smooth rigid variety over \(L\). 

(1)
The natural map \(g:X^{\pdR/L}\to X^{\dR/L}\) is cohomologically co-smooth (Definition \ref{dfnVariousPropertyMorphiDescent} (10)). 

(2)
The natural morphism \(\mO_{X^{\dR/L}}\to g_{*}\mO_{X^{\pdR/L}}\)
is an isomorphism, \(g^{*}:\QCoh(X^{\dR/L})\to\QCoh(X^{\pdR/L})\)
is fully faithful, and \(g_{*}\circ g^{*}\cong \mrm{id}\). 

(3) \((g^{*},g_{*})\)
induces an equivalence \( \Vect(X^{\pdR/L})
\cong \Vect(X^{\dR/L}) \). In particular, \( g_{*}|_{\Vect(X^{\pdR/L})} \) is symmetric monoidal.
\end{prop}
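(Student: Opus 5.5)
The plan is to reduce everything to the covers \(h^{\pdR}:X\to X^{\pdR/L}\) and \(h^{\dR}:X\to X^{\dR/L}\), compare the two fiber products \(\hat{\Delta}_{X}\) and \(\Delta(X)^{\dagger}\), and then descend. By Theorem \ref{thmAnDRStackGeneral} (3), \(h^{\dR}\) is a \(!\)-surjection and affine proper, with \(X\times_{X^{\dR/L}}X\cong \Delta(X)^{\dagger}\). Likewise \(X\times_{X^{\pdR/L}}X\cong\hat{\Delta}_{X}\). Base changing \(g\) along \(h^{\dR}\) gives
\[
X^{\pdR/L}\times_{X^{\dR/L}}X\cong [\Delta(X)^{\dagger}/\hat{\Delta}_{X}],
\]
where the quotient is taken along the projection to the first factor, i.e. \(\Delta(X)^{\dagger}\) divided by the formal groupoid acting on it. Working locally with an étale chart \(X\to\TT^{n}\) or a disc, this becomes relative over \(X\) to \(\mA^{n,\dagger}/\hat{\mA}^{n}\): the overconvergent germ at the origin modulo its formal completion, which is a group. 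By Theorem \ref{thmVariousDescent} (4), the co-smoothness in (1) can be checked after this \(!\)-surjective base change. So (1) reduces to showing that \(\mA^{1,\dagger}/\hat{\mA}^{1}\to *\) is cohomologically co-smooth over \(L\). Products then handle \(\mA^{n}\).

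For this one-dimensional statement I would use that \(\mO(\mA^{1,\dagger})=L\{t\}^{\dagger}\) and \(\mO(\hat{\mA}^{1})=L[[t]]\). The map \(\mA^{1,\dagger}\to *\) is affine proper and prim, since it is a colimit of discs \(\AnSpec(L\langle t/p^{n}\rangle)\), i.e. a limit of proper maps. The map \(\mA^{1,\dagger}\to\mA^{1,\dagger}/\hat{\mA}^{1}\) is a suave \(!\)-cover, because \(\hat{\mA}^{1}\) is cohomologically smooth by Lemma \ref{lemCalDiagonalSmooth}. Theorem \ref{thmVariousDescent} (5) then reduces co-smoothness of the quotient to identifying \(\mbb{P}\) together with the dualizing twists and checking invertibility.

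Statement (2) should follow from the same model. After base change along \(h^{\dR}\), the claim \(\mO\cong g_{*}\mO\) becomes the computation
\[
R\Gamma(\mA^{1,\dagger}/\hat{\mA}^{1},\mO)\cong L .
\]
The left side is computed by the Čech/de Rham type complex \(L\{t\}^{\dagger}\xrightarrow{d/dt}L\{t\}^{\dagger}dt\), via the Lie algebra action of \(\hat{\mA}^{1}\), and this complex has cohomology \(L\) by the overconvergent Poincaré lemma. Base change to \(X\) uses proper base change (Lemma \ref{lemSmBaseChange} (2)), which is available because \(g\) is prim by (1). The projection formula with \(g\) prim then gives \(g_{*}g^{*}M\cong M\otimes g_{*}\mO\cong M\), so \(g^{*}\) is fully faithful.

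For (3), \(g^{*}\) preserves vector bundles and is fully faithful by (2), so only essential surjectivity remains. By Lemma \ref{lemDRcomplexVSpushforward} (2), a vector bundle on \(X^{\pdR/L}\) is a vector bundle with integrable connection \((V,\nabla)\). Descending it to \(X^{\dR/L}\) means extending the \(\hat{\Delta}_{X}\)-stratification to a \(\Delta(X)^{\dagger}\)-stratification. This amounts to convergence of the Taylor series \(\sum_{k}\nabla_{\partial}^{k}(v)(t\otimes1-1\otimes t)^{k}/k!\) on some overconvergent neighbourhood of the diagonal. I expect this step to be the main obstacle. My plan is to argue locally on small discs: a connection on a smooth affinoid has a positive radius of convergence after shrinking, by the \(p\)-adic Cauchy theorem (Dwork–Robba estimates). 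Since \(\Delta(X)^{\dagger}\) is the limit over all open neighbourhoods, it suffices that the series converges on some neighbourhood, and compatible local descents then glue because of full faithfulness. Symmetric monoidality of \(g_{*}|_{\Vect}\) follows because \(g_{*}\) is inverse to the symmetric monoidal \(g^{*}\) on these subcategories.
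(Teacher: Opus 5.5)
\textbf{Gap in (1), inherited by (2).} Your argument for (1) does not go through as written, and your proof of (2) depends on it.

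Theorem \ref{thmVariousDescent} (5) descends primness only along a \emph{prim} \(!\)-cover; along a suave \(!\)-cover it descends suaveness. Likewise, its formula for \(\mbb{P}_{f\circ g}\) requires \(g\) to be prim.

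Your cover \(\mA^{1,\dagger}\to\mA^{1,\dagger}/\hat{\mA}^{1}\) is suave but not prim. Its base change along itself is \(\hat{\mA}^{1}\times\mA^{1,\dagger}\to\mA^{1,\dagger}\), and \(\hat{\mA}^{1}\to *\) is not prim. Indeed, global sections on \(\hat{\mA}^{1}\) are an inverse limit over the \(L[t]/t^{n}\), which does not commute with colimits. So the cover cannot transport primness of \(\mA^{1,\dagger}\to *\) down to the quotient, and you have not shown that \(\mbb{P}\) of the quotient map even makes sense as a co-smoothness datum.

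Your proof of (2) uses proper base change and the projection formula for \(g\). Both need \(g\) prim, so (2) is not established either.

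The paper does not reprove (1)--(2); it cites \cite[Proposition 5.2.11]{Juan2024analyticdeRham}. If you want an argument inside this paper's toolkit, copy the proof of Corollary \ref{corDRStackPrim} relative to \(X^{\dR/L}\):
\begin{itemize}
\item \(g\circ h:X\to X^{\dR/L}\) is affine proper by Theorem \ref{thmAnDRStackGeneral} (3), so every \(V\in\Perf(X)\) is \((g\circ h)\)-prim.
\item Since \(h:X\to X^{\pdR/L}\) is cohomologically smooth, \(h_{\natural}(V)\) is \(g\)-prim by \cite[Proposition 3.1.29 (2)]{Juan2024analyticdeRham}.
\item The Spencer resolution of Remark \ref{rmkResolveObyDX} puts \(\mO_{X^{\pdR/L}}\) in the stable subcategory generated by such \(h_{\natural}(V)\), so \(g\) is prim.
\end{itemize}
Invertibility of \(\mbb{P}_{g}(\mO)\) must still be checked separately, for instance on the local model. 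Once \(g\) is prim, your Poincar\'e-lemma computation for (2) is correct. It uses the same input as the end of Lemma \ref{lemDModOnGaDagger}.

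\textbf{Part (3): correct given (2), by a different route.} Your proof stays on \(X^{\pdR/L}\), where a vector bundle is already a pair \((V,\nabla)\) by Lemma \ref{lemDRcomplexVSpushforward} (2). You show the formal Taylor stratification converges on \(\Delta(X)^{\dagger}\), whose ring is locally \(A\{Y_{1},\ldots,Y_{n}\}^{\dagger}\) by Lemma \ref{lemDiagonalLocally}. This gives descent data along the \(!\)-surjection \(X\to X^{\dR/L}\), so \(g^{*}\) is essentially surjective on vector bundles and \(g_{*}\) is its inverse.

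The paper instead uses co-smoothness of \(g\) to base change along \(X\to X^{\dR/L}\). This reduces to \(X\times(\GG_{a}^{\dagger}/\hat{\GG}_{a})^{n}\to X\), the same local model you use for (1)--(2). Lemma \ref{lemDModOnGaDagger} then shows every vector bundle there is pulled back from \(X\):
\begin{itemize}
\item the Fredholm property of \(A\{x\}^{\dagger}\) makes the bundle a free module with relative connection;
\item the lattice bound \(\nabla(M^{\circ}_{1})\subset p^{-N}M^{\circ}_{1}\), together with \(|1/k!|\le p^{k/(p-1)}\), makes the Taylor projector converge on a smaller disc and produces a horizontal frame;
\item the Poincar\'e lemma for \(\GG_{a}^{\dagger}\) finishes.
\end{itemize}

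The analytic core of the two arguments is the same estimate. Your version avoids the Fredholm and spreading-out steps. The paper's avoids having to construct, verify the cocycle condition for, and glue stratifications on \(\Delta(X)^{\dagger}\).

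Two remarks on your version. The bound is uniform on an affinoid with \'etale coordinates, so no further shrinking or Dwork--Robba theory is needed. Uniqueness of the extension, coming from the injection \(A\{Y\}^{\dagger}\hookrightarrow A[[Y]]\), gives both the cocycle condition and the gluing.
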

\begin{proof}
(1) and (2) are given by \cite[Proposition 5.2.11]{Juan2024analyticdeRham}.
For (3), \( g^{*}:\Vect(X^{\dR/L})\to \Vect(X^{\pdR/L}) \)
is fully faithful by (2). It suffices to show that \( g_{*} \)
preserves vector bundles, and \( g^{*}g_{*}\cong \id \) when restricted to vector bundles.

We can construct the following Cartesian diagram \[\begin{tikzcd}
\Delta(X)^{\dagger}\arrow[d,"h'"]
\arrow[r,"p"] & X\arrow[d,"h"]\\
(\Delta(X)^{\dagger})^{\pdR/X}\arrow[r,"f'"]\arrow[d,"g'"] & X^{\pdR/L}\arrow[d,"g"]\\
X\arrow[r,"f"] & X^{\dR/L}
\end{tikzcd}\] as follows:
by Theorem \ref{thmAnDRStackGeneral} (3),
we have the big Cartesian square. Then we can take relative algebraic de Rham stack along each column. By definition, \( (X^{\dR/L})^{\pdR/L}\cong X^{\dR/L} \), and thus \( X^{\pdR/(X^{\dR/L})}\cong X^{\pdR/L} \).

Since \( g \) is cohomologically co-smooth, by Lemma \ref{lemSmBaseChange} (2),
we know \( f^{*}g_{*}\cong g'_{*}f^{\prime,*} \). So it suffices to show that \( g'_{*} \)
preserves vector bundles, and \( g^{\prime,*}g_{*}'\cong \id \) when restricted to vector bundles.

By \cite[Corollary 1.6.10]{Huber13}, we can assume that \( X \)
admits an \'etale map \( X\to \TT^{n}_{L} \). Then \( \Delta(X)^{\dagger}\cong X\times_{\TT^{n}}\Delta(\TT^{n}_{L})^{\dagger} \), and \( \hat{\Delta}_{X}^{\dagger}\cong X\times_{\TT^{n}}\hat{\Delta}_{\TT^{n}_{L}}^{\dagger} \).
Then we have a Cartesian diagram
\[
\begin{tikzcd}
(\Delta(X)^{\dagger})^{\pdR/X}\arrow[d,"g'"]\arrow[r] & 
(\Delta(\TT^{n}_{L})^{\dagger})^{\pdR/X}\arrow[d,"\ti{g}"] 
\\X\arrow[r] & \TT^{n}_{L}.
\end{tikzcd}
\] Note that
\[ \Delta(\TT^{n}_{L})^{\dagger}\cong \Spa(L\langle T_{1}^{\pm1},\ldots,T_{n}^{\pm1}\rangle\{x_{1},\ldots,x_{n}\}^{\dagger})\cong \TT^{n}_{L}\times \GG_{a}^{\dagger}. \]
Thus \( (\Delta(X)^{\dagger})^{\pdR/X}\cong X\times \GG_{a}^{\dagger}/\GG_{a} \). Then we are done by Lemma \ref{lemDModOnGaDagger}.
\end{proof}
The proof of the following lemma is motivated by that of \cite[Theorem 9.7]{Shimizu2022adic}.
\begin{lem}\label{lemDModOnGaDagger}
Let \( X=\Spa(A,A^{+}) \) be an analytic adic space over \( L \), and \( n\in\NN \). Let
\( p: X\times (\GG_{a}^{\dagger}/\hat{\GG}_{a})^{n}\to X \) denote the projection to the first factor.
Then for any vector bundle \( \mF \)  on  \( X\times (\GG_{a}^{\dagger}/\hat{\GG}_{a})^{n} \), 
\( p_{*}(\mF) \) is a vector bundle on \( X \), and
the natural map \( p^{*}p_{*}(\mF)\to \mF \) is an isomorphism, which induces an equivalence \[
\Vect(X\times (\GG_{a}^{\dagger}/\hat{\GG}_{a})^{n})\cong \Vect(X).
\]
In particular, the natural morphism \(\mO_{X}\to p_{*}\mO_{X\times (\GG_{a}^{\dagger}/\hat{\GG}_{a})^{n}} \) is an isomorphism.
\end{lem}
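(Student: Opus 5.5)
We reduce to $n=1$ and then identify quasi-coherent sheaves on $X\times \GG_{a}^{\dagger}/\hat{\GG}_{a}$ with modules over an explicit ring, in the spirit of \cite[Theorem 9.7]{Shimizu2022adic}. Since $(\GG_{a}^{\dagger}/\hat{\GG}_{a})^{n}\cong (\GG_{a}^{\dagger}/\hat{\GG}_{a})\times\cdots\times(\GG_{a}^{\dagger}/\hat{\GG}_{a})$, we may write $p$ as a composition of $n$ projections of the form $Y\times \GG_{a}^{\dagger}/\hat{\GG}_{a}\to Y$, each with $Y=X\times (\GG_{a}^{\dagger}/\hat{\GG}_{a})^{n-1}$. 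The statement for $Y$ a stack (rather than an affinoid) follows from the affinoid case by $*$-descent, using that all constructions are compatible with base change (the map is prim, see below, so $p_*$ commutes with base change by Lemma \ref{lemSmBaseChange} (2)). So it suffices to treat $n=1$, $X=\Spa(A,A^{+})$.

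For $n=1$, let $q:X\times\GG_{a}^{\dagger}\to X\times \GG_{a}^{\dagger}/\hat{\GG}_{a}$. This map is a $\hat{\GG}_{a}$-torsor, and $0\to\hat{\GG}_{a}$ is a prim $!$-cover (Example \ref{egEssentialTate} (3)), so $q$ satisfies descent. Hence
\[
\QCoh(X\times \GG_{a}^{\dagger}/\hat{\GG}_{a})\cong \Mod_{A\{x\}^{\dagger}}\bigl(\Rep(\hat{\GG}_{a})\bigr),
\]
where a $\hat{\GG}_{a}$-equivariant structure amounts to a locally nilpotent-type action of the distribution algebra $L[\partial]$, i.e.\ a connection $\nabla=\partial_{x}$ compatible with $A\{x\}^{\dagger}$. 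Concretely, this matches the Cartier duality used in Lemma \ref{lemDiagonalLogProperty} (1) and the $D$-module description of Lemma \ref{lemDRcomplexVSpushforward}. The pushforward $p_*$ is the derived horizontal sections $R\Gamma$ of $[M\xrightarrow{\partial}M]$.

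A vector bundle $\mF$ corresponds to a finite projective $A\{x\}^{\dagger}$-module $M$ with connection $\partial$, using Fredholmness (Theorem \ref{thmSolidTateIsFredholm} and Lemma \ref{lemFredholmVectorBundle}) on the overconvergent disc, which is a colimit of Tate rings. The key claim is the \emph{overconvergent Dwork/Cauchy lemma}: every such $(M,\partial)$ is isomorphic to $M_{0}\otimes_{A}A\{x\}^{\dagger}$ with $\partial=1\otimes\partial_{x}$, where $M_0=M/xM$. To prove this, I first descend $M$ to some $A\langle x/p^{m}\rangle$-module with connection. I would then solve $\partial s=0$ with $s(0)=s_{0}$ by the Taylor series
\[
s=\sum_{k}(-1)^{k}\frac{x^{k}}{k!}\,\partial^{k}(\tilde s_{0}).
\]
Growth of $\partial^{k}$ is bounded by $C r^{-k}$ on a disc of radius $r$, and $1/k!$ costs $p^{k/(p-1)}$. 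The series therefore converges on a strictly smaller disc of radius $<r\,p^{-1/(p-1)}$. Since we are in the overconvergent germ at $0$, shrinking the disc is harmless, and this yields the horizontal trivialization in the colimit.

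Granting this claim, the rest is short. $R\Gamma$ of $[A\{x\}^{\dagger}\otimes M_{0}\xrightarrow{\partial_{x}}A\{x\}^{\dagger}\otimes M_{0}]$ equals $M_{0}$: the map $\partial_x$ is surjective on overconvergent series (integration again shrinks radii but stays in the germ) and has kernel the constants. So $p_*\mF\cong M_{0}$ is a vector bundle and $p^*p_*\mF\to\mF$ is an isomorphism. Full faithfulness of $p^*$ follows from the case $\mF=\mO$, and the equivalence $\Vect(X\times(\GG_{a}^{\dagger}/\hat{\GG}_{a})^n)\cong\Vect(X)$ follows.

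The main obstacle is the analytic estimate in the Dwork-type lemma. This requires controlling $\|\partial^{k}\|$ on a lattice uniformly over $A$, where $A$ is not a field, and handling the possibility that $M$ is only projective rather than free; I would first localize on $X$ so that $M$ becomes free. A secondary technical point is justifying the identification of $\hat{\GG}_{a}$-equivariant solid modules with modules with connection, including convergence conditions, via the prim descent above.
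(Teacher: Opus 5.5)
Your proposal follows essentially the paper's route. You use Fredholmness of $A\{x_1,\dots,x_n\}^{\dagger}$ to see $\mF$ as a finite projective (and, after localizing on $X$, free) module with integrable connection. You produce horizontal sections from the Taylor series $P(f)=\sum_{\underline{j}}\frac{(-1)^{|\underline{j}|}}{\underline{j}!}D_{\underline{j}}(f)\,\underline{x}^{\underline{j}}$ after shrinking the disc. And you compute $p_*\mF$ as the de Rham complex via the Poincar\'e lemma for $\GG_a^{\dagger}$. The paper obtains the $D$-module description from the monomorphism $\GG_a^{\dagger}/\hat{\GG}_a\hookrightarrow U^{\pdR/\QQ_p}$ and Lemma~\ref{lemDRcomplexVSpushforward} rather than from the $\hat{\GG}_a$-torsor, but that difference is cosmetic. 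Note, however, that by Cartier duality a $\hat{\GG}_a$-equivariant structure is an \emph{arbitrary} $L[\partial]$-action, not a locally nilpotent one; locally nilpotent actions correspond to $B\GG_a$.

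The step that fails as written is your reduction to $n=1$. The intermediate base $Y=X\times(\GG_a^{\dagger}/\hat{\GG}_a)^{n-1}$ is not covered by affinoid adic spaces. Its natural $!$-cover is $X\times(\GG_a^{\dagger})^{n-1}=\AnSpec((A\{x_1,\dots,x_{n-1}\}^{\dagger},A^{+})_{\square})$, so the affinoid case you prove never applies. You would have to rerun the $n=1$ argument over such dagger bases. You would also need to actually justify the primness you defer; it does hold, as a base change of the cohomologically co-smooth map $g$ in Proposition~\ref{propAnToAlgDRJuan}~(1). There is no reason to induct at all: the multi-index Taylor series works for all $n$ at once, which is what the paper does.

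Two smaller corrections concern the key estimate.
\begin{itemize}
\item The growth of $\partial^k$ is governed by the connection matrix, not by the radius $r$ alone. Choose a lattice $M_1^{\circ}$ with $\nabla(M_1^{\circ})\subset p^{-N}M_1^{\circ}$; then $\frac{1}{\underline{j}!}D_{\underline{j}}(M_1^{\circ})\subset p^{-N'|\underline{j}|}M_1^{\circ}$, and the series converges only on a disc of radius below $p^{-N'}$. This can be far smaller than $r\,p^{-1/(p-1)}$, which is harmless in the germ.
\item To conclude that the horizontal sections through a basis at $0$ trivialize $M$, you need that a matrix over $A\{\underline{x}\}^{\dagger}$ is invertible once it is invertible modulo $(\underline{x})$. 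This is the same fact used to make $M$ free, and the paper spells it out.
\end{itemize}
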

\begin{proof}
Let \( i:X\cong X\times (\hat{\GG}_{a}/\hat{\GG}_{a})^{n}\to X\times (\GG_{a}^{\dagger}/\hat{\GG}_{a})^{n} \) denote the \( 0 \)-th section.
Let \( \mF \) be any vector bundle on  \( X\times (\GG_{a}^{\dagger}/\hat{\GG}_{a})^{n} \). 
Up to replacing \( X \) by an open covering,
we can assume that \( i^{*}M \) is isomorphic to the trivial vector bundle.

\( \mF \) pulls back to a vector bundle \( M \)  on \( X\times (\GG_{a}^{\dagger})^{n} \).   
Note that \[ X\times (\GG_{a}^{\dagger})^{n}\cong \AnSpec((A\{x_{1},\ldots,x_{n}\}^{\dagger},A^{+})_{\square}), \] and \( (A\{x_{1},\ldots,x_{n}\}^{\dagger},A^{+})_{\square} \) 
is a bounded \( \QQ_{p} \)-algebra whose uniform completion (\cite[Definition 2.3.1]{AnschützBoscoLeBrasCamargoScholze2025analyticrhamstacksfarguesfontaine}) is \( (A,A^{+})_{\square} \).
Thus by \cite[Proposition 3.3.5]{AnschützBoscoLeBrasCamargoScholze2025analyticrhamstacksfarguesfontaine} and Theorem \ref{thmSolidTateIsFredholm}, \( (A\{x_{1},\ldots,x_{n}\}^{\dagger},A^{+})_{\square} \) 
is also Fredholm.
In particular, by Lemma \ref{lemFredholmVectorBundle},
\( M \) 
comes from a finite projective module \( M^{\delta} \) over \( A\{x_{1},\ldots,x_{n}\}^{\dagger}(*) \). 
Note that \( f\in A\{x_{1},\ldots,x_{n}\}^{\dagger}(*)  \) is invertible if and only if its image in \( A \) is invertible.
By the assumption in the first paragraph,
\( M^{\delta}\otimes_{A\{x_{1},\ldots,x_{n}\}^{\dagger}(*)}A(*)\cong A^{\oplus r} \),
so by lifting a basis,  \( M^{\delta}\cong A\{x_{1},\ldots,x_{n}\}^{\dagger}(*)^{\oplus r} \),
and  \( M\cong (A\{x_{1},\ldots,x_{n}\}^{\dagger})^{\oplus r} \).

Let \( U\subset \mA^{1,\an} \)
be an open affinoid neighborhood of \( 0 \),
then we have a monomorphism \( \GG_{a}^{\dagger}/\hat{\GG}_{a}\hookrightarrow U^{\pdR/\QQ_{p}} \). Then by Lemma \ref{lemDRcomplexVSpushforward}, 
\( \mF \) corresponds to an object in \( \Mod_{D_{U^{n}/\QQ_{p}}}(\QCoh(X\times U^{n})) \). 
Shrinking \( U \) and taking colimits along \( (-)^{*} \) in \( \mrm{Pr}^{L} \),
\( \mF \) gives rise to an object in \( \Mod_{D_{(\GG_{a}^{\dagger})^{n}/\QQ_{p}}}(\QCoh(X\times (\GG_{a}^{\dagger})^{n})) \), 
with \[ D_{(\GG_{a}^{\dagger})^{n}/\QQ_{p}}\cong \QQ_{p}\{x_{1},\ldots,x_{n}\}^{\dagger}\left[\Partial{}{x_{1}},\ldots,\Partial{}{x_{n}}\right]. \]
By Lemma \ref{lemDRcomplexVSpushforward} (and Remark \ref{rmkResolveObyDX}), we know that \[
p_{*}(\mF)\cong R\Gamma_{\dR}(M,\nabla).
\]

Since  \( M\cong (A\{x_{1},\ldots,x_{n}\}^{\dagger})^{\oplus r} \), \( \mF \) corresponds to \( (A\{x_{1},\ldots,x_{n}\}^{\dagger})^{\oplus r} \) equipped with an integrable connection \[ \nabla:(A\{x_{1},\ldots,x_{n}\}^{\dagger})^{\oplus r} \to \bigoplus_{i=1}^{r}(A\{x_{1},\ldots,x_{n}\}^{\dagger})^{\oplus r}\cdot dx_{i}. \]
Since \( \QQ_{p}\{x_{1},\ldots,x_{n}\}^{\dagger}\cong \varinjlim_{0\subset U\subset \mA^{1}}H^{0}(U^{n},\mO_{U^{n}}) \), we can find an open affinoid neighborhood \( U\subset \mA^{1,\an} \) of \( 0 \), such that \( M \) spreads out to a vector bundle \( M_{1} \) with an integrable connection on \( X\times U^{n} \), i.e. a vector bundle \( \mF_{1} \) on \( X\times (U^{n})^{\pdR/\QQ_{p}} \).

Without loss of generality, we can assume that \[ U^{n}=\Spa(\QQ_{p}\langle x_{1},\ldots,x_{n}\rangle,\ZZ_{p}\langle x_{1},\ldots,x_{n}\rangle). \]
We further fix a \( A^{+}\langle x_{1},\ldots,x_{n}\rangle \)-lattice \( M^{\circ}_{1}\subset M_{1} \). 
Then there exists \( N\in\NN \), such that \( \nabla:M_{1}\to \bigoplus_{i=1}^{n}M_{1}\cdot dt_{i} \) induces \(\nabla: M^{\circ}_{1}\to \bigoplus_{i=1}^{n} p^{-N}M^{\circ}_{1}\cdot dt_{i} \). 

Consider \[
P:M_{1}\to M_{1}\otimes_{A^{+}\langle x_{1},\ldots,x_{n}\rangle}A[[x_{1},\ldots,x_{n}]],\;
f\mapsto \sum_{\unl{j}\in \NN^{n}}\frac{(-1)^{|\unl{j}|}}{\unl{j}{!}} D_{\unl{j}}(f)\cdot \unl{x}^{\unl{j}},
\] where for \( \unl{j}=(j_{1},\ldots,j_{n}) \), \( |\unl{j}|:=\sum_{i}j_{i} \), \( \unl{j}!:=\prod_{i}(j_{i}!) \), \( D_{\unl{j}}=(\Partial{}{x_{1}})^{j_{1}}\cdots (\Partial{}{x_{n}})^{j_{n}} \). 

Then one can verify easily that \( \nabla(P(f))=0 \)
for any \( f\in M_{1}(*) \).
Moreover, for \( f\in M_{1}^{\circ} \), \( D_{\unl{j}}(f)\in p^{-N|\unl{j}|}M_{1}^{\circ} \). Thus there exists \( N'>N \),
such that for \( f\in M_{1}^{\circ} \), \( \frac{(-1)^{|\unl{j}|}}{\unl{j}{!}} D_{\unl{j}}(f)\in  p^{-N'|\unl{j}|}M_{1}^{\circ}\). Hence, we have a factorization
 \begin{align*}
 P:M_{1}\to  M_{1}\otimes_{A^{+}\langle x_{1},\ldots,x_{n}\rangle}A^{+}\langle p^{-N'}x_{1},\ldots,p^{-N'}x_{n}\rangle\\\subset M_{1}\otimes_{A\langle x_{1},\ldots,x_{n}\rangle}
A\{x_{1},\ldots,x_{n}\}^{\dagger}\cong M.
 \end{align*}
 Further extending coefficients, we obtain an \( A \)-linear map \( P:M\to M \), such that \( P(M)\subset M^{\nabla=0} \). Moreover, by construction, the composition \( M\xrightarrow{P}M\to M/(x_{1},\ldots,x_{d}) \) coincides with the natural projection,
 so \( P(M)\to M/(x_{1},\ldots,x_{d}) \) is an \( A \)-linear surjection.
 Since \( M/(x_{1},\ldots,x_{d}) \) is free over \( A \) of rank \( r \),
 we can find a splitting and realize \( A^{\oplus r} \)
 as a direct summand of \( P(M) \). Such that the composition \( A^{\oplus r}\to P(M)\to M\to M/(x_{1},\ldots,x_{d}) \) is an isomorphism.
 
 Therefore, we have \( A^{\oplus r}\subset M^{\nabla=0} \),
 which we can extend to an \( A\{x_{1},\ldots,x_{n}\}^{\dagger} \)-linear map between vector bundles  on \( X\times \GG_{a}^{\dagger} \) with connections (relative to \( X \)): \[\alpha:(A\{x_{1},\ldots,x_{n}\}^{\dagger})^{\oplus r}\hookrightarrow M.\] Again by the Fredholm property, this corresponds to a map between finite projective modules over \( A\{x_{1},\ldots,x_{n}\}^{\dagger}(*) \). Moreover, it is an isomorphism after \( -\otimes_{A\{x_{1},\ldots,x_{n}\}^{\dagger}(*)}A(*) \). 
Note that \( f\in A\{x_{1},\ldots,x_{n}\}^{\dagger}(*)  \) is invertible if and only if its image in \( A \) is invertible. Thus we know the same for matrices, which
shows that \( \alpha \)
is an isomorphism.
 
Thus we can assume that \( \mF \) is the trivial vector bundle on \( X\times \GG_{a}^{\dagger}/\hat{\GG}_{a} \). Then it suffices to show that \( R\Gamma_{\dR}(\QQ_{p}\{x_{1},\ldots,x_{d}\}^{\dagger})\cong \QQ_{p} \), which is the Poincaré lemma for \( \GG_{a}^{\dagger} \). See the proof of \cite[Lemma 5.2.12]{Juan2024analyticdeRham}.
\end{proof}

\begin{cor}\label{corXdRDescendableCover}
Let \( X \) be a smooth rigid variety over \(K\). Then \( X\to X^{\dR/K} \) is a prim \( ! \)-cover (Definition \ref{dfnVariousPropertyMorphiDescent}).  
\end{cor}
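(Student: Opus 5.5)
We must show that \(f:X\to X^{\dR/K}\) is a prim \(!\)-cover in the sense of Definition \ref{dfnVariousPropertyMorphiDescent} (6). This requires three things: \(f\) is \(!\)-able, \(f\) is prim, and \(f_{*}\mO_{X}\in\mrm{CAlg}(\QCoh(X^{\dR/K}))\) is descendable.

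The plan is to factor \(f\) as
\[
X\xrightarrow{h}X^{\pdR/K}\xrightarrow{g}X^{\dR/K}
\]
and handle each factor with results already stated. Alternatively, we can argue directly with Theorem \ref{thmAnDRStackGeneral} (3), which says that \(f\) is affine proper and a \(!\)-surjection with \(X\times_{X^{\dR/K}}X\cong\Delta(X)^{\dagger}\). The cleanest route is the direct one.

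First, primness. By Theorem \ref{thmAnDRStackGeneral} (3), \(f\) is cohomologically co-smooth, so in particular \(\mO_X\) is \(f\)-prim. Concretely, after base change along the \(!\)-surjection \(f\) itself, the map becomes \(p_1:\Delta(X)^{\dagger}\to X\). This map is affine proper, given by \(A\to A\hatotimes_{K}A\{I\}^{\dagger}\) with induced analytic structure. Proper maps of analytic rings are prim, and Theorem \ref{thmVariousDescent} (4) descends primness along \(f\), which satisfies universal \(*\)-descent by Theorem \ref{thmVariousDescent} (6).

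Second, descendability, which is the main step. By Theorem \ref{thmVariousDescent} (3), a prim \(!\)-cover satisfies universal \(!\)-descent, so it suffices to show that \(\mcal{A}:=f_{*}\mO_X\) is descendable. Descendability can be checked after pullback along a map satisfying universal \(*\)-descent, provided the index is uniformly bounded; this follows from the thick-tensor-ideal characterization of \cite{Mathew2016galois} together with \(*\)-descent. Two reductions apply:
\begin{itemize}
\item We may localize on \(X\) (analytic covers give open covers of \(X^{\dR/K}\)).
\item Using \cite[Corollary 1.6.10]{Huber13}, we reduce to \(X\) étale over \(\TT^{n}_{K}\).
\end{itemize}
Pulling back along \(f\), proper base change (Lemma \ref{lemSmBaseChange} (2)) gives \(f^{*}\mcal{A}\cong p_{1,*}\mO_{\Delta(X)^{\dagger}}\). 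Locally this is \(A\{x_1,\dots,x_n\}^{\dagger}\) over \(A\), which has the \(A\)-linear splitting \(x_i\mapsto 0\), so it is descendable of index \(\le1\).

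The obstacle is that descent of descendability along \(f\) is circular, since \(f\) is the very map in question. We therefore route through the factorization instead:
\begin{itemize}
\item For \(g\), use Proposition \ref{propAnToAlgDRJuan} (2), \(g_{*}\mO\cong\mO\), together with the Cartesian diagram in its proof. Pulled back to \(X\), the map \(g\) becomes \(X\times(\GG_a^{\dagger}/\hat{\GG}_a)^n\to X\), whose pushforward of \(\mO\) is \(\mO_X\) by Lemma \ref{lemDModOnGaDagger}.
\item For \(h\), use Lemma \ref{lemDRcomplexVSpushforward} and Remark \ref{rmkResolveObyDX}. The unit \(\mO_{X^{\pdR/K}}\) lies in the thick subcategory generated by the \(h_{\natural}(V)\), which are \(h_{*}\mO_X\)-modules up to twist by the invertible \(h^{!}\mO\) (Lemma \ref{lemBasicLogDRSta} (1)). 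This gives descendability of \(h_{*}\mO_X\) with index \(\le n+1\).
\end{itemize}
Pushing forward along \(g\): since \(g_{*}\) is lax monoidal and \(\mO\to g_{*}\mO\) is an isomorphism, \(g_{*}\) carries the finite Koszul resolution of \(\mO_{X^{\pdR/K}}\) by \(h_{*}\mO_X\)-modules to a finite resolution of \(\mO_{X^{\dR/K}}\) by \(f_{*}\mO_X\)-modules. Hence \(f_{*}\mO_X\) is descendable, and \(f\) is a prim \(!\)-cover.

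Checking that \(g_{*}\) preserves module structures and cofibers here, using projection formula from the co-smoothness of \(g\) in Proposition \ref{propAnToAlgDRJuan} (1), is the technical step I expect to require the most care.
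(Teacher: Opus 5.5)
Your proposal is correct and follows essentially the paper's proof. Primness comes from base-changing $X\to X^{\dR/K}$ along itself to the affine proper map $p_1:\Delta(X)^{\dagger}\to X$ and applying Theorem~\ref{thmVariousDescent}~(4). Descendability comes from factoring through $X^{\pdR/K}$, writing $\mO_{X^{\pdR/K}}$ as a finite complex of $h_*\mO_X$-modules, and pushing forward along $g$ using $g_*\mO_{X^{\pdR/K}}\cong\mO_{X^{\dR/K}}$ from Proposition~\ref{propAnToAlgDRJuan}.

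The only difference is cosmetic: the paper uses the de Rham co-resolution by $h_*(\Omega^i_{X/K})$, while you use the Spencer resolution by $h_\natural(\wedge^i T_X)$, which are $h_*\mO_X$-modules via the projection formula. The detours you consider and then discard are not needed, namely checking descendability after pullback along $f$ and the $\GG_a^{\dagger}/\hat{\GG}_a$ computation.
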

\begin{rmk}
A Gelfand version of this theorem is shown in \cite[Proposition 5.4.4]{AnschützBoscoLeBrasCamargoScholze2025analyticrhamstacksfarguesfontaine}. 
\end{rmk}
\begin{proof}
Consider the composition \[
X\xrightarrow{h}X^{\pdR/K}
\xrightarrow{g}X^{\dR/K}.
\]
We start by showing that \( \mO_{X} \) 
is \( (g\circ h) \)-prim.  
By Theorem \ref{thmAnDRStackGeneral} (3),
\( X\to X^{\dR/K} \) 
is a \( ! \)-surjection, and \( X\times_{X^{\dR/K}}X\cong (X\subset X\times_{K}X)^{\dagger} \). By Theorem \ref{thmVariousDescent} (4), suffices to show that for \( p_{i}: (X\subset X\times_{K}X)^{\dagger}\to X\),   
\( \mO_{(X\subset X\times_{K}X)^{\dagger}} \) 
is \( p_{i} \)-prim. By choosing toric chart, we can assume that \( X\cong \Spa(K\langle T^{\pm1}\rangle) \), and then \[
(X\subset X\times_{K}X)^{\dagger}\cong \AnSpec(\Spa(K\langle T^{\pm1}\rangle\{\epsilon\}^{\dagger}))\cong (X\times \{0\}\subset X\times \mA^{1})^{\dagger},
\] which is affine proper over \( X \), and thus \( \mO_{(X\subset X\times_{K}X)^{\dagger}} \) is \( p_{1} \)-prim, as desired.  

By Lemma \ref{lemDRcomplexVSpushforward} and Remark \ref{rmkResolveObyDX} (to be proven in the next subsection), \( \mO_{X^{\pdR/K}} \) has a resolution
\[ h_{*}(\mO_{X})\to h_{*}(\Omega^{1}_{X/K})\cdots h_{*}(\Omega^{d}_{X/K}). \] 
This implies tha \( (g\circ h)_{*}(\mO_{X}) \) is descendable 
by Proposition \ref{propAnToAlgDRJuan}. 
\end{proof}
\begin{cor}\label{corAnDRstackEssenTate}
Let \( X \) be a smooth rigid variety over \(K\). Then the natural map \( ((X^{\dR/K})_{Solid})^{Tate}\to X^{\dR/K} \) is an isomorphism.

Thus by abuse of notation, we will write \( X^{\dR/K}:=(X^{\dR/K})_{Solid} \).
\end{cor}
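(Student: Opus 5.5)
The plan is to deduce the statement from Lemma \ref{lemBasicPropertyEsseTate} (6), applied to the Tate stack $Y:=X^{\dR/K}$ and the map $f:X\to X^{\dR/K}$. Note that $Y_{Solid}=(X^{\dR/K})_{Solid}$, so conclusion (6) there, namely $Y\cong (Y_{Solid})^{Tate}$, is exactly the isomorphism we want. We therefore have to check the three hypotheses of that lemma:
\begin{itemize}
\item $f$ is a suave $!$-cover;
\item $X$ and $Y$ are $\omega_{1}$-compact;
\item every Čech term $X^{n/Y}$ satisfies $X^{n/Y}\cong ((X^{n/Y})_{Solid})^{Tate}$.
\end{itemize}

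\emph{Step 1: the Čech terms.} By Theorem \ref{thmAnDRStackGeneral} (3) we have $X\times_{X^{\dR/K}}X\cong\Delta(X)^{\dagger}$. Inductively, $X^{n/Y}$ is the dagger neighborhood of the small diagonal in $X^{\times_K n}$, i.e. $(X\subset X^{\times_K n})^{\dagger}$. Locally on $X$ we pick a toric chart, as in the proof of Corollary \ref{corXdRDescendableCover}. This identifies $X^{n/Y}$ with $X\times(\GG_a^{\dagger})^{d(n-1)}$, that is, $\AnSpec^{b}(A\{x_1,\dots\}^{\dagger},A^{+})$ for a bounded ring. By Example \ref{egEssentialTate} (1) and Lemma \ref{lemSolidToTateAnalyticficationFunctor} (2), such an affine satisfies $Z\cong(Z_{Solid})^{Tate}$. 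Both $(-)_{Solid}$ and $(-)^{Tate}$ are compatible with gluing along open covers, since open covers are suave $!$-covers and Lemma \ref{lemBasicPropertyEsseTate} (5) applies. So the property globalizes from these local charts.

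\emph{Step 2: the cover $f$.} Corollary \ref{corXdRDescendableCover} only gives that $f$ is a \emph{prim} $!$-cover, while Lemma \ref{lemBasicPropertyEsseTate} (6) asks for \emph{suave} $!$-descent. This mismatch is the main obstacle. I would first try to observe that the proof of Lemma \ref{lemBasicPropertyEsseTate} (6) works equally well for prim covers. The argument only uses that $X\to(Y_{Solid})^{Tate}$ satisfies descent, that the relevant monomorphism $i$ inherits the cover property (via Theorem \ref{thmVariousDescent} (4),(5), whose prim versions are stated), and finally that a cover which is a monomorphism is an equivalence.

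That last step is Theorem \ref{thmVariousDescent} (7), which is stated only for suave covers. Its prim analogue holds because a prim $!$-cover satisfies universal $!$-descent (Theorem \ref{thmVariousDescent} (3)), and the same \v Cech nerve argument then goes through.

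As a fallback, I would show directly that $f$ is suave. We have $f^{*}$ conservative, since $f$ is a $!$-surjection. Suaveness is $!$-local on the target, so it can be checked after base change along $f$ itself, via Theorem \ref{thmVariousDescent} (4). This reduces the question to the projection $\Delta(X)^{\dagger}\to X$, which is locally $X\times(\GG_a^{\dagger})^{d}\to X$. Its suaveness is the dagger analogue of \cite[Proposition 3.6.9]{Juan2024analyticdeRham}, as used in Lemma \ref{lemDRcomplexVSpushforward}.

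\emph{Step 3: $\omega_1$-compactness.} This holds after reducing to $X$ qcqs and covered by countably many affinoids, using the convention of light objects. The general case then follows by gluing.
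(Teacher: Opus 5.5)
Your main route is the paper's own argument. You write $X^{\dR/K}$ as the colimit of the \v{C}ech nerve $(\Delta(X)^{\dagger})^{n/X}$ via Theorem \ref{thmAnDRStackGeneral} (3), observe that these terms are locally $\AnSpec^{b}$ of bounded rings, and rerun the proof of Lemma \ref{lemBasicPropertyEsseTate} (6) with the prim $!$-cover of Corollary \ref{corXdRDescendableCover} in place of a suave one, ending with a prim analogue of Theorem \ref{thmVariousDescent} (7); the paper uses that analogue without comment, so your explicit justification of it is a plus.

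Discard the fallback, though: $X\times(\GG_{a}^{\dagger})^{d}\to X$ is affine proper, so suaveness of $\mO_{X}$ would force $K\{x\}^{\dagger}$ to be dualizable in $D(K_{\square})$, contradicting Theorem \ref{thmSolidTateIsFredholm}. The map $X\to X^{\dR/K}$ is prim (cohomologically co-smooth) but not suave.
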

\begin{proof}
By Theorem \ref{thmAnDRStackGeneral} (3), \( X^{\dR/K}\cong \varinjlim_{[n]\in\Delta^{op}}(\Delta(X)^{\dagger})^{n/X} \). Note that by the proof of Theorem \ref{thmAnDRStackGeneral} (3), \( (\Delta(X)^{\dagger})^{n/X} \) is analytic locally on \( X \) of the formal \( \AnSpec^{b}(A_{n}) \)
for \( A_{n}\in\mrm{AnRing}^{b}_{\QQ_{p}} \). Thus \( (((\Delta(X)^{\dagger})^{n/X})_{Solid})^{Tate}\cong (\Delta(X)^{\dagger})^{n/X} \). The rest of the proof is essentially the same as that of Lemma \ref{lemBasicPropertyEsseTate} (6). It suffices to show that \( X\to ((X^{\dR/K})_{Solid})^{Tate} \) is a \( ! \)-surjection. 
This is because the properties of being prim and of being descendable are both preserved under \( (-)_{Solid} \)
and \( ((-)_{Solid})^{Tate} \): being prim can be verified using diagonal by \cite[Proposition 6.9]{Scholze2022sixFunctor} as mentioned in Definition \ref{dfnVariousPropertyMorphiDescent}, and these functors are compatible with six functor formalisms by Lemma \ref{lemSolidToTateAnalyticficationFunctor}. Thus \( X\to ((X^{\dR/K})_{Solid})^{Tate} \)
is a prim \( ! \)-cover, and thus a \( ! \)-surjection by Theorem \ref{thmVariousDescent} (7).
\end{proof}

\subsection{Some representation theories}\label{subsecRepTheory}

\subsubsection{Partial flag varieties}\label{subsecPartialFl}
Let \(L\) be a field over \( \QQ \). 
Let \(G\) be a reductive group over \(L\), \(P\subset G\) be a parabolic subgroup, \(N\) be its unipotent radical, and \(M\) be its Levi quotient.
Let \(\fg:=\Lie(G)\), \(\p:=\Lie(P)\), \(\fn:=\Lie(N)\) and \(\mm:=\Lie(M)\). 
\begin{dfn}\label{dfnFlandM}
We
define the \emph{partial flag variety} associated to \(P\)
as the scheme \(\mFl_{P}:=P\backslash G\) over \(L\), which is equipped with a right action of \(G\). 

We define the 
\(\mcal{P}_{P}:=G\xrightarrow{f'}\mFl_{P}\), which we regard as a right \(P\)-torsor over \(\mFl_{P}\) by composing the left action of \(P\) with \(g\mapsto g^{-1}\), and we let
\(\mcal{M}_{P}:=N\backslash G\xrightarrow{f}\mFl_{P}\) be its \(M\)-reduction.

We will write \(\mFl:=\mFl_{P}\) and \(\mcal{M}:=\mcal{M}_{P}\)
when the context is clear.
\end{dfn}
\begin{rmk}
We will always assume that \( L \) is a \( p \)-adic field when we talk about analytic de Rham stacks. In that case, \( \mFl_{P} \) coincides with its analytification \( \Fl_{P} \) by Proposition \ref{propAnGAGA}.
\end{rmk}

\begin{notation}[Equivariant sheaves on \(\mFl_{P}\), \cite{BB83}]\label{notationEquShvOnFlGeneral}
Let
\(\fg^{0}:=\fg\otimes \mO_{\mFl}\), and let \(\p^{0}\) (resp. \(\fn^{0}\)) be the sub-vector bundle of \(\fg^{0}\) whose total space has the description \[\p^{0}=\{(X\in \fg,x\in \mFl):X\in x^{-1}\p x\},\; (\text{resp. } \fn^{0}=\{(X\in \fg,x\in \mFl):X\in x^{-1} \fn{x}\}).
\] Let \(\mm^{0}:=\p^{0}/\fn^{0}\) be the Levi quotient. 

We also define \(G^{0}:=G\times \mFl\), \[P^{0}=\{(g\in G,x\in \mFl):g\in x^{-1}P x\},\; N^{0}=\{(g\in G,x\in \mFl):X\in g\in x^{-1}N x\}.\]
\end{notation}
\begin{dfn}\label{dfnCompleteGrpObj}
Assume that we have a groupoid object \((Y\rightrightarrows X,\Delta:X\hookrightarrow Y)\) in the category of solid stacks (resp. Tate stacks) (\cite[Definition 6.1.2.7]{Lurie2009HTT}). \(\Delta:X\to Y\) is a map between groupoid objects over \(X\).
We define \[\widehat{Y}:=X^{\pdR/Y},\;(\text{resp. }Y^{\dagger}:=X^{\dR/Y}),\]
which is a groupoid object by the finite-limit-preserving functoriality of \((-)^{\pdR/-}\) (resp. \((-)^{\dR/-}\)) (Theorem \ref{thmAnDRStackGeneral} (1)).

The operations \((-)^{\dagger}\)
and \(\widehat{(-)}\)
are clearly functorial in \(Y\), and we also have natural transformations \(\widehat{(-)}\to (-)^{\dagger}\to \id\).
\end{dfn}
\begin{lem}\label{lemFullGrpObj}
(1)
The groupoid object \(G^{0}\rightrightarrows \mFl \) induced by the action of \(G\)
on \(\mFl\) canonically factors through another groupoid object \[
G^{0}/N^{0}\rightrightarrows \mFl,
\]
which further canonically factors through a third groupoid object \[
G^{0}/P^{0}\rightrightarrows \mFl,
\] which is isomorphic as a groupoid object to \( \Fl\times_{L}\Fl\).

(2) The groupoid object \(G^{0}\times_{\mFl}\mcal{M}\rightrightarrows \mcal{M} \) induced by the action of \(G\)
on \(\mcal{M}\) canonically factors through another groupoid object \[
G^{0}/N^{0}\times_{\mFl}\mcal{M}\rightrightarrows \mcal{M}.
\]
\end{lem}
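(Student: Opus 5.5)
The plan is to write down the groupoid structures explicitly on points and check that the relevant subgroup schemes are normal (in the groupoid sense) for the groupoid, so that the quotients inherit groupoid structures. Recall that the action groupoid $G^{0}=G\times\mFl\rightrightarrows\mFl$ has source $(g,x)\mapsto x$, target $(g,x)\mapsto xg$, and composition $(g,x)\cdot(h,xg)=(gh,x)$. The subgroup schemes $N^{0}\subset P^{0}\subset G^{0}$ over $\mFl$ are the stabilizer-type subgroups: $P^{0}$ is exactly the stabilizer of the section, since for $x=Pa$ we have $xg=x$ iff $g\in a^{-1}Pa$. Since $P^{0}$ consists of isotropy, the maps $s,t$ are $P^{0}$-invariant for right multiplication $(g,x)\mapsto (gp,x)$, $p\in P^{0}_{xg}$, and also for left multiplication by $P^{0}_{x}$. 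Hence $s,t$ factor through the quotient $G^{0}/P^{0}$ (where the quotient is taken via the left action of the fiber over the source, equivalently right action over the target).

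First, for (1), I would identify $G^{0}/P^{0}\cong\mFl\times_{L}\mFl$ via $(s,t)$. Fppf (or Zariski, after choosing local sections of $G\to\mFl$) locally on the source, $x=Pa$; then the fiber of $(s,t)$ over $(x,y)$ is $\{g: Pag=y\}$, a torsor under $a^{-1}Pa=P^{0}_{x}$. So $(s,t):G^{0}\to\mFl\times\mFl$ is a $P^{0}$-torsor, surjective since $G$ acts transitively, giving the isomorphism of stacks, and the composition law descends to the pair groupoid law $((x,y),(y,z))\mapsto(x,z)$. Since these are schemes, the isomorphism passes to solid stacks via Example~\ref{egScheme}, using that the realization commutes with finite limits and that smooth surjections of schemes give $!$-surjections.

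Second, for the intermediate groupoid $G^{0}/N^{0}$, I would check that $N^{0}$ is normal in the groupoid sense: for $(g,x)$ with $y=xg$, conjugation $n\mapsto g^{-1}ng$ carries $N^{0}_{x}=a^{-1}Na$ to $(ag)^{-1}N(ag)=N^{0}_{y}$, since $y=P(ag)$. Therefore the left $N^{0}_{x}$-coset and the right $N^{0}_{y}$-coset of $g$ agree: $N^{0}_{x}g=gN^{0}_{y}$. Composition is then well defined on cosets, because $(n g)(n' h)=n(g n' g^{-1})gh$ with $gn'g^{-1}\in N^{0}_{x}$. The unit and inverse maps descend similarly. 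This yields a groupoid $G^{0}/N^{0}\rightrightarrows\mFl$, and the quotient maps $G^{0}\to G^{0}/N^{0}\to G^{0}/P^{0}$ are maps of groupoids. Here the quotient $G^{0}/N^{0}$ is formed as a smooth scheme over $\mFl\times\mFl$, namely an $M^{0}$-torsor, i.e. $\mathcal{P}\times^{P}\ldots$.

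Third, for (2), I would let $G^{0}$ act on $\mcal{M}=N\backslash G$ by right multiplication. The action groupoid is $G^{0}\times_{\mFl}\mcal{M}$, with $(g,Na)\mapsto Nag$ lying over $Pa$. The stabilizer of $Na$ is $a^{-1}Na=N^{0}_{Pa}$, and the same normality computation shows that the action of $g$ modulo $N^{0}$ is well defined on $\mcal{M}$, giving the factorization.

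I expect the main obstacle to be justifying the quotients at the level of solid stacks rather than schemes. The cleanest route is to do everything in schemes, where $G^{0}/N^{0}$ and $G^{0}/P^{0}$ are representable smooth quotients, and then transport the result. Canonicity follows because the factorizations are unique: $G^{0}\to G^{0}/N^{0}$ is an epimorphism.
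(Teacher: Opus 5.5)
Your argument is correct. The paper gives no proof of its own; it only says that Scholze's Lemma IV.1.1 carries over verbatim. Your direct verification therefore supplies the needed content: $P^{0}$ is the isotropy group of the action groupoid, $(s,t)$ exhibits $G^{0}$ as a $P^{0}$-torsor over $\mFl\times_{L}\mFl$, and the identity $g^{-1}N^{0}_{x}g=N^{0}_{xg}$ makes $N^{0}$ normal, so the cosets $N^{0}_{x}g$ inherit the groupoid law.

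A more economical packaging is to recognize every groupoid in the statement as a \v{C}ech nerve.
\begin{itemize}
\item Since $[\mFl/G]\cong BP$, the action groupoid $G^{0}\rightrightarrows\mFl$ is the \v{C}ech nerve of $\mFl\to BP$.
\item Composing with $BP\to BM\to\Spec L$ gives the \v{C}ech nerves $\mFl\times_{BM}\mFl\cong(\mcal{M}\times_{L}\mcal{M})/M\cong G^{0}/N^{0}$ and $\mFl\times_{L}\mFl\cong G^{0}/P^{0}$.
\item Similarly $[\mcal{M}/G]\cong BN$ and $(G^{0}/N^{0})\times_{\mFl}\mcal{M}\cong\mcal{M}\times_{L}\mcal{M}$, so (2) is just the factorization $\mcal{M}\to BN\to\Spec L$.
\end{itemize}
This version produces groupoid objects in the $\infty$-categorical sense of Definition~\ref{dfnCompleteGrpObj}, and gives the canonical factorizations by pure functoriality. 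It also makes the later identifications in Lemma~\ref{lemCompareDRStac} immediate. Your version is more elementary and makes the key normality fact visible.

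In either route, the geometric input is that the relevant quotient maps are $!$-surjections of solid stacks. Here you rely on the claim that smooth surjections of schemes give $!$-surjections, which the paper does not establish. It only proves this for Zariski covers (Example~\ref{egScheme}).

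Instead, use that $G\to\mFl$ is Zariski-locally trivial, via translates of the big cell. Given local sections $\sigma,\tau$ over opens $U,V\subset\mFl$, the map $(x,y,p)\mapsto(\sigma(x)^{-1}p\tau(y),x)$ identifies $U\times V\times P$ with the preimage of $U\times V$ in $G^{0}$. This identification intertwines left multiplication by $N^{0}_{x}\subset P^{0}_{x}$ with left multiplication by $N\subset P$. Hence $G^{0}\to G^{0}/N^{0}\to\mFl\times_{L}\mFl$ become $(U\times V)\times(P\to M\to \ast)$ locally. All these maps are therefore Zariski-locally split, using a Levi section $M\hookrightarrow P$.
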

\begin{proof}
The proof of
\cite[Lemma IV.1.1]{Scholze2024RealLanglands} works verbatim here.
\end{proof}
\begin{cor}\label{corDaggerGroupObject}
We have \[\mFl^{\dR/(G^{0}/N^{0})^{\an}}\cong (\fg^{0}/\fn^{0})^{\dagger},\;\mFl^{\pdR/(G^{0}/N^{0})}\cong \widehat{\fg^{0}/\fn^{0}},\]
\[\mFl^{\dR/(G^{0}/P^{0})}\cong (\fg^{0}/\p^{0})^{\dagger},\;\mFl^{\pdR/(G^{0}/P^{0})}\cong \widehat{\fg^{0}/\p^{0}},\]
\[(\mcal{M}^{\an})^{\dR/(G^{0}/N^{0}\times_{\mFl}\mcal{M})^{\an}}\cong (f^{*}(\fg^{0}/\fn^{0}))^{\dagger},\;\mcal{M}^{\pdR/(G^{0}/N^{0}\times_{\mFl}\mcal{M})}\cong \widehat{f^{*}(\fg^{0}/\fn^{0})},\]
for \(f:\mcal{M}\to \mFl\). Here \(\widehat{(-)}\)
and \((-)^{\dagger}\)
are as in Definition \ref{dfnVBvariousNBHD}. 

In particular, the right hand sides can be endowed with the structure of a groupoid 
object by Definition \ref{dfnCompleteGrpObj}.
\end{cor}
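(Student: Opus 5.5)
We will reduce each isomorphism to a statement about formal (resp. overconvergent) neighborhoods of a closed immersion. The inputs are Lemma \ref{lemFullGrpObj}, which identifies the groupoids as sitting over \(\mFl\), and Lemma \ref{lemClosedImmeAlgDRFil} together with Theorem \ref{thmAnDRStackGeneral}, which compute relative de Rham stacks of closed immersions.

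\textbf{Step 1: reduce to a unit section.} The groupoid \(Y:=G^{0}/N^{0}\rightrightarrows\mFl\) has unit section \(\Delta:\mFl\to Y\). Viewing \(Y\) over \(\mFl\) via the first projection, \(Y\to\mFl\) is smooth: it is a quotient of the trivial group scheme \(G^{0}=G\times\mFl\) by the smooth closed subgroup scheme \(N^{0}\). The unit section is therefore a closed immersion cut out, Zariski locally on \(Y\), by a regular sequence. Its conormal bundle is \((\fg^{0}/\fn^{0})^{\vee}\), since the tangent space of \(G^{0}/N^{0}\) along the identity is \(\fg^{0}/\fn^{0}\).

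\textbf{Step 2: the algebraic case.} By Lemma \ref{lemClosedImmeAlgDRFil} (restricted to \([\GG_{m}/\GG_{m}]\)), \(\mFl^{\pdR/Y}\) is \(\unl{\Spf}\) of the completion of \(\mO_{Y}\) along the ideal of \(\Delta\). This completion should be identified with \(\mO_{\mFl}[[(\fg^{0}/\fn^{0})^{\vee}]]\), so that \(\mFl^{\pdR/Y}\cong\widehat{\fg^{0}/\fn^{0}}\) as in Definition \ref{dfnVBvariousNBHD}.

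To get the identification, I would use the exponential map along \(\Delta\). Work Zariski locally on \(\mFl\) and choose a complement \(\mathfrak{u}\subset\fg\) to \(x^{-1}\fn x\); this can be done over the open Bruhat cell, which trivializes \(\fn^{0}\). Since we are in characteristic zero, \(\exp\) gives an isomorphism from the formal completion of \(\mathfrak{u}\otimes\mO\) at \(0\) to the formal completion of \(G^{0}/N^{0}\) along \(\Delta\).

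Globally, the completed local ring only depends on \(\gr\). The ideal \(I\) of \(\Delta\) satisfies \(I/I^{2}\cong(\fg^{0}/\fn^{0})^{\vee}\) and \(\gr_{I}\cong\Sym(I/I^{2})\). A filtered complete algebra over \(\QQ\) with this property is non-canonically isomorphic to \(\mO[[I/I^{2}]]\) locally. That is exactly how \(\widehat{V}\) is defined, so one matches the two objects locally and checks that the local isomorphisms glue.

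\textbf{Step 3: the analytic case and the \(P^{0}\) variant.} By Theorem \ref{thmAnDRStackGeneral} (2)–(3), and as in the proof of (3) there, \(\mFl^{\dR/Y^{\an}}\) is the dagger neighborhood \((\Delta(\mFl)\subset Y^{\an})^{\dagger}\). This holds because \(\dagger\)-neighborhoods are \(\dagger\)-formally smooth over \(Y^{\an}\), and their de Rham stack relative to \(Y^{\an}\) is themselves. The exponential map is a rigid analytic isomorphism from a small ball in \(\mathfrak{u}\otimes\mO\) onto an open neighborhood of \(\Delta\), so taking the limit over neighborhoods gives \((\fg^{0}/\fn^{0})^{\dagger}\).

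The same argument with \(P^{0}\) in place of \(N^{0}\) handles the second line: there \(Y=\mFl\times\mFl\), the immersion is the diagonal, and the normal bundle is \(T_{\mFl}\cong\fg^{0}/\p^{0}\). The third line follows by base change along \(f:\mcal{M}\to\mFl\). By Lemma \ref{lemFullGrpObj}(2), \(G^{0}/N^{0}\times_{\mFl}\mcal{M}\) is the pullback of the \(\mFl\)-groupoid, and the relative de Rham stack commutes with this pullback by Theorem \ref{thmAnDRStackGeneral}(1). Finally, the groupoid structures on the right-hand sides are transported through these isomorphisms via Definition \ref{dfnCompleteGrpObj}.

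\textbf{Main obstacle.} The hardest part is the canonicity in Step 2 and Step 3. The exponential chart depends on the choice of complement \(\mathfrak{u}\), so only its restriction to the formal or dagger neighborhood is canonical. I would argue that any two choices differ by an automorphism of the completed or overconvergent algebra that is the identity on \(\gr^{1}\). Since the identification with \(\widehat{V}\) is only asserted up to such filtered isomorphisms, this is enough, and it also lets the local descriptions glue.
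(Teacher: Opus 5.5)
Your reduction to neighbourhoods of the unit section is sound: Step 1, the first half of Steps 2 and 3, and the base change along \(f\) for the third line. It is also where the paper starts, via \cite{Juan2024analyticdeRham}. The gap is the globalization. The exponential charts built from a local complement \(\mathfrak{u}\) give isomorphisms \(\widehat{\fg^{0}/\fn^{0}}|_{U}\cong\mFl^{\pdR/Y}|_{U}\), and dagger analogues. On overlaps these differ by \(\mO_{U}\)-automorphisms of \(\mO_{U}[[(\fg^{0}/\fn^{0})^{\vee}]]\) that are the identity on \(\gr^{1}\) but not on higher graded pieces. Such discrepancies form a \v{C}ech \(1\)-cocycle in a pro-unipotent sheaf of groups, and a global isomorphism over \(\mFl\) exists only if this cocycle is a coboundary. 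Observing that the identification is ``only asserted up to such filtered isomorphisms'' does not make it one, since the statement asserts an isomorphism of stacks over \(\mFl\). Likewise, ``the completed local ring only depends on \(\gr\)'' is true locally, not globally.

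The cocycle can be nontrivial, so no choice of charts repairs this. Take \(G=\SL_{2}\) and \(P=B\). Your second line then concerns the formal neighbourhood \(\hat{\Delta}\) of the diagonal in \(\mP^{1}\times\mP^{1}\), over \(\mP^{1}\) via \(p_{1}\). An isomorphism \(\hat{\Delta}\cong\widehat{T_{\mP^{1}}}\) over \(\mP^{1}\) would split \(0\to\Sym^{2}\Omega^{1}\to I/I^{3}\to\Omega^{1}\to0\) as \(\mO\)-modules, where \(I\) is the ideal of \(\Delta\). Put \(y=1/x\) and \(\xi_{x}=p_{2}^{*}x-p_{1}^{*}x\). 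The two coordinate splittings differ on \(dy\) by \(\tfrac12(\partial_{x}^{2}y)\,\xi_{x}^{2}=x^{-3}\xi_{x}^{2}\). As a section of \(\mathcal{H}om(\Omega^{1},\Sym^{2}\Omega^{1})\cong\Omega^{1}\) this is \(-dx/x\), whose class in \(H^{1}(\mP^{1},\Omega^{1})\) is nonzero; it is essentially the Atiyah class of \(T_{\mP^{1}}\).

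The paper never chooses complements. It uses the exponential maps of the groups themselves, \(\mFl^{\dR/G^{0}}\cong\fg^{\dagger}\times\mFl\) and \(\mFl^{\dR/N^{0}}\cong\fn^{0,\dagger}\), together with \(\mFl^{\dR/(G^{0}/N^{0})}\cong\mFl^{\dR/G^{0}}/\mFl^{\dR/N^{0}}\). This canonically and globally identifies the left-hand sides with the quotients \((\fg^{\dagger}\times\mFl)/\fn^{0,\dagger}\) and \((\widehat{\fg}\times\mFl)/\widehat{\fn^{0}}\), and similarly with \(\p^{0}\) in place of \(\fn^{0}\). The paper's last step identifies these quotients with the vector-bundle neighbourhoods of Definition \ref{dfnVBvariousNBHD}. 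That step carries the same caveat as your gluing: by the example it holds locally on \(\mFl\) but can fail globally. A correct argument should prove the quotient description globally and compare with \(\widehat{V}\) and \(V^{\dagger}\) only locally.
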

\begin{proof}
By \cite[Proposition 5.1.4 \& 5.2.4]{Juan2024analyticdeRham}, \( \mFl^{\dR/(G^{0}/N^{0})}\cong (\mFl\subset G^{0}/N^{0})^{\dagger} \) and 
\( \mFl^{\pdR/(G^{0}/N^{0})}\cong (\mFl\subset G^{0}/N^{0})^{\wedge/\mFl} \). We note that \( \mFl^{\dR/G^{0}}\cong \fg^{\dagger}\times \mFl \) and \( \mFl^{\pdR/G^{0}}\cong \widehat{\fg}\times \mFl \) as in Definition \ref{dfnDaggerGroupObject}. Similarly, \( \mFl^{\dR/N^{0}}\cong \fn^{0,\dagger} \) and \( \mFl^{\pdR/N^{0}}\cong \widehat{\fn^{0}} \), induced by exponential maps.
Thus we have \[\mFl^{\dR/(G^{0}/N^{0})}\cong \mFl^{\dR/G^{0}}/\mFl^{\dR/N^{0}}\cong (\fg^{\dagger}\times \mFl)/\fn^{0,\dagger}\cong (\fg^{0}/\fn^{0})^{\dagger}.\]
The proof of the other cases is similar.
\end{proof}
\begin{lem}\label{lemCompareDRStac}
(1) We have isomorphisms \(\mFl/(\fg^{0}/\p^{0})^{\dagger}\cong \mFl^{\dR/L}\)
and \(\mFl/\widehat{\fg^{0}/\p^{0}}\cong \mFl^{\pdR/L}\), compatible with the natural maps from \(\mFl\). 

Equivalently, we have \((\mFl\subset \mFl\times_{L}\mFl)^{\dagger}\cong (\fg^{0}/\fn^{0})^{\dagger}\)
and \((\mFl\subset \mFl\times_{L}\mFl)^{\wedge}\cong \widehat{\fg^{0}/\fn^{0}}\) as groupoid objects. In particular, we have an isomorphism \((\fg^{0}/\p^{0})^{\vee}\cong \Omega^{1}_{\mFl/L}\).

(2) We have isomorphisms \[
\mcal{M}^{\an}/(f^{*}(\fg^{0}/\fn^{0}))^{\dagger}\cong (\mcal{M}^{\an})^{\dR/L},\;\mcal{M}/\widehat{f^{*}(\fg^{0}/\fn^{0})}\cong \mcal{M}^{\pdR/L},
\]
which are compatible with the natural maps from \(\mFl\). 

(3) We have the following commutative diagram \[
\begin{tikzcd}
\mcal{M}^{\an}\arrow[r,"f"]
\arrow[d] & 
\mFl \arrow[d]\arrow[r,equal]
& \mFl\arrow[d]
\\
(\mcal{M}^{\an})^{\dR/L}\arrow[r,"f'"]
& \mFl/(\fg^{0}/\fn^{0})^{\dagger}\arrow[r,"u"]
& \mFl^{\dR/L},
\end{tikzcd}\]
where the square on the left is Cartesian. In particular, \[
\mFl/(\fg^{0}/\fn^{0})^{\dagger}\cong (\mcal{M}^{\an})^{\dR/L}/M^{\an}.
\]
\end{lem}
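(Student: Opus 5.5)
The plan is to deduce everything from Lemma \ref{lemFullGrpObj}, Corollary \ref{corDaggerGroupObject} and the finite-limit compatibility of de Rham stacks (Theorem \ref{thmAnDRStackGeneral} (1)), together with the description of $X\times_{X^{\dR/L}}X$ in Theorem \ref{thmAnDRStackGeneral} (3).

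For (1), I start from the isomorphism of groupoids $G^{0}/P^{0}\cong \mFl\times_{L}\mFl$ of Lemma \ref{lemFullGrpObj} (1). Apply Definition \ref{dfnCompleteGrpObj} to both sides. Functoriality then gives an isomorphism of groupoid objects
\[ \mFl^{\dR/(G^{0}/P^{0})}\cong \mFl^{\dR/(\mFl\times_{L}\mFl)}=(\mFl\subset\mFl\times_{L}\mFl)^{\dagger}, \]
where the last identification is \cite[Proposition 5.2.4]{Juan2024analyticdeRham}, as in the proof of Corollary \ref{corDaggerGroupObject}. By Corollary \ref{corDaggerGroupObject}, the left side is $(\fg^{0}/\p^{0})^{\dagger}$. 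By Theorem \ref{thmAnDRStackGeneral} (3) ($\mFl$ is smooth), the right side is $\mFl\times_{\mFl^{\dR/L}}\mFl$, and $\mFl\to\mFl^{\dR/L}$ is a $!$-surjection. Taking the quotient of $\mFl$ by this groupoid, i.e. the geometric realization of the \v Cech nerve in the $\infty$-topos, yields $\mFl/(\fg^{0}/\p^{0})^{\dagger}\cong\mFl^{\dR/L}$. The algebraic version is identical, using Lemma \ref{lemClosedImmeAlgDRFil}/\cite[Proposition 5.1.4]{Juan2024analyticdeRham} and the $!$-surjectivity in Theorem \ref{thmAnDRStackGeneral} (3).

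I will fix the apparent typo in the statement: the "equivalent" formulation should read $(\fg^{0}/\p^{0})$, not $\fg^{0}/\fn^{0}$. The identification $(\fg^{0}/\p^{0})^{\vee}\cong\Omega^{1}_{\mFl/L}$ then follows by comparing conormal bundles. On one side, $\gr^{1}$ of the completed diagonal is $I/I^{2}\cong\Omega^{1}$ by Lemma \ref{lemDiagonalLogProperty} (2). On the other side, the formal group $\widehat{\fg^{0}/\p^{0}}$ has $\gr^{1}$ equal to the dual of its Lie algebra.

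For (2), I use the same argument with Lemma \ref{lemFullGrpObj} (2). Here a groupoid identification $G^{0}/N^{0}\times_{\mFl}\mcal{M}\cong\mcal{M}\times_{L}\mcal{M}$ is not available. Instead, I note that the action groupoid $G^{0}/N^{0}\times_{\mFl}\mcal{M}\rightrightarrows\mcal{M}$ maps to $\mcal{M}\times_{L}\mcal{M}$, and I claim this map is an étale-type monomorphism near the diagonal. This holds because $G$ acts on $\mcal{M}=N\backslash G$ with stabilizers $N$, so $G^{0}/N^{0}\times_{\mFl}\mcal{M}\to\mcal{M}\times\mcal{M}$ is an open immersion: $(x,g)\mapsto(x,xg)$, with $G\to N\backslash G$ having fibre $N$. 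Since an open immersion is $\dagger$-formally smooth, Theorem \ref{thmAnDRStackGeneral} (2) and (1) give $\mcal{M}^{\dR/(G^{0}/N^{0}\times\mcal{M})}\cong(\mcal{M}\subset\mcal{M}\times\mcal{M})^{\dagger}$. The left side is $(f^{*}(\fg^{0}/\fn^{0}))^{\dagger}$ by Corollary \ref{corDaggerGroupObject}, and I conclude as in (1). The same open-immersion remark also justifies (1) in the form used there.

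For (3), the maps come from functoriality of quotients. $f$ is equivariant for the groupoid map $G^{0}/N^{0}\times_{\mFl}\mcal{M}\to G^{0}/N^{0}$, which after completion gives $f':\mcal{M}^{\an}/(f^{*}(\fg^{0}/\fn^{0}))^{\dagger}\to\mFl/(\fg^{0}/\fn^{0})^{\dagger}$. The map $u$ is induced by $G^{0}/N^{0}\to G^{0}/P^{0}$. The left square is Cartesian because the groupoid on $\mcal{M}$ is the pullback of the groupoid on $\mFl$ along $f$ (its morphism object is $\mcal{M}\times_{\mFl}(\fg^{0}/\fn^{0})^{\dagger}$). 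In an $\infty$-topos, a pulled-back groupoid with effective quotients gives a Cartesian square. Finally, $f$ is an $M$-torsor, and $M$ acts compatibly on the upper-left corner, commuting with the groupoid. Hence $f'$ is an $M^{\an}$-torsor, which gives $\mFl/(\fg^{0}/\fn^{0})^{\dagger}\cong(\mcal{M}^{\an})^{\dR/L}/M^{\an}$.

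The main obstacle is the Cartesianness in (3): one must check that the $M$-action and the groupoid actions commute, and that pulling back the groupoid along the $!$-surjection $f$ is genuinely a fibre product. I would verify this on \v Cech nerves levelwise and then apply Giraud's axioms.
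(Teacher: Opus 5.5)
Your proposal is correct and follows the paper's route: transport the groupoid isomorphism \(G^{0}/P^{0}\cong\mFl\times_{L}\mFl\) through Definition~\ref{dfnCompleteGrpObj}, identify the two sides via Corollary~\ref{corDaggerGroupObject} and Theorem~\ref{thmAnDRStackGeneral}~(3), pass to quotients, and deduce (3) by descent along the groupoid map, which is Cartesian over the source. You are also right that \(\fg^{0}/\fn^{0}\) in the ``equivalently'' clause should read \(\fg^{0}/\p^{0}\), as the paper's own proof shows.

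One small correction concerns (2). The map \(G^{0}/N^{0}\times_{\mFl}\mcal{M}\to\mcal{M}\times_{L}\mcal{M}\), \((m,[g])\mapsto(m,mg)\), is actually an isomorphism of groupoids, because \(G\) acts transitively on \(\mcal{M}=N\backslash G\) with stabilizer group scheme \(f^{*}N^{0}\). So the identification you call unavailable does hold. You can therefore argue exactly as in (1), without the open-immersion and \(\dagger\)-formal-smoothness detour (which is nonetheless valid).
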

\begin{proof}
The groupoid structure induces a natural morphism \(G^{0}/P^{0}\to \mFl\times_{L} \mFl\), which is an isomorphism of groupoid objects. Applying Definition \ref{dfnCompleteGrpObj}, we have \(\mFl^{\dR/G^{0}/P^{0}}\cong (\fg^{0}/\p^{0})^{\dagger}\cong \Delta(\mFl)^{\dagger}  \),
and thus \(\mFl/(\fg^{0}/\p^{0})^{\dagger}\cong (\mFl\subset \mFl\times_{L}\mFl)^{\dagger}\) by Theorem \ref{thmAnDRStackGeneral} (3). The proof of the other isomorphisms is similar. (3) follows from (2) immediately.
\end{proof}

\subsubsection{Representations of Lie algbras}\label{subsubsecRepLieAlg}
Let \( L \) be a field over \( \QQ \), and \( G \) be an algebraic group over \( L \), and let \( \fg \) be its Lie algebra. We will consider \( \hat{\fg}^{+} \)
as in Definition \ref{dfnDaggerGroupObject}.
\begin{lem}\label{lemDRStackOfG}
We have an isomorphism \( G^{\pdR/L,+}\cong (G\times [\mA^{1}/\GG_{m}])/\hat{\fg}^{+} \) over \( [\mA^{1}/\GG_{m}] \). Here the quotient is taken over \( [\mA^{1}/\GG_{m}] \).
\end{lem}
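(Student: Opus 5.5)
We prove the statement by exhibiting both sides as quotients of \(G\times[\mA^{1}/\GG_{m}]\) by groupoid objects, and then identifying the two groupoids. The argument is modeled on Lemma \ref{lemTwoAlgDRSame} (2) together with Corollary \ref{corDaggerGroupObject}.

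\textbf{Step 1: the left-hand side as a quotient.} By \cite[Proposition 5.1.4]{Juan2024analyticdeRham}, the map
\[
h:G\times[\mA^{1}/\GG_{m}]\to G^{\pdR/L,+}
\]
is a \(!\)-surjection. Its Čech nerve is computed by Theorem \ref{thmAnDRStackGeneral} (1) and Lemma \ref{lemClosedImmeAlgDRFil}. Concretely, \((G\times[\mA^{1}/\GG_{m}])\times_{G^{\pdR/L,+}}(G\times[\mA^{1}/\GG_{m}])\) is the filtered formal completion \(\hat{\Delta}_{G}^{+}=G^{\pdR/(G\times_{L}G),+}\). This is \(\Spf^{+}\) of \(\mO_{G\times G}\) with its \(I\)-adic filtration, where \(I\) is the ideal of the diagonal. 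Lemma \ref{lemClosedImmeAlgDRFil} applies because the diagonal of a smooth group is locally cut out by a regular sequence. Since \(G^{\pdR/L,+}\) is a sheaf in an \(\infty\)-topos, it is the geometric realization of this groupoid.

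\textbf{Step 2: trivializing the groupoid by the group law.} Consider the isomorphism
\[
G\times G\cong G\times G,\qquad (g,g')\mapsto (g,g^{-1}g').
\]
It carries the diagonal to \(G\times\{1\}\), so it identifies the pair groupoid \(G\times G\rightrightarrows G\) with the action groupoid of \(G\) acting on itself by right multiplication. The construction \(\Spf^{+}\) and relative de Rham stacks are functorial and commute with fiber products (Lemma \ref{lemSpfCommuteWithLimit}, Theorem \ref{thmAnDRStackGeneral} (1)). Hence completing both sides along the diagonal gives
\[
\hat{\Delta}^{+}_{G}\cong G\times \{1\}^{\pdR/G,+}.
\]
Here \(\{1\}^{\pdR/G,+}=\Spf^{+}(\mO_{G,1},\mm_{1}^{\bullet})\). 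By the smoothness of \(G\) in characteristic \(0\), this is \(\Spf^{+}(L[[\fg^{\vee}]])=\hat{\fg}^{+}\), using the exponential or a choice of formal coordinates. This identification must respect the Hopf structures. The group structure on \(\{1\}^{\pdR/G,+}\) is induced from \(G\) via finite-limit preservation, exactly as in Definition \ref{dfnCompleteGrpObj}. The group structure on \(\hat{\fg}^{+}\) in Definition \ref{dfnDaggerGroupObject} comes from the Baker–Campbell–Hausdorff formula, which is the formal group law of \(G\) at the identity. So the two agree, as in the proof of Corollary \ref{corDaggerGroupObject}.

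\textbf{Step 3: matching the simplicial structure.} Under the isomorphism of Step 2, the Čech groupoid of Step 1 becomes the action groupoid of \(\hat{\fg}^{+}\) acting on \(G\times[\mA^{1}/\GG_{m}]\) over \([\mA^{1}/\GG_{m}]\). Passing to geometric realizations then gives the claimed isomorphism, compatibly with the maps from \(G\times[\mA^{1}/\GG_{m}]\).

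\textbf{Main obstacle.} The key difficulty is matching the filtrations at the filtered level, that is, away from \([\GG_{m}/\GG_{m}]\). One must check that the \(I\)-adic filtration on \(\mO_{G\times G}\) corresponds, under the shear map, to the \(\mm_{1}\)-adic filtration on \(\mO_{G}\otimes\mO_{G,1}\), with the correct \(\{-1\}\) twist on the Rees side. We would verify this on \(\gr^{\bullet}\), where both sides are \(\Sym^{\bullet}(\fg^{\vee})\{1\}\) over \(G\times B\GG_{m}\). This comparison is analogous to Lemma \ref{lemAlgebHTStack}, and it can be checked after restricting to \(B\GG_{m}\) via conservativity (Lemma \ref{lemPerfComplexA1Gm} (1)). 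Since the shear map is an isomorphism of filtered complete algebras (the shear sends \(I\) onto \(\mO_G\otimes\mm_1\)), an isomorphism on associated graded suffices.
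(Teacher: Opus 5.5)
Your proposal is correct and follows essentially the same route as the paper. The paper identifies \(\hat{\fg}^{+}\) with \(\AnSpec(L)^{\pdR/G,+}\) via Lemma \ref{lemClosedImmeAlgDRFil}, uses the shear isomorphism \((g_{1},g_{2})\mapsto (g_{1},g_{1}^{-1}g_{2})\) to move the diagonal to \(G\times 1\), and then applies the functoriality of Definition \ref{dfnCompleteGrpObj} to get \(\hat{\Delta}_{G}^{+}\cong G\times\hat{\fg}^{+}\) as groupoids.

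The filtration matching you single out as the main obstacle is automatic from that same functoriality. An isomorphism of pairs carrying \(I\) onto the ideal of \(G\times 1\) carries \(I^{n}\) onto its \(n\)-th power, so no separate check on \(\gr^{\bullet}\) is needed.
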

\begin{proof}
By Lemma \ref{lemClosedImmeAlgDRFil}, \( \hat{\fg}^{+}\cong \AnSpec(L)^{\pdR/G,+} \) where \( \AnSpec(L)\hookrightarrow G \) is the unit. 
In particular, we have a natural group morphism \( \hat{\fg}^{+}\to G\times [\mA^{1}/\GG_{m}] \). 


Using the isomorphism \( (G\times G,p_{1},p_{2})\cong (G\times G,p_{1},m),(g_{1},g_{2})\mapsto (g_{1},g_{1}^{-1}g_{2}) \)  of groupoid objects over \( G \), the diagonal map \( \Delta:G\to G\times G \)
is translated to \( i_{1}:G\cong G\times 1\hookrightarrow G\times G \). Therefore, using the functoriality of Definition \ref{dfnCompleteGrpObj}, we have an isomorphism of groupoid objects \( \hat{\Delta}_{G}^{+}\cong G\times \hat{\fg}^{+} \). This implies that \( G^{\pdR/L,+}\cong (G\times [\mA^{1}/\GG_{m}])/\hat{\fg}^{+} \).
\end{proof}
\begin{lem}\label{lemQuotientClosedSubgorup}
Let \( H\subset G \) be a Zariski closed subgroup over \( L \). Then \[
(H\backslash G)^{\pdR/L,+}/G\cong [\mA^{1}/\GG_{m}]_{L}/H^{\pdR/G,+}
\] where the quotient on the RHS is taken over \( [\mA^{1}/\GG_{m}]_{L} \) and that on the LHS is over \( L \).
\end{lem}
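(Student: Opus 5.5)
Set $X:=H\backslash G$ and let $\pi:G\to X$ be the quotient map. Both sides of the statement are quotients of $G\times[\mA^{1}/\GG_{m}]$, and I will compare them through the groupoids they receive. The argument runs parallel to Lemma \ref{lemDRStackOfG}.

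First I simplify the left hand side. Quotienting the filtered de Rham stack of a $G$-space by $G$ should agree with quotienting the relative one. Indeed $G$ acts on $X$, hence by functoriality on $X^{\pdR/L,+}$. By Lemma \ref{lemDRStackOfG} we have $G^{\pdR/L,+}\cong (G\times[\mA^{1}/\GG_{m}])/\hat{\fg}^{+}$, and since $(-)^{\pdR,+}$ commutes with finite limits (Theorem \ref{thmAnDRStackGeneral} (1)), $X^{\pdR/L,+}$ carries an action of $G^{\pdR/L,+}$. The map $G\times[\mA^{1}/\GG_{m}]\to G^{\pdR/L,+}$ is a $!$-surjection, being a suave $!$-cover by Lemma \ref{lemBasicFiltered}. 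Therefore $X^{\pdR/L,+}/G\cong X^{\pdR/L,+}/G^{\pdR/L,+}$, up to quotienting additionally by $\hat{\fg}^{+}$, which is absorbed because $G^{\pdR/L,+}$ is generated by $G$ and $\hat{\fg}^{+}$.

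Second, I compute the right hand side. By Theorem \ref{thmAnDRStackGeneral} (1), $H^{\pdR/G,+}=[\mA^{1}/\GG_{m}]\times_{G^{\pdR,+}}H^{\pdR,+}$ is a group over $[\mA^{1}/\GG_{m}]$. Using Lemma \ref{lemClosedImmeAlgDRFil} (regular immersions, locally), I identify it with the formal completion of $G\times[\mA^{1}/\GG_{m}]$ along $H$, taken in the filtered sense. As a group it is $H\cdot\hat{\fg}^{+}$, an extension of $H$ by $\hat{\fg}^{+}$ modulo $\hat{\mathfrak h}^{+}$.

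Third, I show $[\mA^{1}/\GG_{m}]/H^{\pdR/G,+}\cong X^{\pdR/L,+}/G$. The strategy is to exhibit both as the quotient of $G\times[\mA^{1}/\GG_{m}]$ by the same groupoid.
\begin{itemize}
\item On the right, $[\mA^{1}/\GG_{m}]/H^{\pdR/G,+}$ is the quotient of $[\mA^{1}/\GG_{m}]/(\text{point})$ by that group. Equivalently, pulling back along $[\mA^{1}/\GG_{m}]\to[\mA^{1}/\GG_{m}]/G$, it is $(G\times[\mA^{1}/\GG_{m}])/H^{\pdR/G,+}$ modulo $G$.
\item On the left, the fiber product $(G\times[\mA^{1}/\GG_{m}])\times_{X^{\pdR/L,+}}(G\times[\mA^{1}/\GG_{m}])$ is computed by finite limits. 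It equals the relative filtered de Rham stack of $G\times_{X}G\cong H\times G$ inside $G\times G$.
\item Translating by $(g_1,g_2)\mapsto(g_1,g_1^{-1}g_2)$, exactly as in the proof of Lemma \ref{lemDRStackOfG}, this becomes $G\times H^{\pdR/G,+}$.
\end{itemize}
Hence $X^{\pdR/L,+}\cong (G\times[\mA^{1}/\GG_{m}])/H^{\pdR/G,+}$, with $H^{\pdR/G,+}$ acting by left multiplication, compatibly with the right $G$-action. Quotienting by $G$ then gives $[\mA^{1}/\GG_{m}]/H^{\pdR/G,+}$.

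The main obstacle is the surjectivity input. I need $G\times[\mA^{1}/\GG_{m}]\to X^{\pdR/L,+}$ to be a $!$-surjection, so that the quotient by the Čech groupoid recovers $X^{\pdR/L,+}$. To get this, I will first use that $\pi:G\to X$ is smooth, hence a $!$-surjection locally (Zariski-locally it has sections, étale-locally in general). Then I combine this with $X\times[\mA^{1}/\GG_{m}]\to X^{\pdR/L,+}$ being a $!$-surjection (Lemma \ref{lemBasicFiltered}, or \cite[Proposition 5.1.4]{Juan2024analyticdeRham}), and compose. The commutation of the relative de Rham construction with the fiber product $G\times_{X}G$ is then formal from Theorem \ref{thmAnDRStackGeneral} (1).
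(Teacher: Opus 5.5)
Your third step is correct, and it reaches the lemma by a different route from the paper's. The paper first writes $(H\backslash G)^{\pdR/L,+}\cong H^{\pdR/L,+}\backslash G^{\pdR/L,+}$. It then uses that $G\times[\mA^{1}/\GG_{m}]\to G^{\pdR/L,+}$ is a $!$-surjection, together with $H^{\pdR/G,+}=H^{\pdR/L,+}\times_{G^{\pdR/L,+}}(G\times[\mA^{1}/\GG_{m}])$, to rewrite this as $H^{\pdR/G,+}\backslash(G\times[\mA^{1}/\GG_{m}])$, and finally divides by $G$.

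You never form $H^{\pdR/L,+}\backslash G^{\pdR/L,+}$. Instead you compute the \v{C}ech nerve of $G\times[\mA^{1}/\GG_{m}]\to X^{\pdR/L,+}$ directly, via $(G\times[\mA^{1}/\GG_{m}])\times_{X^{\pdR/L,+}}(G\times[\mA^{1}/\GG_{m}])\cong(G\times_{X}G)^{\pdR/(G\times G),+}$. This uses only commutation with finite limits, and you recognize the result as the action groupoid of $H^{\pdR/G,+}$. The price is the $!$-surjectivity of $G\times[\mA^{1}/\GG_{m}]\to X^{\pdR/L,+}$, which you correctly reduce to two facts: $G\to X$ is a $!$-surjection, and $X\times[\mA^{1}/\GG_{m}]\to X^{\pdR/L,+}$ is one (since $X$ is smooth, \cite[Proposition 5.1.4]{Juan2024analyticdeRham} applies). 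This input is not really extra relative to the paper. Its first isomorphism concerns a colimit, and finite-limit commutation only identifies the \v{C}ech nerve of $G^{\pdR/L,+}\to X^{\pdR/L,+}$; one still needs that map to be an effective epimorphism, which is exactly your surjectivity statement. So your route makes explicit what the paper's appeal to Theorem \ref{thmAnDRStackGeneral} (1) leaves implicit, while the paper's is shorter.

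There are two corrections.

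\begin{itemize}
\item Your first step is false and should be deleted; nothing later uses it. The kernel $\hat{\fg}^{+}$ of $G\times[\mA^{1}/\GG_{m}]\to G^{\pdR/L,+}$ acts trivially on $X^{\pdR/L,+}$. Hence the natural map $X^{\pdR/L,+}/G\to X^{\pdR/L,+}/G^{\pdR/L,+}$ has fibers $B\hat{\fg}^{+}$ and is not an isomorphism. Already for $H=G$ it would identify $BG\times[\mA^{1}/\GG_{m}]$ with $BG^{\pdR/L,+}$, contradicting the nontriviality of $B\hat{\fg}^{+}$ (Lemma \ref{lemRealizeLieRep}).
\item For right cosets $H\backslash G$ one has $G\times_{X}G=\{(g_1,g_2):g_2g_1^{-1}\in H\}$. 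So the shear map must be $(g_1,g_2)\mapsto(g_1,g_2g_1^{-1})$, not $(g_1,g_1^{-1}g_2)$, which describes $G/H$. Your stated conclusion, that $H^{\pdR/G,+}$ acts by left multiplication, matches the corrected formula.
\end{itemize}

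The explicit description of $H^{\pdR/G,+}$ in your second step is also unnecessary. You only need that it is a group over $[\mA^{1}/\GG_{m}]$, which is formal.
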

\begin{proof}
By Theorem \ref{thmAnDRStackGeneral} (1), \( 
(H\backslash G)^{\pdR/L,+}\cong H^{\pdR/L,+}\backslash G^{\pdR/L,+} \). By definition, \[
H^{\pdR/G,+}=H^{\pdR/L,+}\times_{G^{\pdR/L,+}}(G\times [\mA^{1}/\GG_{m}]),
\] and \( G\times [\mA^{1}/\GG_{m}]\to G^{\pdR/L,+}\)
is a \( ! \)-surjection by Theorem \ref{thmAnDRStackGeneral} (3). Thus \[
H^{\pdR/L,+}\backslash G^{\pdR/L,+}
\cong H^{\pdR/G,+}\backslash (G\times [\mA^{1}/\GG_{m}]).
\] Quotienting out \( G\times [\mA^{1}/\GG_{m}] \)
on both sides, we obtain the desired isomorphism.
\end{proof}
\begin{lem}\label{lemRealizeLieRep}
We write \( B\hat{\fg}:=\AnSpec(L)/\hat{\fg} \), and \( B\hat{\fg}^{+}:= G\backslash G^{\pdR/L,+}\). 

(1) We have \( B\hat{\fg}^{+}\cong [\mA^{1}/\GG_{m}]_{L}/\hat{\fg}^{+} \), where the quotient is taken over \( [\mA^{1}/\GG_{m}]_{L} \). In particular, 
\( B\hat{\fg}^{+}|_{\GG_{m}/\GG_{m}}\cong B\hat{\fg} \), and
\( h:[\mA^{1}/\GG_{m}]_{L}\to B\hat{\fg}^{+} \) is cohomologically smooth.
Moreover, the structure map \(g: B\hat{\fg}^{+}\to [\mA^{1}/\GG_{m}]_{L} \) is cohomologically smooth. 

(2) There are equivalences \( \QCoh(B\hat{\fg}^{+})\cong \Mod_{U(\fg)}(\Fil^{\ZZ}(D(L_{\square}))) \), and \( \QCoh(B\hat{\fg})\cong \Rep(\fg) \), where \( U(\fg) \) is induced with the degree filtration \( \Fil^{-i}U(\fg):=\im(T^{\le i}(\fg)\to U(\fg)) \).

In particular, we have a filtered isomorphism \( Z(\fg)\cong \pi_{0}\End(\id_{\QCoh(B\hat{\fg}^{+})}) \),
where \( Z(\fg) \) is the center of \( U(\fg) \).
\end{lem}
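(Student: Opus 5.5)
For (1), I would start from Lemma \ref{lemDRStackOfG}, which gives \( G^{\pdR/L,+}\cong (G\times[\mA^{1}/\GG_{m}])/\hat{\fg}^{+} \) over \( [\mA^{1}/\GG_{m}] \). The \( \hat{\fg}^{+} \)-action there is by right translation, so it commutes with left translation by \( G \). Quotienting by the left \( G \)-action then gives
\[ G\backslash G^{\pdR/L,+}\cong (G\backslash G\times[\mA^{1}/\GG_{m}])/\hat{\fg}^{+}\cong [\mA^{1}/\GG_{m}]_{L}/\hat{\fg}^{+}. \]
This is the special case \( H=1 \) of Lemma \ref{lemQuotientClosedSubgorup}, read with the roles of sides swapped. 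Restricting to \( [\GG_{m}/\GG_{m}] \) gives \( B\hat{\fg} \), because \( \hat{\fg}^{+}|_{\GG_{m}/\GG_{m}}=\hat{\fg} \) by definition.

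For cohomological smoothness of \( h \), I would use that the fibre product \( [\mA^{1}/\GG_{m}]\times_{B\hat{\fg}^{+}}[\mA^{1}/\GG_{m}] \) is \( \hat{\fg}^{+} \). After the Rees twist, \( \hat{\fg}^{+} \) is the formal completion of the vector bundle \( \fg\{-1\} \) along its zero section. By \cite[Proposition 4.2.4 (2)]{Juan2024analyticdeRham}, as in Lemma \ref{lemCalDiagonalSmooth}, it is therefore cohomologically smooth over \( [\mA^{1}/\GG_{m}] \), with dualizing complex an invertible twist of \( \det\fg \). It also has a section, so \( h \) is a \( ! \)-surjection. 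Theorem \ref{thmVariousDescent} (4) then shows that \( h \) is cohomologically smooth.

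For \( g \), note that \( g\circ h=\id \). Since \( h \) is a suave \( ! \)-cover, Theorem \ref{thmVariousDescent} (5), read in the reverse direction, reduces the suaveness of \( \mO \) for \( g \) to that of \( h^{*}\mO \) for \( g\circ h=\id \), which is trivial. It remains to show that \( \mbb{D}_{g}(\mO) \) is invertible. The formula \( \mbb{D}_{g\circ h}=h^{*}\mbb{D}_{g}\otimes\mbb{D}_{h} \) gives \( h^{*}\mbb{D}_{g}\cong\mbb{D}_{h}^{-1} \), which is invertible. Since \( h^{*} \) is conservative and symmetric monoidal, \( \mbb{D}_{g} \) is invertible.

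For (2), I would follow the proof of Lemma \ref{lemDRcomplexVSpushforward} almost verbatim. Barr–Beck–Lurie, applied to the conservative functor \( h^{*} \) with left adjoint \( h_{\natural} \), identifies \( \QCoh(B\hat{\fg}^{+}) \) with modules over the monad \( h^{*}h_{\natural}\cong p_{2,\natural}p_{1}^{*} \) on \( \QCoh([\mA^{1}/\GG_{m}])=\Fil^{\ZZ}(D(L_{\square})) \). Computing as in Step (A), by exhausting \( \hat{\fg}^{+} \) by its finite flat truncations \( \AnSpec^{+}(L[[\fg^{\vee}]]/\Fil^{n}) \), gives \( p_{2,\natural}p_{1}^{*}(\mF)\cong \varinjlim_{n}(L[[\fg^{\vee}]]/\Fil^{n})^{\vee}\otimes\mF \). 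This is the filtered distribution algebra of the formal group \( \hat{\fg} \) at the identity.

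The monad structure is dual to the coalgebra structure coming from the group law, that is, the BCH Hopf structure of Definition \ref{dfnDaggerGroupObject}. The identification I need is then \( \mrm{Dist}(\hat{\fg})\cong U(\fg) \) as filtered algebras, with the degree filtration matching \( \Fil^{-i}U(\fg) \). This is the classical identification of distributions on a formal Lie group in characteristic zero: its associated graded is \( \Sym(\fg)\cong\gr U(\fg) \) by PBW, and one compares the two on \( \gr^{1} \), as in Lemma \ref{lemDiagonalLogProperty} (3). I expect this Hopf-algebraic identification, with filtrations tracked and done in the solid setting, to be the main technical point. The solid duality \( \Hom(L[[x_{i}]],L)\cong L[x_{i}^{\vee}] \) is already handled in the proof of Lemma \ref{lemDiffOpeToLogStack}.

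Restricting to \( [\GG_{m}/\GG_{m}] \) gives \( \QCoh(B\hat{\fg})\cong\Mod_{U(\fg)}=\Rep(\fg) \). Finally, the centre of a module category \( \Mod_{A}(\Fil^{\ZZ}(D(L_{\square}))) \) has \( \pi_{0} \) equal to the filtered centre of \( A \). Hence \( \pi_{0}\End(\id)\cong Z(U(\fg))=Z(\fg) \) with the induced filtration.
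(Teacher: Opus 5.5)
Your proposal is correct. Part (1) is essentially the paper's argument: the isomorphism is Lemma \ref{lemQuotientClosedSubgorup} with $H=1$, and smoothness of $g$ is obtained from that of $h$ and $\id$ via Theorem \ref{thmVariousDescent} (5). The only variation is that you test smoothness of $h$ after base change along $h$ itself, where the fibre product is $\hat{\fg}^{+}$. The paper instead base changes along the $G$-torsor $G^{\pdR/L,+}\to B\hat{\fg}^{+}$ and uses the known smoothness of $G\times[\mA^{1}/\GG_{m}]\to G^{\pdR/L,+}$.

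Two small repairs are needed in that step:
\begin{itemize}
\item A section of $\hat{\fg}^{+}\to[\mA^{1}/\GG_{m}]$ says nothing about surjectivity of $h$, since the base change of any map along itself has the diagonal section. The map $h$ is a $!$-surjection simply because $B\hat{\fg}^{+}$ is by construction the quotient of $[\mA^{1}/\GG_{m}]$ by $\hat{\fg}^{+}$.
\item Theorem \ref{thmVariousDescent} (4) only gives suaveness. Invertibility of $\mbb{D}_{h}(\mO)$ follows because, by base change, it is the restriction along the unit section of the invertible dualizing object of $\hat{\fg}^{+}\to[\mA^{1}/\GG_{m}]$.
\end{itemize}

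Part (2) follows a genuinely different route. You compute the monad $h^{*}h_{\natural}$ directly from the groupoid $\hat{\fg}^{+}$. You then identify $h^{*}h_{\natural}(L)$ with the filtered distribution algebra of the formal group $\hat{\fg}$ under convolution, and invoke the characteristic-zero identification $\mrm{Dist}(\hat{\fg})\cong U(\fg)$ via PBW.

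The paper never considers distributions on $\hat{\fg}$. It uses the Cartesian square in which $h':G\times[\mA^{1}/\GG_{m}]\to G^{\pdR/L,+}$ lies over $h$, which gives an algebra map $h^{*}h_{\natural}(L)\to h^{\prime,*}h'_{\natural}(\mO_{G})\cong D_{G/L}$ by Lemma \ref{lemDRcomplexVSpushforward}. It then recovers $h^{*}h_{\natural}(L)\cong R\Gamma(G,D_{G/L})$ by descending along the $G$-torsor. Finally it uses the classical trivialization $D_{G/L}\cong U(\fg)\otimes\mO_{G}$ by invariant differential operators, checked on $\gr_{0}$ and $\gr_{1}$, so that taking $G$-invariants yields $U(\fg)$.

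The paper's route inherits the identification of the monad product with composition of operators from the already-proved $D$-module lemma, so no separate convolution computation is needed. Yours is intrinsic to $\hat{\fg}^{+}$ and would work for any formal group over $L$. The cost is that you must redo the analogue of Steps (A)--(B) of Lemma \ref{lemDRcomplexVSpushforward} for $\hat{\fg}^{+}$ and verify that the monad multiplication is convolution. You must also construct the filtered comparison map $U(\fg)\to\mrm{Dist}(\hat{\fg})$: send $\fg$ to primitive distributions, then conclude on $\gr$, where both sides are $\Sym(\fg)$ generated in degree one.
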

\begin{proof}
For (1), the first isomorphism is given by Lemma \ref{lemQuotientClosedSubgorup} by putting \( H=\{1\} \).
The cohomological smoothness of
\( h \) follows by Theorem \ref{thmVariousDescent} (4) from that of \( G\times [\mA^{1}/\GG_{m}]\to G^{\pdR/L,+} \) by Lemma \ref{lemBasicLogDRSta} (1).
The cohomological smoothness of \( g \) follows from those of \( h \) and \( \id_{[\mA^{1}/\GG_{m}]}\) by Theorem \ref{thmVariousDescent} (5).

(2)
This is essentially \cite[Proposition III.1.4]{Scholze2024RealLanglands}. Let us give a sketch here.  
As in the proof of Lemma \ref{lemDRcomplexVSpushforward}, by Barr-Beck-Lurie theorem (\cite[Theorem 4.7.3.5]{Lurie2017HA}) and \cite[Proposition 3.1.27]{Juan2024analyticdeRham},
the structure of the monoid \( h^{*}h_{\natural} \) equipped \( h^{*}h_{\natural}(L) \) with a filtered associative algebra structure, such that \( \QCoh(B\hat{\fg})\cong \Mod_{h^{*}h_{\natural}(L)}(\Fil^{\ZZ}(D(L_{\square}))) \).

We claim that \( h^{*}h_{\natural}(L)\cong U(\fg) \) as a filtered associative algebra. 
Moreover, we have a Cartesian diagram \[
\begin{tikzcd}
G\times {[\mA^{1}/\GG_{m}]} \arrow[r]\arrow[d,"h'"] & {[\mA^{1}/\GG_{m}]}\arrow[d,"h"]\\
G^{\pdR/L,+} \arrow[r] & B\hat{\fg}^{+}.
\end{tikzcd}
\] 
This implies that there is a natural morphism of associative algebras
\( h^{*}h_{\natural}(L)\to h^{\prime,*}h'_{\natural}(\mO_{G}) \), and the latter is identified with \( D_{G/L} \)
by Lemma \ref{lemDRcomplexVSpushforward}, which is equipped with a \( G \)-action induced by the right multiplication of \( G \) on \( G \), and \( h^{*}h_{\natural}(L)\cong R\Gamma(G,h^{\prime,*}h'_{\natural}(\mO_{G}))\cong R\Gamma(G,D_{G/L}) \).

On the other hand, we have a natural map \( U(\fg)\to   H^{0}(G,D_{G/L}) \) induced by taking derivative of the action induced by the left multiplication, and it is injective, and is compatible with the degree filtration, and induces an isomorphism on \( \gr_{0} \) and \( \gr_{1} \). This implies that it is an isomorphism. We thus have \( D_{G/L}\cong U(\fg)\otimes_{L}\mO_{G} \), and thus \( R\Gamma(G,D_{G/L})\cong U(\fg) \).
\end{proof}
We learn the following idea from a talk of Arthur-C\'esar Le Bras.
\begin{lem}[Infinitesimal action]\label{lemInfiniteAction}
Let \( U \) be a solid stack over \( L \) equipped with a right action of \( G \). Consider \( Y:=G\times [\mA^{1}/\GG_{m}]\backslash U^{\pdR/L,+} \), where the quotient is taken over \( [\mA^{1}/\GG_{m}]_{L} \), and \( G \) acts on \( U^{\pdR/L,+} \) via \( G\times [\mA^{1}/\GG_{m}]\to G^{\pdR/L,+} \). 
Note that \( G^{\pdR/L,+} \) acts on \( U^{\pdR/L,+} \) by Theorem \ref{thmAnDRStackGeneral} (1).

Then we have a filtered morphism \( Z(\fg)\to \End(\id_{\QCoh(Y)}) \), where \( Z(\fg) \) is endowed with the degree filtration inheritted from \( U(\fg) \).

The construction is natural in \( U \), and when \( U=G \), we specialize to the action of \( Z(\fg) \) in Lemma \ref{lemRealizeLieRep}.
\end{lem}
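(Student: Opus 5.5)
The plan is to reduce to the universal case $U=G$, where Lemma \ref{lemRealizeLieRep} already gives the $Z(\fg)$-action, via a map $Y\to B\hat{\fg}^{+}$ over $[\mA^{1}/\GG_{m}]_{L}$.

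First I will build that map. The right action $U\times G\to U$ induces, by Theorem \ref{thmAnDRStackGeneral} (1), an action of $G^{\pdR/L,+}$ on $U^{\pdR/L,+}$ over $[\mA^{1}/\GG_{m}]$. Quotienting by this full action gives a map
\[
Y=(G\times[\mA^{1}/\GG_{m}])\backslash U^{\pdR/L,+}\longrightarrow U^{\pdR/L,+}/G^{\pdR/L,+}.
\]
Quotienting instead by the trivial action of $G\times[\mA^{1}/\GG_{m}]$ on a point, the projection $U^{\pdR/L,+}\to[\mA^{1}/\GG_{m}]$ is equivariant for $G\times[\mA^{1}/\GG_{m}]\to G^{\pdR/L,+}$. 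This produces
\[
\pi:Y\to (G\times[\mA^{1}/\GG_{m}])\backslash G^{\pdR/L,+}\cdot\ldots
\]
To avoid sidedness issues, it is cleaner to write $Y\cong U^{\pdR/L,+}\times^{G^{\pdR/L,+}}\bigl((G\times[\mA^{1}/\GG_{m}])\backslash G^{\pdR/L,+}\bigr)$, a twisted product. The second factor is $B\hat{\fg}^{+}$ by Lemma \ref{lemRealizeLieRep} (1) (after inverting to match left/right conventions). The map $\pi:Y\to B\hat{\fg}^{+}$ is then $U^{\pdR/L,+}/G^{\pdR/L,+}$-twisted, but what matters is that it exists naturally in $U$ and is compatible with the left $G^{\pdR/L,+}$-symmetry.

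Next I transfer central endomorphisms. An element $z\in\pi_{0}\End(\id_{\QCoh(B\hat{\fg}^{+})})$ gives, for $\mF\in\QCoh(Y)$, an endomorphism of $\mF$. Naively one sets $z_{\mF}:=$ the action of $z$ on $\pi^{*}$ of something, but $\mF$ is not pulled back. The better route is to view $\QCoh(Y)$ as $\QCoh(B\hat{\fg}^{+})$-linear via $\pi^{*}$ and the tensor product. The statement is really about the Drinfeld-center-type action: $z$ must be an endomorphism of the identity functor of $\QCoh(B\hat{\fg}^{+})$ as a $\QCoh(B\hat{\fg}^{+})$-module, i.e.\ an element of $\pi_{0}\End(\mO_{B\hat{\fg}^{+}})$-linear natural transformations. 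That is not $Z(\fg)$, since $\End(\mO)=L$.

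So instead I will use the groupoid description. By the definition of $Y$ and Lemma \ref{lemBasicLogDRSta}-type descent, $U^{\pdR/L,+}\to Y$ is a $G$-torsor. Hence $\QCoh(Y)$ is identified with $G$-equivariant objects in $\QCoh(U^{\pdR/L,+})$, which also carry the $G^{\pdR/L,+}$-action. Differentiating the discrepancy between the $G$- and $G^{\pdR/L,+}$-equivariance gives an action of $\hat{\fg}^{+}$, hence a $U(\fg)$-action by Lemma \ref{lemRealizeLieRep} (2). This action commutes with the $G$-structure only up to the adjoint action, so the $G$-invariant part $Z(\fg)=U(\fg)^{G}$ (for connected $G$; in general $U(\fg)^{G}\supset$ the image, which suffices) acts by endomorphisms of $\id$.

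Concretely, I will realize $\QCoh(Y)$ as modules over $Y\times_{B\hat{\fg}^{+}\!/G}\ldots$. The cleanest formulation is a Cartesian square $Y\to [\mA^{1}/\GG_{m}]/(G\ltimes\hat{\fg}^{+})$, obtained by quotienting $U^{\pdR/L,+}$ by $G\times[\mA^{1}/\GG_{m}]$ acting through the map to $G^{\pdR/L,+}$, with $\hat{\fg}^{+}$ the kernel of $G^{\pdR/L,+}\to$ its reduction. On $[\mA^{1}/\GG_{m}]/(G\ltimes\hat{\fg}^{+})$, quasicoherent sheaves are $G$-equivariant filtered $U(\fg)$-modules, on which $U(\fg)^{G}\supset Z(\fg)$ acts centrally, preserving filtrations. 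Pulling back along $Y\to[\mA^{1}/\GG_{m}]/(G\ltimes\hat{\fg}^{+})$ does not directly give endomorphisms of $\id_{\QCoh(Y)}$, so I use relative $\QCoh$-linearity: the central action on the base category acts on any module category over it through the action map.

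Naturality in $U$ is then automatic, and for $U=G$ the construction collapses to Lemma \ref{lemRealizeLieRep} (2). The main obstacle is making the passage from ``central in the base'' to ``endomorphism of $\id_{\QCoh(Y)}$'' rigorous. I expect it to go through Hochschild cohomology of $\QCoh(B\hat{\fg}^{+})$ mapping to that of $\QCoh(Y)$ along the $\QCoh(B\hat{\fg}^{+})$-module structure induced by the groupoid action. This amounts to checking that $U^{\pdR/L,+}\times^{G}(-)$ is a well-defined action of the monoidal category of $\hat{\fg}^{+}$-bimodule kernels.
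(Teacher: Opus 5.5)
You never actually construct the action: the step you defer at the end as ``the main obstacle'' is the whole content of the lemma, and your intermediate steps do not add up to it. As you concede, there is no useful map $Y\to B\hat{\fg}^{+}$ to pull back along. The proposed map $Y\to[\mA^{1}/\GG_{m}]/(G\ltimes\hat{\fg}^{+})$ comes with no Cartesian square, and, as you note yourself, it does not produce endomorphisms of $\id_{\QCoh(Y)}$.

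Two statements are also inaccurate. First, $\hat{\fg}^{+}$ is the kernel of $G^{+}:=G\times[\mA^{1}/\GG_{m}]\to G^{\pdR/L,+}$ (Lemma \ref{lemDRStackOfG}), not of a map out of $G^{\pdR/L,+}$. Second, objects of $\QCoh(Y)$, viewed as $G^{+}$-equivariant sheaves on $U^{\pdR/L,+}$, do not carry a $G^{\pdR/L,+}$-action; only the space does.

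More seriously, the monoidal category you finally invoke cannot act on $\QCoh(Y)$. Non-equivariant $\hat{\fg}^{+}$-bimodule kernels, with the Hochschild cohomology of $\QCoh(B\hat{\fg}^{+})$, fail for the same reason that $Y$ does not map to $B\hat{\fg}^{+}$. Indeed $Y\simeq(G^{\pdR/L,+}\backslash U^{\pdR/L,+})\times_{BG^{\pdR/L,+}}BG^{+}$ is only a family of copies of $B\hat{\fg}^{+}$ twisted by $G^{\pdR/L,+}$. So the kernels must be $G^{+}$-equivariant, and the relevant endomorphism ring is a $G$-invariant version of $U(\fg)$, not $HH^{*}(U(\fg))$.

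The missing idea is the Hecke action of $\QCoh(G^{+}\backslash G^{\pdR/L,+}/G^{+})$ on $\QCoh(Y)$ through the correspondence $Y\xleftarrow{p_{2}}G^{+}\backslash(G^{\pdR/L,+}\times^{G^{+}}U^{\pdR/L,+})\xrightarrow{m}Y$, with $F*\mcal{G}:=m_{\natural}(p_{2}^{*}\mcal{G}\otimes p_{1}^{*}F)$. The steps are:
\begin{enumerate}
\item For $m_{\natural}$ to exist you need $m$ cohomologically smooth. This holds because $m$ is a base change of $BG^{+}\to BG^{\pdR/L,+}$, which is cohomologically smooth by Lemma \ref{lemRealizeLieRep} (1).
\item Base change along the unit $\delta:G^{+}\backslash G^{+}/G^{+}\to G^{+}\backslash G^{\pdR/L,+}/G^{+}$ gives $\delta_{\natural}(1)*(-)\simeq\id_{\QCoh(Y)}$. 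Hence $\End(\delta_{\natural}(1))$, filtered via the twists $\{i\}$, maps to $\End(\id_{\QCoh(Y)})$.
\item Lemma \ref{lemRealizeLieRep} (2) identifies $\End(\delta_{\natural}(1))$ with $R\Gamma(G,U(\fg))$. Its $H^{0}=U(\fg)^{G}$ contains $Z(\fg)$ when $G$ is connected.
\end{enumerate}
Naturality in $U$ is then formal, and for $U=G$ one identifies $\delta_{\natural}(1)$ with $U(\fg)$ as a bimodule.

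Your intuition in the middle of the proposal is sound: restricting the $G^{+}$-equivariance to $\hat{\fg}^{+}$, which acts trivially on $U^{\pdR/L,+}$, gives an $\mrm{Ad}$-twisted $U(\fg)$-action and explains why $U(\fg)^{G}$ acts object by object. The kernel formalism is what turns this into a coherent natural endomorphism of the identity of the $\infty$-category $\QCoh(Y)$.
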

\begin{proof}
We will write \( G^{+}:=G\times [\mA^{1}/\GG_{m}] \).

We start by defining an action of \( \QCoh(G^{+}\backslash G^{\pdR/L,+}/G^{+}) \) on \( \QCoh(Y) \). 
Consider the correspondence \[
G^{+}\backslash U^{\pdR/L,+}\xleftarrow{p_{2}}G^{+}\backslash (G^{\pdR/L,+}\times^{G^{+}}U^{\pdR/L,+})\xrightarrow{m}G^{+}\backslash U^{\pdR/L,+},
\] where \( p_{2} \) (resp. \( m \)) is induced by 
\( p_{2}:G^{\pdR/L,+}\times U^{\pdR/L,+}\to U^{\pdR/L,+} \) (resp. induced by the action of \( G^{\pdR/L,+} \) on \( U^{\pdR/L,+} \)).

Note that \( m \)
fits in the Cartesian diagram \[
\begin{tikzcd}
G^{+}\backslash (G^{\pdR/L,+}\times^{G^{+}}U^{\pdR/L,+})\arrow[d,"m"]\arrow[r]
& BG^{+}\arrow[d,"g"]
\\
G^{+}\backslash U^{\pdR/L,+}\arrow[r] & BG^{\pdR/L,+},
\end{tikzcd}
\] and \( g \) is cohomologically smooth by Lemma \ref{lemRealizeLieRep}. Thus \( m \) is also cohomologically smooth.

Consider \[
p_{1}:G^{+}\backslash (G^{\pdR/L,+}\times^{G^{+}}U^{\pdR/L,+})
\to 
G^{+}\backslash G^{\pdR/L,+}/G^{+}.
\]
Then for any \( F\in \QCoh(G^{+}\backslash G^{\pdR/L,+}/G^{+}) \),
we define 
\begin{align*}
F*(-):\QCoh(G^{+}\backslash U^{\pdR/L,+})
&\to \QCoh(G^{+}\backslash U^{\pdR/L,+}),\\
\mcal{G}&\mapsto m_{\natural}(p_{2}^{*}(\mcal{G})\otimes p_{1}^{*}(F)).
\end{align*}

Let \( \delta:G^{+}\backslash G^{+}/G^{+}\to G^{+}\backslash G^{\pdR/L,+}/G^{+} \). Note that \( \delta \) is also cohomologically smooth by Lemma \ref{lemBasicLogDRSta} (1). We claim that \( \delta_{\natural}(1)*(-)\cong \id_{\QCoh(G^{+}\backslash U^{\pdR/L,+})} \). For this, we consider the following diagram \[
\begin{tikzcd}
G^{+}\backslash U^{\pdR/L,+}\arrow[d,equal]
&[-0.5cm]\arrow[l,"p_{2}'"]
G^{+}\backslash (G^{+}\times^{G^{+}}U^{\pdR/L,+})\arrow[r,"p_{1}'"]\arrow[d,"\delta'"]
&[-0.5cm]
G^{+}\backslash G^{+}/G^{+}
\arrow[d,"\delta"]\\
G^{+}\backslash U^{\pdR/L,+}
&\arrow[l,"p_{2}"]
G^{+}\backslash (G^{\pdR/L,+}\times^{G^{+}}U^{\pdR/L,+})\arrow[r,"p_{1}"]
&
G^{+}\backslash G^{\pdR/L,+}/G^{+}
\end{tikzcd}
\] where the square on the right is Cartesian.
Then \[
\delta_{\natural}(1)*\mcal{G}
\cong m_{\natural}(p_{1}^{*}(\delta_{\natural}(1))\otimes p_{2}^{*}\mcal{G})
\cong m_{\natural}(\delta_{\natural}'\circ p_{1}^{\prime,*}(1)\otimes p_{2}^{*}\mcal{G})
\cong (m\circ \delta')_{\natural}p^{\prime,*}_{2}(\mcal{G}).
\] However, both \( p_{2}' \)
and \( m\circ \delta' \) are isomorphisms, and the composition \( p_{2}'\circ (m\circ \delta')^{-1}\cong \id \). This implies that  \( (m\circ \delta')_{\natural}p^{\prime,*}_{2}\cong \id \), as desired.

Now given \( \delta_{\natural}(1)*(-)\cong \id_{\QCoh(Y)} \),
it suffices to construct \( Z(\fg)\to \End(\delta_{\natural}(1)) \).
This is given by \[
\End_{G^{+}\backslash G^{\pdR/L,+}/G^{+}}(\delta_{\natural}(1))
\cong R\Gamma(G,\End_{G^{+}\backslash G^{\pdR/L,+}}(\delta''_{\natural}(1)))
\cong R\Gamma(G,U(\fg)),
\] where the second isomorphism is given by Lemma \ref{lemRealizeLieRep}. 

The construction is clearly natural in \( U \) equipped with an action of \( G \). 

When \( U=G \), 
note that \( G\backslash G^{\dR}/G\cong \AnSpec(L)/(\Delta(G)\subset G\times G)^{\wedge} \),
and \( \delta_{\natural}(1) \) corresponds to the compact support induction of the trivial representation of \( G \)
to \( (\Delta(G)\subset G\times G)^{\wedge} \).
When restricted to \( \hat{\fg}\times \hat{\fg}\subset (\Delta(G)\subset G\times G)^{\wedge} \),
\( \delta_{\natural}(1) \)
corresponds to \( U(\fg) \) regarded as a \( U(\fg)\otimes U(\fg) \)-representation via
\( (*_{r},*_{l}) \),
where \( *_{r} \) and \( *_{l} \) denote the
left and right actions respectively.
Then the identification \( \delta_{\natural}(1)*(-)\cong \id_{\QCoh(Y)} \) boils down to the isomorphism \[ \left(R\Gamma(\fg,(U(\fg),*_{r})\otimes V),*_{l}\right)\cong V. \]
Then the action of \( Z(\fg) \) on \( U(\fg) \) on the LHS clearly induces the \( Z(\fg) \)-action on the RHS.
\end{proof}


\subsubsection{Parabolic BGG category \texorpdfstring{\( \mscr{O} \)}{O}}
\label{subsubsecParBGGCatO}
We continue with the setting of \S \ref{subsecPartialFl}. We write \[ X^{+}:=X\times [\mA^{1}/\GG_{m}] \] for any solid stack \( X \), unless \( X=\hat{\fg} \) or \( B\hat{\fg}^{+} \), in which case we still use Definition \ref{dfnDaggerGroupObject} and Lemma \ref{lemRealizeLieRep}. Let \[
h^{+}:\mFl_{P}^{+}/G^{+}\to 
\mFl_{P}^{\pdR/L,+}/G^{+},\;
h:\mFl_{P}/G\to \mFl_{P}^{\pdR/L}/G.
\]
\begin{dfn}
We define the \emph{filtered parabolic BGG category \( \mscr{O}_{P}^{+} \)} to be the full \emph{stable} subcategory of \( \QCoh(\mFl_{P}^{\pdR/L,+}/G^{+}) \) generated by \( h_{\natural}^{+}(V)\{n\} \) for \( V\in  \Vect(\mFl_{P}/G) \) and \( n\in\ZZ \).

We define the \emph{parabolic BGG category \( \mscr{O}_{P} \)} to be the full \emph{stable} subcategory of \( \QCoh(\mFl_{P}^{\pdR/L}/G) \) generated by \( h_{\natural}(V) \) for \( V\in \Vect(\mFl_{P}/G) \).
\end{dfn}

Note that \( \mFl_{P}/G\cong BP \), which admits a natural t-structure. Let \( \Rep(P)^{\heartsuit} \) denote the heart of the natural \( t \)-structure, and \(  \Rep(P)^{\heartsuit,\omega} \) denote the subcategory of compact objects. Then \( \Vect(\mFl_{P}/G)\cong \Rep(P)^{\heartsuit,\omega} \), which is generated by \( \Rep(M)^{\heartsuit,\omega} \).
\begin{lem}\label{lemRelateClassicalCateO}
(1) 
For any \( V\in\QCoh(\mFl_{P}/G)\cong\Rep(P) \), we have \[
(h^{+})^{*}h^{+}_{\natural}(V)\cong U(\fg)\otimes_{U(\p)}V,
\] where the RHS is regarded as a filtered \( P \)-representation by the diagonal action, where \( U(\fg) \)  is endowed with the adjoint action of \( P \) and the degree filtration as in Lemma \ref{lemRealizeLieRep}.

Moreover, the monad \( (h^{+})^{*}h^{+}_{\natural} \) is induced by the bi-\( U(\p) \)-algebra 
structure of \( U(\fg) \), and \[
\QCoh(\mFl_{P}^{\pdR/L,+}/G)\cong \Mod_{(h^{+})^{*}h^{+}_{\natural}}(\Rep(P)).
\]

(2)
Consider the map \( f:B\hat{\fg}^{+}\cong G^{\pdR/L,+}/G^{+}\to \mFl_{P}^{\pdR/L,+}/G^{+} \).
Then via the equivalence in Lemma \ref{lemRealizeLieRep} (2), for \( V\in \Rep(M) \), 
\[ f^{*}(h_{\natural}^{+}(V))\cong U(\fg)\otimes_{U(\p)}V. \]
where the RHS is equipped with the filtration induced from the degree filtration on \( U(\fg) \) as in Lemma \ref{lemRealizeLieRep}.
Moreover, for \( V,V'\in\Rep(M) \) and \( n\in\ZZ \), 
\begin{align*}&f^{*}:
\pi_{0}
\Hom_{\mFl_{P}^{\pdR/L,+}/G^{+}}(h^{+}_{\natural}(V)\{n\},h^{+}_{\natural}(V'))\\&
\cong \pi_{0}\Hom_{\Mod_{U(\fg)}(\Fil^{\ZZ}(D(L_{\square})))}(U(\fg)\otimes_{U(\p)}V\{n\},U(\fg)\otimes_{U(\p)}V').
\end{align*}

In particular,
\( f^{*} \) induces a conservative functor from \( \mscr{O}_{P} \) to the stable subcategory of \( \Rep(\fg) \) generated by the usual parabolic BGG category (see for example \cite[\S 9.3]{Humphreys2008RosL}), which is compatible with the action of \( Z(\fg) \) provided by Lemma \ref{lemInfiniteAction}.
\end{lem}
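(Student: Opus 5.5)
The plan is to compute the monad $(h^{+})^{*}h^{+}_{\natural}$ by base change, just as in the proofs of Lemma \ref{lemDRcomplexVSpushforward} and Lemma \ref{lemRealizeLieRep} (2). Then (2) follows by transporting everything along $f$.

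\textbf{Part (1).} First, $h^{+}$ is cohomologically smooth and a $!$-surjection. This holds because its pull-back along $\mFl_{P}^{\pdR/L,+}\to \mFl_{P}^{\pdR/L,+}/G^{+}$ is $\mFl_{P}^{+}\to \mFl_{P}^{\pdR/L,+}$, which is a suave $!$-cover by Lemma \ref{lemBasicLogDRSta} (1) and Theorem \ref{thmVariousDescent} (4).
\begin{itemize}
\item Barr--Beck--Lurie together with \cite[Proposition 3.1.27]{Juan2024analyticdeRham} then give $\QCoh(\mFl_{P}^{\pdR/L,+}/G)\cong \Mod_{(h^{+})^{*}h^{+}_{\natural}}(\Rep(P))$, filtered via $[\mA^{1}/\GG_{m}]$.
\item By smooth base change (Lemma \ref{lemSmBaseChange} (1)), $(h^{+})^{*}h^{+}_{\natural}\cong p_{2,\natural}p_{1}^{*}$ for the groupoid $\mFl_{P}^{+}/G^{+}\times_{\mFl_{P}^{\pdR/L,+}/G^{+}}\mFl_{P}^{+}/G^{+}$.
\item Using $\mFl_{P}/G\cong BP$ and Lemma \ref{lemQuotientClosedSubgorup} with $H=P$, we get $\mFl_{P}^{\pdR/L,+}/G\cong [\mA^{1}/\GG_{m}]/P^{\pdR/G,+}$. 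This groupoid becomes $P^{+}\backslash P^{\pdR/G,+}/P^{+}$.
\item By Lemma \ref{lemClosedImmeAlgDRFil}, $P^{\pdR/G,+}$ is the formal completion of $G^{+}$ along $P^{+}$. Hence $P^{\pdR/G,+}/P^{+}\cong (P\backslash G)^{\wedge}_{e}$, the filtered formal neighbourhood of the base point, whose functions are filtered dual to $U(\fg)\otimes_{U(\p)}L$.
\item Imitating Step (A) of Lemma \ref{lemDRcomplexVSpushforward}: write the formal neighbourhood as a colimit of finite flat infinitesimal neighbourhoods and dualize level by level. This yields $p_{2,\natural}p_{1}^{*}(V)\cong U(\fg)\otimes_{U(\p)}V$, with the degree filtration. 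The monad structure is the convolution on the groupoid, which dualizes to the bi-$U(\p)$-algebra structure on $U(\fg)$, exactly as in Step (B).
\item The $P$-action on the result is the diagonal one, with $P$ acting on $U(\fg)$ by the adjoint action. This is because the groupoid is formed $P$-equivariantly, via conjugation on $\hat{\fg}^{+}$.
\end{itemize}

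\textbf{Part (2).} Consider the cartesian square formed by $f$, $h^{+}$, the map $h_{0}:[\mA^{1}/\GG_{m}]\to B\hat{\fg}^{+}$, and the fibre product. This fibre product is $G^{+}\times_{\mFl^{\pdR,+}}\mFl^{+}$ modulo $G^{+}$, which should identify with $[\mA^{1}/\GG_{m}]\times_{B\hat{\fg}^{+}}(\ldots)\cong B\hat{\p}^{+}$-type data.
\begin{itemize}
\item Since $h^{+}$ is cohomologically smooth, base change for $\natural$ (Lemma \ref{lemSmBaseChange} (1)) gives $f^{*}h^{+}_{\natural}\cong h'_{\natural}f'^{*}$.
\item I then claim $f^{*}h^{+}_{\natural}(V)$ is the induction from the subgroupoid $\hat{\p}^{+}$ to $\hat{\fg}^{+}$. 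Under Lemma \ref{lemRealizeLieRep} (2) this is $U(\fg)\otimes_{U(\p)}V$; concretely, it is computed by $h_{0}^{*}$ of the same expression as in (1).
\item For the Hom-statement, apply adjunction on both sides. The left side becomes $\Hom_{\Rep(P)}(V\{n\},U(\fg)\otimes_{U(\p)}V')$ on filtered objects, using (1). The right side becomes $\Hom_{U(\p)}(V\{n\},U(\fg)\otimes_{U(\p)}V')$.
\item The comparison map is restriction from $P$-equivariant to $\p$-equivariant maps. Since $P=M\ltimes N$ with $N$ unipotent and connected in characteristic $0$, and $V,V'$ are algebraic $M$-representations, $\pi_{0}$ of $P$-equivariant maps agrees with $\p$-equivariant maps. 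The reason is that the modules are sums of finite-dimensional algebraic $P$-representations, on which $\p$-invariants equal $P$-invariants. This is the step I expect to be the main obstacle: it needs care with $\pi_{0}$ versus higher Ext, and it must pass through the filtered pieces, which are finite-dimensional and $P$-stable.
\end{itemize}

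\textbf{Conservativity and compatibility.} Conservativity on $\mscr{O}_{P}$ follows from the computation above: the generators map to generalized Verma modules, and on the stable subcategory they generate, $f^{*}$ is fully faithful on $\pi_{0}$ of the generators' Homs. Compatibility with $Z(\fg)$ follows from naturality in $U$ in Lemma \ref{lemInfiniteAction}, applied to the $G$-equivariant map $G\to \mFl_{P}$.
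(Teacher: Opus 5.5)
The step that fails is the conservativity claim at the end. You derive it from $f^{*}$ being bijective on $\pi_{0}\Hom$ between the generators $h^{+}_{\natural}(V)\{n\}$, but that does not pass to the stable subcategory they generate. The five-lemma argument that propagates from generators to cones and iterated extensions needs $f^{*}$ to be bijective on $\pi_{0}\Hom(A,B[i])$ for all $i$, and this already fails for $i=1$. Take $\SL_{2}$ and $P=B$, and induce the self-extension of $L_{\lambda}$ on which $\mfk{t}$ acts by a Jordan block and $\fn$ by $0$. This gives a nonzero class in $\mrm{Ext}^{1}$ of $U(\fg)\otimes_{U(\mfk{b})}L_{\lambda}$ with itself in filtered $U(\fg)$-modules. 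On the stack side, however, (1) and adjunction give $\pi_{0}\Hom(h^{+}_{\natural}(L_{\lambda}),h^{+}_{\natural}(L_{\lambda})[1])=\mrm{Ext}^{1}_{B}(L_{\lambda},\Fil^{0}(U(\fg)\otimes_{U(\mfk{b})}L_{\lambda}))=\mrm{Ext}^{1}_{B}(L_{\lambda},L_{\lambda})=0$. So your argument does not give conservativity. The correct argument is the paper's one line: $f$ is a $!$-surjection. Indeed, the $!$-surjection $[\mA^{1}/\GG_{m}]\to BP^{+}=\mFl_{P}^{+}/G^{+}\xrightarrow{h^{+}}\mFl_{P}^{\pdR/L,+}/G^{+}$ (you already showed $h^{+}$ is one) factors through $f$, via the map $[\mA^{1}/\GG_{m}]\to B\hat{\fg}^{+}$ induced by $G^{+}\to G^{\pdR/L,+}$. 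Hence $f^{*}$ is conservative on all of $\QCoh$ by Theorem \ref{thmVariousDescent} (6).

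The rest is sound, though it partly takes a different route.
In (1) you compute $(h^{+})^{*}h^{+}_{\natural}$ directly by dualizing the infinitesimal neighbourhoods of the base point of $\mFl_{P}$, as in Steps (A)--(B) of Lemma \ref{lemDRcomplexVSpushforward}; that is, you realize generalized Verma modules as distributions supported at the base point.
The paper proves (2) first. From $P\cong G\times_{\mFl_{P}}\{\infty\}$ it gets the Cartesian square with $\alpha\colon B\hat{\p}^{+}\cong G^{\pdR/\mFl_{P},+}/G^{+}\to B\hat{\fg}^{+}$, so that $f^{*}h^{+}_{\natural}(V)\cong\alpha_{\natural}(V)=U(\fg)\otimes_{U(\p)}V$. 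This is exactly how your ``$B\hat{\p}^{+}$-type data'' should be made precise.
The paper then deduces $P^{\pdR/G,+}/P^{+}\cong\hat{\fg}^{+}/\hat{\p}^{+}$ and reads off both the value and the monad structure in (1) from Lemma \ref{lemRealizeLieRep} applied to $\fg$ and $\p$. Your route is more self-contained; the paper's avoids redoing the duality computation.
For the $\pi_{0}\Hom$ comparison, your elementary argument (comparing $P$- and $\p$-invariants on the finite-dimensional $P$-stable filtered pieces) is a valid substitute for the paper's appeal to Faltings. What it actually uses is that $P$ is connected, equivalently that $M$ is, which is automatic for $G$ connected reductive; connectedness of $N$ alone is not enough.
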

\begin{rmk}
Comparing with the usual parabolic BGG category, we are putting the additional structure of an algebraic action of \( P \).
\end{rmk}
\begin{proof}
We start with (2).
Consider the Cartesian diagram
\begin{equation}\label{equationCartePtoG}
\begin{tikzcd}
G^{\pdR/\mFl_{P},+}/G^{+}\arrow[r,"f'"]\arrow[d,"\alpha"]
& \mFl_{P}^{+}/G^{+}\arrow[d,"h^{+}"]\\
G^{\pdR/L,+}/G^{+}
\arrow[r,"f"]
& \mFl_{P}^{\pdR/L,+}/G^{+}.
\end{tikzcd}
\end{equation}
Note that we have \( P\cong G\times_{\mFl_{P}}\{\infty\} \),
and thus \( P^{+}\cong P^{\pdR/L,+}\times_{G^{\pdR/\mFl_{P},+}} G^{+} \). 
Therefore, \begin{equation}\label{eqRelaDRquatoG}
G^{\pdR/\mFl_{P},+}/G^{+}\cong P^{\pdR/L,+}/P^{+}\cong B\hat{\p}^{+}\xrightarrow{\alpha}B\hat{\fg}^{+}\cong G^{\pdR/L,+}/G^{+}.
\end{equation} Then by Lemma \ref{lemSmBaseChange} (1), it suffices to \( \alpha_{\natural}(V)\cong U(\fg)\otimes_{U(\p)}V \). This is clear since \( \alpha^{*} \) corresponds to the forgetful map by the proof of Lemma \ref{lemRealizeLieRep}.

The identification of \( \pi_{0}\Hom \)
is given by \cite[Theorem 2]{Faltings1983CohoOfLocSymmetric}, for which we use Lemma \ref{lemDiffOpeToLogStack} to relate the LHS to classical differential operators.
Concretely, by adjunction, it boils down to 
\begin{align*}
\pi_{0}
\Hom_{\Fil^{\ZZ}(\Rep(P))}(V\{n\},U(\fg)\otimes_{U(\p)}V')\\
\cong \pi_{0} \Hom_{\Fil^{\ZZ}(\Rep(\p))}(V\{n\},U(\fg)\otimes_{U(\p)}V').
\end{align*}

Finally, \( f^{*} \) is conservative since \( f \) is a \( ! \)-surjection,
and \( f^{*} \) is compatible with the action of \( Z(\fg) \) by the functoriality in Lemma \ref{lemInfiniteAction}. 

For (1), by Lemma \ref{lemQuotientClosedSubgorup}, we have \( \mFl_{P}^{+}/G^{+}\cong [\mA^{1}/\GG_{m}]_{L}/P^{\pdR/G,+} \). Thus \[
\mFl_{P}^{+}/G^{+}\times_{\mFl_{P}^{\pdR/L,+}/G^{+}} \mFl_{P}^{+}/G^{+}
\cong P^{+}\backslash P^{\pdR/G,+}/P^{+}
\cong P^{+}\backslash (P^{\pdR/G,+}/P^{+}),
\] where in the second term, the two \( P^{+} \) act by left and right multiplication, and in the third term, the action of \( P^{+} \) is induced by the adjoint action.

By (\ref{equationCartePtoG}), (\ref{eqRelaDRquatoG}) and Lemma \ref{lemRealizeLieRep}, we have a \( P^{+} \)-equivariant isomorphism
\( \hat{\fg}^{+}/\hat{\p}^{+}\cong P^{\pdR/G,+}/P^{+} \), 
where the actions of \( P^{+} \) on the both sides 
are induced by the adjoint action. Thus we have a Cartesian diagram \[
\begin{tikzcd}
P^{+}\backslash (\hat{\fg}^{+}/\hat{\p}^{+})\arrow[r,"s_{1}"]\arrow[d,"s_{2}"] &
BP^{+}\arrow[d,"h^{+}"]\\
BP^{+}\arrow[r,"h^{+}"]& \mFl_{P}^{\pdR/L,+}/G^{+},
\end{tikzcd}
\] where \( s_{1} \) and \( s_{2} \) are 
 compositions \[ s_{i}:P^{+}\backslash (\hat{\fg}^{+}/\hat{\p}^{+})\xrightarrow{u} [\mA^{1}/\GG_{m}]_{L}/(P^{+}\ltimes \hat{\p}^{+})\xrightarrow{\bar{s}_{i}} BP^{+}, \]
where the first map is induced by the structure map \( \hat{\fg}^{+}\to [\mA^{1}/\GG_{m}]_{L} \), and the second map is induced by \( \bar{s}_{i}:P^{+}\ltimes \hat{\p}^{+}\to P^{+} \) for \( i=1,2 \), with \( \bar{s}_{1} \) being the projection to the first term, and \( \bar{s}_{2} \) being the multiplication.

Thus by Lemma \ref{lemSmBaseChange} (1), \[ (h^{+})^{*}h^{+}_{\natural}(V)\cong s_{1,\natural}s_{2}^{*}(V)\cong \bar{s}_{1,\natural}u_{\natural}u^{*}\bar{s}_{2}^{*}(V)\cong \bar{s}_{1,\natural}(u_{\natural}(\mO_{\hat{\fg}^{+}})\otimes V)\cong \bar{s}_{1,\natural}(U(\fg)\otimes V),\]
where the last isomorphism is given by Lemma \ref{lemRealizeLieRep}, and \( P^{+}\ltimes \hat{\p}^{+} \) acts on \( \hat{\fg}^{+} \) as in the quotient \( P^{+}\backslash (\hat{\fg}^{+}/\hat{\p}^{+}) \) and acts on \( V \) via \( \bar{s}_{2} \).
Consider the Cartesian diagram \[
\begin{tikzcd}
B\hat{\p}^{+}\arrow[d,"\bar{s}_{1}'"]
\arrow[r] & {[\mA^{1}/\GG_{m}]_{L}}/(P^{+}\ltimes \hat{\p}^{+})\arrow[d,"\bar{s}_{1}"]\\
{[\mA^{1}/\GG_{m}]_{L}}\arrow[r] & {[\mA^{1}/\GG_{m}]_{L}}/P^{+}.
\end{tikzcd}\]
Thus by Lemma \ref{lemRealizeLieRep} applied to \( \p \),
\( \bar{s}_{1,\natural}(U(\fg)\otimes V)\cong U(\fg)\otimes_{U(\p)}V \) as filtered \( P \)-representations.

As a result, applying \( (h^{+})^{*}h^{+}_{\natural} \) \( n \)-times to \( V \)
gives \[ \underbrace{U(\fg)\otimes_{U(\p)}U(\fg)\cdots\otimes_{U(\p)}U(\fg)}_{n-\text{times}}\otimes_{U(\p)}V. \] Putting \( V:=U(\p) \) (equipped with the degree filtration and the adjoint action), then the monad structure of \( (h^{+})^{*}h^{+}_{\natural} \) corresponds to a unique bi-\( U(\p) \)-algbra structure on \( U(\fg) \). Note that the objects involved in the latter are concentrated in degree \( 0 \). We can show that it has to be the natural bi-algebra structure of \( U(\fg) \) by comparing with the monad in Lemma \ref{lemRealizeLieRep}.

The last statement follows from Barr-Beck-Lurie theorem (\cite[Theorem 4.7.3.5]{Lurie2017HA}), the argument in \cite[Proposition 3.1.27]{Juan2024analyticdeRham} and Lemma \ref{lemBasicLogDRSta} (1).
\end{proof}



\begin{notation}
Let \( V=V_{\alpha}\in \Rep(M)^{\omega} \) be the irreducible representation of highest weight \( \alpha \).
We denote by \( \lambda_{V}=\lambda_{\alpha}:Z(\fg)\to L \) the character through which \( Z(\fg) \)
acts on \( U(\fg)\otimes_{U(\p)}V_{\alpha} \), which exists by \cite[Proposition 2.1]{Lepowsky1977470}. 
\end{notation}
\begin{lem}\label{lemCompareInfiniteAction}
Let \( V\in \Rep(M)^{\omega} \).
Then the map \( Z(\fg) \to \pi_{0}\End (h^{+}_{\natural}(V)) \) provided by Lemma \ref{lemInfiniteAction} factors through  \( \lambda_{V} \).
\end{lem}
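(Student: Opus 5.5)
The plan is to reduce the statement to the classical fact that \( Z(\fg) \) acts on the generalized Verma module \( U(\fg)\otimes_{U(\p)}V \) through \( \lambda_{V} \). We pull back along the \( ! \)-surjection \( f:B\hat{\fg}^{+}\to \mFl_{P}^{\pdR/L,+}/G^{+} \) of Lemma \ref{lemRelateClassicalCateO} (2). The first step is to use the naturality statement of Lemma \ref{lemInfiniteAction}. The map \( G\to \mFl_{P}=P\backslash G \) is \( G \)-equivariant for the right action. It therefore induces a map \( G^{+}\backslash G^{\pdR/L,+}\to G^{+}\backslash \mFl_{P}^{\pdR/L,+} \), which is exactly \( f \). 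By the naturality in \( U \), the \( Z(\fg) \)-action on \( \id_{\QCoh(\mFl_{P}^{\pdR/L,+}/G^{+})} \) is intertwined under \( f^{*} \) with the action on \( \id_{\QCoh(B\hat{\fg}^{+})} \). The specialization clause of Lemma \ref{lemInfiniteAction} identifies the latter with the tautological action of \( Z(\fg)\subset U(\fg) \) on \( U(\fg) \)-modules from Lemma \ref{lemRealizeLieRep} (2).

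The next step computes \( f^{*}h^{+}_{\natural}(V)\cong U(\fg)\otimes_{U(\p)}V \) with its degree filtration, using Lemma \ref{lemRelateClassicalCateO} (2). Under this identification, each \( z\in Z(\fg) \) acts on the left by multiplication. The cited result of Lepowsky then says this action is multiplication by \( \lambda_{V}(z) \), at least when \( V=V_{\alpha} \) is irreducible. For general \( V\in\Rep(M)^{\omega} \) the notation only defines \( \lambda_{V} \) in the irreducible case, so I would first reduce to that case by additivity in \( V \).

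It remains to transport this back to \( \pi_{0}\End(h^{+}_{\natural}(V)) \). Lemma \ref{lemRelateClassicalCateO} (2), applied with \( V'=V \) and \( n=0 \), says \( f^{*} \) induces an isomorphism
\[ \pi_{0}\End(h^{+}_{\natural}(V))\cong \pi_{0}\End_{\Mod_{U(\fg)}(\Fil^{\ZZ}(D(L_{\square})))}(U(\fg)\otimes_{U(\p)}V). \]
Hence \( z-\lambda_{V}(z) \) maps to zero on the right-hand side and is already zero on the left. This shows that \( Z(\fg)\to\pi_{0}\End(h^{+}_{\natural}(V)) \) factors through \( \lambda_{V} \).

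I expect the main obstacle to be checking that the naturality of Lemma \ref{lemInfiniteAction} really applies to \( f \), i.e.\ that the map \( G\to\mFl_{P} \) of spaces with right \( G \)-action induces exactly \( f \). One also has to confirm that the two structures agree: the convolution action by \( \delta_{\natural}(1) \) on \( B\hat{\fg}^{+} \), and left multiplication by \( Z(\fg) \) on the module \( U(\fg)\otimes_{U(\p)}V \) from Lemma \ref{lemRelateClassicalCateO}. The specialization clause in Lemma \ref{lemInfiniteAction} should settle this second point, since both identifications come from the same monad \( h^{*}h_{\natural} \) for \( h:[\mA^{1}/\GG_{m}]\to B\hat{\fg}^{+} \). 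Note that the left/right conventions for \( *_{l} \) and \( *_{r} \) do not matter here, because \( Z(\fg) \) is central.
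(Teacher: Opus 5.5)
Your proposal is correct and is essentially the paper's argument. The paper's proof simply cites Lemma \ref{lemRelateClassicalCateO}, whose part (2) already contains the identification $f^{*}h^{+}_{\natural}(V)\cong U(\fg)\otimes_{U(\p)}V$, the $\pi_{0}\Hom$-isomorphism, and the compatibility of $f^{*}$ with the $Z(\fg)$-action of Lemma \ref{lemInfiniteAction}, so the naturality check you flag as the main obstacle is already part of that lemma. One small precision: an element of $Z(\fg)$ of degree $\le n$ acts as a map $h^{+}_{\natural}(V)\{-n\}\to h^{+}_{\natural}(V)$, so you should apply the $\pi_{0}\Hom$-isomorphism of Lemma \ref{lemRelateClassicalCateO} (2) with the twist $\{-n\}$ for every $n$, not only with $n=0$.
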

\begin{proof}
This follows from Lemma \ref{lemRelateClassicalCateO}.
\end{proof}
\begin{cor}\label{corNoMapDiffInfinit}
Let \( V,V'\in \Rep(M)^{\omega} \).
If \( \lambda_{V}\ne \lambda_{V'} \), then 
\[ \RHom(h_{\natural}^{+}(V),h_{\natural}^{+}(V'))\cong 0. \]
\end{cor}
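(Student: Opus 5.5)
The plan is the standard central-character argument, carried out with the infinitesimal action of Lemma \ref{lemInfiniteAction} on \( Y:=\mFl_{P}^{\pdR/L,+}/G^{+} \). That lemma gives a filtered ring map \( Z(\fg)\to \End(\id_{\QCoh(Y)}) \). In particular, for any \( F,F'\in\QCoh(Y) \) and \( z\in Z(\fg) \), the two actions of \( z \) on \( \RHom(F,F') \) agree up to homotopy: precomposition with \( z_{F} \) and postcomposition with \( z_{F'} \). This is just naturality of \( z \) as an endomorphism of the identity functor, applied to every morphism \( F\to F' \).

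Take \( F=h^{+}_{\natural}(V) \) and \( F'=h^{+}_{\natural}(V') \).

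\begin{itemize}
\item By Lemma \ref{lemCompareInfiniteAction}, \( z \) acts on \( F \) through the scalar \( \lambda_{V}(z) \) at the level of \( \pi_{0}\End(F) \), and on \( F' \) through \( \lambda_{V'}(z) \).
\item Since \( \lambda_{V}\neq\lambda_{V'} \), choose \( z \) with \( c:=\lambda_{V}(z)-\lambda_{V'}(z)\neq0 \).
\item Then \( z_{F}-\lambda_{V}(z) \) is zero in \( \pi_{0}\End(F) \), hence null-homotopic, and likewise \( z_{F'}-\lambda_{V'}(z) \) is null-homotopic. So precomposition with \( z_{F} \) induces multiplication by \( \lambda_{V}(z) \) on the mapping spectrum, and postcomposition with \( z_{F'} \) induces multiplication by \( \lambda_{V'}(z) \).
\item Combining with the first paragraph, multiplication by \( c \) on \( \RHom(F,F') \) is homotopic to \( 0 \).
\end{itemize}

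Since \( c\in L^{\times} \) is invertible and \( \RHom(F,F') \) is \( L \)-linear, the identity of \( \RHom(F,F') \) is homotopic to zero. Hence \( \RHom(F,F')\simeq 0 \).

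The main obstacle is checking that the \( Z(\fg) \)-action of Lemma \ref{lemInfiniteAction} is genuinely a map into \( \pi_{0}\End(\id) \), i.e. natural with respect to all morphisms in \( \QCoh(Y) \) and not merely defined objectwise. This follows from its construction as the endomorphisms of the convolution kernel \( \delta_{\natural}(1) \): the functor \( \delta_{\natural}(1)*(-)\cong\id \) is functorial, so an endomorphism of the kernel yields a natural transformation of \( \id \).

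A second point is that Lemma \ref{lemCompareInfiniteAction} only controls \( \pi_{0} \). This suffices because we only need homotopy classes of the endomorphisms \( z_{F} \) and \( z_{F'} \): the induced self-maps of the mapping spectrum depend only on those homotopy classes.
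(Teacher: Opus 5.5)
Your argument is the paper's own argument. The paper works one class \(f\in\pi_m\) at a time, viewing it as \(h^{+}_{\natural}(V)\to h^{+}_{\natural}(V')[-m]\) and using the normalized element \((X-\lambda_V(X))/(\lambda_{V'}(X)-\lambda_V(X))\), which acts by \(0\) and \(1\); you instead show multiplication by \(c\) on the whole mapping spectrum is null. Your proof has the same gap as the paper's.

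The step that fails is treating \(z\in Z(\fg)\) as an endomorphism of \(\id_{\QCoh(Y)}\). Lemma \ref{lemInfiniteAction} only gives a \emph{filtered} map, and an element of \(\Fil^{-n}Z(\fg)\) acts by a natural transformation \(\id\{-n\}\to\id\). This is exactly how order-\(n\) differential operators correspond to maps out of a \(\{-n\}\)-twist in Lemma \ref{lemDiffOpeToLogStack} (2). Since \(\Fil^{0}Z(\fg)=L\) acts by the same scalar on every object, any \(z\) with \(\lambda_V(z)\neq\lambda_{V'}(z)\) has \(n\geq1\). Read in the filtered sense, Lemma \ref{lemCompareInfiniteAction} says \(z_F=\lambda_V(z)\cdot t^{n}\), where \(t^{n}\colon F\{-n\}\to F\) is the canonical map induced by \(\mO\{-1\}\hookrightarrow\mO\). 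Naturality then only gives \(c\,(f\circ t^{n})\simeq0\) for every \(f\colon F\to F'\): precomposition with \(t^{n}\) kills \(\RHom(F,F')\), which does not force \(f\simeq0\). Your bullet ``precomposition with \(z_F\) induces multiplication by \(\lambda_V(z)\)'' is where the twist is silently dropped.

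What the argument does prove is that \(\RHom(F,F')\) is \(t\)-power torsion. Equivalently, \(\RHom(h_{\natural}(V),h_{\natural}(V'))\simeq0\) on \(\mFl_{P}^{\pdR/L}/G\), where \(Z(\fg)\) acts by honest endomorphisms of the identity and your proof goes through verbatim.

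The statement for \(h^{+}_{\natural}\) is in fact false as written. By adjunction and Lemma \ref{lemRelateClassicalCateO} (1), with \(V\) carrying the trivial filtration,
\[
\RHom\bigl(h^{+}_{\natural}(V),h^{+}_{\natural}(V')\bigr)\simeq\RHom_{\Rep(P)}\bigl(V,\Fil^{0}(U(\fg)\otimes_{U(\p)}V')\bigr)=\RHom_{\Rep(P)}(V,V'),
\]
which does not see \(Z(\fg)\) at all. For a concrete case, take \(G=\GL_{2}\), \(P=B\) upper triangular, \(V\) trivial and \(V'\) the character of \(T\) on \(\fn\). Then \(\lambda_V\neq\lambda_{V'}\), yet \(\mathrm{Ext}^{1}_{B}(V,V')\neq0\): the adjoint action of \(B\) on \(\mathrm{span}(E_{11},E_{12})\) is a nonsplit extension of \(V\) by \(V'\).
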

\begin{proof}
Given any \( f\in \pi_{m}(\RHom(h_{\natural}^{+}(V),h_{\natural}^{+}(V'))) \), we claim that \( f\cong 0 \).
\( f \) corresponds to 
\( f:h_{\natural}^{+}(V)\to h_{\natural}^{+}V'[-m] \). 
Let \( X\in Z(\fg) \) such that \( \lambda_{V}(X)\ne \lambda_{V'}(X) \). Consider \[Y:= \frac{X-\lambda_{V}(X)}{\lambda_{V'}(X)-\lambda_{V}(X)}\in Z(\fg). \]
By Lemma \ref{lemInfiniteAction}, we have a commutative diagram \[
\begin{tikzcd}
h_{\natural}^{+}(V)\arrow[d,"Y"] \arrow[r,"f"]
& h_{\natural}^{+}(V'){[-m]}\arrow[d,"Y"]
\\
h_{\natural}^{+}(V) \arrow[r,"f"]
& h_{\natural}^{+}(V'){[-m]}.
\end{tikzcd}
\] On the other hand, \( Y|_{h_{\natural}^{+}(V)}=0 \in \pi_{0}\End(h^{+}_{\natural}(V))\) and \( Y|_{h_{\natural}^{+}(V')}=1 \in \pi_{0}\End(h^{+}_{\natural}(V'))\). This implies that \( f\cong 0 \), as desired.
\end{proof}
\begin{dfn}
For \( \lambda:\ZZ(\fg)\to L \), we define
\( \mscr{O}_{P,\lambda}^{+} \) to be the stable subcategory generated by \( h_{\natural}(V_{\alpha}) \) for 
\( \lambda_{\alpha}=\lambda \).
\end{dfn}
\begin{cor}\label{corDecomposeCatOInfiniteAction}
There is a decomposition \( \mscr{O}_{P}^{+} \cong \bigoplus_{\lambda}\mscr{O}_{P,\lambda}^{+} \). 

For \( M\in\mscr{O}_{P}^{+} \),
we denote its factor in \( \mscr{O}_{P,\lambda}^{+} \) by \( M^{\widehat{\lambda}} \), which we refer to as the generalized eigenspace for \( Z(\fg)=\lambda \). Then we have \( M\cong \bigoplus_{\chi}M^{\widehat{\chi}} \).
\end{cor}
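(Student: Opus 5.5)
The decomposition follows from the orthogonality in Corollary \ref{corNoMapDiffInfinit}, together with a standard argument for stable subcategories generated by mutually orthogonal families. Write \( \mscr{O}^{+}_{P,\lambda} \) for the stable subcategory generated by \( h^{+}_{\natural}(V_{\alpha})\{n\} \), \( n\in\ZZ \), \( \lambda_{\alpha}=\lambda \). The twist \( \{n\} \) is harmless: it is tensoring with a pulled-back line bundle from \( [\mA^{1}/\GG_{m}] \), and Lemma \ref{lemInfiniteAction} is \( \QCoh([\mA^{1}/\GG_{m}]) \)-linear, so \( Z(\fg) \) still acts on \( h^{+}_{\natural}(V)\{n\} \) through \( \lambda_{V} \). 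Corollary \ref{corNoMapDiffInfinit} therefore holds verbatim with twists inserted.

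\textbf{Step 1: generators.} I first reduce the generating set of \( \mscr{O}^{+}_{P} \) to irreducibles. Every \( V\in\Vect(\mFl_{P}/G)\cong\Rep(P)^{\heartsuit,\omega} \) has a finite filtration by \( P \)-subrepresentations whose graded pieces are irreducible \( M \)-representations (inflated along \( P\to M \)); one can take, for instance, the filtration by powers of \( \fn \)-action. Since \( h^{+}_{\natural} \) is exact, \( h^{+}_{\natural}(V) \) is an iterated extension of the \( h^{+}_{\natural}(V_{\alpha}) \). Hence \( \mscr{O}^{+}_{P} \) is generated as a stable category by the union over \( \lambda \) of the generators of \( \mscr{O}^{+}_{P,\lambda} \).

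\textbf{Step 2: orthogonality.} By Corollary \ref{corNoMapDiffInfinit} (twisted version) and exactness of \( \RHom \) in each variable, \( \RHom(A,B)\cong 0 \) whenever \( A\in\mscr{O}^{+}_{P,\lambda} \) and \( B\in\mscr{O}^{+}_{P,\lambda'} \) with \( \lambda\neq\lambda' \). I prove this by induction on the number of cones and shifts, first in \( A \) and then in \( B \).

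\textbf{Step 3: the decomposition.} Let \( \mscr{D} \) be the full subcategory of objects isomorphic to finite sums \( \bigoplus_{\lambda}M_{\lambda} \) with \( M_{\lambda}\in\mscr{O}^{+}_{P,\lambda} \). Step 2 gives
\[
\RHom\Bigl(\bigoplus_{\lambda} M_{\lambda},\bigoplus_{\lambda} N_{\lambda}\Bigr)\cong\bigoplus_{\lambda}\RHom(M_{\lambda},N_{\lambda}),
\]
so \( \mscr{D}\simeq\bigoplus_{\lambda}\mscr{O}^{+}_{P,\lambda} \). The category \( \mscr{D} \) contains all generators and is closed under shifts. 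It is also closed under cones: a map \( \bigoplus_{\lambda} M_{\lambda}\to\bigoplus_{\lambda} N_{\lambda} \) is diagonal, so its cone is \( \bigoplus_{\lambda}\mrm{cone}(M_{\lambda}\to N_{\lambda}) \). It is closed under retracts as well, if one wants \( \mscr{O}^{+}_{P} \) idempotent complete. Hence \( \mscr{D}=\mscr{O}^{+}_{P} \).

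For \( M\in\mscr{O}^{+}_{P} \), set \( M^{\widehat{\lambda}}:=M_{\lambda} \). This component is canonical, being the right (equivalently left) adjoint of the inclusion of \( \mscr{O}^{+}_{P,\lambda} \) evaluated on \( M \). The sum is finite because \( M \) is built from finitely many generators. The characterization as a generalized eigenspace comes from Lemma \ref{lemInfiniteAction}: on \( M_{\lambda} \), every \( X-\lambda(X) \) with \( X\in Z(\fg) \) acts nilpotently, since it vanishes on generators and nilpotence is preserved under extensions up to raising the order. Meanwhile \( X-\lambda'(X) \) is invertible on \( M_{\lambda} \) for \( \lambda'\neq\lambda \), which pins \( M_{\lambda} \) down as the generalized \( \lambda \)-eigenspace.

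The main obstacle is Step 1. One must verify that \( \Vect(\mFl_{P}/G) \), which consists of algebraic \( P \)-representations, is filtered with irreducible \( M \)-graded pieces (this uses that \( M \) is reductive in characteristic 0). One must also check that this filtration is compatible with the \( \{n\} \) twists and with the stable-generation convention.
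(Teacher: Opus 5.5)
Your argument follows the paper's route exactly: the paper's proof is the single line ``follows from Corollary \ref{corNoMapDiffInfinit}''. Your Steps 1 and 3 are the standard formal passage from orthogonal generators to a block decomposition, and Step 1, which you flag as the main obstacle, is in fact routine (filter \(V\) by \(\fn^{i}V\)).

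The step that fails is the orthogonality itself. You take it from Corollary \ref{corNoMapDiffInfinit}, extend it to twists, and reuse it for the nilpotence of \(X-\lambda(X)\). In \(\QCoh(\mFl_{P}^{\pdR/L,+}/G^{+})\) the centre does not act by endomorphisms. Lemma \ref{lemInfiniteAction} only gives a \emph{filtered} map \(Z(\fg)\to\End(\id)\), so \(z\in\Fil^{-d}Z(\fg)\) acts as a natural map \(F\{-d\}\to F\); on \(U(\fg)\otimes_{U(\p)}V\) it sends \(\Fil^{i}\) into \(\Fil^{i-d}\). Only \(\Fil^{0}Z(\fg)=L\) acts by filtered endomorphisms. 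On \(h^{+}_{\natural}(V)\) the element \(z\) acts as \(\lambda_{V}(z)\,t^{d}\). Hence for \(f\colon h^{+}_{\natural}(V)\to h^{+}_{\natural}(V')[m]\), naturality yields only \((\lambda_{V}(z)-\lambda_{V'}(z))\,f\circ t^{d}=0\): cross-block maps are \(t^{d}\)-torsion, not zero. The element \((X-\lambda_{V}(X))/(\lambda_{V'}(X)-\lambda_{V}(X))\) does not define an element of \(\pi_{0}\End(h^{+}_{\natural}(V))\) unless \(X\) is a scalar. Your \(\QCoh([\mA^{1}/\GG_{m}])\)-linearity remark is correct but beside the point, since the problem is that \(X\) itself shifts the filtration.

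This is a genuine failure, not a missing detail. Take \(G=\SL_{2}\), \(P=B\) upper triangular, \(V=L\) trivial and \(V'=\fn\) (the character \(\alpha\)). By Lemma \ref{lemRelateClassicalCateO} (1) and the adjunction \(h^{+}_{\natural}\dashv(h^{+})^{*}\),
\[
\RHom\bigl(h^{+}_{\natural}(V),h^{+}_{\natural}(V')\bigr)\cong\RHom_{\Rep(B)}\bigl(L,\Fil^{0}(U(\fg)\otimes_{U(\mfk{b})}\fn)\bigr)=\RHom_{\Rep(B)}(L,\fn)\cong L[-1].
\]
The nonzero class is the adjoint representation \(0\to\fn\to\mfk{b}\to\mfk{b}/\fn\to0\), which does not split; geometrically it is \(H^{1}(\mP^{1},\Omega^{1})\). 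Yet \(U(\fg)\otimes_{U(\mfk{b})}V\) and \(U(\fg)\otimes_{U(\mfk{b})}V'\) are the Verma modules of highest weights \(0\) and \(\alpha\), on which the Casimir acts by different scalars, so \(\lambda_{V}\neq\lambda_{V'}\). Consistently with the \(t\)-torsion statement above, this class dies after pushing into \(\Fil^{-1}\).

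So \(\mrm{Ext}^{1}\) between generators of \(\mscr{O}^{+}_{P,\lambda_{V}}\) and \(\mscr{O}^{+}_{P,\lambda_{V'}}\) is nonzero, and no decomposition \(\mscr{O}^{+}_{P}\cong\bigoplus_{\lambda}\mscr{O}^{+}_{P,\lambda}\) can exist. Corollary \ref{corNoMapDiffInfinit} already fails without twists, and the paper's one-line deduction inherits the problem. What your argument does prove, verbatim, is the unfiltered decomposition of \(\mscr{O}_{P}\subset\QCoh(\mFl_{P}^{\pdR/L}/G)\), where \(Z(\fg)\) acts by genuine endomorphisms. In the filtered category you only get that cross-block \(\RHom\)'s are killed by a power of \(t\).
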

\begin{proof}
This follows immediately from Corollary \ref{corNoMapDiffInfinit}.
\end{proof}

\subsubsection{BGG complexes on partial flag varieties}
\label{subsubsecBGGComplex}
We continue with the setting of \S \ref{subsecPartialFl} and \S \ref{subsubsecParBGGCatO}. We extend the field \( L \) such that \( G \) is split over \( L \). Let \( B\subset G \) be a Borel subgroup contained in \( P \), and let \( T\subset B \) be a split maximal torus of \( G \) contained in \( M \). 
Let \( W \) (resp. \( W_{M} \)) denote the absolute Weyl group of \( G \) (resp. \( M \)), and let \( {}^{M}W \) denote the set of Kostant representatives of \( W_{M}\backslash W \). Let \( w \) be the longest element in \( W \). 
We will moreover restrict ourselves to the following setting:
\begin{notation}\label{notationPartialFlagMinuscule}
Let \( \mu\in X_{*}(T) \) be a minuscule cocharacter. Let \( L\subset\CC \) be a number field.
We define \[P_{\mu,\CC}^{\std}:=\{g\in G_{\CC}:\lim_{t\to\infty}\mrm{Ad}(\mu(t))(g)\text{ exists}\},
\]
and 
\[P_{\mu,\CC}:=\{g\in G_{\CC}:\lim_{t\to 0}\mrm{Ad}(\mu(t))(g)\text{ exists}\},
\] both of which descends to \(L\), which we denote as \(P_{\mu}^{\std}\) and \(P_{\mu}\). Note that \(P_{\mu}^{std}= P_{\mu^{-1}}\).
Let \(M_{\mu}\) be their common Levi quotient, which is the centralizer of \(\mu\). Let \(N_{\mu}\) and \(N_{\mu}^{\std}\)
be the unipotent radical of \(P_{\mu}\)
and \(P_{\mu}^{\std}\) respectively.
We denote the corresponding Lie algebras as \( \p_{\mu}=\mm_{\mu}\oplus \fn_{\mu} \)
and \( \bar{\p}_{\mu}=\mm_{\mu}\oplus \bar{\fn}_{\mu} \) respectively.

We further denote \( \mFl_{G,\mu}:=\mFl_{P_{\mu}}:=P_{\mu}\backslash G  \)
and \( \mFl_{G,\mu}^{\std}:=P^{\std}_{\mu}\backslash G  \), which are schemes over \(L\) equipped with a right action of \(G\). 
\end{notation}
\begin{lem}\label{lemNmuSimpleLem}
Let \( \mu\in X_{*}(G) \) be a minuscule cocharacter.
For any root \( \alpha \), \( \alpha(\mu)\in\{-1,0,1\} \), and \( N_{\mu} \) is an abelian group scheme,
with \( \fn_{\mu} \) (resp. \( \hat{\fn}_{\mu} \))
consisting of \( \alpha \)-eigenspaces for \( \alpha(\mu)=1 \) (resp. \( =-1 \)).
\end{lem}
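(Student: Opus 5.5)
The plan is to reduce to the standard structure theory of minuscule cocharacters. After base change to an algebraically closed field (e.g. \(\CC\)), we may assume \(G\) is split with maximal torus \(T\) and \(\mu\in X_{*}(T)\). Then \(\fg=\mathfrak{t}\oplus\bigoplus_{\alpha}\fg_{\alpha}\), and \(\mrm{Ad}(\mu(t))\) acts on \(\fg_{\alpha}\) by \(t^{\langle\alpha,\mu\rangle}\). By definition, \(\mu\) minuscule means \(\langle\alpha,\mu\rangle\in\{-1,0,1\}\) for every root \(\alpha\). This is the first claim; if minuscule is instead defined via the weights of \(\mrm{Ad}\circ\mu\), it is the same statement.

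Next we identify \(\p_{\mu}\) from its definition. The condition that \(\lim_{t\to0}\mrm{Ad}(\mu(t))(g)\) exists cuts out the subgroup generated by \(T\) and the root groups \(U_{\alpha}\) with \(\langle\alpha,\mu\rangle\ge0\). Hence
\[
\p_{\mu}=\mathfrak{t}\oplus\bigoplus_{\langle\alpha,\mu\rangle\ge0}\fg_{\alpha},\qquad
\mm_{\mu}=\mathfrak{t}\oplus\bigoplus_{\langle\alpha,\mu\rangle=0}\fg_{\alpha},\qquad
\fn_{\mu}=\bigoplus_{\langle\alpha,\mu\rangle>0}\fg_{\alpha}.
\]
By the first claim, \(\fn_{\mu}\) is the sum of the eigenspaces with \(\langle\alpha,\mu\rangle=1\). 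Symmetrically, \(P_{\mu}^{\std}\) uses the limit as \(t\to\infty\), so \(\bar{\fn}_{\mu}\) (denoted \(\hat{\fn}_{\mu}\) in the statement) is the sum over \(\langle\alpha,\mu\rangle=-1\). The sign conventions are the only place where care is needed, and they follow directly from Notation \ref{notationPartialFlagMinuscule}.

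For commutativity, take \(\alpha,\beta\) with \(\langle\alpha,\mu\rangle=\langle\beta,\mu\rangle=1\). Then \([\fg_{\alpha},\fg_{\beta}]\subset\fg_{\alpha+\beta}\), and \(\langle\alpha+\beta,\mu\rangle=2\), so \(\alpha+\beta\) is not a root by the first claim. Also \(\alpha+\beta\ne0\), since its pairing with \(\mu\) is nonzero. Therefore \([\fn_{\mu},\fn_{\mu}]=0\).

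It remains to pass from the Lie algebra to the group. In characteristic \(0\), \(N_{\mu}\) is a connected unipotent group, and \(\exp\colon\fn_{\mu}\to N_{\mu}\) is an isomorphism of schemes. An abelian Lie algebra gives an abelian group: either use the Baker--Campbell--Hausdorff formula, which truncates to addition, or note that the commutator relations \([U_{\alpha},U_{\beta}]\subset\prod_{i,j>0}U_{i\alpha+j\beta}\) are trivial here because no \(i\alpha+j\beta\) with \(i,j>0\) is a root. Everything descends from \(\CC\) to \(L\), since commutativity can be checked after field extension.

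I expect no step to be a real obstacle. The only point needing attention is matching the limit direction \(t\to0\) versus \(t\to\infty\) with the sign of \(\langle\alpha,\mu\rangle\).
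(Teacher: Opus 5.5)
Your proposal is correct, and its commutativity step is the paper's argument. The paper also notes that \(\mrm{Ad}\circ\mu\) would act on \([X,X']\), for \(X,X'\in\fn_{\mu}\), with weight \(2\), and that weight does not occur in \(\fg\).

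The genuine difference is the first claim. You take \(\langle\alpha,\mu\rangle\in\{-1,0,1\}\) as the definition of minuscule. The paper's proof instead works with an order-theoretic notion, namely minimality of \(\mu\), and derives the bound. For a root \(\alpha\) it chooses \(i_{\alpha}\colon\mathfrak{sl}_{2}\hookrightarrow\fg\) with \(\alpha(i_{\alpha}(\mrm{diag}(1,-1)))=2\) and lifts it to \(\SL_{2}\to G\). It then observes that if \(\alpha(\mu)\ge 2\), the cocharacter \(\mu-\alpha^{\vee}\) is strictly smaller than \(\mu\), contradicting minimality.

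What each route buys:
\begin{itemize}
\item The paper's argument gives the lemma under the order-theoretic hypothesis. In effect it proves one direction of the equivalence between the two definitions.
\item Your route is shorter and uses exactly the hypothesis available in the application. For the Hodge cocharacter of a Shimura datum, Deligne's axiom already says that \(\mrm{Ad}\circ\mu\) has weights in \(\{-1,0,1\}\) on \(\fg\).
\end{itemize}

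Your write-up also supplies two points the paper leaves implicit. First, it gives the explicit root-space description of \(\p_{\mu}\), \(\mm_{\mu}\), \(\fn_{\mu}\) and \(\bar{\fn}_{\mu}\) from the two limit conditions, with the signs matched correctly; the paper only says this ``follows from the definition.'' Second, it passes from \([\fn_{\mu},\fn_{\mu}]=0\) to commutativity of the group scheme \(N_{\mu}\), via \(\exp\) (or the Chevalley commutator relations) in characteristic \(0\). The statement asserts this but the paper never spells it out.
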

\begin{proof}
Give a root \( \alpha \), one can find \( i_{\alpha}:\mfk{sl}_{2}\hookrightarrow \fg \), such that \( \alpha(i_{\alpha}(\mrm{diag}\{1,-1\}))=2 \). Then \( i \)
lifts to \( \SL_{2}\to G \).
If \( \alpha(\mu)\ge 2 \), then \( \mu-i_{\alpha}(\mrm{diag}\{1,-1\}) \) will be a strictly smaller cocharacter.
The description of \( \fn_{\mu} \) follows from the definition. For any \( X,X'\in \fn_{\mu} \), \( \mrm{Ad}\circ \mu \) acts on \( [X, X'] \) by \( 2 \), which implies that \( [X, X']=0 \).
\end{proof}
\begin{lem}\label{lemFilOnRep}\label{dfnGTorsorFilDRFl}
Let \( \mu\in X_{*}(G) \) be a minuscule cocharacter, and \( (V,\rho)\in\Rep(G) \). Then \( V \)
admits a unique \( P_{\mu} \)-stable filtration \( \Fil_{\mu}^{\bullet} \) such that \( \GG_{m} \) acts on \( (\gr^{i}V,\rho\circ \mu) \) via \( t\mapsto t^{i} \).

Moreover, for any \( X\in \fg \), the action of \( X \) on \( V \) induces \( \Fil^{i}_{\mu}(V)\to \Fil^{i-1}_{\mu}(V) \).

In particular, 
\( (V,\Fil^{\bullet}_{\mu})\in \Fil^{\ZZ}(\Rep(P_{\mu})) \) descends to \( \mFl_{G,\mu}^{\pdR/L,+}/G \), which we denote as \( \mcal{G}^{+}_{\mu}(V)\in \QCoh(\mFl_{G,\mu}^{\pdR/L,+}/G) \).
Let \( \mcal{G}^{+}_{\mu} \) be the associated \( G \)-torsor. Then  \( \mcal{G}_{\mu}:=\mcal{G}^{+}_{\mu}|_{\mFl_{G,\mu}^{\pdR/L}/G} \) is induced by the structure map \( \mFl_{G,\mu}^{\pdR/L}/G\to BG \).
\end{lem}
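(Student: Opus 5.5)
The argument has three parts. First, the existence of the filtration via the weight decomposition. Second, the Griffiths transversality property via Lemma \ref{lemNmuSimpleLem}. Third, the descent to the de Rham stack via Lemma \ref{lemRelateClassicalCateO} (1).

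\emph{Existence and uniqueness of the filtration.} The cocharacter $\rho\circ\mu:\GG_m\to \GL(V)$ gives a weight decomposition $V=\bigoplus_{j}V_j$, where $\GG_m$ acts on $V_j$ by $t\mapsto t^j$. Set $\Fil^i_\mu(V):=\bigoplus_{j\ge i}V_j$. This satisfies the requirement on graded pieces. To see that it is $P_\mu$-stable, recall that $P_\mu=M_\mu\ltimes N_\mu$. The centralizer $M_\mu$ preserves each $V_j$. For $\fn_\mu$: by Lemma \ref{lemNmuSimpleLem} (the convention is $\lim_{t\to 0}$), $\mrm{Ad}\mu(t)$ acts on $\fn_\mu$ by positive weights, hence by weight $1$, so $\fn_\mu V_j\subset V_{j+1}$. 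Since $N_\mu$ is unipotent and connected in characteristic $0$, it acts through $\exp$ of $\fn_\mu$, and therefore preserves $\Fil^i_\mu$. For uniqueness, suppose $F^\bullet$ is another $P_\mu$-stable filtration with the stated graded pieces. Being stable under $\mu(\GG_m)\subset P_\mu$, each $F^i$ is a sum of weight spaces. The condition on $\gr^i$ then forces $F^i=\bigoplus_{j\ge i}V_j$, by induction from the top (or bottom) index of the finitely many weights.

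\emph{Transversality.} By Lemma \ref{lemNmuSimpleLem}, $\fg=\fn_\mu\oplus\mm_\mu\oplus\bar\fn_\mu$, with $\mrm{Ad}\circ\mu$-weights $1$, $0$, $-1$ respectively. Hence $X\cdot V_j\subset V_{j-1}\oplus V_j\oplus V_{j+1}$ for every $X\in\fg$, and so $X\Fil^i_\mu\subset\Fil^{i-1}_\mu$.

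\emph{Descent.} By Lemma \ref{lemRelateClassicalCateO} (1), $\QCoh(\mFl^{\pdR/L,+}_{G,\mu}/G^{+})\cong\Mod_{U(\fg)\otimes_{U(\p)}-}(\Fil^{\ZZ}(\Rep(P)))$. Concretely, a descent datum on a filtered $P_\mu$-representation $W$ is a filtered map $U(\fg)\otimes_{U(\p)}W\to W$ satisfying the monad axioms. For $W=(V,\Fil^\bullet_\mu)$, we take the action map coming from the $\fg$-action on $V$. The algebra axioms and $P$-equivariance hold because $V$ is a genuine $G$-representation, and the action is compatible with $\p\subset\fg$. The map is filtered because $\Fil^{-n}U(\fg)\cdot\Fil^i_\mu\subset\Fil^{i-n}_\mu$, which follows from the transversality just proved. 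This defines $\mcal{G}^+_\mu(V)$. The construction is exact and symmetric monoidal in $V$: the tensor product filtration is the weight filtration of $V\otimes V'$, and the monad is compatible with tensor products via the coproduct on $U(\fg)$. So $\mcal{G}^+_\mu$ defines a $G$-torsor by Tannakian formalism. This is the step I expect to need the most care. I would check it on the cover $h^+$ and use $*$-descent (Lemma \ref{lemBasicFiltered}), where the comodule structures are explicit.

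Finally, we restrict to $[\GG_m/\GG_m]$, which forgets the filtration. The resulting object is $V$ with its $P$-action and the $U(\fg)$-module structure given by the $\fg$-action of $V$. By Lemma \ref{lemRelateClassicalCateO} (2) and the identification of $G$-representations with $\QCoh(BG)$, this is precisely the pullback of $V\in\QCoh(BG)$ along $\mFl^{\pdR/L}_{G,\mu}/G\to BG\cong G^{\pdR/L}/G^{\pdR/L}$. Indeed, that pullback has underlying $P$-representation $V|_{P}$ and $\fg$-action induced by differentiating the $G$-action. Compatibility of these two descriptions functorially in $V$ gives $\mcal{G}_\mu\cong$ the torsor induced from $BG$.
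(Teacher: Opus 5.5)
Your proposal is correct and follows the paper's proof step for step: the weight decomposition of $\rho\circ\mu$, $M_\mu$-stability together with $\fn_\mu$ raising weights by one via Lemma \ref{lemNmuSimpleLem}, transversality from $\fg=\p_\mu\oplus\bar\fn_\mu$, and descent through the monad description of Lemma \ref{lemRelateClassicalCateO} (1). Your uniqueness argument (which tacitly uses that the filtration is finite, exhaustive and separated) and your Tannakian and restriction-to-$[\GG_m/\GG_m]$ remarks only add detail the paper leaves implicit; note also that ``$BG\cong G^{\pdR/L}/G^{\pdR/L}$'' should simply read as the structure map $\mFl^{\pdR/L}_{G,\mu}/G\to\Spec(L)/G=BG$.
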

\begin{proof}
Consider \( (V,\rho\circ\mu)\in \Rep(\GG_{m})\cong \gr^{\ZZ}(D(L_{\square})) \) by Theorem \ref{thmFilterdatA1modGm}, which decomposes as \( \bigoplus_{i\in\ZZ}V_{i}  \), such that \( \GG_{m} \) acts on \( V_{i} \)
by \( t\mapsto t^{i} \).
We then put \( \Fil_{\mu}^{i}(V):=\bigoplus_{i'\ge i}V_{i} \). It suffices to show that \( \Fil_{\mu}^{i}(V) \) is \( P_{\mu} \)-stable. Note that \( V_{i}\subset V \)
is \( M_{\mu} \)-stable since \( M_{\mu} \) commutes with \( \mu \). To show that it is \( N_{\alpha} \)-stable, since \( N_{\alpha} \) is connected, it suffices to consider the \( \fn_{\alpha} \)-action, which takes \( V_{i} \) to \( V_{i+1} \) by Lemma \ref{lemNmuSimpleLem}. 
Now for the action of \( \fg \), we have \( \fg=\p_{\mu}\oplus \bar{\fn}_{\mu} \), \( \p_{\mu} \) preserves \( \Fil^{\bullet}_{\mu} \), and \( \bar{\fn}_{\mu} \)
takes \( \Fil_{\mu}^{i} \)
to \( \Fil_{\mu}^{i-1} \) by Lemma \ref{lemNmuSimpleLem}.
In other words, \( U(\fg)\otimes_{U(\p)}V\to V \) can be upgraded to a filtered morphism for \( \Fil^{\bullet}_{\mu} \)
on \( V \) and the degree filtration on \( U(\fg) \).
The last part thus follows from the description of \( \QCoh(\mF^{\pdR/L,+}_{G,\mu}/G) \)
in Lemma \ref{lemRelateClassicalCateO} (1).
\end{proof}


\begin{notation}
For \( \alpha\in X^{*}(T) \) \( G \)-dominant, let \( V_{\alpha}\in \Rep(G) \) the corresponding representation with highest weight \( \alpha \).
Similarly, for \( \alpha\in X^{*}(T) \) \( M \)-dominant, let \( W_{\alpha}\in \Rep(M) \) the corresponding representation with highest weight \( \alpha \).
Let \( \rho \) denote the half sum of positive roots of \( G \).
We define the dotted action of \( W \) on \( X^{*}(T) \) by \( w\cdot \alpha=w(\alpha+\rho)-\rho \).
\end{notation}

\begin{prop}[Filtered BGG complex]\label{propFilterBGG}
Let
\( \alpha\in X^{*}(T) \) be a \( G \)-dominant character, and let \( \mu\in X_{*}(G) \) be a minuscule cocharacter. Consider \( \mcal{G}^{+}_{\mu}(V_{\alpha})\in \QCoh(\mFl_{G,\mu}^{\pdR/L,+}/G) \) in Lemma \ref{dfnGTorsorFilDRFl}.

Then \( \mcal{G}^{+}_{\mu}(V_{\alpha})\in\mscr{O}_{P}^{+} \),
and it is isomorphic to a so-called \emph{BGG complex} \[
\mrm{BGG}_{\alpha}^{+}:=
\left[L_{\dim(\fn)}\to \cdots \to L_{1}\to L_{0} \right],
\] with \( L_{i}\cong \bigoplus_{w\in {}^{M}W,l(w)=i}h_{\natural}^{+}(W_{w\cdot \alpha})\{(w\cdot\alpha)(\mu)\} \) in cohomological degree \( -i \).
\end{prop}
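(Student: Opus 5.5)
Via Lemma \ref{lemRelateClassicalCateO} (1), $\QCoh(\mFl_{G,\mu}^{\pdR/L,+}/G^{+})$ is the category of modules over the monad $(h^{+})^{*}h^{+}_{\natural}$ on $\Fil^{\ZZ}(\Rep(P_{\mu}))$, given by $U(\fg)\otimes_{U(\p)}(-)$ with the degree filtration. So the proof is representation theory, namely the classical (generalized) BGG resolution of Lepowsky, made filtered and $P_{\mu}$-equivariant. By Lemma \ref{lemFilOnRep}, $\mcal{G}^{+}_{\mu}(V_{\alpha})$ corresponds to $(V_{\alpha},\Fil^{\bullet}_{\mu})$ with its $U(\fg)$-module structure; the action is filtered by Lemma \ref{lemFilOnRep}. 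The objects $h^{+}_{\natural}(W_{\beta})\{n\}$ correspond to the induced modules $U(\fg)\otimes_{U(\p)}W_{\beta}\{n\}$.

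\textbf{Step 1 (filtered generalized Verma modules).} For $w\in{}^{M}W$, the weight $w\cdot\alpha$ is $M$-dominant. Since $\mu$ is central in $M_{\mu}$, it acts on $W_{w\cdot\alpha}$ by the scalar $(w\cdot\alpha)(\mu)$. I would check that placing $W_{w\cdot\alpha}$ in filtration degree $(w\cdot\alpha)(\mu)$ (the twist $\{(w\cdot\alpha)(\mu)\}$) and using the degree filtration on $U(\bar{\fn}_{\mu})$ gives a filtration on $U(\fg)\otimes_{U(\p)}W_{w\cdot\alpha}\cong U(\bar{\fn}_{\mu})\otimes W_{w\cdot\alpha}$ that agrees with the $\mu$-weight filtration. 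This works because $\bar{\fn}_{\mu}$ has $\mu$-weight $-1$ by Lemma \ref{lemNmuSimpleLem}, so $\Sym^{k}\bar{\fn}_{\mu}\otimes W$ sits in $\mu$-weight $(w\cdot\alpha)(\mu)-k$. In other words, on these induced modules the filtration is just the $\mu$-grading, read via Theorem \ref{thmFilterdatA1modGm} through $\GG_{m}$-weights. The same holds for $V_{\alpha}$ with $\Fil^{\bullet}_{\mu}$.

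\textbf{Step 2 (lift the BGG differentials).} Lepowsky's generalized BGG resolution gives an exact complex
\[
0\to M_{\dim\fn}\to\cdots\to M_{0}\to V_{\alpha}\to 0,\qquad M_{i}=\bigoplus_{l(w)=i}U(\fg)\otimes_{U(\p)}W_{w\cdot\alpha},
\]
of $(\fg,P_{\mu})$-modules. The maps are $P_{\mu}$-equivariant, since they come from $M_{\mu}$-equivariant embeddings of generalized Verma modules compatible with the $\mu$-grading. Each differential commutes with $\mu$, hence preserves $\mu$-weights, hence is filtered by Step 1. By Lemma \ref{lemRelateClassicalCateO} (2), together with the full faithfulness furnished by Faltings, these $\pi_{0}$-morphisms lift uniquely to morphisms
\[
h^{+}_{\natural}(W_{w\cdot\alpha})\{(w\cdot\alpha)(\mu)\}\to h^{+}_{\natural}(W_{w'\cdot\alpha})\{(w'\cdot\alpha)(\mu)\}.
\]

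\textbf{Step 3 (assemble the complex).} This is the main obstacle: in the stable $\infty$-category, the strict complex must be assembled into the totalization $\mrm{BGG}^{+}_{\alpha}$, and we need a map to $\mcal{G}^{+}_{\mu}(V_{\alpha})$. I would handle the higher coherences by weight arguments. The $\RHom$'s between the $h^{+}_{\natural}(W)\{n\}$ are computed by $\RHom_{\Fil(\Rep(P))}(W\{n\},U(\fg)\otimes_{U(\p)}W')$. Since all objects are $\mu$-graded and $\Rep(P)$ has the Levi-type behaviour, I expect the relevant higher Ext groups between consecutive terms to vanish; the mechanism would be that $\fn$-cohomology weights are controlled by the linkage principle, as in Corollary \ref{corNoMapDiffInfinit}. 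Then the obstruction classes for building a Postnikov tower vanish. Alternatively, and more cleanly, I would note that the exact complex above lives in the abelian heart $\Fil^{\ZZ}(\Rep(P_{\mu}))^{\heartsuit}$ of modules over the monad. This works because all objects are discrete, degreewise strict, and filtration-complete. A bounded exact complex in the heart gives $V_{\alpha}\simeq$ its totalization directly.

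\textbf{Step 4 (identify the filtered object).} Exactness on the filtered level is checked after applying $\gr^{\bullet}$ (Corollary \ref{corFilterdatA1modGm}). Since the filtration is the $\mu$-grading (Step 1), exactness on each graded piece is simply weight-space exactness of Lepowsky's resolution. This yields $\mcal{G}^{+}_{\mu}(V_{\alpha})\cong\mrm{BGG}^{+}_{\alpha}$. Membership in $\mscr{O}^{+}_{P}$ then follows because $\mscr{O}^{+}_{P}$ is stable and contains every $L_{i}$.
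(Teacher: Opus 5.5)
Your proposal is correct, but it takes a different route from the paper.

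\textbf{The paper's route.} The paper never uses Lepowsky's resolution as a black box. It starts from the filtered Spencer resolution of \(\mO_{\mFl_{G,\mu}^{\pdR/L,+}}\) by the objects \(h^{+}_{\natural}(\wedge^{i}\Omega^{1,\vee}_{\mFl_{G,\mu}/L})\{-i\}\). This resolution already exists in the stable category by Lemma \ref{lemDRcomplexVSpushforward} and Remark \ref{rmkResolveObyDX}. The paper then does three things:
\begin{enumerate}
\item It projects onto the \(\lambda_{0}\)-block of the decomposition \(\mscr{O}^{+}_{P}\cong\bigoplus_{\lambda}\mscr{O}^{+}_{P,\lambda}\). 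This decomposition comes from the geometrically constructed \(Z(\fg)\)-action (Lemma \ref{lemInfiniteAction}, Corollary \ref{corDecomposeCatOInfiniteAction}).
\item It identifies the surviving summands by Kostant's theorem, following Faltings.
\item It handles general \(\alpha\) by tensoring with \(\mcal{G}^{+}_{\mu}(V_{\alpha})\) and projecting onto the \(\lambda_{\alpha}\)-block, i.e.\ by translation functors, using the conservativity in Lemma \ref{lemRelateClassicalCateO}.
\end{enumerate}
In effect the paper reproves the generalized BGG resolution in the filtered \(P_{\mu}\)-equivariant stacky setting. Every object is produced by functors on the stable category (cofibers and a block projection), so no coherence problem ever arises.

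\textbf{Your route.} You import Lepowsky's exactness and his explicit differentials. The filtered compatibility is then transparent: because \(\mu\) is minuscule, the degree filtration on \(U(\fg)\otimes_{U(\p)}W_{w\cdot\alpha}\{(w\cdot\alpha)(\mu)\}\) is exactly its \(\mu\)-weight filtration. Hence every \(\fg\)-map is filtered, and exactness can be checked weight by weight. The \(P_{\mu}\)-equivariance of Lepowsky's maps holds because \(P_{\mu}\) is connected, so a \(\fg\)-map between \((\fg,P_{\mu})\)-modules is automatically \(P_{\mu}\)-equivariant.

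\textbf{Comments on Step 3.}
\begin{itemize}
\item Your first suggestion is misdirected. The obstructions to totalizing a complex of objects \(M_{i}\) lie in \(\Hom(M_{i}[k],M_{j})\) for \(k>0\), i.e.\ in negative Ext groups, not positive ones. No linkage argument is needed.
\item Your second suggestion is the right one, but it needs a sentence of justification. The monad \(T=U(\fg)\otimes_{U(\p)}(-)\) is t-exact for the pointwise t-structure on \(\Fil^{\ZZ}(\Rep(P_{\mu}))\), because by PBW the Rees algebra of \(U(\fg)\) is free over that of \(U(\p)\).
\item Consequently \(\Mod_{T}\) inherits a t-structure. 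Its heart is the ordinary category of (generalized) filtered \((\fg,P_{\mu})\)-modules, mapping spaces between heart objects are discrete, and short exact sequences in the heart are cofiber sequences.
\item Splitting Lepowsky's exact complex into short exact sequences then gives a filtration of \(\mcal{G}^{+}_{\mu}(V_{\alpha})\) with graded pieces \(L_{i}[i]\), and membership in \(\mscr{O}^{+}_{P}\) follows.
\item With this argument, the appeal to Lemma \ref{lemRelateClassicalCateO}~(2) and Faltings' full faithfulness in Step~2 becomes unnecessary. Strictness and completeness of the filtrations play no role either.
\end{itemize}

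\textbf{Trade-off.} The paper's route is self-contained within its framework and needs only Kostant plus translation. Yours is more explicit about the differentials, but it relies on Lepowsky's theorem, whose classical proof is essentially the Koszul-projection argument the paper carries out.
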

\begin{rmk}
The result differs from \cite{Faltings1983CohoOfLocSymmetric}, because \cite{Faltings1983CohoOfLocSymmetric} fixes the Borel \( B\supset P_{\mu}^{\std} \), while we are fixing \( B\supset P_{\mu} \).
\end{rmk}
\begin{proof}
The following proof is essentially the same as that in \cite{Faltings1983CohoOfLocSymmetric}.
We will consider the decomposition given by Corollary \ref{corDecomposeCatOInfiniteAction}. Note that the action of \( Z(\fg) \) on \( \mcal{G}^{+}_{\mu}(V_{\alpha}) \) factors through \( \lambda_{\alpha} \) 
by the functoriality in Lemma \ref{lemInfiniteAction}.

We start with the case where \( \alpha=0 \) and \( V_{\alpha}\cong \mO_{BG} \). Then by Lemma \ref{lemDRcomplexVSpushforward} and Remark \ref{rmkResolveObyDX}, we have a resolution 
\begin{align*}
0\to h_{\natural}^{+}(\wedge^{\dim(\fn)}\Omega^{1,\vee}_{\mFl_{G,\mu}/L})\{-\dim(\fn)\}\to \cdots\to h_{\natural}^{+}(\Omega^{1,\vee}_{\mFl_{G,\mu}/L})\{-1\}\\\to h_{\natural}^{+}(\mO_{\mFl_{G,\mu}})\to \mO_{\mFl_{G,\mu}^{\pdR/L,+}}\to0.
\end{align*}
More precisely, inductively, we need to construct \[
h^{+}_{\natural}(\wedge^{i}\Omega^{1,\vee}_{\mFl_{G,\mu}/L})\to \left[
	h_{\natural}^{+}(\wedge^{i-1}\Omega^{1,\vee}_{\mFl_{G,\mu}/L})\to \cdots\to h_{\natural}^{+}(\mO_{\mFl_{G,\mu}})\to \mO_{\mFl_{G,\mu}^{\pdR/L,+}},
\right]
\] which is induced by the following isomorphism given by Lemma \ref{lemDRcomplexVSpushforward} and Remark \ref{rmkResolveObyDX}: \[
\wedge^{i}\Omega^{1,\vee}_{\mFl_{G,\mu}/L}\cong \Fil^{-i}\left((h^{+})^{*}
	\left[
	h_{\natural}^{+}(\wedge^{i}\Omega^{1,\vee}_{\mFl_{G,\mu}/L})\{-i\}\to \cdots
	\to \mO_{\mFl_{G,\mu}^{\pdR/L,+}}\right]
\right).
\]

This implies that \( \mO_{\mFl_{G,\mu}^{\pdR/L,+}}\in \mscr{O}_{P}^{+} \). 
Note that \( \Omega^{1,\vee}_{\mFl_{G,\mu}/L} \) corresponds to \( \fg/\p\in \Rep(M) \), and thus we have the correct twist \( \{-\} \) by Lemma \ref{lemNmuSimpleLem}.
By the argument in \cite[70]{Faltings1983CohoOfLocSymmetric} and Lemma \ref{lemCompareInfiniteAction},
\[ (h_{\natural}^{+}(\wedge^{i}\Omega^{1,\vee}_{\mFl_{G,\mu}/L}) )^{\widehat{\lambda_{0}}}\cong \bigoplus_{w\in {}^{M}W,l(w)=i}h_{\natural}^{+}(W_{w\cdot 0}^{\vee}).
\] Thus taking \( (-)^{\widehat{\lambda_{0}}} \) of the resolution above, we obtain the desired resolution.

For general \( \alpha \), we tensor the BGG resolution for \( \alpha=0 \) with \( \mcal{G}^{+}_{\mu}(V_{\alpha})\) and then apply \( (-)^{\widehat{\lambda_{\alpha}}} \). By Lemma \ref{lemSmBaseChange} (1), \( \mscr{O}_{P} \)
is closed under \( -\otimes \mcal{G}^{+}_{\mu}(V_{\alpha}) \), and for \( w\in {}^{M}W \),
\( \mcal{G}^{+}_{\mu}(V_{\alpha})\otimes h_{\natural}^{+}(W_{w\cdot 0})\cong h_{\natural}^{+}(W_{w\cdot 0}\otimes (V_{\alpha},\Fil^{\bullet}_{\mu})) \). As in \cite[71-72]{Faltings1983CohoOfLocSymmetric},
\( W_{w\cdot 0}\otimes V_{\alpha} \) as a \( P_{\mu} \)-representation admits a subquotient \( W_{w\cdot \alpha} \), corresponding to weight \( w\alpha \)-vector in \( V_{\alpha} \), such that \[
U(\fg)\otimes_{U(\p)}W_{w\cdot \alpha}\{w\alpha(\mu)\}
\cong \left(V_{\alpha}\otimes (U(\fg)\otimes_{U(\p)}W_{w\cdot \alpha})\right)^{\widehat{\lambda_{\alpha}}}.
\] Thus by the conservativity in Lemma \ref{lemRelateClassicalCateO}, we have \[h_{\natural}^{+}(W_{w\cdot \alpha})\{w\alpha(\mu)\}\cong \left(\mcal{G}^{+}_{\mu}(V_{\alpha})\otimes h_{\natural}^{+}(W_{w\cdot \alpha})\right)^{\widehat{\lambda_{\alpha}}}.
\] This gives the desired BGG resolution.
\end{proof}
Proposition \ref{propFilterBGG} can be understood as saying that \[ \mcal{G}_{\mu}^{+}(V_{\alpha})\in \QCoh(\mFl_{G,\mu}^{\pdR/L,+}/G) \] admits a \( \ZZ \)-filtration \( \Fil_{\mrm{BGG}} \), such that \[
\gr^{i}_{\mrm{BGG}}(\mcal{G}_{\mu}^{+}(V_{\alpha}))\cong L_{-i}[i]\cong \bigoplus_{w\in {}^{M}W,l(w)=i}h_{\natural}^{+}(W_{w\cdot \alpha})\{(w\cdot\alpha)(\mu)\}[i].
\]
When \( G \) is isogeny to a product \( \prod_{j=1}^{n}G_{j} \), the construction of Proposition \ref{propFilterBGG} can be refined to construct a \( \ZZ^{n} \)-filtration:
\begin{prop}\label{propPartialBGGDual}
Let \( \alpha \)
and \( \mu \)
be as in Proposition \ref{propFilterBGG}.
Assume that \( \prod_{j=1}^{n}G_{j}\to G \) is a central extension, and \( \mu:\GG_{m}\to G \) lifts to a minuscule cocharacter \( \ti{\mu}=(\mu_{j}):\GG_{m}\to \prod_{j=1}^{n}G_{j} \). 
Let \( M_{j,\mu_{j}}\subset G_{j} \) be the Levi subgroup defined by \( \mu_{j} \) (Notation \ref{notationPartialFlagMinuscule}), and let \( {}^{M_{\mu_{j}}}W_{G_{j}} \) be the set of Kostant representatives of \( W_{M_{\mu_{j}}}\backslash W_{G_{j}} \).

Then \( \prod_{j=1}^{n}M_{j,\mu_{j}}\to M_{\mu} \) is a central extension, and \( {}^{M}W\cong \prod_{j=1}^{n}{}^{M_{\mu_{j}}}W_{G_{j}} \).
Moreover,
\( \mcal{G}^{+}_{\mu}(V_{\alpha})\in \QCoh(\mFl_{G,\mu}^{\pdR/L,+}/G) \)
admits a \( \ZZ^{n} \)-filtration \( \Fil^{\bullet}_{\mrm{BGG}} \), such that 
\begin{align*}
&\gr^{-\unl{i}}_{\mrm{BGG}}(\mcal{G}^{+}_{\mu}(V_{\alpha}))
\cong \\&\bigoplus_{(w_{j})_{j=1}^{n}\in \prod_{j=1}^{n}{}^{M_{\mu_{J}}}W_{G_{j}},l(w_{j})=i_{j}}h^{+}_{\natural}(W_{(\prod_{j}w_{j})\cdot \alpha})\left\{((\prod_{j}w_{j})\cdot \alpha)(\mu)\right\}\left[\sum_{j}i_{j}\right]
\end{align*}
 for \( \unl{i}=(i_{1},\ldots,i_{n}) \).
\end{prop}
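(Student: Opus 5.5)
First I would dispose of the group-theoretic statements. Since $\prod_j G_j\to G$ is a central extension, it induces an isomorphism on adjoint groups and on Lie algebras modulo centres. Hence the root systems, Weyl groups and parabolic data decompose: $W\cong\prod_j W_{G_j}$, $\mu$ and $\ti{\mu}$ define the same $\mathrm{Ad}$-action on $\fg$, and $P_\mu$, $M_\mu$, $N_\mu$ are images of the products of $P_{j,\mu_j}$, $M_{j,\mu_j}$, $N_{j,\mu_j}$. It follows that $\prod_j M_{j,\mu_j}\to M_\mu$ is central, $\fn=\bigoplus_j\fn_j$, $W_M=\prod_j W_{M_{\mu_j}}$, and Kostant representatives (minimal length in their cosets) decompose as ${}^{M}W\cong\prod_j {}^{M_{\mu_j}}W_{G_j}$ with $l(\prod_j w_j)=\sum_j l(w_j)$. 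The flag variety also decomposes, $\mFl_{G,\mu}\cong\prod_j\mFl_{G_j,\mu_j}$, equivariantly for $\prod_j G_j$.

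For the $\ZZ^n$-filtration I would redo the proof of Proposition \ref{propFilterBGG} using the product structure. Pull back along $\mFl_{G,\mu}^{\pdR/L,+}/\prod_j G_j\to \mFl_{G,\mu}^{\pdR/L,+}/G$. The central kernel acts on $V_\alpha$ and on each $W_{w\cdot\alpha}$ through a common character, and the relevant categories are the corresponding isotypic parts, so it suffices to work $\prod_j G_j$-equivariantly. Here $\Omega^{1,\vee}_{\mFl/L}\cong\bigoplus_j p_j^*\Omega^{1,\vee}_{\mFl_j/L}$, so the resolution of $\mO_{\mFl^{\pdR/L,+}}$ from Lemma \ref{lemDRcomplexVSpushforward} and Remark \ref{rmkResolveObyDX} is the tensor product (Künneth, Lemma \ref{lemKunnethProper}, with $h^+_\natural$ compatible with external products by smooth base change, Lemma \ref{lemSmBaseChange} (1)) of the de Rham resolutions of the factors. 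It is therefore naturally a $\ZZ^n$-graded complex, i.e. a $\ZZ^n$-filtered object, with $(i_1,\dots,i_n)$ term $h^+_\natural(\bigotimes_j\wedge^{i_j}\Omega^{1,\vee}_{\mFl_j})\{-\sum i_j\}$.

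Next I would take the generalized eigenspace for $Z(\fg)=\bigotimes_j Z(\fg_j)$ (up to centre), using Corollary \ref{corDecomposeCatOInfiniteAction}. Because $Z(\fg)$ acts through the functorial construction of Lemma \ref{lemInfiniteAction}, and $h^+_\natural$ of external tensor products has $Z(\fg)$ acting factorwise, the eigenspace projector respects the $\ZZ^n$-filtration. Faltings' computation then applies factor by factor: $(h^+_\natural(\wedge^{i_j}\Omega^{1,\vee}_{\mFl_j}))^{\widehat{\lambda_0}}=\bigoplus_{l(w_j)=i_j}h^+_\natural(W_{w_j\cdot0})$. For general $\alpha$, I would tensor with $\mcal{G}^+_\mu(V_\alpha)$ and project to $\widehat{\lambda_\alpha}$. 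Tensoring with the Hodge-filtered $V_\alpha$ preserves the $\ZZ^n$-filtration, and the projector commutes with it. The identification of each graded piece with $h^+_\natural(W_{w\cdot\alpha})\{(w\cdot\alpha)(\mu)\}$ in the appropriate shift is then exactly the argument at the end of Proposition \ref{propFilterBGG}, using conservativity in Lemma \ref{lemRelateClassicalCateO}. Note that $(w\cdot\alpha)(\mu)=\sum_j$ of the factor contributions, so the twists match.

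I expect the main obstacle to be checking that the eigenspace projection interacts well with the $\ZZ^n$-filtration. The projection is only defined on $\mscr{O}_P^+$ via a direct-sum decomposition, while a filtration involves cofibres. The approach I would try first is to note that all filtration steps and quotients lie in $\mscr{O}_P^+$, and that the decomposition of Corollary \ref{corDecomposeCatOInfiniteAction} is an exact functor of stable categories. The remaining point to verify is that $V_\alpha$ tensor a graded piece still has a single $\widehat{\lambda_\alpha}$-constituent of the stated form. This follows, multi-index by multi-index, from Faltings' weight argument applied to the Levi $\prod_j M_{j,\mu_j}$.
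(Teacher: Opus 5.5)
Your proposal is correct and follows essentially the paper's route. You write $\mFl_{G,\mu}\cong\prod_j\mFl_{G_j,\mu_j}$, use the Künneth formula (Lemma \ref{lemKunnethProper}) to give the de Rham resolution of $\mO_{\mFl_{G,\mu}^{\pdR/L,+}}$ a $\ZZ^n$-filtration whose graded pieces are external products of the factorwise terms $h^+_{j,\natural}(\wedge^{i_j}\Omega^{1,\vee}_{\mFl_{G_j,\mu_j}/L})$, and then rerun the eigenspace-projection and tensoring argument of Proposition \ref{propFilterBGG}. The differences are cosmetic: you pass to $\prod_j G_j$-equivariance where the paper lets $G$ act on each factor and takes the fibre product over $BG$, and you spell out via exactness of the projections in Corollary \ref{corDecomposeCatOInfiniteAction} the compatibility of the generalized-eigenspace projection with the filtration, which the paper leaves implicit.
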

\begin{proof}
The proof follows that of Proposition \ref{propFilterBGG}. We have a central extension \( \prod_{j}P_{j,\mu_{j}}\to P_{\mu} \), and thus \(\Fl_{G,\mu}\cong \prod_{j=1}^{n}\Fl_{G_{j},\mu_{j}}\) and \[
\Fl_{G,\mu}^{\pdR/L,+}/G\cong \Fl_{G_{1},\mu_{1}}^{\pdR/L,+}/G\times_{BG}\cdots\times_{BG}\Fl_{G_{n},\mu_{n}}^{\pdR/L,+}/G.
\] 
Thus when \( \alpha=0 \), the resolution 
\begin{align*}
0\to h_{\natural}^{+}(\Omega^{\dim(\fn),\vee}_{\mFl_{G,\mu}/L})\{-\dim(\fn)\}\to \cdots\to h_{\natural}^{+}(\Omega^{1,\vee}_{\mFl_{G,\mu}/L})\{-1\}\to \\h_{\natural}^{+}(\mO_{\mFl_{G,\mu}})\to \mO_{\mFl_{G,\mu}^{\pdR/L,+}}\to 0
\end{align*}
in the proof of Proposition \ref{propFilterBGG}
admits a \( \ZZ^{n} \)-filtration such that \[
\gr^{\unl{i}}(\Omega^{m,\vee}_{\mFl_{G,\mu}/L})
\cong \begin{cases}
0,&m\ne \sum_{j=1}^{n}i_{j}\\
h_{1,\natural}^{+}(\Omega^{i_{1},\vee}_{\mFl_{G_{j},\mu_{j}}/L})\boxtimes\cdots\boxtimes 
h_{n,\natural}^{+}(\Omega^{i_{n},\vee}_{\mFl_{G_{j},\mu_{j}}/L}),&m=\sum_{j=1}^{n}i_{j},
\end{cases}
\] where \( h_{j}^{+}:\Fl_{G_{j},\mu_{j}}/G\times[\mA^{1}/\GG_{m}]\to \Fl_{G_{j},\mu_{j}}^{\pdR/L,+}/G \). Here we have implicitly used the K\"unneth formula (Lemma \ref{lemKunnethProper}).
The rest then follows from the proof of Proposition \ref{propFilterBGG}.
\end{proof}

\begin{cor}[Filtered dual BGG complex]\label{corFilterDualBGG}
Let
\( \alpha\in X^{*}(T) \) be a \( G \)-dominant character, and let \( \mu\in X_{*}(G) \) be a minuscule cocharacter. Consider \( \mcal{G}^{+}_{\mu}(V_{\alpha})\in \QCoh(\mFl_{G,\mu}^{\pdR/L,+}/G) \) in Lemma \ref{dfnGTorsorFilDRFl}.

Then \( \mcal{G}^{+}_{\mu}(V_{\alpha}) \) is isomorphic to a so-called \emph{dual BGG complex} \[
\mrm{dBGG}_{\alpha}^{+}:=
\left[M^{0}\to M^{1}\cdots \to M^{\dim(\fn)} \right],
\] with \( M^{i}\cong \bigoplus_{w\in {}^{M}W,l(w)=i}h_{*}^{+}(W^{\vee}_{w\cdot (-w_{0}\alpha)})\{-(w\cdot(-w_{0}\alpha))(\mu)\} \) in cohomological degree \( i \).
\end{cor}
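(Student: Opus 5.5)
We obtain the dual BGG complex by dualizing the BGG resolution of Proposition \ref{propFilterBGG}, applied to the dual representation. Concretely, \( V_{\alpha}^{\vee}\cong V_{-w_{0}\alpha} \) with \( -w_{0}\alpha \) again \( G \)-dominant, and the filtration \( \Fil^{\bullet}_{\mu} \) of Lemma \ref{lemFilOnRep} is compatible with duality: the \( \GG_{m} \)-weight decomposition of \( V^{\vee} \) is dual to that of \( V \), so \( \mcal{G}^{+}_{\mu}(V_{\alpha})\cong \mcal{G}^{+}_{\mu}(V_{-w_{0}\alpha})^{\vee} \). Here \( (-)^{\vee} \) is the internal dual in \( \QCoh(\mFl_{G,\mu}^{\pdR/L,+}/G) \); it exists because \( \mcal{G}^{+}_{\mu}(V) \) is a vector bundle, being associated to a \( G \)-torsor with compatible filtration. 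Proposition \ref{propFilterBGG} for \( -w_{0}\alpha \) then presents \( \mcal{G}^{+}_{\mu}(V_{-w_{0}\alpha}) \) as a finite complex \( [L_{\dim\fn}\to\cdots\to L_{0}] \), that is, as a finite iterated extension of the shifted terms \( L_{i}[i] \). Applying \( \unl{\Hom}(-,\mO) \), which is exact, turns this into a complex \( [L_{0}^{\vee}\to\cdots\to L_{\dim\fn}^{\vee}] \) with \( L_{i}^{\vee} \) placed in cohomological degree \( i \).

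It therefore suffices to compute \( \unl{\Hom}(h^{+}_{\natural}(W)\{n\},\mO) \) for \( W\in\Rep(M) \) finite dimensional. We want
\[ \unl{\Hom}(h^{+}_{\natural}(W)\{n\},\mO_{\mFl_{G,\mu}^{\pdR/L,+}/G^{+}})\cong h^{+}_{*}(W^{\vee})\{-n\}, \]
which is the formula already used in Lemma \ref{lemDiffOpeToLogStack} (1). To prove it, use the adjunction \( h^{+}_{\natural}\dashv h^{+,*} \) together with the projection formula for \( h_{\natural} \) from Lemma \ref{lemSmBaseChange} (1):
\[ \Hom(M\otimes h^{+}_{\natural}W,\mO)\cong \Hom(h^{+}_{\natural}(h^{+,*}M\otimes W),\mO)\cong \Hom(h^{+,*}M\otimes W,\mO)\cong \Hom(M,h^{+}_{*}W^{\vee}). \]
The twist \( \{n\} \) is invertible and pulled back from \( [\mA^{1}/\GG_{m}] \), so it simply dualizes to \( \{-n\} \). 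Substituting \( W=W_{w\cdot(-w_{0}\alpha)} \) and \( n=(w\cdot(-w_{0}\alpha))(\mu) \) gives \( M^{i}=L_{i}^{\vee} \) exactly as stated.

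The main point needing care is the double-dual identification \( \mcal{G}^{+}_{\mu}(V_{\alpha})\cong(\mcal{G}^{+}_{\mu}(V_{\alpha})^{\vee})^{\vee} \) and the exactness of dualizing the filtered complex, given that the terms \( h^{+}_{\natural}(W) \) are not themselves perfect. We only dualize \( \mcal{G}^{+}_{\mu}(V_{-w_{0}\alpha}) \), which is dualizable, and \( \unl{\Hom}(-,\mO) \) is exact on the whole category, so the BGG filtration \( \Fil_{\mrm{BGG}} \) passes to a filtration of its dual with graded pieces \( L_{i}^{\vee}[-i] \). This is enough to get the stated complex. It is also worth checking that the dual of the filtered representation \( (V_{-w_{0}\alpha},\Fil_{\mu}) \) is \( (V_{\alpha},\Fil_{\mu}) \), with the sign convention \( \gr^{i}\leftrightarrow\gr^{-i} \); this holds because \( \mu \) acts on \( \gr^{i}V^{\vee} \) by \( t^{-(-i)} \).
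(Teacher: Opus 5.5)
Your proposal is correct and follows the paper's argument, which says only that the corollary follows from Proposition \ref{propFilterBGG} by applying \( \unl{\RHom}(-,\mO_{\mFl_{G,\mu}^{\pdR/L,+}/G}) \). You fill in the details the paper leaves implicit: applying the proposition to \( -w_{0}\alpha \), identifying \( \mcal{G}^{+}_{\mu}(V_{-w_{0}\alpha})^{\vee}\cong\mcal{G}^{+}_{\mu}(V_{\alpha}) \) through the symmetric monoidal structure, and computing \( \unl{\Hom}(h^{+}_{\natural}(W)\{n\},\mO)\cong h^{+}_{*}(W^{\vee})\{-n\} \) via the projection formula and adjunction.
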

\begin{proof}
This follows from Proposition \ref{propFilterBGG} by taking \( \unl{\RHom}(-,\mO_{\mFl_{G,\mu}^{\pdR/L,+}/G}) \).
\end{proof}
\begin{cor}\label{corPartialBGGDual}
Let \( \alpha \), \( \mu \), \( (G_{j},\mu_{j}) \), \( M_{j,\mu_{j}} \) and \( {}^{M_{\mu_{j}}}W_{G_{j}} \)
be as in Proposition \ref{propPartialBGGDual}. 
Then
\( \mcal{G}^{+}_{\mu}(V_{\alpha})\in \QCoh(\mFl_{G,\mu}^{\pdR/L,+}/G) \)
admits a \( \ZZ^{n} \)-filtration \( \Fil^{\bullet}_{\mrm{BGG}^{\vee}} \), such that 
\begin{align*}
&\gr^{\unl{i}}_{\mrm{BGG}^{\vee}}(\mcal{G}^{+}_{\mu}(V_{\alpha}))\cong \bigoplus_{(w_{j})_{j=1}^{n}\in \prod_{j=1}^{n}{}^{M_{\mu_{j}}}
W_{G_{j}},l(w_{j})=i_{j}}\\
&h^{+}_{\natural}\left(W_{-(\prod_{j}w_{j})\cdot (-w_{0}\alpha)}\right)
\left\{-((\prod_{j}w_{j})\cdot (-w_{0}\alpha))(\mu)\right\}\left[-\sum_{j=1}^{n}i_{j}\right]
\end{align*}
for \( \unl{i}=(i_{1},\ldots,i_{n}) \).
\end{cor}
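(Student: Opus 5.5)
The plan is to deduce the statement from Proposition \ref{propPartialBGGDual} by duality, exactly as Corollary \ref{corFilterDualBGG} is deduced from Proposition \ref{propFilterBGG}. The new work is keeping track of the $\ZZ^{n}$-indexing, and rewriting the dualized graded pieces as $h^{+}_{\natural}$ of $M$-representations, as the statement requires.

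First, I apply Proposition \ref{propPartialBGGDual} to the dominant weight $-w_{0}\alpha$. Since $V_{-w_{0}\alpha}\cong V_{\alpha}^{\vee}$ and $\mcal{G}^{+}_{\mu}$ is compatible with duals (the filtration $\Fil^{\bullet}_{\mu}$ of Lemma \ref{lemFilOnRep} on $V^{\vee}$ is the dual filtration), this gives a $\ZZ^{n}$-filtration on $\mcal{G}^{+}_{\mu}(V_{\alpha})^{\vee}$. Its graded pieces $\gr^{-\unl{i}}$ are
\[
\bigoplus h^{+}_{\natural}\bigl(W_{w\cdot(-w_{0}\alpha)}\bigr)\{(w\cdot(-w_{0}\alpha))(\mu)\}\Bigl[\sum_{j}i_{j}\Bigr],
\]
with $w=\prod_{j}w_{j}$ running over those $(w_{j})$ with $l(w_{j})=i_{j}$. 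Next, I apply the exact contravariant functor $\unl{\RHom}(-,\mO_{\mFl_{G,\mu}^{\pdR/L,+}/G})$. The filtration is finite: it is a finite $\ZZ^{n}$-filtration, because lengths are bounded by $\dim\fn$. Every term is dualizable, since $h^{+}_{\natural}$ of a vector bundle is perfect. Therefore dualizing produces a $\ZZ^{n}$-filtration on $\mcal{G}^{+}_{\mu}(V_{\alpha})$ with the index reversed, $-\unl{i}\mapsto\unl{i}$. The twist $\{m\}$ becomes $\{-m\}$ and the shift $[\sum i_{j}]$ becomes $[-\sum i_{j}]$, which matches the twist and shift in the statement. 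By Notation \ref{notationTwistByAGm}, twists commute with duality.

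It remains to identify $\unl{\RHom}(h^{+}_{\natural}W,\mO)$ with an $h^{+}_{\natural}$-term. By adjunction, $\unl{\RHom}(h^{+}_{\natural}W,\mO)\cong h^{+}_{*}(W^{\vee})$. I will then compare $h^{+}_{*}$ with $h^{+}_{\natural}$ using Lemma \ref{lemBasicFiltered}, which gives $h^{+,!}\mO\cong\omega\{d\}[d]$ with $d=\dim\fn$. Since $\mFl_{P}/G\cong BP$, the canonical bundle $\omega_{\mFl/L}$ corresponds to the character $\det(\fg/\p)^{\vee}$ of $M$, by Lemma \ref{lemCompareDRStac}. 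So the comparison only tensors $W^{\vee}$ by a one-dimensional $M$-representation and adds a fixed twist and shift.

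The concrete route I would take is the following. By Lemma \ref{lemRelateClassicalCateO}, $f^{*}$ is conservative and fully faithful on $\pi_{0}\Hom$ between generators. So it suffices to check that $f^{*}$ of both candidate objects are the same filtered $U(\fg)$-modules with compatible $P$-action. For $h^{+}_{*}(W^{\vee})$, the relevant module is the filtered dual of $U(\fg)\otimes_{U(\p)}W$; Lemma \ref{lemDiffOpeToLogStack} identifies it with $D^{\vee}$-type objects. For $h^{+}_{\natural}$ of the modified weight, it is a parabolic Verma module. I then match the parabolic Verma module $U(\fg)\otimes_{U(\p)}W_{-w\cdot(-w_{0}\alpha)}$, in the generalized eigenspace $(-)^{\widehat{\lambda}}$ of Corollary \ref{corDecomposeCatOInfiniteAction}, with the contragredient of $U(\fg)\otimes_{U(\p)}W_{w\cdot(-w_{0}\alpha)}$, twisted by $\det(\fg/\p)$. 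I read this as precisely what the label $W_{-(\prod_{j}w_{j})\cdot(-w_{0}\alpha)}$ in the statement encodes. Finally, the $\ZZ^{n}$-compatibility reduces to the product decomposition $\mFl_{G,\mu}\cong\prod_{j}\mFl_{G_{j},\mu_{j}}$ and the K\"unneth formula (Lemma \ref{lemKunnethProper}), as in the proof of Proposition \ref{propPartialBGGDual}.

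The main obstacle is this last identification of dualized terms. The twist and shift bookkeeping between $h^{+}_{*}$ and $h^{+}_{\natural}$ has to come out exactly as stated, and the canonical-bundle character has to be absorbed into the weight label. I would first carry out the check for $n=1$ and $\alpha=0$, where the dual of the de Rham resolution in the proof of Proposition \ref{propFilterBGG} is the de Rham complex $h^{+}_{*}\Omega^{i}$. I would then transport the result to general $\alpha$ by tensoring with $\mcal{G}^{+}_{\mu}(V_{\alpha})$ and projecting to $\widehat{\lambda_{\alpha}}$.
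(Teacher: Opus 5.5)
The first half of your plan is exactly the paper's proof, and it already completes the argument. You apply Proposition \ref{propPartialBGGDual} to $-w_{0}\alpha$, use $\mcal{G}^{+}_{\mu}(V_{-w_{0}\alpha})\cong\mcal{G}^{+}_{\mu}(V_{\alpha})^{\vee}$, and apply $\unl{\RHom}(-,\mO)$ together with $\unl{\RHom}(h^{+}_{\natural}W,\mO)\cong h^{+}_{*}(W^{\vee})$. This yields a $\ZZ^{n}$-filtration on $\mcal{G}^{+}_{\mu}(V_{\alpha})$ with graded pieces $h^{+}_{*}(W^{\vee}_{\lambda})\{-\lambda(\mu)\}[-\sum_{j}i_{j}]$, where $\lambda=(\prod_{j}w_{j})\cdot(-w_{0}\alpha)$. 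This is how the displayed formula must be read; compare Corollary \ref{corFilterDualBGG} and Example \ref{egThetaP1}. Since $\lambda$ is $M$-dominant, $W_{-\lambda}$ can only mean the representation of lowest weight $-\lambda$, i.e. $W^{\vee}_{\lambda}$, and the $h^{+}_{\natural}$ in the display should be read as $h^{+}_{*}$, which is what the paper's one-line proof produces. A smaller point: $h^{+}_{\natural}$ of a vector bundle is not perfect, since $h^{+,*}h^{+}_{\natural}W\cong D\otimes W$ is not. You do not need dualizability, though; exactness of $\unl{\RHom}(-,\mO)$ and the fact that $\mcal{G}^{+}_{\mu}(V_{\alpha})$ is a vector bundle suffice.

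The step you call the main obstacle, converting $h^{+}_{*}(W^{\vee})$ into an $h^{+}_{\natural}$-term, would fail and should be dropped: the two are never isomorphic when $\dim\mFl_{G,\mu}>0$.

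\begin{itemize}
\item Lemma \ref{lemBasicFiltered} only relates $h^{+}_{\natural}=h^{+}_{!}(-\otimes h^{+,!}\mO)$ to $h^{+}_{!}$. Passing to $h^{+}_{*}$ would require $h^{+}$ to be prim, and it is not: its base change along itself is the formal completion $\hat{\Delta}^{+}$ (Lemma \ref{lemCalDiagonalSmooth}).
\item Concretely, $h^{+,*}h^{+}_{*}(V)\cong D^{\vee}\otimes V$ (as in the proof of Lemma \ref{lemDiffOpeToLogStack}) is filtered complete, with $\gr^{i}\cong\Sym^{i}\Omega^{1}\otimes V$ in degrees $i\geq 0$.
\item By contrast, $h^{+,*}h^{+}_{\natural}(W)\cong D\otimes W$ (Lemma \ref{lemDRcomplexVSpushforward}) is an exhaustive union, with $\gr^{-i}\cong\Sym^{i}T\otimes W$ in degrees $-i\leq 0$, where $T=(\Omega^{1})^{\vee}$.
\item A twist $\{m\}$ only translates filtration degrees and a shift $[k]$ does not move them, so no choice of $W'$, $m$, $k$ gives $h^{+}_{*}(W^{\vee})\cong h^{+}_{\natural}(W')\{m\}[k]$.
\end{itemize}

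On the $U(\fg)$-side, this is the difference between a parabolic Verma module and the completed coinduced dual of one, which a twist by $\det(\fg/\p)$ cannot repair. Moreover, the full faithfulness in Lemma \ref{lemRelateClassicalCateO} (2) only concerns maps between objects of the form $h^{+}_{\natural}(V)\{n\}$, so it cannot be used to compare with $h^{+}_{*}(W^{\vee})$. Your own test case $n=1$, $\alpha=0$ already shows this: dualizing the resolution in the proof of Proposition \ref{propFilterBGG} gives the filtered de Rham complex, with terms $h^{+}_{*}(\Omega^{i})\{i\}$, not $h^{+}_{\natural}$-terms.
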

\begin{proof}
This follows from Proposition \ref{propPartialBGGDual} by taking \( \unl{\RHom}(-,\mO_{\mFl_{G,\mu}^{\pdR/L,+}/G}) \).
\end{proof}
\begin{eg}[Theta operator]\label{egThetaP1}
Let \( G=\GL_{2} \), \( \mu:\GG_{m}\to G,t\mapsto \mrm{diag}(t,1) \),
and \( P_{\mu}=B \), the subgroup of upper triangular matrices. 
Then the dual BGG complex for \( V_{\alpha}=\Sym^{k-1}\mrm{Std}\otimes \mrm{det}^{l} \) for \( \alpha=(k-1+l,l) \) is a two-term resolution of \( \mcal{G}^{+}_{\mu}(V_{\alpha}) \) \[ \theta^{k}:h^{+}_{*}\left((l,k+l-1)\right)\{l\}\to h^{+}_{*}\left((k+l,l-1)\right)\{k+l\}. \]
Thus \( \theta^{k} \) corresponds to a differential operator of degree \( k \) by Lemma \ref{lemDiffOpeToLogStack}.
Applying \( (h^{+})^{*} \), we obtain a fiber sequence in \( \QCoh(\Fil^{\ZZ}(\Rep(P_{\mu}))) \) \[
(V_{\alpha},\Fil^{\bullet}_{\mu})\to (h^{+})^{*}h^{+}_{*}\left((l,k+l-1)\right)\{l\}\xrightarrow{\theta^{k}} (h^{+})^{*}h^{+}_{*}\left((k+l,l-1)\right)\{k+l\}.
\]  Note that 
\( \gr^{i}_{\mu}(V_{\alpha})\cong 0 \) for \( i\notin [l,k+l] \). Thus \( \theta^{k} \) induces an isomorphism on \( \gr^{i} \) for \( i\notin [l,k+l] \).
\end{eg}
\subsection{de Rham period sheaves}\label{subsecDeRhamPeriodSheaves}
In this subsection, we realize the de Rham period sheaves (\cite{Scholze13}, \cite{DLLZ2022logarithmicJAMS}) as quasi-coherent sheaves on the algebraic de Rham stacks (Definitions \ref{dfnAlgDRJuan} \& \ref{dfnFilterDRAlg}). 

\begin{construction}\label{constructionBdROnDRStack}
Let \(S=\Spa(R,R^{+})\)
be an affinoid perfectoid space over \(L\) for some \(p\)-adic field  \(L\subset\bar{\QQ}_{p}\), and let \(\mX_{\rF}(S)\) denote the relative Fargues-Fontaine curve associated to \(S^{\flat}\) (\cite[Definition II.1.15]{FarguesScholze2021geometrization}). Then there is a closed embedding \(S\hookrightarrow \mX_{\rF}(S)\) (\cite[Proposition 11.3.1]{ScholzeWeinstein2020berkeley}). 
Let \(\BdR^{+}(R)\) denote the formal completion of \(\mX_{\rF}(S)\)
along the divisor \(S\). 

For any \(n\in\NN\),
we can endow \(\BdR^{+}(R)/t^{n}\) with the solid ring structure associated to the standard topology induced from \(([\varpi],p)\)-adic topology on \(W(R^{+,\flat})[1/p]\twoheadrightarrow \BdR^{+}(R)/t^{n}\), where \([\varpi]\) is the Teichmüller lift of a pseudo-uniformizer \(\varpi\in R^{+,\flat}\). Note that \(\BdR^{+}(R)/t^{n}\) is indeed bounded (\cite[Definition 2.6.10]{Juan2024analyticdeRham}) with the subring of power bounded elements (\cite[Definition 2.6.1 (3)]{Juan2024analyticdeRham}) given by the preimage of \(R^{\circ}\subset R\) along \(\BdR^{+}(R)/t^{n}\twoheadrightarrow R\).

We further endow \(\BdR^{+}(R)/t^{n}\)
 with the analytic structure induced by that of \(R\) via \cite[Proposition 12.23]{CS20}, which is again bounded. 
 
Using Definition \ref{dfnSpfFunctor},
 we define \[\BdR^{+}(S)^{+}:=\Spf^{+}(\BdR^{+}(R))\to [\mA^{1}/\GG_{m}].
 \] We denote \( \BdR^{+}(S):=\BdR^{+}(S)^{+}\times_{[\mA^{1}/\GG_{m}]}[\GG_{m}/\GG_{m}] \).

 Then by Definition \ref{dfnAlgDRJuan}, \(S\to S^{\pdR/L}\) extends naturally to
\[f_{n}:\AnSpec(\BdR^{+}(R)/t^{n})\to S^{\pdR/L},\;f_{\infty}:\BdR^{+}(S)\to S^{\pdR/L}
\] 
and we define \[
\BdR^{+}/t^{n}:=f_{n,*}(\mO_{\AnSpec(\BdR^{+}(R)/t^{n})})\in\QCoh(S^{\pdR}).
\]

We further define \[
\BdR^{+}:=\varprojlim_{n}\BdR^{+}/t^{n},
\;\BdR:=\BdR^{+}[1/t]\in \QCoh(S^{\dR}),
\] which are endowed with the \(t\)-adic filtration. 
\end{construction}
\begin{rmk}
Instead of considering the formal completion, we could also consider the dagger completion along \(S\hookrightarrow \mX_{\rF}(S)\),
which also descends to \(S^{\dR}\), defining an alternative period sheaf \(\BdR^{\dagger}\in\QCoh(S^{\dR})\).
\end{rmk}

This construction is closely related to the sheaf \(\OBdRlog^{+}\)
introduced by \cite{Scholze13}, \cite{DLLZ2022logarithmicJAMS}. See \cite{GuoLi2021period} for an alternative interpretation. 

\begin{dfn}[{\cite[Definition 6.8]{Scholze13}, \cite[Definition 2.2.10]{DLLZ2022logarithmicJAMS}}]\label{dfnOBdRDFN}
Let \(X=\Spa(A,A^{+})\) be a log smooth rigid variety over \(L\) for some \(p\)-adic field \(L\subset\bar{\QQ}_{p}\). Then we have the period sheaf \(\OBdRlog^{+}\) on the pro-Kummer \'etale site of \(X\).
Assume that \(\ti{X}=\Spa(B,B^{+})\to X\) is a pro-Kummer \'etale morphism, and \(\ti{X}\) is perfectoid. 

Then \(\OBdRlog^{+}(B)\) is equipped with a (complete) filtration \(\Fil^{\bullet}\), contains \(\BdR^{+}(B)\) and \(A\), and is equipped with an integrable connection \[\nabla:\OBdRlog^{+}(B)\to \OBdRlog^{+}(B)\otimes_{A}\Omega^{1}_{A/L,\log}\]
satisfying the Griffiths transversality,
such that the filtered Poincar\'e sequence below is exact (\cite[Corollary 6.13]{Scholze13}, \cite[Corollary 2.4.2]{DLLZ2022logarithmicJAMS}): 
\begin{align*}
0\to \BdR^{+}(B)\to \OBdRlog^{+}(B)\xrightarrow{\nabla}\OBdRlog^{+}(B)\otimes_{A}\Omega^{1}_{A/L,\log}\\\to \cdots
\to \OBdRlog^{+}(B)\otimes_{A}\Omega^{\dim(X)}_{A/L,\log}\to 0.
\end{align*}
\end{dfn}
\begin{construction}\label{constructionOBdRTate}
Let \(\ti{X}\to X\) as in Definition \ref{dfnOBdRDFN}. \(\OBdRlog^{+}(B)/\Fil^{i}\) is a nilpotent thickening of \(B\), and we endow it with the analytic ring structure induced by \cite[Proposition 12.23]{CS20}, which is again bounded. 
We define \[\OBdRlog^{+}(\ti{X})^{+}:=
\Spf^{+}(\OBdRlog^{+}(B)):=\varinjlim_{n}\AnSpec^{+}(\OBdR^{+}(B)/\Fil^{n})
\] over \( [\mA^{1}/\GG_{m}] \), where the RHS is defined in Definition \ref{dfnSpfFunctor}.
We denote \(\OBdRlog^{+}(\ti{X}):=\OBdRlog(\ti{X})^{+}\times_{[\mA^{1}/\GG_{m}]}[\GG_{m}/\GG_{m}]\).

Then there are natural maps \[
\begin{tikzcd}
\ti{X}\times [\mA^{1}/\GG_{m}]\arrow[r] & 
\OBdRlog^{+}(\ti{X})^{+}\arrow[r]\arrow[d] & \BdR^{+}(\ti{X})^{+}\\
& X\times [\mA^{1}/\GG_{m}]
\end{tikzcd}
\] where \(\BdR^{+}(\ti{X})^{+}\) is as in Construction \ref{constructionBdROnDRStack}. 
The construction globalizes by Lemma \ref{lemBdRTorsor} (2) below. Note that the horizontal maps are \(!\)-surjections, being colimits of descendable covers (\cite[Proposition 3.35]{Mathew2016galois}). 

We will omit the subscript \(\log\) when \(X\) is smooth with the trivial log structure.

The construction has good functorial properties.
\end{construction}
\begin{lem}\label{lemBdRTorsor}
Let \(f:S=\Spa(R,R^{+})\to S'=\Spa(R',R^{\prime,+})\) be a map
between affinoid perfectoid spaces, which induces
\begin{align*}&
f^{[i]}:\AnSpec(\BdR^{+}(R)/t^{i})\to \AnSpec(\BdR^{+}(R')/t^{i}),\\&
\ti{f}^{[i]}:\AnSpec(\OBdR^{+}(R)/\Fil^{i})\to \AnSpec(\OBdR^{+}(R')/\Fil^{i}).
\end{align*}

(1) If \(f\) is light pro\'etale morphism for a locally profinite group \( \Gamma \), then \(f^{[i]}\) and \(\ti{f}^{[i]}\)
are \(\unl{\Gamma}\)-torsors.

(2) If \(f\) is an open immersion, then \(f^{[i]}\) and \(\ti{f}^{[i]}\) are open immersions.

In particular, the constructions globalize and works for perfectoid spaces \(S\) over \(\QQ_{p}\).
\end{lem}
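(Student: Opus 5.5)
The plan is to reduce everything to the underlying rings. I then use that both \(\BdR^{+}(R)/t^{i}\) and \(\OBdR^{+}(R)/\Fil^{i}\) are nilpotent thickenings of \(R\) whose analytic structure is induced from \(R\) by \cite[Proposition 12.23]{CS20}. The key input is a deformation-theoretic rigidity: an étale-type map \(R'\to R\) lifts uniquely along nilpotent thickenings. Concretely, I would first show that the natural maps
\[
\BdR^{+}(R')/t^{i}\hatotimes_{R'}R\cong \BdR^{+}(R)/t^{i},\qquad \OBdR^{+}(R')/\Fil^{i}\hatotimes_{\BdR^{+}(R')/t^{i}}\BdR^{+}(R)/t^{i}\cong \OBdR^{+}(R)/\Fil^{i}
\]
hold in the relevant (bounded, solid) sense. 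Here the tensor product is understood as the unique flat lift of \(R\) over the thickening. Since the analytic structures on both sides are induced from \(R\), resp. \(R'\), it suffices to check this on underlying condensed rings and on graded pieces.

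For (2), take \(f\) a rational open immersion \(S=S'(\frac{g_{1},\ldots,g_{n}}{h})\). I lift \(g_{j},h\) to \(\BdR^{+}(R')/t^{i}\). The open subspace of \(\AnSpec(\BdR^{+}(R')/t^{i})\) cut out by the same rational condition, via Lemma \ref{lemBasicCoverOfA1} and Example \ref{egAnAdicSpace}, is again an open immersion. It depends only on the reduction modulo the nilpotent ideal, since open/closed idempotent algebras are insensitive to nilpotent thickenings. I then identify this open with \(\AnSpec(\BdR^{+}(R)/t^{i})\). Both are thickenings of \(S\) with graded pieces \(R(j)\) for \(0\le j<i\). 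This follows from the standard compatibility \(\theta\)-kernel \(=(\xi)\) and the almost-purity/localization behaviour of \(W(R^{+,\flat})\), so dévissage along \(t\) gives the isomorphism. The \(\OBdR^{+}\) case follows identically, using that \(\gr^{j}\OBdR^{+}(R)\cong \bigoplus \Sym^{a}(\Omega^{1}(-1))\otimes R(b)\) and is compatible with localization.

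For (1), with \(f\) a light pro-étale \(\Gamma\)-torsor, I would write \(S\times_{S'}S\cong \unl{\Gamma}\times S\). I then apply the thickening functors, which commute with this fiber product. Indeed \(\BdR^{+}(-)/t^{i}\) of \(\mrm{Cont}(\Gamma,R)\) is \(\mrm{Cont}(\Gamma,\BdR^{+}(R)/t^{i})\) by the same dévissage on graded pieces, so I obtain \(\AnSpec(\BdR^{+}(R)/t^{i})^{2/\AnSpec(\BdR^{+}(R')/t^{i})}\cong \unl{\Gamma}\times \AnSpec(\BdR^{+}(R)/t^{i})\). It remains to show \(f^{[i]}\) is a \(!\)-surjection. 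For this I reduce to \(i=1\): the square with the thickenings is Cartesian, and \(!\)-surjectivity can be checked after base change along the descendable (hence \(!\)-surjective) map \(R'\to\BdR^{+}(R')/t^{i}\) mod nilpotents. The \(i=1\) case is Theorem \ref{thmAnDRStackGeneral} (4). Lifting descendability across a nilpotent (finite \(t\)-adic) filtration uses \cite[Proposition 3.35]{Mathew2016galois}. The same argument applies to \(\ti{f}^{[i]}\). Globalization then follows by gluing along (2) and descent along (1).

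I expect the main obstacle to be the base-change isomorphism in the pro-étale case. This means showing that \(\BdR^{+}(R)/t^{i}\) is really the completed base change of \(\BdR^{+}(R')/t^{i}\) along \(R'\to R\) in the solid bounded sense, as opposed to the \(p\)-adically completed one. I would attack it by dévissage to \(i=1\) together with the exact sequences \(0\to R(j)\to \BdR^{+}/t^{j+1}\to \BdR^{+}/t^{j}\to 0\).
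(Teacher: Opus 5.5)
Your outline is correct, but it does more work than the paper's proof, which reduces everything to \(i=1\) by a single descent argument.

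The paper first treats \(i=1\). For a light pro\'etale \(\Gamma\)-torsor this is Theorem \ref{thmAnDRStackGeneral} (4), transported from Tate stacks to solid stacks because \(S,S'\) are essentially Tate (Lemma \ref{lemBasicPropertyEsseTate} (3)); you should record this transport too when you cite that theorem. For open immersions the \(i=1\) case is implicitly Example \ref{egAnAdicSpace}.

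For general \(i\), working with the filtered versions \(\AnSpec^{+}\) over \([\mA^{1}/\GG_{m}]\), the paper uses the square with \(S\times[\mA^{1}/\GG_{m}]\to\AnSpec^{+}(\BdR^{+}(R)/t^{i})\) on top and \(S'\times[\mA^{1}/\GG_{m}]\to\AnSpec^{+}(\BdR^{+}(R')/t^{i})\) below. This square is Cartesian, and its horizontal maps are \(!\)-surjections by \cite[Proposition 3.35]{Mathew2016galois}. Both the torsor property and the open-immersion property of \(f^{[i]}\) are then inherited from its base change \(S\to S'\).

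Consequently only the easy direction of base change is needed: \(\BdR^{+}(R)/t^{i}\otimes_{\BdR^{+}(R')/t^{i}}R'\cong R\), which follows from \(\xi\) being a non-zero-divisor. You do not need \(\BdR^{+}(R)/t^{i}\) to be the completed flat lift of \(R\) over \(\BdR^{+}(R')/t^{i}\), which you single out as the main obstacle. Even in your own argument for (1) the easy direction suffices. Writing \(f^{[i]}:X\to Y\), the isomorphism \(\unl{\Gamma}\times X\to X\times_{Y}X\) can be checked after pulling back along \(S'\to Y\), so you never need to compute \(\BdR^{+}\) of \(\mrm{Cont}(\Gamma,R)\).

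Likewise the paper needs no graded computation with \(\Sym^{a}(\Omega^{1}(-1))\) for \(\ti{f}^{[i]}\). By Lemma \ref{lemOBdRIsPushoutOfBdR}, the square comparing \(\OBdR^{+}/\Fil^{i}\) with \(\BdR^{+}/t^{i}\) for \(R\) and \(R'\) is Cartesian. Hence \(\ti{f}^{[i]}\) is literally a base change of \(f^{[i]}\).

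What your route buys is explicit identifications, e.g.\ \(\BdR^{+}(R)/t^{i}\) as the rational localization of \(\BdR^{+}(R')/t^{i}\). It also does not rely on the fact that being an open immersion can be checked after a \(!\)-surjective base change. The price is verifying those identifications, including the analytic ring structures, and reducing arbitrary open immersions of affinoid perfectoids to rational ones.

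One small slip: the descendable ring map is \(\theta:\BdR^{+}(R')/t^{i}\to R'\). So the relevant \(!\)-surjection is \(\AnSpec(R')\to\AnSpec(\BdR^{+}(R')/t^{i})\), not a map \(R'\to\BdR^{+}(R')/t^{i}\).
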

\begin{proof}
The case \(i=1\): 
note that \( S \) 
and \( S' \) 
are essentially Tate (Lemma \ref{lemBasicPropertyEsseTate} (3)).
By Theorem \ref{thmAnDRStackGeneral} (4), \( S^{Tate}\to S^{\prime,Tate} \) 
is a \( \unl{\Gamma} \)-torsor. Thus applying \( (-)_{Solid} \)  \( S\to S' \) 
is a \( \unl{\Gamma} \)-torsor. 

In general, note that we have a Cartesian diagram \[
\begin{tikzcd}
S\times[\mA^{1}/\GG_{m}]\arrow[r]\arrow[d] &\AnSpec^{+}(\OBdR^{+}(R)/\Fil^{i})\arrow[d]\arrow[r]
&
\AnSpec^{+}(\BdR^{+}(R)/t^{i})\arrow[d]\\
S'\times[\mA^{1}/\GG_{m}]\arrow[r]&\AnSpec^{+}(\OBdR^{+}(R')/\Fil^{i})\arrow[r] &
\AnSpec^{+}(\BdR^{+}(R')/t^{i}),
\end{tikzcd}
\] 
where the large square is clearly Cartesian, and the square on the right is Cartesian by Lemma \ref{lemOBdRIsPushoutOfBdR}.
Now we conclude by noting that the horizontal maps are \(!\)-surjections by \cite[Proposition 3.35]{Mathew2016galois}. 
\end{proof}

The following result is a logarithmic version of \cite[Theorem 1.1, Remark 4.14]{GuoLi2021period}.
\begin{lem}\label{lemOBdRIsPushoutOfBdR}
We endow \(\dR_{\log}(A)\) with the Hodge filtration (i.e. induced by stupid truncation).
Then there is a structure of a filtered complete \(\dR_{\log}(A)\)-algebra on \((\BdR^{+}(B),t^{\bullet})\)
such that we have a \emph{push-out square} of filtered complete algebras \[
\begin{tikzcd}
\dR_{\log}(A)\arrow[r]\arrow[d] &  A\arrow[d]\\
(\BdR^{+}(B),t^{\bullet})\arrow[r] & (\OBdRlog^{+}(B),\Fil^{\bullet}).
\end{tikzcd}
\]
\end{lem}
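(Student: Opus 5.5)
The plan is to construct the $\dR_{\log}(A)$-algebra structure on $(\BdR^{+}(B),t^{\bullet})$ from the Poincaré sequence. Then I would check the push-out property on associated gradeds, where it reduces to Lemma \ref{lemDiagonalLogProperty} (1).

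\emph{Step 1: the algebra structure.} The filtered Poincaré sequence of Definition \ref{dfnOBdRDFN} says that $\BdR^{+}(B)$ is filtered quasi-isomorphic to the log de Rham complex
\[
\mrm{DR}:=\left[\OBdRlog^{+}(B)\to \OBdRlog^{+}(B)\otimes_{A}\Omega^{1}_{A/L,\log}\to\cdots\right],
\]
with the filtration $\Fil^{i-\bullet}$ in degree $\bullet$ (Griffiths transversality). Since $\nabla$ is an $L$-linear derivation extending $d$ on $A$, $\mrm{DR}$ is a filtered complete commutative dg-algebra, and it receives a filtered map from $\dR_{\log}(A)$ induced by $A\to\OBdRlog^{+}(B)$. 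Transporting along the quasi-isomorphism gives the filtered complete $\dR_{\log}(A)$-algebra structure on $(\BdR^{+}(B),t^{\bullet})$. This structure restricts to the given one on $\gr^{0}$, namely $A\to B$ via $\gr^{0}\dR_{\log}(A)=A$.

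\emph{Step 2: the comparison map.} The map $\BdR^{+}(B)\hatotimes_{\dR_{\log}(A)}A\to\OBdRlog^{+}(B)$ comes from the degree-$0$ projection $\mrm{DR}\to\OBdRlog^{+}(B)$. This projection is $\dR_{\log}(A)$-linear once $\OBdRlog^{+}(B)$ is viewed as a module over $\dR_{\log}(A)\to A$ (forms of positive degree act by $0$). Alternatively, one can write $\OBdRlog^{+}(B)\cong\mrm{DR}\hatotimes_{\dR_{\log}(A)}A$ directly.

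\emph{Step 3: checking it is an isomorphism.} Both sides are filtered complete, so it suffices to check on $\gr^{\bullet}$; by \cite[Proposition 3.2.1]{Lurie2015rotation}, $\gr$ commutes with the tensor product. On the left we get
\[
\gr^{\bullet}\BdR^{+}(B)\otimes_{\bigoplus_{i}\wedge^{i}\Omega^{1}_{A,\log}[-i]}A .
\]
Here $\gr^{\bullet}\BdR^{+}(B)=\bigoplus_{i} B(i)$, and the $\wedge^{\ge1}$ part acts through $B$ by zero. So the left side is
\[
\bigoplus_{i} B(i)\otimes_{B}\left(B\otimes_{\bigoplus\wedge^{i}\Omega^{1}_{B,\log}[-i]}B\right),
\]
which by the Koszul computation (\ref{alignKoszulDuality}) equals $\bigoplus_{i} B(i)\otimes_{B}\Sym^{\bullet}(\Omega^{1}_{A,\log}\otimes_{A}B(-1))$. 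On the right, \cite[Proposition 6.10]{Scholze13} and its log analogue in \cite{DLLZ2022logarithmicJAMS} give
\[
\gr^{\bullet}\OBdRlog^{+}(B)\cong \bigoplus_{i}\bigoplus_{a+b=i}B(a)\otimes\Sym^{b}(\Omega^{1}_{A,\log}\otimes B(-1)).
\]
It remains to check that the map matches these on $\gr^{1}$ and respects multiplication. Locally, via a toric chart and the log coordinates $\log[T^{\flat}]-\log T$ (the variables $u_i$ of \cite{DLLZ2022logarithmicJAMS}), $\gr^{1}$ of $A\to\OBdRlog^{+}$ sends $d\log T$ to the class of $u_i$, just as in the argument of Lemma \ref{lemDiagonalLogProperty} (2).

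\emph{Main obstacle.} The hardest part is Step 1: making the algebra structure canonical (an $\mbb{E}_{\infty}$ filtered structure) and independent of $\OBdRlog^{+}$ in the solid, filtered-complete setting, so that it globalizes. I would first try defining it via the quasi-isomorphism of $\mbb{E}_{\infty}$ filtered complete algebras $\BdR^{+}(B)\simeq\mrm{DR}$, which is functorial in $\ti{X}$. Failing that, I would follow Guo–Li's deformation-theoretic description of $\BdR^{+}(B)/t^{n}$ as an $A$-thickening.
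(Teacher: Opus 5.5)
Your Steps 1 and 2 coincide with the paper's proof. The paper also forms the filtered cdga $\dR_{\log}(\OBdRlog^{+}(B))$, uses the map $\dR_{\log}(A)\to\dR_{\log}(\OBdRlog^{+}(B))$, and identifies the target with $(\BdR^{+}(B),t^{\bullet})$ via the Poincar\'e sequence.

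The routes diverge in proving that the square is a push-out. The paper deliberately avoids the graded or Barr computation. It proceeds in three moves:
\begin{itemize}
\item It regards $(\OBdRlog^{+}(B),\Fil^{\bullet},\nabla)$ as an object $\mF\in\QCoh(X^{\pdR/L,\hat{+}}_{\log})$, by Lemma \ref{lemDRcomplexVSpushforward}.
\item It notes that $R\Gamma(X^{\pdR/L,\hat{+}}_{\log},\mF)$ is the log de Rham complex of $\OBdRlog^{+}(B)$, hence $\BdR^{+}(B)$ by the Poincar\'e lemma.
\item It reads off $\BdR^{+}(B)\hatotimes_{\dR_{\log}(A)}A\cong h^{*}\mF\cong\OBdRlog^{+}(B)$ from the affineness Lemma \ref{lemLogDRStackIsAffine}, under which $h^{*}$ becomes $-\hatotimes_{\dR_{\log}(A)}A$.
\end{itemize}
Your route is the explicit alternative the paper mentions but skips. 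It needs only that $\gr$ is symmetric monoidal, the Koszul computation (\ref{alignKoszulDuality}), and the local description $\gr^{\bullet}\OBdRlog^{+}(B)\cong B[t,u_{1},\ldots,u_{d}]$ of Scholze/DLLZ. It does not use the de Rham stack at all, whose foundations rest on the same Koszul computation through Lemma \ref{lemDiagonalLogProperty}. The paper's argument is coordinate-free. It also exhibits $\OBdRlog^{+}(B)$ as a quasi-coherent sheaf on $X^{\pdR/L,\hat{+}}_{\log}$ with global sections $\BdR^{+}(B)$, which is what Definition \ref{dfnOBdRDescend} and Lemma \ref{lemBdRMapToLogDRStack} use next.

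Two points in your Step 3 need tightening.

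\textbf{First}, ``the $\wedge^{\ge1}$ part acts by zero'' is automatic on homotopy groups. You need it at the level of graded $\mbb{E}_{\infty}$-structures, and it holds there for this reason:
\begin{itemize}
\item $\gr^{\bullet}\dR_{\log}(A)\simeq\Sym_{A}(\Omega^{1}_{A,\log}[-1])$ is free.
\item So maps of graded $\mbb{E}_{\infty}$-$A$-algebras to $B[t]=\gr^{\bullet}\BdR^{+}(B)$ are classified by $\mrm{Map}_{A}(\Omega^{1}_{A,\log}[-1],B\cdot t)$.
\item This space is connected because $\Omega^{1}_{A,\log}$ is projective.
\end{itemize}

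\textbf{Second}, the $\gr^{1}$ check cannot be phrased via ``$\gr^{1}$ of $A\to\OBdRlog^{+}(B)$'', since $A$ has the trivial filtration. The weight-one classes come from the bar construction of $\dR_{\log}(\OBdRlog^{+}(B))\otimes_{\dR_{\log}(A)}A$. For example, take the total-degree-$0$ cycle $u_{i}\otimes1\pm1\otimes d\log T_{i}\otimes1$, which is a cycle because $\nabla u_{i}=\pm d\log T_{i}$. It lies in $\Fil^{1}$ and maps to $u_{i}$. Together with $t$, this shows that $\gr^{1}$ of the comparison map is a surjection between free $B$-modules of rank $d+1$, hence an isomorphism. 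Both sides are polynomial over $B$ on $\gr^{1}$, so all $\gr^{n}$ follow.

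Two smaller slips do not affect the isomorphism. Your Tate twists are inconsistent: in weight $a+b$ the piece is $B(a)\otimes\Sym^{b}\Omega^{1}_{A,\log}$. Also, $\Omega^{1}_{B,\log}$ should read $\Omega^{1}_{A,\log}\otimes_{A}B$.
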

\begin{proof}
Let \[
\dR_{\log}(\OBdRlog^{+}(B)):=[\OBdRlog^{+}(B)\xrightarrow{\nabla}\cdots
\to \OBdRlog^{+}(B)\otimes_{A}\Omega^{\dim(X)}_{A/L,\log}],\]
which is a filtered commutative differential graded algebra (\cite[Definition 7.1.4.8]{Lurie2017HA}) via the filtration \(\Fil^{\bullet}\) on \(\OBdRlog^{+}(B)\).
We have a morphism of filtered commutative differential graded algebras
\(\dR_{\log}(A)\to \dR_{\log}(\OBdRlog^{+}(B)),\)
but using the Poincar\'e squence, the RHS is also identified with \(\BdR^{+}(B)\). 
We thus obtain the commutative diagram. 

To verify that the diagram is a push-out, it is possible to prove by a tedious calculation with Barr resolution. We prove it using the filtered complete log de Rham stack.
By Lemma \ref{lemDRcomplexVSpushforward} (1), \((\OBdRlog^{+}(B),\Fil^{\bullet},\nabla)\) defines a unique element in \(\mF\in\QCoh(X_{\log}^{\pdR/L,\hat{+}})\),
and by Lemma \ref{lemDRcomplexVSpushforward} (3) and Remark \ref{rmkResolveObyDX}, 
we have a filtered isomorphism
\[R\Gamma(X_{\log}^{\pdR/L,\hat{+}},\mF)\cong R\Gamma_{\log-\dR}(\OBdRlog^{+}(B))\cong \BdR^{+}(B).\]
Then by Lemma \ref{lemLogDRStackIsAffine}, we know that \[
\BdR^{+}(B)\otimes_{\dR_{\log}(A)}A\cong \OBdRlog^{+}(B),
\] as desired.
%
\end{proof}
\begin{dfn}\label{dfnOBdRDescend}
By Lemma \ref{lemDRcomplexVSpushforward}, \((\OBdRlog^{+}(B),\Fil^{\bullet},\nabla)\) defines a unique element in \(\QCoh(X_{\log}^{\pdR/L,\hat{+}})\), which we denote (by abuse of notation) as \(\BdR^{+}(B)^{+}\), which is further equipped with a complete \(t\)-adic filtration \(t^{\bullet}\). 

By Lemma \ref{lemOBdRIsPushoutOfBdR}, 
\(\BdR^{+}(B)^{+}\in\QCoh(X_{\log}^{\pdR/L,\hat{+}})\)
corresponds to \[(\BdR^{+}(B),t^{\bullet})\in \Mod_{\dR_{\log}(A)}(\widehat{\Fil}^{\ZZ}(D(L_{\square})))\]
via the equivalence in Lemma \ref{lemLogDRStackIsAffine}. 

We define \(\BdR(B)^{+}:=(\BdR^{+}(B)^{+}[1/t],t^{\bullet})\in \widehat{\Fil}^{\ZZ}(\QCoh(X_{\log}^{\pdR/L,\hat{+}}))\). 

We will regard \( \BdR(B)^{+} \) and  \( \BdR^{+}(B)^{+} \) 
as objects in \( \Fil^{\ZZ}(\QCoh(X_{\log}^{\pdR/L,+})) \) via \(\widehat{j}_{*}\) (Notation \ref{rmkEmbedFilterComplete}), and denote by \( \BdR(B) \) 
and \( \BdR^{+}(B) \) 
by their restrictions to \( X_{\log}^{\pdR/L} \). 
\end{dfn}
\begin{lem}\label{lemGrBdRonDRStack}
We have 
\begin{align*}
\gr^{i}_{t^{\bullet}}(\BdR(B)^{+})\cong f_{*}(\mO_{\ti{X}\times[\mA^{1}/\GG_{m}]})
\{i\}(i),\;i\in\ZZ
\end{align*}
and 
\[\gr^{i}_{t^{\bullet}}(\BdR^{+}(B)^{+})\cong
\begin{cases}
f_{*}(\mO_{\ti{X}\times[\mA^{1}/\GG_{m}]})
\{i\}(i),&i\in\NN
    \\
0,&i\in\ZZ_{<0}.
\end{cases}
\]
Here \(f\) denotes the composition \(\ti{X}\times \mA^{1}/\GG_{m}\to X\times \mA^{1}/\GG_{m}\to X^{\pdR/L,+}_{\log}\), \( \{i\} \) 
denotes the twist of the filtration (Notation \ref{notationTwistByAGm}), and \( (i) \) 
denotes the Tate twist.
\end{lem}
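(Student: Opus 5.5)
The plan is to reduce the computation of the graded pieces to the affine description of Lemma \ref{lemLogDRStackIsAffine}, and then to the classical structure of $\gr^{\bullet}_{t}\BdR^{+}(B)$. By Definition \ref{dfnOBdRDescend}, $\BdR^{+}(B)^{+}$ corresponds to the filtered complete $\dR_{\log}(A)$-module $(\BdR^{+}(B),t^{\bullet})$. The extra $t$-adic filtration gives a $\ZZ$-filtration by $\dR_{\log}(A)$-submodules $t^{i}\BdR^{+}(B)$, each carrying the induced Hodge filtration. Since $t$ is a nonzerodivisor, $t^{i}\BdR^{+}(B)\cong \BdR^{+}(B)\cdot t^{i}$, and multiplication by $t$ sends $\Fil^{j}$ to $\Fil^{j+1}$ in the filtered algebra structure (because $t\in\Fil^{1}$ for the Hodge-type filtration that enters the $\Spf^{+}$). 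So $\gr^{i}_{t^{\bullet}}$ is the filtered $\dR_{\log}(A)$-module $t^{i}\BdR^{+}(B)/t^{i+1}\BdR^{+}(B)$, whose filtration is shifted by $i$ relative to the $i=0$ piece. This accounts for the twist $\{i\}$. The Galois action on $t$ accounts for the Tate twist $(i)$.

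The key step is the case $i=0$, namely identifying $\BdR^{+}(B)/t$ with $f_{*}\mO_{\ti X\times[\mA^{1}/\GG_{m}]}$.

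By Lemma \ref{lemOBdRIsPushoutOfBdR}, base change along $\dR_{\log}(A)\to A$ gives the following. The pullback $h^{*}$ of the $\gr^{0}$ piece is $\OBdRlog^{+}(B)/t\,\OBdRlog^{+}(B)$. With its induced filtration, this is $\OBdRlog^{+}(B)\otimes_{\BdR^{+}(B)}B$.

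By the explicit structure of $\OBdRlog^{+}$ (locally $\BdR^{+}(B)[[u_{1},\dots,u_{d}]]$ with the $u_{k}$ in $\Fil^{1}$, as in \cite{Scholze13} and \cite{DLLZ2022logarithmicJAMS}), this is $B[[u_{1},\dots,u_{d}]]$ with the degree filtration. Its graded pieces are $B\otimes_{A}\Sym^{\bullet}\Omega^{1}_{A,\log}(-1)$.

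On the other side, $h^{*}f_{*}\mO$ is computed by base change along the Cartesian square of Definition \ref{dfnFilterDRAlg}. It equals $p_{2,*}$ of the pullback of $\mO_{\ti X}$ to $\hat{\Delta}^{+}_{X,\log}$, namely $B\hatotimes_{A}(A\hatotimes_{\dR_{\log}(A)}A)$. By Lemma \ref{lemDiagonalLogProperty} (1), this is again $B[[\Omega^{1}_{\log}]]$ with the degree filtration.

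To obtain a canonical map, use the natural morphism $\ti X\times[\mA^{1}/\GG_{m}]\to \OBdRlog^{+}(\ti X)^{+}$ of Construction \ref{constructionOBdRTate}, which reduces mod $t$. By adjunction it gives $\BdR^{+}(B)^{+}/t\to f_{*}\mO$. One then checks on $h^{*}$ and $\gr$ that this map is an isomorphism. The isomorphism in degree $1$ is the classical one $\gr^{1}\OBdR^{+}/t\cong B\otimes\Omega^{1}(-1)$, and both sides are generated by $\gr^{1}$.

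The main obstacle is exactly this compatibility, that is, making sure the mod-$t$ reduction of $\OBdRlog^{+}$ matches $f_{*}$ with the correct twists. I would verify it first on $\gr$ (via $i^{*}$ to the Hodge stack, Lemma \ref{lemAlgebHTStack}) and then use filtered completeness and conservativity of $i^{*}$ (Lemma \ref{lemPerfComplexA1Gm} (1)).

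For the remaining cases: for $i<0$ the statement for $\BdR^{+}$ is trivial, and for $\BdR$ we invert $t$ and use that $t$-multiplication gives the isomorphisms $\gr^{i}\cong\gr^{0}\{i\}(i)$ for all $i\in\ZZ$.
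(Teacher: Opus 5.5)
Your argument is correct in outline, but for the key case \(i=0\) it takes a longer route than the paper. The paper never leaves the module side of Lemma~\ref{lemLogDRStackIsAffine}.

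\emph{The paper's route.} By part (2) of that lemma, \(h_{*}\) is the forgetful functor. So \(f_{*}\mO\) is simply \(B\) with the trivial filtration, regarded as a \(\dR_{\log}(A)\)-module through \(\dR_{\log}(A)\to\gr^{0}\dR_{\log}(A)\cong A\to B\). On the other side, \(t^{i}\BdR^{+}(B)/t^{i+1}\BdR^{+}(B)\) with the filtration induced by \(t^{\bullet}\) is \(B\{i\}(i)\). It sits in a single filtration degree, so \(\Fil^{1}\dR_{\log}(A)\) acts by zero and its module structure factors through the same augmentation. The identification is then immediate. The only extra input is Lemma~\ref{lemPerfComplexA1Gm} (2), applied to \(\ti{X}\), to pass from \([\hat{\mA}^{1}/\GG_{m}]\) to \([\mA^{1}/\GG_{m}]\) through \(\widehat{j}_{*}\). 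Your write-up leaves this step implicit.

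\emph{Your route.} You construct a comparison map geometrically and pull back along \(h\), using Lemma~\ref{lemOBdRIsPushoutOfBdR} on one side and smooth base change plus Lemma~\ref{lemDiagonalLogProperty} (1) on the other. You then check on \(\gr\) in local coordinates for \(\OBdRlog^{+}\). This is valid, since \(h^{*}\) and \(\gr\) are conservative here. It has the merit of exhibiting \(h^{*}\) of both sides explicitly as \(B[[\Omega^{1}_{A,\log}]]\). But it obliges you to identify the induced map on \(\gr^{1}\), which you assert rather than check. On the module side this map is visibly the identity of \(B\hatotimes_{\dR_{\log}(A)}A\).

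\emph{A slip in that computation.} \(\gr^{j}\) of \(\OBdRlog^{+}(B)/t\) with its induced filtration is \(B\otimes_{A}\Sym^{j}\Omega^{1}_{A,\log}\), with no Tate twist. In the Faltings extension \(0\to\hat{\mO}(1)\to\gr^{1}\OBdRlog^{+}\to\hat{\mO}\otimes\Omega^{1}_{\log}\to 0\), the twisted subobject is exactly \(t\cdot\gr^{0}\), which dies modulo \(t\); the \((-1)\) only appears in \(\gr^{0}\OBdRlog\). As you wrote it, your two sides would carry different Galois actions.
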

\begin{proof}
Via the equivalence in Lemma \ref{lemLogDRStackIsAffine}, 
for \(i\in \NN\), \(\gr^{i}_{t^{\bullet}}(\BdR^{+}(B)^{+})\) is isomorphic to \(B\{i\}(i)\) regarded as a \(\dR_{\log}(A)\)-module via \(\dR_{\log}(A)\to\gr^{0}(\dR_{\log}(A))\cong A\to B\). 
This corresponds to \(f_{*}(\mO_{\ti{X}\times[\hat{\mA}^{1}/\GG_{m}]})\{i\}(i)\in \QCoh(X_{\log}^{\pdR/L,\hat{+}})\)
by Lemma \ref{lemLogDRStackIsAffine}. Then by Lemma \ref{lemPerfComplexA1Gm}, it is the same as \(f_{*}(\mO_{\ti{X}\times[\mA^{1}/\GG_{m}]})\{i\}(i)\in\QCoh(X_{\log}^{\pdR/L,+})\).
The proof for \(\BdR(B)\) is similar.
\end{proof}

\begin{lem}\label{lemBdRMapToLogDRStack}
Let \(X\) be a log smooth rigid variety over \(L\) for some \(p\)-adic field \(L\subset\bar{\QQ}_{p}\), and let \(f:\ti{X}\to X\) be a pro-Kummer \'etale morphism, where \(\ti{X}\) is perfectoid. 

Then there is a natural map \(f^{\pdR}:\BdR^{+}(\ti{X})\to X^{\pdR/L}_{\log}\), 
such that there is a commutative diagram \[
\begin{tikzcd}
\ti{X}\times {[\mA^{1}/\GG_{m}]}\arrow[r]\arrow[rr,bend left,"f"] &
\OBdRlog^{+}(\ti{X})^{+}\arrow[r]\arrow[d] & X\times {[\mA^{1}/\GG_{m}]}\arrow[d]\\ &
\BdR^{+}(\ti{X})^{+}\arrow[r,"f^{\pdR}"] & X^{\pdR/L,+}_{\log}\arrow[r] & {[\mA^{1}/\GG_{m}]},
\end{tikzcd}
\] 
where the square is Cartesian \emph{after} restricting to \( [\hat{\mA}^{1}/\GG_{m}] \). 
Here  \(\OBdRlog^{+}(\ti{X})^{+}\) and \(\BdR^{+}(\ti{X})^{+}\) are as in Construction \ref{constructionOBdRTate} and Construction \ref{constructionBdROnDRStack}.

Moreover, if \(\ti{X}=\Spa(B,B^{+})\to X=\Spa(A,A^{+})\),
then \(f^{\pdR}_{*}(\mO_{\BdR^{+}(\ti{X})^{+}})\cong \BdR^{+}(B)^{+}\), 
\end{lem}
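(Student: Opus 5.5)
The plan is to construct $f^{\pdR}$ first over the filtered complete locus and then extend. By Construction \ref{constructionOBdRTate} there is the map $\OBdRlog^{+}(\ti{X})^{+}\to X\times[\mA^{1}/\GG_{m}]\to X^{\pdR/L,+}_{\log}$. We would show that it descends along the $!$-surjection $\OBdRlog^{+}(\ti{X})^{+}\to \BdR^{+}(\ti{X})^{+}$. Locally we may take $X=\Spa(A,A^{+})$ and $\ti{X}=\Spa(B,B^{+})$, since Lemma \ref{lemBdRTorsor} (2) lets the construction globalize. By Lemma \ref{lemOBdRIsPushoutOfBdR}, $\OBdRlog^{+}(B)\cong \BdR^{+}(B)\hatotimes_{\dR_{\log}(A)}A$. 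The Čech nerve of $\OBdRlog^{+}(\ti{X})^{+}\to \BdR^{+}(\ti{X})^{+}$ is therefore $\unl{\Spf}^{+}$ of the algebras $\BdR^{+}(B)\hatotimes_{\dR_{\log}(A)}A^{\hatotimes_{\dR_{\log}(A)}(n+1)}$. By Lemma \ref{lemSpfCommuteWithLimit}, this nerve maps compatibly to the groupoid $U([n])$ of Lemma \ref{lemThisIsGroupoid}. Taking geometric realizations then gives $f^{\pdR}:\BdR^{+}(\ti{X})^{+}\to (X\times[\mA^{1}/\GG_{m}])/\hat{\Delta}^{+}_{X,\log}=X^{\pdR/L,+}_{\log}$, using Definition \ref{dfnFilterDRAlg} and the $\infty$-topos property. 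The commutativity of the diagram holds by construction.

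For the Cartesian claim over $[\hat{\mA}^{1}/\GG_{m}]$, we would use Lemma \ref{lemLogDRStackIsAffine}, which identifies $\QCoh(X^{\pdR/L,\hat{+}}_{\log})$ with filtered complete $\dR_{\log}(A)$-modules. Under this identification $h^{*}$ becomes $-\hatotimes_{\dR_{\log}(A)}A$. The fiber product $\BdR^{+}(\ti{X})^{\hat{+}}\times_{X^{\pdR/L,\hat{+}}_{\log}}(X\times[\hat{\mA}^{1}/\GG_{m}])$ is affine proper over $X\times[\hat{\mA}^{1}/\GG_{m}]$, because $h$ is a suave $!$-cover (Lemma \ref{lemBasicFiltered}) and affine properness descends (Lemma \ref{lemAffProperProp} (1)). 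Its structure sheaf is computed by base change as $\BdR^{+}(B)\hatotimes_{\dR_{\log}(A)}A$, which equals $\OBdRlog^{+}(B)$ by Lemma \ref{lemOBdRIsPushoutOfBdR}. Comparing truncations $/\Fil^{n}$ and using the $\Spf^{+}$ description then identifies the fiber product with $\OBdRlog^{+}(\ti{X})^{+}|_{[\hat{\mA}^{1}/\GG_{m}]}$.

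For the last claim, we compute $f^{\pdR}_{*}\mO$ by $*$-descent along $h$. Its pullback to $X\times[\hat{\mA}^{1}/\GG_{m}]$ is the pushforward of $\mO_{\OBdRlog^{+}(\ti{X})^{+}}$, namely the filtered module $\OBdRlog^{+}(B)$ with its connection. This is exactly the datum defining $\BdR^{+}(B)^{+}$ in Definition \ref{dfnOBdRDescend}, and via the affineness of Lemma \ref{lemLogDRStackIsAffine} it corresponds to $(\BdR^{+}(B),t^{\bullet})$.

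We expect the main obstacle to be the passage from $[\hat{\mA}^{1}/\GG_{m}]$ to $[\mA^{1}/\GG_{m}]$. Here we would invoke Notation \ref{rmkEmbedFilterComplete} together with the fact that $\BdR^{+}(\ti{X})^{+}=\Spf^{+}$ of a filtered complete algebra. This should make $f^{\pdR}$ factor through $X^{\pdR/L,\hat{+}}_{\log}$ up to $\widehat{j}$, so that $f^{\pdR}_{*}$ agrees with $\widehat{j}_{*}$ of the complete version. Checking that each $\AnSpec^{+}(\BdR^{+}(B)/t^{n})$ lands over $\hat{\mA}^{1}/\GG_{m}$ uses the nilpotence of $t$ in the Rees algebra modulo $t^{n}$.
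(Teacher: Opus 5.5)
Your first paragraph is the paper's construction: the \v{C}ech nerve of $\OBdRlog^{+}(\ti{X})^{+}\to\BdR^{+}(\ti{X})^{+}$ is $\Spf^{+}$ of $\BdR^{+}(B)\hatotimes_{\dR_{\log}(A)}A^{\hatotimes_{\dR_{\log}(A)}(n+1)}$, it maps to the groupoid of Lemma \ref{lemThisIsGroupoid}, and you take colimits. The rest of the proposal has two problems.

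\textbf{The Cartesian claim.} Your second paragraph does not prove it, for three reasons.
\begin{itemize}
\item Lemma \ref{lemAffProperProp} (1) runs in the opposite direction. It deduces affine properness of a map from that of its base change along a $!$-surjection, so you would first need $f^{\pdR}$ to be affine proper.
\item $\Spf^{+}$-objects are colimits of $\AnSpec^{+}$'s, so they are not affine proper in any case.
\item Computing the structure sheaf of the fiber product as $\BdR^{+}(B)\hatotimes_{\dR_{\log}(A)}A$ uses the $\dR_{\log}(A)$-module description of $f^{\pdR}_{*}\mO$. As written, your third paragraph derives that description from the very Cartesian square you are trying to prove.
\end{itemize}
The paper's argument is already contained in your first paragraph. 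After restricting to $[\hat{\mA}^{1}/\GG_{m}]$, the map from the \v{C}ech nerve to the groupoid is Cartesian in each simplicial degree. For example, $\Spf^{+}(\OBdRlog^{+}(B)\hatotimes_{\BdR^{+}(B)}\OBdRlog^{+}(B))$ and $\Spf^{+}(\OBdRlog^{+}(B))\times_{X\times[\mA^{1}/\GG_{m}]}\hat{\Delta}^{+}_{X,\log}$ are both $\Spf^{+}$ of $\BdR^{+}(B)\hatotimes_{\dR_{\log}(A)}A\hatotimes_{\dR_{\log}(A)}A$, by Lemmas \ref{lemOBdRIsPushoutOfBdR} and \ref{lemSpfCommuteWithLimit}. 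Descent in the $\infty$-topos of solid stacks then gives the Cartesian square on colimits.

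\textbf{The last paragraph.} This step rests on a false claim. $f^{\pdR}$ does not factor through $X^{\pdR/L,\hat{+}}_{\log}$, and $\AnSpec^{+}(\BdR^{+}(B)/t^{n})$ does not lie over $[\hat{\mA}^{1}/\GG_{m}]$.
\begin{itemize}
\item Its fiber over $[\GG_{m}/\GG_{m}]$ is $\AnSpec(\BdR^{+}(B)/t^{n})\neq\emptyset$, whereas $[\hat{\mA}^{1}/\GG_{m}]$ does not meet $[\GG_{m}/\GG_{m}]$. Indeed, the lemma itself asserts a map $\BdR^{+}(\ti{X})\to X^{\pdR/L}_{\log}$ on the generic fiber.
\item You are conflating two different parameters. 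What is nilpotent is Fontaine's $t$, i.e.\ the degree-one element $t\,u^{-1}$ of $\mrm{Rees}(\BdR^{+}(B)/t^{n})$. The coordinate $u$ of $\mA^{1}$ (which the paper also calls $t$) is not nilpotent, since inverting it returns $\BdR^{+}(B)/t^{n}\neq 0$.
\end{itemize}
What is true, and suffices, is that $f^{\pdR}_{*}\mO$ is filtered complete.
\begin{itemize}
\item We have $f^{\pdR}_{*}\mO=\varprojlim_{n}(f_{n})_{*}\mO$, where $f_{n}$ is the restriction of $f^{\pdR}$ to $\AnSpec^{+}(\BdR^{+}(B)/t^{n})$.
\item The structure sheaf of $\AnSpec^{+}(\BdR^{+}(B)/t^{n})$ is derived $u$-complete, because the $t$-adic filtration on $\BdR^{+}(B)/t^{n}$ is finite (Corollary \ref{corFilterdatA1modGm} (2)).
\item Let $s:X^{\pdR/L,+}_{\log}\to[\mA^{1}/\GG_{m}]$. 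The $u$-completion $\varprojlim_{k}(-)\otimes s^{*}(\mO/u^{k})$ commutes with $(f_{n})_{*}$, by the projection formula for perfect complexes (Lemma \ref{lemProjectionFormuaForPerf}).
\end{itemize}
Hence $f^{\pdR}_{*}\mO$ is $\widehat{j}_{*}$ of its restriction to $X^{\pdR/L,\hat{+}}_{\log}$, and that restriction is the pushforward along the restriction of $f^{\pdR}$. On that locus your third paragraph applies: the Cartesian square, smooth base change along $h$, and Lemma \ref{lemLogDRStackIsAffine} identify it with $\BdR^{+}(B)^{+}$ as in Definition \ref{dfnOBdRDescend}.
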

\begin{proof}
Localizing, 
we can assume that \(X=\Spa(A,A^{+})\)
and \(\ti{X}=\Spa(B,B^{+})\).
By Lemma \ref{lemOBdRIsPushoutOfBdR}, we have a push-out diagram of filtered complete algebras \[
\begin{tikzcd}
\BdR^{+}(B)\arrow[r]
& \OBdRlog^{+}(B)\\
\dR(A/L)\arrow[u]\arrow[r]
& A\arrow[u].
\end{tikzcd}
\] 
In particular, we have a morphism between cosimplicial system of filtered complete algebras \[
(A^{\otimes_{\dR(A/L)}n})_{[n]\in\Delta}\to 
(\OBdRlog^{+}(B)^{\otimes_{\BdR^{+}(B)}n})_{[n]\in\Delta}.
\]
Applying the functor \(\Spf^{+}(-)\) (Definition \ref{dfnSpfFunctor}), we
 have a morphism between simplicial system of solid stacks \[
\left(\Spf^{+}\left(\OBdRlog^{+}(B)^{\otimes_{\BdR^{+}(B)}n}\right)\right)_{[n]\in\Delta^{op}}\to 
\left(\Spf^{+}\left(A^{\otimes_{\dR(A/L)}n}\right)\right)_{[n]\in\Delta^{op}}. 
\] Note that the RHS is precisely the groupoid object in Lemma \ref{lemThisIsGroupoid}, and the LHS is the \v Cech nerve associated to the \(!\)-cover \(\OBdRlog^{+}(\ti{X})\to \BdR^{+}(\ti{X})\) by Construction \ref{constructionOBdRTate} and Lemma \ref{lemSpfCommuteWithLimit}. 

So taking simplicial colimits on both sides, by Definition \ref{dfnFilterDRAlg},
we obtain a morphism \[\Spf^{+}(\BdR^{+}(B))\to X^{\pdR/L,+}.
\] such that we have a commutative diagram \[
\begin{tikzcd}
\Spf^{+}(\OBdRlog^{+}(B))\arrow[r]\arrow[d] \mcorner & X\times {[\mA^{1}/\GG_{m}]}\arrow[d] \\
\Spf^{+}(\BdR^{+}(B))\arrow[r]& X^{\pdR/L,+}_{\log},
\end{tikzcd}
\] that becomes Cartesian after restricting of \( [\hat{\mA}^{1}/\GG_{m}] \) by Lemma \ref{lemOBdRIsPushoutOfBdR}. 
\end{proof}

In the smooth case,
we have two constructions \(\BdR^{+}(\ti{X})\to X^{\pdR/L}\) (Construction \ref{constructionBdROnDRStack} and Lemma \ref{lemBdRMapToLogDRStack}). Actually they are the same. 
\begin{lem}\label{lemTwoBdRCoincide}
Let \(\ti{X}\to X\) be as in Definition \ref{dfnOBdRDescend}.
Assume in addition that \(X\) is smooth and is equipped with the trivial log structure. 

Then 
we have the following commutative diagram \[
\begin{tikzcd}
\BdR^{+}(\ti{X})\arrow[r,"f_{\infty}"]\arrow[rd,"f^{\pdR}"] & \ti{X}^{\pdR/L}\arrow[d,"\pi"] \\
& X^{\pdR/L}
\end{tikzcd}
\] where \(f_{\infty}\) is as in Construction \ref{constructionBdROnDRStack}, and \(f^{\pdR}\) is as in Lemma \ref{lemBdRMapToLogDRStack}.

In particular,
\((\BdR^{+}(B),t^{\bullet})\in\Fil^{\ZZ}(\QCoh(X^{\pdR/L}))\) in Definition \ref{dfnOBdRDescend}
is isomorphic to \((\pi_{*}(\BdR^{+}),t^{\bullet})\),  
where \(\BdR^{+}\in \QCoh(\ti{X}^{\pdR/L})\) is given by  in Construction
\ref{constructionBdROnDRStack}.
\end{lem}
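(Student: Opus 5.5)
We have to show $\pi\circ f_{\infty}\cong f^{\pdR}$ as maps $\BdR^{+}(\ti{X})\to X^{\pdR/L}$. The idea is to use the trivial deformation theory of algebraic de Rham stacks (feature (B)), which rigidifies maps from nilpotent thickenings. We work locally, with $X=\Spa(A,A^{+})$ and $\ti{X}=\Spa(B,B^{+})$; both constructions are compatible with localization by Lemma \ref{lemBdRTorsor} (2).

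Since $\BdR^{+}(\ti{X})=\varinjlim_{n}\AnSpec(\BdR^{+}(B)/t^{n})$, it suffices to compare the two maps on each $\AnSpec(\BdR^{+}(B)/t^{n})$, compatibly in $n$. By Lemma \ref{lemTwoAlgDRSame} (2), $X^{\pdR/L}$ agrees with Juan's algebraic de Rham stack (Definition \ref{dfnAlgDRJuan}). Its functor of points is therefore $\varinjlim_{I}X(C/I)$, the colimit over uniformly nilpotent ideals, sheafified.

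The ideal $(t)\subset \BdR^{+}(B)/t^{n}$ is nilpotent. Hence a map $\AnSpec(\BdR^{+}(B)/t^{n})\to X^{\pdR/L}$ is determined by its restriction to $\AnSpec(B)$, provided both maps are restrictions of the same map $\AnSpec(B)\to X$ along $\AnSpec(B)\to X\to X^{\pdR/L}$. More precisely, $\Hom(\AnSpec(B),X^{\pdR/L})\cong\Hom(\AnSpec(\BdR^{+}(B)/t^{n}),X^{\pdR/L})$ by restriction; this uses the colimit over nilpotent ideals, cofinality of ideals containing $(t)$, and sheafification being compatible.

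So the key steps are the following.

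\emph{Step 1.} Show that $\pi\circ f_{\infty}$ restricted to $\ti{X}$ is $\ti{X}\xrightarrow{f}X\to X^{\pdR/L}$. This holds by construction, since $f_{\infty}$ extends $\ti{X}\to\ti{X}^{\pdR/L}$ and $\pi$ is induced functorially from $f$.

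\emph{Step 2.} Show that $f^{\pdR}$ restricted to $\ti{X}$ is also $f$ composed with $h$. This follows from the commutative diagram of Lemma \ref{lemBdRMapToLogDRStack}, composed with the comparison $X^{\pdR/L}_{\log}\to X^{\pdR/L}$ of Lemma \ref{lemTwoAlgDRSame} (1). The restriction of $\BdR^{+}(\ti{X})$ to $\ti{X}$ is realized via $\ti{X}\to\OBdR^{+}(\ti{X})\to\BdR^{+}(\ti{X})$, and the upper route factors through $X$.

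\emph{Step 3.} Apply the rigidity above to conclude that the two maps agree on each $\AnSpec(\BdR^{+}(B)/t^{n})$, compatibly in $n$, hence on the colimit. Equivalently, $\Hom(\BdR^{+}(\ti{X}),X^{\pdR/L})\cong\Hom(\ti{X},X^{\pdR/L})$.

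I expect the main obstacle to be making this rigidity precise as an equivalence of anima rather than merely of $\pi_{0}$, since the comparison needs to be canonical enough to glue. Here I would first try Theorem \ref{thmAnDRStackGeneral} (2) in the form $X^{\pdR/L}\cong (X^{\pdR/L})^{\pdR/L}$, together with the fact that $\ti{X}\to \BdR^{+}(\ti{X})$ induces an isomorphism on $(-)^{\pdR}$; the latter holds because it is an ind-nilpotent thickening, exactly as in the $\dagger$-reduction argument in the proof of Theorem \ref{thmAnDRStackGeneral} (5).

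For the final assertion, push forward the structure sheaf along the now-identified maps. Lemma \ref{lemBdRMapToLogDRStack} gives $f^{\pdR}_{*}\mO\cong\BdR^{+}(B)$, and Construction \ref{constructionBdROnDRStack} gives $\pi_{*}f_{\infty,*}\mO\cong\pi_{*}\BdR^{+}$. These pushforwards are taken levelwise in $n$ and passed to the limit, and the $t$-adic filtrations match because both come from $\BdR^{+}(B)/t^{n}$.
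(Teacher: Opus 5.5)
Your proposal is correct and is essentially the paper's proof. The paper observes that, by Definition \ref{dfnAlgDRJuan}, $\mrm{Map}_{L}(\BdR^{+}(\ti{X}),X^{\pdR/L})\cong\mrm{Map}_{L}(\ti{X},X^{\pdR/L})$, and that both $\pi\circ f_{\infty}$ and $f^{\pdR}$ extend $\ti{X}\xrightarrow{f}X\to X^{\pdR/L}$, which is exactly your Steps 1--3 compressed. That equivalence of mapping anima comes directly from the definition (cofinality of the uniformly nilpotent ideals containing $(t)$), so your $\pi_{0}$ worry is moot and the fallback through Theorem \ref{thmAnDRStackGeneral} (2), which concerns monomorphisms rather than this rigidity, is unnecessary.
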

\begin{proof}
By definition of \(X^{\pdR/L}\) (Definition \ref{dfnAlgDRJuan}), \[
\mrm{Map}_{L}(\BdR^{+}(\ti{X}),X^{\pdR/L})
\cong 
\mrm{Map}_{L}(\ti{X},X^{\pdR/L}).
\] Thus there are forced to be the same, as both maps are the unique extension of \(f:\ti{X}\to X\).
\end{proof}






\section{Shimura varieties as analytic stacks}\label{sectionLAVectorSV}

In this section, we work with general Shimura varieties. 
We want to study the locally analytic vectors in the structure sheaf of infinite-level Shimura varieties,
following \cite{Pan22} and \cite{Juan2022.09locallyShi}.
In the framework of analytic stacks, we will define so-called locally analytic Shimura varieties.
The two main results are Theorem \ref{thmLAstrucGMHTper} and Theorem \ref{thmReformulateRHCorr}. 

Theorem \ref{thmLAstrucGMHTper} says roughly that the geometry of locally analytic Shimura varieties is determined by its de Rham stack plus a so-called ``Grothendieck-Messing-Hodge-Tate period map''. 
We learn about this period map first in the local setting in \cite{DospinescuJuan2024jacquet}, but the version for global Shimura varieties is more subtle, for which we first need to develop a theory of ``Grothendieck-Messing period map'' in Subsection \ref{subsecGrothMessTheory}.

Theorem \ref{thmReformulateRHCorr} is about the analytic de Rham stack of perfectoid spaces. We explain how one can fit the (logarithmic) Riemann-Hilbert correspondence into our framework. 

Here is a more detailed plan:

In Subsection \ref{subsecGrothMessTheory}, we give a reformulation of the classical Kodaira-Spencer isomorphism (Propositions \ref{propAlgGMtheory} \& \ref{propAnalyticGMTheory}), which could be understood as a characteristic 0 version of Grothendieck-Messing theory, but note that our results hold for \emph{any} Shimura varieties. As a corollary, we give a reformulation of (dual) BGG complexes on Shimura varieties.

In Subsection \ref{subsectionInfiniteLevelShimuraVar}, we define two versions---a completed version (Lemma \ref{lemTorCompacAsTateStack}) and a smooth / uncompleted version (Definition \ref{dfnNonCompletedInfiniteLevel})---of the original construction of the ``perfectoid Shimura variety" of \cite{Scholze15}, in the frame work of analytic stacks. The slight trickiness comes from the toroidal boundaries. The completed infinite-level Shimura varieties also admit Hodge-Tate period maps to \( \Fl_{G,\mu} \) in the category of Tate stacks.

In Subsection \ref{subsecDeriveLocallyAnalyticVectors},
we introduce the derived locally analytic vectors as defined in \cite{JRC2021solid}, \cite{JacintoJoaquínJuan2023SolidLARepII}, and apply the construction to define the main object of our study---locally analytic Shimura varieties. We show that its cohomology calculates the locally analytic part of the completed cohomology of \cite{Emerton06}. 

In Subsection \ref{subsecGMHTperiodMap}, we define a so-called ``Grothendieck-Messing-Hodge-Tate period map'', which combines the Hodge-Tate period map and the Grothendieck-Messing period map. We state a theorem that could be understood as a Grothendieck-Messing theory of locally analytic Shimura varieties (Theorem \ref{thmLAstrucGMHTper}). This is one of the deepest results in the section. 

In Subsection \ref{subsecGeometricSen}, 
we recall the geometric Sen theory from \cite{Pan22}, \cite{Pilloni22}, \cite{Juan2022.09locallyShi}, which is a key tool for studying locally analytic Shimura varieties. This will allow us to relate locally analytic Shimura varieties to smooth infinite-level Shimura varieties in Subsection \ref{subsecLocallyAnalyticShimuraVar} (Proposition \ref{propLAvsSmoothSV}). 

In Subsection \ref{subsecSMdRstack}, we further relate smooth infinite-level Shimura varieties to its de Rham stack. Since we don't know whether taking algebraic de Rham stacks commutes with limits in general, the latter is defined as the inverse limit of de Rham stacks of finite level Shimura varieties.
Combining Subsections \ref{subsecLocallyAnalyticShimuraVar} \& \ref{subsecSMdRstack}, we complete the proof of Theorem \ref{thmLAstrucGMHTper}.

In Subsection \ref{subsecHorLocaAnaShimuraVarie},
we study the 
``horizontal action'' of \(\mfk{m}^{0}\), and define 
the horizontal locally analytic Shimura variety, which is the quotient of the locally analytic Shimura variety by this action.
The key result is 
Corollary \ref{corCartesianFlandSMGeneral}, which follows immediately from Theorem \ref{thmLAstrucGMHTper}.
shows in particular that the map from the horizontal locally analytic Shimura variety \(\mX^{\la,0}_{K^{p}}\) to \(\mX_{\Kpp}\times \Fl_{G,\mu}\) is ``\'etale'' in a suitable sense. 
In the next section, we will use this to pull back the classical theta operators on \(\mX_{\Kpp}\)
and \(\Fl_{G,\mu}\) to define the operators \(d^{k_{\tau}}_{\tau}\)
and \(\bar{d}^{k_{\tau}}_{\tau}\).

Subsections \ref{subsecBdRSV} and \ref{subsecRiemannHilbert} focus on another topic: a reformulation of the logarithmic Riemann-Hilbert correspondence on the Shimura varieties. 
In Subsection \ref{subsecBdRSV},
we start with defining a \( \BdR^{+} \)-version of the infinite-level Shimura varieties. This is essentially in Subsection \ref{subsecDeRhamPeriodSheaves}, except some additional treatment is needed when the perfectoidness of the infinite-level Shimura varieties is unknown. 
In Subsection \ref{subsecRiemannHilbert}, we reformulate the logarithmic Riemann-Hilbert correspondence (Theorem \ref{thmReformulateRHCorr}, Corollary \ref{corReformulateRiemmanHilbPerfdCase}). The key point is that the Riemann-Hilbert correspondence can be formulated in a way that is intrinsic to a perfectoid space equipped with an action of a \( p \)-adic Lie group. 

In Subsection \ref{subsecLocalAnalogueLASv}, we give an account of the previous results in the setting of basic local Shimura varieties (\cite{ScholzeWeinstein2020berkeley}). Part of these results are essentially in \cite{DospinescuJuan2024jacquet}.

\subsection{Grothendieck-Messing theory}\label{subsecGrothMessTheory}
\subsubsection{Set-up}\label{subsecSetUpSV}
As before, we fix an identification \(\iota:\CC\cong\bar{\QQ}_{p}\).
\begin{notation}\label{notationShimuraSetUp}
Let \((G,X)\)
be any Shimura data, \(E\subset \CC\) be the reflex field, and \(E_{\p}
\) be the \(p\)-adic completion along the composition \(E\subset \CC\cong \bar{\QQ}_{p}\). In particular, we have natural embeddings \(E_{\p}\subset\bar{\QQ}_{p}\subset \CC_{p}\).

(1)
For any neat open compact subgroup \(K\subset G(\mA_{f})\), let \(X_{K}\) 
be 
the associated Shimura variety defined over \(E\) (\cite{Milne1990canonical}), and let \( X_{K}^{\tor} \) be a toroidal compactification of \( X_{K} \) (\cite{Pink1989arithmetical}). Here we fix a suitable choice of the cone decomposition such that for a fixed level \(K^{p}K_{p,0}\), \(X_{K^{p}K_{p,0}}^{\tor}\) is smooth, and the boundary \(\partial X_{K}^{\tor}\) is a normal crossings divisor (\cite[Example 2.3.17]{DLLZ2019logarithmicFoundational}). 
Later, we will only consider \( K\subset K^{p}K_{p,0} \).

For any finite extension \( L\subset \bar{\QQ}_{p} \) of \( E_{\p} \), 
let \(\mX_{K,L}\) (resp. \(\mX_{K,L}^{\tor}\))
be the analytification of \(X_{K}\times_{E}L\) (resp. \(X_{K}^{\tor}\times_{E}L\)), 
and denote \(\mX_{K}:=\mX_{K,E_{\p}}\times_{\Spa(E_{\p})}\Spa(\CC_{p})\) (resp. \(\mX_{K}^{\tor}:=\mX^{\tor}_{K,E_{p}}\times_{\Spa(E_{\p})}\Spa(\CC_{p})\)). 

(2)
Let \(\mu:\GG_{m,\bar{\QQ}}\to G_{\bar{\QQ}}\)
be the Hodge cocharacter determined by the Shimura datum \((G,X)\). Let \( P^{\std}_{\mu},\;P_{\mu},\;M_{\mu},\;N_{\mu}^{\std},\;N_{\mu} \) be as in Notation \ref{notationPartialFlagMinuscule}.

Let
\[\mrm{Fl}_{G,\mu,\bar{\QQ}}:=P_{\mu}\backslash G_{\bar{\QQ}},\;\mrm{Fl}_{G,\mu,\bar{\QQ}}^{\std}:=P_{\mu}^{\std}\backslash G_{\bar{\QQ}},
\] which are schemes over \(\bar{\QQ}\) equipped with a right action of \(G_{\bar{\QQ}}\). 
They both descend to schemes over \(E\), which we denote as \( \mrm{Fl}_{G,\mu} \) and \( \mrm{Fl}_{G,\mu}^{\std} \).

For any finite extension \( L\subset \bar{\QQ}_{p} \) of \( E_{\p} \), 
we denote by \(\Fl_{G,\mu,L}\) and \(\Fl^{\std}_{G,\mu,L}\) the associated rigid varieties over \(\Spa(L)\), and denote by \(\Fl_{G,\mu}\)
and \(\Fl^{\std}_{G,\mu}\) their base change to \(\Spa(\CC_{p})\).
\end{notation}
\begin{lem}\label{lemFiniteLevelProper}
(1)
The solid stacks \( \mX^{\tor}_{K} \), \( \Fl_{G,\mu}^{\std} \)
and \( \Fl_{G,\mu} \) are cohomologically co-smooth over \( \AnSpec(\CC_{p,\square}) \). \( (\mX^{\tor}_{K})^{\pdR/\CC_{p}}\to \AnSpec(\CC_{p,\square}) \)
is prim.

(2) \( \Fl_{G,\mu}^{\dR}\to \AnSpec(\bar{\QQ}_{p}) \) and
\( \Delta_{\Fl}^{\dR}:\Fl_{G,\mu}^{\dR}\to \Fl_{G,\mu}^{\dR}\times_{\bar{\QQ}_{p}}\Fl_{G,\mu}^{\dR} \)
are cohomologically co-smooth.
\end{lem}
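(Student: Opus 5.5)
For (1), the plan is to reduce co-smoothness of a proper smooth rigid variety over \( \CC_{p} \) to algebraic facts. All three spaces \( \mX^{\tor}_{K} \), \( \Fl_{G,\mu}^{\std} \) and \( \Fl_{G,\mu} \) are analytifications of smooth projective schemes over \( E \), base changed to \( \CC_{p} \). By Proposition \ref{propAnGAGA} (1), the analytification of a projective scheme agrees with the scheme itself as a solid stack. By Example \ref{egScheme}, a scheme realized as a solid stack is cohomologically co-smooth over the base, since affine opens are affine proper and Zariski covers are prim \( ! \)-covers (Proposition \ref{propAffineProperCover} and Theorem \ref{thmVariousDescent} (5)). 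Cohomological co-smoothness is stable under base change along \( \AnSpec(E_{\square})\to \AnSpec(\CC_{p,\square}) \), because primness and \( \mbb{P}_{f} \) are compatible with base change (Definition \ref{dfnVariousPropertyMorphiDescent} (5)). The one point needing care is that \( \mbb{P}_{f}(\mO) \) is invertible. For a smooth projective scheme I expect \( \mbb{P}_{f}(\mO)\cong \omega^{-1}[-d] \) up to convention, by Serre duality. I would obtain this by computing \( \mbb{P}_{f} \) locally on the affine Zariski charts, or by reducing to \( \mP^{n} \) via a closed immersion; if needed, the toroidal case only uses smoothness of \( X^{\tor}_{K} \), which is assumed in Notation \ref{notationShimuraSetUp}. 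The statement about \( (\mX^{\tor}_{K})^{\pdR/\CC_{p}} \) is then immediate from Corollary \ref{corDRStackPrim}, since \( \mX^{\tor}_{K}\to\AnSpec(\CC_{p,\square}) \) is prim. The log version applies equally, as that corollary is stated for \( X^{\pdR/L}_{\log} \).

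For (2), I will use that \( \Fl_{G,\mu} \) is defined over \( E_{\p} \), so \( \Fl_{G,\mu}^{\dR}\cong (\Fl_{G,\mu,L})^{\dR} \) base changed to \( \bar{\QQ}_{p} \), via Theorem \ref{thmAnDRStackGeneral} (1), (6). Then \( \Fl_{G,\mu}^{\dR}=\varinjlim_{L}\Fl_{G,\mu,L}^{\dR/L}\times_{L}\bar{\QQ}_{p} \), reducing to the relative analytic de Rham stack \( \Fl^{\dR/L} \) over a \( p \)-adic field \( L \). By Corollary \ref{corXdRDescendableCover}, \( h:\Fl\to\Fl^{\dR/L} \) is a prim \( ! \)-cover. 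By Theorem \ref{thmVariousDescent} (5) in its converse direction, co-smoothness of \( \Fl^{\dR/L}\to L \) follows from co-smoothness of \( \Fl\to L \) (part (1)) together with \( \mbb{P}_{h}(\mO) \) being invertible. For the latter, I would base change along \( h \) itself, using Theorem \ref{thmVariousDescent} (4). This gives \( p_{1}:\Delta(\Fl)^{\dagger}\to\Fl \), which is locally \( \Fl\times(0\subset\mA^{d})^{\dagger} \) as in the proof of Corollary \ref{corXdRDescendableCover}. Its \( \mbb{P} \) is invertible by the standard computation that \( \AnSpec(L\{x\}^{\dagger})\to L \) is co-smooth with dualizing object a shifted line.

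For the diagonal \( \Delta^{\dR}_{\Fl} \), I would apply the same descent to \( \Delta \) pulled back along \( h\times h \): the fiber \( \Fl\times_{\Fl^{\dR}}\Fl=\Delta(\Fl)^{\dagger}\to\Fl\times\Fl \) is affine proper, so it is prim. Invertibility of \( \mbb{P} \) can then be checked locally as a dagger neighborhood of a smooth closed immersion. Equivalently, I could use Lemma \ref{lemCompareDRStac}, \( \Fl^{\dR}\cong \Fl/(\fg^{0}/\p^{0})^{\dagger} \), which reduces the question to the group-like object \( (\fg^{0}/\p^{0})^{\dagger} \).

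The main obstacle is invertibility of \( \mbb{P} \), rather than mere primness, in the dagger and diagonal steps. My first attempt there is the explicit one-variable computation with \( L\{x\}^{\dagger} \), followed by a Künneth argument via Lemma \ref{lemKunnethProper}.
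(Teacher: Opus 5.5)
Your skeleton is correct and close to the paper's. For (1) you use GAGA (Proposition \ref{propAnGAGA} (1)), Example \ref{egScheme}, and Corollary \ref{corDRStackPrim} for the de Rham stack, exactly as the paper does. For (2) you use cancellation along the prim \(!\)-cover \(h:\Fl\to\Fl^{\dR}\) via Theorem \ref{thmVariousDescent} (4)/(5); the paper does the same, with two small differences.
\begin{itemize}
\item You descend to a finite \(L\) and re-derive the co-smoothness of \(h\) by base changing to \(p_{1}:\Delta(\Fl)^{\dagger}\to\Fl\). The paper simply quotes Theorem \ref{thmAnDRStackGeneral} (3), which already says \(h\) is an affine proper \(!\)-surjection and hence co-smooth, together with (6) for \(\CC_{p}\to\bar{\QQ}_{p}\).
\item For \(\Delta^{\dR}_{\Fl}\) you base change along \(h\times h\) and apply Theorem \ref{thmVariousDescent} (4). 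The paper instead writes the composite \(\Fl\to\Fl\times_{\CC_{p}}\Fl\to\Fl^{\dR}\times_{\bar{\QQ}_{p}}\Fl^{\dR}\) as \(\Fl\to\Fl^{\dR}\xrightarrow{\Delta^{\dR}}\Fl^{\dR}\times\Fl^{\dR}\) and cancels using Theorem \ref{thmVariousDescent} (5).
\end{itemize}
Both routes work.

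What you call the main obstacle is not one, and your predicted answers for it are wrong. Every map you need to test is proper in the analytic-ring sense, meaning its source carries the analytic structure induced from the target.
\begin{itemize}
\item A projective scheme realized as in Example \ref{egScheme} has charts \(\AnSpec((A,\ZZ)_{\square})\), and Zariski localizations are proper maps.
\item For \(p_{1}:\Delta(\Fl)^{\dagger}\to\Fl\), the proof of Theorem \ref{thmAnDRStackGeneral} (3) shows the analytic structure is automatically induced from \(\Fl\).
\end{itemize}
For such \(f\) (with proper diagonal) one has \(f_{!}\cong f_{*}\) and \(\Delta_{f,!}\cong\Delta_{f,*}\). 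Hence \(\mbb{P}_{f}(\mO)=p_{2,*}\Delta_{f,*}\mO\cong\mO\), and co-smoothness is formal; this is the mechanism behind Example \ref{egScheme} and Proposition \ref{propAffineProperCover}.

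In particular \(\mbb{P}_{f}(\mO)\not\cong\omega^{-1}[-d]\), and the dagger disc does not produce a shifted line. Serre duality and \(\omega[d]\) describe \(\mbb{D}_{f}(\mO)=\unl{\Hom}(\mO,f^{!}\mO)\), the dualizing object for cohomological smoothness, not \(\mbb{P}_{f}\). If you tried to prove the formula you wrote, you would fail. Since only invertibility is required, replace those computations by ``affine proper \(\Rightarrow\) \(\mbb{P}=\mO\)''; the rest of your argument then goes through unchanged.
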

\begin{proof}
The first part follows from Proposition \ref{propAnGAGA} and Example \ref{egScheme}. The part for algebraic de Rham stacks is given by Corollary \ref{corDRStackPrim}. For the second part, the cohomological co-smoothness of the first map follows from Theorem \ref{thmVariousDescent} (5),
and Theorem \ref{thmAnDRStackGeneral} (3) and (6).
For the second map \( \Delta^{\dR}_{\Fl} \), we note that \( \Delta_{Fl}:\Fl_{G,\mu}\to \Fl_{G,\mu}\times_{\CC_{p}}\Fl_{G,\mu} \)
is cohomogically co-smooth  by Proposition \ref{propAffineProperCover}, and
\( \Fl_{G,\mu}\times_{\CC_{p}}\Fl_{G,\mu}\to \Fl_{G,\mu}^{\dR}\times_{\bar{\QQ}_{p}}\Fl_{G,\mu}^{\dR}
\) is cohomogically co-smooth again by Theorem \ref{thmVariousDescent} (5)
and Theorem \ref{thmAnDRStackGeneral} (3) and (6). Thus we can again conclude by Theorem \ref{thmVariousDescent} (5).
\end{proof}
\begin{notation}\label{notationWidetildeGroup}
We define \(Z_{c}\subset Z=Z(G)\) the maximal \(\QQ\)-anisotropic \(\RR\)-split torus, and  \(G^{c}:=G/Z_{c}\). For any subgroup \(K_{p}\subset G(\QQ_{p})\),
define \(\widetilde{K_{p}}:=K_{p}/\overline{(K^{p}K_{p}\cap Z(\QQ))}\). For any subgroup \(H\subset G_{\bar{\QQ}}\) containing \(Z_{c}\), we denote \(H^{c}:=H/Z_{c}\). Note that \(\widetilde{K_{p}}\) depends implicitly on the neat open compact subgroup \(K^{p}\).
\end{notation}

\begin{notation}
	\label{notationEquShvOnFl}
Let \(\fg:=\Lie(G)\otimes\CC_{p}\), \(\fg^{0}:=\fg\otimes \mO_{\Fl_{G,\mu}}\), and let \(\p_{\mu}^{0}:=\p^{0}\), \(\fn^{0}_{\mu}:=\fn^{0}\) and \(\mm_{\mu}^{0}:=\mm^{0}\), where the RHS is as in Notation \ref{notationEquShvOnFlGeneral} for \(L=\CC_{p}\) and \(P=P_{\mu}\). We will write \(\p^{0}\), \(\fn^{0}\) and \(\mm^{0}\) if it causes no confusion. We write \(\fg^{c,0},\p^{c,0},\mm^{0}\)
for the quotient of \(\p^{0}\), \(\fn^{0}\) and \(\mm^{0}\) by \(\mfk{z}_{c}:=\Lie(Z_{c})\otimes\CC_{p}\).

We write \(\mcal{M}_{\mu}^{c}:=\mcal{M}_{P_{\mu}^{c}}\), \(\mcal{P}_{\mu}^{c}:=\mcal{P}_{P_{\mu}^{c}}\) over \( \mFl_{G,\mu} \) and \(\mcal{M}_{\mu}^{\std,c}:=\mcal{M}_{P^{\std,c}_{\mu}}\), \(\mcal{P}_{\mu}^{\std,c}:=\mcal{P}_{P_{\mu}^{\std,c}}\) over \( \mFl_{G,\mu}^{\std} \), where the RHS is as in Definition \ref{dfnFlandM} for \(P=P_{\mu}^{c}\) and \(P=P_{\mu}^{\std,c}\) respectively.

After base change to \( \CC_{p} \), by Proposition \ref{propAnGAGA}, \( (\mFl_{G,\mu})_{\CC_{p}} \cong \Fl_{G,\mu}\). We will write \( \mcal{M}_{\mu}^{c,\an}:= ((\mcal{M}_{\mu}^{c})^{Tate})_{Solid}\), which by Lemma \ref{lemSolidToTateAnalyticficationFunctor} is a reduction of structure of \( \mcal{M}_{\mu}^{c} \) to a \( M_{\mu}^{c,\an} \)-torsor over \( \Fl_{G,\mu} \). We define \(\mcal{P}_{\mu}^{c,\an}\), \(\mcal{M}_{\mu}^{\std,c,\an}\), and \(\mcal{P}_{\mu}^{\std,c,\an}\) similarly.
\end{notation}

\begin{notation}[de Rham torsor]\label{notationDRtorsor}
By \cite{Milne1990canonical}, \cite{Harris1989functorial}, for any \(V\in \Rep(G^{c})\), there is a \(\mu^{-1}\)-filtered vector bundle \(D_{\dR,K}(V)\) (``the canonical extension'')
over \(X_{K}^{\tor}\) equipped with an integrable log connection \(\nabla_{\dR,K}\). 
The construction \( D_{\dR,K}(-) \) 
is moreover a \( \otimes \)-functor. 

By Tannakian formalism,
this induces a (right) \(P^{\std,c}_{\mu}\)-torsor \(\mcal{P}_{\dR,K}^{\std,c}\) over \(X^{\tor}_{K,\bar{\QQ}}\).
We denote \[
\mcal{G}_{\dR,K,\bar{\QQ}}^{c}:=(\mcal{P}_{\dR,K}^{\std,c}\times G^{c}_{\bar{\QQ}})/P^{\std,c}_{\mu},\;
\mcal{M}_{\dR,K}^{c}:=(\mcal{P}_{\dR,K}^{\std,c}\times M^{c}_{\mu})/P^{\std,c}_{\mu}
\] over \(X^{\tor}_{K,\bar{\QQ}}\). Moreover, \( \mcal{G}_{\dR,K,\bar{\QQ}}^{c} \) 
descends to a \( G^{c} \)-torsor \( \mcal{G}_{\dR,K}^{c} \)  
over \( X^{\tor}_{K} \). 
Then we have a Cartesian diagram \[
\begin{tikzcd}
\mcal{G}_{\dR,K,\bar{\QQ}}^{c}\arrow[r]\arrow[d]& \mrm{Fl}^{\std}_{G,\mu,\bar{\QQ}}\arrow[d]\\
X_{K,\bar{\QQ}}^{\tor}\arrow[r,"\pi_{\GM,K}"] & {[\mrm{Fl}^{\std}_{G,\mu,\bar{\QQ}}/G^{c}_{\bar{\QQ}}]}
\end{tikzcd}
\] which also descends to \(E\).

After base change to \( \CC_{p} \), by Proposition \ref{propAnGAGA}, \( (X^{\tor}_{K})_{\CC_{p}} \cong \mX^{\tor}_{K}\), and we will write \( \mcal{P}^{\std,c,\an}_{\dR,K} \), \( \mcal{G}^{c,\an}_{\dR,K} \) and \( \mcal{M}^{c,\an}_{\dR,K} \) for the corresponding torsors obtained by applying \( ((-)^{Tate})_{Solid} \).
\end{notation}

\subsubsection{Algebraic Grothendieck-Messing theory}
\begin{lem}	\label{lemDRtorsorDescends}
There is a \(G^{c}\)-torsor \(\bar{\mcal{G}}^{c,+}_{\dR,K}\)
over \((X^{\tor}_{K})^{\pdR/E,+}_{\log}\) such that for any \(V\in \Rep(G^{c})\), the associated filtered vector bundle \(\bar{\mcal{G}}^{c,+}_{\dR,K}(V)\) with an integrable log connection over \((X^{\tor}_{K})^{\pdR/E,+}_{\log}\)
coincides via Lemma \ref{lemDRcomplexVSpushforward} with 
the canonical extension \((D_{\dR,K}(V),\nabla_{\dR,K})\) in Notation \ref{notationDRtorsor}. 

Consider \(h_{K}:X^{\tor}_{K}\times [\mA^{1}/\GG_{m}]\to (X^{\tor}_{K})^{\pdR/E,+}_{\log}\). Then we have a commutative diagram of solid stacks \[
\begin{tikzcd}
X^{\tor}_{K}\times [\mA^{1}/\GG_{m}]\arrow[r,"\pi_{\GM,K}"] \arrow[d,"h_{K}"]& 
{\left[\mrm{Fl}^{\std}_{G,\mu}/G^{c}\right]\times [\mA^{1}/\GG_{m}]}\arrow[d,"h_{\mrm{Fl}^{\std}}"]
\\
(X^{\tor}_{K})^{\pdR/E,+}_{\log}\arrow[r,"\pi_{\GM,K}^{\pdR,+}"] &
{\left[(\mrm{Fl}^{\std}_{G,\mu})^{\pdR/E,+}/G^{c}\right]}.
\end{tikzcd}
\]
The morphism \( \pi_{\GM,K^{p}K_{p,0}} \) is cohomologically smooth, and we have \[
\mcal{P}_{\dR,K}^{\std,c}\cong \pi_{\GM,K}^{*}(\mcal{P}_{\mu}^{\std,c}),
\mcal{M}_{\dR,K}^{c}\cong \pi_{\GM,K}^{*}(\mcal{M}_{\mu}^{\std,c}),
\] 
Moreover, \[ \bar{\mcal{G}}_{\dR,K}^{c,+}\cong (\pi_{\GM,K}^{\pdR,+})^{*}(\mcal{G}^{\std,c,+}_{\mu}), \]
where \( \mcal{G}^{\std,c,+}_{\mu}:=\mcal{G}_{\mu^{-1}}^{c,+}\to(\mFl_{G^{c},\mu^{-1}})^{\pdR/E,+}/G^{c} \) as in Lemma \ref{lemFilOnRep}.

We will refer to \(\pi_{\GM,K}\) as the \emph{Grothendieck-Messing period map}.
\end{lem}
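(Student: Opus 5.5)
The plan is to build the torsor $\bar{\mcal{G}}^{c,+}_{\dR,K}$ through the Tannakian formalism, applied to the symmetric monoidal description of quasi-coherent sheaves on the filtered log de Rham stack. Lemma \ref{lemDRcomplexVSpushforward} (1) identifies $\Vect((X^{\tor}_{K})^{\pdR/E,+}_{\log})$, as a symmetric monoidal category, with filtered vector bundles carrying an integrable log connection satisfying Griffiths transversality. The canonical extension $V\mapsto(D_{\dR,K}(V),\Fil^{\bullet},\nabla_{\dR,K})$ of Notation \ref{notationDRtorsor} is an exact $\otimes$-functor into this category.

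It is therefore an exact tensor functor $\Rep(G^{c})\to\Vect((X^{\tor}_{K})^{\pdR/E,+}_{\log})$. Since $h_K$ is a $!$-surjection satisfying universal $*$-descent (Lemma \ref{lemBasicFiltered}), I would check that it defines a $G^{c}$-torsor after pullback along $h_K$. There it becomes the Rees construction of $\mcal{G}^{c}_{\dR,K}$ with its filtration, which is a torsor over $X^{\tor}_K\times[\mA^1/\GG_m]$. I would argue this by reducing the fiber functor to the reduction of structure to $P^{\std,c}_\mu$ given by $\mcal{P}^{\std,c}_{\dR,K}$: the $\mu^{-1}$-filtration is locally split. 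The torsor then descends by $*$-descent, and this gives $\bar{\mcal{G}}^{c,+}_{\dR,K}$.

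Next I would produce $\pi_{\GM,K}^{\pdR,+}$ by identifying $[(\mFl^{\std}_{G,\mu})^{\pdR/E,+}/G^c]$ as a moduli description. By Lemma \ref{lemRelateClassicalCateO} (1) and Lemma \ref{lemFilOnRep} applied to $\mu^{-1}$, a map from $Y$ to $[(\mFl^{\std}_{G,\mu})^{\pdR/E,+}/G^c]$ should amount to a $G^c$-torsor on $Y$ together with a compatible filtered "Griffiths transversal" structure. Concretely, $\mcal{G}^{\std,c,+}_\mu$ is the universal such object.

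To make this rigorous without a full moduli statement, I would instead work on the atlas. The map $\pi_{\GM,K}\times\id$ together with $h_{\mrm{Fl}^{\std}}$ gives a map $X^{\tor}_K\times[\mA^1/\GG_m]\to[(\mFl^{\std})^{\pdR/E,+}/G^c]$. By Definition \ref{dfnFilterDRAlg}, $(X^{\tor}_K)^{\pdR/E,+}_{\log}$ is the quotient by the groupoid $\hat{\Delta}^+_{X,\log}$, so it suffices to produce a descent datum, that is, an identification of the two pullbacks along $p_1,p_2$ satisfying the cocycle condition. A map to $[(\mFl^{\std})^{\pdR,+}/G^c]$ is a $G^c$-torsor $\mcal{T}$ together with a $G^c$-equivariant map $\mcal{T}\to(\mFl^{\std})^{\pdR,+}$. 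The torsor on $\hat{\Delta}^+$ is identified by the connection, i.e.\ by the descent of the previous step, under which $\bar{\mcal{G}}^{c,+}_{\dR,K}$ descends. For the equivariant map, property (B) (trivial deformation theory, Definition \ref{dfnAlgDRJuan}) means that maps from the formal thickening $\hat{\Delta}^{+}$ into a de Rham stack are determined by their restriction to the diagonal $X\times[\mA^1/\GG_m]$. Filtered compatibility should follow from Griffiths transversality of $\nabla_{\dR,K}$, via Lemma \ref{lemFilOnRep}. This forces the cocycle condition, giving $\pi^{\pdR,+}_{\GM,K}$ and the identity $\bar{\mcal{G}}^{c,+}_{\dR,K}\cong(\pi^{\pdR,+}_{\GM,K})^*\mcal{G}^{\std,c,+}_\mu$. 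Commutativity of the square is then built in.

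The identities $\mcal{P}^{\std,c}_{\dR,K}\cong\pi_{\GM,K}^*\mcal{P}^{\std,c}_\mu$ and $\mcal{M}^{c}_{\dR,K}\cong\pi_{\GM,K}^*\mcal{M}^{\std,c}_\mu$ are immediate from the Cartesian diagram in Notation \ref{notationDRtorsor}. For cohomological smoothness of $\pi_{\GM,K}$, I would pull back along the smooth cover $\mFl^{\std}_{G,\mu}\to[\mFl^{\std}_{G,\mu}/G^c]$. The pullback is $\mcal{G}^c_{\dR,K}\to\mFl^{\std}_{G,\mu}$, a map between smooth schemes, and by Theorem \ref{thmVariousDescent} (4),(5) it suffices to know this map is smooth. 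This should follow from the Kodaira–Spencer isomorphism, since the differential is surjective on tangent spaces; the log structure is handled by working with the log tangent bundle.

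I expect the main obstacle to be the descent-datum step, specifically making precise that Griffiths transversality is exactly the condition for the filtered thickening $\hat{\Delta}^{+}$ to map compatibly to the filtered de Rham stack of the flag variety.
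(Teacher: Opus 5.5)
The gap is in your construction of $\pi^{\pdR,+}_{\GM,K}$ by a descent datum along $\hat{\Delta}^{+}_{X,\log}$, and it has two parts.

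First, you take $h_K^{*}\bar{\mcal{G}}^{c,+}_{\dR,K}$ as the torsor in the datum, and you treat $\bar{\mcal{G}}^{c,+}_{\dR,K}\cong(\pi^{\pdR,+}_{\GM,K})^{*}\mcal{G}^{\std,c,+}_{\mu}$ as automatic. But a map to $[(\mFl^{\std}_{G,\mu})^{\pdR/E,+}/G^{c}]$ is recorded by the tautological torsor $(\mFl^{\std}_{G,\mu})^{\pdR/E,+}\to[(\mFl^{\std}_{G,\mu})^{\pdR/E,+}/G^{c}]$, and this torsor is not $\mcal{G}^{\std,c,+}_{\mu}$.
\begin{itemize}
\item Along $h_{\mrm{Fl}^{\std}}\circ(\pi_{\GM,K}\times\id)$ the tautological torsor pulls back to $\mcal{G}^{c}_{\dR,K}\times[\mA^{1}/\GG_{m}]$ with the \emph{trivial} filtration.
\item $h_K^{*}\bar{\mcal{G}}^{c,+}_{\dR,K}$ instead carries the Hodge filtration.
\item These differ in general: over $X^{\tor}_{K}\times B\GG_{m}$ the first is pure of weight $0$, while the second is graded by the $\mu^{-1}$-weights.
\end{itemize}
So the datum as you describe it cannot restrict to the prescribed map on the atlas. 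The descent must be done for the trivially filtered torsor, namely the pullback of $\bar{\mcal{G}}^{c}_{\dR,K}$ along $(X^{\tor}_{K})^{\pdR/E,+}_{\log}\to(X^{\tor}_{K})^{\pdR/E}_{\log}\times[\mA^{1}/\GG_{m}]$. The identity with $(\pi^{\pdR,+}_{\GM,K})^{*}\mcal{G}^{\std,c,+}_{\mu}$ is then a separate step, as in the paper:
\begin{itemize}
\item The two torsors agree over $(X^{\tor}_{K})^{\pdR/E}_{\log}$, because $\mcal{G}^{\std,c,+}_{\mu}$ restricts there to the tautological torsor (Lemma \ref{lemFilOnRep}).
\item By Tannakian formalism and Lemma \ref{lemDRcomplexVSpushforward}(1), it then suffices to compare filtrations on $X^{\tor}_{K}\times[\mA^{1}/\GG_{m}]$.
\item There both filtrations are induced by $\Fil^{\bullet}_{\mu^{-1}}$ through the reduction $\mcal{P}^{\std,c}_{\dR,K}$.
\end{itemize}
This comparison is where the Hodge filtration and Lemma \ref{lemFilOnRep} enter, not the construction of the map.

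Second, the equality of the two equivariant maps over $\hat{\Delta}^{+}_{X,\log}$ does not follow from property (B). The rigidity in Definition \ref{dfnAlgDRJuan} concerns thickenings by ideals of the form $I\{-1\}$, i.e.\ ideals divisible by $t$. The ideal of $X^{\tor}_{K}\times[\mA^{1}/\GG_{m}]$ in $\hat{\Delta}^{+}_{X,\log}$ is not of this form: in the Rees algebra, its elements in $\Fil^{1}\cdot t^{-1}$ are not multiples of $t$. Griffiths transversality is not the missing input either. What is missing is that the filtered groupoid is the deformation to the normal cone of the unfiltered one: by Lemma \ref{lemClosedImmeAlgDRFil} and Lemma \ref{lemDiagonalLogProperty}(1),
$$\hat{\Delta}^{+}_{X^{\tor}_{K},\log}\cong(X^{\tor}_{K})^{\pdR/(\hat{\Delta}_{X^{\tor}_{K},\log}),+},\qquad \hat{\Delta}^{+}_{\mrm{Fl}^{\std}_{G,\mu}}\cong(\mrm{Fl}^{\std}_{G,\mu})^{\pdR/(\hat{\Delta}_{\mrm{Fl}^{\std}_{G,\mu}}),+}.$$
The correct order of steps is then:
\begin{itemize}
\item Build the unfiltered $\pi^{\pdR}_{\GM,K}$ first. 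Here (B) does apply, and your argument is fine.
\item This gives a morphism of groupoids $\hat{\Delta}_{X^{\tor}_{K},\log}\to\hat{\Delta}_{\mrm{Fl}^{\std}_{G,\mu}}/G^{c}$.
\item Apply the relative filtered de Rham stack along the diagonals (Definition \ref{dfnCompleteGrpObj}, Theorem \ref{thmAnDRStackGeneral}(1)) and pass to quotients.
\end{itemize}
The filtered map then comes for free, and your descent datum becomes exactly the paper's filtered groupoid morphism.

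A smaller point: smoothness of $\mcal{G}^{c}_{\dR,K}\to\mrm{Fl}^{\std}_{G,\mu}$ has nothing to do with Kodaira--Spencer, which is the input for Proposition \ref{propAlgGMtheory}. The fibres of this map are $P^{\std,c}_{\mu}$-torsors over $X^{\tor}_{K}$, so it is smooth exactly when $X^{\tor}_{K}$ is smooth. This is why only $\pi_{\GM,K^{p}K_{p,0}}$ is asserted to be cohomologically smooth.
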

\begin{proof}
The first paragraph follows from Lemma \ref{lemDRcomplexVSpushforward} and the Tannakian formalism. We will write \( \mcal{\bar{G}}^{c}_{\dR,K}:=\mcal{\bar{G}}^{c,+}_{\dR,K}|_{X_{K}^{\pdR/E}} \).

Note that \(\mcal{G}^{c}_{\dR,K}\cong h_{K}^{*}(\mcal{\bar{G}}^{c}_{\dR,K})|_{\mX^{\tor}_{K}}\to X^{\tor}_{K} \) admits a reduction of structure to a \( P_{\mu}^{\std,c} \)-torsor as in Notation \ref{notationDRtorsor}.
This induces \(\pi_{\GM,K}:X^{\tor}_{K}\to {[\mrm{Fl}^{\std}_{G,\mu}/G^{c}]}\cong BP_{\mu}^{\std,c}\).
Note that by trivializing the \( P^{c}_{\mu} \)-torsor \( \mcal{P}_{\dR,K}^{c} \) locally on \( X^{\tor}_{K} \), the fiber of \( \mcal{G}^{c}_{\dR,K}\to \mrm{Fl}^{\std}_{G,\mu} \) 
is locally isomorphic to \( X^{\tor}_{K}\times P^{c}_{\mu} \). 
In particular, as \( X^{\tor}_{K^{p}K_{p,0}} \) is assumed to be smooth, \( \pi_{\GM,K^{p}K_{p,0}} \) is cohomologically smooth by \cite[Proposition 3.6.13]{Juan2024analyticdeRham}. 

We continue to construct \( \pi_{\GM,K}^{\pdR,+} \). 
We start by defining \[
\pi_{\GM,K}^{\pdR}:(X^{\tor}_{K})^{\pdR/E}_{\log}\to (\mFl^{\std}_{G,\mu})^{\pdR/E}/G^{c}.
\]
By Lemma \ref{lemTwoAlgDRSame}, 
there is a natural factorization \( X_{K}\to (X_{K}^{\tor})_{\log}^{\dR/E}\to (X_{K}^{\tor})^{\dR/E} \). 
Consider the Cartesian diagram \[
\begin{tikzcd}
\mcal{G}^{c}_{\dR,K}
\arrow[r]\arrow[d] & \mcal{\bar{G}}^{c}_{\dR,K}\arrow[d]\\
X^{\tor}_{K}\arrow[r] & (X^{\tor}_{K})^{\pdR/E}_{\log}.
\end{tikzcd}
\] Then by Theorem \ref{thmAnDRStackGeneral} (1), there is an isomorphism \[
(\mcal{G}^{c}_{\dR,K})^{\pdR}\times_{(X^{\tor}_{K})^{\pdR}}(X^{\tor}_{K})^{\pdR/E}_{\log}\cong \mcal{\bar{G}}^{c}_{\dR,K}
\times^{G^{c}}(G^{c})^{\pdR},
\] where we note that \( E \) is \'etale over \( \QQ \), so \( (-)^{\pdR,+}=(-)^{\pdR/E,+} \) by Lemma \ref{lemBasicLogDRSta} (3).
Now \( \mcal{G}^{c}_{\dR,K}\to \mFl^{\std}_{G,\mu} \)
induces \( (\mcal{G}^{c}_{\dR,K})^{\pdR}\to (\mFl^{\std}_{G,\mu})^{\pdR} \),
and thus, using the isomorphism, induces a \( G^{c} \)-equivariant map 
\( \bar{G}^{c}_{\dR,K}\to (\mFl^{\std}_{G,\mu})^{\pdR} \). Thus quotienting out \( G^{c} \) on both sides gives the desired \( \pi^{\pdR}_{\GM,K} \).

Now to extend \( \pi^{\pdR}_{\GM,K} \) to \( \pi^{\pdR,+}_{\GM,K} \),
we observe that \( \pi^{\pdR}_{\GM,K} \)
induces a morphism of groupoid objects \[
\begin{tikzcd}
\hat{\Delta}_{X^{\tor}_{K},\log}
\arrow[d,bend left,"p_{1}'"]
\arrow[d,bend right,"p_{2}'"']
\arrow[r] & \hat{\Delta}_{\mFl^{\std}_{G,\mu}}/G^{c}\arrow[d,bend left,"p_{1}"]
\arrow[d,bend right,"p_{2}"']\\
X^{\tor}_{K}
\arrow[u,"\Delta'"]
\arrow[r] & \mFl_{G,\mu}^{\std}/G^{c}.
\arrow[u,"\Delta"]
\end{tikzcd}
\]
As in Definition \ref{dfnCompleteGrpObj}, we can take \( (-)^{\pdR,+} \) along \( \Delta \),
which gives a morphism of groupoid objects \[
\begin{tikzcd}
(X^{\tor}_{K})^{\pdR/(\hat{\Delta}_{X^{\tor}_{K},\log}),+}
\arrow[d,bend left,"p_{1}'"]
\arrow[d,bend right,"p_{2}'"']
\arrow[r] & 
(\mFl_{G,\mu}^{\std}/G^{c})^{\pdR/(\hat{\Delta}_{\mFl^{\std}_{G,\mu}}/G^{c}),+}
\arrow[d,bend left,"p_{1}"]
\arrow[d,bend right,"p_{2}"']\\
X^{\tor}_{K}
\arrow[u,"\Delta'"]
\arrow[r] & \mFl_{G,\mu}^{\std}/G^{c}.
\arrow[u,"\Delta"]
\end{tikzcd}
\]
However, by Lemma \ref{lemClosedImmeAlgDRFil} and Lemma \ref{lemDiagonalLogProperty} (1),
we have isomorphisms of groupoid objects
\[
(X^{\tor}_{K})^{\pdR/(\hat{\Delta}_{X^{\tor}_{K},\log}),+}\cong \hat{\Delta}_{X^{\tor}_{K},\log}^{+},\;
(\mFl^{\std}_{G,\mu})^{\pdR/(\hat{\Delta}_{\mFl^{\std}_{G,\mu}}),+}\cong \hat{\Delta}_{\mFl^{\std}_{G,\mu}}^{+}.
\]
By Theorem \ref{thmAnDRStackGeneral} (1), we also have \[
(\mFl_{G,\mu}^{\std}/G^{c})^{\pdR/(\hat{\Delta}_{\mFl^{\std}_{G,\mu}}/G^{c}),+}
\cong (\mFl^{\std}_{G,\mu})^{\pdR/(\hat{\Delta}_{\mFl^{\std}_{G,\mu}}),+}/G^{c}\cong \hat{\Delta}_{\mFl^{\std}_{G,\mu}}^{+}/G^{c}.
\] Quotienting out these groupoid objects as in Definition \ref{dfnFilterDRAlg},
we obtain the desired \( \pi_{\GM,K}^{\pdR,+} \).

We are now left to comparing the various torsors. 
The isomorphism \( \mcal{P}_{\dR,K}^{\std,c}\cong \pi_{\GM,K}^{*}(\mcal{P}_{\mu}^{\std,c}) \) is clear from the construction of \( \pi_{\GM,K} \), and the comparison of the \( M_{\mu}^{c} \)-torsor follows immediately.
The isomorphism \( \bar{\mcal{G}}_{\dR,K}^{c,+}\cong (\pi_{\GM,K}^{\pdR,+})^{*}(\mcal{G}^{\std,c,+}_{\mu}) \) requires some explanation. From the construction of \( \pi_{\GM,K}^{\pdR} \), the \( G^{c} \)-torsor \( (\mFl^{\std}_{G,\mu})^{\pdR/E}\to (\mFl^{\std}_{G,\mu})^{\pdR/E}/G^{c} \) pulls back to \( \bar{\mcal{G}}^{c}_{\dR,K}\to (X^{\tor}_{K})^{\pdR/E}_{\log} \),
but the former is isomorphic to \( \mcal{G}^{\std,c,+}_{\mu}|_{\left[(\mrm{Fl}^{\std}_{G,\mu})^{\pdR/E}/G^{c}\right]} \). This induces an isomorphism \[ \bar{\mcal{G}}_{\dR,K}^{c,+}|_{(X^{\tor}_{K})^{\pdR/E}_{\log}}\cong (\pi_{\GM,K}^{\pdR,+})^{*}(\mcal{G}^{\std,c,+}_{\mu})|_{(X^{\tor}_{K})^{\pdR/E}_{\log}}. \]

Now by Tannakian formalism and Lemma \ref{lemDRcomplexVSpushforward}, we are left to verify that for any \( V\in\Rep(G^{c}) \), the induced isomorphism of vector bundles is compatible with filtrations. This is true since on both sides, the filtrations are induced by \( \Fil_{\mu_{-1}}^{\bullet} \) in Lemma \ref{lemFilOnRep} and the reduction of structure of \( \mcal{G}^{c}_{\dR,K} \) to \( P^{c}_{\mu} \), and the latter is shown to be compatible above.
\end{proof}
\begin{rmk}[Subcanonical extension]\label{rmkSubCanonicalExtension}
We will also consider the \emph{subcanonical extension} considered in \cite{Harris1989functorial}. For this, assume that \( X_{K^{p}K_{p,0}}^{\tor} \) is smooth with normal crossings divisor, and let \( \mscr{I}_{X_{K^{p}K_{p,0}}^{\tor}} \) be the ideal sheaf defining the boundary. Then the usual differential 
\[ \mscr{I}_{X_{K^{p}K_{p,0}}^{\tor}}\hookrightarrow \mO_{X_{K^{p}K_{p,0}}^{\tor}}\to \Omega^{1}_{X_{K^{p}K_{p,0}}/E}\cong \mscr{I}_{X_{K^{p}K_{p,0}}^{\tor}}\otimes \Omega^{1}_{X_{K^{p}K_{p,0}}/E,\log} \]
defines a log differential on \( \mscr{I}_{X_{K^{p}K_{p,0}}^{\tor}} \). 
Therefore, by Lemma \ref{lemDRcomplexVSpushforward}, it (equipped with the trivial filtration) descends to a line bundle on \( (X_{K^{p}K_{p,0}}^{\tor})_{\log}^{\pdR/E,+} \).
 We denote this line bundle by  \( \mscr{I}_{K^{p}K_{p,0}}\in \QCoh((X_{K^{p}K_{p,0}}^{\tor})_{\log}^{\pdR/E,+}) \). 
 For \( K_{p}\subset K_{p,0} \), we denote by \( \mscr{I}_{K^{p}K_{p,0}} \in \QCoh((X_{\Kpp}^{\tor})_{\log}^{\pdR/E,+}) \) its pull-back to \( (X_{\Kpp}^{\tor})_{\log}^{\pdR/E,+} \). 
Then for \( V\in\Rep(G^{c}) \),  the subcanonical extension 
is given by \( \bar{\mcal{G}}^{c}_{\dR,K}(V)\otimes \mscr{I}_{K}\in \mrm{Vect}((X^{\tor}_{K})^{\pdR/E,+}_{\log}) \). 
\end{rmk}
\begin{prop}[Algebraic Grothendieck-Messing theory]\label{propAlgGMtheory}
The commutative diagram in Lemma \ref{lemDRtorsorDescends} is Cartesian.
\end{prop}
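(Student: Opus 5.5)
To prove that the square of Lemma \ref{lemDRtorsorDescends} is Cartesian, I would work with the two vertical maps, both of which are quotients by groupoids. By Definition \ref{dfnFilterDRAlg}, $h_{K}$ is the quotient of $X^{\tor}_{K}\times[\mA^{1}/\GG_{m}]$ by the groupoid $\hat{\Delta}^{+}_{X^{\tor}_{K},\log}$. By Lemma \ref{lemTwoAlgDRSame} (2) and Lemma \ref{lemDRStackOfG}, $h_{\mrm{Fl}^{\std}}$ is the quotient of $[\mrm{Fl}^{\std}_{G,\mu}/G^{c}]\times[\mA^{1}/\GG_{m}]$ by $\hat{\Delta}^{+}_{\mrm{Fl}^{\std}}/G^{c}$. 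By construction, $\pi^{\pdR,+}_{\GM,K}$ is induced by the morphism of groupoid objects
\[ \hat{\Delta}^{+}_{X^{\tor}_{K},\log}\to\hat{\Delta}^{+}_{\mrm{Fl}^{\std}_{G,\mu}}/G^{c} \]
covering $\pi_{\GM,K}$. The standard criterion in an $\infty$-topos (effective epimorphisms and Giraud's axioms, \cite[Theorem 6.1.0.6]{Lurie2009HTT}) says the square of quotients is Cartesian once the morphism of groupoids is Cartesian, i.e. once
\[ \hat{\Delta}^{+}_{X^{\tor}_{K},\log}\cong (X^{\tor}_{K}\times[\mA^{1}/\GG_{m}])\times_{p_{1},\,[\mrm{Fl}^{\std}/G^{c}]^{+}}\bigl(\hat{\Delta}^{+}_{\mrm{Fl}^{\std}}/G^{c}\bigr) \]
via $(p_{1},\pi)$. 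So the plan has two steps: reduce to this statement, then prove it.

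For the second step, both sides are formal groupoids over $X^{\tor}_{K}\times[\mA^{1}/\GG_{m}]$ arising as $\unl{\Spf}^{+}$ of filtered complete bi-$\mO$-algebras. Since $\unl{\Spf}^{+}$ commutes with fiber products (Lemma \ref{lemSpfCommuteWithLimit}), the claim becomes a statement about a map of filtered complete $\mO_{X^{\tor}_{K}}$-algebras
\[ \pi_{\GM,K}^{*}\bigl(\mO_{\mrm{Fl}^{\std}}\hatotimes_{\dR}\mO_{\mrm{Fl}^{\std}}\bigr)\to\mO_{X^{\tor}_{K}}\hatotimes_{\dR_{\log}}\mO_{X^{\tor}_{K}}. \]
Both sides are filtered complete with $\gr^{i}=\Sym^{i}\gr^{1}$, by Lemma \ref{lemDiagonalLogProperty} (1)(2). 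It therefore suffices to check an isomorphism on $\gr^{1}$, namely
\[ \pi_{\GM,K}^{*}\Omega^{1}_{[\mrm{Fl}^{\std}/G^{c}]}\to\Omega^{1}_{X^{\tor}_{K},\log}. \]
Here $\Omega^{1}$ of the quotient stack means the relative cotangent of $BP^{\std,c}_{\mu}$ pulled back appropriately, which is $(\fg^{c}/\p^{\std,c}_{\mu})^{\vee}$ twisted by $\mcal{P}^{\std,c}_{\mu}$, with the identification $(\fg^{0}/\p^{0})^{\vee}\cong\Omega^{1}$ from Lemma \ref{lemCompareDRStac}. Pulling back by $\pi_{\GM,K}$ and using $\mcal{P}^{\std,c}_{\dR,K}\cong\pi_{\GM,K}^{*}\mcal{P}^{\std,c}_{\mu}$ turns the source into the automorphic vector bundle attached to $(\fg^{c}/\bar{\p}_{\mu})^{\vee}$. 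The map is induced by the Gauss–Manin connection, i.e. it is the dual of the Kodaira–Spencer map. The classical log Kodaira–Spencer isomorphism for toroidal compactifications (\cite{Harris1989functorial}, \cite{Milne1990canonical}) says exactly that this is an isomorphism.

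The main obstacle is identifying the $\gr^{1}$ map with the Kodaira–Spencer map. One has to unwind how $\pi^{\pdR}_{\GM,K}$ was built from the descent of $\mcal{G}^{c}_{\dR,K}$ to the de Rham stack via Lemma \ref{lemDRcomplexVSpushforward}. By Lemma \ref{lemDiagonalLogProperty} (3), the first-order part of the groupoid encodes the connection $\nabla_{\dR,K}$, so the induced map on $\gr^{1}$ is the derivative of the Hodge filtration, which is Griffiths transversality followed by Kodaira–Spencer. I would check this on the Rees side using Lemma \ref{lemFilOnRep}. A secondary point is that the stacky quotient by $G^{c}$ contributes no extra tangent directions: after $!$-locally trivializing $\mcal{G}^{c}_{\dR,K}$ it becomes a statement about $\mrm{Fl}^{\std}$ itself. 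The $\GG_{m}$-twists $\{1\}$ match on both sides, so the filtered (rather than merely $[\GG_{m}/\GG_{m}]$) version follows by the same $\gr$ computation.
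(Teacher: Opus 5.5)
Your proposal is correct and follows essentially the same route as the paper. The paper first proves (Lemma \ref{lemIsoOmega1KS}) that the induced map $(\pi_{\GM}^{*}p_{1,*}\mO_{\hat{\Delta}^{+}_{\mrm{Fl}^{\std}_{G,\mu}}})^{\wedge}_{\Fil}\to p_{1,*}\mO_{\hat{\Delta}^{+}_{X^{\tor}_{K},\log}}$ is an isomorphism, by reducing to $\gr^{1}$ and the classical Kodaira--Spencer isomorphism. It then gets the Cartesian square by identifying $X_{K}^{\tor,+}\times_{(X^{\tor}_{K})^{\pdR/E,+}_{\log}}X_{K}^{\tor,+}$ with the corresponding fiber product on the flag side and descending along the $!$-cover $h_{K}$, which is exactly your groupoid criterion. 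The one minor difference: for $\gr^{1}$, the paper does not identify the map directly with the dual Kodaira--Spencer map as you propose. Instead it checks compatibility of two maps out of $\pi_{\GM}^{*}(\mm^{0}_{\mu})\otimes\pi_{\GM}^{*}(\fg^{0}/\p^{\std,0}_{\mu})^{\vee}$, coming from the graded connections on the adjoint bundles, and concludes from their surjectivity together with equality of ranks.
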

\begin{lem}[Kodaira-Spencer isomorphism]\label{lemIsoOmega1KS}
There are a natural \(\mO_{X^{\tor}_{K}}\)-linear isomorphism \[\pi_{\GM}^{*}(\Omega^{1}_{\mrm{Fl}_{G,\mu}})\cong \Omega^{1}_{X^{\tor}_{K},\log}\]
and a natural \(\mO_{X^{\tor}_{K}}\)-linear filtered isomorphism \[(\pi_{\GM}^{*}(p_{1,*}(\mO_{\hat{\Delta}^{+}_{\mrm{Fl}_{G,\mu}}})))^{\wedge}_{\Fil}
\cong p_{1,*}(\mO_{\hat{\Delta}^{+}_{X^{\tor}_{K},\log}}).
\]
Here we use the notation of Definition \ref{dfnLogDiagnal},
\((-)^{\wedge}_{\Fil}\) denotes the filtered completion, and
 we regard \(\Omega^{1}_{\mrm{Fl}_{G,\mu}}\)
and \(p_{1,*}(\mO_{\hat{\Delta}^{+}_{\mrm{Fl}_{G,\mu}}})\)
as objects in \(\QCoh(\mrm{Fl}_{G,\mu}/G^{c})\) by descent, where \(G^{c}\) acts on \(\hat{\Delta}^{+}_{\mrm{Fl}_{G,\mu}}\)
diagonally.
\end{lem}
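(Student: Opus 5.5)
The plan is to reduce both isomorphisms to the classical Kodaira–Spencer isomorphism and then bootstrap to the completed diagonal using the structure results of Lemma \ref{lemDiagonalLogProperty}.

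\emph{Step 1: the first isomorphism.} Here $\pi_{\GM}$ denotes $\pi_{\GM,K}$ of Lemma \ref{lemDRtorsorDescends}, which lands in $[\mrm{Fl}^{\std}_{G,\mu}/G^{c}]\cong BP^{\std,c}_{\mu}$. Under this identification, $\Omega^{1}_{\mrm{Fl}^{\std}_{G,\mu}}$ viewed in $\QCoh(BP^{\std,c}_{\mu})$ corresponds to the $P^{\std,c}_{\mu}$-representation $(\fg^{c}/\bar{\p}^{c}_{\mu})^{\vee}$, by Lemma \ref{lemCompareDRStac} (1). Hence its pull-back along $\pi_{\GM,K}$ is the vector bundle $\mcal{P}^{\std,c}_{\dR,K}\times^{P^{\std,c}_{\mu}}(\fg^{c}/\bar{\p}^{c}_{\mu})^{\vee}$. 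By Lemma \ref{lemNmuSimpleLem}, $N^{\std}_{\mu}$ acts trivially on this representation, so it is pulled back from the $M^{c}_{\mu}$-torsor $\mcal{M}^{c}_{\dR,K}$. The classical Kodaira–Spencer isomorphism for toroidal compactifications (Harris, Milne; in the log form of Lan and Pink) identifies this automorphic bundle with $\Omega^{1}_{X^{\tor}_{K},\log}$. Concretely, the map is given by Griffiths transversality: for every $V$, the log connection $\nabla_{\dR,K}$ induces an $\mO$-linear map $\gr^{i}D_{\dR,K}(V)\to\gr^{i-1}D_{\dR,K}(V)\otimes\Omega^{1}_{\log}$, which by Tannakian formalism gives $T_{X^{\tor}_{K},\log}\to \mcal{M}^{c}_{\dR,K}\times^{M^{c}_{\mu}}(\fg^{c}/\bar{\p}^{c}_{\mu})$. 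This is the map I would use to define the isomorphism intrinsically (via the filtered torsor $\bar{\mcal{G}}^{c,+}_{\dR,K}$ of Lemma \ref{lemDRtorsorDescends}), so that naturality in $K$ is automatic. The one genuinely external input is the statement that this map is an isomorphism, including along the boundary; I expect this to be the main obstacle, and I would simply cite it.

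\emph{Step 2: the second isomorphism.} The morphism $\pi^{\pdR,+}_{\GM,K}$ of Lemma \ref{lemDRtorsorDescends} was built from a morphism of groupoid objects $\hat{\Delta}^{+}_{X^{\tor}_{K},\log}\to \hat{\Delta}^{+}_{\mrm{Fl}^{\std}_{G,\mu}}/G^{c}$ over $\pi_{\GM,K}$. Pushing the structure sheaves forward along $p_{1}$ yields a filtered $\mO_{X^{\tor}_{K}}$-algebra map $\pi_{\GM}^{*}(p_{1,*}\mO_{\hat{\Delta}^{+}_{\mrm{Fl}}})\to p_{1,*}\mO_{\hat{\Delta}^{+}_{X^{\tor}_{K},\log}}$. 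The target is filtered complete, so this map factors through the filtered completion of the source.

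To prove that the factored map is an isomorphism, it suffices to check on graded pieces, since both sides are filtered complete. By Lemma \ref{lemDiagonalLogProperty} (1), in its log and non-log forms, both sides have $\gr^{i}\cong\Sym^{i}\gr^{1}$. Pull-back commutes with $\gr$, and $\pi_{\GM}^{*}$ of a graded piece that is a vector bundle is unchanged by completion. Therefore it suffices to show the map is an isomorphism on $\gr^{1}$. There it is the map $\pi_{\GM}^{*}\Omega^{1}_{\mrm{Fl}}\to\Omega^{1}_{X^{\tor}_{K},\log}$, which I will check agrees with the map from Step 1 by unwinding the $\gr^{1}$ computation in the proof of Lemma \ref{lemDiagonalLogProperty} (2) (namely $a\otimes1-1\otimes a\mapsto da$). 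Step 1 then finishes the proof.
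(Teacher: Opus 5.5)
Your proposal is correct and follows essentially the same route as the paper. You obtain the filtered map from the groupoid morphism underlying $\pi^{\pdR,+}_{\GM,K}$, reduce via $\gr^{i}\cong\Sym^{i}\gr^{1}$ and filtered completeness to $\gr^{1}$, and invoke the classical Kodaira--Spencer isomorphism there. The only difference is bookkeeping: the paper defines the first isomorphism as $\gr^{1}$ of the groupoid map and gets bijectivity from its compatibility with the Griffiths-transversality maps plus a surjectivity and rank argument (citing Graham--Pilloni--Jacinto), whereas you define it via Griffiths transversality and then check it agrees with $\gr^{1}$, which is the same compatibility check run in the other direction.
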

\begin{proof}
The commutative diagram in Lemma \ref{lemDRtorsorDescends} induces a map between groupoid objects \[
\begin{tikzcd}
\hat{\Delta}^{+}_{X^{\tor}_{K},\log} \arrow[r]
\arrow[d,bend right,"p_{1}"]\arrow[d,bend left,"p_{2}"] & {[\hat{\Delta}^{+}_{\mrm{Fl}^{\std}_{G,\mu}}/G^{c}]}
\arrow[d,bend right,"p_{1}"]\arrow[d,bend left,"p_{2}"] \\
{X^{\tor}_{K}\times [\mA^{1}/\GG_{m}]} \arrow[r,"\pi_{\GM}"] & {[\mrm{Fl}^{\std}_{G,\mu}/G^{c}]\times [\mA^{1}/\GG_{m}]},
\end{tikzcd}
\]
which induces a natural filtered morphism 
\[
i':
(\pi_{\GM}^{*}(p_{1,*}(\mO_{\hat{\Delta}^{+}_{\mrm{Fl}^{\std}_{G,\mu}}})))^{\wedge}_{\Fil}
\to p_{1,*}(\mO_{\hat{\Delta}^{+}_{X_{K}^{\tor},\log}}).
\]
Taking \(\gr^{1}\), we obtain (by Lemma \ref{lemDiagonalLogProperty} (1)) a natural morphism 
\[i:\pi_{\GM}^{*}(\Omega^{1}_{\mrm{Fl}^{\std}_{G,\mu}})\to \Omega^{1}_{X_{K}^{\tor},\log}.
\] 

This is an isomorphism by the classical Kodaira-Spencer isomorphism. See for example \cite[Lemma A.10]{GrahamPilloniJacinto2023pNearlyOveconvergent}. 
Let us give a sketch here: 

Over \(\mrm{Fl}^{\std}_{G,\mu}\), we have a canonical identification \(\Omega^{1}_{\mrm{Fl}^{\std}_{G,\mu}}\cong (\fg^{0}/\p_{\mu}^{std,0})^{\vee}\), and we have a filtered vector bundle with connection \(\fg^{0}\to \fg^{0}\otimes \Omega^{1}_{\mrm{Fl}^{\std}_{G,\mu}}\). Taking the graded pieces, we obtain a \(\mO_{\mrm{Fl}^{\std}_{G,\mu}}\)-linear morphism \(\mm^{0}_{\mu}\to (\fg^{0}/\p^{std,0}_{\mu})\otimes \Omega^{1}_{\mrm{Fl}^{\std}_{G,\mu}}\), which gives a \(\mO_{X_{K}^{\tor}}\)-linear map \[f_{1}:\pi_{\GM}(\mm^{0}_{\mu})\otimes\pi_{\GM}(\fg^{0}/\p^{std,0}_{\mu})^{\vee} \to \pi_{\GM}^{*}(\Omega^{1}_{\mrm{Fl}^{\std}_{G,\mu}}).
\] 

On the other hand, 
\(\pi_{\GM}(\fg^{0})\)
is a filtered vector bundle over \(X_{K}^{\tor}\) with connection,
which by the same argument gives another \(\mO_{X_{K}^{\tor}}\)-linear map \[f_{2}:\pi_{\GM}(\mm^{0}_{\mu})\otimes\pi_{\GM}(\fg^{0}/\p^{std,0}_{\mu})^{\vee}\to \Omega^{1}_{X_{K}^{\tor},\log}.
\] By tracing the diagram, we can see the \(f_{1}\)
and \(f_{2}\)
are compatible along the natural morphism \(i\) above. 
However, both \(f_{1}\)
and \(f_{2}\) are surjective. This can by checked on the scheme level by working on the complex manifolds. Therefore, \(i\) is surjective, and thus it is an isomorphism since both sides are vector bundles of the same rank. 

We are left to show that \(i'\) is also an isomorphism. Note that by Lemma \ref{lemDiagonalLogProperty} (1), \(\gr^{*}(\mO_{\hat{\Delta}_{\mrm{Fl}^{\std}_{G,\mu}}})\cong \Sym^{*}\Omega^{1}_{\mrm{Fl}^{\std}_{G,\mu}}\), \(\gr^{*}(\mO_{X_{K}^{\tor}})\cong \Sym^{*}\Omega^{1}_{X_{K}^{\tor},\log}\), and \(i=\gr^{1}(i')\) is an isomorphism. This implies that \(\gr^{*}(i')\) is an isomorphism. Since both sides are filtered complete, this shows that \( i \) 
is also an isomorphism. 
\end{proof}
\begin{proof}[Proof of Proposition \ref{propAlgGMtheory}]
For a solid stack \( X \), we will write \( X^{+}:=X\times [\mA^{1}/\GG_{m}] \).
By Lemma \ref{lemIsoOmega1KS} (2),
the natural morphism 
\begin{align*}
X_{K}^{\tor,+}\times_{(X^{\tor}_{K})^{\pdR/E,+}_{\log}}X_{K}^{\tor,+}\to  
X_{K}^{\tor,+}\times_{\mrm{Fl}^{\std,\pdR/E,+}_{G,\mu}/G^{c}}\mrm{Fl}^{\std,+}_{G,\mu}/G^{c}\\
\cong X_{K}^{\tor,+}\times_{\mrm{Fl}^{\std,+}_{G,\mu}/G^{c},\pr_{1}}\hat{\Delta}_{\mrm{Fl}^{\std}_{G,\mu}}^{+}/G^{c}
\end{align*}
is an isomorphism. 
Note that the RHS is also isomorphic to \[X_{K}^{\tor,+}\times_{(X^{\tor}_{K})^{\pdR/E,+}_{\log}}((X^{\tor}_{K})^{\pdR/E,+}_{\log}\times_{\mrm{Fl}^{\std,\pdR/E,+}_{G,\mu}/G^{c}}\mrm{Fl}^{\std,+}_{G,\mu}/G^{c}).\] Since \(X_{K}^{\tor,+}\to (X_{K}^{\tor})^{\pdR/E,+}_{\log}\)
is a \(!\)-cover by construction, we know that the natural morphism 
\[X_{K}^{\tor,+}\to (X^{\tor}_{K})^{\pdR/E,+}_{\log}\times_{\mrm{Fl}^{\std,\pdR/E,+}_{G,\mu}/G^{c}}\mrm{Fl}^{\std,+}_{G,\mu}/G^{c}
\]
is an isomorphism.
\end{proof}
\begin{rmk}
Proposition \ref{propAlgGMtheory} actually gives another proof of \cite[Theorem 1.5]{DLLZ2022logarithmicJAMS}, as the fibers of \(h_{K}\) at a \(\CC_{p}\)-point \(x\) is precisely the formal neighborhood of \(x\). 
\end{rmk}
\subsubsection{BGG complex on Shimura varieties}
\label{subsubsecBGGSV}
The following BGG complexes are
studied in \cite{Faltings1983CohoOfLocSymmetric}, \cite[VI.\S 5]{FC13}, \cite[\S 5]{LanPolo2018dualBGG}, \cite[Theorem 6.1.7]{LanLiuZhu2023rham}. We give a quick proof here using Proposition \ref{propAlgGMtheory}. This reconstruction will be important for our future construction of a locally analytic Jacquet-Langlands transfer (\cite[Theorem \ref{2-thmJacquetLanglandsLA}]{Jiang2025HMF}), since we are going to define the transfer first for the period domains.
\begin{prop}[BGG complex on Shimura varieties]\label{propBGGFilSV}
Fix a Borel \( B\subset G_{\bar{\QQ}} \) such that \( B\subset P_{\mu} \).
Let
\( \alpha\in X^{*}(T) \) be a \( G \)-dominant character.



(1)  \( \bar{\mcal{G}}^{c,+}_{\dR,K}(V_{\alpha})^{\vee} \) is isomorphic to a so-called \emph{BGG complex} \[
\mrm{BGG}_{\alpha}^{+}:=
\left[L_{\dim(\fn)}\to \cdots \to L_{1}\to L_{0} \right],
\] with \( L_{i}\cong \bigoplus_{w\in {}^{M}W,l(w)=i}h_{K,\natural}(\mcal{M}_{\dR,K}^{c}(W_{w\cdot \alpha}^{\vee}))\{(w\cdot\alpha)(\mu)\} \) in cohomological degree \( -i \).

(2) \( \bar{\mcal{G}}^{c,+}_{\dR,K}(V_{\alpha}) \) is isomorphic to a so-called \emph{dual BGG complex} \[
\mrm{dBGG}_{\alpha}^{+}:=
\left[M^{0}\to M^{1}\cdots \to M^{\dim(\fn)} \right],
\] with \( M^{i}\cong \bigoplus_{w\in {}^{M}W,l(w)=i}h_{K,*}(\mcal{M}_{\dR,K}^{c}(W_{w\cdot \alpha}))\{-(w\cdot\alpha)(\mu)\} \) in cohomological degree \( i \).
\end{prop}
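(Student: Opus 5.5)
The plan is to pull back the BGG and dual BGG complexes on the flag variety, Proposition \ref{propFilterBGG} and Corollary \ref{corFilterDualBGG}, along the filtered Grothendieck--Messing period map \(\pi_{\GM,K}^{\pdR,+}\) of Lemma \ref{lemDRtorsorDescends}. The identification of the pulled-back terms comes from base change along the Cartesian square of Proposition \ref{propAlgGMtheory}.

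First, apply Proposition \ref{propFilterBGG} to the group \(G^{c}\) with the minuscule cocharacter \(\mu^{-1}\), so that \(\mFl_{G^{c},\mu^{-1}}=\mFl^{\std}_{G,\mu}\), and to the representation \(V_{\alpha}^{\vee}\) (or directly to \(V_{\alpha}\), using the BGG resolution of the dual). This gives an isomorphism in \(\QCoh((\mFl^{\std}_{G,\mu})^{\pdR/E,+}/G^{c})\) between \(\mcal{G}^{\std,c,+}_{\mu}(V_{\alpha})^{\vee}\) and a complex whose terms are \(h_{\Fl^{\std},\natural}(W_{w\cdot\alpha}^{\vee})\{(w\cdot\alpha)(\mu)\}\), indexed by \(w\in{}^{M}W\) of length \(i\).

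Some bookkeeping is needed here. Passing from \(\mu\) to \(\mu^{-1}\) and from the Borel \(B\subset P_{\mu}\) to \(P^{\std}_{\mu}\) changes the relevant weights by \(-w_{0}\) and swaps the Kostant representatives. I would do this bookkeeping carefully so that the final weights read \(w\cdot\alpha\) with twist \((w\cdot\alpha)(\mu)\). I expect this to be a matter of conventions (compare the remark after Proposition \ref{propFilterBGG}), and it may be the step most prone to sign errors, though it is not conceptually hard.

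Second, pull the complex back along \(\pi^{\pdR,+}_{\GM,K}\). Since pullback is exact and symmetric monoidal, Lemma \ref{lemDRtorsorDescends} gives \((\pi^{\pdR,+}_{\GM,K})^{*}\mcal{G}^{\std,c,+}_{\mu}(V_{\alpha})^{\vee}\cong\bar{\mcal{G}}^{c,+}_{\dR,K}(V_{\alpha})^{\vee}\). For each term I need the base change isomorphism
\[(\pi^{\pdR,+}_{\GM,K})^{*}h_{\Fl^{\std},\natural}(W)\cong h_{K,\natural}\pi_{\GM,K}^{*}(W).\]
This follows from Lemma \ref{lemSmBaseChange} (1), applied to the Cartesian square of Proposition \ref{propAlgGMtheory}, because \(h_{\Fl^{\std}}\) is cohomologically smooth (Lemma \ref{lemBasicLogDRSta} (1), after descending along the smooth cover by \(G^{c}\)). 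Finally, \(\pi_{\GM,K}^{*}(W_{\beta}^{\vee})\), viewed as a vector bundle on \(\mFl^{\std}/G^{c}\cong BP^{\std,c}_{\mu}\) via the Levi quotient, equals \(\mcal{M}^{c}_{\dR,K}(W^{\vee}_{\beta})\) by the identification \(\mcal{M}^{c}_{\dR,K}\cong\pi_{\GM,K}^{*}\mcal{M}^{\std,c}_{\mu}\). The twists \(\{-\}\) pass through unchanged. This proves (1).

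For (2), I would proceed in the same way, starting from Corollary \ref{corFilterDualBGG}. Pushforwards \(h_{*}\) also base change, either by noting that \(h_{*}(W)\cong\unl{\Hom}(h_{\natural}(W^{\vee}),\mO)\) with the \(h_{\natural}\)-terms perfect so that dualizing commutes with pullback, or by using \(h_{*}=h_{\natural}(-\otimes h^{!}1)\) for the smooth map \(h\) together with \(h^{!}1\) being a line bundle compatible with base change. Alternatively, (2) follows from (1) by applying \(\unl{\RHom}(-,\mO)\) on \((X^{\tor}_{K})^{\pdR/E,+}_{\log}\) and using the identity \(\unl{\Hom}(h_{K,\natural}V,\mO)\cong h_{K,*}V^{\vee}\) from Lemma \ref{lemDiffOpeToLogStack}. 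The main point requiring care is matching the twists and weights, which again reduces to the convention bookkeeping from the first step.
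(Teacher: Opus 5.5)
Your proposal is essentially the paper's proof. You pull back the BGG complex of Proposition \ref{propFilterBGG} (for \(G^{c}\) and \(\mu^{-1}\), i.e.\ on \(\mFl^{\std}_{G,\mu}\)) along \(\pi^{\pdR,+}_{\GM,K}\), identify the terms via base change for \(h_{\natural}\) in the Cartesian square of Proposition \ref{propAlgGMtheory} together with the torsor comparisons of Lemma \ref{lemDRtorsorDescends}, treat the weight/Borel matching as convention bookkeeping, and get (2) from (1) by applying \(\unl{\RHom}(-,\mO)\) with \(\unl{\Hom}(h_{K,\natural}V,\mO)\cong h_{K,*}V^{\vee}\).

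For (2), keep only that last route; your first two justifications do not hold as stated. The \(h_{\natural}\)-terms are not perfect: \(h_{K}^{*}h_{K,\natural}(W)\cong D_{X^{\tor}_{K},\log}\otimes W\) carries an unbounded degree filtration. Also, \(h_{\natural}=h_{!}(-\otimes h^{!}1)\) is not a twist of \(h_{*}\).
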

\begin{proof}
Using Lemma \ref{lemDRtorsorDescends} and Proposition \ref{propAlgGMtheory},
the first isomorphisms follows from
Proposition \ref{propFilterBGG} via proper base change (Lemma \ref{lemSmBaseChange} (2)). The second isomorphism follows from the first by taking dual.
 Note that the difference on the formulation comes from the different choices of the Borel subgroup as \( B\subset P_{\mu} \) rather than \( B\subset P_{\mu}^{\std} \).
\end{proof}
\subsubsection{Analytic Grothendieck-Messing theory}
We now continue to the analytic version. Due to the lack of a good definition of the ``analytic logarithmic de Rham stack'', we restrict to the open part \(\mX_{K}\subset\mX_{K}^{\tor}\).
\begin{prop}[Analytic Grothendieck-Messing theory]\label{propAnalyticGMTheory}
There is a \(G^{c,\an}_{E_{\p}}\)-torsor \(\bar{\mcal{G}}^{c,\an}_{\dR,K}\)
over \(\mX_{K,E_{\p}}^{\dR/E_{\p}}\) such that for any \(V\in \Rep(G^{c})\), the associated vector bundle \(\bar{\mcal{G}}^{c,\an}_{\dR,K}(V)\) over \(\mX_{K,E_{\p}}^{\dR/E_{\p}}\)
coincides via Lemma \ref{lemDRcomplexVSpushforward} with the analytification of \((D_{\dR,K}(V),\nabla_{\dR,K})\) in Notation \ref{notationDRtorsor}. 

Consider \(h_{K}^{\an}:\mX_{K,E_{\p}}\to \mX_{K,E_{\p}}^{\dR}\), then we have a \emph{Cartesian} diagram of solid stacks over \( E_{\p} \) \[
\begin{tikzcd}
\mX_{K,E_{\p}}\arrow[r,"\pi_{\GM,K}"] \arrow[d,"h_{K}^{\an}"]& 
{[\Fl^{\std}_{G,\mu,E_{\p}}/G^{c,\an}_{E_{\p}}]}\arrow[d,"h_{\Fl^{\std}}^{\an}"]
\\
\mX_{K,E_{\p}}^{\dR}\arrow[r,"\pi_{\GM,K}^{\dR}"] &
{[\Fl^{\std,\dR}_{G,\mu,E_{\p}}/G^{c,\an}_{E_{\p}}]}.
\end{tikzcd}
\]
Here we use implicitly Corollary \ref{corAnDRstackEssenTate} to regard analytic de Rham stacks as solid stacks.
\end{prop}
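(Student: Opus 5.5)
We will deduce the analytic statement from the algebraic one (Lemma \ref{lemDRtorsorDescends} and Proposition \ref{propAlgGMtheory}) restricted to the open part $X_K\subset X_K^{\tor}$, where the log structure is trivial and $(X_K)^{\pdR/E}_{\log}=X_K^{\pdR/E}$ by Lemma \ref{lemTwoAlgDRSame}. The plan has three steps: construct the torsor, construct the map $\pi^{\dR}_{\GM,K}$, and prove the square is Cartesian.

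\textbf{Construction of $\bar{\mcal G}^{c,\an}_{\dR,K}$.} Base change the algebraic torsor $\bar{\mcal G}^{c}_{\dR,K}|_{X_K^{\pdR/E}}$ to $E_{\p}$ and analytify. This gives, for each $V\in\Rep(G^c)$, a vector bundle on $\mX_{K,E_{\p}}^{\pdR/E_{\p}}$; by Lemma \ref{lemDRcomplexVSpushforward} this is the same as the analytified $(D_{\dR,K}(V),\nabla_{\dR,K})$. Proposition \ref{propAnToAlgDRJuan}(3) identifies $\Vect(\mX^{\pdR/E_{\p}}_{K,E_{\p}})\cong\Vect(\mX^{\dR/E_{\p}}_{K,E_{\p}})$ via $(g^*,g_*)$, with $g_*$ symmetric monoidal on vector bundles. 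Hence $V\mapsto g_*\bar{\mcal G}^{c}_{\dR,K}(V)$ is an exact $\otimes$-functor. Tannakian reconstruction for $G^{c,\an}$-torsors (checked $!$-locally, using the descent results Theorem \ref{thmVariousDescent}) then gives the torsor $\bar{\mcal G}^{c,\an}_{\dR,K}$. Finally, $\mX^{\dR}_{K,E_{\p}}=\mX^{\dR/E_{\p}}_{K,E_{\p}}$ because $E_{\p}$ is finite étale over $\QQ_p$, so $\AnSpec(E_{\p})^{\dR}=\AnSpec(E_{\p})$ by Theorem \ref{thmAnDRStackGeneral}(2).

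\textbf{Construction of $\pi^{\dR}_{\GM,K}$.} Copy the proof of Lemma \ref{lemDRtorsorDescends}. The $P^{\std,c}_\mu$-reduction gives a $G^{c}$-equivariant map $\mcal G^{c,\an}_{\dR,K}\to\Fl^{\std}_{G,\mu,E_{\p}}$. Applying $(-)^{\dR}$, which commutes with finite limits (Theorem \ref{thmAnDRStackGeneral}(1)), gives
\[(\mcal G^{c,\an}_{\dR,K})^{\dR}\cong\bar{\mcal G}^{c,\an}_{\dR,K}\times^{G^{c,\an}}(G^{c,\an})^{\dR}.\]
This yields a $G^{c,\an}$-equivariant map $\bar{\mcal G}^{c,\an}_{\dR,K}\to\Fl^{\std,\dR}_{G,\mu,E_{\p}}$, and quotienting by $G^{c,\an}$ produces $\pi^{\dR}_{\GM,K}$. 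Commutativity of the square follows from the uniqueness built into the definition of $(-)^{\dR}$.

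\textbf{Cartesianness.} Set $Q:=\mX^{\dR}_{K,E_{\p}}\times_{[\Fl^{\std,\dR}/G^{c,\an}]}[\Fl^{\std}/G^{c,\an}]$; we must show the natural map $\mX_{K,E_{\p}}\to Q$ is an isomorphism. By Theorem \ref{thmAnDRStackGeneral}(3), $h^{\an}_K$ is a $!$-surjection with
\[\mX_K\times_{\mX_K^{\dR}}\mX_K\cong\Delta(\mX_K)^{\dagger},\]
and likewise on the flag side by Lemma \ref{lemCompareDRStac}. By the same argument as the proof of Proposition \ref{propAlgGMtheory}, it then suffices to show that the induced map of groupoids
\[\Delta(\mX_K)^{\dagger}\to \mX_K\times_{\Fl^{\std}/G^c}\Delta(\Fl^{\std})^{\dagger}/G^c\]
is an isomorphism.

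This is the \textbf{main obstacle}. Lemma \ref{lemIsoOmega1KS} only gives the formal-completion statement; upgrading it to dagger neighborhoods is exactly the analytic content. The argument we would try first is the following.
\begin{itemize}
\item Since $\pi_{\GM,K}$ is cohomologically smooth (Lemma \ref{lemDRtorsorDescends}), the Kodaira–Spencer isomorphism (Lemma \ref{lemIsoOmega1KS}) says that $\pi_{\GM,K}$, composed with a local section of the torsor $\mcal G^{c,\an}_{\dR,K}$, is étale into $\Fl^{\std}$ after localizing on $\mX_K$.
\item An étale map $f:U\to Y$ of smooth rigid varieties identifies $\Delta(U)^{\dagger}\cong U\times_{Y}\Delta(Y)^{\dagger}$: the map $U\times_Y U\to U\times U$ is an open immersion near the diagonal, and one argues as in the proof of Lemma \ref{lemBasicLogDRSta}(3) for log étale maps. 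The $G^{c,\an}$-quotient is then handled by $!$-descent along the torsor.
\end{itemize}
If this argument falls short, the fallback is Proposition \ref{propAnToAlgDRJuan}, comparing dagger and formal neighborhoods through the Cartesian diagram in its proof.
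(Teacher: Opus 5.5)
Your first two steps and the reduction in the third follow the paper exactly. The torsor is defined through $g_{K,*}$ and Proposition \ref{propAnToAlgDRJuan}, and $\pi^{\dR}_{\GM,K}$ is obtained by repeating the proof of Lemma \ref{lemDRtorsorDescends}. Cartesianness is then reduced, as in Proposition \ref{propAlgGMtheory}, to the isomorphism $\pi_{\GM}^{*}(\pr_{1,*}\mO_{\Delta(\Fl^{\std}_{G,\mu})^{\dagger}})\cong\pr_{1,*}\mO_{\Delta(\mX_{K})^{\dagger}}$. (One small gloss: Tannakian reconstruction first gives a $G^{c}$-torsor, and the paper passes to $G^{c,\an}$ using Corollary \ref{corAnDRstackEssenTate} and $(G^{c})^{Tate}\cong G^{c,\an}$.)

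Your argument for that last isomorphism, which is the real content, does not work. For an arbitrary local section $s$ of $\mcal{G}^{c,\an}_{\dR,K}$, the map $\phi_{s}:U\to\Fl^{\std}_{G,\mu}$ is not \'etale. Write $\nabla=d+\omega$ in the frame $s$. Then $d\phi_{s}$ differs from the Kodaira--Spencer map by the image of $\omega$ in $\fg/\p^{\std}$. In particular, if $s$ is a section of the $P^{\std,c}_{\mu}$-reduction, $\phi_{s}$ is constant. Lemma \ref{lemIsoOmega1KS} is a statement about the map of groupoids, i.e.\ transport along $\nabla$, not about $d\phi_{s}$. Correspondingly, in the frame $s$ the groupoid map $\Delta(U)^{\dagger}\to U\times_{\Fl^{\std}}\Delta(\Fl^{\std})^{\dagger}$ is twisted by the descent datum and is not induced by $\phi_{s}$.

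The argument would need $s$ to be horizontal, and that is where it breaks down. $p$-adic horizontal sections converge only on small discs, and these do not form an admissible covering. For instance, $d-dx$ on $\mO$ of the closed unit disc has no horizontal trivialization on any affinoid neighbourhood of the Gauss point. Such a neighbourhood contains the generic disc of radius $1$, while $e^{x}$ converges only for $|x|<|p|^{1/(p-1)}$. So the ``local period map'' at a point exists only on its dagger germ, and you cannot reduce to \'etale maps on an admissible cover of $\mX_K$. What is needed is a dagger inverse function theorem that holds uniformly over $\mX_K$. Your fallback via Proposition \ref{propAnToAlgDRJuan} only restates this as a comparison of $(\Delta(\mX_K)^{\dagger})^{\pdR/\mX_K}$ with the pull-back of $(\Delta(\Fl^{\std})^{\dagger})^{\pdR/\Fl^{\std}}$.

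The paper proves the uniform statement directly in Corollary \ref{corOverConvergentNbhdNotTooDifficult}:
\begin{itemize}
\item On an affinoid $U$ with \'etale coordinates $f_i$, one has $\pr_{1,*}\mO_{\Delta(U)^{\dagger}}\cong\mO_U\{Y_1,\ldots,Y_n\}^{\dagger}$ with $Y_i=f_i\otimes 1-1\otimes f_i$ (Lemma \ref{lemDiagonalLocally}).
\item The formal Kodaira--Spencer isomorphism shows that the map from $\pi_{\GM}^{*}(\mO_{\Fl^{\std}}\{\Omega^{1}\}^{\dagger})\cong\mO_U\{\Omega^{1}_U\}^{\dagger}$ sends $df_i\mapsto Y_i+\epsilon_i$ with $\epsilon_i\in(Y)^{2}\mO_U\{Y\}^{\dagger}$.
\item After rescaling the $Y_i$ by a power of $p$, one has $\epsilon_i\in p(Y)^{2}\mO^{+}_U\langle Y\rangle$. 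Hence the map is an isomorphism $\mO^{+}_U\langle df_1/p^{t},\ldots,df_n/p^{t}\rangle\cong\mO^{+}_U\langle Y_1/p^{t},\ldots,Y_n/p^{t}\rangle$ for every $t$: both sides are $p$-complete and $p$-torsion free, and modulo $p$ it sends $df_i$ to $Y_i$.
\item Taking the colimit over $t$ gives the required dagger isomorphism.
\end{itemize}
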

Recall that by Theorem \ref{thmAnDRStackGeneral} (3), 
for any smooth rigid varieties \(X/E_{\p}\), \[
X\times_{X^{\dR}}X\cong {\Delta}(X)^{\dagger}:= (X\subset X\times_{E_{\p}} X)^{\dagger},
\] which is equipped with two projections \(\pr_{1},\pr_{2}:{\Delta}(X)^{\dagger}\to X\), and there is a map of groupoid objects \((\hat{\Delta}_{X},\pr_{i})\to (\Delta(X)^{\dagger},p_{i})\), corresponding to \(X^{\pdR/E_{\p}}\to X^{\dR}\) in Proposition \ref{propAnToAlgDRJuan}, which gives rise to an injection \(\pr_{1,*}\mO_{\Delta(X)^{\dagger}}\hookrightarrow p_{1,*}\mO_{\hat{\Delta}_{X}}\). 
\begin{lem}\label{lemDiagonalLocally}
Let \(U=\Spa(A,A^{+})\) be an affinoid rigid variety that admits an \'etale chart \(f:U\to \TT^{n}\)
corresponding to \(f_{1},\ldots,f_{n}\in \mO_{U}(U)^{\times}\). 
Then \begin{align*}
\Delta(U)^{\dagger}&\cong \AnSpec((A,A^{+})_{\square}\{Y_{1},\ldots, Y_{n}\}^{\dagger}),\\
\hat{\Delta}_{U}&\cong \varinjlim_{n} \AnSpec((A,A^{+})_{\square}[Y_{1},\ldots, Y_{n}]/I^{n}),
\end{align*} where \(Y_{i}:=f_{i}\otimes 1-1\otimes f_{i}\), and \(I:=(Y_{1},\ldots,Y_{n})\). The natural morphism \(\pr_{1,*}\mO_{\Delta(X)^{\dagger}}\hookrightarrow p_{1,*}\mO_{\hat{\Delta}_{X}}\)
is given by the natural injection \[
A\{Y_{1},\ldots, Y_{n}\}^{\dagger}\hookrightarrow A[[Y_{1},\ldots, Y_{n}]]. 
\]
\end{lem}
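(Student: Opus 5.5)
The plan is to reduce to the torus via the étale chart, using the fact that both $\Delta(-)^{\dagger}$ and $\hat{\Delta}_{(-)}$ are compatible with étale base change. First, since $f:U\to\TT^{n}$ is étale, the diagonal of $f$ is an open immersion. Hence $\Delta(U)^{\dagger}\cong U\times_{\TT^{n}}\Delta(\TT^{n})^{\dagger}$ and $\hat{\Delta}_{U}\cong U\times_{\TT^{n}}\hat{\Delta}_{\TT^{n}}$, where $U$ maps via $\pr_{1}$ and the map $\Delta(U)^{\dagger}\to\Delta(\TT^{n})^{\dagger}$ is induced by $f\times f$. This is the same input used in the proof of Proposition~\ref{propAnToAlgDRJuan}.

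Concretely, a dagger (resp. formal) neighborhood of the diagonal of $U$ in $U\times U$ is identified with a neighborhood of $U\times_{\TT^{n}}\Delta(\TT^{n})$ in $U\times_{\TT^{n}}(\TT^{n}\times\TT^{n})$. The argument is as follows. The map $U\times U\to U\times_{\TT^{n}}(\TT^{n}\times\TT^{n})=U\times\TT^{n}$ is étale. It restricts to an isomorphism $\Delta(U)\cong U\times_{\TT^{n}}\Delta(\TT^{n})$ on the closed loci. An étale map that is an isomorphism on a closed subspace induces an isomorphism of dagger and formal completions along it.

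For the formal completion, I would deduce this from Theorem~\ref{thmAnDRStackGeneral}~(1) together with the description $\hat{\Delta}_{X}=X^{\pdR/X\times X}$ of Theorem~\ref{thmAnDRStackGeneral}~(3) (equivalently, from Lemma~\ref{lemBasicLogDRSta}~(3) with trivial log structure, which gives $f^{*}\hat\Delta_{\TT^n}\cong\hat\Delta_U$). For the dagger version, I would use $X\times_{X^{\dR}}X\cong\Delta(X)^{\dagger}$ and the fact that étale maps are $\dagger$-formally smooth. Then Theorem~\ref{thmAnDRStackGeneral}~(2) gives $U\cong U^{\dR/\TT^{n}}$, that is, $U\cong \TT^n\times_{(\TT^n)^{\dR}}U^{\dR}$. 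Taking fibre products, $U\times_{U^{\dR}}U\cong U\times_{\TT^{n}}(\TT^{n}\times_{(\TT^{n})^{\dR}}\TT^{n})$.

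Second, I would compute on $\TT^{n}$ directly. Here $\TT^{n}\times\TT^{n}$ has coordinates $T_{i}\otimes1$ and $1\otimes T_{i}$, and the diagonal is cut out by $Y_{i}=T_{i}\otimes1-1\otimes T_{i}$. The dagger neighborhood is $\varinjlim_{m}\Spa\bigl(\CC\langle T^{\pm1}\rangle\langle Y/p^{m}\rangle\bigr)$. Here I use that $1\otimes T_{i}=T_{i}\otimes 1-Y_{i}$ is invertible once $|Y_i|$ is small, since $T_{i}$ is a unit. This gives $\Delta(\TT^{n})^{\dagger}\cong\AnSpec(\mO(\TT^{n})\{Y\}^{\dagger})$, as in the proof of Proposition~\ref{propAnToAlgDRJuan}. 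Similarly, the formal completion is $\varinjlim\AnSpec(\mO(\TT^{n})[Y]/I^{k})$ by Lemma~\ref{lemClosedImmeAlgDRFil}, since $I$ is generated by a regular sequence.

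Base changing along $U\to\TT^{n}$ gives the stated formulas with $Y_{i}=f_{i}\otimes1-1\otimes f_{i}$. The analytic structure is the one induced from $(A,A^{+})_{\square}$, as in the proof of Theorem~\ref{thmAnDRStackGeneral}~(3), and this is automatic since the $\dagger$-reduction is $A$. Finally, the map of groupoids $\hat{\Delta}_{U}\to\Delta(U)^{\dagger}$ is compatible with these coordinates because it is induced by the same functions $Y_i$. So on global sections it is the inclusion $A\{Y\}^{\dagger}\hookrightarrow A[[Y]]$, where $A[[Y]]=\varprojlim A[Y]/I^{k}$. Injectivity holds because an overconvergent power series is determined by its Taylor coefficients.

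The main obstacle is the étale-invariance step for the dagger neighborhood: one must show that the étale map $U\times U\to U\times\TT^{n}$ identifies the inverse systems of open neighborhoods of the respective diagonals. I would try the de Rham stack argument above first. If that fails, the fallback is to use the rigid-analytic fact that an étale map which is an isomorphism on a closed analytic subset is an isomorphism on some open neighborhood of it.
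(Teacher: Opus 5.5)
Your proposal is essentially the paper's proof. The paper invokes the Cartesian square $\Delta(U)^{\dagger}\cong U\times_{\TT^{n}}\Delta(\TT^{n})^{\dagger}$ (over $\pr_{1}$ and $f$) together with the explicit description of $\Delta(\TT^{n})^{\dagger}$, which is your reduction and base-change argument. One correction to your justification of that square: Theorem~\ref{thmAnDRStackGeneral}~(2) requires a monomorphism, and an \'etale chart is not one. Instead, get $U\cong U^{\dR/\TT^{n}}$ from Theorem~\ref{thmAnDRStackGeneral}~(3). It identifies $U\times_{U^{\dR/\TT^{n}}}U$ with the dagger neighbourhood of the diagonal in $U\times_{\TT^{n}}U$, which is just $U$ because that diagonal is open and closed, and it shows $U\to U^{\dR/\TT^{n}}$ is a $!$-surjection.
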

\begin{proof}
This follows from the Cartesian diagram 
\[\begin{tikzcd}
\Delta(U)^{\dagger}\arrow[d,"\pr_{1}"]\arrow[r,"f'"]
& \Delta(\TT^{n})^{\dagger}\arrow[d,"\pr_{1}"]\\
U \arrow[r,"f"]
& \TT^{n},
\end{tikzcd}
\] and the concrete description of \(\Delta(\TT^{n})^{\dagger}\). 
\end{proof}

 
\begin{lem}\label{lemDiagonalFlIsEasy}
We have the following isomorphism of filtered algebras in \(\QCoh(\Fl_{G,\mu}^{\std})\) \[p_{1,*}(\mO_{\hat{\Delta}_{\Fl_{G,\mu}^{\std}}})
\cong \mO_{\Fl_{G,\mu}^{\std}}[[(\fg^{0}/\p^{std,0}_{\mu})^{\vee}]]\cong \mO_{\Fl_{G,\mu}^{\std}}[[\Omega^{1}_{\Fl_{G,\mu}^{\std}}]],
\] and the following isomorphism of algebras in \(\QCoh(\Fl_{G,\mu}^{\std})\)
\[\pr_{1,*}(\mO_{{\Delta}(\Fl_{G,\mu}^{\std})^{\dagger}})
\cong \mO_{\Fl_{G,\mu}^{\std}}\{(\fg^{0}/\p^{std,0}_{\mu})^{\vee}\}^{\dagger}\cong \mO_{\Fl_{G,\mu}^{\std}}\{\Omega^{1}_{\Fl_{G,\mu}^{\std}}\}^{\dagger}.
\] Moreover, the natural morphism \(\pr_{1,*}(\mO_{{\Delta}(\Fl_{G,\mu}^{\std})^{\dagger}})\to p_{1,*}(\mO_{\hat{\Delta}_{\Fl_{G,\mu}^{\std}}})\)
corresponds to the natural inclusion \( \mO_{\Fl_{G,\mu}^{\std}}\{\Omega^{1}_{\Fl_{G,\mu}^{\std}}\}^{\dagger}\hookrightarrow \mO_{\Fl_{G,\mu}^{\std}}[[\Omega^{1}_{\Fl_{G,\mu}^{\std}}]] \). 
\end{lem}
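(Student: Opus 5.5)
We will use the groupoid description of the flag variety from Lemma \ref{lemCompareDRStac} rather than computing in coordinates. Apply that lemma (and Corollary \ref{corDaggerGroupObject}) to the parabolic \(P=P^{\std}_{\mu}\), i.e. to \(\Fl^{\std}_{G,\mu}\) (over \(\CC_{p}\), using Proposition \ref{propAnGAGA} to identify the scheme with its analytification). This gives isomorphisms of groupoid objects over \(\Fl^{\std}_{G,\mu}\):
\[
\hat{\Delta}_{\Fl^{\std}_{G,\mu}}\cong \widehat{\fg^{0}/\p^{\std,0}_{\mu}},\qquad \Delta(\Fl^{\std}_{G,\mu})^{\dagger}\cong (\fg^{0}/\p^{\std,0}_{\mu})^{\dagger}.
\]
Here the right-hand sides are the formal and dagger neighbourhoods of the zero section of the vector bundle \(\fg^{0}/\p^{\std,0}_{\mu}\), as in Definition \ref{dfnVBvariousNBHD}. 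Moreover, the structure map to \(\Fl^{\std}_{G,\mu}\) given by the bundle projection is identified with \(p_{1}\) (resp. \(\pr_{1}\)).

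Pushing forward the structure sheaves along these projections gives, by Definition \ref{dfnVBvariousNBHD} and Lemma \ref{lemAffProperProp} (3), the algebras \(\mO[[(\fg^{0}/\p^{\std,0}_{\mu})^{\vee}]]\) and \(\mO\{(\fg^{0}/\p^{\std,0}_{\mu})^{\vee}\}^{\dagger}\). For the formal case we must also check that the filtration on \(p_{1,*}\mO_{\hat{\Delta}}\) matches the degree filtration. We do this with Lemma \ref{lemDiagonalLogProperty} (1),(2): both filtrations are the \(I\)-adic filtration of the diagonal, and \(\gr^{1}\) is the conormal bundle on both sides.

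The identification \((\fg^{0}/\p^{\std,0}_{\mu})^{\vee}\cong\Omega^{1}_{\Fl^{\std}_{G,\mu}}\) is also part of Lemma \ref{lemCompareDRStac} (1). It also follows from taking \(\gr^{1}\) of the first isomorphism, which is canonical.

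For the compatibility statement, we use that the map of groupoids \(\hat{\Delta}\to\Delta^{\dagger}\) corresponds under these isomorphisms to the natural inclusion \(\widehat{V}\hookrightarrow V^{\dagger}\) for \(V=\fg^{0}/\p^{\std,0}_{\mu}\). This holds because both are obtained functorially from the same groupoid \(G^{0}/P^{0}\cong \Fl\times\Fl\) via the natural transformation \(\widehat{(-)}\to(-)^{\dagger}\) of Definition \ref{dfnCompleteGrpObj}. Taking global functions then gives the inclusion \(\mO\{V^{\vee}\}^{\dagger}\hookrightarrow\mO[[V^{\vee}]]\) of Definition \ref{dfnVBvariousNBHD}.

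The main obstacle is checking that the exponential-map identifications in Corollary \ref{corDaggerGroupObject}, \(\mFl^{\dR/G^{0}}\cong\fg^{\dagger}\times\mFl\), descend through the quotient by \(\p^{0}\) to a genuine vector-bundle neighbourhood structure. In particular we need the projection to be \(p_{1}\), not just an abstract isomorphism of stacks. If this is troublesome, we fall back on a local check via Lemma \ref{lemDiagonalLocally}: choose local coordinates on the big cell \(\bar N\) (through the open orbit) and compare \(A\{Y_{i}\}^{\dagger}\subset A[[Y_{i}]]\) directly.
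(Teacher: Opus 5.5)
Your proposal is correct and is essentially the paper's argument: the paper's proof consists only of the remark that the lemma is a reformulation of Lemma \ref{lemCompareDRStac} applied to $P=P^{\std}_{\mu}$, which is what you do, with extra but harmless bookkeeping for the filtrations and for the compatibility $\widehat{(-)}\to(-)^{\dagger}$. The obstacle you flag does not actually arise: Lemma \ref{lemCompareDRStac} already gives isomorphisms \emph{of groupoid objects}, so the bundle projection is matched with $p_{1}$ (resp.\ $\pr_{1}$), and no fallback to local coordinates is needed.
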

\begin{proof}
This is a reformulation of Lemma \ref{lemCompareDRStac}.
\end{proof}
\begin{cor}\label{corOverConvergentNbhdNotTooDifficult}
We have 
the following commutative diagram 
\[\begin{tikzcd}[column sep=0.32cm]
\pr_{1,*}(\mO_{{\Delta}(\mX_{K})^{\dagger}})\arrow[d,hook]& \arrow[l,"\alpha^{\dagger}"']\mO_{\mX_{K}}\{\Omega^{1}_{\mX_{K}}\}^{\dagger}\arrow[d,hook]
&\arrow[l] \pi_{\GM}^{*}(\mO_{\Fl_{G,\mu}^{\std}}\{\Omega^{1}_{\Fl_{G,\mu}^{\std}}\}^{\dagger})\arrow[d,hook]
\\
p_{1,*}(\mO_{\hat{\Delta}_{\mX_{K}}})& \mO_{\mX_{K}}[[\Omega^{1}_{{\mX_{K}}}]]\arrow[l,"\alpha"']
& \arrow[l]\pi_{\GM}^{*}(\mO_{\Fl_{G,\mu}^{\std}}[[\Omega^{1}_{\Fl_{G,\mu}^{\std}}]])^{\wedge}_{\Fil}
\end{tikzcd}
\]
where the horizontal maps in the middle are induced by \(\pi_{\GM}^{*}(\Omega^{1}_{\Fl_{G,\mu}^{\std}})\cong \Omega^{1}_{\mX_{K}}\) in Lemma \ref{lemIsoOmega1KS}. 
Moreover, all the horizontal maps are isomorphisms.
\end{cor}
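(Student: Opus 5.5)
The plan is to build the diagram column by column and then check that each horizontal arrow is an isomorphism, working locally on $\mX_{K}$. The bottom row is the easiest place to start. The right-hand map is the filtered completion of $\pi_{\GM}^{*}$ applied to the isomorphism of Lemma \ref{lemDiagonalFlIsEasy}, followed by the identification $\pi_{\GM}^{*}(\Omega^{1}_{\Fl^{\std}_{G,\mu}})\cong \Omega^{1}_{\mX_{K}}$ from Lemma \ref{lemIsoOmega1KS}, so it is an isomorphism by construction. For the composite of the bottom row I would take the map induced by $\pi_{\GM}$ on formal diagonals. Lemma \ref{lemIsoOmega1KS} (restricted to the open part, where the log structure is trivial) says this composite is a filtered isomorphism. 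It follows that $\alpha$ is an isomorphism. Concretely, $\alpha$ is obtained by transporting the Flag-variety splitting of $\gr^{1}$ through this composite, which is how I would define it.

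Next I would treat the top row. The top-right map is $\pi_{\GM}^{*}$ of the dagger isomorphism in Lemma \ref{lemDiagonalFlIsEasy}, composed with the Kodaira--Spencer identification. It is an isomorphism because $\mO\{V\}^{\dagger}$ is functorial in the vector bundle $V$ and is compatible with pullback: locally, $\pi_{\GM}^{*}$ of a lattice gives a lattice. To construct $\alpha^{\dagger}$, I would use the morphism of groupoids $\Delta(\mX_{K})^{\dagger}\to \Delta(\Fl^{\std}_{G,\mu})^{\dagger}/G^{c,\an}$. This morphism is induced by $\pi_{\GM}$, via functoriality of $(-)^{\dR/-}$ (Theorem \ref{thmAnDRStackGeneral} (1),(3)). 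It gives a map $\pi_{\GM}^{*}\pr_{1,*}\mO_{\Delta(\Fl^{\std})^{\dagger}}\to \pr_{1,*}\mO_{\Delta(\mX_{K})^{\dagger}}$, and $\alpha^{\dagger}$ is this map precomposed with the inverse of the top-right isomorphism. The vertical injections come from Lemma \ref{lemDiagonalLocally} and Lemma \ref{lemDiagonalFlIsEasy}. Both squares then commute by naturality of the transformation $\widehat{(-)}\to(-)^{\dagger}$ (Definition \ref{dfnCompleteGrpObj}).

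It remains to show that $\alpha^{\dagger}$ is an isomorphism, and I expect this to be the main obstacle. The difficulty is that the $\gr$ argument gives no information about dagger rings. Since $\alpha$ is an isomorphism, $\alpha^{\dagger}$ is injective, and the real content is surjectivity together with the statement that radii of overconvergence match.

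I would first reduce to a local statement. Choose an affinoid $U\subset\mX_{K}$ with an étale chart as in Lemma \ref{lemDiagonalLocally}, small enough that $\pi_{\GM}|_{U}$ lifts to a map $\tilde\pi:U\to \Fl^{\std}_{G,\mu}$ by trivializing the torsor. Then $\tilde\pi$ is étale onto its image, since its differential is the Kodaira--Spencer isomorphism. For an étale morphism of smooth rigid varieties $U\to V$, the square with the diagonals $\Delta(U)^{\dagger}\to\Delta(V)^{\dagger}$ over $U\to V$ is Cartesian. This follows because the pullback of $\Delta(V)^{\dagger}$ along $U\to V$ and $\Delta(U)^{\dagger}$ are both computed by $U\times_{V^{\dR}}V$, and étale maps satisfy $U\cong U^{\dR}\times_{V^{\dR}}V$ (the étale case of Theorem \ref{thmAnDRStackGeneral} (2)/(3), i.e. $\dagger$-formal étaleness). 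This Cartesian square gives $\tilde\pi^{*}\pr_{1,*}\mO_{\Delta(V)^{\dagger}}\cong \pr_{1,*}\mO_{\Delta(U)^{\dagger}}$.

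Finally, I would check that this local isomorphism agrees with $\alpha^{\dagger}$ by comparing inside the formal completions. This is legitimate because the vertical maps are injective, and the formal statement is already known. Independence of the choice of lift $\tilde\pi$ follows from $G^{c}$-equivariance of the Flag-variety diagonal.
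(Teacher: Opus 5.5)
The gap is the step where you lift $\pi_{\GM}|_U$ to $\tilde\pi\colon U\to\Fl^{\std}_{G,\mu}$ ``by trivializing the torsor'' and assert that $\tilde\pi$ is \'etale because its differential is the Kodaira--Spencer map. The map $\pi_{\GM}\colon\mX_K\to[\Fl^{\std}_{G,\mu}/G^{c,\an}_{\CC_p}]$ records only the torsor $\mcal{G}^{c,\an}_{\dR,K}$ with its $P^{\std,c}_\mu$-reduction. It does not record the connection.

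\begin{itemize}
\item For an arbitrary trivialization $s$, the differential of $\tilde\pi_s$ differs from the Kodaira--Spencer map by a term coming from the connection form of $s$. If $s$ is induced from a trivialization of $\mcal{P}^{\std,c}_{\dR,K}|_U$, then $\tilde\pi_s$ is constant, so certainly not \'etale.
\item For the same reason, the groupoid map $\Delta(U)^\dagger\to\Delta(\Fl^{\std}_{G,\mu})^\dagger/G^{c,\an}_{\CC_p}$ defining $\alpha^\dagger$ does not come from $\pi_{\GM}$ by functoriality of $(-)^{\dR}$. Functoriality only gives a map to $[\Fl^{\std}_{G,\mu}/G^{c,\an}_{\CC_p}]\times_{[\Fl^{\std}_{G,\mu}/G^{c,\an}_{\CC_p}]^{\dR}}[\Fl^{\std}_{G,\mu}/G^{c,\an}_{\CC_p}]$, which is not $\Delta(\Fl^{\std}_{G,\mu})^\dagger/G^{c,\an}_{\CC_p}$. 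The map you need is built from parallel transport, i.e.\ from the descent $\bar{\mcal{G}}^{c,\an}_{\dR,K}$ of the torsor to $\mX_K^{\dR}$. It coincides with $\Delta(\tilde\pi_s)$ only when $s$ is horizontal.
\item Your independence claim fails for the same reason. Replacing $s$ by $s\cdot g$ with $g\colon U\to G^{c,\an}_{\CC_p}$ non-constant changes $\Delta(\tilde\pi_s)$, after normalizing the first factor by the diagonal action, by $g(y)g(x)^{-1}$ in the second factor. In general the diagonal action cannot absorb this.
\end{itemize}

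So your argument needs horizontal trivializations of $\mcal{G}^{c,\an}_{\dR,K}$ on an admissible affinoid cover of $\mX_K$. Nothing in the proposal provides them, and in rigid geometry they are not available in general. Taylor expansions of horizontal sections typically converge only on small polydiscs, and over $\CC_p$ finitely many of these never cover an affinoid. For example, $d+\lambda\,dT$ on the closed unit disc with $|\lambda|\gg 0$ has no nonzero horizontal section on any neighbourhood of the Gauss point. By contrast, parallel transport always exists on the dagger neighbourhood of the diagonal (cf.\ the Taylor-series argument of Lemma \ref{lemDModOnGaDagger}), which is why the statement is formulated on $\Delta(\mX_K)^\dagger$.

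The missing idea is to prove the dagger statement directly, as the paper does:
\begin{itemize}
\item In the coordinates of Lemma \ref{lemDiagonalLocally}, $\alpha^\dagger$ is the $\mO_U$-algebra map $\mO_U\{df_1,\ldots,df_n\}^\dagger\to\mO_U\{Y_1,\ldots,Y_n\}^\dagger$.
\item Since $\gr^1(\alpha)$ sends $df_i\mapsto Y_i$ and $\alpha^\dagger$ lands in the dagger ring, $\alpha^\dagger(df_i)=Y_i+\epsilon_i$ with $\epsilon_i\in(Y_1,\ldots,Y_n)^2\,\mO_U\{Y_1,\ldots,Y_n\}^\dagger$.
\item After rescaling the coordinates one may assume $\epsilon_i\in p\,(Y_1,\ldots,Y_n)^2\,\mO_U^+\langle Y_1,\ldots,Y_n\rangle$.
\item Then $\alpha^\dagger$ restricts to isomorphisms $\mO_U^+\langle df_1/p^t,\ldots,df_n/p^t\rangle\cong\mO_U^+\langle Y_1/p^t,\ldots,Y_n/p^t\rangle$ for all $t$. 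Both sides are $p$-complete and $p$-torsion free, and modulo $p$ the map is $df_i\mapsto Y_i$.
\item Passing to the colimit over $t$ shows that $\alpha^\dagger$ is an isomorphism.
\end{itemize}
This relative $p$-adic inverse function theorem on dagger germs replaces your global \'etale lift. The rest of your bookkeeping (the bottom row, the top-right isomorphism, injectivity of $\alpha^\dagger$) is fine.
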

\begin{proof}
The square on the right is given by Lemma \ref{lemIsoOmega1KS}.
Thus
we obtain 
the commutative diagram 
above, except that we need to show that the natural map (induced by the commutative diagram in the proof of Proposition \ref{propAnalyticGMTheory} below) \[
i:\mO_{\mX_{K}}\{\Omega^{1}_{\mX_{K}}\}^{\dagger}
\cong 
\pi_{\GM}^{*}(\mO_{\Fl_{G,\mu}^{\std}}\{\Omega^{1}_{\Fl_{G,\mu}^{\std}}\}^{\dagger})\hookrightarrow \pr_{1,*}(\mO_{\Delta(\mX_{K})^{\dagger}})
\] is an isomorphism. For this, we can check analytic locally on \(\mX_{K}\), so let \(U\subset \mX_{K}\)
be an affinoid open subspace admitting an \'etale chart corresponding to \( f_{1},\ldots,f_{n}\in \mO_{U}(U)^{\times} \).  
So
we reduce to the setting of Lemma \ref{lemDiagonalLocally}. We have \[
\mO_{\mX_{K}}\{\Omega^{1}_{\mX_{K}}\}^{\dagger}\hookrightarrow \pr_{1,*}(\mO_{\Delta(\mX_{K})^{\dagger}})
\hookrightarrow p_{1,*}\mO_{\hat{\Delta}_{X}}
\cong \mO_{\mX_{K}}[[\Omega^{1}_{\mX_{K}}]].
\] 
Comparing with Lemma \ref{lemDiagonalLocally}, the composition \[\alpha:
 \mO_{U}[[\Omega^{1}_{U}]]\cong \mO_{U}[[df_{1},\ldots,df_{n}]]\to 
\mO_{U}[[Y_{1},\ldots,Y_{n}]]\cong p_{1,*}\mO_{\hat{\Delta}_{U}}
\] is a filtered isomorphism, 
and 
\[
\gr^{*}(\alpha):\mO_{U}[df_{1},\ldots,df_{n}]\cong\mO_{U}[Y_{1},\ldots,Y_{n}],\;df_{i}\mapsto Y_{i}.
\]

Along the isomorphism \(\alpha\),
we have an induced \[
\alpha^{\dagger}:\mO_{\mX_{K}}\{\Omega^{1}_{\mX_{K}}\}^{\dagger}\cong 
\mO_{\mX_{K}}\{df_{1},\ldots,df_{n}\}^{\dagger}
\hookrightarrow 
\mO_{\mX_{K}}\{Y_{1},\ldots,Y_{n}\}^{\dagger}\cong \pr_{1,*}\mO_{\Delta(X)^{\dagger}}.
\]

Therefore \[\alpha^{\dagger}(df_{i})=Y_{i}+\epsilon_{i}
\] where \(\epsilon_{i}\in (Y_{1},\ldots,Y_{n})^{2}
\mO_{\mX_{K}}\{Y_{1},\ldots,Y_{n}\}^{\dagger}\subset 
\mO_{\mX_{K}}\{Y_{1},\ldots,Y_{n}\}^{\dagger}\).
Up to replacing \(Y_{i}\) by \(Y_{i}/p^{N}\),
we can assume that \[\epsilon_{i}\in p(Y_{1},\ldots,Y_{n})^{2}\mO^{+}_{\mX_{K}}\langle Y_{1},\ldots,Y_{n} \rangle.
\] Then we claim that \(\alpha^{\dagger}\) already induces an isomorphism \[
\mO_{\mX_{K}}^{+}\langle Y_{1}/p^{t},\ldots,Y_{n}/p^{t}\rangle 
\cong \mO_{\mX_{K}}^{+}\langle df_{1}/p^{t},\ldots,df_{n}/p^{t}\rangle.  
\] for any \(t\in \NN\).
For this, without losing generality, let us assume \(t=1\). Since both sides are \(p\)-torsion free and \(p\)-complete,
it suffices to verify modulo \(p\), but then \(df_{i}\)
is mapped to \(Y_{i}\). Now taking colimit along \(t\)
gives the desired isomorphism.

Thus taking colimit along \(t\), we know that \(\alpha^{\dagger}\) is an isomorphism.
\end{proof}

\begin{proof}[Proof of Proposition \ref{propAnalyticGMTheory}]
Let \( g_{K}:\mX^{\pdR/E_{\p}}_{K,E_{\p}}\to \mX^{\dR}_{K,E_{\p}} \). 
We define \[ \mcal{G}^{c,\an}_{\dR,K}(V):=g_{K,*}(\mcal{G}^{c}_{\dR,K}(V)), \] where the RHS is as in Lemma \ref{lemDRtorsorDescends}. By Proposition \ref{propAnToAlgDRJuan}, \( \mcal{G}^{c,\an}_{\dR,K}(V) \) is a vector bundle, and \( \mcal{G}^{c,\an}_{\dR,K} \) is an exact symmetric monoidal functor, and thus define a \( G^{c} \)-torsor \( \bar{\mcal{G}}^{c,\an'}_{\dR,K} \) over \( \mX^{\dR}_{K,E_{\p}} \). 
Moreover, the solid stack \( \mX^{\dR}_{K,E_{\p}} \) is essentially Tate by Corollary \ref{corAnDRstackEssenTate}. Since \( (-)_{Solid} \) and \( (-)^{Tate} \) in Lemma \ref{lemSolidToTateAnalyticficationFunctor} commutes with finite limits, and \( (G^{c})^{Tate}\cong (G^{c}_{\QQ_{p}})^{\an} \) by Proposition \ref{propAnGAGA}, we know that \[ \bar{\mcal{G}}^{c,\an'}_{\dR,K}:=((\bar{\mcal{G}}^{c,\an'}_{\dR,K})^{Tate})_{Solid}\to \mX^{\dR}_{K,E_{\p}} \]
is a reduction of structure of \( \bar{\mcal{G}}^{c,\an'}_{\dR,K} \) to a \( G^{c,\an}_{E_{\p}} \)-torsor.
By the same argument in the proof of Proposition \ref{lemDRtorsorDescends}, we obtain the commutative diagram. 

This diagram is Cartesian by the natural isomorphism \[\pi_{\GM}^{*}(\pr_{1,*}(\mO_{\Delta(\Fl_{G,\mu}^{\std})^{\dagger}}))\cong \pr_{1,*}(\mO_{\Delta(\mX_{K})^{\dagger}})\] in Corollary \ref{corOverConvergentNbhdNotTooDifficult} through the same argument in the proof of Proposition \ref{propAlgGMtheory}. 
\end{proof}



\subsection{Infinite-level Shimura varieties}\label{subsectionInfiniteLevelShimuraVar}
The goal of this subsection is to introduce two versions of the infinite-level Shimura varieties---the completed version \(\mX_{K^{p}}^{\tor}\) (Lemma \ref{lemTorCompacAsTateStack}) and the uncompleted / smooth version \(\mX_{K^{p}}^{\tor,\sm}\) (Definition \ref{dfnNonCompletedInfiniteLevel}), and we will show that the natural map from the completed version to the smooth version is a \(!\)-covering.

The construction of the infinite-level Shimura varieties was introduced by \cite{Scholze15} and further studied in \cite{CS17}, \cite{Shen2017perfectoid}, \cite{HansenJohansson2020perfectoid}. Away from the boundary, one defines \(\mX_{K^{p}}:=\varprojlim_{\Kpp}\mX_{\Kpp},\)
as a diamond, which is represented by a perfectoid space by \cite{HansenJohansson2020perfectoid} when the Shimura datum is of pre-abelian type. However, we do not know a general functor from the category of diamonds to the category of Tate stacks, sending perfectoid spaces to the associated Tate stack, as pro\'etale covers are not necessarily \(!\)-covers. Therefore, some non-formal argument is needed for the realization of \(\mX_{K^{p}}\) as a Tate stack.

We continue with the setting of Notation \ref{notationShimuraSetUp}. In particular,
we fix a base level \(K_{p,0}\), and a cone decomposition, such that the associated toroidal compactification \(\mX^{\tor}_{K^{p}K_{p,0}}\)
for \(\mX_{K^{p}K_{p,0}}\) is smooth
whose boundary is a normal crossings divisor.
For each \(K_{p}\), by rigid Abhyankar's lemma (\cite[Proposition 4.2.1]{DLLZ2022logarithmicJAMS}), 
the transition maps \(\mX^{\tor}_{\Kpp}\to \mX^{\tor}_{K^{p}K_{p,0}}\) are finite Kummer \'etale. 
\begin{rmk}
\(\mX^{\tor}_{\Kpp}\) are normal, but we do not guarantee that they are smooth.
\end{rmk}

We define \[\mX_{K^{p},\proket}^{\tor}:=\varprojlim_{K_{p}}\mX_{\Kpp}^{\tor},\]
where the limit is taken in the pro-Kummer \'etale site of \(\mX^{\tor}_{K^{p}K_{p,0}}\). Then \(\mX^{\tor}_{K^{p},\proket}\) is a pro-Kummer 
\'etale \(\widetilde{K_{p}}\)-torsor over \(\mX^{\tor}_{\Kpp}\), where \(\widetilde{K_{p}}\)
is defined in Notation \ref{notationWidetildeGroup}.


The lemma below gives a realization of \(\mX^{\tor}_{K^{p},\proket}\) as a Tate stack. See Lemma \ref{lemAnalyticVSproket} for the relation.
\begin{notation}\label{notationToricChartTower}
We define \(\mbb{B}^{n}:=\Spa(\CC_{p}\langle T_{1},\ldots,T_{n}\rangle,\mO_{\CC_{p}}\langle T_{1},\ldots,T_{n}\rangle)\), equipped with the log structure induced by the divisor \(\{T_{1}\cdots T_{n}=0\}\) (\cite[Example 2.2.21]{DLLZ2019logarithmicFoundational}).

Consider the finite Kummer \'etale cover
\[\mbb{B}^{n}_{m}:=\Spa\left(\CC_{p}\langle T^{1/m}_{1},\ldots,T^{1/m}_{n}\rangle,\mO_{\CC_{p}}\langle T^{1/m}_{1},\ldots,T^{1/m}_{n}\rangle
\right)\to \mbb{B}^{n},
\] where the LHS is equipped with the log structure induced by the divisor \(\{T_{1}^{1/m}\cdots T_{n}^{1/m}=0\}\). Denote by \( \mbb{B}^{n}_{\infty} \) the following pro-finite Kummer \'etale \(\Gamma_{n}\)-torsor over \( \mbb{B}^{n} \)
\[\Spa\left(\left(\varinjlim_{m\in\ZZ_{\ge1}}\mO_{\CC_{p}}\langle T^{1/m}_{1},\ldots,T^{1/m}_{n}\rangle\right)^{\wedge}_{p}[\frac{1}{p}],\left(\varinjlim_{m\in\ZZ_{\ge1}}\mO_{\CC_{p}}\langle T^{1/m}_{1},\ldots,T^{1/m}_{n}\rangle\right)^{\wedge}_{p}\right),\]
where \(\Gamma_{n}:= \left(\underline{\hat{\ZZ}^{\times}}\right)^{n}\).
\end{notation}
\begin{construction}[Local chart of completed infinite-level Shimura varieties]
	\label{constructionUChartTower}
Let \(U_{K_{p,0}}\subset \mX_{K^{p}K_{p,0}}^{\tor}\) be an open affinoid subspace 
such that there exists a strictly \'etale morphism \(U_{K_{p,0}}\to \mbb{B}^{n}\), which descends to a finite extension \(L/E_{\p}\).
Note that such open subspaces always exist analytically locally by the assumption on \(\mX^{\tor}_{K^{p}K_{p,0}}\) (see \cite[Example 2.3.17 \& 3.1.13]{DLLZ2019logarithmicFoundational}).

For any \(K_{p}\subset K_{p,0}\),
let \(U_{K_{p}}\subset \mX^{\tor}_{\Kpp}\) denote
the preimage of \(U_{K_{p,0}}\),
and let \(U_{K_{p},m}\) be
the normalization of \(U_{K_{p}}\times_{\mbb{B}^{n}}\mbb{B}^{n}_{m}\) (equivalently, the fiber product in the category of fs log adic spaces by rigid Abhyankar's lemma (\cite[Proposition 4.2.1]{DLLZ2019logarithmicFoundational})).

We define 
\(U_{K_{p},\infty}:=\varprojlim_{m}U_{K_{p},m},\)
where the limit is taken in the pro-Kummer \'etale site of \(\mX^{\tor}_{\Kpp}\), which is represented by a perfectoid space by Lemma \ref{lemTorCompacAsTateStack} (2) below. We define a Tate stack via \[\ti{U}:=\left[\ti{U}_{\infty}/\unl{\Gamma_{n}}\right].\]
\end{construction}
\begin{lem}[Completed infinite-level Shimura varieties]\label{lemTorCompacAsTateStack}
We continue the setting of Construction \ref{constructionUChartTower}.

(1) For a fixed \(K_{p}\), \(U_{K_{p},m}\to \mbb{B}^{n}_{m}\) is finite Kummer \'etale, and when \(m\) is sufficiently divisible, it is finite \'etale. Taking limit,
\(U_{K_{p},\infty}\to \mbb{B}^{n}_{\infty}\)
is finite \'etale. In particular,
\(U_{K_{p},\infty}\)
is represented by a perfectoid space. 

(2)
For any \(K_{p}'\subset K_{p}\), \(U_{K_{p}',\infty}\to U_{K_{p},\infty}\)
is finite \'etale.
We define \(\ti{U}_{\infty}:=\varprojlim_{K_{p}}U_{K_{p},\infty}\),
then \(\ti{U}_{\infty}\)
is also perfectoid.

(3) \(\ti{U}\to U_{K_{p,0}}\)
is independent of the choice of the toric chart \(U_{K_{p,0}}\to \mbb{B}^{n}\). In particular, for any \(U'\subset U\subset \mX^{\tor}_{K^{p}K_{p,0}}\), we have an natural open immersion
\(\ti{U}'\subset \ti{U}\). 

Therefore, by gluing along open immersions, we obtain a Tate 
stack, which we still denote as \(\mX^{\tor}_{K^{p}}\), and refer to as the \emph{completed infinite-level Shimura variety} of tame level \(K^{p}\).

(4)
There are natural maps \(\pi_{K_{p}}:\mX^{\tor}_{K^{p}}\to \mX^{\tor}_{\Kpp}\) as morphisms between Tate stacks, which
are essentially affine proper and descendable of index \(\le 8\).

(5) We have the Hodge-Tate period map \(\pi_{\HT,K^{p}}:\mX^{\tor}_{K^{p}}\to \Fl_{G,\mu}\)
as a morphism between Tate stacks. We will write 
\(\pi_{\HT}:=\pi_{\HT,K^{p}}\) when it causes no confusion.

(6) 
Let \(\mX_{K^{p}}\subset \mX^{\tor}_{K^{p}}\)
be the preimage of \(\mX_{\Kpp}\subset \mX_{\Kpp}^{\tor}\). If the diamond \(\varprojlim_{K_{p}}\mX_{\Kpp}\) is representable by a perfectoid space \(\mX_{K^{p}}^{\diamond}\) over \(\CC_{p}\), then we have \(\mX_{K^{p}}\cong \mX_{K^{p}}^{\diamond}\).
\end{lem}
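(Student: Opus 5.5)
I will treat the six parts in order, since each feeds the next.

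For (1), I work at fixed $K_p$. The map $U_{K_p}\to U_{K_{p,0}}\to\mbb{B}^n$ is finite Kummer \'etale, being a composition of the strictly \'etale chart and the Kummer \'etale transition maps. By rigid Abhyankar's lemma \cite[Proposition 4.2.1]{DLLZ2019logarithmicFoundational}, the ramification along the boundary is tame of bounded index. So after pulling back to $\mbb{B}^n_m$ with $m$ divisible by this index and normalizing, $U_{K_p,m}\to\mbb{B}^n_m$ becomes finite \'etale. Passing to the limit, $U_{K_p,\infty}\to\mbb{B}^n_\infty$ is the base change of a finite \'etale map along a pro-finite-\'etale tower. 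Since $\mbb{B}^n_\infty$ is affinoid perfectoid, the almost purity theorem \cite[Theorem 1.10]{Scholze12} shows that $U_{K_p,\infty}$ is affinoid perfectoid.

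For (2), the same Abhyankar argument applied to $U_{K_p'}\to U_{K_p}$ shows that $U_{K_p',m}\to U_{K_p,m}$ is finite \'etale for $m$ sufficiently divisible. The limit $U_{K_p',\infty}\to U_{K_p,\infty}$ is therefore finite \'etale between perfectoids. Then $\ti U_\infty=\varprojlim U_{K_p,\infty}$ is the completed colimit of perfectoid algebras along finite \'etale maps, which is again perfectoid. By Theorem \ref{thmAnDRStackGeneral} (4), all these maps are affine proper and descendable of index $\le4$.

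For (3), given two charts $f,f'$, I form the fiber product tower over $\mbb{B}^n\times\mbb{B}^n$, producing a $\unl{\Gamma_n\times\Gamma_n}$-torsor $\ti U_\infty''$ that is pro-finite \'etale over both $\ti U_\infty$ and $\ti U'_\infty$. This uses Theorem \ref{thmAnDRStackGeneral} (4) again, together with the fact that fiber products of perfectoids along the Kummer towers remain perfectoid by (1). The quotients are then compared as
\[
[\ti U''_\infty/\unl{\Gamma_n\times\Gamma_n}]\cong[\ti U_\infty/\unl{\Gamma_n}]\cong[\ti U'_\infty/\unl{\Gamma_n}].
\]
Open immersions $U'\subset U$ pull back to open immersions of perfectoids, and these are compatible with the torsor structure. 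Gluing along these open immersions then defines $\mX^{\tor}_{K^p}$, with cocycle conditions handled the same way on triple overlaps.

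For (4), I factor $\pi_{K_p}$ locally as $\ti U\leftarrow\ti U_\infty\to U_{K_p,\infty}\to U_{K_p}$. The first map $\ti U_\infty\to U_{K_p,\infty}$ is a limit of finite \'etale maps and is descendable of index $\le4$. The second map $U_{K_p,\infty}\to U_{K_p}$ is pro-Kummer \'etale through the toric tower and comes with the splitting of Theorem \ref{thmAnDRStackGeneral} (5). Composing via \cite[Lemma 11.22]{BhattScholze2017projectivity} gives index $\le8$, and the resulting cover $X'=\ti U_\infty\to\ti U$ witnesses the claim that $\pi_{K_p}$ is essentially affine proper and descendable.

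For (5), the Hodge--Tate period map constructed by \cite{Scholze15} and \cite{CS17} on the perfectoid tower $\ti U_\infty$ is $\Gamma_n$-invariant. It therefore descends to the quotient $\ti U$ and glues to $\pi_{\HT}$. Functoriality comes from the fact that maps of perfectoid spaces define maps of Tate stacks.

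For (6), the boundary is empty on $\mX_{K_p}$, so the Kummer towers $\ti U_\infty\to\ti U$ there are pro-finite-\'etale $\Gamma_n$-torsors of perfectoid spaces. If $\mX_{K^p}^\diamond$ is perfectoid, then $\ti U_\infty\times\mX_{K^p}^\diamond\to\mX_{K^p}^\diamond$ is a pro\'etale $\Gamma_n$-torsor. Theorem \ref{thmAnDRStackGeneral} (4) then identifies its Tate-stack quotient with $\mX^\diamond_{K^p}$, which is exactly $\ti U$.

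I expect the main obstacle to be (3). The difficulty is that the Tate-stack quotient must be shown independent of the chart, while pro\'etale covers are not automatically $!$-covers. I will therefore make sure every comparison map passes through perfectoid torsors to which Theorem \ref{thmAnDRStackGeneral} (4) applies.
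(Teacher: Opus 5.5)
Your treatment of (1)--(4) and (6) follows the paper's proof closely. The shared steps are: the divisibility of the boundary monoid over $\mbb{B}^n_\infty$, giving $U_{K_p,\infty}\cong U_{K_p,m}\times_{\mbb{B}^n_m}\mbb{B}^n_\infty$; the $\Gamma_n\times\Gamma_n$ comparison through Theorem \ref{thmAnDRStackGeneral} (4); and the indices $1$ and $4$ combined via \cite[Lemma 11.22]{BhattScholze2017projectivity}. In (6), note that over the open part $\ti U_\infty$ itself is a pro-finite-\'etale $\unl{\Gamma_n}$-torsor over $\mX^{\diamond}_{K^p}|_{U_{K_p}}$, so no product with $\mX^{\diamond}_{K^p}$ is needed.

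\textbf{The gap is in (5).} You take ``the Hodge--Tate period map constructed by \cite{Scholze15} and \cite{CS17} on the perfectoid tower $\ti U_\infty$'', but no such map exists in this setting, for two reasons.
\begin{itemize}
\item Those constructions are for Shimura varieties of Hodge type, whereas the lemma is stated for an arbitrary Shimura datum and deliberately does not assume that $\varprojlim_{K_p}\mX_{\Kpp}$ is perfectoid.
\item Even in Hodge type, their map lives on Scholze's perfectoid Shimura variety over the minimal compactification. It does not live on the space $\ti U_\infty$ you have just built over the toroidal compactification, with its extra Kummer tower at the boundary. You would still need a map from $\ti U_\infty$ to that space compatible with the boundary, and you do not supply one.
\end{itemize}
So in general there is nothing to pull back.

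\textbf{The missing idea.} Construct the map directly on $\ti U_\infty$ from relative $p$-adic Hodge theory.
\begin{itemize}
\item For $V\in\Rep(G^{c})$ one has the automorphic pro-Kummer-\'etale local system $\mcal{M}_{B,\Kpp}(V)$ on $\mX^{\tor}_{\Kpp}$. It is canonically trivialized over $\ti U_\infty$, because $\ti U_\infty$ lies over the whole $K_p$-tower.
\item The logarithmic Riemann--Hilbert correspondence of \cite{DLLZ2022logarithmicJAMS} equips $\mcal{M}_{B,\Kpp}(V)|_{\ti U_\infty}\otimes\mO_{\ti U_\infty}$ with a Hodge--Tate filtration. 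This filtration is exact and $\otimes$-compatible in $V$, and $\Gamma_n$-equivariant.
\item Since the underlying torsor is trivialized, Tannakian formalism turns this filtration of type $\mu$ into a $\Gamma_n$-invariant map $\ti U_\infty\to\Fl_{G,\mu}$.
\item This map descends to $[\ti U_\infty/\unl{\Gamma_n}]$ and glues, using the chart-independence established in (3).
\end{itemize}
This route gives $\pi_{\HT}$ in the full generality of the lemma without appealing to any previously constructed period map.
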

\begin{rmk}
In the Hodge type cases, \(\mX^{\tor}_{K^{p}}\)
is representable by a perfectoid space by \cite{PilloniStroh2016cohomologie} and \cite{Lan2022closed} for good choices of cone decompositions. Then the associated Tate stack \(\mX^{\tor}_{K^{p}}\) that we define here is expected to be the Tate stack associated to that perfectoid space, if the tower of toroidal compactifications is arbitrarily ramified at boundaries; for example, this is known in the PEL type case.
\end{rmk}
\begin{proof}
	By rigid Abhyankar's lemma (\cite[Proposition 4.2.1]{DLLZ2019logarithmicFoundational}), \(U_{K_{p},m}\to \mbb{B}^{n}_{m}\) is finite Kummer \'etale.
	By construction, the saturated monoid defining the log structure of \(\mbb{B}_{\infty}^{n}\) is \(m\)-divisible for any \(m\in\ZZ_{\ge 1}\),
	and thus any Kummer \'etale morphism over \(\mbb{B}_{\infty}^{n}\)
	is automatically \'etale. Since the tower \(\{\mbb{B}_{m}^{n}\}\)	
	is arbitrarily ramified, the map becomes finite \'etale for \(m\) sufficiently large.
	Then for sufficiently divisible \(m|m'\), \(U_{K_{p},m'}\cong U_{K_{p},m}\times_{\mbb{B}^{n}_{m}}\mbb{B}^{n}_{m'}\), and thus \(U_{K_{p},\infty}\cong U_{K_{p},m}\times_{\mbb{B}^{n}_{m}}\mbb{B}^{n}_{\infty}\).
	This finishes the proof of (1) and (2).
	
	For (3),
	if we have two such choices \(f_{i}:U_{K_{p,0}}\to \mbb{B}^{n}\)
	for \(i=1,2\),
	and denote by \(\ti{U}^{(1)}_{K_{p},\infty}\)
	and \(\ti{U}^{(2)}_{K_{p},\infty}\)
	the corresponding perfectoid spaces. Then
	consider \(\ti{U}^{(12)}_{K_{p},\infty}:=\ti{U}^{(1)}_{K_{p},\infty}\times_{U_{K_{p}}}\ti{U}^{(2)}_{K_{p},\infty}\),
	where the fiber product is taken in the category of diamonds. Then \(\ti{U}^{(12)}_{K_{p},\infty}\to \ti{U}^{(i)}_{K_{p},\infty}\)
	is pro-finite-Kummer-\'etale, for \(i=1,2\).
	Again since the relevant monoids are divisible,
	we know that these morphisms are pro-finite-\'etale,
	with Galois groups \(\Gamma_{n}\).
	Taking limits along \(K_{p}\),
	and writing \(\ti{U}^{(12)}_{\infty}:=\varprojlim_{K_{p}}\ti{U}^{(12)}_{K_{p},\infty}\),
	we have a diagram \[\ti{U}^{(2)}_{\infty}\leftarrow\ti{U}^{(12)}_{\infty}\to \ti{U}^{(1)}_{\infty},\]
	where both morphisms are pro\'etale \(\Gamma_{n}\)-torsors.

	By Theorem \ref{thmAnDRStackGeneral} (4),
	we know that \[[\ti{U}^{(2)}_{\infty}/\Gamma_{n}]\cong [\ti{U}^{(12)}_{\infty}/\Gamma_{n}\times \Gamma_{n}]\cong [\ti{U}^{(1)}_{\infty}/\Gamma_{n}],\]
	which shows that the construction is independent of the choice of the charts.


For (4),
we again work locally on \(U_{K_{p}}\).
It suffices to show the same statement for \(\ti{U}_{\infty}\to U_{K_{p}}\).
Note that \(U_{K_{p},\infty}\to U_{K_{p}}\)
is pulled-back from \(U_{\infty}\to U\),
which is affine proper and admits a splitting,
and thus descendable of index \(\le 1\). 
Now for any \(K_{p}'\subset K_{p}\),
\(U_{K_{p}',\infty}\to U_{K_{p},\infty}\) is finite \'etale,
and thus \(\ti{U}_{\infty}\to U_{K_{p}}\)
is affine proper and descendable of index \(\le 4\) by Theorem \ref{thmAnDRStackGeneral} (4). 
Therefore, \(\ti{U}_{\infty}\to U_{K_{p}}\)
is affine proper and descendable of index \(\le 8\).

For (5), by Riemann-Hilbert correspondence in \cite{DLLZ2022logarithmicJAMS}, for any algebraic representation \(V\) of \(G^{c}\),
we have a pro-Kummer-\'etale local system \(\mcal{M}_{B,\Kpp}(V)\) over \(\mX_{\Kpp}^{\tor}\),
with associated vector bundles \((\mcal{B}_{\dR,\Kpp}(V),\nabla)\) with log connection. We have the Hodge-Tate filtration on \(\mcal{M}_{B,\Kpp}(V)|_{\ti{U}_{\infty}}\otimes \mO_{\ti{U}}\)
for \(\ti{U}\)
as in (3),
which is \(\Gamma_{n}\)-equivariant.
Since \(\mcal{M}_{B,\Kpp}(V)|_{\ti{U}_{\infty}}\)
is canonically trivialized by the definition of \(\mX_{K^{p}}^{\tor}\),
the Hodge-Tate filtration
defines a \(\Gamma_{n}\)-equivariant morphism \(\ti{U}_{\infty}\to \Fl_{G,\mu}\),
and thus \([\ti{U}_{\infty}/\Gamma_{n}]\to \Fl_{G,\mu}\).
This glues together to give the desired Hodge-Tate period map 
\(\pi_{\HT,K^{p}}:\mX^{\tor}_{K^{p}}\to \Fl_{G,\mu}\).

For (6), by construction, \(\ti{U}_{\infty}\to \mX^{\diamond}_{K^{p}}|_{U_{K_{p}}}\) 
is pro-finite \'etale \(\Gamma_{n}\)-torsor between perfectoid spaces. Then by Theorem \ref{thmAnDRStackGeneral} (4), we know that \([\ti{U}_{\infty}/\unl{\Gamma_{n}}]\cong \mX^{\diamond}_{K^{p}}|_{U_{K_{p}}}\).
\end{proof}
The construction in Lemma \ref{lemTorCompacAsTateStack} is designed to ensure the following:
\begin{lem}\label{lemAnalyticVSproket}
Let \(U_{K_{p}}\subset \mX^{\tor}_{\Kpp}\)
be an open subspace, and let \(\ti{U}:=\pi_{K_{p}}^{-1}(U_{K_{p}})\) which is an open subspace of the Tate stack \(\mX^{\tor}_{K^{p}}\) defined in Lemma \ref{lemTorCompacAsTateStack} (3). Then we have a natural isomorphism \[
R\Gamma(\ti{U},\mO_{\mX^{\tor}_{K^{p}}})\cong R\Gamma(\ti{U}_{\proket},\hat{\mO}_{\mX^{\tor}_{\Kpp,\proket}}),
\] where the LHS is the cohomology of Tate stacks, and the RHS is the pro-Kummer \'etale cohomology of the open subspace \(\ti{U}_{\proket}\subset \mX^{\tor}_{K^{p},\proket}\).
\end{lem}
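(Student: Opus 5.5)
We reduce to the local chart of Construction \ref{constructionUChartTower}. Both sides satisfy descent along finite analytic open covers: on the left, open coverings give suave \( ! \)-covers (Example \ref{egAnAdicSpace}), hence \( * \)-descent (Theorem \ref{thmVariousDescent}); on the right, pro-Kummer \'etale cohomology satisfies Čech descent for analytic covers. So we may assume \( U_{K_{p}} \) is contained in the preimage of an affinoid \( U_{K_{p,0}} \) with a strictly \'etale chart to \( \mbb{B}^{n} \). Then \( \ti{U}=[\ti{U}_{\infty}/\unl{\Gamma_{n}}] \) with \( \ti{U}_{\infty}=\Spa(B,B^{+}) \) an affinoid perfectoid space, by Lemma \ref{lemTorCompacAsTateStack} (1)(2).

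Next we compute the left side via the Čech nerve of \( \ti{U}_{\infty}\to\ti{U} \). This map is a \( ! \)-cover by Theorem \ref{thmAnDRStackGeneral} (4), and its \( (k+1) \)-fold fiber product is \( \ti{U}_{\infty}\times\unl{\Gamma_{n}}^{k} \), which is again affinoid perfectoid with ring \( \mrm{Cont}(\Gamma_{n}^{k},B) \). Since \( ! \)-covers satisfy \( * \)-descent, \( R\Gamma(\ti{U},\mO) \) is the totalization of \( k\mapsto\mrm{Cont}(\Gamma_{n}^{k},B) \). This is the continuous group cohomology \( R\Gamma_{\mrm{cont}}(\Gamma_{n},B) \), computed in solid modules. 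The affinoid values carry no higher cohomology on the analytic-stack side, because \( \AnSpec(B,B^{+})_{\square} \) is affine.

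On the pro-Kummer \'etale side we use the same cover. The object \( \ti{U}_{\infty}=\varprojlim U_{K_{p}',\infty} \) is affinoid perfectoid in \( \mX^{\tor}_{K^{p},\proket} \), so its pro-Kummer \'etale cohomology of \( \hat{\mO} \) is almost \( B \) in degree \( 0 \) and almost vanishes in higher degrees (Scholze, and the log version in \cite{DLLZ2022logarithmicJAMS}). After inverting \( p \) it is exactly \( B \). The \( \Gamma_{n} \)-torsor \( \ti{U}_{\infty}\to\ti{U}_{\proket} \) gives the Cartan–Leray spectral sequence \( R\Gamma(\ti{U}_{\proket},\hat{\mO})\cong R\Gamma_{\mrm{cont}}(\Gamma_{n},B) \). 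Its Čech nerve has terms \( \hat{\mO}(\ti{U}_{\infty}\times\Gamma_{n}^{k})=\mrm{Cont}(\Gamma_{n}^{k},B) \), which matches the left side term by term. We then check that the identification is functorial in \( U_{K_{p}} \) and independent of the chart, using the same argument as Lemma \ref{lemTorCompacAsTateStack} (3). This lets the isomorphisms glue across the cover.

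The main obstacle is ensuring that the two cosimplicial objects agree as condensed/solid objects, not merely as abstract rings. One issue is that the totalization on the left is taken in \( D(\CC_{p,\square}) \), while on the right we must compare it with a classical continuous cochain complex. Here we would use that \( \mrm{Cont}(\Gamma_{n}^{k},B)\cong\mrm{Cont}(\Gamma_{n}^{k},\QQ_{p})\hatotimes B \) for Banach \( B \), and that the condensed and classical continuous cochains agree for profinite groups acting on Banach modules. A second subtlety is that \( \Gamma_{n}=(\hat{\ZZ}^{\times})^{n} \) is not pro-\( p \), so we rely on the light-profinite version of Theorem \ref{thmAnDRStackGeneral} (4) and not on \( p \)-adic Lie arguments.
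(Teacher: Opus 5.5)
Your proposal is correct and follows essentially the paper's argument. On a toric chart you identify both sides with \(R\Gamma(\Gamma_{n},B)\): on the stack side by \(*\)-descent for \(\ti{U}=[\ti{U}_{\infty}/\unl{\Gamma_{n}}]\), and on the pro-Kummer \'etale side via the \(\Gamma_{n}\)-torsor \(\ti{U}_{\infty}\to\ti{U}\) together with affinoid-perfectoid acyclicity (the DLLZ input). You then glue along a \v{C}ech cover using chart-independence as in Lemma \ref{lemTorCompacAsTateStack} (3); the only difference from the paper is the order (you reduce to charts first, the paper does the chart case and then globalizes), which is immaterial.
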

\begin{proof}
We start with the case where \(U_{K_{p}}\) is affinoid, and
admits a log \'etale morphism to \(\mbb{B}^{n}\)
as in Lemma \ref{lemTorCompacAsTateStack}. Then \(\ti{U}\cong [\ti{U}_{\infty}/\Gamma_{n}]\),
and thus by \(*\)-descent, \[
R\Gamma(\ti{U},\mO_{\mX^{\tor}_{K^{p}}})
\cong R\Gamma(\Gamma_{n},H^{0}(\ti{U}_{\infty},\mO_{\ti{U}})).
\]
On the other hand, \(\ti{U}_{\infty,\proket}\to \ti{U}_{\proket}\)
is a pro-Kummer \'etale \(\Gamma_{n}\)-torsor, and by \cite[Theorem 5.4.3]{DLLZ2019logarithmicFoundational}, 
\begin{align*}
R\Gamma(\ti{U}_{\proket},\hat{\mO}_{\mX^{\tor}_{\Kpp,\proket}})
\cong R\Gamma(\Gamma_{n},R\Gamma(\ti{U}_{\infty},\hat{\mO}_{\mX^{\tor}_{\Kpp,\proket}}))\\
\cong R\Gamma(\Gamma_{n},H^{0}(\ti{U}_{\infty},\mO_{\ti{U}})).
\end{align*}
This induces an isomorphism \[
R\Gamma(\ti{U},\mO_{\mX^{\tor}_{K^{p}}})\cong
R\Gamma(\ti{U}_{\proket},\hat{\mO}_{\mX^{\tor}_{\Kpp,\proket}}).
\]
By the same argument as in the proof of Lemma \ref{lemTorCompacAsTateStack} (3), 
we can see that this isomorphism is independent of the choice of the toric chart, in particular, it is compatible with restriction along open inclusions \(U_{K_{p}}'\subset U_{K_{p}}\). Thus we can treat the general cases using the \v Cech resolution.
\end{proof}

\begin{dfn}[Smooth infinite-level Shimura varieties]\label{dfnNonCompletedInfiniteLevel}
We define the \emph{smooth infinite-level Shimura varieties} \(\mX^{\tor,\sm}_{K^{p}}\) of tame level \(K^{p}\) as
 \[\mX^{\tor,\sm}_{K^{p}}:=\varprojlim_{K_{p}\subset G(\QQ_{p})}\mX^{\tor}_{\Kpp}\supset \mX^{\sm}_{K^{p}}:=\varprojlim_{K_{p}\subset G(\QQ_{p})}\mX_{\Kpp},\]
where the limits are taken in the category of solid stacks over \( \CC_{p} \). 

For any \( E_{\p}\subset L\subset \CC_{p} \), we denote by \( \mX^{\tor,\sm}_{K^{p},L} \) and \( \mX^{\sm}_{K^{p},L} \)  
the descent of \( \mX^{\tor,\sm}_{K^{p}} \) and \( \mX^{\sm}_{K^{p}} \) 
from \( \CC_{p} \) 
to \( L \). 


We will write \(\mO^{\sm}_{\mX^{\tor}_{K^{p}}}\)
for the structure sheaf of \(\mX^{\tor,\sm}_{K^{p}}\).
We write \(\mscr{I}^{\sm}_{\mX^{\tor}_{K^{p}}}\)
for the pull-back of \(\mscr{I}_{\mX^{\tor}_{\Kpp}}:=\mO_{\mX^{\tor}_{\Kpp}}(-D)\),
the ideal sheaf that defines the boundary.
We will also write \(\mO^{\sm}_{K^{p}}\)
and \(\mscr{I}^{\sm}_{K^{p}}\) if there is no chance of confusion.
We will write \( \mO^{\sm}_{K^{p},L} \) 
and \( \mscr{I}^{\sm}_{K^{p},L} \) 
for their descent to \( L \).  
\end{dfn}
\begin{dfn}[Chartable affinoid open subspace]\label{dfnChartableAffOp}
Let \( L\subset \bar{\QQ}_{p} \) be any finite extension of \( E_{\p} \) such that the irreducible components of the boundaries of \( (\mX^{\tor,rat}_{K^{p}K_{p,0}})_{L} \)  are geometrically irreducible.

Let \(K_{p}\) be an open compact subgroup of \(G(\QQ_{p})\), and \(U_{K_{p}}\) be an
open affinoid subspace of \(\mX^{\tor}_{\Kpp}\) that descends to \( L \), and that admits a toric chart (i.e. a strictly \'etale morphism \(U_{K_{p}}\to \mbb{B}^{n}\) that is a composition of rational localizations and finite étale morphisms).

For any \(K'_{p}\subset K_{p}\), let 
\(U_{K'_{p}}\cong \Spa(A_{K_{p}'},A_{K_{p}'}^{+})\subset \mX^{\tor}_{K^{p}K_{p}'}\) be the preimage of \(U_{K_{p}}\) in \(\mX^{\tor}_{K^{p}K_{p}'}\),
and let \(U^{\sm}:=\varprojlim_{K_{p}'}U_{K'_{p}}=\AnSpec((A^{\sm},A^{\sm,+})_{\square})\), with \(A^{\sm,+}:=\varinjlim_{K_{p}'}A_{K_{p}'}^{+}\) and \(A^{\sm}:=A^{\sm,+}[1/p]\).
Then \(U^{\sm}\) is the preimage of \(U_{K_{p}}\) in \(\mX^{\tor,\sm}\). 

We will refer to such \(U^{\sm}\) as a \emph{chartable affinoid open subspace} of \(\mX^{\tor,\sm}_{K^{p}}\).
In this case, we denote the descent of \( U_{K_{p}'} \) and \( U^{\sm} \)   to \( L \) 
as \( U_{K_{p}',L}\cong \Spa(A_{K_{p}',L},A^{+}_{K_{p}',L}) \) 
and \( U^{\sm}_{L}\cong \Spa(A^{\sm}_{L},A^{+,\sm}_{L}) \). 

A chartable affinoid open subspace \(U^{\sm}\) is \emph{good} 
if, in addition, there exists an open affinoid subspace \(V\subset\Fl_{G,\mu}\) such that \(\fn^{0}|_{V}\) and \(\fg^{c,0}/\fn^{0}|_{V}\) are finite free, and \((\pi^{\sm})^{-1}(U^{\sm})\subset \pi_{\HT}^{-1}(V)\)
for \[
\mX^{\tor,\sm}_{K^{p}}\xleftarrow{\pi^{\sm}}
\mX^{\tor}_{K^{p}}\xrightarrow{\pi_{\HT}}
\Fl_{G,\mu}.
\]
\end{dfn}
\begin{lem}\label{lemOpenSetsFromFulltoSM}
(1) The set of chartable affinoid open subspace is closed under intersections. Moreover, we can (and we will) fix a suitable cone decomposition \(\Sigma\) and \(K_{p,0}\) (in Notation \ref{notationShimuraSetUp}) such that the set of good chartable affinoid open subspace forms a cover of \(\mX_{K^{p}}^{\tor,\sm}\). 

(2) The solid stack \( \mX^{\tor,\sm}_{K^{p}} \) 
is essentially Tate (Definition \ref{dfnEssentialTate}). 

Thus by abuse of notation, we will write \( \mX^{\tor,\sm}_{K^{p}}:=(\mX^{\tor,\sm}_{K^{p}})^{Tate} \).

(3)
The natural morphism \[\pi^{\sm}:\mX^{\tor}_{K^{p}}\to \mX^{\tor,\sm}_{K^{p}}\]
is essentially affine proper and descendable of index \(\le 16\).

(4) The solid stack \( \mX^{\tor,\sm}_{K^{p}} \)
is cohomologically co-smooth \( \mX^{\tor}_{\Kpp} \) and over \( \CC_{p} \).
\end{lem}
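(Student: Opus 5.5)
The plan is to prove the four parts in order, working chart by chart and reusing the tower arguments of Theorem \ref{thmAnDRStackGeneral} (4)--(5) and Lemma \ref{lemTorCompacAsTateStack}.

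For (1), intersections are immediate. If \(U_{K_p}\) and \(U'_{K_p'}\) are chartable, pull both back to a common level \(K_p\cap K_p'\). The intersection is then affinoid, since \(\mX^{\tor}_{\Kpp}\) is separated. It inherits a toric chart by composing a rational localization with the chart of one factor. For the covering statement, I would first shrink \(K_{p,0}\) and refine \(\Sigma\) so that \(\mX^{\tor}_{K^pK_{p,0}}\) is smooth with normal crossings boundary; charts then exist analytically locally (\cite[Example 2.3.17]{DLLZ2019logarithmicFoundational}). To get the good condition, cover \(\Fl_{G,\mu}\) by affinoids \(V\) on which \(\fn^0\) and \(\fg^{c,0}/\fn^0\) are free, for instance translates of the big cell. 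The key input is that \(\pi_{\HT}\) is continuous and \(|\mX^{\tor}_{K^p}|=\varprojlim|\mX^{\tor}_{\Kpp}|\) is a limit of quasi-compact spaces. Hence each \(\pi_{\HT}^{-1}(V)\) is a union of preimages of opens at some finite level, and by quasi-compactness one finite level suffices for a finite subcover. I would then refine these opens further into chartable affinoids.

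For (2), observe that on a chartable affinoid \(U^{\sm}\cong\AnSpec((A^{\sm},A^{\sm,+})_\square)\) the ring \(A^{\sm}=A^{\sm,+}[1/p]\) is bounded. Each \(A_{K_p'}\to A_{K_p''}\) is finite Kummer étale, hence integral, so \(A^{\sm,+}\) lies inside the power-bounded elements. Lemma \ref{lemBasicPropertyEsseTate} (3) then shows \(U^{\sm}\) is essentially Tate. The good cover from (1) is an open cover, hence a suave \(!\)-cover, and its fiber products are again chartable by (1). Lemma \ref{lemBasicPropertyEsseTate} (6) therefore globalizes the statement.

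For (3), on a chartable \(U^{\sm}\) the map factors as
\[\ti U=[\ti U_\infty/\unl{\Gamma_n}]\leftarrow \ti U_\infty\to U^{\sm}_\infty:=\varprojlim_{K_p'}U_{K_p',\infty}^{\sm}\to U^{\sm},\]
where \(U^{\sm}_\infty\) denotes the uncompleted Kummer tower. The last arrow is affine proper with descendability index \(\le2\), since the Kummer tower over \(\mbb{B}^n\) splits and the splitting gives index \(\le 1\) at each level before passing to the colimit. The middle arrow is \(p\)-adic completion of a colimit of finite étale maps. Its descendability I would prove exactly as in Theorem \ref{thmAnDRStackGeneral} (5): almost descendability of index \(\le 4\) from almost purity, together with \cite[Lemma 11.22]{BhattScholze2017projectivity}, giving index \(\le 8\). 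Composing yields \(\le 16\). The precomposition with \(\ti U_\infty\to\ti U\) is exactly the freedom allowed in the definition of essentially affine proper. I expect this step to be the main obstacle: one must verify that the completed ring \(H^0(\ti U_\infty)\) carries the analytic structure induced from \(A^{\sm}_\infty\), so that the map is genuinely proper, and one must track the indices carefully through the limit in \(n\) as was done for \(X\to X^{\sm}\).

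For (4), compare with \(\mX^{\tor}_{\Kpp}\). Each chartable \(U^{\sm}\to U_{K_p}\) is affine proper and descendable of index \(\le2\), by the splittings of \cite[Construction 4.3.3]{Bhatt17} as in Theorem \ref{thmAnDRStackGeneral} (5). Proposition \ref{propAffineProperCover} then gives cohomological co-smoothness locally, and Theorem \ref{thmVariousDescent} (4) globalizes it along the open cover. Finally, over \(\CC_p\), compose with the co-smooth structure map \(\mX^{\tor}_{\Kpp}\to\AnSpec(\CC_{p,\square})\) of Lemma \ref{lemFiniteLevelProper} (1) and apply Theorem \ref{thmVariousDescent} (5).
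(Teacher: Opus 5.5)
Your treatment of (2) and (4) is essentially the paper's. The genuine gap is in the covering statement of (1), and it comes from the order in which you make the choices.

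\textbf{The gap in (1).} You first fix \(K_{p,0}\) and \(\Sigma\) so that \(\mX^{\tor}_{K^pK_{p,0}}\) is smooth with normal crossings boundary. Only afterwards do you choose the affinoids \(V\subset\Fl_{G,\mu}\). You then use quasi-compactness to find a finite level \(K_{p,1}\) and opens \(W\subset\mX^{\tor}_{K^pK_{p,1}}\) with \(\pi_{K_{p,1}}^{-1}(W)\subset\pi_{\HT}^{-1}(V)\).

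Nothing makes the \(V\) stable under \(K_{p,0}\), so in general \(K_{p,1}\) is strictly smaller than \(K_{p,0}\). At that deeper level, with the same \(\Sigma\), the toroidal compactification is only normal. The paper explicitly does not claim smoothness of \(\mX^{\tor}_{\Kpp}\), and it is typically singular along the boundary. A strictly \'etale map to the smooth space \(\mbb{B}^n\) cannot exist near a singular point. So the step ``refine these opens into chartable affinoids'' fails near the boundary.

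You also cannot simply re-choose \(\Sigma\) afterwards. The space \(|\mX^{\tor}_{K^p}|\), the map \(\pi_{\HT}\), and hence the level your compactness argument produces all depend on \(\Sigma\). Re-choosing it would need an extra argument that \(\pi_{\HT}\) is compatible with refinements of cone decompositions.

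\textbf{How the paper avoids this.} It does not use compactness; it reverses the order of choices.
\begin{enumerate}
\item First choose a finite affinoid cover \(\{V_i\}\) of \(\Fl_{G,\mu}\) on which \(\fn^0\) and \(\fg^{c,0}/\fn^0\) are free.
\item Then choose \(K_{p,0}\) so small that every \(V_i\) is \(K_{p,0}\)-stable. Now \(\pi_{\HT}^{-1}(V_i)\) is \(\widetilde{K_{p,0}}\)-stable, so it descends to an open \(U_{K_{p,0},i}\subset\mX^{\tor}_{K^pK_{p,0}}\), and this holds for every choice of \(\Sigma\).
\item Only now choose \(\Sigma\) so that level \(K_{p,0}\) is smooth with normal crossings boundary.
\item Cover each \(U_{K_{p,0},i}\) by chart domains at level \(K_{p,0}\), using \cite{Kiehl1967rham}. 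Their preimages in \(\mX^{\tor,\sm}_{K^p}\) are good chartable affinoids.
\end{enumerate}

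\textbf{On (3).} Your factorization through the uncompleted tower \(U^{\sm}_\infty\) can be made to work, but it is more roundabout than needed. Almost purity does not apply directly to the completion map \(\ti U_\infty\to U^{\sm}_\infty\), because its source ring is not perfectoid. At each finite level you must first pass through the completed Kummer tower, which is split, and only then apply almost purity in the \(K_p\)-direction. That is exactly Lemma \ref{lemTorCompacAsTateStack} (4). The paper uses this lemma directly: \(\ti U_\infty\to U_{K_p}\) is affine proper and descendable of index \(\le 8\). It then passes to the colimit over \(K_p\) with \cite[Lemma 11.22]{BhattScholze2017projectivity}, so the intermediate space \(U^{\sm}_\infty\) is not needed.
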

\begin{proof} 
For (1),
the set is closed under intersection,
since \(\mX^{\tor}_{\Kpp}\)
is separated. 
We can choose a finite open affinoid cover \(  \{V_{i}\} \)  of \(\Fl_{G,\mu}\)
such that \(\fn^{0}|_{V_{i}}\) and \(\fg^{c,0}/\fn^{0}|_{V_{i}}\) are finite free for any \(i\). Then we can choose a sufficiently small open subgroup \(K_{p,0}\subset G(\QQ_{p})\), such that \(V_{i}\) is \(K_{p,0}\)-stable for any \(i\). Then \(\{\pi_{\HT}^{-1}(V_{i})\}\)
descends to an open cover \(  \{U_{K_{p,0},i}\} \)  of \(\mX^{\tor}_{K^{p}K_{p,0}}\)
(for any choice of \(\Sigma\)). Finally, we fix \(\Sigma\) such that \(\mX^{\tor}_{K^{p}K_{p,0}}\) is smooth with the log structure
induced by a normal crossings divisor (\cite[Example 2.3.17]{DLLZ2019logarithmicFoundational}). 
Then for each \(i\), \(U_{K_{p,0},i}\) analytically locally admits a toric chart by \cite[Theorem 1.18]{Kiehl1967rham}, and thus
\( \pi_{\HT}^{-1}(V_{i}) \) can be covered by chartable affinoid open subspaces.  

(2) follows from descent from the open covering given by \( U^{\sm} \) using Lemma \ref{lemBasicPropertyEsseTate} (6).
Note that \( U^{\sm} \) 
is essentially Tate Lemma \ref{lemBasicPropertyEsseTate} (3), since \( A^{\sm} \) 
is bounded, being colimits of bounded rings by \cite[Proposition 2.6.14 (1)]{Juan2024analyticdeRham}. 

For (3),
the natural map is induced by \(\pi_{K_{p}}:\mX^{\tor}_{K^{p}}\to \mX^{\tor}_{\Kpp}\) in Lemma \ref{lemTorCompacAsTateStack} (4), 
which
is essentially affine proper and descendable of index \(\le 8\). More precisely,
locally there is a cover \(\ti{U}_{\infty}\to \ti{U}\subset\mX^{\tor}_{K^{p}}\),
such that \(\ti{U}_{\infty}\to U_{K_{p}}\)
is affine proper and descendable of index \(\le 8\),
and thus \(\pi^{\sm}\)
is essentially affine proper and descendable of index \(\le 16\) by \cite[Lemma 11.22]{BhattScholze2017projectivity}.

The part (4) follows from Proposition \ref{propAffineProperCover} and Lemma \ref{lemFiniteLevelProper} (1).
\end{proof}

\subsection{Locally analytic Shimura varieties}
\label{subsecDeriveLocallyAnalyticVectors}
\subsubsection{Derived locally analytic vectors}
\begin{notation}\label{notationStalkAt1DaggerGrp}
Let \(L/\QQ_{p}\) be \(p\)-adic field, and \( G \) be a \( p \)-adic Lie group.  
Recall \( \mO_{G,1} \) 
defined in Definition \ref{dfnIncarnationGroup} (4).
The space $\mO_{G,1}$ carries two actions of $\fg$, induced respectively by the left multiplication and the right multiplication of \(G\). We denote them as $*_{1}$ and $*_{2}$ respectively.

Given any solid $\fg$-representation $(V,*_{3})$ over \(L\), we have various actions of \(\fg\) on \(V\hatotimes \mO_{G,1}\). 
For any \(I\subset \{1,2,3\}\), we denote by \(*_{I}\) the diagonal action obtained by combining \(*_{i}\) for \(i\in I\). For example,
\(*_{1,3}\) is the diagonal action induced by \(*_{1}\) and \(*_{3}\). 
\end{notation}
\begin{dfn}[derived smooth / locally analytic vector, {\cite[Definition 5.1.4 \& Definition 3.2.3]{JacintoJoaquínJuan2023SolidLARepII}}]\label{dfnLAvectors}
Let \( G \) 
be a \( p \)-adic Lie group. 
For any solid $G$-representation $V$ over \(L\), we define the \emph{derived smooth vector functor} as  $V^{R-G-\sm}:=\varinjlim_{K\subset G}R\Gamma(K,V)$, and we define
the \emph{derived locally analytic vector functor} as \[
V^{R-G-\la}:=(V\hatotimes\mO_{G,1})^{R-(G,*_{1,3})-\sm}:=\varinjlim_{K\subset G}(V\hatotimes C^{\an}(K,L))^{R-(G,*_{1,3})-\sm}.
\] Here the colimits are taken over open compact subgroups \( K \) 
of \( G \). 

Then \( V^{R-G-\sm} \) (resp. \( V^{R-G-\la} \)) is naturally a representation of \( G^{\sm} \)  (resp. \( G^{\la} \)) by \cite[Proposition 5.4.2 \& Theorem 4.3.3]{JacintoJoaquínJuan2023SolidLARepII}.  
\end{dfn}

We will also discuss the descent datum from \( \CC_{p} \) to the cyclotomic extension \( E_{\p,\infty} \)  of \( E_{\p} \).  
\begin{notation}\label{notationEpInftyHandGamma}
Let \( L\subset \bar{\QQ}_{p} \) 
be a finite extension of \( \QQ_{p} \). 
Let \(L_{\infty}\subset \bar{\QQ}_{p}\) be the algebraic extension of \( L \) 
obtained by adding all the \(p^{n}\)-th roots of unity \( \zeta_{p^{n}} \)  for all \( n\in\NN \). 
We denote by \( \hat{L}_{\infty}\subset \CC_{p} \) the \( p \)-adic completion of \( L_{\infty} \). 
Let \( H_{L}:=\Gal(\bar{\QQ}_{p}/L_{\infty}) \), 
and \( \Gamma_{L}:=\Gal(L_{\infty}/L) \). 
\end{notation}
\begin{dfn}\label{dfnDsenFunctor}
For any \( V \in \Rep_{\QQ_{p,\square}}(\unl{\Gal_{L}}) \), we define \[
D_{\sen,L}(V):=(R\Gamma(H_{L},V))^{R-\Gamma_{L}-\la}\in \Rep_{\QQ_{p,\square}}(\Gamma_{L}^{\la}),
\]
and \[ D_{\sen}(V):=\varinjlim_{L'\subset\bar{\QQ}_{p}}D_{\sen,L'}(V), \] 
where the colimit is taken over all the finite extensions \( L'/L \). 

Note that \(\Gamma_{\QQ_{p}}\twoheadrightarrow \Gamma_{L}\) with finite kernel. 
We fix a generator \( \Theta\in \Lie(\Gamma_{\QQ_{p}})\cong\QQ_{p} \).
By abuse of notation, we denote by \( \Theta \) the image of \( \Theta \)  in \( \Lie(\Gamma_{L}) \).
We have the \emph{arithmetic Sen operator} \( \Theta\in\End_{L_{\infty}}(D_{\sen,L}(V)) \) induced by the action of \( \Theta\in \Lie(\Gamma_{L}) \).
We normalize the choice of \( \Theta \) such that \( \Theta \) 
acts on \( D_{\sen,L}(\chi_{\mrm{cycl}}\otimes\CC_{p}) \) by \( 1 \). 
\end{dfn}
\begin{dfn}\label{dfnAdmitsSen}
For any \( V \in \Mod_{\CC_{p}}(\Rep_{\QQ_{p,\square}}(\unl{\Gal_{L}})) \) (i.e. solid semilinear representation over \( \CC_{p} \)), then \( D_{\sen,L}(V)\in \Mod_{L_{\infty}}(\Rep_{\QQ_{p,\square}}(\Gamma_{L}^{\la})) \),  
and there is a natural morphism \( D_{\sen,L}(V)\hatotimes_{L_{\infty}}\CC_{p}\to V \).
We say that \(V\) \emph{admits an arithmetic Sen operator} 
if this natural morphism is an isomorphism,
in which case we define \emph{the arithmetic Sen operator} \( \Theta\in\End_{\CC_{p}}(V) \)
to be the \( \CC_{p} \)-linear extension of the action of \( \Theta \)  on \( D_{\sen,L}(V) \). Note that by Galois descent, the property of ``admitting an arithmetic Sen operator''
is insensitive to restricting to open subgroups of \( \Gal_{L} \). 
\end{dfn}
\begin{dfn}\label{dfnFontaineOperator}
For any \( V \in \Mod_{B_{\dR}}(\widehat{\Fil}^{\ZZ}(\Rep_{\QQ_{p,\square}}(\unl{\Gal_{L}}))) \) (i.e. filtered complete solid semilinear representation over \( B_{\dR} \)), then 
\[ D_{\sen,L}(V)\in \Mod_{L_{\infty}((t))}(\widehat{\Fil}^{\ZZ}(\Rep_{\QQ_{p,\square}}(\Gamma_{L}^{\la}))), \]
and there is a natural morphism \( D_{\sen,L}(V)\hatotimes_{L_{\infty}((t))}B_{\dR}\to V \).
We say that \(V\) \emph{admits a Fontaine operator} 
if this natural morphism is an isomorphism,
in which case we define \emph{the Fontaine operator} \( \Theta\in\End_{\CC_{p}}(V) \).
\end{dfn}
Later for study of Fontaine operator, we will need the following functor:
\begin{dfn}\label{dfnE0functor}
In the algebraic setting, we define a functor \( E_{0}:D(\GG_{a})\to D(\hat{\GG}_{a}) \) as follows for \( \GG_{a}=\Spec(\QQ[Y]) \) and \( \hat{\GG}_{a}:=\varinjlim \Spec(\QQ[Y]/Y^{n}) \): by Cartier duality (\cite[Proposition 2.2.13 \& 2.4.4]{BhattLurie2022prismatic}), we have equivalences of symmetric monoidal categories
\( \alpha:\QCoh(\GG_{a})\cong D(\QQ[Y])\cong \QCoh(\Spec(\QQ)/\hat{\GG}_{a}) \), and  \(\beta:\QCoh(\hat{\GG}_{a})\cong D_{Y^{\infty}-\mrm{torsion}}(\QQ[Y])\cong \QCoh(\Spec(\QQ)/\GG_{a}) \), with the RHS equipped with the convolution symmetric monoidal structure. 

Let \( f:\Spec(\QQ)/\hat{\GG}_{a}\to \Spec(\QQ)/\GG_{a} \), and we define the functor taking \emph{the part where \( Y \) is nilpotent} as
\[ E_{0}:D(\GG_{a})\to D_{Y}(\hat{\GG}_{a}),\;E_{0}:=\beta^{-1}\circ f_{*}\circ \alpha. \] 
Then
\( E_{0} \) is lax symmetric monoidal for the convolution symmetric monoidal structures.

In the setting of solid stacks, by Cartier duality (\cite[Proposition 4.2.5]{Juan2024analyticdeRham}), \( \QCoh(B\hat{\GG}_{a})\cong D(\QQ_{p,\square}[Y])\cong D(\QQ_{p,\square})\otimes_{D(\QQ)}D(\QQ[Y]) \), and \( E_{0} \) defines a lax symmetric monoidal functor of \[E_{0}: \QCoh(B\hat{\GG}_{a})\to \QCoh(\hat{\GG}_{a}), \]
for the convolution symmetric monoidal structure on the RHS.
\end{dfn}
\begin{lem}
The functor 
\( E_{0}:D(\QQ[Y])\to D_{Y^{\infty}-\mrm{torsion}}(\QQ[Y]) \) 
is isomorphic to \( M\mapsto \varinjlim_{n}M/^{\mbb{L}}Y^{n}[-1] \), with transition maps given by multiplication by \( Y \).

In particular, if \( M\in D(\QQ[Y]) \) 
lies in the essential image of \( D(\QQ[Y]/(Y-i)) \) 
for \( i\in\QQ \), then \( E_{0}(M)\cong M \)  
if \( i=0 \) 
and \( E_{0}(M)\cong 0 \) 
if \( i\ne 0 \). 
\end{lem}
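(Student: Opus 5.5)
The plan is to compute \(E_{0}=\beta^{-1}\circ f_{*}\circ\alpha\) explicitly on the module side, using that \(f:\Spec(\QQ)/\hat{\GG}_{a}\to\Spec(\QQ)/\GG_{a}\) has fiber \(\GG_{a}/\hat{\GG}_{a}\), and to write \(f_{*}\) as a colimit over the closed subgroups \(\Spec(\QQ[X]/X^{n})\subset\hat{\GG}_{a}\) on the dual side. Concretely, under Cartier duality, \(\QCoh(B\hat{\GG}_{a})\cong D(\QQ[Y])\), where the Lie algebra generator acts by \(Y\). \(\QCoh(B\GG_{a})\) is the full subcategory of \(Y\)-locally nilpotent modules, i.e.\ \(D_{Y^{\infty}-\mrm{torsion}}(\QQ[Y])\), and \(f^{*}\) is the fully faithful inclusion \(\iota\). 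Therefore \(f_{*}\), its right adjoint, is the right adjoint of the inclusion of torsion modules, namely local cohomology \(R\Gamma_{(Y)}\).

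The first step is to check that \(\alpha\) and \(\beta\) really identify \(f^{*}\) with this inclusion. Both equivalences come from the same Cartier pairing of \(\GG_{a}\) with \(\hat{\GG}_{a}\), and pulling back along \(B\hat{\GG}_{a}\to B\GG_{a}\) just restricts the \(\GG_{a}\)-action, i.e.\ the \(\Sym\)/divided power coalgebra structure, to its Lie algebra action. That is the content of \cite[Proposition 2.4.4]{BhattLurie2022prismatic}. Granting this, \(f_{*}=R\Gamma_{(Y)}\) by uniqueness of adjoints.

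Next I compute \(R\Gamma_{(Y)}(M)\) using the standard formula
\[
R\Gamma_{(Y)}(M)\cong \fib(M\to M[1/Y])\cong \varinjlim_{n}\RHom_{\QQ[Y]}(\QQ[Y]/Y^{n},M).
\]
Koszul self-duality gives \(\RHom(\QQ[Y]/Y^{n},M)\cong M/^{\mbb{L}}Y^{n}[-1]\). Under this identification the transition maps induced by \(\QQ[Y]/Y^{n+1}\twoheadrightarrow\QQ[Y]/Y^{n}\) become multiplication by \(Y\): \(M/Y^{n}\to M/Y^{n+1}\). This gives the stated formula \(\varinjlim_{n}M/^{\mbb{L}}Y^{n}[-1]\). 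The step I expect to need the most care is fixing the conventions in the first paragraph (the direction of \(\alpha,\beta\), and any shift or twist in Cartier duality). Everything after that is routine commutative algebra.

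For the last claim, suppose \(M\) is a \(\QQ[Y]/(Y-i)\)-module. If \(i\neq0\), then \(Y\) acts invertibly on \(M\), so each \(M/^{\mbb{L}}Y^{n}\) vanishes and \(E_{0}(M)\cong 0\). If \(i=0\), then \(Y\) acts by \(0\). Hence \(M/^{\mbb{L}}Y^{n}\cong M\oplus M[1]\), and the transition maps (multiplication by \(Y\)) kill the \(M\) summand and are the identity on \(M[1]\). The colimit is therefore \(M[1]\), and \(E_{0}(M)\cong M\). Alternatively, in this case \(M\) is already \(Y\)-torsion, so \(R\Gamma_{(Y)}M=M\) directly.
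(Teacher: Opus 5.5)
Your proposal is correct and follows the same route as the paper. The paper's proof is exactly the observation that, under Cartier duality, \(f^{*}\) is the inclusion of \(Y^{\infty}\)-torsion modules, so \(f_{*}\) is its right adjoint (local cohomology), \(\varinjlim_{n}\RHom(\QQ[Y]/Y^{n},M)\cong\varinjlim_{n}M/^{\mbb{L}}Y^{n}[-1]\); your check of the transition maps and of the cases \(i=0\), \(i\neq 0\) correctly fills in details the paper leaves to its reference.
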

\begin{proof}
This follows from adjuction, and the identification of \( f^{*} \) 
with the natural inclusion \( D_{Y^{\infty}-\mrm{torsion}}(\QQ[Y])\to D(\QQ[Y]) \). See \cite[Proposition 2.42]{Jiang2023thetaModCur} for details. 
\end{proof}
\begin{lem}\label{lemDsenCommuteWithFlatBC}
Let \( M\in D(L_{\infty,\square})^{\heartsuit} \) be a Banach space.
 We regard \( M\otimes_{L_{\infty}}\CC_{p}\in \Mod_{\CC_{p}}(\Rep(\unl{\Gal_{L}})) \) by letting \( \Gal_{L} \) 
act on \( \CC_{p} \). Then \( D_{\sen,L}(M\otimes_{L_{\infty}}\CC_{p})\cong M \).  
\end{lem}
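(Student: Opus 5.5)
The plan is to compute the two layers of $D_{\sen,L}$ in turn: first the derived $H_L$-invariants, then the derived $\Gamma_L$-locally analytic vectors. Since $M$ is a Banach space over $L_\infty$ with trivial Galois action, I would write $M\otimes_{L_\infty}\CC_p\cong M\hatotimes_{\hat L_\infty}\CC_p$. The first step is to show
\[
R\Gamma(H_L,M\hatotimes_{\hat L_\infty}\CC_p)\cong M\hatotimes_{L_\infty}\hat L_\infty .
\]
For this I would use the classical Tate--Sen computation $R\Gamma(H_L,\CC_p)\cong \hat L_\infty$, meaning $H^0=\hat L_\infty$ and $H^i=0$ for $i>0$, valid for continuous cohomology with solid/condensed coefficients. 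Then I need to pull the Banach space $M$ out of the continuous cohomology. I would choose an orthonormalizable lattice in $M$, that is, $M\cong (\widehat{\bigoplus}_I \mO_{L_\infty})[1/p]$ after completion, or reduce to the case where $M$ is a direct summand of such a space. Working integrally modulo $p^n$, cohomology commutes with direct sums of discrete modules. The almost vanishing of $H^{>0}(H_L,\mO_{\CC_p}/p^n)$ then lets me pass to the derived limit and invert $p$.

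The second step is to compute the derived locally analytic vectors of $N:=M\hatotimes_{L_\infty}\hat L_\infty$ as a $\Gamma_L$-representation, with $\Gamma_L$ acting only on $\hat L_\infty$. Here I would use the Tate normalized traces: $\hat L_\infty=L_\infty\oplus X$ as $\Gamma_L$-modules, where $L_\infty=\varinjlim_n L_n$ and $X$ is the completed direct sum of the kernels of the traces. The Sen/Tate estimate says $\gamma-1$ is invertible on $X$ with bounded inverse, for $\gamma$ a generator of a small open subgroup. I would then argue that $X^{R-\Gamma_L-\la}=0$. Using the definition $(X\hatotimes C^{\an}(K,L))^{R-\sm}$ and the invertibility of $\gamma-1$ on $X$, the operator $\gamma-1$ acts invertibly on $X\hatotimes C^{\an}(K,L)$ for $K$ small enough. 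Hence the cohomology of each $K=\ZZ_p$, which is the cone of $\gamma-1$, vanishes.

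On the other hand, $M\otimes_L L_n$ carries a smooth action of $\Gamma_L$, so its derived locally analytic vectors are itself; this uses the compatibility of $(-)^{R-\la}$ with smooth representations in \cite{JacintoJoaquínJuan2023SolidLARepII}. The functor $(-)^{R-\la}$ commutes with filtered colimits and with completed tensor products with the trivial representation $M$, so the $L_\infty$-part contributes exactly $M$. Combining the two steps gives $D_{\sen,L}(M\otimes_{L_\infty}\CC_p)\cong M$.

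I expect the main obstacle to be the infinite-dimensional bookkeeping. One must check that Tate's traces and the bound on $(\gamma-1)^{-1}$ behave uniformly after taking $\hatotimes M$ and $\hatotimes C^{\an}(K,L)$. This requires the right operator-norm estimates, applied to $M$ via an orthonormal basis, and a check that the solid tensor products agree with completed Banach tensor products here. If that is awkward, my first fallback is to reduce to $M=L_\infty$ by noting that every functor involved commutes with the relevant completed direct sums of Banach spaces.
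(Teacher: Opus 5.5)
Your Step~1 is fine and parallels the paper. The paper pulls $M$ out of the continuous cochains $\Hom(\unl{H_L^i},-)$ via $\Hom(S,M)\cong\widehat{\bigoplus}_I M$; your mod $p^n$ almost-vanishing argument does the same job.

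\textbf{The gap is in Step~2.} There is no decomposition $\hat L_\infty=L_\infty\oplus X$ with $X$ a closed $\Gamma_L$-stable complement. Indeed, $L_\infty$ is dense in $\hat L_\infty$ and has countable dimension over $L$, so $\hat L_\infty/X\cong L_\infty$ would contradict Baire. Tate's normalized traces only give $\hat L_\infty=L_n\oplus X_n$ with $X_n=\ker R_n$, one level at a time. Moreover, $X_n$ is not free of locally analytic vectors: the trace-zero elements of $L_{n+1}$ over $L_n$ lie in $X_n$ and are fixed by $\Gamma_{L_{n+1}}$, so $X_n^{R-\Gamma_L-\la}\neq 0$. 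Hence $\gamma_K-1$ is not invertible on $X_n$ once $K\subset\Gamma_{L_{n+1}}$. Your claim that $\gamma-1$ is invertible on $X\hatotimes C^{\an}(K,L)$ ``for $K$ small enough'' therefore fails exactly in the colimit over $K$ that defines $(-)^{R-\Gamma_L-\la}$.

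What actually works (Berger--Colmez) couples the level $n$ to the shrinking group. By Tate--Sen, the norm of $(\gamma_n-1)^{-1}$ on $X_n$ is bounded by a constant independent of $n$. On the other hand, $\gamma_n-1$ has small norm on vectors, and on functions in $C^{\an}(K,L)$, that are analytic of a fixed radius, once $n$ is large. Comparing the two kills the $X_n$-component of an analytic vector. The same estimate, applied through a Neumann series to the diagonal action on $X_n\hatotimes C^{\an}(K,L)$, is what kills the degree-one term. This uniformity is the whole content of $\hat L_\infty^{R-\Gamma_L-\la}\cong L_\infty$, and it is missing from your sketch.

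\textbf{How the paper avoids this.} The paper never reproves that fact. It shows that $(-)^{R-\Gamma_L-\la}$ commutes with tensoring by the Banach space, because it is computed by $-\hatotimes\mO_{\Gamma_L,1}$ followed by the finite Lazard--Serre resolution; $R\Gamma(H_L,-)$ commutes with it for the reason above. This reduces everything to $M=L_\infty$, where the input is Ax--Sen--Tate together with the standard identity $\hat L_\infty^{R-\Gamma_L-\la}\cong L_\infty$. That is your fallback, and it is the proof you should give.

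\textbf{A separate issue.} Drop the rewriting $M\otimes_{L_\infty}\CC_p\cong M\hatotimes_{\hat L_\infty}\CC_p$ with trivial action on $M$. Since $\Gamma_L$ moves $\hat L_\infty$, a trivial action on $M$ is incompatible with $\hatotimes_{\hat L_\infty}$. If you instead let $\Gal_L$ act through $M=M'\hatotimes_L\hat L_\infty$, the answer is $M'\otimes_L L_\infty\neq M$; already $D_{\sen,L}(\CC_p)=L_\infty\neq\hat L_\infty$. The lemma is meaningful for $M=M'\otimes_L L_\infty$ with $M'$ a Banach $L$-space, so that $M\otimes_{L_\infty}\CC_p=M'\hatotimes_L\CC_p$. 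This is how it is used in the proof of Theorem~\ref{thmReformulateRHCorr}, and it is what your final step $\varinjlim_n M\otimes_L L_n$ implicitly assumes.
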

\begin{proof}
We first show that \( R\Gamma(H_{L},M\otimes_{L_{\infty}}\CC_{p})\cong M\otimes_{L_{\infty}}\hat{L}_{\infty} \). The case where \( M\cong L_{\infty} \) is the classical theorem of Ax-Sen-Tate. The general case follows from the observation that the calculation of the continuous cochain complex commutes with \( -\otimes_{L_{\infty}}M \): 
\[
\Hom(\unl{H_{L}^{i}},M\otimes \CC_{p})
\cong \Hom(\unl{H_{L}^{i}},L_{\infty})\otimes_{L_{\infty,\square}}(M\otimes_{L_{\infty,\square}} \CC_{p})
\cong
\Hom(\unl{H_{L}^{i}},\CC_{p})\otimes_{L_{\infty},\square}M
\] by \cite[Lemma 3.13]{JRC2021solid} and Lemma \ref{lemHomStoBanach} below.

We then take \( (-)^{R-\Gamma_{L}-\la} \), which can be calculated by \( -\otimes \mO_{\Gamma_{L},1} \) and the Lazard-Serre's resolution (see \cite[Theorem 5.7]{JRC2021solid}), and thus also commutes with  \( -\otimes_{L_{\infty,\square}}M \).
\end{proof}
\begin{lem}\label{lemHomStoBanach}
Let \( S \) be a profinite set. Then there exists a set \( I \), such that for any Banach space \( M \) over \( \QQ_{p} \),  \[
\Hom(S,M)\cong \widehat{\bigoplus}_{I}M.
\]
\end{lem}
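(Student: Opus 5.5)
The plan is to reduce to the case $S=\varprojlim_{n}S_{n}$ with each $S_{n}$ finite, and to compute $\Hom(S,M)$ (continuous maps, i.e.\ the condensed internal Hom evaluated at a point) explicitly as an orthonormalizable Banach space whose orthonormal basis set $I$ depends only on $S$. The natural candidate is $\mrm{Cont}(S,\QQ_{p})$ itself. I would first show that $\mrm{Cont}(S,\ZZ_{p})\cong \mrm{LocConst}(S,\ZZ)^{\wedge}_{p}$, and that $\mrm{LocConst}(S,\ZZ)=\varinjlim_{n}\ZZ^{S_{n}}$ is a free abelian group. This freeness is the classical result of N\"obeling for profinite sets. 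Concretely, one picks a basis compatibly along the tower: each transition $\ZZ^{S_{n}}\hookrightarrow\ZZ^{S_{n+1}}$ is a split injection with free cokernel, and choosing bases of the successive cokernels gives a basis $I$ of the colimit. Hence $\mrm{LocConst}(S,\ZZ)\cong\bigoplus_{I}\ZZ$.

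Next, for a Banach space $M$ with unit ball $M^{\circ}$, I would argue that $\Hom(S,M^{\circ})\cong\varprojlim_{k}\Hom(S,M^{\circ}/p^{k})$. The reason is that $M^{\circ}$ is $p$-complete and $S$ is compact, so a continuous map into $M^{\circ}$ is the same as a compatible system of maps into the quotients. Since $M^{\circ}/p^{k}$ is discrete and $S$ is profinite, each $\Hom(S,M^{\circ}/p^{k})$ equals $\mrm{LocConst}(S,\ZZ)\otimes_{\ZZ}M^{\circ}/p^{k}$. By the freeness above this is $\bigoplus_{I}M^{\circ}/p^{k}$. Taking the limit over $k$ gives $\widehat{\bigoplus}_{I}M^{\circ}$, and inverting $p$ (using compactness of $S$ to bound the image of any map) yields $\Hom(S,M)\cong\widehat{\bigoplus}_{I}M$. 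The isomorphism is natural in $M$, and $I$ visibly depends only on $S$.

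The main obstacle is the freeness of $\mrm{LocConst}(S,\ZZ)$ for an arbitrary profinite $S$, together with the identification of locally constant functions into a discrete module $N$ with $\mrm{LocConst}(S,\ZZ)\otimes N$. For $S$ light (a countable tower) the inductive basis choice above works directly, and that is the case the paper uses (Galois groups such as $H_{L}$ are light). For general $S$ I would simply invoke N\"obeling's theorem. Once the reduction to the discrete quotients $M^{\circ}/p^{k}$ is in place, the remaining step is a routine compatibility of bases in the inverse limit.
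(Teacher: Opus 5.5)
Your argument is correct and is essentially the paper's proof: both use compactness of $S$ to land in the unit ball, the freeness of $\Hom(S,\ZZ)\cong\ZZ^{\oplus I}$ (N\"obeling), reduction modulo $p^{k}$, passage to the inverse limit, and inverting $p$. The only difference is that the paper first chooses an orthonormal basis $M^{\circ}\cong\widehat{\bigoplus}_{J}\ZZ_{p}$ and regroups $\widehat{\bigoplus}_{I\times J}\ZZ_{p}$ as $\widehat{\bigoplus}_{I}M^{\circ}$. You instead tensor directly with the discrete module $M^{\circ}/p^{k}$, which avoids needing an orthonormal basis of $M$.
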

\begin{proof}
By compactness, any map \( S\to M \) factors through a lattice \[ M^{\circ}\cong \widehat{\bigoplus}_{J}(\ZZ_{p})\cong \varprojlim_{n}\bigoplus_{J}(\ZZ/p^{n}) \]
for some set \( J \).
Let \( \ZZ^{\bigoplus I}:=\Hom(S,\ZZ) \), and then \[
\Hom(S,M^{\circ})\cong \varprojlim_{n}\bigoplus_{J}\Hom(S,\ZZ/p^{n}) 
\cong \varprojlim_{n}\bigoplus_{J\times I}(\ZZ/p^{n})
\cong \widehat{\bigoplus}_{I\times J}\ZZ_{p}\cong \widehat{\bigoplus}_{I}M^{\circ}.
\] Inverting \( p \), we obtain the desired isomorphism.
\end{proof}
\begin{eg}\label{egApplyToLinfty}
We have 
\begin{align*}
\Mod_{L_{\infty}}(\widehat{\Fil}^{\ZZ}(\Rep_{\QQ_{p,\square}}(\Gamma^{\la}_{L})))\cong 
\widehat{\Fil}^{\ZZ}(\Mod_{L_{\infty}}(\Rep_{\QQ_{p,\square}}(\Gamma^{\la}_{L})))\\
\cong \widehat{\Fil}^{\ZZ}\QCoh(\AnSpec(L_{\infty})/\Gamma^{\la}_{L})
\end{align*}
by \cite[Theorem 4.3.3]{JacintoJoaquínJuan2023SolidLARepII}. 
Here \( \Gamma^{\la}_{L} \) 
acts on \( \AnSpec(L_{\infty})=\varprojlim_{n}\AnSpec(L(\zeta_{p^{n}})) \) 
via \( \Gamma^{\la}_{L}\to \Gamma^{\sm}_{L}=\varprojlim_{n}\unl{\Gal(L(\zeta_{p^{\infty}})/L)} \). Thus we have natural morphisms \[h:\AnSpec(L_{\infty})\times B\hat{\GG}_{a}\to (\AnSpec(L_{\infty})\times B(1\subset\Gamma^{\la}_{L})^{\dagger})/\Gamma^{\sm}_{L}\cong \AnSpec(L_{\infty})/\Gamma^{\la}_{L}, \] where the first morphism is induced by \( \hat{\fg}\to \fg^{\dagger} \) in Definition \ref{dfnDaggerGroupObject}.

Unwinding the definition,
for any \( V\in \Mod_{L_{\infty}}(\widehat{\Fil}^{\ZZ}(\Rep_{\QQ_{p,\square}}(\Gamma^{\la}_{L}))) \), \[ h^{*}(V)\in \widehat{\Fil}^{\ZZ}\QCoh(\AnSpec(L_{\infty})\times B\hat{\GG}_{a})\cong \widehat{\Fil}^{\ZZ}D(L_{\infty,\square}[\Theta]) \] corresponds precisely to the action of \( \Theta\in\Lie(\Gamma_{L}) \). 
We will denote \[
E_{0}(V):=E_{0}(h^{*}(V))\in \widehat{\Fil}^{\ZZ}\QCoh(\hat{\GG}_{a})\cong \widehat{\Fil}^{\ZZ}(D_{\Theta^{\infty}-\mrm{torsion}}(L_{\infty,\square}[\Theta])),
\] for \( \hat{\GG}_{a}=\Spf(L_{\infty,\square}[[\Theta]]) \).  
\end{eg}

\subsubsection{Construction}

\begin{notation}\label{notationRlaConvention}
For \( V\in \Rep_{\QQ_{p,\square}}(\unl{G(\QQ_{p})}) \), by abuse of notation,
we write \[V^{R-\la}:=V^{R-G^{c}(\QQ_{p})-\la}:=V^{R-\widetilde{Z_{c}(\ZZ_{p})}-\sm,R-G^{c}(\ZZ_{p})-\la}.\]
Here \(\widetilde{G(\ZZ_{p})}\) is as in Notation \ref{notationWidetildeGroup}.
Note that \(G^{c}(\ZZ_{p})\)
doesn't really act on \(V^{R-\widetilde{Z_{c}(\ZZ_{p})}-\sm}\),
but \(G^{c}(\ZZ_{p})\)-locally analytic vector is still defined.

More precisely, 
we define \[V^{R-\la}:=
	\varinjlim_{K_{p}}R\Gamma(K_{p}\cap\widetilde{Z_{c}(\ZZ_{p})},V)^{R-K_{p}/(K_{p}\cap \widetilde{Z_{c}(\ZZ_{p})})-\la}
	\] where the colimit is taken over all the open compact subgroups \( K_{p}\subset \widetilde{G(\ZZ_{p})} \). 

As a result, \( V^{R-\la} \) 
is equipped with an action of \( G(\QQ_{p})^{\la}/\mfk{z}_{c}^{\dagger} \) (Notation \ref{notationEquShvOnFl}) as \( \mfk{z}_{c}^{\dagger}\cong\varprojlim_{Z_{p}\subset\widetilde{Z_{c}(\ZZ_{p})}}Z_{p}^{\la} \) (Definition \ref{dfnDaggerGroupObject}). 
\end{notation}
\begin{notation}\label{notationOlaIla}
	Define \[\hat{\mO}_{K^{p}}=\hat{\mO}_{\mX^{\tor}_{K^{p}}}:=R\pi_{*}^{\sm}(\mO_{\mX^{\tor}_{K^{p}}})\in\mrm{CAlg}(\QCoh(\mX^{\tor,\sm}_{K^{p}})),\]
	\[\mO_{K^{p}}^{\la}:=\mO_{\mX^{\tor}_{K^{p}}}^{\la}:=(\hat{\mO}_{K^{p}})^{R-\la}\in \mrm{CAlg}(\QCoh(\mX^{\tor,\sm}_{K^{p}})),\]
	and \[\mscr{I}_{K^{p}}^{\la}:=\mscr{I}_{\mX^{\tor}_{K^{p}}}^{\la}:=\mO^{\la}_{K^{p}}\otimes_{\mO^{\sm}_{K^{p}}}\mscr{I}^{\sm}_{K^{p}},\]
	where \(\mscr{I}^{\sm}_{K^{p}}\) is as in Definition \ref{dfnNonCompletedInfiniteLevel}. We will omit the subscript \( K^{p} \) when it causes no confusions.
\end{notation}
\begin{rmk}
	Note that assuming Leopoldt's conjecture, \(\widetilde{Z_{c}(\ZZ_{p})}\) is a finite discrete group (as \(K^{p}\) is neat), and \(\hat{\mO}_{K^{p}}\) is automatically \(Z_{c}(\ZZ_{p})\)-smooth. Therefore, we will always take \((-)^{R-\widetilde{Z_{c}(\ZZ_{p})}-\sm}\) (which is conjecturally a trivial operation), and work exclusively with \(Z_{c}(\ZZ_{p})\)-smooth vectors. 
\end{rmk}

\begin{thm}\label{thmOlaConcentrated0}
	Let \(U^{\sm}=\AnSpec(A^{\sm},A^{\sm,+})\subset \pi_{\HT}^{-1}(V)\) be a good chartable affinoid open subspace that descends to \( L/E_{\p} \)  (Definition \ref{dfnChartableAffOp}).
	Then \(\mO^{\la}_{K^{p}}(U^{\sm})\in D^{[0,0]}(A^{\sm})\). 
	Moreover, \( \mO^{\la}_{K^{p}}(U^{\sm}) \) admits an arithmetic Sen operator, and \[
	D_{\sen,L}(\mO^{\la}_{K^{p}}(U^{\sm}))\in D^{[0,0]}(A^{\sm}_{L}).
	\]
	The same holds for \(\mscr{I}^{\la}_{K^{p}}\).
\end{thm}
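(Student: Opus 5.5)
The plan is to reduce the statement to the geometric Sen theory computations of Pan and Rodríguez Camargo (\cite{Pan22}, \cite{Juan2022.09locallyShi}). First, by Lemma \ref{lemAnalyticVSproket} and Lemma \ref{lemOpenSetsFromFulltoSM} (3), the value \(\hat{\mO}_{K^{p}}(U^{\sm})\) is \(R\Gamma\) of \(\pi_{K_p}^{-1}(U_{K_p})\) with coefficients in the completed structure sheaf, i.e. pro-Kummer étale cohomology. By the local chart of Construction \ref{constructionUChartTower} it equals \(R\Gamma(\Gamma_{n},H^{0}(\ti{U}_{\infty},\mO))\). Since \(\ti U_\infty\) is affinoid perfectoid and the group \(\widetilde{K_p}\times\Gamma_n\) acts, I would then rewrite
\[
\mO^{\la}_{K^p}(U^{\sm})\cong \bigl(R\Gamma(\Gamma_n,\mO(\ti U_\infty))\bigr)^{R-\la}.
\]
Because derived locally analytic vectors commute with the \(\Gamma_n\)-invariants (both are limits, and \(\Gamma_n\) commutes with \(K_p\)), this becomes \(R\Gamma(\Gamma_n,\mO(\ti U_\infty)^{R-K_p-\la})\).

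The core step is to compute \(\mO(\ti U_\infty)^{R-K_p-\la}\) on the good chart. There \(\fn^0|_V\) and \(\fg^{c,0}/\fn^0|_V\) are free, so we may choose coordinates on \(V\subset\Fl_{G,\mu}\) via \(\pi_{\HT}\) together with a Tate–Sen-type decompletion for the \(K_p\)-tower. Following \cite[\S 5]{Juan2022.09locallyShi}, I would show that the \(K_p\)-locally analytic vectors are concentrated in degree \(0\). The intended description is \(\varinjlim\) of rings of the form \(A_{K_p',\infty}\hat\otimes \mO_{V}\{\text{coordinates of }\fn^0\text{ complement}\}^\dagger\), meaning functions analytic in finitely many ``horizontal'' variables. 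Higher derived locally analytic vectors then vanish by the Lazard–Serre resolution, because geometric Sen theory makes the \(\fn^0\)-action zero and renders the relevant Lie algebra cohomology acyclic. The subsequent \(\Gamma_n\)-cohomology of the toric tower is handled by the standard toric decompletion: the non-integral-exponent parts are killed after taking locally analytic vectors. This yields degree \(0\), and being a module over \(A^{\sm}\) is automatic from the construction.

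For the arithmetic Sen operator, I would descend the whole chart to \(L\) and run the same computation with \(\Gal_L\) acting. The \(H_L\)-invariants of the degree-\(0\) module commute with the computation: Ax–Sen–Tate holds in families in the same way as in Lemma \ref{lemDsenCommuteWithFlatBC}, using Lemma \ref{lemHomStoBanach} for the Banach-type pieces. Taking \(\Gamma_L\)-analytic vectors then gives a model over \(L_\infty\) defined over \(A^{\sm}_L\), whose base change to \(\CC_p\) recovers \(\mO^{\la}_{K^p}(U^{\sm})\). This proves that it admits a Sen operator and that \(D_{\sen,L}\) lies in degree \(0\).

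The statement for \(\mscr{I}^{\la}_{K^p}\) follows formally, since \(\mscr I^{\sm}\) is a line bundle over the chart. The main obstacle I expect is the vanishing of the higher derived locally analytic vectors, where one must check carefully that the decompletion of Rodríguez Camargo applies uniformly over the log boundary charts. I would first try to import his result directly, and reprove it only where the toroidal boundary intervenes.
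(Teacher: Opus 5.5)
Your overall route is the paper's. The paper proves this theorem in one line, as a direct translation of \cite[Proposition 6.2.8 \& Theorem 6.3.6]{Juan2022.09locallyShi}. The only translation needed is your first step: identifying \(\hat{\mO}_{K^{p}}(U^{\sm})\) with pro-Kummer \'etale cohomology via Lemma \ref{lemAnalyticVSproket}. Those results are already formulated for the toroidal tower on the pro-Kummer \'etale site, which is why the paper can cite them as is. So there is nothing to reprove at the boundary charts.

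The sketch you give of \emph{why} degree-\(0\) concentration holds is wrong, though, and would fail if you tried to carry it out, for two reasons.

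\emph{The intermediate object has too few directions.} The map \(\ti{U}_{\infty}\to U_{K_{p},\infty}\) is a pro-\'etale \(\widetilde{K_{p}}\)-torsor over a perfectoid base, so there is no geometric Sen operator at that stage. Hence \(\mO(\ti{U}_{\infty})^{R-K_{p}-\la}\) is, pro-\'etale locally, of the shape \(C^{\la}(\widetilde{K_{p}},\CC_{p})\hatotimes\mO(U_{K_{p},\infty})\), with all \(\dim\fg^{c}\) directions present. The ring you write down, with only the \(\fg^{c,0}/\fn^{0}\)-directions, is the \emph{final} answer for \(\mO^{\la}_{K^{p}}(U^{\sm})\), i.e.\ after taking \(\Gamma_{n}\)-invariants.

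\emph{Toric decompletion alone cannot handle the \(\Gamma_{n}\)-step.} The non-integral-exponent parts are rationally acyclic anyway. The integral-exponent part is exactly what carries \(H^{i}(\Gamma_{n},-)\neq 0\) for \(0<i\le n\) (log Hodge--Tate: \(\Omega^{i}_{\log}(-i)\) in degree \(i\)). If \(\Gamma_{n}\) acted only through the toric part, as in your description, these groups would survive.

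What actually kills them is the nondegeneracy of the geometric Sen operator of the \(K_{p}\)-tower. By the Hodge--Tate comparison and the log Kodaira--Spencer isomorphism (Lemma \ref{lemIsoOmega1KS}), \(\Lie(\Gamma_{n})\) acts on the \(\widetilde{K_{p}}\times\Gamma_{n}\)-locally analytic vectors through \(\fn^{0}\). There that Lie algebra cohomology is concentrated in degree \(0\) (compare Lemma \ref{lemClaConcentraDeg0}), and \(\mO^{\la}_{K^{p}}\) is the kernel. This is also \emph{why} \(\fn^{0}\) acts by zero on \(\mO^{\la}_{K^{p}}\); the vanishing of that action is a consequence, not the cause of the vanishing of higher locally analytic vectors as your sketch suggests.

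The arithmetic part has a similar issue. Lemma \ref{lemDsenCommuteWithFlatBC} presupposes a module already of the form \(M\otimes_{L_{\infty}}\CC_{p}\), so it cannot by itself produce the arithmetic Sen decompletion. That decompletion is the content of \cite[Theorem 6.3.6]{Juan2022.09locallyShi}, which relies on the explicit local structure of \(\mO^{\la}_{K^{p}}\) obtained from the geometric part. Since your plan is to import these results directly, the proof stands; just do not use the sketched mechanism as a substitute.
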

\begin{proof}
This is a direct translation of
\cite[Proposition 6.2.8 \& Theorem 6.3.6]{Juan2022.09locallyShi}.
\end{proof}
\begin{dfn}\label{dfnLocallyAnSV}
We define the \emph{locally analytic Shimura variety}
of tame level \(K^{p}\)
as the solid stack \[\mX^{\tor,\la}_{K^{p}}:=\underline{\AnSpec}_{\mX^{\tor,\sm}_{K^{p}}}(\mO^{\la}_{K^{p}}),\]
and for \( L/E_{\p} \) as in Definition \ref{dfnChartableAffOp}, 
we define \begin{align*}
\mX^{\tor,\la}_{K^{p},L_{\infty}}:=\underline{\AnSpec}_{\mX^{\tor,\sm}_{K^{p},L_{\infty}}}(D_{\sen,L}(\mO^{\la}_{K^{p}})),\end{align*}
where the right hand sides are given by the construction of Lemma \ref{lemAffProperProp} (3), which is applicable by Theorem \ref{thmOlaConcentrated0} and Lemma \ref{lemOpenSetsFromFulltoSM}. 

There is an action of the finite group \( H_{E_{\p}}/H_{L}=\Gal(L_{\infty}/E_{\p,\infty}) \) 
on \( \mX_{K^{p},L_{\infty}}^{\tor,\la}\to \mX_{K^{p},L_{\infty}}^{\tor,\sm} \), and we denote \[
\mX^{\tor,\la}_{K^{p},E_{\p,\infty}}:=\mX_{K^{p},L_{\infty}}^{\tor,\la}/(H_{E_{\p}}/H_{L})\to \mX_{K^{p},E_{\p,\infty}}^{\tor,\sm},
\] which is a descent of \( \mX^{\tor,\la}_{K^{p}}\to \mX^{\tor,\sm}_{K^{p}} \) 
to \( E_{\p,\infty} \), and is equipped with an action of \( \Gamma_{E_{\p}}^{\la} \).  

Then there is a natural factorization of \(\pi^{\sm}\)
as \[\mX^{\tor}_{K^{p}}\xrightarrow{\pi^{\la}} \mX^{\tor,\la}_{K^{p}}\xrightarrow{\pi^{\sm}_{\la}} \mX^{\tor,\sm}_{K^{p}}.\]
\end{dfn}
\begin{rmk}\label{rmkLAsvIsEssTate}
If \( A \) is a Banach algebra with a continuous action of a \( p \)-adic Lie group \( G \),  
then \( A^{\la} \) admits a lattice \( \varinjlim_{K\subset G} (A^{\circ}\otimes C^{\an}(K,\ZZ_{p}))^{K} \), and thus it is bounded. In particular, 
 \( \mO^{\la}_{K^{p}}(U^{\sm}) \) is bounded, and thus \( \mX^{\tor,\la}_{K^{p}} \) 
 is essentially Tate by Lemma \ref{lemBasicPropertyEsseTate} (3).
\end{rmk}
\begin{lem}\label{lemHTPeriodFactor}
The Hodge-Tate period map \[\pi_{\HT,K^{p}}:\mX^{\tor}_{K^{p}}\to \Fl_{G,\mu}\]
has a natural factorization \[\pi_{\HT,K^{p}}^{\la}:\mX^{\tor,\la}_{K^{p}}\to \Fl_{G,\mu},\]
which descends to a \( \Gamma^{\la}_{E_{\p}} \)-equivariant map  \begin{align*}
\pi_{\HT,K^{p}}^{\la}:\mX^{\tor,\la}_{K^{p},E_{\p,\infty}}\to \Fl_{G,\mu,E_{\p,\infty}}.
\end{align*}
\end{lem}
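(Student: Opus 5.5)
The factorization will be constructed locally on a cover of \(\mX^{\tor,\sm}_{K^{p}}\) by good chartable affinoid open subspaces (Lemma \ref{lemOpenSetsFromFulltoSM} (1)), and the local pieces glued via the functoriality of \(\unl{\AnSpec}\) (Lemma \ref{lemAffProperProp} (3)). Fix a good chartable \(U^{\sm}=\AnSpec(A^{\sm},A^{\sm,+})\) with \((\pi^{\sm})^{-1}(U^{\sm})\subset\pi_{\HT}^{-1}(V)\) for an affinoid \(V=\Spa(B,B^{+})\subset\Fl_{G,\mu}\). Since \(V\) is affinoid, a map from the affine-over-\(U^{\sm}\) stack \(\AnSpec(\mO^{\la}_{K^{p}}(U^{\sm}),A^{\sm})\) to \(V\) amounts to a map of analytic rings \((B,B^{+})_{\square}\to (\mO^{\la}_{K^{p}}(U^{\sm}),A^{\sm})\). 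Such a map is automatically compatible with the induced analytic structure, since the target is solid and bounded (Remark \ref{rmkLAsvIsEssTate}) and the images of \(B^{+}\) will be power bounded.

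The key point is that \(\pi_{\HT}^{*}:B\to \hat{\mO}_{K^{p}}(U^{\sm})=R\Gamma((\pi^{\sm})^{-1}(U^{\sm}),\mO)\) lands in locally analytic vectors. The map \(\pi_{\HT}\) is \(G(\QQ_{p})\)-equivariant, so \(\pi_{\HT}^{*}\) is equivariant for the \(K_{p}\)-action on both sides. The \(G(\QQ_{p})\)-action on \(\Fl_{G,\mu}\) is algebraic, hence every element of \(B\) is a \(G(\QQ_{p})\)-locally analytic vector: the orbit map \(K_{p}\times V\to V\), for \(K_{p}\) small enough to stabilise \(V\), is analytic. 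Equivalently, there is a coaction \(B\to B\hatotimes C^{\an}(K_{p},\QQ_{p})\).

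Composing this coaction with \(\pi_{\HT}^{*}\) gives a \(K_{p}\)-equivariant map \(B\to \hat{\mO}_{K^{p}}(U^{\sm})\hatotimes C^{\an}(K_{p},\QQ_{p})\) into \(*_{1,3}\)-invariants. Taking \(R\)-smooth vectors and the colimit over \(K_{p}\), this factors \(\pi_{\HT}^{*}\) through \(\mO^{\la}_{K^{p}}(U^{\sm})=\hat{\mO}_{K^{p}}(U^{\sm})^{R-\la}\) (Definition \ref{dfnLAvectors}, Notation \ref{notationRlaConvention}). The center \(Z_{c}\) acts trivially on \(\Fl_{G,\mu}\), so taking \(\widetilde{Z_{c}(\ZZ_{p})}\)-smooth vectors costs nothing. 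Theorem \ref{thmOlaConcentrated0} makes the target discrete in degree \(0\), so we get an honest ring map \(B\to\mO^{\la}_{K^{p}}(U^{\sm})\).

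I expect the main obstacle to be making this factorization precise as a map of \(\mbb{E}_{\infty}\)-algebras in the derived solid setting. The plan is to use the natural transformation \(W^{R-\la}\to W\) together with the fact that for \(W=B\), which is already locally analytic, the map \(B^{R-\la}\to B\) is an isomorphism (a Lazard–Serre/JRC comparison). Applying the functor \((-)^{R-\la}\), which is lax symmetric monoidal, to the equivariant map \(\pi_{\HT}^{*}\) then yields \(B\cong B^{R-\la}\to \mO^{\la}_{K^{p}}(U^{\sm})\) as algebras. Compatibility with restriction between good charts follows from the naturality of this construction, and compatibility with \(\pi^{\la}\) holds because the composition with \(\mO^{\la}\to\hat{\mO}\) recovers \(\pi_{\HT}^{*}\).

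For the descent, I would use the fact that \(\pi_{\HT}\) is defined over \(E_{\p}\), so that \(B\) has a model \(B_{L}\) which is \(\Gal_{L}\)-equivariant with trivial action on \(B_{L}\). Applying \(D_{\sen,L}\), that is \(R\Gamma(H_{L},-)\) followed by \(\Gamma_{L}\)-locally analytic vectors, gives \(B_{L}\otimes_{L}L_{\infty}\to D_{\sen,L}(\mO^{\la}_{K^{p}}(U^{\sm}))\), which is \(\Gamma_{L}^{\la}\)-equivariant; here Lemma \ref{lemDsenCommuteWithFlatBC} handles the source. Finally, the maps are equivariant for \(H_{E_{\p}}/H_{L}\), so they pass to the quotient \(\mX^{\tor,\la}_{K^{p},E_{\p,\infty}}\to\Fl_{G,\mu,E_{\p,\infty}}\) with \(\Gamma^{\la}_{E_{\p}}\)-equivariance.
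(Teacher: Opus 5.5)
Your overall strategy is the paper's, and most of it is sound. You work on good chartable affinoids $U^{\sm}$, observe that $\pi_{\HT}^{*}\colon B\to\hat{\mO}_{K^{p}}(U^{\sm})$ lands in $A^{\la}:=\mO^{\la}_{K^{p}}(U^{\sm})$ because $\mO(V)$ is a locally analytic representation, then apply $D_{\sen,L}$ and pass to the quotient by $H_{E_{\p}}/H_{L}$.

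The gap is in turning the ring map $B\to A^{\la}$ into a morphism of stacks. You assert that it upgrades to a map of analytic rings $(B,B^{+})_{\square}\to(A^{\la},A^{\sm})$, on the grounds that the target is solid and bounded and that $B^{+}$ lands in power-bounded elements. That principle is false. Being power bounded in the underlying condensed ring does not place an element in the $+$-ring of an induced analytic structure. For example, $(\QQ_{p}\langle T\rangle,\ZZ_{p})_{\square}$ is solid and bounded, and $T$ is power bounded in it. Yet it is strictly larger than $\QQ_{p}\langle T\rangle_{\square}$: it sees the extra rank-two point of $\Spa(\QQ_{p}\langle T\rangle,\ZZ_{p})$. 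So there is no map of analytic rings $(\QQ_{p}\langle T\rangle,\ZZ_{p}\langle T\rangle)_{\square}\to(\QQ_{p}\langle T\rangle,\ZZ_{p})_{\square}$ lifting the identity. Since $A^{\la}$ carries the structure induced from $(A_{K_{p}},A_{K_{p}}^{+})_{\square}$, whether $\pi_{\HT}^{*}(B^{+})$ lies in its $+$-ring is a real question, and your argument does not address it. As written, the map $\mX^{\tor,\la}_{K^{p}}|_{U^{\sm}}\to V$ has not been constructed.

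The paper sidesteps this in two steps:
\begin{itemize}
\item It equips $B$ only with the structure $(B,\mO_{\CC_{p}})_{\square}$ induced from $(\CC_{p},\mO_{\CC_{p}})_{\square}$. The map $(B,\mO_{\CC_{p}})_{\square}\to A^{\la}$ is then automatic, because every object of $D(A^{\la})$ is already $(\CC_{p},\mO_{\CC_{p}})_{\square}$-complete.
\item It uses the properness of $\Fl_{G,\mu}$ to extend $V=\Spa(B,B^{+})\to\Fl_{G,\mu}$ canonically to $\Spa(B,\mO_{\CC_{p}})\to\Fl_{G,\mu}$, and composes the two maps.
\end{itemize}
The resulting map $\mX^{\tor,\la}_{K^{p}}|_{U^{\sm}}\to\Fl_{G,\mu}$ is never claimed to factor through $V$. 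This also makes gluing immediate, since the target is the global $\Fl_{G,\mu}$.

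With this replacement, the rest of your argument goes through: the identification $B\cong B^{R-\la}$ for $K_{p}$ small enough to stabilise $V$, the use of Theorem~\ref{thmOlaConcentrated0}, and the descent. In the descent step, the same device is needed again over $L_{\infty}$ after applying $D_{\sen,L}$.
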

\begin{proof}
It suffices to work locally, so we fix \(U^{\sm}=\AnSpec(A^{\sm},A^{\sm,+})\subset \mX^{\tor}_{K^{p}}\)
a good chartable affinoid open subspace that descends to \( L \) (Definition \ref{dfnChartableAffOp}), and let \(U_{K_{p}}=\Spa(A_{K_{p}},A_{K_{p}}^{+})\subset \mX^{\tor}_{\Kpp}\)
and \(V=\Spa(B,B^{+})\subset \Fl_{G,\mu}\) be as in Definition \ref{dfnChartableAffOp}.
Note that by Lemma \ref{lemOpenSetsFromFulltoSM}, such \(V\) forms a cover of \(\Fl_{G,\mu}\), and such \(U^{\sm}\) forms a cover \(\mX^{\tor,\sm}_{K^{p}}\). Let \(\hat{A}:=H^{0}(U^{\sm},\hat{\mO}_{K^{p}})\)
and \(A^{\la}:=H^{0}(U^{\sm},\mO^{\la}_{K^{p}})\) endowed with the induced analytic ring structure from \((A_{K_{p}},A^{+}_{K_{p}})_{\square}\). Then \(\mX^{\tor,\la}_{K^{p}}|_{U^{\sm}}\cong \AnSpec(A^{\la})\).

The Hodge-Tate period map \(\pi_{\HT}\) (Lemma \ref{lemTorCompacAsTateStack} (5)) \(\pi_{\HT}:\mX^{\tor}_{K^{p}}|_{U^{\sm}}\to V\cong  \Spa(B,B^{+})\)
induces a map of solid \(\CC_{p}\)-algebras \(B\to \hat{A}\).
Since the action of \(G(\QQ_{p})\)
on \(B\) is locally analytic, we have a canonical factorization \(B\to A^{\la}\hookrightarrow \hat{A}\), and thus a morphism of analytic rings \((B,\mO_{\CC_{p}})_{\square}\to A^{\la}\), where
\(f:(B,\mO_{\CC_{p}})_{\square}\) is the ring \(B\) equipped with the induced analytic ring structure from \((\CC_{p},\mO_{\CC_{p}})_{\square}\). Since \(\Fl_{G,\mu}\) is proper, \(V=\Spa(B,B^{+})\to \Fl_{G,\mu}\)
extends canonically to \(\Spa(B,\mO_{\CC_{p}})\to \Fl_{G,\mu}\). Composing with \(f\), we obtain the desired morphism \(\pi_{\HT,K^{p}}^{\la}:\mX^{\tor,\la}_{K^{p}}\cong\AnSpec(A^{\la})\to \Fl_{G,\mu}\).

By functoriality of the construction, the map \( \pi^{\la}_{\HT,K^{p}} \) 
is equivariant for the \( \Gal_{E_{\p}} \)-action. Applying the functor \( D_{\sen,L} \) to the factorization \( B\to A^{\la} \) above, we have \( D_{\sen,L}(B)\to D_{\sen,L}(A) \), which by the same argument induces the Hodge-Tate map \( \mX^{\tor,\la}_{K^{p},L_{\infty}}\to \Fl_{G,\mu,L_{\infty}} \) over \( L_{\infty} \). Quotienting out \( H_{E_{\p}}/H_{L_{\infty}} \) gives the desired map.
\end{proof}

\subsubsection{Relation with completed cohomology}
Now we explain the relation with the completed cohomology of Emerton.
\begin{dfn}[\cite{Emerton06}]\label{dfnCompleteCoh}
Define the \emph{completed cohomology} with compact support with tame level $K^{p}$ as  \[R\Gamma_{c}(K^{p},\ZZ/p^{n}):=\varinjlim_{K_{p}} R\Gamma_{\et,c}(X_{\Kpp,\bar{E}},\ZZ/p^{n}),
\] which carries the action of $G(\QQ_{p})\times\TT(K^{p})\times \Gal_{E}$. We also define \[R\Gamma_{c}(K^{p},\ZZ_{p}):=\Rlim_{n}R\Gamma_{c}(K^{p},\ZZ/{p^{n}}),\;R\Gamma_{c}(K^{p},\QQ_{p}):=R\Gamma_{c}(K^{p},\ZZ_{p})[\frac{1}{p}].\]
In general, for any Banach ring $C$ over $\ZZ_{p}$, $R\Gamma_{c}(K^{p},C):=R\Gamma_{c}(K^{p},\ZZ_{p})\otimes^{L}_{\ZZ_{p}}C$. We will write \(\ti{H}^{i}_{c}(K^{p},C):=H^{i}(R\Gamma_{c}(K^{p},C))\).

Define the \emph{completed cohomology} with tame level $K^{p}$ as  \[R\Gamma(K^{p},\ZZ/p^{n}):=\varinjlim_{K_{p}} R\Gamma_{\et}(X_{\Kpp,\bar{E}},\ZZ/p^{n}),
\]
and \(R\Gamma(K^{p},\ZZ_{p})\), \(R\Gamma(K^{p},\QQ_{p})\), \(R\Gamma(K^{p},C)\) and \(\ti{H}^{i}(K^{p},C)\) are defined similarly as in the compact support case.
	
	\end{dfn}
	\begin{thm}[{\cite[Theorem 6.2.6]{Juan2022.09locallyShi}}]\label{thmLAsvCalculateCompleCoh}
	We have \( \TT^{S}\times G(\QQ_{p})^{\la}\times \Gal_{E} \)-equivariant  isomorphisms \[
	R\Gamma(K^{p},\CC_{p})^{R-\la}\cong 
	R\Gamma
	(\mX^{\tor,\sm}_{K^{p}},\mO^{\la}_{\mX^{\tor}_{K^{p}}})\cong R\Gamma(\mX^{\tor,\la}_{K^{p}},
	\mO_{\mX^{\tor,\la}_{K^{p}}}),\]
	and 
	\[R\Gamma_{c}(K^{p},\CC_{p})^{R-\la}\cong 
	R\Gamma
	(\mX^{\tor,\sm}_{K^{p}},\mscr{I}^{\la}_{\mX^{\tor}_{K^{\p}}})\cong R\Gamma(\mX^{\tor,\la}_{K^{p}},\pi^{\sm,*}_{\la}(\mscr{I}^{\sm}_{K^{p}})).\]
	\end{thm}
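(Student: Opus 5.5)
The plan is to establish the two isomorphisms in each line separately: the second is formal from the relative spectrum, and the first reduces to known comparison theorems via the pro-Kummer étale site.

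\emph{The right-hand isomorphisms.} Since $\mX^{\tor,\la}_{K^{p}}=\unl{\AnSpec}_{\mX^{\tor,\sm}_{K^{p}}}(\mO^{\la}_{K^{p}})$ and $\mO^{\la}_{K^{p}}$ is concentrated in degree $0$ on good chartable affinoids (Theorem \ref{thmOlaConcentrated0}), Lemma \ref{lemAffProperProp} (2),(3) shows that $\pi^{\sm}_{\la}$ is affine proper with $\pi^{\sm}_{\la,*}\mO=\mO^{\la}_{K^{p}}$. Thus $R\Gamma(\mX^{\tor,\la}_{K^{p}},\mO)\cong R\Gamma(\mX^{\tor,\sm}_{K^{p}},\mO^{\la}_{K^{p}})$. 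For $\mscr{I}$ we combine this with the projection formula (Lemma \ref{lemProjectionFormuaForPerf}), since $\mscr{I}^{\sm}_{K^{p}}$ is a line bundle.

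\emph{The left-hand isomorphisms.} The plan is to commute $(-)^{R-\la}$ with global sections and then compare with completed cohomology.
\begin{enumerate}
\item By Lemma \ref{lemOpenSetsFromFulltoSM} (4), $\mX^{\tor,\sm}_{K^{p}}$ is prim (cohomologically co-smooth) over $\CC_{p}$. Hence $R\Gamma$ is computed by a finite Čech complex over a cover by good chartable affinoids, so it commutes with colimits.
\item The functor $(-)^{R-\la}$ is a filtered colimit of tensoring with $C^{\an}(K,\CC_{p})$ (which is nuclear, so it commutes with the finite limits in the Čech complex) followed by group cohomology, which commutes with $R\Gamma$. This gives
\[
R\Gamma(\mX^{\tor,\sm}_{K^{p}},\mO^{\la}_{K^{p}})\cong R\Gamma(\mX^{\tor,\sm}_{K^{p}},\hat{\mO}_{K^{p}})^{R-\la}\cong R\Gamma(\mX^{\tor}_{K^{p}},\mO)^{R-\la}.
\]
\item By Lemma \ref{lemAnalyticVSproket}, glued along Čech covers, $R\Gamma(\mX^{\tor}_{K^{p}},\mO)\cong R\Gamma(\mX^{\tor}_{K^{p},\proket},\hat{\mO})$.
\item By the primitive comparison theorem in its logarithmic form (Diao–Lan–Liu–Zhu), together with the identification of the Kummer étale cohomology of $\mX^{\tor}_{\Kpp}$ with the étale cohomology of the open Shimura variety $X_{\Kpp}$ (purity at the boundary), this is $R\Gamma(K^{p},\ZZ_{p})\hatotimes\CC_{p}=R\Gamma(K^{p},\CC_{p})$.
\item For the compactly supported version, the same argument applies with $\mscr{I}$, using the comparison for the boundary ideal sheaf, which computes $R\Gamma_{c}$.
\end{enumerate}
Equivariance for $\TT^{S}$, $G(\QQ_{p})^{\la}$ and $\Gal_{E}$ follows from the functoriality of every step.

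The main obstacle is step 2, the interchange of $(-)^{R-\la}$ with cohomology. One must check that the relevant solid Banach/Fréchet tensor products commute with the Čech limits. I would reduce to good chartable affinoids, where everything is an honest Banach module, and use \cite{JacintoJoaquínJuan2023SolidLARepII} to express derived locally analytic vectors through the Lazard–Serre resolution. Alternatively, one can follow \cite[Theorem 6.2.6]{Juan2022.09locallyShi} directly, where this statement is proven with the same sheaves, and simply translate it into the present stack-theoretic language.
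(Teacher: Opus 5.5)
Your argument is correct, but it is organized differently from the paper's.

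\textbf{The paper's route.} The paper does not commute $(-)^{R-\la}$ with global cohomology on the stack. It imports the whole comparison $R\Gamma(K^{p},\CC_{p})^{R-\la}\cong R\Gamma_{\an}(\mX^{\tor}_{K^{p}},\mO^{\la})$ from \cite[Theorem 6.2.6]{Juan2022.09locallyShi}. Here the right side is analytic cohomology on the topological space of the infinite-level variety. The paper then only has to identify this analytic cohomology with the stack cohomology $R\Gamma(\mX^{\tor,\sm}_{K^{p}},\mO^{\la})$. It does so on each good chartable affinoid by combining Lemma~\ref{lemAnalyticVSproket} with the local comparison $R\Gamma_{\proket}(U_i,\mO)^{R-\la}\cong R\Gamma_{\an}(U_i,\mO^{\la}_{U_i})$ (Theorem 6.2.8 of Rodr\'iguez Camargo), and then glues by \v Cech complexes.

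\textbf{Your route.} You use exactness of $(-)^{R-\la}$ to pass it through the finite \v Cech complex on $\mX^{\tor,\sm}_{K^{p}}$. You then apply Lemma~\ref{lemAnalyticVSproket} globally. Finally you invoke the logarithmic primitive comparison, purity at the boundary, and the boundary-ideal comparison for compact support directly. In effect you rederive, in the stack formalism, the comparison the paper cites.

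\textbf{What each buys.} Your route needs neither Rodr\'iguez Camargo's topological sheaf $\mO^{\la}$ nor his local Theorem 6.2.8. The paper's route is shorter. It also yields the identification of stack cohomology of $\mO^{\la}$ with the analytic cohomology of Rodr\'iguez Camargo's sheaf. That identification is what lets the paper transport results proved for his sheaf, such as Theorem~\ref{thmOlaConcentrated0} and the geometric Sen theory, into the stacky setting.

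\textbf{Two small corrections.}
\begin{itemize}
\item The finite \v Cech complex comes from quasi-compactness and the cover by good chartable affinoids (Lemma~\ref{lemOpenSetsFromFulltoSM}), not from primness.
\item The derived solid tensor product, group cohomology and filtered colimits are all exact, so they commute with finite limits with no nuclearity hypothesis. The only real point in your step 2 is that sections of $\mO^{\la}$ over a $K_p$-stable affinoid are the derived locally analytic vectors of the sections of $\hat{\mO}$. The paper uses this implicitly as well.
\end{itemize}
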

\begin{proof}
We focus on the completed cohomology without support. The proof works verbatim for the other case.
For the first isomorphism,
\cite[Theorem 6.2.6]{Juan2024analyticdeRham}	
proves that \(R\Gamma(K^{p},\CC_{p})^{R-\la}\cong R\Gamma_{\an}(\mX^{\tor}_{K^{p}},\mO^{\la})\). 
We claim that \(R\Gamma_{\an}(\mX^{\tor}_{K^{p}},\mO^{\la}) \cong R\Gamma(\mX_{K^{p}}^{\tor,\sm},\mO^{\la}_{\mX^{\tor}_{K^{p}}})\).
For this, let \(K_{p}\) be a small enough open subgroup, and let \(U_{i,K_{p}}\) be an open cover of \(\mX^{\tor}_{\Kpp}\) such that \(U_{i,K_{p}}\)
satisfies the condition of \(U_{K_{p}}\) as in Theorem \ref{thmOlaConcentrated0}.
Then \(\{U_{i}^{\sm}\}\)
forms an open cover of \(\mX^{\tor,\sm}_{K^{p}}\). Then \(U_{i}^{\sm}\cong\AnSpec(A_{i}^{\sm},A_{i}^{\sm,+})\), and \[R\Gamma(U_{i}^{\sm},\mO^{\la}_{\mX^{\tor}_{K^{p}}})\cong R\Gamma_{\proket}(U_{i},\mO_{U_{i}})^{R-\la}\cong R\Gamma_{\an}(U_{i},\mO^{\la}_{U_{i}}), \] where \(U_{i}:=\varprojlim_{K_{p}}U_{i,K_{p}}\)
with the limit
taken in the category of diamonds, the first isomorphism is given by Lemma \ref{lemAnalyticVSproket},
and the second isomorphism is given by \cite[Theorem 6.2.8]{Juan2024analyticdeRham}. Therefore, \(R\Gamma(U_{i}^{\sm},\mO^{\la}_{\mX^{\tor}_{K^{p}}})\cong R\Gamma_{\an}(U_{i},\mO_{U_{i}}^{\la})\). Using the \v Cech complex, we have an isomorphism \(R\Gamma_{\an}(\mX^{\tor}_{K^{p}},\mO^{\la})\cong R\Gamma(\mX^{\tor,\sm}_{K^{p}},\mO^{\la}_{\mX^{\tor}_{K^{p}}})\). 
The second isomorphism follows from the construction of \(\mX^{\tor,\la}_{K^{p}}\). 
\end{proof}

\subsection{Grothendieck-Messing-Hodge-Tate period maps}    \label{subsecGMHTperiodMap}
Recall that we have the Hodge-Tate period map (Lemma \ref{lemHTPeriodFactor}) \[
\pi_{\HT,K^{p}}:\mX_{K^{p}}^{\tor,\la}\to \Fl_{G,\mu},
\] and the Grothendieck-Messing period map (Lemma \ref{lemDRtorsorDescends} \& Proposition \ref{propAnalyticGMTheory})
\[
\pi_{\GM,K}:\mX^{\tor}_{K}\to [\Fl^{\std}_{G,\mu}/G^{c,\an}_{\CC_{p}}].
\]
In this subsection, we will show that the two combines into one \emph{Grothendieck-Messing-Hodge-Tate period map} \(\pi_{\GM\HT}\). 
This period map is studied in the setting of local Shimura varieties in \cite[Proof of Theorem 5.1.10]{DospinescuJuan2024jacquet}.

Recall the following result, which is a corollary of the deep theorem in logarithmic Riemann-Hilbert correspondence (\cite[Theorem 4.6.1]{DLLZ2022logarithmicJAMS}):
\begin{thm}[Hodge-Tate comparison]
	\label{thmIdenTwoTorsorsOverLa}
We have \(\unl{\Gal_{E_{p}}}\times \unl{G(\QQ_{p})}\)-equivariant isomorphisms of \(M^{c,\an}_{\mu}\)-torsors over \(\mX^{\tor,\la}_{K^{p}}\) \[i_{\mrm{RH}}:(\pi_{\GM}^{\la})^{*}(\mcal{M}_{\mu}^{\std,c,\an})\cong 
(\pi_{K}^{\la})^{*}(\mcal{M}_{\dR,K}^{c,\an})\cong (\pi_{\HT,K^{p}}^{\la})^{*}(\mcal{M}_{\mu}^{c,\an})(-1),
\] 
where \(\pi_{\GM}^{\la}:=\pi_{\GM,K}\circ\pi^{\la}_{K}\), \((-1)\) refers to twisting the \(\Gal_{E_{\p}}\)-action by \(\mu\circ \chi_{\mrm{cycl}}^{-1}\), \(\mcal{M}_{\dR,K}^{c,\an}\), \(\mcal{M}_{\mu}^{\std,c,\an}\)
and \(\mcal{M}_{\mu}^{c,\an}\) are as in Notation \ref{notationEquShvOnFl} and Notation \ref{notationDRtorsor},  and
the morphisms are 
as in \[
[\Fl^{\std}_{G,\mu}/G^{c}_{\CC_{p}}]\xleftarrow{\pi_{\GM,K}}
\mX_{K}^{\tor}\xleftarrow{\pi_{K}^{\la}}
\mX_{K^{p}}^{\tor,\la}
\xrightarrow{\pi_{\HT,K^{p}}^{\la}}\Fl_{G,\mu}.
\]
\end{thm}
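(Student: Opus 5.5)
The plan is to prove the two isomorphisms separately. The first is formal; the second comes from the logarithmic Riemann--Hilbert correspondence of \cite{DLLZ2022logarithmicJAMS} combined with the Hodge--Tate filtration used to build $\pi_{\HT,K^{p}}$ in Lemma \ref{lemTorCompacAsTateStack} (5).

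For the first isomorphism $(\pi_{\GM}^{\la})^{*}(\mcal{M}_{\mu}^{\std,c,\an})\cong(\pi^{\la}_{K})^{*}(\mcal{M}^{c,\an}_{\dR,K})$, I would pull back along $\pi^{\la}_{K}$ the identification $\mcal{M}_{\dR,K}^{c}\cong\pi_{\GM,K}^{*}(\mcal{M}_{\mu}^{\std,c})$ from Lemma \ref{lemDRtorsorDescends}, and then apply $((-)^{Tate})_{Solid}$. Since these functors commute with finite limits (Lemma \ref{lemSolidToTateAnalyticficationFunctor}), this passes to the analytified torsors. Equivariance is automatic: $G(\QQ_{p})$ acts on $\mX^{\tor,\la}_{K^{p}}$ over the tower of $\mX^{\tor}_{K}$ compatibly with the Hecke action on $D_{\dR,K}$, and everything on the de Rham side is defined over $E_{\p}$, so $\Gal_{E_{\p}}$ acts only through the base.

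For the second isomorphism I would use Tannakian formalism. It suffices to produce, functorially and $\otimes$-compatibly in $V\in\Rep(M^{c}_{\mu})$, isomorphisms of vector bundles on $\mX^{\tor,\la}_{K^{p}}$ between $(\pi^{\la}_{K})^{*}\mcal{M}^{c,\an}_{\dR,K}(V)$ and $(\pi^{\la}_{\HT})^{*}\mcal{M}^{c,\an}_{\mu}(V)(-1)$. Since $M_{\mu}$-representations are graded pieces of $\mu$-filtered $G$-representations, I would first work with $V\in\Rep(G^{c})$ and take the graded pieces $\gr^{i}$ of the Hodge filtration on $D_{\dR,K}(V)$ and of the Hodge--Tate filtration. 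Over $\mX^{\tor}_{K^{p}}$, the relative $p$-adic Hodge theory of \cite[Theorem 4.6.1]{DLLZ2022logarithmicJAMS} gives, for the pro-Kummer-\'etale local system $\mcal{M}_{B,K}(V)$, a Hodge--Tate decomposition
\[
\gr^{i}\bigl(\mcal{M}_{B,K}(V)\otimes\hat{\mO}\bigr)\cong \gr^{-i}D_{\dR,K}(V)\otimes\hat{\mO}(i),
\]
which is $\Gal_{E_{\p}}\times G(\QQ_{p})$-equivariant. By construction of $\pi_{\HT}$, the left side is $\pi_{\HT}^{*}\mcal{M}^{c,\an}_{\mu}(V)$, and the twist $(i)$ organizes into twisting by $\mu\circ\chi_{\mrm{cycl}}^{-1}$, i.e.\ $(-1)$. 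This gives the isomorphism over the completed space $\mX^{\tor}_{K^{p}}$, glued from the toric charts $\ti{U}$ using Lemma \ref{lemAnalyticVSproket} to compare pro-Kummer-\'etale and Tate-stack sections.

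The main obstacle is descending this isomorphism from $\mX^{\tor}_{K^{p}}$ to $\mX^{\tor,\la}_{K^{p}}$. My approach is as follows. Both sides are already defined on $\mX^{\tor,\la}_{K^{p}}$: they are pulled back along $\pi^{\la}_{K}$ and along $\pi^{\la}_{\HT}$, the latter by Lemma \ref{lemHTPeriodFactor}. Hence the isomorphism, viewed locally on a good chartable affinoid $U^{\sm}$ as an invertible matrix over $\hat{A}$, has entries that are $G(\QQ_{p})$-equivariant combinations of the $G(\QQ_{p})$-locally analytic frames on each side. To make this concrete I would trivialize both torsors locally; this is possible on good $U^{\sm}$ by Definition \ref{dfnChartableAffOp}. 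Equivariance then forces the matrix entries to be locally analytic vectors, so they lie in $A^{\la}=H^{0}(U^{\sm},\mO^{\la})$, using Theorem \ref{thmOlaConcentrated0} to know that $A^{\la}$ sits in degree $0$ inside $\hat{A}$. The inverse matrix is handled the same way. Finally, compatibility on overlaps and with tensor products follows from the corresponding properties upstairs, because $\mO^{\la}\to\hat{\mO}$ is injective.
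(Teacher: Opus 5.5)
Your proposal is correct and is essentially the paper's argument. The first isomorphism is pulled back from Lemma \ref{lemDRtorsorDescends}. The second is established over the completed space $\mX^{\tor}_{K^{p}}$ from the logarithmic Riemann--Hilbert/Hodge--Tate comparison, and then descended to $\mX^{\tor,\la}_{K^{p}}$ by taking $G(\QQ_{p})$-locally analytic vectors on good chartable affinoids where both sides are trivialized. The differences are only in packaging. The paper quotes the upstairs isomorphism directly as an isomorphism of $v$-torsors from \cite[Theorem 4.2.1]{Juan2022.09locallyShi}, passing to analytic bundles on $\ti{U}_{\infty}$ via \cite[Theorem 3.5.8]{KedlayaLiu2016relative} and descending along $\Gamma_{n}$, rather than rebuilding it from graded pieces. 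It also phrases the descent as $\mcal{E}\cong(\pi_{\la,*}\pi_{\la}^{*}\mcal{E})^{R-\la}$ for both sides, instead of your explicit matrix computation.
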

\begin{proof}
The first isomorphism is given by Lemma \ref{lemDRtorsorDescends}. 

For the second isomorphism,
by \cite[Theorem 4.2.1]{Juan2022.09locallyShi}, we have \[
\pi_{K}^{*}(\mcal{M}_{\dR,K}^{c})\cong \pi_{\HT,K^{p}}^{*}(\mcal{M}_{\mu}^{c})(-1),\]
for \[\mX_{K}^{\tor,\diamond}\xleftarrow{\pi_{K}}
\mX_{K^{p}}^{\tor,\diamond}
\xrightarrow{\pi_{\HT,K^{p}}}\Fl_{G,\mu}^{\diamond},\]
as \(v\)-torsors on diamonds, where \(\mX_{K^{p}}^{\tor,\diamond}:=\varprojlim_{K_{p}}\mX_{\Kpp}^{\tor,\diamond}\) in the category of diamonds over \(\Spd(\CC_{p})\). 
In particular, for any \((U_{K_{p}},U_{K_{p},\infty},\ti{U}_{\infty},\Gamma_{n},\ti{U})\)
as in the construction of \(\mX^{\tor}_{K^{p}}\) in Construction \ref{constructionUChartTower} and Lemma \ref{lemTorCompacAsTateStack},
we have a map of diamonds \(\ti{U}^{\diamond}_{\infty}\to \mX^{\tor,\diamond}_{K^{p}}\),
and thus we have \[
(\pi_{K}|_{\ti{U}_{\infty}})^{*}(\mcal{M}_{\dR,K}^{c})\cong (\pi_{\HT,K^{p}}|_{\ti{U}_{\infty}})^{*}(\mcal{M}_{\mu}^{c})(-1)\]
over \(\ti{U}_{\infty}\). Here \(\ti{U}_{\infty}\) is perfectoid (Lemma \ref{lemTorCompacAsTateStack}) we use \cite[Theorem 3.5.8]{KedlayaLiu2016relative}
to identify the category of \(v\)-vector bundles over \(\ti{U}_{\infty}\) with that of analytic vector bundles. Using descent data, we see that the isomorphism is \(\Gamma_{n}\)-equivariant,
and thus descends to an isomorphism \[
(\pi_{K}|_{\ti{U}})^{*}(\mcal{M}_{\dR,K}^{c})\cong (\pi_{\HT,K^{p}}|_{\ti{U}})^{*}(\mcal{M}_{\mu}^{c})(-1),\]
which further glues to a \(G(\QQ_{p})\)-equivariant isomorphism 
\begin{align}\label{alignIdentTorsorsCompleted}
\pi_{K}^{*}(\mcal{M}_{\dR,K}^{c})\cong \pi_{\HT,K^{p}}^{*}(\mcal{M}_{\mu}^{c})(-1)
\end{align}
over \(\mX^{\tor}_{K^{p}}\). 

We want to descend this isomorphism along \(\pi_{\la}:\mX^{\tor}_{K^{p}}\to \mX^{\tor,\la}_{K^{p}}\). Note that we can cover \(\mX^{\tor,\sm}_{K^{p}}\) by good chartable affinoid open subspaces  \(\{U_{i}^{\sm}\}\) (Definition \ref{dfnChartableAffOp}),
such that \((\pi^{\la}_{K})^{*}(\mcal{M}_{\dR,K}^{c})|_{U_{i}^{\la}}\)
and \((\pi^{*}_{\HT,K^{p}})|_{U_{i}^{\la}}\)
are both trivial \(M^{c}_{\mu}\)-torsors, 
where \(U_{i}^{\la}\) denotes the preimage of \(U_{i}^{\sm}\)
in \(\mX^{\tor,\la}_{K^{p}}\). 

Then by verifying after localizing to each \(U_{i}^{\la}\), we see that for any \((\rho,V)\in\Rep(M^{c}_{\mu})\), 
\begin{align}\label{alignIdenLHSTro}
(\pi^{\la}_{K})^{*}(\mcal{M}_{\dR,K}^{c}(V))\cong \pi_{\la,*}\left(\pi_{K}^{*}(\mcal{M}_{\dR,K}^{c}(V))\right)^{R-\la},
\end{align}
and 
\begin{align}\label{alignIdenRHSTro}
(\pi_{\HT,K^{p}}^{\la})^{*}(\mcal{M}_{\mu}^{c}(V))(\rho\circ \mu\circ \chi_{\mrm{cycl}})\cong \pi_{\la,*}\left(\pi_{\HT,K^{p}}^{*}(\mcal{M}_{\mu}^{c}(V))\right)^{R-\la}(\rho\circ \mu\circ \chi_{\mrm{cycl}}).
\end{align}
Comparing the two isomorphisms, the right hand sides are isomorphic by (\ref{alignIdentTorsorsCompleted}), and thus we have an isomorphism of the left hand sides.
Finally, applying \( ((-)^{Tate})_{Solid} \), we obtain the 
desired isomorphism of \( M^{c,\an}_{\mu} \)-torsors. 
\end{proof}

\begin{dfn}\label{dfnGMHTperDomain}
We define the \emph{Grothendieck-Messing-Hodge-Tate period domain} as \[
\Per_{G,\mu}:=\Per(G,X):=M^{c,\an}_{\mu}\backslash
\left((N_{\mu}^{\an}\backslash G^{c,\an}_{\CC_{p}})(-1)\times (N_{\mu}^{\std,\an}\backslash G^{c,\an}_{\CC_{p}})\right),
\] where \((-1)\) is as in Theorem \ref{thmIdenTwoTorsorsOverLa}.

Then
\(\Per_{G,\mu}\) is a smooth rigid variety over \(\CC_{p}\) equipped with a \(\Gal_{E_{\p}}\)-descent datum. Applying \( D_{\sen,E_{\p}} \) (Notation \ref{notationEpInftyHandGamma}), \( \Per_{G,\mu} \) descends to a smooth rigid variety \( \Per_{G,\mu,E_{\p,\infty}} \)  over \( E_{\p,\infty} \), equipped with a \( \Gamma_{E_{\p}}^{\la} \)-action.  

We denote the right action of \(G^{c,\an}_{\CC_{p}}\) on the first (resp. second) factor of \(\Per_{G,\mu}\) as \(*_{\HT}\) (resp. \(*_{\GM}\)).

We have morphisms \[
\begin{tikzcd}
& \Per_{G,\mu}\arrow[rd,"\pi_{\GM}^{per}"]\arrow[ld,"\pi_{\HT}^{per}"] & \\
\Fl_{G,\mu} & & \Fl_{G,\mu}^{\std},
\end{tikzcd}
\] where for \(\mrm{XX}\in\{\GM,\HT\}\),
 \(\pi_{\mrm{XX}}^{per}\)
is \(G^{c,\an}_{\CC_{p}}\)-equivariant for the \(*_{\mrm{XX}}\)-action on \(\Per_{G,\mu}\).

By construction, for any Tate stack \(X\) over \(\CC_{p}\), \(\mrm{Map}(X,\Per_{G,\mu})\)
is the groupoid of tuples \((\mscr{P}_{\mu}^{c},\mscr{P}_{\mu}^{\std,c},i)\)
where \(\mscr{P}_{\mu}^{c}\) (resp. \(\mscr{P}_{\mu}^{\std,c}\))
is a \(P_{\mu}^{c,\an}\)-structure (resp. \(P_{\mu}^{\std,c,\an}\)-structure) on the trivial \(G^{c,\an}_{\CC_{p}}\)-torsor, and \(i:\mscr{P}_{\mu}^{c}\times^{P_{\mu}^{c,\an}}M_{\mu}^{c,\an}\cong \mscr{P}_{\mu}^{\std,c}\times^{P_{\mu}^{\std,c,\an}}M_{\mu}^{c,\an}(-1)\) is an isomorphism of the associated \(M_{\mu}^{c,\an}\)-torsors.

Then \(\pi^{per}_{\GM}:(\mscr{P}_{\mu}^{c},\mscr{P}_{\mu}^{\std,c},i)\mapsto \mscr{P}_{\mu}^{\std,c}\),
and \(\pi^{per}_{\HT}:(\mscr{P}_{\mu}^{c},\mscr{P}_{\mu}^{\std,c},i)\mapsto \mscr{P}_{\mu}^{c}\).
\end{dfn}
\begin{rmk}
Forgetting the \(\Gal_{E_{\p}}\)-descent datum,
we have \[\Per_{G,\mu}\cong (G^{c,\an}_{\CC_{p}}\times G^{c,\an}_{\CC_{p}})/(P_{\mu}^{c,\an}\times_{M_{\mu}^{c,\an}}P_{\mu}^{\std,c,\an}).\]
\end{rmk}
\begin{dfn}\label{dfnGMHTperMap}
We define the \emph{Grothendieck-Messing-Hodge-Tate period map} as \[
\pi_{\GM\HT,K^{p}}^{\la}:\mX^{\tor,\la}_{K^{p}}\to [\Per_{G,\mu}/(G^{c,\an}_{\CC_{p}},*_{\GM})],
\] as the descent of the map \[
\ti{\pi}_{\GM\HT,K^{p}}^{\la}:(\pi^{\la}_{K})^{*}(\mcal{G}_{\dR,K}^{c,\an})\to \Per_{G,\mu}
\] corresponding to the tuple \[\left((f\circ\pi^{\la}_{\HT,K^{p}})^{*}(\mcal{P}_{\mu}^{c,\an}),
	(f\circ \pi^{\la}_{\GM})^{*}(\mcal{P}_{\mu}^{\std,c,\an}),i_{\mrm{RH}}\right),\]
	where \(f:(\pi^{\la}_{K})^{*}(\mcal{G}_{\dR,K}^{c,\an})\to \mX^{\tor,\la}_{K^{p}}\), and \(i_{\mrm{RH}}\)
	is as in Theorem \ref{thmIdenTwoTorsorsOverLa}.

By Theorem \ref{thmIdenTwoTorsorsOverLa}, \( \pi^{\la}_{\GM\HT,K^{p}} \) is \( \unl{\Gal_{E_{\p}}}\times \unl{G(\QQ_{p})} \)-equivariant, where \( \unl{G(\QQ_{p})} \) 
acts on \( \Per_{G,\mu} \) 
by \( *_{\HT} \).
\end{dfn}
\begin{lem}\label{lemCompatibleWithHTGM}
We have the following commutative diagram over \( \CC_{p} \) \[
\begin{tikzcd}
\mX_{K}^{\tor}\arrow[d,"\pi_{\GM,K}"] &\arrow[l,"\pi^{\la}_{K}"] \mX_{K^{p}}^{\tor,\la}\arrow[d,"\pi_{\GM\HT,K^{p}}"] \arrow[r,equal] & \mX_{K^{p}}^{\tor,\la}
\arrow[d,"\pi_{\HT,K^{p}}"]
\\
\Fl_{G,\mu}^{\std}/G^{c,\an}_{\CC_{p}}
& \Per_{G,\mu}/(G^{c,\an}_{\CC_{p}},*_{\GM})\arrow[r,"\pi^{per}_{\HT}"]
\arrow[l,"\pi^{per}_{\GM}"]
& \Fl_{G,\mu}.
\end{tikzcd}
\]
\end{lem}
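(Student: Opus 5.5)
The plan is to verify the two squares separately, directly from the definition of \(\pi^{\la}_{\GM\HT,K^{p}}\) in Definition \ref{dfnGMHTperMap}. Both squares follow once we know how the quotient by \((G^{c,\an}_{\CC_{p}},*_{\GM})\) interacts with the torsor \(f:(\pi^{\la}_{K})^{*}(\mcal{G}^{c,\an}_{\dR,K})\to \mX^{\tor,\la}_{K^{p}}\). Since all the stacks involved are quotients of torsors, we check commutativity after pulling back along the \(!\)-surjection \(f\), where every map becomes a \(G^{c,\an}_{\CC_{p}}\)-equivariant map. Equivariant maps out of a torsor descend uniquely, so this check suffices.

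For the left square, the upstairs map \(\ti{\pi}^{\la}_{\GM\HT,K^{p}}\) classifies the tuple \(\bigl((f\circ\pi^{\la}_{\HT,K^{p}})^{*}\mcal{P}^{c,\an}_{\mu},(f\circ\pi^{\la}_{\GM})^{*}\mcal{P}^{\std,c,\an}_{\mu},i_{\mrm{RH}}\bigr)\). By the moduli description in Definition \ref{dfnGMHTperDomain}, composing with \(\pi^{per}_{\GM}\) forgets everything except the \(P^{\std,c,\an}_{\mu}\)-structure \((f\circ\pi^{\la}_{\GM})^{*}\mcal{P}^{\std,c,\an}_{\mu}\) on the trivialized \(G^{c}\)-torsor. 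This structure is \(*_{\GM}\)-equivariantly the pullback of the map \(\mcal{G}^{c,\an}_{\dR,K}\to \Fl^{\std}_{G,\mu}\) from the Cartesian square in Notation \ref{notationDRtorsor}, after analytification. Passing to the quotient by \(G^{c,\an}_{\CC_{p}}\) therefore gives \(\pi_{\GM,K}\circ\pi^{\la}_{K}=\pi^{\la}_{\GM}\), which is exactly the left square.

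For the right square, composing with \(\pi^{per}_{\HT}\) keeps only the \(P^{c,\an}_{\mu}\)-structure \((f\circ\pi^{\la}_{\HT,K^{p}})^{*}\mcal{P}^{c,\an}_{\mu}\). This is the pullback of the universal \(P_{\mu}\)-torsor \(G\to \Fl_{G,\mu}\) along \(\pi^{\la}_{\HT,K^{p}}\circ f\), and it lies on the trivial \(G^{c}\)-torsor. We need to identify this trivial \(G^{c}\)-torsor with the \(*_{\GM}\)-torsor coming from \(\mcal{G}^{c,\an}_{\dR,K}\), in a way compatible with the map to \(\Fl_{G,\mu}\). Since \(*_{\HT}\) and \(*_{\GM}\) commute, the quotient by \(*_{\GM}\) forgets the trivialization of the de Rham torsor. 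The residual map to \(\Fl_{G,\mu}=P^{c}_{\mu}\backslash G^{c}\) then records only the Hodge-Tate filtration relative to the Betti trivialization, which is the data defining \(\pi_{\HT,K^{p}}\) in Lemma \ref{lemTorCompacAsTateStack} (5) and Lemma \ref{lemHTPeriodFactor}.

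The main obstacle is this bookkeeping of the two trivializations in the right square. Writing \(\Per_{G,\mu}\cong (G^{c,\an}\times G^{c,\an})/(P^{c,\an}_{\mu}\times_{M^{c,\an}_{\mu}}P^{\std,c,\an}_{\mu})\), we must check that the first factor is governed by the trivialization \(\mcal{M}_{B}(V)\otimes\mO\), compatibly with the de Rham torsor via the Riemann-Hilbert identification. Concretely, we would verify that the comparison isomorphism underlying Theorem \ref{thmIdenTwoTorsorsOverLa} identifies the trivial torsor used in Definition \ref{dfnGMHTperDomain} with \(\mcal{M}_{B}\otimes\mO\), locally on the charts \(\ti{U}_{\infty}\). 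Once that holds, \(\pi^{per}_{\HT}\) depends only on the first factor, and the square commutes. The final step is to descend this from \(\mX^{\tor}_{K^{p}}\) to \(\mX^{\tor,\la}_{K^{p}}\) exactly as in the proof of Theorem \ref{thmIdenTwoTorsorsOverLa}.
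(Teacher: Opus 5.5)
This is the paper's route, since the paper states that the lemma follows directly from the constructions in Definitions \ref{dfnGMHTperDomain} and \ref{dfnGMHTperMap}. Your treatment of the left square is fine, but the ``main obstacle'' you flag in the right square is illusory.

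By the moduli description, \(\pi^{per}_{\HT}\) sends \((\mscr{P}^{c}_{\mu},\mscr{P}^{\std,c}_{\mu},i)\) to \(\mscr{P}^{c}_{\mu}\). A \(P^{c,\an}_{\mu}\)-structure on the trivial \(G^{c,\an}_{\CC_{p}}\)-torsor is the same thing as a map to \(\Fl_{G,\mu}\). The first entry of the tuple defining \(\ti{\pi}^{\la}_{\GM\HT,K^{p}}\) is by definition \((f\circ\pi^{\la}_{\HT,K^{p}})^{*}(\mcal{P}^{c,\an}_{\mu})\). So composing \(\ti{\pi}^{\la}_{\GM\HT,K^{p}}\) with \(\pi^{per}_{\HT}\) gives exactly the composite of \(f\) and \(\pi^{\la}_{\HT,K^{p}}\). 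This descends along \(f\) because \(\pi^{per}_{\HT}\) is invariant under \(*_{\GM}\).

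Nothing in \(\Per_{G,\mu}\cong (G^{c,\an}_{\CC_{p}}\times G^{c,\an}_{\CC_{p}})/(P^{c,\an}_{\mu}\times_{M^{c,\an}_{\mu}}P^{\std,c,\an}_{\mu})\) identifies the Betti-trivialized \(G^{c}\)-torsor with the de Rham torsor. The two factors are independent, and only the Levi torsors are matched, via \(i_{\mrm{RH}}\). So the verification you propose is unnecessary. As phrased it is also not well-posed: \(i_{\mrm{RH}}\) is an isomorphism of \(M^{c,\an}_{\mu}\)-torsors and cannot identify a \(G^{c}\)-torsor with the Betti one.

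Theorem \ref{thmIdenTwoTorsorsOverLa} and the descent from \(\mX^{\tor}_{K^{p}}\) to \(\mX^{\tor,\la}_{K^{p}}\) are inputs to the existence of \(\pi^{\la}_{\GM\HT,K^{p}}\). They are not needed for the commutativity of either square.
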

\begin{proof}
This follows from the construction of Definition \ref{dfnGMHTperDomain} and Definition \ref{dfnGMHTperMap}. 
\end{proof}

The main result of this section is the following result, which should be understood as a version of Grothendieck-Messing theory on the locally analytic Shimura varieties. 
\begin{thm}\label{thmLAstrucGMHTper}
(1)
We have the following \( \unl{\Gal_{E_{\p}}} \)-equivariant commutative diagram of Tate stacks \[
\begin{tikzcd}
\mX^{\tor,\la}_{K^{p}}
\arrow[d,"\pi^{\la}_{\sm}"]
\arrow[r,"\pi^{\la}_{\GM\HT}"] & 
\Per_{G,\mu}/G^{c,\an}_{\CC_{p}} 
\arrow[r,"\pi_{\HT}^{per}"] \arrow[d,"h_{\Per/\Fl^{\std}}"] &[-0.8cm] 
\Fl_{G,\mu}\arrow[d,"\ti{h}_{\Fl_{G,\mu}}"]\\
\mX^{\tor,\sm}_{K^{p}}
\arrow[d,"h^{\sm}"] \arrow[r,"\pi^{\sm}_{\GM\HT}"] & \Per_{G,\mu}^{\dR/\Fl^{\std}_{G,\mu}}/G^{c,\an}_{\CC_{p}} \arrow[r,"\pi_{\HT}^{per,\dR}"]\arrow[d,"\ti{h}_{\Fl^{\std}}"]\arrow[dr,"\pi_{\GM}^{per}"] & \Fl_{G,\mu}/(\fg^{0}/\fn^{0})^{\dagger}\\
(\mX^{\tor}_{K^{p}})_{\log}^{\pdR/\CC_{p},\sm}\arrow[rrd,bend right=10,"\pi_{\GM}^{\pdR}"]
\arrow[r,"\pi^{\pdR}_{\GM\HT}"] & (\Per_{G,\mu}^{\dR/(\Fl^{\std,\pdR/\CC_{p}}_{G,\mu})})/G^{c,\an}_{\CC_{p}}\arrow[dr,"\pi_{\GM}^{per,\pdR}"] & \Fl_{G,\mu}^{\std}/G^{c,\an}_{\CC_{p}}\arrow[d,"h_{\Fl^{\std}}"]\\
& & \Fl_{G,\mu}^{\std,\pdR/\CC_{p}}/G^{c,\an}_{\CC_{p}},
\end{tikzcd}
\] where \((\mX^{\tor}_{K^{p}})_{\log}^{\pdR/\CC_{p},\sm}:=\varprojlim_{K_{p}}(\mX^{\tor}_{\Kpp})_{\log}^{\pdR/\CC_{p}}\), the actions of \( G^{c,\an}_{\CC_{p}} \) on the middle column are induced by \( *_{\GM} \), and all the squares are \emph{Cartesian}. 

(2) Let \(\mX^{\la}_{K^{p}}\subset \mX^{\tor,\la}_{K^{p}}\) be the preimage of \(\mX^{\sm}_{K^{p}}\subset \mX^{\tor,\sm}_{K^{p}}\). Then we have the following \( \unl{\Gal_{E_{\p}}} \)-equivariant  Cartesian diagram of Tate stacks \[
\begin{tikzcd}
\mX^{\la}_{K^{p}}\arrow[r,"\pi^{\la}_{\GM\HT}"]\arrow[d,"\beta^{\la}"]
& \Per_{G,\mu}/(G^{c,\an}_{\CC_{p}},*_{\GM})\arrow[d]
\\
\mX^{\dR,\sm}_{K^{p}}\arrow[r,"\pi^{\dR}_{\GM\HT}"]
& \Per_{G,\mu}^{\dR}/(G^{c,\an}_{\bar{\QQ}_{p}},*_{\GM}),
\end{tikzcd}
\] for \(\mX^{\dR,\sm}_{K^{p}}:=\varprojlim_{K_{p}}\mX^{\dR}_{\Kpp}\).

If there exists a perfectoid space \( \mX_{K^{p}}^{\diamond} \) over \( \CC_{p} \)
such that \( \mX_{K^{p}}^{\diamond}\sim \varprojlim_{K_{p}}\mX_{\Kpp} \), then \( \mX^{\dR,\sm}_{K^{p}}\cong (\mX_{K^{p}}^{\diamond})^{\dR} \). 
\end{thm}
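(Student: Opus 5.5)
The plan is to build the big diagram of (1) from the bottom up and to prove that each square is Cartesian separately, leaving the top-left square for last since it is the one that actually involves $\mO^{\la}$.

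\textbf{Bottom squares.} The lowest layer is obtained from the algebraic Grothendieck--Messing theory (Proposition \ref{propAlgGMtheory}), restricted to $[\GG_{m}/\GG_{m}]$ and base-changed to $\CC_{p}$, at each finite level $\Kpp$. Each such square is Cartesian, and the transition maps in $K_{p}$ are compatible with $\pi_{\GM,K}$. Taking $\varprojlim_{K_{p}}$ therefore gives a Cartesian square
\[
\mX^{\tor,\sm}_{K^{p}}\cong (\mX^{\tor}_{K^{p}})^{\pdR/\CC_{p},\sm}_{\log}\times_{\Fl^{\std,\pdR/\CC_{p}}_{G,\mu}/G^{c,\an}}\Fl^{\std}_{G,\mu}/G^{c,\an}.
\]
I then define $\Per^{\dR/\Fl^{\std,\pdR}}_{G,\mu}$ and $\Per^{\dR/\Fl^{\std}}_{G,\mu}$ as the relative de Rham stacks of $\pi^{per}_{\GM}$. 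Since $\pi^{per}_{\GM}$ is a smooth map with fibres $M\backslash(N\backslash G)$, its relative de Rham stack is a fibration over the base with fibre $\Fl_{G,\mu}/(\fg^{0}/\fn^{0})^{\dagger}$; this uses Lemma \ref{lemCompareDRStac}(3) and Theorem \ref{thmAnDRStackGeneral}(1). With this description the lower middle square and the right-hand square are formal base changes.

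\textbf{Middle maps.} Next I construct $\pi^{\pdR}_{\GM\HT}$ and $\pi^{\sm}_{\GM\HT}$. The Hodge--Tate period map does not factor through the smooth tower. However, its composite $\mX^{\tor,\la}_{K^{p}}\to\Fl_{G,\mu}\to\Fl_{G,\mu}/(\fg^{0}/\fn^{0})^{\dagger}$ should descend along $\pi^{\la}_{\sm}$. The reason is that $\pi^{\la}_{\sm}$ is, up to the quotient by the Lie algebra action, a torsor under $(\fg^{c,0}/\fn^{0})^{\dagger}$. Equivalently, $\fn^{0}$ acts trivially on $\mO^{\la}$ while $\fg^{0}$ acts through the $G(\QQ_{p})$-action, which is the content of Pan and Rodr\'iguez Camargo via geometric Sen theory (Theorem \ref{thmOlaConcentrated0} and \cite{Juan2022.09locallyShi}). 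Combining this with $i_{\mrm{RH}}$ from Theorem \ref{thmIdenTwoTorsorsOverLa} gives the map to $\Per^{\dR/\Fl^{\std}}/G^{c}$.

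\textbf{Top-left square (main obstacle).} The hard step is showing that
\[
\mX^{\tor,\la}_{K^{p}}\to \mX^{\tor,\sm}_{K^{p}}\times_{\Fl_{G,\mu}/(\fg^{0}/\fn^{0})^{\dagger}}\Fl_{G,\mu}
\]
is an isomorphism. Both sides are affine over $\mX^{\tor,\sm}_{K^{p}}$, so it suffices to compare pushforwards of structure sheaves on good chartable affinoids $U^{\sm}$ (Definition \ref{dfnChartableAffOp}). There the right-hand side is
\[
A^{\sm}\hatotimes_{B'}\,B\{\fg^{c,0}/\fn^{0}\text{-coordinates}\}^{\dagger}.
\]
I would first try to match this with the explicit description of $A^{\la}$ as $A^{\sm}\hatotimes\pi_{\HT}^{-1}\mO_{\Fl}$ extended by overconvergent power series in $\mm^{0}$-directions, as in \cite[Theorem 6.3.6]{Juan2022.09locallyShi}. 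Corollary \ref{corOverConvergentNbhdNotTooDifficult} would then identify the Grothendieck--Messing directions. The delicate point is that the dagger neighbourhoods must match exactly, which requires a radius-of-convergence argument like the one in the proof of Corollary \ref{corOverConvergentNbhdNotTooDifficult}.

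\textbf{Part (2).} Here I would repeat the argument with analytic de Rham stacks, replacing Proposition \ref{propAlgGMtheory} by Proposition \ref{propAnalyticGMTheory} over $\mX_{K}$ and passing to the limit over $K_{p}$. The right-hand vertical map factors through $\Per^{\dR/\Fl^{\std}}$, and then through $\Fl^{\std,\dR}$ via Theorem \ref{thmAnDRStackGeneral}(6) over $\bar{\QQ}_{p}$. Finally, the perfectoid statement $\mX^{\dR,\sm}_{K^{p}}\cong(\mX^{\diamond}_{K^{p}})^{\dR}$ follows by working locally on good affinoids and applying Theorem \ref{thmAnDRStackGeneral}(5) to the finite étale tower.
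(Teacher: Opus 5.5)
Your overall architecture matches the paper's, which proves the theorem as Propositions \ref{propLAvsSmoothSV} and \ref{propSMdRTorLog}: the bottom square comes from Proposition \ref{propAlgGMtheory} plus a limit over $K_{p}$, the lower-middle and right-hand squares are formal, part (2) uses Proposition \ref{propAnalyticGMTheory} with Theorem \ref{thmAnDRStackGeneral}(3),(6), and the perfectoid statement uses Theorem \ref{thmAnDRStackGeneral}(5). The gap is in the step you single out as the main obstacle. It affects part (2) as well, since the square there is the restricted top square composed with the analytic bottom one.

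You cannot compute $U^{\sm}\times_{\Fl_{G,\mu}/(\fg^{0}/\fn^{0})^{\dagger}}\Fl_{G,\mu}$ directly on a chartable affinoid. The quotient $\Fl_{G,\mu}/(\fg^{0}/\fn^{0})^{\dagger}$ is not affine, so there is no ring $B'$ to tensor over. Moreover, the map $U^{\sm}\to\Fl_{G,\mu}/(\fg^{0}/\fn^{0})^{\dagger}$ is only defined by descent from $\mX^{\tor,\la}_{K^{p}}$: as you note, the Hodge--Tate map does not factor through $\mX^{\tor,\sm}_{K^{p}}$. Any honest computation of the fibre product therefore has to go through a cover of $U^{\sm}$. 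Corollary \ref{corOverConvergentNbhdNotTooDifficult} plays no role here; it concerns finite-level Grothendieck--Messing directions, which the bottom squares already absorb into $\mX^{\tor,\sm}_{K^{p}}$. Also, in the flag directions $\mO^{\la}$ is not $A^{\sm}\hatotimes\pi_{\HT}^{-1}\mO_{\Fl}$. It is an overconvergent neighbourhood of the ``graph'' of $\pi_{\HT}$, as Proposition \ref{propCartesianLA0} makes precise.

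The missing idea is descent along $\pi^{\la}_{\sm}$. This map is a $!$-surjection because $\pi^{\sm}=\pi^{\la}_{\sm}\circ\pi^{\la}$ is one (Lemma \ref{lemOpenSetsFromFulltoSM}(3), Proposition \ref{propAffineProperCover}). It therefore suffices to check Cartesianness after base change along $\pi^{\la}_{\sm}$, i.e.
\[
\mX^{\tor,\la}_{K^{p}}\times_{\mX^{\tor,\sm}_{K^{p}}}\mX^{\tor,\la}_{K^{p}}\cong \mX^{\tor,\la}_{K^{p}}\times_{\Fl_{G,\mu}}(\fg^{c,0}/\fn^{0})^{\dagger}.
\]
Equivalently, $\mO^{\la}_{K^{p}}\otimes_{\mO^{\sm}_{K^{p}}}\mO^{\la}_{K^{p}}\cong\mO^{\la}_{K^{p}}\{(\fg^{c,0}/\fn^{0})^{\vee}\}^{\dagger}$, compatibly with the map from $\mO_{\Fl}\{(\fg^{c,0}/\fn^{0})^{\vee}\}^{\dagger}$. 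Geometric Sen theory gives exactly this, through three inputs:
\begin{itemize}
\item $\mO^{\la}_{K^{p}}\cong\tVBfu(C^{\la})$ (Corollary \ref{corVBCla=Ola});
\item $(\pi^{\la}_{\sm})^{*}\tVBfu(\mF)\cong(\pi^{\la}_{\HT,K^{p}})^{*}\mF$ (Theorem \ref{thmMainThmGeomSenVB}(2));
\item $C^{\la}\cong\mO_{\Fl}\{(\fg^{c,0}/\fn^{0})^{\vee}\}^{\dagger}$ (Lemma \ref{lemClaConcentraDeg0}).
\end{itemize}
Your heuristic that $\pi^{\la}_{\sm}$ is ``a torsor under $(\fg^{c,0}/\fn^{0})^{\dagger}$'' is precisely this isomorphism, and stated this way it does double duty. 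First, its $\dagger$-reduction gives $(\mX^{\tor,\la}_{K^{p}})^{\dR/\mX^{\tor,\sm}_{K^{p}}}\cong\mX^{\tor,\sm}_{K^{p}}$ (Proposition \ref{propdRstackOfShLA}). Applying relative de Rham stacks to the square of Lemma \ref{lemCompatibleWithHTGM} then produces $\pi^{\sm}_{\GM\HT}$, with no separate descent of $i_{\mrm{RH}}$ needed. Second, it proves the top square Cartesian.

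A minor correction: the fibre of $\pi^{per}_{\GM}$ is $N_{\mu}\backslash G^{c}$, not $M\backslash(N\backslash G)$. The fibre of its relative de Rham stack is $(N_{\mu}\backslash G^{c})^{\dR}$, an $M^{c}_{\mu}$-torsor over $\Fl_{G,\mu}/(\fg^{0}/\fn^{0})^{\dagger}$ (Lemma \ref{lemCompareDRStac}(3)). The right-hand square is Cartesian by $M^{c}_{\mu}$-equivariant base change before quotienting, as in the proof of Proposition \ref{propLAvsSmoothSV}.
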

The proof of this theorem will be finished at the end of Subsection \ref{subsecSMdRstack}. The restriction to \(\mX^{\la}_{K^{p}}\) in (2) is again due to the lack of the ``analytic logarithmic de Rham stacks''.

\subsection{Geometric Sen theory}
\label{subsecGeometricSen}
We now recall the main result of the geometric Sen theory from \cite{Pan22}, \cite{Pilloni22}, \cite{Juan2022.09locallyShi}. This allows us to study locally analytic vectors on the flag variety.
 This will also be used later in the proof of \cite[Theorem \ref{2-thmFontainEqDDbar}]{Jiang2025HMF}. 
	

\begin{notation}
	For any open subspace \(U\subset\Fl_{G,\mu}\),
	denote by \(\mX^{\tor,\sm}_{K^{p}}|_{U}\) be
	the open subspace of \(\mX^{\tor,\sm}_{K^{p}}\) such that \(\pi^{\sm,-1}(\mX^{\tor,\sm}_{K^{p}}|_{U})=\pi_{\HT}^{-1}(U)\subset \mX^{\tor}_{K^{p}}\).
\end{notation}
	\begin{notation}
	Let $U$ be any affinoid open of $\Fl$.
	For any open compact subgroup $K_{p}\subset G^{c}(\QQ_{p})$ such that $K_{p}\cdot U=U$, we denote $\QCoh_{K_{p}}(U):=\QCoh([U/\unl{K_{p}}])$.

	Let $\QCoh_{\fg^{c}}(U)$ be the 2-colimit of $\QCoh_{K_{p}}(U)$, taken along restriction maps over all small enough open subgroups $K_{p}\subset G^{c}(\QQ_{p})$. 

	Then we define the functor $\tVBfun$ as  \begin{align*}
		\tVBfun:\mrm{QCoh}_{\fg^{c}}(U)&\to \QCoh(\mX^{\tor,\sm}_{K^{p}}|_{U}),\\\mF
		&\mapsto (\pi^{\sm}_{*}\circ \pi_{\HT,K^{p}}^{*}(\mF))^{R-\widetilde{G(\ZZ_{p})}-\sm},
	\end{align*}
	where the group \( \widetilde{G(\ZZ_{p})} \) is defined as in Notation \ref{notationWidetildeGroup}, and  the morphisms are as in the diagram \[\begin{tikzcd}
		& \mX^{\tor}_{K^{p}}\arrow[rd,"\pi_{\HT,K^{p}}"]
		\arrow[ld,"\pi^{\sm}"] &\\
		\mX^{\tor,\sm}_{K^{p}}
		&&
		\Fl_{G,\mu}.
	\end{tikzcd}\]

	We define the version with support as \[\tVBn(\mF):=\tVBfun(\mF)\otimes_{\mO^{\sm}_{K^{p}}}\mscr{I}^{\sm}_{K^{p}},\]
	where \(\mscr{I}^{\sm}_{K^{p}}\) is as in Definition \ref{dfnNonCompletedInfiniteLevel}.
	\end{notation}
	
	\begin{rmk}[Prime-to-\(p\) Hecke action]
	The functor
	\(\tVBfun\) lands in the module of \(\TT(K^{p})\)-modules in a suitable sense. One way to formulate is to consider the functor \(\VBn:=R\pi_{\HT,*}\circ \tVBfun\), which clearly enhances to a functor \(\VBn:\QCoh_{\fg^{c}}(U)\to 
	\Mod_{\TT(K^{p})* R\pi_{\HT,*}\mO^{\sm}_{K^{p}}}(\QCoh(U)),\)
	where \(\TT(K^{p})* R\pi_{\HT,*}\mO^{\sm}_{K^{p}}\)
	is taking the push-out in the category of associative algebras.

	Alternatively, \(\varinjlim_{K^{p}}\tVBfun\)
	enhances to a functor \[\ti{\VB}^{\text{naïve}}:\QCoh_{\fg^{c}}(U)\to \QCoh\left(\left(\varprojlim_{K^{p}}\mX^{\tor,\sm}_{K^{p}}|_{U}\right)/G(\mA_{f}^{p})\right).\]
	\end{rmk}

	We will use the equivariant vector bundles over \(\Fl_{G,\mu}\) from Notation \ref{notationEquShvOnFl}. 
	Note that for any $\mF\in \QCoh_{\fg^{c}}(U)$, $\fg^{0}$ acts naturally
	on \(\mF\),
	in the sense that there is a natural map \(\fg^{0}\otimes_{\CC_{p}}\mF\to \mF\), which satisfies the usual axioms of Lie algebra representations. In particular, we have a map \(\fn^{0}\otimes_{\CC_{p}}\mF\to \mF\). Note that the latter is actually a morphism in \(\QCoh_{\fg^{c}}(U)\),
	because the action of $\fn^{0}$ commutes with those of $\fg$ and of $\mO_{U}$.
	
	\begin{dfn}
	We define $\QCoh_{\fg^{c}}(U)^{\fn^{0}}$ to be the \(\infty\)-category of pairs $(\mF,i)$ with $\mF\in\QCoh_{\fg^{c}}(U)$ and $i$ a homotopy equivalence between the morphisms $\mF\to\mF\otimes(\fn^{0})^{\vee}$ and $0$. Note that $\mF\to \mF\otimes(\fn^{0})^{\vee}$ is naturally defined in $\QCoh_{\fg^{c}}(U)$.
	\end{dfn}
	\begin{lem}\label{lemVBnToVB} The functor
	$\mF\mapsto \Fib(\mF\to\mF\otimes(\fn^{0})^{\vee})$ 
	defines a natural functor $R\Gamma(\fn^{0},-):\QCoh_{\fg^{c}}(U)\to\QCoh_{\fg^{c}}(U)^{\fn^{0}}$, and there exists a functor \[\tVBfu:\QCoh_{\fg^{c}}(U)^{\fn^{0}}\to \QCoh(\mX^{\tor,\sm}_{K^{p}}|_{U}),\] such that         \[
		\tVBfun\cong \tVBfu\circ R\Gamma(\fn^{0},-) .
		\]
	We similarly have a decomposition \(\tVBn\cong\tVB\circ R\Gamma(\fn^{0},-)\) for a canonical functor $\tVB$.
	
	Explicitly,
	$\tVBfu((\mF,i)):=R\Gamma(\tilde{\fg}^{0}/\fn^{0},\mO^{\la}_{K^{p}}\otimes^{L}_{\mO_{\Fl},\square}\mF)$, where we have a canonical action of $\tilde{\fg}^{0}/\fn^{0}$ on $\mF$ thanks to the null homotopy $i$ of $\mF\to\mF\otimes(\fn^{0})^{\vee}$, which makes $\mF$ a direct summand of $R\Gamma(\fn^{0},\mF)$.
	\end{lem}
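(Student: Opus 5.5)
The plan is to construct \(R\Gamma(\fn^{0},-)\) and \(\tVBfu\) directly and then identify their composite with \(\tVBfun\) through a computation of \(\mO^{\la}_{K^{p}}\) relative to the flag variety. First, since the \(\fn^{0}\)-action map \(\fn^{0}\otimes\mF\to\mF\) is a morphism in \(\QCoh_{\fg^{c}}(U)\), so is its adjoint \(\mF\to\mF\otimes(\fn^{0})^{\vee}\). By Lemma \ref{lemNmuSimpleLem}, \(\fn^{0}\) is abelian, so \(R\Gamma(\fn^{0},\mF)\) is computed by the Koszul complex. For rank-one \(\fn^{0}\) this is exactly the fiber \(\Fib(\mF\to\mF\otimes(\fn^{0})^{\vee})\); in general I would take the full Koszul complex and read the statement's fiber as its first step. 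On \(R\Gamma(\fn^{0},\mF)\) the \(\fn^{0}\)-action is canonically null-homotopic, because cohomology of an abelian Lie algebra carries a trivial action. This supplies the pair \((R\Gamma(\fn^{0},\mF),i)\), natural in \(\mF\), which defines the functor \(R\Gamma(\fn^{0},-)\).

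Next I would define \(\tVBfu((\mF,i))\) by the explicit formula in the statement. The null-homotopy \(i\) makes the \(\fg^{0}\)-action on \(\mF\) factor through \(\fg^{0}/\fn^{0}\). By the vanishing of the \(\fn^{0}\)-action on \(\mO^{\la}_{K^{p}}\) (the results of \cite{Pan22} and \cite{Juan2022.09locallyShi}, translated through Theorem \ref{thmOlaConcentrated0}), the diagonal action of \(\fg^{c,0}/\fn^{0}\) on \(\mO^{\la}_{K^{p}}\otimes_{\mO_{\Fl}}\mF\) is well defined. One then checks that \(\tVBfu\) glues over \(\mX^{\tor,\sm}_{K^{p}}|_{U}\), working locally on good chartable affinoids (Lemma \ref{lemOpenSetsFromFulltoSM}) where \(\fn^{0}\) and \(\fg^{c,0}/\fn^{0}\) are free.

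For the identification \(\tVBfun\cong\tVBfu\circ R\Gamma(\fn^{0},-)\), the first step is to rewrite \(\tVBfun(\mF)\) using the projection formula for \(\pi_{\HT}\):
\[
(\pi^{\sm}_{*}\pi_{\HT}^{*}\mF)^{R-\sm}\cong\big((\hat{\mO}\otimes_{\mO_{\Fl}}\mF)^{R-\la}\big)^{R-\fg^{c}-\sm}.
\]
Smooth vectors of a locally analytic representation are derived \(\fg^{c}\)-invariants, since smooth vectors are the locally analytic ones killed by the Lie algebra. The action of \(\mF\) is already locally analytic, so
\[
(\hat{\mO}\otimes\mF)^{R-\la}\cong\mO^{\la}\otimes_{\mO_{\Fl}}\mF.
\]
This follows the argument of \cite[Theorem 6.2.6]{Juan2022.09locallyShi} for \(\mF\) a \(K_{p}\)-equivariant bundle, and extends by colimits. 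This gives
\[
\tVBfun(\mF)\cong R\Gamma(\fg^{c},\mO^{\la}\otimes\mF)
\]
with the diagonal action, and after \(\mO_{\Fl}\)-linearization
\[
\tVBfun(\mF)\cong R\Gamma(\fg^{c,0},\mO^{\la}\otimes_{\mO_{\Fl}}\mF).
\]
The Hochschild–Serre spectral sequence for the ideal \(\fn^{0}\subset\fg^{c,0}\) then gives
\[
R\Gamma(\fg^{c,0},-)\cong R\Gamma(\fg^{c,0}/\fn^{0},R\Gamma(\fn^{0},-)),
\]
and since \(\fn^{0}\) acts trivially on \(\mO^{\la}\),
\[
R\Gamma(\fn^{0},\mO^{\la}\otimes\mF)\cong\mO^{\la}\otimes R\Gamma(\fn^{0},\mF),
\]
compatibly with the residual action. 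The version with support follows by tensoring with \(\mscr{I}^{\sm}_{K^{p}}\) via Lemma \ref{lemProjectionFormuaForPerf}.

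I expect the main obstacle to be the \(\infty\)-categorical bookkeeping: showing that the residual \(\fg^{0}/\fn^{0}\)-action produced by Hochschild–Serre is the one induced by the null-homotopy \(i\), coherently and functorially. I would handle it by realizing all the actions geometrically, as quasi-coherent sheaves on the classifying stacks \(B(\fg^{0})^{\dagger}\) and \(B(\fg^{0}/\fn^{0})^{\dagger}\) over \(U\), in the spirit of Lemma \ref{lemRealizeLieRep}. There \(R\Gamma(\fn^{0},-)\) becomes pushforward along \(B(\fg^{0})^{\dagger}\to B(\fg^{0}/\fn^{0})^{\dagger}\), and \(\QCoh_{\fg^{c}}(U)^{\fn^{0}}\) is identified with sheaves on the latter. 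The composite identity is then just transitivity of pushforward.
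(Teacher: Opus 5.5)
Your proposal is correct and follows the paper's own argument. You write derived smooth vectors as $\fg^{c}$-cohomology of derived locally analytic vectors, use that $\mF$ is already locally analytic to reach $R\Gamma(\fg^{c},\mO^{\la}_{K^{p}}\otimes_{\mO_{\Fl}}\mF)$, linearize to $\fg^{c,0}$, apply Hochschild--Serre along the ideal $\fn^{0}$, and pull $\mO^{\la}_{K^{p}}$ out of $R\Gamma(\fn^{0},-)$ because $\fn^{0}$ acts on it by zero. Your observation that the two-term fiber computes $R\Gamma(\fn^{0},-)$ only when $\fn^{0}$ has rank one is well taken: the paper later uses this functor as the full Koszul complex $\bigoplus_{i}\mF\otimes(\wedge^{i}\fn^{0})^{\vee}[-i]$ in the proof of Theorem~\ref{thmMainThmGeomSenVB}.
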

	\begin{proof}
	For $\mcal{G}=\Fib(\mF\to\mF\otimes(\fn^{0})^{\vee})$, there is a canonical homotopy equivalence between $\mcal{G}\to \mcal{G}\otimes(\fn^{0})^{\vee}$ and $0$, and thus we obtain a functor     \( R\Gamma(\fn^{0},-):\QCoh_{\fg^{c}}(U)\to\QCoh_{\fg^{c}}(U)^{\fn^{0}}. \)
Now we want to factorize $\VBn$. By Theorem 1.5 and Theorem 1.7 of \cite{JRC2021solid},      \begin{align*}
\tVBfun(\mF)\cong 
(\pi^{\sm}_{*}\circ \pi_{\HT}^{*}\mF)^{R-\sm}
&\cong R\Gamma(\tilde{\fg},(\hat{\mO}_{K^{p}}\otimes^{L}_{\mO_{\Fl},\square}\mF)^{R-\la})\\
&\cong R\Gamma(\tilde{\fg},{\mO}^{\la}_{K^{p}}
\otimes^{L}_{\mO_{\Fl},\square}\mF)
\end{align*}

Now we can extend the action of $\tilde{\fg}$ to $\tilde{\fg}^{0}=\tilde{\fg}\otimes\mO_{\Fl}$, and 
\begin{align*}
R\Gamma(\tilde{\fg},\mO^{\la}_{K^{p}}\otimes^{L}_{\mO_{\Fl},\square}\mF)&\cong R\Gamma(\tilde{\fg}^{0},\mO^{\la}_{K^{p}}\otimes^{L}_{\mO_{\Fl},\square}\mF)\\
&\cong R\Gamma(\tilde{\fg}^{0}/\fn^{0},R\Gamma(\fn^{0},\mO^{\la}_{K^{p}}\otimes^{L}_{\mO_{\Fl},\square}\mF))
\\&\cong R\Gamma(\tilde{\fg}^{0}/\fn^{0},\mO^{\la}_{K^{p}}\otimes^{L}_{\mO_{\Fl},\square}R\Gamma(\fn^{0},\mF))         ,
\end{align*} where in the last step we use the fact that $\fn^{0}$ acts on $\mO^{\la}_{\mX_{K^{p}}}\cong \mO^{\la}_{K^{p}}$ by zero (\cite[Corollary 6.2.12, Proposition 6.2.8]{Juan2022.09locallyShi}). Therefore, we are done by putting $\tVBfu:=R\Gamma(\tilde{\fg}^{0}/\fn^{0},\mO^{\la}_{K^{p}}\otimes^{L}_{\pi_{\HT}^{-1}\mO_{\Fl},\square}\pi_{\HT}^{-1}(-))$,
and \(\tVB(-):=\tVBfun(-)(-D)\).
\end{proof}

\begin{dfn}[{\cite[Definition 2.3.4]{Juan2022.05GeoSen}}]
Let \(U\) be a quasi-compact open subspace of \(\Fl\), and let \(K_{p}\) be an open compact subgroup of \(G^{c}(\ZZ_{p}) \) that acts on \(U\). We say that
$\mF\in\QCoh_{\fg^{c}}(U)$ is a \emph{relative \(K_{p}\)-analytic} modules, if \(\mF\) is associated to a direct summand of an ON Banach \(\mO(U)\)-module \(V\),
	that admits an \(\mO^{+}(U)\)-lattice \(V^{0}\) with a basis \(\{v_{i}\}_{i\in I}\),
	such that there exist \(\epsilon>0\),
		such that \(K_{p}\)
	acts on \(v_{i}\!\!\mod{p^{\epsilon}}\) trivially.

We say that	\(\mF\) is a \emph{relatively locally analytic module} 
	if there exists a filtered colimit \(\mF\cong \varinjlim_{i\in I} \mF_{i}\) in \(\QCoh_{\fg^{c}}(U)\), such that \(\forall i\in I\), there exists an open
		covering \(U=\cup_{j\in J_{i}}U_{i,j}\), such that for \(\forall j\in J_{i}\),  \(\mF_{i}|_{U_{i,j}}\in\mrm{QCoh}_{\fg^{c}}(U_{i,j})\) 
		is relatively \(K_{i,j}\)-analytic for some open compact subgroup \(K_{i,j}\).

\end{dfn}
If \(\mF\) is relatively locally analytic, the action of \(K_{p}\) on \(R\Gamma(U,\mF)\) is locally analytic, and we have an action of \(\fg^{c}\) (and thus of \(\fg^{c,0}\)) on \(\mF\).
If $\fn^{0}$ acts on $\mF$ by zero, then $\mF$ gives rise to an object in $\QCoh_{\fg^{c}}(U)^{\fn^{0}}$. 
\begin{notation}\label{notationRelLA}

We denote as $\QCoh^{\rla}_{\fg^{c}}(U)^{\fn^{0}}$ the subcategory of $\QCoh_{\fg^{c}}(U)^{\fn^{0}}$ generated (under filtered colimits, extensions, and taking idempotents) by relative locally analytic modules that are killed by $\fn^{0}$. 
\end{notation} 
Note that the underlying sheaves of
relative locally analytic modules are colimits of ON Banach modules by definition, and $\QCoh_{\fg^{c}}^{\rla}(U)^{\fn^{0}}$ is not a stable \(\infty\)-category.

The following is a reformulation of the main theorem of geometric Sen theory:
\begin{thm}[{\cite[Theorem 5.2.1, Theorem 5.2.5]{Juan2022.09locallyShi}}]\label{thmMainThmGeomSenVB}
Let \(U\subset\Fl\) be an open affinoid subspace. Let \(K_{p,0}\) be an open compact subgroup of \(G(\QQ_{p})\) such that \(U\) is \(K_{p,0}\)-stable, and let \(U_{K_{p,0}}\) be the open subspace of \(\mX^{\tor}_{K^{p}K_{p,0}}\)
such that \(\pi_{K_{p,0}}^{-1}(U_{K_{p,0}})=\pi_{\HT}^{-1}(U)\).

(1)
For $\mF\in\QCoh^{\rla}_{\fg^{c}}(U)^{\fn^{0}}$, $\tVB(\mF)$ and \(\tVBfu(\mF)\) are concentrated in degree $0$ in the following sense:

There exists a system of open compact subgroups \(\{K_{p,r}\}_{r\in\NN}\), an open cover \(\{U_{i,K_{p,0}}\}\) of \(U_{K_{p,0}}\)
and a quasi-coherent sheaf \(\VB_{U_{i,K_{p,r}}}(\mF)\) over \(U_{i,K_{p,r}}\) that is a direct summand of ON Banach modules,
such that  \[\tVBfu(\mF)\cong \varinjlim_{r}\pi^{\sm,*}_{K_{p,r}}(\VB_{U_{i},K_{p,r}}(\mF)).
\]
Here \(U_{i,K_{p,r}}\) denotes 
	the preimage of \(U_{i,K_{p,0}}\)
	in \(\mX^{\tor}_{K^{p}K_{p,r}}\), and \(\pi^{\sm}_{K_{p,r}}:\mX^{\tor,\sm}_{K^{p}}\to \mX^{\tor}_{K^{p}K_{p,r}}\).


(2) We have \[(\pi^{\la}_{\sm})^{*}(\tVBfu(\mF))\cong (\pi_{\HT,K^{p}}^{\la})^{*}(\mF),\]
for \[
\mX^{\tor,\sm}_{K^{p}}
\xleftarrow{\pi_{\sm}^{\la}}\mX^{\tor,\la}_{K^{p}}
\xrightarrow{\pi_{\HT,K^{p}}^{\la}}\Fl_{G,\mu},
\]

(3) \(\tVBfu\)
is a symmetric monoidal functor when restricted to \(\mF\in\mrm{QCoh}^{\rla}_{\fg^{c}}(U)^{\fn^{0}}\).
\end{thm}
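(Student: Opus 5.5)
The plan is to deduce all three statements from \cite[Theorem 5.2.1, Theorem 5.2.5]{Juan2022.09locallyShi}. We use the explicit formula of Lemma \ref{lemVBnToVB},
\[
\tVBfu(\mF)\cong R\Gamma(\tilde{\fg}^{0}/\fn^{0},\mO^{\la}_{K^{p}}\otimes^{L}_{\mO_{\Fl},\square}\mF),
\]
and transport Rodríguez Camargo's statements, which are phrased on the pro-Kummer-étale site, into $\QCoh(\mX^{\tor,\sm}_{K^{p}}|_{U})$.

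First reduce to basic objects. Both $\tVBfu$ and $(\pi^{\la}_{\sm})^{*}$, $(\pi^{\la}_{\HT,K^{p}})^{*}$ commute with filtered colimits, extensions and retracts. This holds because $\tilde{\fg}^{0}/\fn^{0}$ is a vector bundle, so Lie algebra cohomology is a finite Chevalley--Eilenberg complex. By the definition of $\QCoh^{\rla}_{\fg^{c}}(U)^{\fn^{0}}$ (Notation \ref{notationRelLA}) it therefore suffices to treat $\mF$ that is relatively $K_{p}$-analytic on each member $U_{i,j}$ of a finite affinoid cover and killed by $\fn^{0}$. We refine the cover so that each preimage is a union of good chartable affinoid open subspaces (Lemma \ref{lemOpenSetsFromFulltoSM} (1)), and glue at the end by Čech descent.

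For (1), on a good chartable $U^{\sm}$ compute $H^{\bullet}(U^{\sm},\mO^{\la}_{K^{p}}\otimes\mF)$. By Theorem \ref{thmOlaConcentrated0} it lives in degree $0$. By Lemma \ref{lemAnalyticVSproket}, together with the argument in Theorem \ref{thmLAsvCalculateCompleCoh}, it agrees with the pro-Kummer-étale/analytic cohomology used by Rodríguez Camargo. His Theorem 5.2.1 then gives the degree-$0$ statement. It also produces, at finite levels $K_{p,r}$, Banach sheaves $\VB_{U_{i},K_{p,r}}(\mF)$ whose colimit of pullbacks computes the horizontal invariants. One needs to check that the solid tensor product and internal $\RHom$ match the completed tensor products used there. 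This is where Lemma \ref{lemHomStoBanach}-type arguments on ON Banach modules enter.

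For (2), build the comparison map
\[
(\pi^{\la}_{\sm})^{*}\tVBfu(\mF)\to(\pi^{\la}_{\HT,K^{p}})^{*}\mF
\]
by adjunction from the inclusion of $\tilde{\fg}^{0}/\fn^{0}$-invariants into $\mO^{\la}_{K^{p}}\otimes\mF$. On each $U^{\la}$ this is the map
\[
\mO^{\la}\otimes_{\mO^{\sm}}\VB(\mF)\to\mO^{\la}\otimes_{\mO_{\Fl}}\mF,
\]
which is an isomorphism by \cite[Theorem 5.2.5]{Juan2022.09locallyShi}. Since $\mX^{\tor,\la}_{K^{p}}$ is the relative $\unl{\AnSpec}$ of $\mO^{\la}_{K^{p}}$ (Definition \ref{dfnLocallyAnSV}), this identifies the quasi-coherent pullbacks.

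For (3), the lax monoidal structure on $\tVBfu$ comes from the cup product on Lie algebra cohomology. To show it is strong, compose with $(\pi^{\la}_{\sm})^{*}$ and use (2), under which it becomes the monoidality of $(\pi^{\la}_{\HT,K^{p}})^{*}$. One then recovers $\tVBfu$ from its pullback by taking $R\Gamma(\tilde{\fg}^{0}/\fn^{0},(\pi^{\la}_{\sm})_{*}(-))$, using degree-$0$ concentration from (1) and the projection formula. This shows the comparison map is invertible.

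I expect the main obstacle to be the dictionary step in (1): matching the solid/stacky cohomology on $\mX^{\tor,\sm}_{K^{p}}$, including the log boundary and the $\widetilde{Z_{c}(\ZZ_{p})}$-smoothing convention of Notation \ref{notationRlaConvention}, with the sheaves on the topological space $|\mX^{\tor}_{K^{p}}|$ in \cite{Juan2022.09locallyShi}.
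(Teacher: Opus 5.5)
The gap is in (1). The one concrete reason you give for degree-$0$ concentration, Theorem \ref{thmOlaConcentrated0}, only says that $\mO^{\la}_{K^{p}}\otimes\mF$ has its sections in degree $0$ on good chartable opens. That gives a lower bound for $\tVBfu(\mF)=R\Gamma(\tilde{\fg}^{0}/\fn^{0},\mO^{\la}_{K^{p}}\otimes^{L}_{\mO_{\Fl},\square}\mF)$. It says nothing about the higher cohomology of this Chevalley--Eilenberg complex, which is exactly what has to vanish.

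Nor can you import the vanishing directly. The external results, at least in the packaged form \cite[Theorem 4.5.4]{BoxerCalegariGeePilloni2025modularity} of Rodr\'iguez Camargo's theorems that the paper uses, are statements about the naive functor $\tVBfun$ (their $\VB$). But $\tVBfun(\mF)$ is \emph{not} concentrated in degree $0$: already $\tVBfun(\mO_{U})=(\hat{\mO}_{K^{p}})^{R-\sm}$ carries the Hodge--Tate pieces $\Omega^{i}_{\log}(-i)$ in degree $i$. So the step ``Theorem 5.2.1 gives the degree-$0$ statement'' hides the actual content. By contrast, the dictionary issue you flag as the main obstacle is treated as routine in the paper, which simply identifies $\tVBfun$ with that $\VB$.

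The missing idea is the splitting for $\fn^{0}$-trivial sheaves, combined with a top-degree twist. If $\fn^{0}$ kills $\mF$, then $R\Gamma(\fn^{0},\mF)\cong\bigoplus_{i}\mF\otimes(\wedge^{i}\fn^{0})^{\vee}[-i]$, so Lemma \ref{lemVBnToVB} gives
\[
\tVBfun(\mF)\cong\bigoplus_{i=0}^{d}\tVBfu\bigl(\mF\otimes(\wedge^{i}\fn^{0})^{\vee}\bigr)[-i],\qquad d=\dim\fn .
\]
The cited results put $\tVBfun(\mF)$ in degrees $[0,d]$. The summand $i=d$ therefore shows that $\tVBfu(\mF\otimes(\wedge^{d}\fn^{0})^{\vee})$ has no cohomology in positive degrees. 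Together with your lower bound, it is concentrated in degree $0$.

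Replacing $\mF$ by $\mF\otimes\wedge^{d}\fn^{0}$ gives the claim for $\tVBfu(\mF)$. Comparing summands once more identifies $\tVBfu(\mF)$ with $\VB^{0}_{\Sigma}(\mF)$ of loc.\ cit., and this is what yields the colimit-of-Banach description in (1).

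Your treatment of (2) and (3) is fine modulo this, and differs from the paper's as follows.
\begin{itemize}
\item For (2), the paper obtains the isomorphism after pullback to the completed $\mX^{\tor}_{K^{p}}$, where the cited results live. It then descends through $(-)^{R-\la}$ exactly as in Theorem \ref{thmIdenTwoTorsorsOverLa}, rather than asserting the isomorphism directly on each $U^{\la}$.
\item For (3), the paper uses only that $\pi^{\sm}$ is a $!$-surjection, so $\pi^{\sm,*}$ is conservative (Theorem \ref{thmVariousDescent} (6)). This avoids your reconstruction argument and its extra inputs: $\tVBfu(\mO_{U})\cong\mO^{\sm}_{K^{p}}$, and $\tilde{\fg}^{0}/\fn^{0}$-equivariance of the isomorphism in (2).
\end{itemize}
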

\begin{proof}
(1) follows essentially from \cite[Theorem 5.2.1, Theorem 5.2.5]{Juan2022.09locallyShi}. A good summary of the argument is summed up in \cite[Theorem 4.5.4]{BoxerCalegariGeePilloni2025modularity}. Note that our definition of the functor \(\tVBfun\) is precisely the functor \(\VB\) in \cite{BoxerCalegariGeePilloni2025modularity}. 

For $\mF\in\QCoh^{\rla}(U)^{\fn^{0}}$, \( R\Gamma(\fn^{0},\mF)\cong\bigoplus_{i\in \NN}\mF\otimes (\wedge^{i}\fn^{0})^{\vee}[-i] \). Thus by Lemma \ref{lemVBnToVB}, we know that
\begin{align}\label{alignCompareSumRGammeN0}
\tVBfun(\mF)\cong\tVBfu(R\Gamma(\fn^{0},\mF))\cong \bigoplus_{i=0}^{d}\tVBfu(\mF\otimes(\wedge^{i}\fn^{0})^{\vee})[-i].
\end{align}
 Compare with \cite[Theorem 4.5.4 (2)]{BoxerCalegariGeePilloni2025modularity}, we see that \(\tVBfu(\mF\otimes(\wedge^{d}\fn^{0})^{\vee})\)
is a direct summand of \(\VB^{0}_{\Sigma}(\mF\otimes(\wedge^{d}\fn^{0})^{\vee})\) loc. cit. 
In particular, \(\tVBfu(\mF\otimes(\wedge^{d}\fn^{0})^{\vee})\) has the desired property of concentration at degree \(0\). 
Replacing $\mF$ with $\mF\otimes\wedge^{d}\fn^{0}$ tells us the concentration of $\tVBfu(\mF)$. Comparing (\ref{alignCompareSumRGammeN0}) again with \cite[Theorem 4.5.4 (2)]{BoxerCalegariGeePilloni2025modularity}, we see that \(\tVBfu(\mF)\cong \VB^{0}_{\Sigma}(\mF)\). 

For (2), one has an isomorphism after pulling back to the diamond. Then
one follows the same proof of Theorem \ref{thmIdenTwoTorsorsOverLa} to descend the isomorphism to \(\mX^{\tor,\la}_{K^{p}}\).

For (3),
it is easy to see that \(\tVBfu(-)\)
is lax symmetric monoidal,
that is, there is a natural morphism \(\tVBfu(\mF)\otimes\tVBfu(\mF')\to \tVBfu(\mF\otimes\mF')\)
is \(\QCoh(\mX^{\tor,\sm}_{K^{p}}|_{U})\). Now since \(\pi^{\sm}\)
is a \(!\)-surjection,
and the morphism above becomes an isomorphism after 
\(\pi^{\sm,*}\),
we know that this natural morphism is an isomorphism.
\end{proof}
\begin{dfn}[Horizontal action]\label{dfnHorActionPilloni}

	Note that for any $\mF\in\QCoh_{\fg^{c}}(U)^{\fn^{0}}$, $\mF$ is carried with a canonical action $\fg^{c,0}/\fn^{0}\otimes\mF\to \mF$. We consider the sub-bundle \(\mfk{m}^{c,0}:=\mfk{p}^{c,0}/\fn^{0}\subset \fg^{c,0}/\fn^0
    \), and we have the action of $\mfk{m}^{c,0}$ on $\mF$,
	which we denote by $(\mfk{m}^{c,0},\theta)$ and refer to it as the \emph{horizontal action}. 
	

	Assuming \(\mF\in\QCoh_{\fg^{c}}^{\rla}(U)^{\fn^{0}}\). Then applying \(\tVBfu\) to the Koszul complex associated to the \(\mm^{c,0}\)-action on \(\mF\) \[
	R\Gamma(\mm^{c,0},\mF):=
	[\mF\to \mF\otimes_{\mO_{\Fl}}(\mm^{c,0})^{\vee}
	\to 
	\cdots \to \mF\otimes_{\mO_{\Fl}}\wedge^{\dim(\mm^{c})}(\mm^{c,0})^{\vee}],
	\]
	we obtain a complex in \(\QCoh(\mO^{\sm})\)
	\begin{align*}
	\tVBfu(R\Gamma(\mm^{c,0},\mF))=[\tVBfu(\mF)\to \tVBfu(\mF)\otimes_{\mO_{\Fl}}\tVBfu((\mm^{c,0})^{\vee})\\
	\to\cdots \to \tVBfu(\mF)\otimes_{\mO_{\Fl}}\wedge^{\dim(\mm^{c})}\tVBfu((\mm^{c,0})^{\vee})],
	\end{align*}
	where we have used the symmetric monoidal structure of \(\tVBfu\) in Theorem \ref{thmMainThmGeomSenVB} (2).

	In particular, we have an action of \(Z(\mfk{m})\) (the center of \(U(\mfk{m})\)),
	By functoriality, $(Z(\mfk{m}),\theta)$ induces an action of \(Z(\mfk{m})
    \) on $\tVBfu(\mF)$, which we also denote at $\theta$.
\end{dfn}

\subsection{Geometry of locally analytic Shimura varieties}
\label{subsecLocallyAnalyticShimuraVar}

\begin{prop}\label{propLAvsSmoothSV}
We have the following commutative diagram \[
\begin{tikzcd}
\mX^{\tor,\la}_{K^{p}}
\arrow[d,"\pi^{\la}_{\sm}"]
\arrow[r,"\pi^{\la}_{\GM\HT}"] & 
\Per_{G,\mu}/(G^{c}_{\CC_{p}},*_{\GM}) 
\arrow[r,"\pi_{\HT}^{per}"] \arrow[d] & 
\Fl_{G,\mu}\arrow[d]\\
\mX^{\tor,\sm}_{K^{p}}\arrow[r,"\pi^{\sm}_{\GM\HT}"] & \Per_{G,\mu}^{\dR/\Fl^{\std}_{G,\mu}}/(G^{c}_{\CC_{p}},*_{\GM}) \arrow[r] & \Fl_{G,\mu}/(\fg^{0}/\fn^{0})^{\dagger}
\end{tikzcd}
\] where both squares are \emph{Cartesian}.
\end{prop}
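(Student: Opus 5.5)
The plan is to establish the right-hand square first, since it is a statement purely about period domains, and then deduce the left square from the geometric Sen theory of \S\ref{subsecGeometricSen}. Since the whole diagram is $G^{c}$-equivariant for $*_{\GM}$, it suffices to prove the statement before quotienting, i.e. after pulling back along the $G^{c,\an}_{\CC_{p}}$-torsor $(\pi^{\la}_{K})^{*}\mcal{G}^{c,\an}_{\dR,K}$.

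\emph{Right square.} Forgetting the Galois descent datum, $\Per_{G,\mu}\cong (G^{c,\an}\times G^{c,\an})/(P^{c,\an}_{\mu}\times_{M^{c,\an}_{\mu}}P^{\std,c,\an}_{\mu})$. Under $\pi^{per}_{\GM}$ this space fibres over $\Fl^{\std}_{G,\mu}$, and over the base point the fibre is $M^{c,\an}_{\mu}\backslash(N^{\an}_{\mu}\backslash G^{c,\an})$, i.e. the torsor $\mcal{M}^{c,\an}_{\mu}$ over $\Fl_{G,\mu}$ twisted by $P^{\std}$. The relative de Rham stack $\Per^{\dR/\Fl^{\std}}$ is then the fibrewise de Rham stack of $\mcal{M}^{c,\an}_{\mu}$. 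Lemma \ref{lemCompareDRStac} (2),(3) identifies this with $\mcal{M}^{\an}/(f^{*}(\fg^{0}/\fn^{0}))^{\dagger}$, and after quotienting by $M^{c}$ with $\Fl/(\fg^{0}/\fn^{0})^{\dagger}$; the left square of Lemma \ref{lemCompareDRStac} (3) is Cartesian. I would run this argument $\Fl^{\std}$-locally, using Theorem \ref{thmAnDRStackGeneral} (1) (de Rham stacks commute with finite limits) to identify the base change. The map $\pi^{\sm}_{\GM\HT}$ itself is defined locally on good chartable opens $U^{\sm}$: one trivializes $\mcal{G}_{\dR}$, uses $\mO^{\la}\otimes_{\mO^{\sm}}$, and notes that the $\fn^{0}$-action vanishes, so the HT data descends to $\Fl/(\fg^{0}/\fn^{0})^{\dagger}$.

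\emph{Left square.} By the right square, it suffices to show that $\mX^{\tor,\la}_{K^{p}}\to \mX^{\tor,\sm}_{K^{p}}\times_{\Fl/(\fg^{0}/\fn^{0})^{\dagger}}\Fl$ is an isomorphism, working over a good chartable $U^{\sm}\subset\pi_{\HT}^{-1}(V)$. Both sides are affine proper over $U^{\sm}$: the left by Theorem \ref{thmOlaConcentrated0}, and the right since $\Fl\to\Fl/(\fg^{0}/\fn^{0})^{\dagger}$ is affine proper with fibres $(\fg^{0}/\fn^{0})^{\dagger}$. By Lemma \ref{lemAffProperProp} (2), the statement therefore reduces to an isomorphism of algebras. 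We must check that
\[\mO^{\la}_{K^{p}}|_{U^{\sm}}\cong \tVBfu\big(\mO_{V}\{(\fg^{c,0}/\fn^{0})^{\vee}\}^{\dagger}\big),\]
where the right-hand side is computed from the pushforward of the fibre of $\Fl\to\Fl/(\fg^{0}/\fn^{0})^{\dagger}$, namely the $\fg$-stalk of functions $\mO_{V}\hatotimes\mO_{G,1}$ modulo $\fn^{0}$. For this I would use Lemma \ref{lemVBnToVB} together with the computation $\tVBfun(\mO_{V}\hatotimes\mO_{G,1})\cong R\Gamma(\fg,\mO^{\la}\hatotimes\mO_{G,1})$, taken with the $*_{1,3}$ action. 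This recovers $\mO^{\la}$ by the orbit-map description of locally analytic vectors (Definition \ref{dfnLAvectors}). Theorem \ref{thmMainThmGeomSenVB} (1),(2) then gives degree-$0$ concentration and the identification after pullback. Finally, $!$-descent along $\pi^{\sm}$ (Lemma \ref{lemOpenSetsFromFulltoSM} (3)) lets us check that the comparison map is an isomorphism after pulling back to $\mX^{\tor}_{K^{p}}$, where it is the trivialization of Theorem \ref{thmIdenTwoTorsorsOverLa}.

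The main obstacle is this last identification: showing that $\mO^{\la}$ is exactly the $\tVBfu$ of the dagger-completed $\fg^{0}/\fn^{0}$-functions, and not merely a colimit of relatively locally analytic pieces. That object is only in $\QCoh^{\rla}_{\fg^{c}}(V)^{\fn^{0}}$ after writing $\mO\{\cdot\}^{\dagger}$ as a colimit over radii $p^{-n}$, each term relatively $K_{p,n}$-analytic. So I would first verify that $\tVBfu$ commutes with this filtered colimit and that the colimit matches $\varinjlim_{K_{p}}$ in Definition \ref{dfnLAvectors}.
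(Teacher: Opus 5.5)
Your treatment of the right square, and the reduction of the left square to the outer rectangle by pasting, agree with the paper. There is one slip: the fibre of $\pi^{per}_{\GM}$ is $N^{\an}_{\mu}\backslash G^{c,\an}$, i.e.\ the torsor $\mathcal{M}^{c,\an}_{\mu}$, not $M^{c,\an}_{\mu}\backslash(N^{\an}_{\mu}\backslash G^{c,\an})\cong\Fl_{G,\mu}$.

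The left square, however, has two genuine gaps. First, you never construct $\pi^{\sm}_{\GM\HT}$, so you never produce the comparison map $\mX^{\tor,\la}_{K^{p}}\to\mX^{\tor,\sm}_{K^{p}}\times_{\Fl/(\fg^{0}/\fn^{0})^{\dagger}}\Fl$ that you want to show is an isomorphism. Knowing that $\fn^{0}$ acts trivially on $\mO^{\la}_{K^{p}}$ does not give descent data for $\mX^{\tor,\la}_{K^{p}}\to\Fl\to\Fl/(\fg^{0}/\fn^{0})^{\dagger}$ along $\pi^{\la}_{\sm}$. For that you would need a coherent groupoid map from $\mX^{\tor,\la}_{K^{p}}\times_{\mX^{\tor,\sm}_{K^{p}}}\mX^{\tor,\la}_{K^{p}}\rightrightarrows\mX^{\tor,\la}_{K^{p}}$ to $(\fg^{0}/\fn^{0})^{\dagger}\rightrightarrows\Fl$. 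The paper obtains the map from Proposition \ref{propdRstackOfShLA}. There the projection $\mX^{\tor,\la}_{K^{p}}\times_{\mX^{\tor,\sm}_{K^{p}}}\mX^{\tor,\la}_{K^{p}}\to\mX^{\tor,\la}_{K^{p}}$ is shown to be an isomorphism after $\dagger$-reduction, so $(\mX^{\tor,\la}_{K^{p}})^{\dR/\mX^{\tor,\sm}_{K^{p}}}\cong\mX^{\tor,\sm}_{K^{p}}$. Then $\pi^{\sm}_{\GM\HT}$ is obtained by applying the relative de Rham stack to $\pi^{\la}_{\GM\HT}$ lying over $\pi_{\GM}$.

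Second, you identify the pushforward of the structure sheaf of $\mX^{\tor,\sm}_{K^{p}}\times_{\Fl/(\fg^{0}/\fn^{0})^{\dagger}}\Fl$ with $\tVBfu(\mO_{V}\{(\fg^{c,0}/\fn^{0})^{\vee}\}^{\dagger})$, and this is unjustified. Nothing in the paper says that pulling back along $\mX^{\tor,\sm}_{K^{p}}\to\Fl/(\fg^{0}/\fn^{0})^{\dagger}$ is computed by $\tVBfu$, and proving that is essentially the Cartesian claim itself. Conversely, what you call the main obstacle is already done. That $C^{\la}\cong\mO_{\Fl}\{(\fg^{c,0}/\fn^{0})^{\vee}\}^{\dagger}$ lies in $\QCoh^{\rla}_{\fg^{c}}(\Fl)^{\fn^{0}}$ is Lemma \ref{lemClaConcentraDeg0}, and $\tVBfu(C^{\la})\cong\mO^{\la}_{K^{p}}$ is Corollary \ref{corVBCla=Ola}.

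What closes the argument is the descent step you mention only in passing, carried out along the $!$-surjection $\pi^{\la}_{\sm}$. Base change turns the comparison map into $\mX^{\tor,\la}_{K^{p}}\times_{\mX^{\tor,\sm}_{K^{p}}}\mX^{\tor,\la}_{K^{p}}\to\mX^{\tor,\la}_{K^{p}}\times_{\Fl}(\fg^{c,0}/\fn^{0})^{\dagger}$, using $\Fl\times_{\Fl/(\fg^{0}/\fn^{0})^{\dagger}}\Fl\cong(\fg^{c,0}/\fn^{0})^{\dagger}$. This map is an isomorphism because
\[\mO^{\la}_{K^{p}}\otimes_{\mO^{\sm}_{K^{p}}}\mO^{\la}_{K^{p}}\cong(\pi^{\la}_{\sm})^{*}\tVBfu(C^{\la})\cong(\pi^{\la}_{\HT,K^{p}})^{*}C^{\la}\cong\mO^{\la}_{K^{p}}\{(\fg^{c,0}/\fn^{0})^{\vee}\}^{\dagger},\]
where the middle isomorphism is Theorem \ref{thmMainThmGeomSenVB} (2). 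The input is therefore geometric Sen theory. Theorem \ref{thmIdenTwoTorsorsOverLa}, which you cite at this point, compares $M^{c}_{\mu}$-torsors and says nothing about $\mO^{\la}_{K^{p}}\otimes_{\mO^{\sm}_{K^{p}}}\mO^{\la}_{K^{p}}$.
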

The proof will be finished at the end of this subsubsection.
\begin{notation}
Recall from Notation \ref{notationStalkAt1DaggerGrp} that
the space
$\mO_{G^{c},1}$ carries two actions $*_{1}$ and $*_{2}$ of $\fg^{c}$. On $\mO_{G^{c},1}\hatotimes\mO_{\Fl}$, there is a third action coming from the action of $G^{c}$ on $\mO_{\Fl}$, which we denote as $*_{3}$. 
Then we have an action \(*_{I}\) of \(\fg^{c}\) on \(\mO_{\Fl}\otimes \mO_{G^{c},1}\) for any \(I\subset\{1,2,3\}\). See Notation \ref{notationStalkAt1DaggerGrp} for details.
We have $(\mO_{G^{c},1}\hatotimes\mO_{\Fl},*_{1,3})\in\QCoh_{\fg^{c}}(\Fl)$.
We define \[C^{\la}:=R\Gamma((\fn^{0},*_{1,3}),\mO_{\Fl}\hatotimes_{\QQ_{p}}\mO_{G^{c},1}),\]
and \[
C^{\la,0}:=R\Gamma((\mfk{p}^{c,0},*_{1,3}),\mO_{\Fl}\hatotimes\mO_{G^{c},1}).
\]
\end{notation}

\begin{lem}\label{lemLocalStalkAcyclicUgRep}
Let \(G\) be a \(p\)-adic Lie group \(G\) and \(\fg:=\Lie(G)\). Then \(R\Gamma(\fg,(\mO_{G,1},*_{1}))\cong \CC_{p}[0]\). 
\end{lem}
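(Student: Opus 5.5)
The plan is to reduce to an explicit Poincaré lemma for germs of analytic functions at the identity. By Definition \ref{dfnIncarnationGroup} (4), $\mO_{G,1}=\varinjlim_{K}C^{\an}(K,L)$, and by Definition \ref{dfnDaggerGroupObject} there is a Hopf algebra isomorphism $\mO_{G,1}\cong L\{\fg^{\vee}\}^{\dagger}$, i.e. functions on $\fg^{\dagger}$. The $*_{1}$-action of $\fg$ is the infinitesimal action coming from left multiplication, so the Chevalley--Eilenberg complex computing $R\Gamma(\fg,(\mO_{G,1},*_{1}))$ is the de Rham complex of the germ $(1\subset G^{\la})^{\dagger}$, written in the frame of invariant vector fields. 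Concretely, $\Hom(\wedge^{i}\fg,\mO_{G,1})\cong\Omega^{i}$ of the dagger germ. We therefore need a dagger Poincaré lemma with answer equal to the constants. Everything is base-changed to $\CC_{p}$, which is harmless since the complex is built from $\QQ_{p}$-Banach/LB spaces and solid tensor is exact on them.

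The first step is to pass to coordinates of the second kind. Choose a basis $X_{1},\dots,X_{d}$ of $\fg$ and the analytic chart $(t_{1},\dots,t_{d})\mapsto \exp(t_{1}X_{1})\cdots\exp(t_{d}X_{d})$ near $1$. In these coordinates $\mO_{G,1}\cong L\{t_{1},\dots,t_{d}\}^{\dagger}$. Both models describe germs at $1$, so the Hopf identification is compatible with this chart. The invariant vector fields generate $\Omega^{1,\vee}$ over $\mO_{G,1}$, so the Chevalley--Eilenberg complex is isomorphic to the usual de Rham complex $\Omega^{\bullet}$ of $L\{t_{1},\dots,t_{d}\}^{\dagger}$. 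This is the standard identification of Lie algebra cohomology with coefficients in functions with de Rham cohomology. No issue of filtered structure arises here because everything is discrete in degree.

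It remains to show that $R\Gamma_{\dR}(\QQ_{p}\{t_{1},\dots,t_{d}\}^{\dagger})\cong\QQ_{p}$, which is exactly the Poincaré lemma for $\GG_{a}^{\dagger}$ invoked at the end of Lemma \ref{lemDModOnGaDagger} (and \cite[Lemma 5.2.12]{Juan2024analyticdeRham}). By Künneth, $\QQ_{p}\{t_{1},\dots,t_{d}\}^{\dagger}$ is the $d$-fold solid tensor product of the one-variable case, so it suffices to treat $d=1$. There $\partial_{t}$ is surjective on $\varinjlim_{r}\QQ_{p}\langle t/p^{r}\rangle$, because integration costs only a bounded shrinking of the radius: $t^{n}/n$ has growth $p^{v(n)}\le n$, which is absorbed by passing to a smaller disc. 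The kernel of $\partial_{t}$ is the constants. Tensoring with $\CC_{p}$ then gives $\CC_{p}[0]$.

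The main obstacle is the claim that $d$ acting on the colimit is strictly exact in the solid sense, with no hidden $\varprojlim^{1}$ or non-Hausdorff phenomena. The dagger (colimit) structure is precisely what makes this work: the explicit integration operator maps radius $r$ to a slightly smaller radius boundedly. That gives a contracting homotopy at the level of the ind-system, hence an exact complex of solid modules.
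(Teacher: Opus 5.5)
Your argument is correct, but it takes a genuinely different route from the paper.

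\textbf{The paper's route.} It works on the dual side. Since \(\mO_{G,1}\) is an LB space of compact type, the paper uses the duality with Fr\'echet spaces of compact type \cite[Theorem 3.40]{JRC2021solid} to turn the computation of \(R\Gamma(\fg,(\mO_{G,1},*_{1}))\) into one of \(\mO_{G,1}^{\vee}\otimes^{L}_{U(\fg)}\CC_{p}\). It then identifies \(\mO_{G,1}^{\vee}\) with the algebra of distribution germs \(\varprojlim_{n}D(G_{h^{+}},\CC_{p})\) and concludes by quoting \cite[Proposition 5.12]{JRC2021solid} about these distribution algebras.

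\textbf{Your route.} You stay on the function side throughout:
\begin{itemize}
\item You identify the Chevalley--Eilenberg complex for \(*_{1}\) with the de Rham complex of the dagger germ \(L\{t_{1},\dots,t_{d}\}^{\dagger}\). This works because the infinitesimal left-translation vector fields form a frame: their coefficient matrix is invertible at \(1\), hence invertible in the germ ring.
\item You reduce to one variable by K\"unneth, which is legitimate since Banach and LB spaces are flat.
\item You prove the overconvergent Poincar\'e lemma directly with the integration operator \(\QQ_{p}\langle t/p^{r}\rangle\to\QQ_{p}\langle t/p^{r+1}\rangle\). This operator is continuous and satisfies \(\partial I=\mathrm{id}\) and \(I\partial=\mathrm{id}-\mathrm{ev}_{0}\), so it is a genuine contracting homotopy of condensed modules. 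That settles strictness outright, with no \(\varprojlim^{1}\) issues.
\end{itemize}

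\textbf{Comparison.} Your proof is more elementary and self-contained, and it yields an explicit homotopy. It is in fact the same \(\GG_{a}^{\dagger}\) Poincar\'e lemma the paper already invokes at the end of Lemma \ref{lemDModOnGaDagger}. The paper's proof is shorter given its citations and stays inside the distribution-algebra and Lazard--Serre framework of \cite{JRC2021solid} that it uses elsewhere for derived locally analytic vectors. The cost is that it depends on the compact-type duality and a nontrivial external result.
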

\begin{proof}
\(\mO_{G,1}\) is a LB space of compact type. 
Using the duality between LB spaces of compact type, and Fréchet spaces of compact type (\cite[Theorem 3.40]{JRC2021solid}), it suffices to show the dual statement that \(\mO_{G,1}^{\vee}\otimes^{L}_{U(\fg)}\CC_{p}\cong 0\). 
Moreover, \(\mO_{G,1}^{\vee}\cong \varprojlim_{n} D(G_{h^{+}},\CC_{p}) \), where the latter
is defined in \cite[\S 5.3.2]{JRC2021solid}. Then we conclude by \cite[Proposition 5.12]{JRC2021solid}.
\end{proof}
\begin{lem}\label{lemClaConcentraDeg0}
(1) The sheaf
$C^{\la}$ is concentrated degree $0$, $C^{\la}\in\QCoh^{\rla}_{\fg^{c}}(\Fl)^{\fn^{0}}$, and \(C^{\la}\cong\mO_{\Fl}\{(\fg^{c,0}/\fn^{0})^{\vee}\}^{\dagger}\).

(2) The sheaf
\(C^{\la,0}\) is concentrated in degree \(0\),
and \(C^{\la,0}\in \mrm{QCoh}_{\fg^{c}}^{\rla}(\Fl)^{\fn^{0}}\), \(C^{\la,0}\cong \mO_{\Fl}\{(\fg^{c,0}/\mfk{p}^{c,0})^{\vee}\}^{\dagger}\).

\end{lem}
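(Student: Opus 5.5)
The plan is to reduce everything to the structure of $\mO_{G^{c},1}$ as a $(\fg^{c},*_{1},*_{2})$-bimodule and then split off the $\fn^{0}$- and $\p^{c,0}$-directions using a local splitting of $\fg^{c,0}$. We work locally on an affinoid $V\subset\Fl_{G,\mu}$ on which $\fn^{0}$, $\p^{c,0}$ and $\fg^{c,0}/\p^{c,0}$ are finite free. Over such a $V$ we choose an $\mO_V$-linear splitting $\fg^{c,0}|_V=\fn^{0}\oplus\fm'\oplus\bar{\fn}'$, where $\fm'\oplus\fn^{0}$ lifts $\p^{c,0}$. By Definition \ref{dfnDaggerGroupObject} we have $\mO_{G^{c},1}\cong\CC_{p}\{\fg^{c,\vee}\}^{\dagger}$. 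We then use the standard trick of untwisting the diagonal action. The map $\mO_{\Fl}\hatotimes\mO_{G^{c},1}\to\mO_{\Fl}\hatotimes\mO_{G^{c},1}$ given by the coaction, $f\otimes\phi\mapsto (g\mapsto g^{-1}\cdot f\,\phi(g))$, is well-defined on germs because the $G^{c}$-action on $\mO_{\Fl}$ is locally analytic. It intertwines $*_{1,3}$ with $*_{1}$ extended $\mO_{\Fl}$-linearly, at least up to the $\mO_{\Fl}$-linear twist by which $\fg^{c}$ becomes $\fg^{c,0}$. So the first step is to identify
\[
(\mO_{\Fl}\hatotimes\mO_{G^{c},1},*_{1,3})\cong \mO_{\Fl}\{(\fg^{c,0})^{\vee}\}^{\dagger},
\]
with $\fg^{c,0}$ acting by left-invariant derivations, i.e. as the $\mO_{\Fl}$-linear extension of $(\mO_{G^{c},1},*_1)$.

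Next we compute Lie algebra cohomology along a subalgebra. Choose a local decomposition of the germ of $G^{c}$ at $1$ into a product of germs, $\exp(\fn^{0})\times\exp(\fm'\oplus\bar{\fn}')$ over $V$. This is possible by the BCH/dagger Hopf-algebra structure, and because $\fn^{0}$ is an $\mO$-Lie subalgebra of $\fg^{c,0}$; note that $\fn^{0}$ is abelian by Lemma \ref{lemNmuSimpleLem}. This gives a Künneth decomposition
\[
\mO_V\{(\fg^{c,0})^{\vee}\}^{\dagger}\cong \mO_V\{(\fn^{0})^{\vee}\}^{\dagger}\hatotimes_{\mO_V}\mO_V\{(\fg^{c,0}/\fn^{0})^{\vee}\}^{\dagger},
\]
under which $\fn^{0}$ acts only on the first factor via $*_1$. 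Lemma \ref{lemLocalStalkAcyclicUgRep}, applied to the group germ of $\fn^{0}$ (relative over $\mO_V$, by the same duality argument with $D(G_{h^{+}})$, now in families), gives $R\Gamma(\fn^{0},\mO_V\{(\fn^{0})^{\vee}\}^{\dagger})\cong\mO_V[0]$. This yields $C^{\la}\cong\mO_{\Fl}\{(\fg^{c,0}/\fn^{0})^{\vee}\}^{\dagger}$, concentrated in degree $0$. The same argument with $\p^{c,0}$ in place of $\fn^{0}$ gives (2), now using Lemma \ref{lemLocalStalkAcyclicUgRep} for the group $P^{c}_{\mu}$ (conjugated along $\Fl$). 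Since the resulting identifications are canonical, namely as invariants of the right-$*_1$-action, i.e. functions on $N^0\backslash G^{c}$ or $P^{c,0}\backslash G^c$ germs, they glue over $\Fl$. This is consistent with Corollary \ref{corDaggerGroupObject}, which identifies $(\fg^{0}/\fn^{0})^{\dagger}$ with $\mFl^{\dR/(G^{0}/N^{0})}$; alternatively one can phrase the whole computation geometrically through that corollary.

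Finally we check relative local analyticity and the $\fn^{0}$-triviality. On $\mO_V\langle p^{-r}(\fg^{c,0}/\fn^{0})^{\vee}_0\rangle$ with the residual $*_{2}$ (or $*_{3}$) action of small $K_{p}$, the ON basis of monomials is fixed mod $p^{\epsilon}$ for $K_p$ small relative to $r$. Taking the colimit over $r$ shows that the result lies in $\QCoh^{\rla}_{\fg^{c}}$, and it is killed by $\fn^{0}$ by construction of the Koszul complex.

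The main obstacle is the relative (over $\mO_{\Fl}$, solid) version of Lemma \ref{lemLocalStalkAcyclicUgRep} and the product decomposition of germs in families. Here the subalgebra $\fn^{0}$ varies with the point of $\Fl$, so one must check that the untwisting map and the BCH splitting converge uniformly on the dagger neighbourhoods over an affinoid $V$. I would handle this by trivializing $\fn^{0}$ via a section $V\to G^{c}$ conjugating $\fn^{0}$ to the constant $\fn_{\mu}$, which is possible after shrinking $V$ since $G\to\Fl$ is a Zariski-locally trivial $P_\mu$-torsor. This reduces the claim to the constant case $\mO_V\hatotimes\mO_{G^c,1}$ with the constant subalgebra $\fn_\mu$, where Lemma \ref{lemLocalStalkAcyclicUgRep} applies after base change.
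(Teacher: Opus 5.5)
This is essentially the paper's proof. The paper also works on a small $U\subset\Fl$ and chooses a section $s\colon U\to G$ so that $\mrm{Ad}_{s}$ carries $\fn^{0}|_U$ (resp.\ $\p^{c,0}|_U$) to the constant $\fn\otimes\mO_U$ (resp.\ $\p^{c}\otimes\mO_U$). It then splits $\CC_p\{\fg^{c,\vee}\}^{\dagger}\cong\CC_p\{\fn^{\vee}\}^{\dagger}\hatotimes\CC_p\{(\bar{\p}^{c})^{\vee}\}^{\dagger}$ with $\fn$ acting only on the first factor, applies Lemma~\ref{lemLocalStalkAcyclicUgRep}, and twists back by $s^{-1}$. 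This is exactly the reduction you reach in your last paragraph.

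The step you should discard is the ``untwisting'' by the coaction $f\otimes\phi\mapsto(g\mapsto g^{-1}\cdot f\,\phi(g))$, because it is wrong as stated.
\begin{itemize}
\item The map is not $\mO_{\Fl}$-linear: it turns multiplication by $a\in\mO_{\Fl}$ into multiplication by the non-constant germ $g\mapsto g^{-1}\cdot a$.
\item The $*_{1,3}$-action of a section of $\fg^{c,0}$ not in $\p^{c,0}$ genuinely differentiates $\mO_{\Fl}$.
\item Hence your displayed identification of $(\mO_{\Fl}\hatotimes\mO_{G^{c},1},*_{1,3})$ with the $\mO_{\Fl}$-linear extension of $*_1$, for all of $\fg^{c,0}$, is false.
\end{itemize}

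Nothing of this kind is needed. The only input, which the paper uses implicitly, is that $\p^{c,0}\supset\fn^{0}$ annihilates $\mO_{\Fl}$: the fundamental vector field of an element of the stabilizer $x^{-1}\p_{\mu}x$ vanishes at $x$, i.e.\ $\fg^{0}/\p^{0}\cong T_{\Fl}$. So $*_{1,3}$ restricted to $\fn^{0}$ or $\p^{c,0}$ already equals the $\mO_{\Fl}$-linear extension of $*_1$. The Koszul complexes can then be computed directly after the conjugation by $s$.

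Your extra check of relative local analyticity, which the paper does not spell out, is aimed at the wrong action. It should be made for the $*_{1,3}$-equivariant structure, which is what defines the object of $\QCoh_{\fg^{c}}(\Fl)$. The action $*_2$ is the extra commuting action that later becomes the $G(\QQ_p)$-action on $\mO^{\la}$.
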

\begin{proof}
We identify \(\fg^{0}\) with its total space \(\fg \times \Fl\).
Locally over \(U\subset \Fl\),
we can fix a section \(s:U\to G\) to the natural map \(G\to \Fl=\Fl_{G,\mu}=P_{\mu}\backslash G\), and then
\(\mrm{Ad}_{s(x)}\) induces an automorphism \(\fg\otimes\mO_{U}\cong \fg\otimes\mO_{U}\), such that
\(\fn^{0}|_{U}=\{(x,\mrm{Ad}_{s(x)}(X)):x\in U,X\in \fn \}\subset \fg\otimes\mO_{U}\) is sent to
\(\fn\otimes\mO_{U}\subset \fg\otimes\mO_{U}\),
Similarly, \(\mfk{p}^{c,0}|_{U}=\{(x,\mrm{Ad}_{s(x)}(X)):x\in U,X\in \mfk{p}^{c} \}\) is sent to \(\p^{c}\otimes\mO_{U}\).

Now \(\mO_{G^{c},1}\cong \CC_{p}\{\fg^{c,\vee}\}^{\dagger} \), so \(\mO_{\Fl}\hatotimes\mO_{G^{c},1}\cong \mO_{\Fl}\{\fg^{c,\vee}\}^{\dagger},\)
and thus \[C^{\la}|_{U}\cong R\Gamma(\fn^{0},\mO_{U}\hatotimes\mO_{G^{c},1})\cong R\Gamma(\fn\otimes\mO_{U},\mO_{U}\{\fg^{c,\vee}\}^{\dagger})\cong \mO_{U}\hatotimes R\Gamma(\fn,\CC_{p}\{\fg^{c,\vee}\}^{\dagger}),\]
where we have implicitly used the fact that the action of \(\fn^{0}\)
is trivial on \(\mO_{\Fl}\).
Fixing a decomposition \(\fg^{c}=\fn\oplus \bar{\p}^{c}\), 
we have \(\CC_{p}\{\fg^{c,\vee}\}^{\dagger}\cong \CC_{p}\{\fn^{\vee}\}^{\dagger}\hatotimes \CC_{p}\{\bar{\p}^{c}\}^{\dagger}\), and the action of \(\fn\) is induced by its action on \(\CC_{p}\{\fn^{\vee}\}\). We know that \(R\Gamma(\fn,\CC_{p}\{\fn^{\vee}\}^{\dagger})\cong \CC_{p}
\) (Lemma \ref{lemLocalStalkAcyclicUgRep}), so
\(R\Gamma(\fn,\CC_{p}\{\fg^{c,\vee}\}^{\dagger})\cong  \CC_{p}\{(\fg^{c}/\fn)^{\vee}\}^{\dagger}\). Twisting back by \(s^{-1}\), we have an isomorphism \(C^{\la}|_{U}\cong \mO_{U}\{(\fg^{c,0}/\fn^{0})^{\vee}\}^{\dagger}\)
that is independent of the choice of \(s\).

The proof of (2) is similar.
\end{proof}

\begin{cor}\label{corVBCla=Ola} 
We have \(\tVBfu(C^{\la},*_{1,3})\cong \mO^{\la}_{K^{p}}\), and 
$\tVB(C^{\la},*_{1,3})\cong \mscr{I}^{\la}_{K^{p}}$. Moreover,
the constant action of \(\fg\)
on \(\mO^{\la}_{K^{p}}\) and \(\mscr{I}^{\la}_{K^{p}}\)
coincides with the action induced by \((\fg,*_{2})\) on \(C^{\la}\).
More generally,
for any \(\mF\in\mrm{QCoh}_{\fg^{c}}^{\rla}(\Fl)^{\fn^{0}}\),
we have \(\tVBfu(\mF\otimes C^{\la})\cong \pi^{\la,*}_{\HT}(\mF)\).
\end{cor}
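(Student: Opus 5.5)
The plan is to unwind the formula for \(\tVBfu\) from Lemma \ref{lemVBnToVB} and apply it to \(C^{\la}\), using Lemma \ref{lemClaConcentraDeg0} to know that \(C^{\la}\in\QCoh^{\rla}_{\fg^{c}}(\Fl)^{\fn^{0}}\). Start from the definition \(C^{\la}=R\Gamma((\fn^{0},*_{1,3}),\mO_{\Fl}\hatotimes\mO_{G^{c},1})\). Since \(\mO_{\Fl}\hatotimes\mO_{G^{c},1}\) with the action \(*_{1,3}\) is an object of \(\QCoh_{\fg^{c}}(\Fl)\), Lemma \ref{lemVBnToVB} gives
\[
\tVBfu(C^{\la})\cong\tVBfun(\mO_{\Fl}\hatotimes\mO_{G^{c},1})\cong\bigl(\pi^{\sm}_{*}\pi_{\HT}^{*}(\mO_{\Fl}\hatotimes\mO_{G^{c},1})\bigr)^{R-\sm}.
\]
The pullback \(\pi_{\HT}^{*}(\mO_{\Fl}\hatotimes\mO_{G^{c},1})\) is \(\mO_{\mX^{\tor}_{K^{p}}}\hatotimes\mO_{G^{c},1}\). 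The projection formula, or simply the fact that \(\mO_{G^{c},1}\) is a constant LB space of compact type and \(\pi^{\sm}\) is affine proper (Lemma \ref{lemOpenSetsFromFulltoSM}), then yields \(\pi^{\sm}_{*}(\ldots)\cong\hat{\mO}_{K^{p}}\hatotimes\mO_{G^{c},1}\) with the diagonal action \(*_{1,3}\). Taking derived smooth vectors is exactly the definition of derived locally analytic vectors (Definition \ref{dfnLAvectors}, in the form of Notation \ref{notationRlaConvention} with the \(\widetilde{Z_{c}(\ZZ_{p})}\)-smooth part). This gives \((\hat{\mO}_{K^{p}})^{R-\la}=\mO^{\la}_{K^{p}}\).

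The compatibility of actions comes from tracking the residual action. In \(V^{R-\la}=(V\hatotimes\mO_{G,1})^{R-(G,*_{1,3})-\sm}\), the \(G^{\la}\)-action on the output is induced by the right-multiplication action \(*_{2}\) on \(\mO_{G,1}\), as in the cited result of Rodríguez Camargo–Rodrigues Jacinto. The action \(*_{2}\) commutes with \(*_{1,3}\) and with the \(\fn^{0}\)-Koszul complex, so it passes through every step above and agrees with the constant \(\fg\)-action on \(\mO^{\la}_{K^{p}}\). The statement for \(\mscr{I}^{\la}_{K^{p}}\) then follows by tensoring with \(\mscr{I}^{\sm}_{K^{p}}\) over \(\mO^{\sm}_{K^{p}}\), by the definition of \(\tVB\) and of \(\mscr{I}^{\la}\) in Notation \ref{notationOlaIla}.

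For general \(\mF\in\QCoh^{\rla}_{\fg^{c}}(\Fl)^{\fn^{0}}\), apply the symmetric monoidality of Theorem \ref{thmMainThmGeomSenVB} (3) to get \(\tVBfu(\mF\otimes C^{\la})\cong\tVBfu(\mF)\otimes\tVBfu(C^{\la})\cong\tVBfu(\mF)\otimes\mO^{\la}_{K^{p}}\). This is \((\pi^{\la}_{\sm})_{*}(\pi^{\la}_{\sm})^{*}\tVBfu(\mF)\), because \(\pi^{\la}_{\sm}\) is the relative \(\unl{\AnSpec}\) of \(\mO^{\la}_{K^{p}}\). Theorem \ref{thmMainThmGeomSenVB} (2) identifies it with \(\pi^{\la,*}_{\HT}(\mF)\), pushed forward to \(\mX^{\tor,\sm}_{K^{p}}\).

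I expect the main obstacle to be the first step: justifying the commutation of \(\pi^{\sm}_{*}\) with \(-\hatotimes\mO_{G^{c},1}\) and the identification of the smooth vectors of the tensor product with the \(R\)-la vectors, including the \(Z_{c}\) subtleties. I would handle it locally on good chartable affinoids using Theorem \ref{thmOlaConcentrated0}, writing \(\mO_{G^{c},1}\) as a colimit of the Banach spaces \(C^{\an}(K,\CC_{p})\) and using that solid tensor products commute with colimits.
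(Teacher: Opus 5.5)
Your proposal is correct. For \(C^{\la}\) itself it is the paper's chain: apply Lemma \ref{lemVBnToVB}, unwind \(\tVBfun\), and recognize derived smooth vectors for \(*_{1,3}\) as \((-)^{R-\la}\).

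The step you flag as the main obstacle is simpler than you fear. By Lemma \ref{lemOpenSetsFromFulltoSM} (3) and Proposition \ref{propAffineProperCover}, \(\pi^{\sm}\) is cohomologically co-smooth, not merely affine proper. The projection formula of Lemma \ref{lemSmBaseChange} (2) therefore gives \(\pi^{\sm}_{*}(\mO_{\mX^{\tor}_{K^{p}}}\hatotimes\mO_{G^{c},1})\cong\hat{\mO}_{K^{p}}\hatotimes\mO_{G^{c},1}\) directly, with no local Banach computation.

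You genuinely differ from the paper on general \(\mF\).
\begin{itemize}
\item The paper does not reduce to \(\mF=\mO_{\Fl}\). It inserts \(\mF\) into the same chain, using that \(\fn^{0}\) kills \(\mF\) to write \(\mF\hatotimes C^{\la}\cong R\Gamma(\fn^{0},\mF\hatotimes\mO_{G^{c},1})\). At the end it pulls \(\mF\) out of the locally analytic vectors via \((\mF\hatotimes_{\mO_{\Fl}}\hat{\mO}_{K^{p}})^{R-\la}\cong\mF\hatotimes_{\mO_{\Fl}}\mO^{\la}_{K^{p}}\) for relatively locally analytic \(\mF\) (\cite[Corollary 2.12]{Jiang2023thetaModCur}).
\item You instead use Theorem \ref{thmMainThmGeomSenVB} (3) to split off \(\tVBfu(C^{\la})\cong\mO^{\la}_{K^{p}}\), and then quote Theorem \ref{thmMainThmGeomSenVB} (2).
\end{itemize}
Your route is formally cleaner and uses only results already stated. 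However, it relocates the real input rather than avoiding it. The proof of Theorem \ref{thmMainThmGeomSenVB} (2) descends from the diamond as in Theorem \ref{thmIdenTwoTorsorsOverLa}. That descent needs, for general \(\mF\), the comparison \((\pi^{\la}_{\HT})^{*}\mF\cong\pi_{\la,*}(\pi_{\HT}^{*}\mF)^{R-\la}\). This is the same commutation of \((-)^{R-\la}\) with \(\mF\hatotimes_{\mO_{\Fl}}-\) that the paper cites.

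So the paper's argument proves the general statement directly from that fact, and could itself feed Theorem \ref{thmMainThmGeomSenVB} (2). Yours is the natural argument if one treats geometric Sen theory as a black box.

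Your justification of the action compatibility is sound and fills in a point the paper's proof leaves implicit. The residual \(*_{2}\)-action commutes with \(*_{1,3}\), with the \(\mO_{\Fl}\)-linear \(\fn^{0}\)-action, and with the \(\fg^{0}/\fn^{0}\)-Koszul complex defining \(\tVBfu\).
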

\begin{proof}
We have 
\begin{align*}
& \tVBfu(\mF\hatotimes C^{\la})
\cong
\tVBfu(R\Gamma(\fn^{0},\mF\hatotimes\mO_{G^{c},1}))
 \cong \tVBfun(\mF\hatotimes\mO_{G^{c},1}) \\
&
\cong (\mF\hatotimes_{\mO_{\Fl}}\hat{\mO}_{\mX^{\tor}_{K^{p}}}\hatotimes\mO_{G^{c},1})^{R-\sm} \cong (\mF\hatotimes_{\mX^{\tor}_{K^{p}}}\hat{\mO}_{\mX^{\tor}_{K^{p}}})^{R-\la}
\cong \mF\hatotimes_{\mO_{\Fl}}\mO_{\mX^{\tor}_{K^{p}}}^{\la} ,
\end{align*} where the last isomorphism holds by \cite[Corollary 2.12]{Jiang2023thetaModCur}. 
The same argument also works for \(\tVB\).
\end{proof}

\begin{prop}\label{propdRstackOfShLA}
    The natural morphism \(\pi^{\la}_{\sm}:\mX_{K^{p}}^{\tor,\la}\to \mX_{K^{p}}^{\tor,\sm}
    \) induces a monomorphism after taking \(
    (-)^{\dR}
    \).
	
	The natural morphism \(\mX^{\tor,\sm}_{K^{p}}\to (\mX^{\tor,\sm}_{K^{p}})^{\dR}\) canonically 
	factors through \((\mX_{K^{p}}^{\tor,\la})^{\dR}\), and \((\mX^{\tor,\la}_{K^{p}})^{\dR/(\mX^{\tor,\sm}_{K^{p}})}\cong \mX^{\tor,\sm}_{K^{p}}\).
\end{prop}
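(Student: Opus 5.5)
The plan is to reduce everything to a local statement about $\dagger$-reductions, the same mechanism used in the proof of Theorem \ref{thmAnDRStackGeneral} (5). Both claims are local on $\mX^{\tor,\sm}_{K^{p}}$, since $(-)^{\dR}$ commutes with finite limits (Theorem \ref{thmAnDRStackGeneral} (1)) and open immersions are $\dagger$-formally smooth monomorphisms, so they are preserved. We therefore work over a good chartable affinoid $U^{\sm}=\AnSpec(A^{\sm},A^{\sm,+})$, where $\mX^{\tor,\la}_{K^{p}}|_{U^{\sm}}\cong\AnSpec(A^{\la})$ with $A^{\la}=H^{0}(U^{\sm},\mO^{\la}_{K^{p}})$, concentrated in degree $0$ by Theorem \ref{thmOlaConcentrated0} and bounded by Remark \ref{rmkLAsvIsEssTate}, and with the analytic structure induced from $A^{\sm}$.

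For the monomorphism, we would show that the diagonal $\mX^{\tor,\la}_{K^{p}}\to \mX^{\tor,\la}_{K^{p}}\times_{\mX^{\tor,\sm}_{K^{p}}}\mX^{\tor,\la}_{K^{p}}$ induces an isomorphism on $(-)^{\dR}$. Since $(-)^{\dR}$ only depends on the $\dagger$-reduction, it suffices to show that $\ker(A^{\la}\otimes_{A^{\sm}}A^{\la}\to A^{\la})$ lies in the $\dagger$-nilradical. We would argue as in Theorem \ref{thmAnDRStackGeneral} (5) (the density argument there). By Corollary \ref{corVBCla=Ola} and Lemma \ref{lemClaConcentraDeg0}, locally $A^{\la}$ is obtained from $A^{\sm}$ together with $\pi_{\HT}^{*}\mO_{\Fl}$ by adjoining overconvergent functions in the directions of $\fg^{c,0}/\fn^{0}$. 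Concretely, $A^{\la}$ is the colimit over $K_{p}$ of $K_{p}$-analytic vectors, and any $a\in A^{\la}$ is approximated $p$-adically by elements of $A^{\sm}$ up to an error governed by the radius of analyticity. So $a\otimes 1-1\otimes a$ should lie in $p^{n}(A^{\la,\circ}\otimes A^{\la,\circ})$ for all $n$ after shrinking $K_{p}$.

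The main obstacle is exactly this approximation. Unlike $A^{\sm}\subset \hat{A}$, where density is clear, $A^{\sm}$ is not $p$-adically dense in $A^{\la}$ for the LB-topology, and we need a uniform bound on the lattice $\varinjlim_{K}(A^{\circ}\otimes C^{\an}(K,\ZZ_{p}))^{K}$. We would first try to use the description $\tVBfu(C^{\la})\cong\mO^{\la}$ together with $C^{\la}\cong\mO_{\Fl}\{(\fg^{c,0}/\fn^{0})^{\vee}\}^{\dagger}$. The coordinates $Y_{i}$ of this dagger disc become divisible by arbitrary powers of $p$ as the group shrinks, so their images $Y_{i}\otimes 1-1\otimes Y_{i}$ are $\dagger$-nilpotent. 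The functions from $\mO_{\Fl}$ are already handled by the density of $\hat{A}$ over $A^{\sm}$, as in Theorem \ref{thmAnDRStackGeneral} (5).

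For the factorization, $\mX^{\tor,\sm}_{K^{p}}\to(\mX^{\tor,\sm}_{K^{p}})^{\dR}$ lifts through $(\mX^{\tor,\la}_{K^{p}})^{\dR}$ because $A^{\la}\to \hat{A}$ and $A^{\sm}\to\hat{A}$ have the same $\dagger$-reduction. Equivalently, the map $A^{\la}\to \hat A$ composed with the reduction factors through the $\dagger$-reduction of $A^{\sm}$ by the same nilpotence argument. We then conclude via the $!$-surjection $\pi^{\sm}$ from Lemma \ref{lemOpenSetsFromFulltoSM} (3), together with $\mX^{\tor}_{K^{p}}\to\mX^{\tor,\la}_{K^{p}}$, using uniqueness from the monomorphism already proved. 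Finally, $(\mX^{\tor,\la}_{K^{p}})^{\dR/\mX^{\tor,\sm}_{K^{p}}}=\mX^{\tor,\sm}_{K^{p}}\times_{(\mX^{\tor,\sm})^{\dR}}(\mX^{\tor,\la})^{\dR}$. This maps to $\mX^{\tor,\sm}_{K^{p}}$ as a monomorphism by the first part, and it has a section given by the factorization just constructed, so it is an isomorphism.
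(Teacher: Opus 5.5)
Your global skeleton matches the paper's. You reduce the monomorphism claim to $p_1\colon \mX^{\tor,\la}_{K^{p}}\times_{\mX^{\tor,\sm}_{K^{p}}}\mX^{\tor,\la}_{K^{p}}\to \mX^{\tor,\la}_{K^{p}}$ (equivalently the diagonal) inducing an isomorphism on $\dagger$-reductions. You get the factorization by descending along the $!$-surjection $\mX^{\tor,\la}_{K^{p}}\to\mX^{\tor,\sm}_{K^{p}}$ using the monomorphism. You finish by noting that a monomorphism with a section is an isomorphism.

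\textbf{The gap.} The key step is the $\dagger$-nilpotence of $\ker(A^{\la}\otimes_{A^{\sm}}A^{\la}\to A^{\la})$. You assert an approximation, retract it as ``the main obstacle'', and the replacement argument does not work, for two reasons.
\begin{itemize}
\item Corollary \ref{corVBCla=Ola} says $A^{\la}=\tVBfu(C^{\la})$. This is not $A^{\sm}$ with $\pi_{\HT}^{*}\mO_{\Fl}$ and dagger coordinates adjoined: $\tVBfu$ is not a base change from $\Fl_{G,\mu}$ to $U^{\sm}$. The $Y_i$ are coordinates of $C^{\la}$ over $\Fl_{G,\mu}$, not elements of $A^{\la}$, so ``$Y_i\otimes 1-1\otimes Y_i$'' has no meaning in $A^{\la}\otimes_{A^{\sm}}A^{\la}$.
\item Density of $A^{\sm}$ in $\hat A$ only makes $b\otimes1-1\otimes b$ $\dagger$-nilpotent in $\hat A\otimes_{A^{\sm}}\hat A$. $\dagger$-nilpotence does not pull back along $A^{\la}\otimes_{A^{\sm}}A^{\la}\to\hat A\otimes_{A^{\sm}}\hat A$, because power-boundedness in the source is measured by the finer lattice $\varinjlim_{K}(\hat A^{\circ}\otimes C^{\an}(K,\ZZ_{p}))^{K}$.
\end{itemize}
Separately, ``$A^{\la}\to\hat A$ and $A^{\sm}\to\hat A$ have the same $\dagger$-reduction'' is not a valid justification. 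All three rings sit inside the uniform ring $\hat A$ and have no nonzero $\dagger$-nilpotents. The epimorphism-plus-monomorphism argument you give next is sufficient on its own.

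\textbf{The paper's input.} The paper applies Theorem \ref{thmMainThmGeomSenVB}(2) to $\mF=C^{\la}$, which lies in $\QCoh^{\rla}_{\fg^{c}}(\Fl)^{\fn^{0}}$ by Lemma \ref{lemClaConcentraDeg0}. Together with Corollary \ref{corVBCla=Ola} this gives
$\mO^{\la}_{K^{p}}\otimes_{\mO^{\sm}_{K^{p}}}\mO^{\la}_{K^{p}}\cong(\pi^{\la}_{\sm})^{*}\mO^{\la}_{K^{p}}\cong(\pi^{\la}_{\HT})^{*}C^{\la}\cong\mO^{\la}_{K^{p}}\{(\fg^{c,0}/\fn^{0})^{\vee}\}^{\dagger}$,
with $p_1^*$ the inclusion of constants. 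This is exactly where your dagger-disc heuristic becomes meaningful. The disc coordinates live on the fibre product only after pulling back along $\pi^{\la}_{\sm}$; there they are $\dagger$-nilpotent, so $p_1^*$ is an isomorphism on $\dagger$-reductions without any elementwise approximation.

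\textbf{An alternative fix.} Your first instinct can also be made rigorous without geometric Sen theory. The $\dagger$-nilradical only needs, for each $n$, \emph{some} level, not density in the LB topology.
\begin{itemize}
\item Take a uniform filtration $K_h=K_{h_0}^{p^{h-h_0}}$, whose coordinates rescale by $p^{h-h_0}$. Let $a$ be $K_{h_0}$-analytic with orbit map $\sum_\alpha c_\alpha y^\alpha$, where after scaling $c_\alpha\in\hat A^\circ$.
\item Choose $a'\in A^{\sm}$ with $a-a'\in p^n\hat A^\circ$, fixed by some open $K'$. Then choose $h\ge h_0+n$ with $K_h\subset K'$.
\item On $K_h$ the orbit map of $a-a'$ is $(a-a')+\sum_{\alpha\neq0}c_\alpha p^{(h-h_0)|\alpha|}y'^{\alpha}$. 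Hence $a-a'\in p^n(\hat A^{\circ}\otimes C^{\an}(K_h,\ZZ_p))^{K_h}$.
\end{itemize}
The argument of Theorem \ref{thmAnDRStackGeneral}(5) then goes through, granting the density of $A^{\sm}$ in $\hat A$ that you already use. As written, however, the proposal contains neither argument.
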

\begin{proof}
    We claim that the projection to the first factor \(p_1:
    \mX_{K^{p}}^{\tor,\la}\times_{\mX_{K^{p}}^{\sm}}\mX_{K^{p}}^{\tor,\la}
    \to \mX_{K^{p}}^{\tor,\la}
    \) becomes an isomorphism after taking \(\dagger\)-reduction. 

    By Theorem \ref{thmMainThmGeomSenVB}, Corollary \ref{corVBCla=Ola} and Lemma \ref{lemClaConcentraDeg0}, we have \[\tVBfu(C^{\la},*_{1,3})\cong\mO^{\la}_{K^{p}},\] 
	and
	\[ \mO^{\la}_{K^{p}}\otimes_{\mO^{\sm}_{K^{p}}}\mO^{\la}_{K^{p}}\cong 
	\pi^{\la,*}_{\sm}\mO^{\la}_{K^{p}}\cong \pi_{\HT}^{\la,*}(C^{\la})\cong \mO^{\la}_{K^{p}}\{(\fg^{c,0}/\fn^{0})^{\vee}\}^{\dagger}\in\QCoh(\mX^{\la}_{K^{p}}),
    \] 
	along which \(p_{1}^{*}: \mO^{\la}_{K^{p}}\to \mO^{\la}_{K^{p}}\otimes_{\mO^{\sm}_{K^{p}}}\mO^{\la}_{K^{p}}
    \) coincides with the natural map \(\mO^{\la}_{K^{p}}\to
    \mO^{\la}_{K^{p}}\{(\fg^{c,0}/\fn^{0})^{\vee}\}^{\dagger}
    \). Then we know that \(p_{1}^{*}\) induces an isomorphism after taking \(\dagger\)-reduction. 

    As a corollary, the morphism \(
    p_1^{\dR}:
    (\mX_{K^{p}}^{\tor,\la})^{\dR}\times_{(\mX^{\tor,\sm}_{K^{p}})^{\dR}}(\mX_{K^{p}}^{\tor,\la})^{\dR}
    \to (\mX_{K^{p}}^{\tor,\la})^{\dR}
    \) is an isomorphism. Equivalently, 
    the natural morphism \((\mX_{K^{p}}^{\tor,\la})^{\dR}
    \to (\mX^{\tor,\sm}_{K^{p}})^{\dR}\) is a monomorphism. 
    
	For the last part, the factorization exists, because \(\mX^{\tor,\la}_{K^{p}}\to \mX^{\tor,\sm}_{K^{p}}\) is a \(!\)-surjection (Lemma \ref{lemOpenSetsFromFulltoSM} (2)); it is unique, because \((\mX_{K^{p}}^{\tor,\la})^{\dR}
    \to (\mX^{\tor,\sm}_{K^{p}})^{\dR}\) is a monomorphism.
	In particular, \((\mX^{\tor,\la}_{K^{p}})^{\dR/(\mX^{\tor,\sm}_{K^{p}})}\cong \mX^{\tor,\sm}_{K^{p}}\).
\end{proof}

\begin{proof}[Proof of Proposition \ref{propLAvsSmoothSV}]
We clearly have a \(M^{c}_{\mu}\)-equivariant Cartesian diagram \[
\begin{tikzcd}
\mcal{M}_{\mu}^{c}(-1)\times \mcal{M}_{\mu}^{\std,c}
\arrow[d]\arrow[r,"p_{1}"]
& \mcal{M}_{\mu}^{c}(-1)\arrow[d]\\
(\mcal{M}_{\mu}^{c})^{\dR}(-1)\times \mcal{M}_{\mu}^{\std,c}\arrow[r]
& (\mcal{M}_{\mu}^{c})^{\dR}(-1).
\end{tikzcd}
\] Quotienting out \(M^{c}_{\mu}\), we obtain a Cartesian square \[
\begin{tikzcd}
\Per_{G,\mu}\arrow[r,"\pi^{per}_{\HT}"]\arrow[d] & \Fl_{G,\mu}\arrow[d]\\
\Per_{G,\mu}^{\dR/\Fl_{G,\mu}^{\std}} \arrow[r] & \Fl_{G,\mu}/(\fg^{c,0}/\fn^{0})^{\dagger}
\end{tikzcd}
\] by Lemma \ref{lemCompareDRStac} (3). Quotienting out \((G^{c}_{\CC_{p}},*_{\GM})\)
on the left column, we obtain the square on the right of Proposition \ref{propLAvsSmoothSV}.

For the square on the left, we have the following commutative diagram \[
\begin{tikzcd}
\mX^{\tor,\la}_{K^{p}}\arrow[r,"\pi^{\la}_{\GM\HT}"] 
\arrow[d] & \Per_{G,\mu}/(G^{c}_{\CC_{p}},*_{\GM})\arrow[d]
\\
\mX^{\tor,\sm}_{K^{p}}\arrow[r] & \Fl^{\std}_{G,\mu}/G^{c}_{\CC_{p}}.
\end{tikzcd}
\] Taking the relative de Rham stacks along the vertical arrows, we have a commutative diagram \[
\begin{tikzcd}
\mX^{\tor,\la}_{K^{p}}\arrow[r,"\pi^{\la}_{\GM\HT}"] 
\arrow[d] & \Per_{G,\mu}/(G^{c}_{\CC_{p}},*_{\GM})\arrow[d]
\\
\mX^{\tor,\sm}_{K^{p}}\arrow[r] & \Per_{G,\mu}^{\dR/\Fl^{\std}_{G,\mu}}/(G^{c}_{\CC_{p}},*_{\GM}).
\end{tikzcd}
\] by Proposition \ref{propdRstackOfShLA}. 

To show that it is Cartesian, since the square on the right has been shown to be Cartesian, it suffices to verify it for the bigger square \[
\begin{tikzcd}
\mX^{\tor,\la}_{K^{p}}\arrow[r,"\pi^{\la}_{\HT,K^{p}}"] 
\arrow[d,"\pi_{\sm}^{\la}"] & \Fl_{G,\mu}\arrow[d]
\\
\mX^{\tor,\sm}_{K^{p}}\arrow[r] & \Fl/(\fg^{c,0}/\fn^{0})^{\dagger}. 
\end{tikzcd}
\] By tracing the construction, the induced map \[
\mX^{\tor,\la}_{K^{p}}\times_{\mX^{\tor,\sm}_{K^{p}}}\mX^{\tor,\la}_{K^{p}}\to (\fg^{c,0}/\fn^{0})^{\dagger}
\] is induced by \[
\mO_{\Fl}\{(\fg^{c,0}/\fn^{0})^{\vee}\}^{\dagger}\hookrightarrow 
\mO^{\la}_{K^{p}}\{(\fg^{c,0}/\fn^{0})^{\vee}\}^{\dagger}
\xrightarrow{\sim}
\mO^{\la}_{K^{p}}\otimes_{\mO^{\sm}_{K^{p}}}\mO^{\la}_{K^{p}},
\] where the second map is as in the proof of Proposition \ref{propdRstackOfShLA}.
The isomorphism shows that 
\begin{align*}
\mX^{\tor,\la}_{K^{p}}\times_{\mX^{\tor,\sm}_{K^{p}}}\mX^{\tor,\la}_{K^{p}}\to (\fg^{c,0}/\fn^{0})^{\dagger}\cong \mX^{\tor,\la}_{K^{p}}\times_{\Fl_{G,\mu}}(\fg^{c,0}/\fn^{0})^{\dagger}\\
\cong \mX^{\tor,\la}_{K^{p}}\times_{\Fl_{G,\mu}/(\fg^{c,0}/\fn^{0})^{\dagger}}\Fl_{G,\mu}.
\end{align*}
Since \(\mX^{\tor,\la}_{K^{p}}\to \mX^{\tor,\sm}_{K^{p}}\) is a \(!\)-surjection by Lemma \ref{lemOpenSetsFromFulltoSM} (2), we know that the natural map \[\mX^{\tor,\la}_{K^{p}}
\to \mX^{\tor,\sm}_{K^{p}}\times_{\Fl_{G,\mu}/(\fg^{c,0}/\fn^{0})^{\dagger}}\Fl_{G,\mu}.
\] is an isomorphism.
\end{proof}
\subsection{Smooth log de Rham stack of Shimura varieties}
\label{subsecSMdRstack}
In this subsection, we will prove Theorem \ref{thmLAstrucGMHTper}.
\begin{dfn}\label{dfnSMratConstruction}
For any \( E_{\p}\subset L\subset \CC_{p} \), and  \( ?\in\{(nothing),+,\hat{+}\} \)
we define the \emph{smooth log de Rham stack of Shimura varieties} as the solid stack \[(\mX^{\tor}_{K^{p},E_{\p}})^{\pdR/E_{\p},?,\sm}_{\log}:=\varprojlim_{K_{p}}(\mX^{\tor}_{\Kpp,E_{\p}})_{\log}^{\pdR/E_{\p},?},\]
and \((\mX^{\tor}_{K^{p},L})^{\pdR/L,?,\sm}_{\log}:= (\mX^{\tor}_{K^{p},E_{\p}})^{\pdR/E_{\p},?,\sm}_{\log}\times_{\AnSpec(E_{\p})}\AnSpec(L) \). 

We will write the open subspace corresponding to the complement of the toroidal boundaries as \( \mX_{K^{p},L}^{\pdR/L,?,\sm} \).

More generally, for \((U_{K_{p}},U^{\sm}:=\varprojlim_{K_{p}}U_{K_{p}},\ti{U})\) as in Construction \ref{constructionUChartTower} that descends to \( U_{K_{p},L}\subset\mX^{\tor}_{\Kpp,L} \) for some \( L/E_{\p} \), 
we denote \[
(\ti{U}_{L})_{\log}^{\pdR/L,?,\sm}:=\varprojlim_{K_{p}}(U_{K_{p},L})^{\pdR/L,?}_{\log}.
\]

We define \[
\Omega^{1,\sm}_{\log,K^{p},L}:=(\pi^{\sm}_{K_{p}})^{*}(\Omega^{1}_{\mX^{\tor}_{\Kpp,L},\log}),\;\omega^{\sm}_{\log,K^{p},L}:=(\pi^{\sm}_{K_{p}})^{*}(\omega_{\mX^{\tor}_{\Kpp,L},\log}).
\] We will omit the subscript \( L \) 
if \( L=\CC_{p} \).  
\end{dfn}
\begin{lem}\label{lemSmDRStackProperty}
(1)
We have a Cartesian diagram of solid stacks over \( L \) \[
\begin{tikzcd}
\mX^{\tor,\sm}_{K^{p},L}\times{[\mA^{1}/\GG_{m}]}\arrow[r,"\pi^{\sm}_{K_{p}}"]\arrow[d,"h^{\sm}"]
& \mX^{\tor}_{\Kpp,L}\times{[\mA^{1}/\GG_{m}]}\arrow[d,"h_{K_{p}}"]\arrow[r,"\pi_{\GM,\Kpp}"]
& (\Fl^{\std}_{G,\mu} \times {[\mA^{1}/\GG_{m}]})/G^{c}_{L}\arrow[d]
\\
(\mX^{\tor}_{K^{p},L})^{\pdR/L,+,\sm}_{\log}\arrow[r,"\pi^{\pdR,\sm}_{K_{p}}"]
& (\mX^{\tor}_{\Kpp,L})^{\pdR/L,+}_{\log}\arrow[r,"\pi_{\GM,\Kpp}^{\pdR}"] & \Fl^{\std,\pdR/L,+}_{G,\mu}/G^{c}_{L},
\end{tikzcd}
\] 

Moreover, \(h^{\sm}\) is a suave \( ! \)-cover, 
and \[h^{\sm,!}(\mO_{(\mX_{K^{p},L}^{\tor})^{\pdR/L,+,\sm}_{\log}})\cong \omega_{\log,K^{p},L}^{\sm}\{d\}[d],\]
with \(d:=\dim(\mX_{\Kpp}^{\tor})\), and \( \{d\} \) 
is as in Notation \ref{notationTwistByAGm}.

In particular, \( (\mX^{\tor}_{K^{p},L})^{\pdR/L,+,\sm}_{\log} \) 
is essentially Tate (Definition \ref{dfnEssentialTate}). By abuse of notation, we will write \[
(\mX^{\tor}_{K^{p},L})^{\pdR/L,+,\sm}_{\log}:=((\mX^{\tor}_{K^{p},L})^{\pdR/L,+,\sm}_{\log})^{Tate}.
\]

(2) The natural map \((\mX^{\tor,\sm}_{K^{p},L})^{\dR/((\mX^{\tor}_{K^{p},L})^{\pdR/L,\sm}_{\log})}\to (\mX^{\tor}_{K^{p},L})^{\pdR/L,\sm}_{\log}\)
is an isormorphism.

(3) The maps \[ s^{\pdR,\sm}_{K_{p}}:(\mX^{\tor}_{K^{p},L})^{\pdR/L,\sm}_{\log}\to (\mX^{\tor}_{\Kpp,L})^{\pdR/L,\sm}_{\log} \] and \[ s^{\pdR,\sm}:(\mX^{\tor}_{K^{p},L})^{\pdR/L,\sm}_{\log}\to \AnSpec(L_{\square}) \] are prim.
\end{lem}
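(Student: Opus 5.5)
For (1), I would build the left square from the finite-level ones. For each $K_{p}'\subset K_{p}$ the transition map $\mX^{\tor}_{K^{p}K_{p}',L}\to\mX^{\tor}_{\Kpp,L}$ is finite Kummer \'etale, hence log \'etale. So Lemma \ref{lemBasicLogDRSta} (3) makes the square relating $h_{K_{p}'}$ and $h_{K_{p}}$ Cartesian. Passing to the limit over $K_{p}'$ and using that limits commute with fiber products gives the left Cartesian square. The right square is the diagram of Lemma \ref{lemDRtorsorDescends}, base changed to $L$. It is Cartesian by Proposition \ref{propAlgGMtheory}; since $E\to L$ is flat, base change and the identification $(-)^{\pdR/E,+}\times_{E}L\cong(-)^{\pdR/L,+}$ are harmless.

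Suaveness of $h^{\sm}$ follows by base change from $h_{K_{p}}$, which is a suave $!$-cover by Lemma \ref{lemBasicFiltered}. Its dualizing complex pulls back, so
\[h^{\sm,!}\mO\cong(\pi^{\sm}_{K_{p}})^{*}\bigl(\omega_{\mX^{\tor}_{\Kpp,L},\log}\{d\}[d]\bigr)=\omega^{\sm}_{\log,K^{p},L}\{d\}[d].\]
Conservativity of $h^{\sm,*}$ holds because $h^{\sm}$ is a base change of a $!$-surjection. For essential Tateness, $\mX^{\tor,\sm}_{K^{p},L}\times[\mA^{1,\an}/\GG_{m}^{\an}]$ is essentially Tate by Lemma \ref{lemOpenSetsFromFulltoSM} (2) and Lemma \ref{lemBasicPropertyEsseTate} (2). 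Its self fiber products over the target are base changes of $\hat{\Delta}^{+}$, which Lemma \ref{lemBasicLogDRSta} (1) handles locally. Lemma \ref{lemBasicPropertyEsseTate} (6) then gives the claim, after restricting to $[\mA^{1,\an}/\GG_{m}^{\an}]$ exactly as in Lemma \ref{lemBasicLogDRSta} (1).

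For (2), I follow Lemma \ref{lemBasicLogDRSta} (2) and Proposition \ref{propdRstackOfShLA}. Put $Y:=(\mX^{\tor}_{K^{p},L})^{\pdR/L,\sm}_{\log}$. The map $\mX^{\tor,\sm}\to Y$ is a $!$-surjection, and by (1) its \v Cech nerve is $\hat{\Delta}^{\sm}:=\mX^{\tor,\sm}\times_{\mX^{\tor}_{\Kpp}}\hat{\Delta}_{\mX^{\tor}_{\Kpp},\log}$. The key point is that $\hat{\Delta}^{\sm}\to\mX^{\tor,\sm}$ induces an isomorphism on $\dagger$-reductions, since its kernel ideal is (ind-)nilpotent. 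Using Theorem \ref{thmAnDRStackGeneral} (1), it follows that $(\mX^{\tor,\sm})^{\dR/Y}\times_{Y}\mX^{\tor,\sm}\cong(\hat{\Delta}^{\sm})^{\dR/\mX^{\tor,\sm}}\cong\mX^{\tor,\sm}$. Descending along the $!$-surjection gives $(\mX^{\tor,\sm})^{\dR/Y}\cong Y$. The obstacle here is that $Y$ is only Tate after the identification in (1). One must check that relative analytic de Rham stacks commute with the limit and base change. I would verify this locally on good chartable affinoids, using Example \ref{egEssentialTate} (3).

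For (3), I would reduce to the finite level, where Lemma \ref{lemFiniteLevelProper} (1) and Corollary \ref{corDRStackPrim} already give primness of $(\mX^{\tor}_{\Kpp,L})^{\pdR/L}_{\log}\to\AnSpec(L_{\square})$. So it suffices to show $s^{\pdR,\sm}_{K_{p}}$ is prim. By the left square in (1) and Theorem \ref{thmVariousDescent} (4), it is enough that $\pi^{\sm}_{K_{p}}:\mX^{\tor,\sm}\to\mX^{\tor}_{\Kpp}$ is prim. This holds because it is affine proper with a descendable (split) cover, by Lemma \ref{lemOpenSetsFromFulltoSM} (4) and Proposition \ref{propAffineProperCover}. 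Primness of $s^{\pdR,\sm}$ then follows by composition (Theorem \ref{thmVariousDescent} (5)). I expect the main subtlety overall to be step (2).
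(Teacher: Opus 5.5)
Your proposal is correct and follows essentially the paper's route. In (1), the left square comes from Lemma \ref{lemBasicLogDRSta} (3) and passage to the limit, the right square from Proposition \ref{propAlgGMtheory}, and the remaining claims from Lemma \ref{lemBasicFiltered} and Lemma \ref{lemBasicPropertyEsseTate} (6). In (3), primness of $\pi^{\sm}_{K_{p}}$ (Lemma \ref{lemOpenSetsFromFulltoSM} (4)) is transported along the Cartesian square and then composed with Lemma \ref{lemFiniteLevelProper} (1), exactly as you propose.

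For (2), the paper uses the same key fact but packages it differently: it takes the monomorphism $(\mX^{\tor}_{\Kpp,L})^{\dR}\to((\mX^{\tor}_{\Kpp,L})^{\pdR/L}_{\log})^{\dR}$ of Lemma \ref{lemBasicLogDRSta} (2), base-changes it via Theorem \ref{thmAnDRStackGeneral} (1), and combines it with $!$-surjectivity of $\mX^{\tor,\sm}_{K^{p},L}\to(\mX^{\tor}_{K^{p},L})^{\pdR/L,\sm}_{\log}$. This is equivalent to your \v{C}ech-nerve descent, and like yours it uses only finite limits, so the commutation of relative de Rham stacks with the inverse limit that you plan to check is not actually needed.
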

\begin{proof}
For (1), the diagram on the left is Cartesian by Lemma \ref{lemBasicLogDRSta} (3),
and the Cartesian diagram on the right is given by Proposition \ref{propAlgGMtheory}.
The rest follows from
Lemma \ref{lemBasicFiltered} and Lemma \ref{lemBasicPropertyEsseTate} (6).

For (2), by Theorem \ref{thmAnDRStackGeneral} (1), we conclude as \[(\mX^{\tor}_{\Kpp,L})^{\dR}\to((\mX^{\tor}_{\Kpp,L})^{\pdR/L}_{\log})^{\dR}\] is a monomorphism by Lemma \ref{lemBasicLogDRSta} (2).

For (3), the first map is prim by the Cartesian diagram in (1) and Lemma \ref{lemOpenSetsFromFulltoSM} (4).
This fact combined with Lemma \ref{lemFiniteLevelProper} (1) implies that
the second map is also prim.
\end{proof}

\begin{lem}\label{lemQCohSMsv}
(1)
We have isomoprhisms in \(\mrm{Pr}^{L}\) \begin{align*}
\QCoh(\mX^{\tor,\sm}_{K^{p},L})
&\cong \varinjlim_{K_{p}}\QCoh(\mX^{\tor}_{\Kpp,L}),\\
\QCoh((\mX^{\tor}_{K^{p},L})^{\pdR/L,?,\sm}_{\log})
&\cong \varinjlim_{K_{p}}\QCoh((\mX^{\tor}_{\Kpp,L})^{\pdR/L,?,\sm}_{\log})
\end{align*}
where the transition maps are given by \((-)^{*}\), 

(2) We have a \( \QCoh([\mA^{1}/\GG_{m}]) \)-linear isomorphism in \(\mrm{Pr}^{L}\)  \[\QCoh((\mX^{\tor}_{K^{p},L})^{\pdR/L,+,\sm}_{\log})
\cong \Mod_{D_{\log,K^{p},L}^{\sm}}(\Fil^{\ZZ}\QCoh(\mX^{\tor,\sm}_{K^{p},L})),\]
where \(D^{\sm}_{\log,K^{p},L}:=(\pi^{\sm}_{K_{p}})^{*}(D_{\mX^{\tor}_{\Kpp,L},\log})\), equipped with the bi-\(\mO_{\mX^{\tor,\sm}_{K^{p},L}}\)-algebra structure, such that for any \(K_{p}\), \[
(\pi^{\sm}_{K_{p}})^{*}(D^{\sm}_{\log,K^{p},L})\cong \varinjlim_{K_{p}'\subset K_{p}}(\pi^{K_{p}'}_{K_{p}})_{*}(D_{\mX^{\tor}_{K^{p}K_{p}',L},\log})
\] as bi-\(\mO_{\mX^{\tor}_{K^{p}K_{p},L}}\)-algebras, where \(\pi^{K_{p}'}_{K_{p}}:\mX^{\tor}_{K^{p}K_{p}',L}\to\mX^{\tor}_{K^{p}K_{p},L} \).

(3) Let \((U_{K_{p}},U^{\sm}:=\varprojlim_{K_{p}}U_{K_{p}},\ti{U})\) be as in Construction \ref{constructionUChartTower} that descends to \( U_{K_{p},L}\cong \Spa(A_{K_{p},L},A^{+}_{K_{p},L})\subset\mX^{\tor}_{\Kpp,L} \) for some \( L/E_{\p} \). 
Let \( A^{\sm}_{L}:=\varinjlim_{K_{p}}A_{K_{p},L} \) 
and \( \dR_{\log}(A^{\sm}_{L}):=\varinjlim_{K_{p}}\dR_{\log}(A_{K_{p},L}) \). 

Then 
 we have a \( \widehat{\Fil}^{\ZZ}(D(L_{\square})) \)-linear isomorphism in \(\mrm{Pr}^{L}\) \[\QCoh((\ti{U}_{L})^{\pdR/L,\hat{+},\sm}_{\log})
\cong \Mod_{\dR_{\log}(A^{\sm}_{L})}(\widehat{\Fil}^{\ZZ}(D(L_{\square}))),\]
\end{lem}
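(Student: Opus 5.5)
For (1), the plan is to apply Lemma \ref{lemUniformAffineProper} to the two towers \( (\mX^{\tor}_{\Kpp,L})_{K_{p}} \) and \( ((\mX^{\tor}_{\Kpp,L})^{\pdR/L,?}_{\log})_{K_{p}} \), indexed by a cofinal sequence \( K_{p,0}\supset K_{p,1}\supset\cdots \). It therefore suffices to show that both towers are uniformly affine proper. For the first tower, take \( m_{0}=0 \) and a finite cover of \( \mX^{\tor}_{K^{p}K_{p,0},L} \) by affinoids \( U_{i,K_{p,0}}=\Spa(A_{i},A_{i}^{+}) \). The preimage of \( U_{i,K_{p,0}} \) at level \( K_{p} \) is \( \Spa(A_{i,K_{p}},A^{+}_{i,K_{p}}) \), where \( A_{i,K_{p}} \) is finite over \( A_{i} \) because the transition maps are finite Kummer \'etale. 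Hence, with the analytic structure coming from \( (A_{i},A_{i}^{+})_{\square} \), each map \( A_{i}\to A_{i,K_{p}} \) is proper, since \( A^{+}_{i,K_{p}} \) is the integral closure. The open cover \( \coprod U_{i}\to \mX^{\tor}_{K^{p}K_{p,0}} \) satisfies universal \( ! \)-descent by Example \ref{egAnAdicSpace} and Theorem \ref{thmVariousDescent} (2).

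For the de Rham tower, Lemma \ref{lemSmDRStackProperty} (1), or more precisely Lemma \ref{lemBasicLogDRSta} (3) applied to the log \'etale maps \( \mX^{\tor}_{\Kpp}\to\mX^{\tor}_{K^{p}K_{p,0}} \), shows that every level is a base change:
\[(\mX^{\tor}_{\Kpp,L})^{\pdR/L,+}_{\log}\times_{(\mX^{\tor}_{K^{p}K_{p,0},L})^{\pdR/L,+}_{\log}}(\mX^{\tor}_{K^{p}K_{p,0},L}\times[\mA^{1}/\GG_{m}])\cong \mX^{\tor}_{\Kpp,L}\times[\mA^{1}/\GG_{m}].\]
The map \( h_{K_{p,0}} \) is a suave \( ! \)-cover (Lemma \ref{lemBasicFiltered}), so it satisfies universal \( ! \)-descent. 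Affine properness can be checked after this \( ! \)-surjective base change by Lemma \ref{lemAffProperProp} (1), and it then reduces to the first tower base-changed along \( [\mA^{1}/\GG_{m}] \). To put things in the precise form of Definition \ref{dfnUniformAffineProper}, compose with an affine cover of \( X\times[\mA^{1}/\GG_{m}] \) by \( \AnSpec(A_{i}[t]) \), using \( \mA^{1}\to\mA^{1}/\GG_{m} \). The base-changed rings remain proper over \( A_{i}[t] \). Restricting to \( [\GG_{m}/\GG_{m}] \) or \( [\hat{\mA}^{1}/\GG_{m}] \) handles the other values of \( ? \).

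For (2), first use (1) and Lemma \ref{lemDRcomplexVSpushforward} (1) to write
\[\QCoh((\mX^{\tor}_{K^{p},L})^{\pdR/L,+,\sm}_{\log})\cong\varinjlim_{K_{p}}\Mod_{D_{\log,K_{p}}}(\Fil^{\ZZ}\QCoh(\mX^{\tor}_{\Kpp,L})).\]
The transition functors are pullbacks. Since the squares are Cartesian and the maps between finite-level de Rham stacks are log \'etale, these pullbacks correspond to \( (\pi^{K_{p}'}_{K_{p}})^{*} \), and they carry \( D_{\log,K_{p}} \) isomorphically to \( D_{\log,K_{p}'} \), because \( \Omega^{1}_{\log} \) pulls back isomorphically. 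On the other side, Barr--Beck--Lurie applied directly to the suave \( ! \)-cover \( h^{\sm} \) from Lemma \ref{lemSmDRStackProperty} (1) identifies the category with modules over the monad \( h^{\sm,*}h^{\sm}_{\natural} \). By smooth base change (Lemma \ref{lemSmBaseChange} (1)) in the Cartesian left square, this monad is \( (\pi^{\sm}_{K_{p}})^{*}h_{K_{p}}^{*}h_{K_{p},\natural}\cong(\pi^{\sm}_{K_{p}})^{*}D_{\log,K_{p}}\otimes- \).

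The stated formula for \( (\pi^{\sm}_{K_{p}})_{*} \) then follows from proper base change along the affine proper tower, as in step (1) of the proof of Lemma \ref{lemUniformAffineProper}: pushforward to level \( K_{p} \) is the colimit of the finite pushforwards. The main obstacle I expect is checking that the algebra structures, and not just the underlying objects, are compatible in the colimit. I would handle this by working affine-locally, where the structure becomes a filtered colimit of bi-algebras.

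For (3), I would combine Lemma \ref{lemLogDRStackIsAffine} at each finite level with the colimit from (1). Each \( \Mod_{\dR_{\log}(A_{K_{p},L})}(\widehat{\Fil}^{\ZZ}) \) has transition functors \( -\hatotimes_{\dR_{\log}(A_{K_{p}})}\dR_{\log}(A_{K_{p}'}) \), and the colimit in \( \mrm{Pr}^{L} \) of module categories along a filtered system of algebras is modules over the colimit algebra (\cite[Proposition 7.1.2.7]{Lurie2017HA}, as used in Lemma \ref{lemUniformAffineProper}). The one point to verify is that this colimit is formed in filtered-complete modules. For that I would check on graded pieces, using Lemma \ref{lemPerfFilDRtoFilCompDR}.
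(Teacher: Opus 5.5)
Your proposal follows the paper's proof: (1) is Lemma~\ref{lemUniformAffineProper} applied to both towers, and your check of uniform affine properness is exactly what the paper leaves implicit, namely the finite Kummer \'etale transition maps plus the log-\'etale Cartesian squares of Lemma~\ref{lemBasicLogDRSta}~(3). Parts (2) and (3) then come from passing the finite-level descriptions of Lemmas~\ref{lemDRcomplexVSpushforward} and~\ref{lemLogDRStackIsAffine} to the colimit of module categories in \(\mathrm{Pr}^{L}\), exactly as the paper does.

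The completeness issue you flag in (3) needs no appeal to Lemma~\ref{lemPerfFilDRtoFilCompDR}, which is about perfect complexes and vector bundles rather than colimits. The Hodge filtration on each \(\dR_{\log}(A_{K_{p},L})\) satisfies \(\Fil^{d+1}=0\), with \(d=\dim \mX^{\tor}_{\Kpp}\) independent of \(K_{p}\). So the naive colimit \(\dR_{\log}(A^{\sm}_{L})\) is already filtered complete, and it agrees with the colimit taken in filtered complete algebras.
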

\begin{proof}
(1) follows from Lemma \ref{lemUniformAffineProper}. 
As in the proof of Lemma \ref{lemUniformAffineProper},
using \cite[Corollary 5.5.3.4]{Lurie2009HTT} and \cite[Proposition 7.1.2.7]{Lurie2017HA},
(2) and (3) follow from Lemma \ref{lemDRcomplexVSpushforward} and Lemma \ref{lemLogDRStackIsAffine} respectively.
\end{proof}
We can define the (dual) BGG complexes on \( (\mX^{\tor}_{K^{p}})^{\pdR/\CC_{p},\sm,+}_{\log} \) using Proposition \ref{propBGGFilSV}: 
\begin{notation}
Denote by \( \mcal{M}^{c}_{\dR} \) the pull-back of \( \mcal{M}^{c}_{\dR,\Kpp} \) along \( \mX^{\tor,\sm}_{K^{p}}\to \mX^{\tor}_{\Kpp} \). We define similarly \( \mcal{P}^{\std,c}_{\dR} \) and \( \mcal{G}^{c}_{\dR} \), and the latter descends to \( \bar{\mcal{G}}^{c}_{\dR} \)
over \( (\mX^{\tor}_{K^{p}})^{\pdR/\CC_{p},+,\sm}_{\log} \), which is defined as the pull-back of \( \bar{\mcal{G}}^{c}_{\dR,K} \) along \( (\mX^{\tor}_{K^{p}})^{\pdR/\CC_{p},+,\sm}_{\log}\to (\mX^{\tor}_{\Kpp})^{\pdR/\CC_{p},+}_{\log} \).
\end{notation}
\begin{prop}\label{propBGGFilSVsm}
Fix a Borel \( B\subset G_{\bar{\QQ}} \) such that \( B\subset P_{\mu} \).
Let
\( \alpha\in X^{*}(T) \) be a \( G \)-dominant character. Consider \( h^{\sm}:\mX^{\tor,\sm}_{K^{p}}\times [\mA^{1}/\GG_{m}]\to (\mX^{\tor}_{K^{p}})^{\pdR/\CC_{p},+,\sm}_{\log} \).

(1)  \( \bar{\mcal{G}}^{c,+}_{\dR}(V_{\alpha})^{\vee} \) is isomorphic to a so-called \emph{BGG complex} \[
\mrm{BGG}_{\alpha}^{+}:=
\left[L_{\dim(\fn)}\to \cdots \to L_{1}\to L_{0} \right],
\] with \( L_{i}\cong \bigoplus_{w\in {}^{M}W,l(w)=i}h^{\sm}_{\natural}(\mcal{M}_{\dR}^{c}(W_{w\cdot \alpha}^{\vee}))\{(w\cdot\alpha)(\mu)\} \) in cohomological degree \( -i \).

(2) \( \bar{\mcal{G}}^{c,+}_{\dR,K}(V_{\alpha}) \) is isomorphic to a so-called \emph{dual BGG complex} \[
\mrm{dBGG}_{\alpha}^{+}:=
\left[M^{0}\to M^{1}\cdots \to M^{\dim(\fn)} \right],
\] with \( M^{i}\cong \bigoplus_{w\in {}^{M}W,l(w)=i}h^{\sm}_{*}(\mcal{M}_{\dR}^{c}(W_{w\cdot \alpha}))\{-(w\cdot\alpha)(\mu)\} \) in cohomological degree \( i \).
\end{prop}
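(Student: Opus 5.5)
The plan is to deduce Proposition \ref{propBGGFilSVsm} from its finite-level version, Proposition \ref{propBGGFilSV}, by pulling back along
\[
\pi^{\pdR,\sm}_{K_{p}}:(\mX^{\tor}_{K^{p}})^{\pdR/\CC_{p},+,\sm}_{\log}\to (\mX^{\tor}_{\Kpp})^{\pdR/\CC_{p},+}_{\log}
\]
for a fixed level $K_{p}$. By definition, $\bar{\mcal{G}}^{c,+}_{\dR}$ is the pull-back of $\bar{\mcal{G}}^{c,+}_{\dR,\Kpp}$ along this map. Likewise $\mcal{M}^{c}_{\dR}$ is the pull-back of $\mcal{M}^{c}_{\dR,\Kpp}$ along $\pi^{\sm}_{K_{p}}$. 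Pull-back $(\pi^{\pdR,\sm}_{K_{p}})^{*}$ is exact, commutes with duals of vector bundles, and commutes with the twists $\{n\}$, since these are pulled back from $[\mA^{1}/\GG_{m}]$. Hence it suffices to show that it carries each term $L_{i}$ (resp. $M^{i}$) of the finite-level complex to the corresponding term at smooth level. The differentials then come along automatically, since the whole complex, with its filtration $\Fil_{\mrm{BGG}}$, is pulled back.

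The key input is the Cartesian square on the left of Lemma \ref{lemSmDRStackProperty} (1):
\[
h_{K_{p}}\circ \pi^{\sm}_{K_{p}}=\pi^{\pdR,\sm}_{K_{p}}\circ h^{\sm}.
\]
For (1) I would apply smooth base change, Lemma \ref{lemSmBaseChange} (1), with $g=\pi^{\pdR,\sm}_{K_{p}}$. However, $g$ is not known to be cohomologically smooth. Instead I would use that the vertical map $h_{K_{p}}$ is cohomologically smooth (Lemma \ref{lemBasicFiltered}), so the relevant base-change statement is
\[
(\pi^{\pdR,\sm}_{K_{p}})^{*}h_{K_{p},\natural}\cong h^{\sm}_{\natural}(\pi^{\sm}_{K_{p}})^{*}.
\]
This is the $\natural$-version of base change for a cohomologically smooth map, and holds for arbitrary base change. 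To see it, write $h_{\natural}=h_{!}(-\otimes h^{!}1)$. Use $!$-base change $g^{*}h_{!}\cong h'_{!}g'^{*}$. Then use $g'^{*}h^{!}1\cong h'^{!}1$, which holds because $h^{!}1\cong\omega_{\log}\{d\}[d]$ is compatible with pull-back: compare Lemma \ref{lemBasicFiltered} with the formula $h^{\sm,!}1\cong \omega^{\sm}_{\log,K^{p}}\{d\}[d]$ in Lemma \ref{lemSmDRStackProperty} (1). Applying this to $\mcal{M}^{c}_{\dR,\Kpp}(W^{\vee}_{w\cdot\alpha})$ gives the terms $L_{i}$.

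For (2), the dual BGG terms use $h_{*}$. Here I would use proper base change, Lemma \ref{lemSmBaseChange} (2). This needs $h_{K_{p}}$ to be prim, which I would argue as follows: $h^{*}h_{*}$ is given by tensoring with $D_{\log}^{\vee}$, which is essentially the completed diagonal $\hat{\Delta}^{+}_{\log}$ computed in Lemma \ref{lemDRcomplexVSpushforward}, and that diagonal is an ind-finite flat thickening. Alternatively, and more simply, I would take duals of (1): $\unl{\RHom}(h_{\natural}V,\mO)\cong h_{*}(V^{\vee})$ (Lemma \ref{lemDiffOpeToLogStack}), and the dual BGG complex at finite level was itself obtained from the BGG complex by dualizing. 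So it suffices that $(\pi^{\pdR,\sm}_{K_{p}})^{*}$ commutes with $\unl{\RHom}(-,\mO)$ on the relevant objects, or equivalently that $h_{*}$ commutes with this pull-back. I would try proper base change first, reducing primness of $h_{K_{p}}$ to $p_{1}:\hat{\Delta}^{+}_{\log}\to X\times[\mA^{1}/\GG_{m}]$ via Theorem \ref{thmVariousDescent} (4).

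I expect this base-change step for (2) to be the main obstacle. Part (1) is formal from suaveness. For part (2), $h_{*}V$ is not dualizable, so dualizing is not automatically compatible with pull-back, and primness of $h_{K_{p}}$ must be checked by hand. A fallback is Lemma \ref{lemQCohSMsv} (2): identify both sides as filtered $D^{\sm}_{\log}$-modules and compute $h^{\sm,*}$ of each side as $D^{\sm,\vee}_{\log}\otimes\mcal{M}^{c}_{\dR}(W)$. Since $D^{\sm}_{\log}$ is defined as the pull-back of $D_{\log}$, the two agree.
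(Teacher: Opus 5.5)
Your part (1) is correct and is the paper's argument. The identification $(\pi^{\pdR,\sm}_{K_p})^{*}h_{K_p,\natural}\cong h^{\sm}_{\natural}(\pi^{\sm}_{K_p})^{*}$ is the $\natural$-base change of Lemma \ref{lemSmBaseChange} (1), applied to the cohomologically smooth map $h_{K_p}$ in the Cartesian square of Lemma \ref{lemSmDRStackProperty} (1).

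Part (2) has a genuine gap. All three routes you sketch reduce to the termwise comparison $(\pi^{\pdR,\sm}_{K_p})^{*}h_{K_p,*}\mF\cong h^{\sm}_{*}(\pi^{\sm}_{K_p})^{*}\mF$, and this comparison is not available.
\begin{itemize}
\item \emph{$h_{K_p}$ is not prim.} Its base change along itself is $p_{1}:\hat{\Delta}^{+}_{\log}\to X\times[\mA^{1}/\GG_{m}]$. Here $p_{1,*}p_{2}^{*}\mF\cong\varprojlim_{n}((\mO_{X}\hatotimes_{\dR_{\log}(\mO_{X})}\mO_{X})/\Fil^{n}\otimes\mF)$, which is non-canonically an infinite product $\prod_{k}\Sym^{k}\Omega^{1}_{\log}\otimes\mF$. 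This does not commute with direct sums, whereas a prim $f$ satisfies $f_{*}\cong f_{!}(\mbb{P}_{f}(\mO)\otimes-)$.
\item \emph{The comparison itself fails on the uncompleted $+$-level.} Apply the conservative functor $h^{\sm,*}$ and smooth base change, and work over a chartable affinoid. The map becomes $\varinjlim_{K_p'}\prod_{k}(A_{K_p'}\otimes N_{k})\to\prod_{k}(A^{\sm}\otimes N_{k})$. This is not surjective, because $A^{\sm}=\varinjlim A_{K_p'}$ is a strictly increasing union of finite projective $A_{K_p}$-modules.
\item \emph{Your fallback has the same defect.} $h^{\sm,*}h^{\sm}_{*}$ is governed by the filtered dual of $D^{\sm}_{\log}$. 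That dual is a product of pulled-back pieces, not the pull-back of $D^{\vee}_{\log}$.
\end{itemize}

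The missing idea is to dualize at the smooth level instead of at finite level. Once (1) holds on $(\mX^{\tor}_{K^{p}})^{\pdR/\CC_{p},+,\sm}_{\log}$, use that $h^{\sm}$ is itself cohomologically smooth (Lemma \ref{lemSmDRStackProperty} (1)). The projection formula for $h^{\sm}_{\natural}$ (Lemma \ref{lemSmBaseChange} (1)) and adjunction then give $\unl{\RHom}(h^{\sm}_{\natural}V,\mO)\cong h^{\sm}_{*}(V^{\vee})$ directly on that stack. Apply the exact functor $\unl{\RHom}(-,\mO)$ to the finite $\Fil_{\mrm{BGG}}$-filtration from (1). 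Since $\bar{\mcal{G}}^{c,+}_{\dR}(V_{\alpha})$ is a vector bundle, hence reflexive, this gives (2) with terms $h^{\sm}_{*}(\mcal{M}^{c}_{\dR}(W_{w\cdot\alpha}))\{-(w\cdot\alpha)(\mu)\}$ in degree $i$. This is the paper's proof of (2), and it never compares $h_{*}$ across levels.
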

\begin{proof}
(1) follows from Proposition \ref{propBGGFilSV} (1)
and proper base change (Lemma \ref{lemSmBaseChange} (1)).
Then (2) follows from (1) by taking dual \( \unl{\RHom}(-,\mO_{(\mX^{\tor}_{K^{p}})^{\pdR/\CC_{p},\sm,+}_{\log}}) \).
\end{proof}

\begin{prop}\label{propSMdRTorLog}
(1)
We have the following commutative diagram of Tate stacks
\[
\begin{tikzcd}
\mX^{\tor,\sm}_{K^{p}}
\arrow[d,"h^{\sm}"] \arrow[r,"\pi^{\sm}_{\GM\HT}"] &[-0.3cm] \Per_{G,\mu}^{\dR/\Fl^{\std}_{G,\mu}}/G^{c,\an}_{\CC_{p}} \arrow[d]\arrow[r,"\pi_{\GM}^{per}"] & \Fl_{G,\mu}^{\std}/G^{c,\an}_{\CC_{p}}\arrow[d,"h_{\Fl^{\std}}"] \\
(\mX^{\tor}_{K^{p}})_{\log}^{\pdR/\CC_{p},\sm}
\arrow[r,"\pi^{\pdR}_{\GM\HT}"] & \Per_{G,\mu}^{\dR/(\Fl^{\std,\pdR/\CC_{p}}_{G,\mu})}/G^{c,\an}_{\CC_{p}}\arrow[r,"\pi_{\GM}^{per,\pdR}"] & \Fl_{G,\mu}^{\std,\pdR/\CC_{p}}/G^{c,\an}_{\CC_{p}},
\end{tikzcd}
\]
where the actions of \( G^{c,\an}_{\CC_{p}} \) on the middle column are induced by \( *_{\GM} \), and both squares are \emph{Cartesian}.

(2)
We have the following commutative diagram of Tate stacks
\[
\begin{tikzcd}
\mX^{\sm}_{K^{p}}
\arrow[d,"h^{\sm}"] \arrow[r,"\pi^{\sm}_{\GM\HT}"] & \Per_{G,\mu}^{\dR/\Fl^{\std}_{G,\mu}}/(G^{c,\an}_{\CC_{p}},*_{\GM}) \arrow[d]\arrow[r,"\pi_{\GM}^{per}"] & \Fl_{G,\mu}^{\std}/G^{c,\an}_{\CC_{p}}\arrow[d,"h_{\Fl^{\std}}"] \\
\mX_{K^{p}}^{\dR/\CC_{p},\sm}
\arrow[r,"\pi^{\dR}_{\GM\HT}"] & (\Per_{G,\mu}^{\dR/\CC_{p}})/(G^{c,\an}_{\CC_{p}},*_{\GM})\arrow[r,"\pi_{\GM}^{per,\dR}"] & \Fl_{G,\mu}^{\std,\dR/\CC_{p}}/G^{c,\an}_{\CC_{p}},
\end{tikzcd}
\]
where \( \mX_{K^{p}}^{\dR/\CC_{p},\sm}:=\varprojlim_{K_{p}}\mX_{\Kpp}^{\dR/\CC_{p}} \), and  both squares are \emph{Cartesian}.
\end{prop}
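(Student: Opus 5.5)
The plan is to build both diagrams by base change from finite-level Grothendieck--Messing statements, and then pass to the limit over $K_{p}$. Throughout, the right-hand squares involve only the period domain, and the left-hand squares are obtained as base changes of the right-hand ones.

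\textbf{The right square of (1).} First I show that the right square is Cartesian. Recall from Definition \ref{dfnGMHTperDomain} that $\Per_{G,\mu}\to \Fl^{\std}_{G,\mu}$ is, $G^{c,\an}$-equivariantly for $*_{\GM}$, the fibration with fiber $\Fl_{G,\mu}$-type data. It is obtained from $\mcal{M}^{c}_{\mu}(-1)\times \mcal{M}^{\std,c}_{\mu}$ by quotienting by $M^{c}_{\mu}$. Taking relative de Rham stacks, Theorem \ref{thmAnDRStackGeneral} (1) says that $(-)^{\dR/-}$ commutes with finite limits, and this gives
\[
\Per_{G,\mu}^{\dR/\Fl^{\std}_{G,\mu}}\cong \Per_{G,\mu}^{\dR/(\Fl^{\std,\pdR/\CC_{p}}_{G,\mu})}\times_{\Fl^{\std,\pdR/\CC_{p}}_{G,\mu}}\Fl^{\std}_{G,\mu}.
\]
To justify this, I use that $(Y^{\dR/X'})\cong Y^{\dR/X}\times_{X}X'$ whenever $X'\to X$ is a map over which one relativizes. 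Concretely, $Y^{\dR/X'}=X'\times_{X'^{\dR}}Y^{\dR}$, and $\Fl^{\std}\to\Fl^{\std,\pdR}$ becomes an isomorphism after $(-)^{\dR}$ by Lemma \ref{lemBasicLogDRSta} (2) together with the $\dagger$-reduction argument. Quotienting by $G^{c,\an}_{\CC_{p}}$ then gives the Cartesian right square.

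\textbf{The left square of (1), and the construction of $\pi^{\pdR}_{\GM\HT}$.} For the left square I define $\pi^{\pdR}_{\GM\HT}$ by pulling back along $\pi^{\pdR}_{\GM}$. I set
\[
\mcal{Y}:=(\mX^{\tor}_{K^{p}})^{\pdR,\sm}_{\log}\times_{\Fl^{\std,\pdR}/G^{c}}\Per^{\dR/\Fl^{\std,\pdR}}_{G,\mu}/G^{c}.
\]
By the first step, $\mcal{Y}\times_{(\mX^{\tor}_{K^{p}})^{\pdR,\sm}_{\log}}\mX^{\tor,\sm}_{K^{p}}\cong \mX^{\tor,\sm}_{K^{p}}\times_{\Fl^{\std}/G^{c}}\Per^{\dR/\Fl^{\std}}/G^{c}$. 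Lemma \ref{lemSmDRStackProperty} (1), which passes Proposition \ref{propAlgGMtheory} to the limit, identifies $\mX^{\tor,\sm}_{K^{p}}$ with $(\mX^{\tor}_{K^{p}})^{\pdR,\sm}_{\log}\times_{\Fl^{\std,\pdR}/G^{c}}\Fl^{\std}/G^{c}$. So the claim reduces to showing that the map $\pi^{\sm}_{\GM\HT}$ from Proposition \ref{propLAvsSmoothSV} gives a section that identifies $\mX^{\tor,\sm}_{K^{p}}$ with $\mcal{Y}$.

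I would argue this via the universal property of relative de Rham stacks. A map to $\Per^{\dR/\Fl^{\std,\pdR}}$ over $\Fl^{\std,\pdR}$ is the same as a map from a nil/$\dagger$-thickening to $\Per$, and $\Per\to \Fl^{\std}$ is smooth, so sections extend. The key input here is Lemma \ref{lemSmDRStackProperty} (2): it gives $(\mX^{\tor,\sm}_{K^{p}})^{\dR/((\mX^{\tor}_{K^{p}})^{\pdR,\sm}_{\log})}\cong(\mX^{\tor}_{K^{p}})^{\pdR,\sm}_{\log}$. Applying $(-)^{\dR/(\mX^{\tor}_{K^{p}})^{\pdR,\sm}_{\log}}$ to the Cartesian diagram of Proposition \ref{propLAvsSmoothSV} therefore produces $\pi^{\pdR}_{\GM\HT}$, and finite-limit preservation then yields the Cartesian square. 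I expect this to be the main obstacle. Matching $\mcal{Y}$ with $(\mX^{\tor,\la}_{K^{p}})^{\dR/(\mX^{\tor}_{K^{p}})^{\pdR,\sm}_{\log}}$ needs Proposition \ref{propdRstackOfShLA} in relative form, and it needs $!$-descent along $h^{\sm}$ (Lemma \ref{lemSmDRStackProperty} (1)) to check the isomorphism after pulling back to $\mX^{\tor,\sm}_{K^{p}}$. I would do this check, in that order, after the pullback.

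\textbf{Part (2).} For (2) I argue the same way at each finite level using Proposition \ref{propAnalyticGMTheory}, which gives the analytic Cartesian square with $\mX^{\dR}_{\Kpp}$. I take the inverse limit over $K_{p}$; limits commute with fiber products, which gives the right square. I then build the left square exactly as above, replacing the algebraic log de Rham stack by $\mX^{\dR/\CC_{p},\sm}_{K^{p}}$ and using Theorem \ref{thmAnDRStackGeneral} (3) in place of Lemma \ref{lemSmDRStackProperty} (2).
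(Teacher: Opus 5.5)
Your proof that the right square is Cartesian, and your construction of $\pi^{\pdR}_{\GM\HT}$, match the paper. For the latter you use relative de Rham functoriality for $\mX^{\tor,\sm}_{K^{p}}\to\Per_{G,\mu}^{\dR/\Fl^{\std}_{G,\mu}}/G^{c,\an}_{\CC_{p}}$ lying over $\pi^{\pdR}_{\GM}$, together with Lemma~\ref{lemSmDRStackProperty}~(2). The gap is in how you then try to show the left square is Cartesian.

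You reduce this to ``identifying $\mX^{\tor,\sm}_{K^{p}}$ with $\mcal{Y}$'', and later to matching $\mcal{Y}$ with $(\mX^{\tor,\la}_{K^{p}})^{\dR/(\mX^{\tor}_{K^{p}})^{\pdR,\sm}_{\log}}$. Both identifications are false. A lift of $\pi^{\pdR}_{\GM}$ to $\pi^{\pdR}_{\GM\HT}$ is only a section of $\mcal{Y}\to(\mX^{\tor}_{K^{p}})^{\pdR,\sm}_{\log}$, not an isomorphism. This map is a base change of $\Per_{G,\mu}^{\dR/\Fl^{\std,\pdR}_{G,\mu}}\to\Fl^{\std,\pdR}_{G,\mu}$. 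By your own computation, its pullback along $h^{\sm}$ is $\mX^{\tor,\sm}_{K^{p}}\times_{\Fl^{\std}_{G,\mu}/G^{c}}\Per^{\dR/\Fl^{\std}_{G,\mu}}_{G,\mu}/G^{c}$. Its fibres are the de Rham stacks of the fibres of $\Per_{G,\mu}\to\Fl^{\std}_{G,\mu}$, which are smooth of positive dimension and map onto $\Fl_{G,\mu}$ via $\pi^{per}_{\HT}$. So this pullback is neither of the following:
\begin{itemize}
\item the formal log diagonal $\mX^{\tor,\sm}_{K^{p}}\times_{(\mX^{\tor}_{K^{p}})^{\pdR,\sm}_{\log}}\mX^{\tor,\sm}_{K^{p}}$, which is what $\mcal{Y}\cong\mX^{\tor,\sm}_{K^{p}}$ would force;
\item $\mX^{\tor,\sm}_{K^{p}}$ itself, which is what $\mcal{Y}\cong(\mX^{\tor}_{K^{p}})^{\pdR,\sm}_{\log}$ would force.
\end{itemize}
On the other hand, $(\mX^{\tor,\la}_{K^{p}})^{\dR/(\mX^{\tor}_{K^{p}})^{\pdR,\sm}_{\log}}$ is just $(\mX^{\tor}_{K^{p}})^{\pdR,\sm}_{\log}$, by Proposition~\ref{propdRstackOfShLA} and Lemma~\ref{lemSmDRStackProperty}~(2). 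Likewise, finite-limit preservation applied to the square of Proposition~\ref{propLAvsSmoothSV} says nothing about the left square here, which does not involve $\mX^{\tor,\la}_{K^{p}}$. The ``main obstacle'' you set yourself therefore cannot be overcome, and the $\dagger$-lifting/smoothness discussion is beside the point.

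What you are missing is the pasting lemma, and you have already assembled all its inputs. Once $\pi^{\pdR}_{\GM\HT}$ is built by functoriality, the left square commutes and $\pi^{per,\pdR}_{\GM}\circ\pi^{\pdR}_{\GM\HT}=\pi^{\pdR}_{\GM}$. The right square is Cartesian by your first step. The outer rectangle is Cartesian by Lemma~\ref{lemSmDRStackProperty}~(1), i.e.\ Proposition~\ref{propAlgGMtheory} passed to the limit. Hence the left square is Cartesian. This is exactly the paper's proof, and it needs no relative version of Proposition~\ref{propdRstackOfShLA} and no $!$-descent check.

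The same fix applies to (2), with one correction. The inverse limit of the finite-level Cartesian squares of Proposition~\ref{propAnalyticGMTheory} gives the outer rectangle, not the right square. The right square involves only $\Per_{G,\mu}$ and is Cartesian by the definition of relative de Rham stacks, as in (1). Pasting then gives the left square.
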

\begin{proof}
For (1), by Proposition \ref{propAlgGMtheory} and Proposition \ref{propLAvsSmoothSV}, we have a commutative diagram \[
\begin{tikzcd}
\mX^{\tor,\sm}_{K^{p}}
\arrow[d,"h^{\sm}"] 
\arrow[r,"\pi^{\sm}_{\GM\HT}"] 
&[-0.1cm] \Per_{G,\mu}^{\dR/\Fl^{\std}_{G,\mu}}/(G^{c,\an}_{\CC_{p}},*_{\GM}) 
\arrow[r,"\pi_{\GM}^{per}"] &[-0.1cm] 
\Fl_{G,\mu}^{\std}/G^{c,\an}_{\CC_{p}}
\arrow[d,"h_{\Fl^{\std}}"] 
\\
(\mX^{\tor}_{K^{p}})_{\log}^{\pdR/\CC_{p},\sm}
\arrow[rr] & & \Fl_{G,\mu}^{\std,\pdR/\CC_{p}}/G^{c,\an}_{\CC_{p}},
\end{tikzcd}
\] and thus by taking relative de Rham stack and Lemma \ref{lemSmDRStackProperty}, the lower horizontal arrow factors through \((\Per_{G,\mu}^{\dR/(\Fl^{\std,\pdR/\CC_{p}}_{G,\mu})})/(G^{c,\an}_{\CC_{p}},*_{\GM})\). We thus have the commutative diagram in (1). The square on the right is clearly Cartesian. The large square is Cartesian by Proposition \ref{propAlgGMtheory}. Thus the square on the left is also Cartesian.

The proof of (2) is similar, using Proposition \ref{propAnalyticGMTheory} and Theorem \ref{thmAnDRStackGeneral} (3) instead of Proposition \ref{propAlgGMtheory} and Lemma \ref{lemSmDRStackProperty}.
\end{proof}

\begin{proof}[Proof of Theorem \ref{thmLAstrucGMHTper}]
This follows immediately from Proposition \ref{propLAvsSmoothSV}, Proposition \ref{propSMdRTorLog}, and Theorem \ref{thmAnDRStackGeneral} (6).
The isomorphism \( \mX^{\dR,\sm}_{K^{p}}\cong (\mX_{K^{p}}^{\diamond})^{\dR} \) is provided by Theorem \ref{thmAnDRStackGeneral} (5).
\end{proof}



\subsection{Horizontal locally analytic Shimura varieties}\label{subsecHorLocaAnaShimuraVarie}
In this subsection,
we define a quotient of \(\mX^{\la}_{K^{p}}\) by the following horizontal action.

\begin{dfn}[Horizontal action of {\cite{Pan22}}]\label{dfnHorActionPan}
There is an action of \(\fg^{0}\)
on \(\mO^{\la}\),
induced by the constant action of \(\fg\)
on \(\mO^{\la}\), where the sub-bundle \(\fn^{0}\) acts trivially by \cite[Corollary 6.2.12]{Juan2022.09locallyShi}, and thus induces an action of \(\mm^{0}\), which we denote as \(\theta^{Pan}\). 

More generally, for any \(\mF\in\QCoh_{\fg^{c}}(U)^{\fn^{0}}\), we have an action \((\mm^{c,0},\theta^{Pan})\)
on \[(\pi^{\la}_{\sm})_{*}((\pi^{\la}_{\HT,K^{p}})^{*}(\mF))\cong \mO^{\la}_{K^{p}}\otimes_{\mO_{\Fl}}\mF\] by taking the diagonal action.
\end{dfn}
\begin{lem}\label{lemCompareHorPilloniAndPan}
For any open subspace \(U\subset \Fl_{G,\mu}\),
along the isomorphisms 
\begin{align*}
\mO^{\la}_{K^{p}}\otimes_{\mO^{\sm}_{K^{p}}}\mO^{\la}_{K^{p}}
\cong C^{\la}\otimes_{\mO_{\Fl_{G,\mu}}}\mO^{\la}_{K^{p}},\\
\tVBfu((\mm^{c,0})^{\vee})\otimes_{\mO^{\sm}_{K^{p}}}\mO^{\la}_{K^{p}}
\cong (\mm^{c,0})^{\vee}\otimes_{\mO_{\Fl_{G,\mu}}}\mO^{\la}_{K^{p}}
\end{align*}
in Theorem \ref{thmMainThmGeomSenVB} and Corollary \ref{corVBCla=Ola},
we have an identification of the Koszul complex associated to the action \((\mm^{c,0},\theta_{Pan})\) (Definition \ref{dfnHorActionPan})
\[R\Gamma(\mm^{c,0},(\mO^{\la}_{K^{p}},\theta_{Pan})):=\left[\mO^{\la}_{K^{p}}\otimes_{\mO_{\Fl}}\wedge^{\bullet}((\mm^{c,0})^{\vee})\right]
\]
with the pull-back along \(\pi^{\la}_{\sm}\) of the Koszul complex associated to \((\mm^{c,0},\theta)\) in Definition \ref{dfnHorActionPilloni} \[
\tVBfu(R\Gamma(\mm^{c,0},(C^{\la},\theta))):=[\tVBfu(C^{\la})\otimes_{\mO^{\sm}_{K^{p}}}\wedge^{\bullet}(\tVBfu((\mm^{c,0})^{\vee}))],
\] after modifying the \(i\)-th differential by the signature \((-1)^{n}\).
\end{lem}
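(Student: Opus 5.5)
We will reduce the comparison to an explicit local computation on $\Fl_{G,\mu}$, where both Koszul complexes become Chevalley--Eilenberg complexes of $\mm^{c,0}$ acting on $C^{\la}\otimes_{\mO_{\Fl}}\mO^{\la}_{K^{p}}\cong\mO^{\la}_{K^{p}}\{(\fg^{c,0}/\fn^{0})^{\vee}\}^{\dagger}$, built in two different ways.

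\textbf{Step 1: align the modules termwise.} We work locally on a good chartable affinoid $U^{\sm}$ over an open $V\subset\Fl_{G,\mu}$ on which $\fn^{0}$ and $\fg^{c,0}/\fn^{0}$ are free; both sides are sheaves, so this suffices. By Theorem \ref{thmMainThmGeomSenVB} (2) and (3), $(\pi^{\la}_{\sm})^{*}\tVBfu$ is symmetric monoidal on $\QCoh^{\rla}_{\fg^{c}}(U)^{\fn^{0}}$ and agrees with $(\pi^{\la}_{\HT})^{*}$. Hence the $i$-th term of the pulled-back complex $\tVBfu(R\Gamma(\mm^{c,0},C^{\la}))$ is
\[
C^{\la}\otimes_{\mO_{\Fl}}\wedge^{i}(\mm^{c,0})^{\vee}\otimes_{\mO_{\Fl}}\mO^{\la}_{K^{p}}.
\]
Corollary \ref{corVBCla=Ola} identifies $\tVBfu(C^{\la})$ with $\mO^{\la}_{K^{p}}$. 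Under this identification, after base change along $\pi^{\la}_{\sm}$, the first factor $\mO^{\la}_{K^{p}}$ of $\mO^{\la}_{K^{p}}\otimes_{\mO^{\sm}}\mO^{\la}_{K^{p}}$ corresponds to $C^{\la}$ with its $(\fg,*_{2})$-action. The second factor is the coefficient ring. So the two complexes agree termwise, and it remains to compare the differentials.

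\textbf{Step 2: identify the two actions.} It suffices to show that the $\theta$-action of $\mm^{c,0}$ on $C^{\la}$ matches $\theta^{Pan}$ on the first factor. By Definition \ref{dfnHorActionPilloni}, $\theta$ is the action of $\mm^{c,0}\subset\fg^{c,0}/\fn^{0}$ on $C^{\la}=R\Gamma((\fn^{0},*_{1,3}),\mO_{\Fl}\hatotimes\mO_{G^{c},1})$ coming from the $\fn^{0}$-null-homotopy, that is, from the $*_{1,3}$-action extended $\mO_{\Fl}$-linearly. By Definition \ref{dfnHorActionPan}, $\theta^{Pan}$ comes from the constant $\fg$-action on $\mO^{\la}$, which Corollary \ref{corVBCla=Ola} says corresponds to $*_{2}$ on $C^{\la}$, extended linearly to $\fg^{0}$.

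In the local trivialisation of Lemma \ref{lemClaConcentraDeg0}, take $C^{\la}|_{V}\cong\mO_{V}\{(\fg^{c}/\fn)^{\vee}\}^{\dagger}$, with $\fn$-invariant germs on $G^{c}$. On germs of functions at $1$, the left derivative $*_{1}$ and the right derivative $*_{2}$ are exchanged by inversion $g\mapsto g^{-1}$, which acts by $X\mapsto -X$ on $\fg$. The remaining term $*_{3}$ is the action on $\mO_{\Fl}$. It is exactly the correction needed to pass from the constant action to the $\mO_{\Fl}$-linear extension along $\fg^{0}$. The key claim is therefore that on $\fn^{0}$-invariants,
\[
\theta(X)=-\theta^{Pan}(X)\quad\text{for local sections }X\text{ of }\p^{c,0},
\]
modulo $\fn^{0}$. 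I would verify this first for $X\in\p^{c}$ constant at a point, then extend $\mO$-linearly.

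\textbf{Step 3: the sign.} A Chevalley--Eilenberg complex for the action $-\theta$ is isomorphic to the one for $\theta$ by rescaling the $i$-th term by $(-1)^{i}$. This produces the stated sign modification of the differentials, provided the identification of Step 1 is $\mO^{\la}_{K^{p}}$-linear and compatible with wedge products, which follows from monoidality.

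\textbf{Main obstacle.} The hard step is Step 2: keeping the $*_{1},*_{2},*_{3}$ bookkeeping straight through the isomorphism of Corollary \ref{corVBCla=Ola}, which passes through $(-)^{R-\sm}$, the comparison of \cite{JRC2021solid}, and the $\fg$ versus $\fg^{0}$ extension. I would first check it on the dense subspace of polynomial germs, and also confirm that the horizontal action is independent of the chosen section $s$.
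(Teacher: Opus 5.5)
Your key idea is the paper's. The identification $\tVBfu(C^{\la})\cong\mO^{\la}_{K^{p}}$ of Corollary \ref{corVBCla=Ola} is, by the definition of $(-)^{R-\la}$ as smooth vectors in $-\hatotimes\mO_{G^{c},1}$, simply $ev_{1}$, the inverse of the orbit map $\Psi$ onto the $*_{1,3}$-smooth vectors. For $X\in\p^{c,0}$ one has $*_{1,3}=*_{1}$ because $\p^{c,0}$ kills $\mO_{\Fl}$, and $ev_{1}(X*_{1}F)=-ev_{1}(X*_{2}F)$ then produces $-\theta^{Pan}$. This settles your ``main obstacle'' directly, with no need for the trivialisation of Lemma \ref{lemClaConcentraDeg0} or for polynomial germs. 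Note also that $\theta(X)=-\theta^{Pan}(X)$ is not an identity of operators on $C^{\la}$. Left and right derivatives agree up to sign only at $g=1$ and elsewhere differ by $\mathrm{Ad}(g)$, so the identity must be read through $ev_{1}\circ\Psi$, that is, after applying $\tVBfu$.

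The genuine gap is Step 3, which is false when $\mm^{c,0}$ is non-abelian (e.g.\ a $\mathrm{GL}_{g}$-factor in the Siegel case). Write the Koszul differential as $d_{\theta}=A_{\theta}+B$, where $A_{\theta}=\sum_{k}e^{k}\wedge\theta(e_{k})$ is the action part and $B=1\otimes d_{0}$ is the bracket part. Rescaling the $i$-th term by $(-1)^{i}$ turns $d_{\theta}$ into $-A_{\theta}-B$, not into $-A_{\theta}+B$. Moreover, $-\theta$ is not even an action of $\mm^{c,0}$ unless it kills $[\mm^{c,0},\mm^{c,0}]$. So knowing that the action parts match up to sign does not determine the higher differentials; this applies to your Step 2 and also to the degree-$0$ computation written in the paper.

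What closes the argument is the following. In degree $i$, the orbit map $\Psi(f\otimes\omega)(g)=gf\otimes g\omega$ also moves $\omega\in\wedge^{i}(\mm^{c,0})^{\vee}$, and for $X\in\p^{c,0}$ the equivariant structure acts on $\wedge^{i}(\mm^{c,0})^{\vee}$ by the coadjoint action. Using $d_{0}=\tfrac12\sum_{k}e^{k}\wedge\mathrm{ad}^{*}(e_{k})$, this gives
\[ ev_{1}\bigl(A_{\theta}\Psi(f\otimes\omega)\bigr)=-A_{\theta^{Pan}}(f\otimes\omega)-f\otimes\sum_{k}e^{k}\wedge\mathrm{ad}^{*}(e_{k})\omega=-A_{\theta^{Pan}}(f\otimes\omega)-2\,f\otimes d_{0}\omega. \]
The bracket part $B$, being $\mO_{\Fl}$-linear and equivariant, is transported to itself. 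Altogether $\tVBfu(d_{\theta})=-d_{Pan}$, which is exactly the asserted sign modification. In the abelian case (modular curves, Hilbert modular varieties) one has $B=0$, and your Step 3 is then fine as written.
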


\begin{proof}
This is essentially the proof of \cite[Lemme 4.6]{Pilloni22}.
We have a factorization \[\mO^{\la}_{K^{p}}\xhookrightarrow{\Psi}\mO_{G^{c},1}\hatotimes\mO^{\la}_{K^{p}}\xrightarrow{ev_{1}}\mO_{K^{p}}^{\la},\]
where \(\Psi\)
is an injection,
which realizes \(\mO^{\la}_{K^{p}}\)
as the subspace of smooth vectors for the action \(*_{1,3}\), where \(*_{3}\) denotes the constant action on \(\mO^{\la}_{K^{p}}\), and \(*_{1,3}\) is as in Notation \ref{notationStalkAt1DaggerGrp}, and \(ev_{1}\)
is induced by evaluation at \(1\) \(ev_{1}:\mO_{G^{c},1}\to \CC_{p}\). In particular, the composition \(ev_{1}\circ \Psi\)
is the natural identity.
By the property of \(ev_{1}\),
for any \(X\in \fg^{c}\),
and any \(f\in \mO_{G^{c},1}\hatotimes \mO_{K^{p}}^{\la}\),
we know that \(ev_{1}(X*_{1}f)=-ev_{1}(X*_{2}f)\).

We consider the action of \(X\in \mfk{p}^{c,0}\).
Since such an \(X\) acts on \(\mO_{\Fl}\) trivially,
we know \[ev_{1}(X*_{1,3}f)=ev_{1}(X*_{1}f)=-ev_{1}(X*_{2}f).\] In particular,
for any \(f\in \mO^{\la}_{K^{p}}\), and \(X\in \mfk{p}^{c,0}\),
\[ev_{1}(X*_{1,3}\Psi(f))=ev_{1}(X*_{1}\Psi(f))=-ev_{1}(X*_{2}\Psi(f))=-\theta^{Pan}(X)(f),\]
where the last identity is given by Corollary \ref{corVBCla=Ola}.
From this, we see that \(C^{\la}\xrightarrow{*_{1,3}}C^{\la}\otimes \mfk{m}^{c,0,\vee}\)
after applying \(\tVBfu(-)\)
induces \(\mO^{\la}_{K^{p}}\xrightarrow{-\theta^{Pan}}\mO^{\la}_{K^{p}}\otimes \mfk{m}^{c,0,\vee},\) as desired.
\end{proof}

\begin{dfn}
We define \[\mO^{\la,0}_{\mX^{\tor}_{K^{p}}}:=\mO^{\la,0}_{K^{p}}:=\tVBfu(C^{\la,0}),\]
and \[\mscr{I}^{\la,0}_{\mX^{\tor}_{K^{p}}}:=\mscr{I}^{\la,0}_{K^{p}}:=\tVB(C^{\la,0}).\]
\end{dfn}

\begin{lem}\label{lemHorizalActNotSame}
The sheaf \(\mO^{\la,0}\in\mrm{CAlg}(\mrm{QCoh}(\mX^{\tor,\sm}_{K^{p}}))\) in concentrated in degree \(0\),
and is the subsheaf of \(\mO^{\la}\)
where the action of \((\mfk{m}^{c,0},\theta^{Pan})\)
is zero. In fact, \(\mO^{\la,0}\cong R\Gamma((\mfk{m}^{c,0},\theta^{Pan}),\mO^{\la})\). Similar results hold for \(\mscr{I}^{\la,0}_{K^{p}}\).
\end{lem}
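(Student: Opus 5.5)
The plan is to compute $C^{\la,0}$ as a Koszul complex for the horizontal action on $C^{\la}$, push it through $\tVBfu$, and then use Lemma \ref{lemCompareHorPilloniAndPan} to identify the result with $R\Gamma((\mm^{c,0},\theta^{Pan}),\mO^{\la})$ after pulling back along $\pi^{\la}_{\sm}$.

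\textbf{Step 1: $C^{\la,0}$ as a Koszul complex.} Since $\fn^{0}\subset\p^{c,0}$ is an ideal with quotient $\mm^{c,0}$, there is a Hochschild--Serre type identification
$$C^{\la,0}=R\Gamma\bigl((\p^{c,0},*_{1,3}),\mO_{\Fl}\hatotimes\mO_{G^{c},1}\bigr)\cong R\Gamma\bigl(\mm^{c,0},R\Gamma(\fn^{0},\mO_{\Fl}\hatotimes\mO_{G^{c},1})\bigr)=R\Gamma\bigl((\mm^{c,0},\theta),C^{\la}\bigr).$$
Here the $\mm^{c,0}$-action on $C^{\la}$ is exactly the horizontal action of Definition \ref{dfnHorActionPilloni}, coming from the null-homotopy of the $\fn^{0}$-action. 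This is justified because all actions are $\mO_{\Fl}$-linear; one can check it locally using the trivialization in the proof of Lemma \ref{lemClaConcentraDeg0}.

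\textbf{Step 2: apply $\tVBfu$.} The Koszul complex $R\Gamma(\mm^{c,0},C^{\la})$ is a finite complex. Its terms $C^{\la}\otimes\wedge^{i}(\mm^{c,0})^{\vee}$ all lie in $\QCoh^{\rla}_{\fg^{c}}(\Fl)^{\fn^{0}}$ by Lemma \ref{lemClaConcentraDeg0}. Since $\tVBfu$ is exact, it commutes with this finite totalization. Using the symmetric monoidal property from Theorem \ref{thmMainThmGeomSenVB} (3), we get
$$\mO^{\la,0}=\tVBfu(C^{\la,0})\cong\tVBfu\bigl(R\Gamma(\mm^{c,0},(C^{\la},\theta))\bigr).$$

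\textbf{Step 3: compare with Pan's action.} By Lemma \ref{lemCompareHorPilloniAndPan}, after pulling back along $\pi^{\la}_{\sm}$ (base change along $\mO^{\sm}\to\mO^{\la}$), the complex from Step 2 becomes, up to signs in the differentials, $R\Gamma((\mm^{c,0},\theta^{Pan}),\mO^{\la})\otimes_{\mO^{\sm}}\mO^{\la}$.

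The main obstacle is descending this identification from $\mX^{\tor,\la}$ to $\mX^{\tor,\sm}$, since Lemma \ref{lemCompareHorPilloniAndPan} is only stated after base change. The approach is to use the factorization $\Psi$ and $ev_{1}$ from the proof of Lemma \ref{lemCompareHorPilloniAndPan}, which already works before base change. Combined with Corollary \ref{corVBCla=Ola}, this gives an $\mO^{\sm}$-linear map of complexes
$$\tVBfu\bigl(R\Gamma(\mm^{c,0},C^{\la})\bigr)\to R\Gamma\bigl((\mm^{c,0},\theta^{Pan}),\mO^{\la}\bigr).$$
This map becomes an isomorphism after $\pi^{\la,*}_{\sm}$. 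Since $\pi^{\la}_{\sm}$ is a $!$-surjection, $\pi^{\la,*}_{\sm}$ is conservative, so the map is already an isomorphism.

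\textbf{Step 4: concentration in degree $0$.} By Lemma \ref{lemClaConcentraDeg0} (2), $C^{\la,0}$ is concentrated in degree $0$ and relatively locally analytic, so Theorem \ref{thmMainThmGeomSenVB} (1) shows that $\mO^{\la,0}$ is concentrated in degree $0$. Consequently $R\Gamma((\mm^{c,0},\theta^{Pan}),\mO^{\la})$ has only $H^{0}$, which is the subsheaf of $\mO^{\la}$ killed by $\theta^{Pan}$. The algebra structure on $\mO^{\la,0}$ comes from the lax monoidal structure of $\tVBfu$.

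For $\mscr{I}^{\la,0}$, the same argument applies after tensoring with $\mscr{I}^{\sm}$, which is a line bundle and hence flat.
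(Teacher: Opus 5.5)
Your proposal is correct and is essentially the paper's proof. Concentration in degree $0$ comes from Theorem \ref{thmMainThmGeomSenVB} (1) applied to $C^{\la,0}$ via Lemma \ref{lemClaConcentraDeg0} (2), and the identification with $R\Gamma((\mm^{c,0},\theta^{Pan}),\mO^{\la})$ comes from Lemma \ref{lemCompareHorPilloniAndPan}; your Hochschild--Serre step $C^{\la,0}\simeq R\Gamma((\mm^{c,0},\theta),C^{\la})$ simply makes explicit the link the paper leaves implicit.

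Two side remarks. First, the conservativity detour in your Step 3 is harmless but unnecessary: the proof of Lemma \ref{lemCompareHorPilloniAndPan} already matches the two Koszul complexes termwise on $\mX^{\tor,\sm}_{K^{p}}$, using $\tVBfu(C^{\la})\cong\mO^{\la}$ and $\tVBfu((\mm^{c,0})^{\vee})\otimes_{\mO^{\sm}}\mO^{\la}\cong(\mm^{c,0})^{\vee}\otimes_{\mO_{\Fl}}\mO^{\la}$. Second, for $\mscr{I}^{\la,0}$ you do not need flatness of $\mscr{I}^{\sm}$, and its invertibility at deeper levels is not clear since the $\mX^{\tor}_{\Kpp}$ are only known to be normal. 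The argument goes through anyway, because Theorem \ref{thmMainThmGeomSenVB} (1) also covers $\tVB$ and derived tensoring commutes with the finite Koszul totalization.
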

\begin{proof}
Concentration follows from Theorem \ref{thmMainThmGeomSenVB}
and Lemma \ref{lemClaConcentraDeg0}.
The rest follows from Lemma \ref{lemCompareHorPilloniAndPan}. 
\end{proof}

By Lemma \ref{lemHorizalActNotSame}, we can apply Lemma \ref{lemAffProperProp} (3) to \(\mO^{\la,0}_{K^{p}}\):
\begin{dfn}\label{dfnHorizontalLAsv}
We define \(\mX^{\tor,\la,0}_{K^{p}}:=\underline{\AnSpec}_{\mX^{\tor,\sm}_{K^{p}}}(\mO^{\la,0}_{K^{p}})\).
\end{dfn}
\begin{rmk}
\( \mO^{\la,0}_{K^{p}}\cong R\Gamma((\mfk{m}^{c,0},\theta^{Pan}),\mO^{\la})\subset \mO^{\la} \),
and thus it follows from Remark \ref{rmkLAsvIsEssTate} that \( \mO^{\la,0}_{K^{p}} \) is also bounded, and \( \mX^{\tor,\la,0}_{K^{p}} \) is also essentially Tate.
\end{rmk}
Then we have a factorization \[\mX^{\tor}_{K^{p}}\to \mX^{\tor,\la}_{K^{p}}\to \mX^{\tor,\la,0}_{K^{p}}\to \mX^{\tor,\sm}_{K^{p}}.\]
\begin{lem}\label{lemHTPeriodFactor0}
The Hodge-Tate period map \(\pi_{\HT}^{\la}\) as in Lemma \ref{lemHTPeriodFactor}
factors through \(\pi_{\HT}^{\la,0}:\mX^{\tor,\la,0}_{K^{p}}\to \Fl_{G,\mu}\).
\end{lem}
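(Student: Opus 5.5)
The plan is to mirror the proof of Lemma \ref{lemHTPeriodFactor}, with $\mO^{\la,0}_{K^{p}}$ in place of $\mO^{\la}_{K^{p}}$. The question is local, so we fix a good chartable affinoid open subspace $U^{\sm}=\AnSpec(A^{\sm},A^{\sm,+})$ with $(\pi^{\sm})^{-1}(U^{\sm})\subset\pi_{\HT}^{-1}(V)$ and $V=\Spa(B,B^{+})\subset\Fl_{G,\mu}$ (Definition \ref{dfnChartableAffOp}). By Lemma \ref{lemOpenSetsFromFulltoSM} (1) these cover. Write $A^{\la,0}:=H^{0}(U^{\sm},\mO^{\la,0}_{K^{p}})$, equipped with the induced analytic structure; by Lemma \ref{lemHorizalActNotSame} it is a discrete-degree subalgebra of $A^{\la}$. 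Then $\mX^{\tor,\la,0}_{K^{p}}|_{U^{\sm}}\cong\AnSpec(A^{\la,0})$ by Lemma \ref{lemAffProperProp} (3).

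The proof of Lemma \ref{lemHTPeriodFactor} gives a map of solid algebras $B\to A^{\la}$, namely $\pi_{\HT}^{\la,*}$ on functions. The key step is to show that this map lands in $A^{\la,0}=\ker(\mm^{c,0},\theta^{Pan})$. Recall that $\theta^{Pan}$ is the $\mO_{\Fl}$-linear extension of the constant $\fg$-action, restricted to $\p^{c,0}$ and descended modulo $\fn^{0}$. For $f\in B$, the constant action of $X\in\fg$ on $\pi_{\HT}^{*}f$ equals $\pi_{\HT}^{*}(X\cdot f)$, by $G(\QQ_{p})$-equivariance of $\pi_{\HT}$. Linearizing over $\mO_{\Fl}$, a section of $\p^{c,0}$ acts on $\mO_{\Fl}$ through the vector field it induces on $\Fl_{G,\mu}$. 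By the very definition of $\p^{0}$ as the stabilizer-subbundle (Notation \ref{notationEquShvOnFlGeneral}), this vector field vanishes, since $\p^{0}$ is the kernel of $\fg^{0}\to T_{\Fl}\cong\fg^{0}/\p^{0}$ (Lemma \ref{lemCompareDRStac} (1)). Hence $\theta^{Pan}(\mm^{c,0})$ kills $\pi_{\HT}^{*}(B)$.

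A cleaner alternative avoids the vector-field computation. Note that $\mO_{\Fl}\to C^{\la}$ factors through $C^{\la,0}=R\Gamma(\p^{c,0},\mO_{\Fl}\hatotimes\mO_{G^{c},1})$, because $\mO_{\Fl}\hatotimes 1$ is killed by $(\p^{c,0},*_{1,3})$. One then applies $\tVBfu$, using Corollary \ref{corVBCla=Ola} and Theorem \ref{thmMainThmGeomSenVB} (2), to identify the resulting map $\mO_{\Fl}\to\mO^{\la,0}$ with $\pi_{\HT}^{\la,*}$. The obstacle here is checking that this identification agrees with the Hodge–Tate map, which amounts to tracking the evaluation at $1$ as in Lemma \ref{lemCompareHorPilloniAndPan}.

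Granting that the map lands in $A^{\la,0}$, we obtain a morphism of analytic rings $(B,\mO_{\CC_{p}})_{\square}\to A^{\la,0}$. Since $\Fl_{G,\mu}$ is proper, this composes to a map $\AnSpec(A^{\la,0})\to\Fl_{G,\mu}$. These maps glue by functoriality over intersections of chartable opens, and composing with $\mX^{\tor,\la}_{K^{p}}\to\mX^{\tor,\la,0}_{K^{p}}$ recovers $\pi^{\la}_{\HT}$ by construction. I expect the main obstacle to be the precise justification that $\theta^{Pan}$ kills $\pi_{\HT}^{*}\mO_{\Fl}$, as discussed above.
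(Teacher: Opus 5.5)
Your proposal is correct and takes the same approach as the paper. The paper's proof simply notes that the horizontal action of \(\mfk{m}^{c,0}\) on \(\mO_{\Fl_{G,\mu}}\) is zero, so \(\pi_{\HT}^{\la,*}\) lands in \(\mO^{\la,0}_{K^{p}}\), and then repeats the argument of Lemma \ref{lemHTPeriodFactor}; your vector-field argument (sections of \(\p^{0}\) lie in the kernel of the anchor map \(\fg^{0}\to T_{\Fl}\)) spells out that first step, so the ``obstacle'' you flag is already resolved by it.
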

\begin{proof}
Since the horizontal 
action of \(\mfk{m}^{c,0}\)on \(\mO_{\Fl_{G,\mu}}\) is zero,
we know that the morphism \(\pi_{\HT}^{\la}:\mX_{K^p}^{\tor,\la}\to \Fl\) factors on the level of structure sheaves through \(\mO^{\la,0}_{K^{p}}\). The rest follows the same proof as in Lemma \ref{lemHTPeriodFactor}.
\end{proof}

\begin{prop}\label{propCartesianLA0} We have a Cartesian diagram \[\begin{tikzcd}
		\mX^{\tor,\la,0}_{K^{p}}\arrow[d,"\pi^{\sm}_{\la,0}"]
		\arrow[r,"\pi_{\HT}"]
		& \Fl_{G,\mu}\arrow[d,"h"]\\
		\mX^{\tor,\sm}_{K^{p}}
		\arrow[r,"\pi_{\HT}^{\sm}"] & \Fl_{G,\mu}^{\dR/\CC_{p}}.
	\end{tikzcd}
	\] 

	Moreover, \( h \)
	and \( \pi^{\sm}_{\HT} \) are both cohomologically co-smooth.
\end{prop}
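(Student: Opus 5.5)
The plan is to follow the proof of Proposition \ref{propLAvsSmoothSV} with \(\fn^{0}\) replaced by \(\p^{c,0}\), using \(C^{\la,0}\) in place of \(C^{\la}\). First I would construct the map \(\pi^{\sm}_{\HT}:\mX^{\tor,\sm}_{K^{p}}\to \Fl_{G,\mu}^{\dR/\CC_{p}}\). By Proposition \ref{propLAvsSmoothSV}, \(\mX^{\tor,\sm}_{K^{p}}\) already maps to \(\Fl_{G,\mu}/(\fg^{0}/\fn^{0})^{\dagger}\). By Lemma \ref{lemCompareDRStac}, this stack maps further to \(\Fl_{G,\mu}/(\fg^{c,0}/\p^{c,0})^{\dagger}\cong \Fl_{G,\mu}^{\dR/\CC_{p}}\). (Here the center \(\mfk{z}_{c}\) acts trivially on \(\Fl\), so passing to the \(c\)-quotient is harmless.) Alternatively, one can take relative de Rham stacks of the square in Lemma \ref{lemHTPeriodFactor0} along \(\pi^{\sm}_{\la,0}\), in the manner of Proposition \ref{propdRstackOfShLA}. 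Either way, this gives a commutative square, and the task is to show it is Cartesian.

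For the Cartesian property I would repeat the argument of Proposition \ref{propdRstackOfShLA}. By Lemma \ref{lemClaConcentraDeg0} (2), \(C^{\la,0}\cong \mO_{\Fl}\{(\fg^{c,0}/\p^{c,0})^{\vee}\}^{\dagger}\) lies in \(\QCoh^{\rla}_{\fg^{c}}(\Fl)^{\fn^{0}}\). Theorem \ref{thmMainThmGeomSenVB} (2), applied as in Corollary \ref{corVBCla=Ola}, then gives
\[\mO^{\la,0}_{K^{p}}\otimes_{\mO^{\sm}_{K^{p}}}\mO^{\la,0}_{K^{p}}\cong (\pi^{\la,0}_{\HT})^{*}C^{\la,0}\cong \mO^{\la,0}_{K^{p}}\{(\fg^{c,0}/\p^{c,0})^{\vee}\}^{\dagger}.\]
Theorem \ref{thmMainThmGeomSenVB} is stated over \(\mX^{\tor,\la}_{K^{p}}\), so I would first pull back there and then descend, using that \(\mO^{\la,0}\) is the \(\theta^{Pan}\)-invariants (Lemma \ref{lemHorizalActNotSame}) and that \(\tVBfu\) is symmetric monoidal.

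Lemma \ref{lemCompareDRStac} and Theorem \ref{thmAnDRStackGeneral} (3) identify the right-hand side with the structure sheaf of \(\mX^{\tor,\la,0}_{K^{p}}\times_{\Fl}(\Fl\times_{\Fl^{\dR}}\Fl)\). So the natural map \(\mX^{\tor,\la,0}\times_{\mX^{\tor,\sm}}\mX^{\tor,\la,0}\to \mX^{\tor,\la,0}\times_{\Fl^{\dR}}\Fl\) is an isomorphism. Since \(\mX^{\tor,\la,0}_{K^{p}}\to\mX^{\tor,\sm}_{K^{p}}\) is a \(!\)-surjection (it factors \(\pi^{\sm}\), which is one by Lemma \ref{lemOpenSetsFromFulltoSM} (3)), descent gives \(\mX^{\tor,\la,0}_{K^{p}}\cong \mX^{\tor,\sm}_{K^{p}}\times_{\Fl^{\dR}}\Fl\). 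I expect the main obstacle to be checking that the two groupoid structures agree, namely that the map \(\mO_{\Fl}\{(\fg^{c,0}/\p^{c,0})^{\vee}\}^{\dagger}\to \mO^{\la,0}\otimes\mO^{\la,0}\) obtained by tracing the construction is exactly the geometric-Sen isomorphism. As in Proposition \ref{propLAvsSmoothSV}, I would reduce this to \(C^{\la}\) via the quotient \(\fg^{0}/\fn^{0}\to\fg^{c,0}/\p^{c,0}\) and apply Lemma \ref{lemCompareHorPilloniAndPan}.

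For the co-smoothness claims, \(h:\Fl_{G,\mu}\to\Fl_{G,\mu}^{\dR/\CC_{p}}\) is affine proper and a \(!\)-surjection by Theorem \ref{thmAnDRStackGeneral} (3), and hence cohomologically co-smooth by Proposition \ref{propAffineProperCover} and Theorem \ref{thmVariousDescent}. For \(\pi^{\sm}_{\HT}\), I would write it as \(\mX^{\tor,\sm}_{K^{p}}\to \CC_{p}\times_{\bar{\QQ}_{p}}\Fl^{\dR}\to\Fl^{\dR}\). Alternatively, I would use Theorem \ref{thmVariousDescent} (5) with the \(!\)-cover \(h\): the pullback \(\pi^{\sm}_{\la,0}\) should be co-smooth because it is affine proper with \(!\)-descendable structure sheaf, while \(\mX^{\tor,\sm}_{K^{p}}\) and \(\Fl^{\dR}\) are co-smooth over the base by Lemma \ref{lemOpenSetsFromFulltoSM} (4) and Lemma \ref{lemFiniteLevelProper} (2), and the relative statement follows from the co-smoothness of \(\Delta^{\dR}_{\Fl}\) in Lemma \ref{lemFiniteLevelProper} (2).
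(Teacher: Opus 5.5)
Your proposal follows the paper's proof. The map \(\pi^{\sm}_{\HT}\) is obtained by composing the map of Proposition \ref{propLAvsSmoothSV} with Lemma \ref{lemCompareDRStac}. Cartesianness is reduced, as in Proposition \ref{propLAvsSmoothSV}, to the identification \(\mO^{\la,0}_{K^{p}}\otimes_{\mO^{\sm}_{K^{p}}}\mO^{\la,0}_{K^{p}}\cong \mO^{\la,0}_{K^{p}}\otimes_{\mO_{\Fl}}C^{\la,0}\), followed by descent along the \(!\)-surjection \(\mX^{\tor,\la,0}_{K^{p}}\to\mX^{\tor,\sm}_{K^{p}}\). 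Co-smoothness comes from Theorem \ref{thmAnDRStackGeneral} (3), Proposition \ref{propAffineProperCover}, Lemma \ref{lemFiniteLevelProper} and Lemma \ref{lemOpenSetsFromFulltoSM} (4).

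One cleanup in the co-smoothness paragraph. For \(\pi^{\sm}_{\HT}\), the argument that works is the graph factorization \(\mX^{\tor,\sm}_{K^{p}}\to\mX^{\tor,\sm}_{K^{p}}\times_{\CC_{p}}\Fl_{G,\mu}^{\dR/\CC_{p}}\to\Fl_{G,\mu}^{\dR/\CC_{p}}\): the first map is a base change of \(\Delta^{\dR}_{\Fl}\), and the second is a base change of \(\mX^{\tor,\sm}_{K^{p}}\to\AnSpec(\CC_{p})\). Your detour through the co-smoothness of \(\pi^{\sm}_{\la,0}\) does not help, because \(\pi^{\sm}_{\la,0}\) is the base change of \(h\), not of \(\pi^{\sm}_{\HT}\), so Theorem \ref{thmVariousDescent} gives nothing about \(\pi^{\sm}_{\HT}\) from it.
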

\begin{proof}
We have a morphism \(\pi^{\sm}_{\HT}\) by composing \(\mX^{\tor,\sm}_{K^{p}}\to\Fl_{G,\mu}/(\fg^{0}/\fn^{0})^{\dagger}\) as in Proposition \ref{propLAvsSmoothSV} with \(\Fl_{G,\mu}/(\fg^{0}/\fn^{0})^{\dagger}\to \Fl_{G,\mu}^{\dR/\CC_{p}}\) in Lemma \ref{lemCompareDRStac} (1).

We thus have the commutative diagram. 
To verify that it is Cartesian, we follows the same proof of Proposition \ref{propLAvsSmoothSV}, and eventually reduces to \(\tVBfu(C^{\la,0})\cong\mO^{\la,0}_{K^{p}}\), and \[
\mO^{\la,0}_{K^{p}}\otimes_{\mO^{\sm}_{K^{p}}}\mO^{\la,0}_{K^{p}}
\cong \mO^{\la,0}_{K^{p}}\otimes_{\mO_{\Fl}}C^{\la,0}.
\] The details are left to readers.

Finally, \( h \) is cohomogically co-smooth by Theorem \ref{thmAnDRStackGeneral} (3) and Proposition \ref{propAffineProperCover},
and \( \pi^{\sm}_{\HT} \)
is cohomogically co-smooth by Lemma \ref{lemFiniteLevelProper} and Lemma \ref{lemOpenSetsFromFulltoSM} (4).
\end{proof}

\begin{cor}\label{corCartesianFlandSMGeneral}\label{corCartesianHilbertFlandSm}
(1)
We have the following commutative diagram of solid stacks over \( \CC_{p} \) \[
\begin{tikzcd}[column sep=-0.1cm]
\mX^{\tor,\la}_{K^{p}}\arrow[r]\arrow[d] & 
\Per_{G,\mu}/G^{c,\an}_{\CC_{p}}\arrow[d]
\\
\mX^{\tor,\la,0}_{K^{p}}\arrow[r]\arrow[d] & 
\Per_{G,\mu}^{\dR/(\Fl_{G,\mu}\times \Fl_{G,\mu}^{\std})}/G^{c,\an}_{\CC_{p}}\arrow[r]\arrow[d]
& \Fl_{G,\mu}\times \Fl_{G,\mu}^{\std}/G^{c,\an}_{\CC_{p}}\arrow[d]
\\
\mX^{\tor,\sm}_{K^{p}}\arrow[r]\arrow[d] & 
\Per_{G,\mu}^{\dR/\Fl_{G,\mu}^{\std}}/G^{c,\an}_{\CC_{p}}\arrow[r]
& \Fl_{G,\mu}^{\dR/\CC_{p}}\times \Fl_{G,\mu}^{\std}/G^{c,\an}_{\CC_{p}}\arrow[d]
\\
(\mX^{\tor}_{K^{p}})^{\pdR/\CC_{p},\sm}_{\log}\arrow[rr] & 
& \Fl_{G,\mu}^{\dR/\CC_{p}}\times \Fl_{G,\mu}^{\std,\pdR/\CC_{p}}/G^{c,\an}_{\CC_{p}}
\end{tikzcd}
\]
where the actions of \( G^{c,\an}_{\CC_{p}} \) on the middle column are induced by \( *_{\GM} \), and all the squares are Cartesian.


(2) We have the following commutative diagram of solid stacks over \( \QQ_{p} \)
\[
\begin{tikzcd}[column sep=0.5cm]
\mX_{K^{p}}^{\la}\arrow[r]\arrow[d] & 
\Per_{G,\mu}/(G^{c,\an},*_{\GM})\arrow[d]\\
\mX_{K^{p}}^{\la,0}\arrow[r]\arrow[d] & 
\Per_{G,\mu}^{\dR/(\Fl_{G,\mu}\times_{\CC_{p}}\Fl^{\std}_{G,\mu})}/(G^{c,\an},*_{\GM})\arrow[r]\arrow[d]
& \Fl_{G,\mu}\times_{\CC_{p}} \Fl_{G,\mu}^{\std}
/G^{c,\an}\arrow[d]\\
\mX_{K^{p}}^{\dR}\arrow[r] & 
\Per_{G,\mu}^{\dR}/(G^{c,\an},*_{\GM})\arrow[r]
& \Fl_{G,\mu}^{\dR}\times_{\bar{\QQ}_{p}} \Fl_{G,\mu}^{\std,\dR}/G^{c,\an},
\end{tikzcd}\]
where all the squares are Cartesian, and \(\mX^{\la,0}_{K^{p}}\subset\mX^{\tor,\la,0}_{K^{p}}\)
is the preimage of \(\mX^{\sm}_{K^{p}}\subset\mX^{\tor,\sm}_{K^{p}}\).
\end{cor}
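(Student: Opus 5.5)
The plan is to build the diagrams in Corollary \ref{corCartesianFlandSMGeneral} by pasting Cartesian squares we already have. Cartesianness of the composite rectangles will then follow from the pasting lemma. For (1), the top square (with \(\mX^{\tor,\la}_{K^{p}}\)) and the two squares in the \(\mX^{\tor,\sm}_{K^{p}}\) row are the new content. The outer rectangle from \(\mX^{\tor,\la}_{K^{p}}\) down to \((\mX^{\tor}_{K^{p}})^{\pdR/\CC_{p},\sm}_{\log}\) is already Cartesian by Theorem \ref{thmLAstrucGMHTper} (1).

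The first step is the purely local statement on the period domain. I would check that the right column and middle column form Cartesian squares before involving Shimura varieties. The square with corners \(\Per^{\dR/(\Fl\times\Fl^{\std})}_{G,\mu}\), \(\Fl\times\Fl^{\std}\), \(\Per^{\dR/\Fl^{\std}}_{G,\mu}\), \(\Fl^{\dR/\CC_p}\times\Fl^{\std}\) reduces, by Theorem \ref{thmAnDRStackGeneral} (1) (relative de Rham stacks commute with finite limits), to the identity \(X^{\dR/Y}\times_{Y^{\dR/S}}Y\cong X^{\dR/S}\times_{S^{\dR}}\dots\). Concretely, I would use the presentation \(\Per_{G,\mu}=M^{c}_{\mu}\backslash(\mcal{M}^{c}_{\mu}(-1)\times\mcal{M}^{\std,c}_{\mu})\) as in the proof of Proposition \ref{propLAvsSmoothSV}, and reduce to the facts that \(\mcal{M}^{c}_{\mu}\to\Fl_{G,\mu}\) is smooth with fibers \(M^c_\mu\) and that \(\Fl^{\dR}=\Fl/(\fg^0/\p^0)^\dagger\) (Lemma \ref{lemCompareDRStac}). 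The lower square in the right column is a product of \(\Fl^{\dR}\) with \(h_{\Fl^{\std}}\), and is Cartesian trivially.

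Next, I would identify the middle row. The morphism \(\mX^{\tor,\la,0}_{K^{p}}\to\Per^{\dR/(\Fl\times\Fl^{\std})}_{G,\mu}/G^c\) is obtained by taking relative de Rham stacks of \(\pi^{\la}_{\GM\HT}\) along \(\mX^{\tor,\la}_{K^p}\to\mX^{\tor,\la,0}_{K^p}\). For this I need an analogue of Proposition \ref{propdRstackOfShLA}: \((\mX^{\tor,\la}_{K^p})^{\dR/\mX^{\tor,\la,0}_{K^p}}\cong\mX^{\tor,\la,0}_{K^p}\). I would prove it by the same \(\dagger\)-reduction argument, now using \(\mO^{\la}\otimes_{\mO^{\la,0}}\mO^{\la}\cong\mO^{\la}\{(\p^{c,0}/\fn^0)^\vee\}^\dagger\). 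That isomorphism in turn comes from \(C^{\la}\otimes_{C^{\la,0}}C^{\la}\) via Lemma \ref{lemClaConcentraDeg0} and the symmetric monoidality of \(\tVBfu\) (Theorem \ref{thmMainThmGeomSenVB} (3)).

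Given this, the square between rows two and three is Cartesian. Indeed, composing it with the right-column square gives the rectangle \(\mX^{\tor,\la,0}\to\Fl\times\Fl^{\std}\) over \(\mX^{\tor,\sm}\to\Fl^{\dR}\times\Fl^{\std}\), which is Proposition \ref{propCartesianLA0} after base change along \(\pi_{\GM}\). The bottom rectangle is Proposition \ref{propSMdRTorLog} (1) composed with the Cartesian square \(\Per^{\dR/\Fl^{\std}}\to\Fl^{\dR}\times\Fl^{\std}\) over \(\Per^{\dR/\Fl^{\std,\pdR}}\to\Fl^{\dR}\times\Fl^{\std,\pdR}\). The top square then follows from Theorem \ref{thmLAstrucGMHTper} (1) and pasting.

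Part (2) is formally identical. It replaces Proposition \ref{propSMdRTorLog} (1) by (2), and \(\Fl^{\std,\pdR}\) by \(\Fl^{\std,\dR}\), and restricts to the open \(\mX^{\sm}_{K^p}\). Working over \(\QQ_p\), it uses Theorem \ref{thmAnDRStackGeneral} (6) to pass from \(\CC_p\) to \(\bar\QQ_p\) in the bottom row.

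The main obstacle I expect is the \(\dagger\)-reduction identification for \(\mX^{\tor,\la}\to\mX^{\tor,\la,0}\), which is needed to produce the middle row of maps and to get the top square. Carrying out this step requires checking that the \(\theta^{Pan}\)-action, compared with \(\theta\) via Lemma \ref{lemCompareHorPilloniAndPan}, matches the groupoid \((\mm^{c,0})^\dagger\). I would try to do this by restricting the computation of \(C^{\la}\) in Lemma \ref{lemClaConcentraDeg0} to the fixed decomposition \(\fg^c=\fn\oplus\mm^c\oplus\bar\fn\).
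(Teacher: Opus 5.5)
Your pasting skeleton is the paper's. Its proof is just Proposition \ref{propCartesianLA0} combined with Theorem \ref{thmLAstrucGMHTper}, with the squares in the right-hand column Cartesian for formal reasons (Theorem \ref{thmAnDRStackGeneral} (1), plus (6) in part (2)).

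You diverge only in constructing the middle horizontal arrow $\mX^{\tor,\la,0}_{K^{p}}\to\Per^{\dR/(\Fl_{G,\mu}\times\Fl^{\std}_{G,\mu})}_{G,\mu}/G^{c,\an}_{\CC_{p}}$. The step you single out as the main obstacle is not needed there. Your first step already shows that the following square is Cartesian (modulo $G^{c,\an}_{\CC_{p}}$): the one with corners $\Per^{\dR/(\Fl\times\Fl^{\std})}_{G,\mu}$, $\Fl_{G,\mu}\times\Fl^{\std}_{G,\mu}$, $\Per^{\dR/\Fl^{\std}}_{G,\mu}$, $\Fl^{\dR/\CC_{p}}_{G,\mu}\times\Fl^{\std}_{G,\mu}$. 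So the arrow comes from its universal property, applied to the pair $\pi^{\sm}_{\GM\HT}\circ\pi^{\sm}_{\la,0}$ and $(\pi^{\la,0}_{\HT},\pi_{\GM}\circ\pi^{\sm}_{\la,0})$. These two agree over $\Fl^{\dR/\CC_{p}}_{G,\mu}\times\Fl^{\std}_{G,\mu}/G^{c,\an}_{\CC_{p}}$ because the square in Proposition \ref{propCartesianLA0} commutes. Its lower arrow $\pi^{\sm}_{\HT}$ is by construction the composite through $\Per^{\dR/\Fl^{\std}}_{G,\mu}/G^{c,\an}_{\CC_{p}}\to\Fl_{G,\mu}/(\fg^{0}/\fn^{0})^{\dagger}$. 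Commutativity of the top square is then checked after projecting to the two factors, using Lemma \ref{lemHTPeriodFactor0}, Lemma \ref{lemCompatibleWithHTGM} and Proposition \ref{propLAvsSmoothSV}. From there everything is pasting exactly as you wrote. No computation of $\mO^{\la}\otimes_{\mO^{\la,0}}\mO^{\la}$ and no comparison of $\theta^{Pan}$ with $\theta$ enters.

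You should drop the detour, because as sketched it does not close on its own. The $\dagger$-reduction computation only gives that $(\mX^{\tor,\la}_{K^{p}})^{\dR}\to(\mX^{\tor,\la,0}_{K^{p}})^{\dR}$ is a monomorphism. In Proposition \ref{propdRstackOfShLA}, the existence of the lift through $(\mX^{\tor,\la}_{K^{p}})^{\dR}$ came from $!$-surjectivity of $\mX^{\tor,\la}_{K^{p}}\to\mX^{\tor,\sm}_{K^{p}}$, inherited from $\pi^{\sm}$ via Lemma \ref{lemOpenSetsFromFulltoSM}. The analogous $!$-surjectivity of $\mX^{\tor,\la}_{K^{p}}\to\mX^{\tor,\la,0}_{K^{p}}$ is not established before this corollary. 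It is rather a consequence of it, as a base change of $\Per_{G,\mu}\to\Per^{\dR/(\Fl\times\Fl^{\std})}_{G,\mu}$. Likewise, deducing $\mO^{\la}\otimes_{\mO^{\la,0}}\mO^{\la}$ from $C^{\la}\otimes_{C^{\la,0}}C^{\la}$ would need $\tVBfu$ to be compatible with relative tensor products (bar constructions). That is more than the symmetric monoidality in Theorem \ref{thmMainThmGeomSenVB} (3) provides.
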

\begin{proof}
This follows from Proposition \ref{propCartesianLA0}
and Theorem \ref{thmLAstrucGMHTper}.
\end{proof}
We will also need a filtered version:
\begin{cor}\label{corXLa0WithTwoFlag}
We have the following Cartesian diagram of solid stacks over \( \CC_{p} \) \[
\begin{tikzcd}
{\mX^{\tor,\la,0}_{K^{p}}\times [{\mA}^{1}/\GG_{m}]}\arrow[r]\arrow[d,"h^{\la,0,+}"] &\arrow[d,"\beta_{\Fl}\times h^{\sm,+}"]\Fl_{G,\mu}\times 
(\mX^{\tor,\sm}_{K^{p}}\times [{\mA}^{1}/\GG_{m}])
\\
(\mX^{\tor}_{K^{p}})^{\pdR/\CC_{p},{+},\sm}_{\log}\arrow[r] &\Fl_{G,\mu}^{\dR}\times  (\mX^{\tor}_{K^{p}})^{\pdR/\CC_{p},{+},\sm}_{\log}.
\end{tikzcd}
\] 
\end{cor}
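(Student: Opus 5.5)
The plan is to obtain the filtered square by pasting two Cartesian squares and then base-changing along \([\mA^{1}/\GG_{m}]\). First, take the Cartesian square of Proposition \ref{propCartesianLA0} and multiply it by \([\mA^{1}/\GG_{m}]\). This gives a Cartesian square
\[
\mX^{\tor,\la,0}_{K^{p}}\times[\mA^{1}/\GG_{m}]\cong \Fl_{G,\mu}\times_{\Fl^{\dR}_{G,\mu}}\bigl(\mX^{\tor,\sm}_{K^{p}}\times[\mA^{1}/\GG_{m}]\bigr),
\]
where the map \(\mX^{\tor,\sm}_{K^{p}}\to\Fl^{\dR}_{G,\mu}\) is \(\pi^{\sm}_{\HT}\). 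I write \(\Fl^{\dR}_{G,\mu}\) for \(\Fl^{\dR/\CC_{p}}_{G,\mu}\); the two agree up to base change along \(\CC_{p}\to\bar{\QQ}_{p}\) by Theorem \ref{thmAnDRStackGeneral} (6), and the argument is unchanged either way. It then suffices to show the following: \(\pi^{\sm}_{\HT}\times\id\) factors through \(h^{\sm,+}:\mX^{\tor,\sm}_{K^{p}}\times[\mA^{1}/\GG_{m}]\to(\mX^{\tor}_{K^{p}})^{\pdR/\CC_{p},+,\sm}_{\log}\), via a map \(\pi^{\pdR,+}_{\HT}:(\mX^{\tor}_{K^{p}})^{\pdR/\CC_{p},+,\sm}_{\log}\to\Fl^{\dR}_{G,\mu}\). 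Once this holds, the square in the statement is the pullback of \(\Fl_{G,\mu}\to\Fl^{\dR}_{G,\mu}\) along \(\pi^{\pdR,+}_{\HT}\), pulled back further along \(h^{\sm,+}\). Formal pasting of Cartesian squares then gives the claim: the bottom-right term is \(\Fl^{\dR}_{G,\mu}\times(\mX^{\tor}_{K^{p}})^{\pdR/\CC_{p},+,\sm}_{\log}\), the bottom arrow is \((\pi^{\pdR,+}_{\HT},\id)\), and the fiber product of \(\beta_{\Fl}\times h^{\sm,+}\) with it is the iterated fiber product above.

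The key step is constructing \(\pi^{\pdR,+}_{\HT}\). I would first build the unfiltered map \(\pi^{\pdR}_{\HT}:(\mX^{\tor}_{K^{p}})^{\pdR/\CC_{p},\sm}_{\log}\to\Fl^{\dR}_{G,\mu}\) from Proposition \ref{propSMdRTorLog} (1). That result gives \(\pi^{\pdR}_{\GM\HT}\) to \(\Per_{G,\mu}^{\dR/(\Fl^{\std,\pdR}_{G,\mu})}/G^{c,\an}\). This stack maps to \(\Fl_{G,\mu}^{\dR}\), because \(\pi^{per}_{\HT}\) is \(*_{\GM}\)-invariant and de Rham stacks are functorial (Theorem \ref{thmAnDRStackGeneral} (1)); this is exactly the bottom row of Corollary \ref{corCartesianFlandSMGeneral} (1).

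To pass to the filtered version, I would compose with the map \((\mX^{\tor}_{K^{p}})^{\pdR/\CC_{p},+,\sm}_{\log}\to(\mX^{\tor}_{K^{p}})^{\pdR/\CC_{p},\sm}_{\log}\) of Remark \ref{rmkFilDRtoDR}, passing to the limit over \(K_{p}\). Restricted along \(h^{\sm,+}\), this composite is \(h^{\sm}\circ\pr_{1}\), so it agrees with \(\pi^{\sm}_{\HT}\circ\pr_{1}\) by Corollary \ref{corCartesianFlandSMGeneral} (1). Alternatively, one could define \(\pi^{\pdR,+}_{\HT}\) directly: the maps \(\hat{\Delta}^{+}_{X,\log}\rightrightarrows X^{+}\) become equal after composing with \(\Fl^{\dR}_{G,\mu}\), because \(\hat{\Delta}^{+}\) is a nilpotent thickening of the diagonal and \((-)^{\dR}\) is insensitive to such thickenings (feature (B) and Theorem \ref{thmAnDRStackGeneral} (2)). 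The map then descends to the groupoid quotient of Definition \ref{dfnFilterDRAlg}.

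The main obstacle is checking that the square built this way uses the same map \(\mX^{\tor,\la,0}_{K^{p}}\times[\mA^{1}/\GG_{m}]\to(\mX^{\tor}_{K^{p}})^{\pdR/\CC_{p},+,\sm}_{\log}\) as the intended \(h^{\la,0,+}\). I expect this to follow from the compatibility \(\pi^{\sm}_{\HT}=\pi^{\pdR}_{\HT}\circ h^{\sm}\) just established. Some care is also needed with \(\Fl^{\dR}\) versus \(\Fl^{\dR/\CC_{p}}\) and with passing the limit over \(K_{p}\) through the fiber products. The latter is handled by the fact that fiber products commute with limits.
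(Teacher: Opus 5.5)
Your proposal is correct and is essentially the paper's argument. The paper deduces the statement from Corollary \ref{corCartesianFlandSMGeneral} (1), which packages the Cartesian square of Proposition \ref{propCartesianLA0}, together with the fact that \(\mX^{\tor,\sm}_{K^{p}}\to(\mX^{\tor}_{K^{p}})^{\pdR/\CC_{p},\sm}_{\log}\) factors through the map of Remark \ref{rmkFilDRtoDR}; this is exactly your factorization \(\pi^{\sm}_{\HT}\times\id=\pi^{\pdR,+}_{\HT}\circ h^{\sm,+}\) followed by pasting of Cartesian squares.

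The only slip is your aside that the argument is ``unchanged either way.'' If you take the absolute \(\Fl^{\dR}_{G,\mu}\) as the base, then \(\Fl_{G,\mu}\times_{\Fl^{\dR}_{G,\mu}}\mX^{\tor,\sm}_{K^{p}}\) picks up an extra \(\CC_{p}\otimes_{\bar{\QQ}_{p}}\CC_{p}\) factor. So your chosen convention \(\Fl^{\dR/\CC_{p}}_{G,\mu}\) is the one that makes the square Cartesian; equivalently, read the bottom-right corner as \(\Fl^{\dR}_{G,\mu}\times_{\bar{\QQ}_{p}}(-)\).
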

\begin{proof}
This
follows from Corollary \ref{corCartesianFlandSMGeneral} (1), since \( \mX^{\tor,\sm}_{K^{p}}\to (\mX^{\tor}_{K^{p}})^{\pdR/\CC_{p},\sm}_{\log} \) factors throug the natural map in Remark \ref{rmkFilDRtoDR}.
\end{proof}

\begin{dfn}\label{dfnBiFilAgLa0}
We define the \emph{smooth bi-filtered algebraic de Rham stack} of \( \mX^{\tor,\la,0}_{K^{p}} \) as the solid stack over \( \CC_{p} \) defined by the following Cartesian diagram
\[
\begin{tikzcd}
(\mX^{\tor,\la,0}_{K^{p}})^{\pdR/\CC_{p},++}_{\log}\arrow[r,"p_{1}"]\arrow[d,"p_{2}"]
& (\mX^{\tor}_{K^{p}})_{\log}^{\pdR/\CC_{p},+,\sm}\arrow[d,"\pi_{\HT}^{\pdR}"]\\
\Fl^{\pdR/\CC_{p},+}_{G,\mu}\arrow[r,"g_{\Fl}^{+}"]
&\Fl^{\dR/\CC_{p}}_{G,\mu}, 
\end{tikzcd}
\] which lies naturally over \( [\mA^{1}/\GG_{m}]^{2} \).
Here \( g_{\Fl}^{+} \) is 
the composition \( \Fl^{\pdR/\CC_{p},+}_{G,\mu}\to \Fl^{\pdR/\CC_{p}}_{G,\mu}\times \mA^{1}/\GG_{m} \) (Remark \ref{rmkFilDRtoDR})
and \( \Fl^{\pdR/\CC_{p}}_{G,\mu}\to \Fl^{\dR/\CC_{p}}_{G,\mu} \).
\end{dfn}
This construction is seemingly ad hoc, but we will see in \cite{Jiang2025HMF} that the Fontaine operator is naturally defined over this space (\cite[Proposition \ref{2-propFontaineIsdRlinear}]{Jiang2025HMF}).
\begin{cor}\label{CordfnBiFilAgLa0}
We have the following Cartesian diagram of solid stacks over \( \CC_{p} \) \[
\begin{tikzcd}
{\mX^{\tor,\la,0}_{K^{p}}\times [{\mA}^{1}/\GG_{m}]^{2}}
\arrow[r,"\pi^{\la,0}_{\HT}\times \pi^{\la,0}_{\sm}"]
\arrow[d,"h^{\la,0,++}"] &
{(\Fl_{G,\mu}\times [\mA^{1}/\GG_{m}])}
\times  
{(\mX^{\tor,\sm}_{K^{p}}\times [\mA^{1}/\GG_{m}])}
\arrow[d,"h_{\Fl}^{+}\times h^{\sm,+}"]
\\ 
(\mX^{\tor,\la,0}_{K^{p}})^{\pdR/\CC_{p},++}_{\log}
\arrow[r,"\pi^{\pdR,++}_{\GM\HT}"] 
& \Fl_{G,\mu}^{\pdR/\CC_{p},+}\times  (\mX^{\tor}_{K^{p}})^{\pdR/\CC_{p},{+},\sm}_{\log}.
\end{tikzcd}
\]  
\end{cor}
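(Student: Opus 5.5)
The plan is to obtain the diagram by pasting two Cartesian squares, one for each filtration factor. By Definition \ref{dfnBiFilAgLa0}, $(\mX^{\tor,\la,0}_{K^{p}})^{\pdR/\CC_{p},++}_{\log}$ is the fiber product of $\Fl^{\pdR/\CC_{p},+}_{G,\mu}$ and $(\mX^{\tor}_{K^{p}})^{\pdR/\CC_{p},+,\sm}_{\log}$ over $\Fl^{\dR/\CC_{p}}_{G,\mu}$. Hence it is itself a fiber product over $\Fl^{\dR/\CC_{p}}_{G,\mu}\times(\mX^{\tor}_{K^{p}})^{\pdR/\CC_{p},+,\sm}_{\log}$, namely of $\Fl^{\pdR/\CC_{p},+}_{G,\mu}\times(\mX^{\tor}_{K^{p}})^{\pdR/\CC_{p},+,\sm}_{\log}$ with the graph of $\pi^{\pdR}_{\HT}$. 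So to show that the square in the statement is Cartesian, it suffices to compute the fiber product
\[
\bigl(\Fl_{G,\mu}\times[\mA^{1}/\GG_{m}]\bigr)\times\bigl(\mX^{\tor,\sm}_{K^{p}}\times[\mA^{1}/\GG_{m}]\bigr)\times_{\Fl^{\pdR/\CC_{p},+}_{G,\mu}\times(\mX^{\tor}_{K^{p}})^{\pdR/\CC_{p},+,\sm}_{\log}}(\mX^{\tor,\la,0}_{K^{p}})^{\pdR/\CC_{p},++}_{\log}
\]
and identify it with $\mX^{\tor,\la,0}_{K^{p}}\times[\mA^{1}/\GG_{m}]^{2}$.

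The computation goes in two base changes. First, pulling the defining Cartesian square back along $h^{\sm,+}$ replaces $(\mX^{\tor}_{K^{p}})^{\pdR/\CC_{p},+,\sm}_{\log}$ by $\mX^{\tor,\sm}_{K^{p}}\times[\mA^{1}/\GG_{m}]$. The composite to $\Fl^{\dR/\CC_{p}}_{G,\mu}$ is then $\pi^{\sm}_{\HT}\circ\pr_{1}$; by Corollary \ref{corXLa0WithTwoFlag}, this map factors through the $[\mA^{1}/\GG_{m}]$-trivial projection, and the same corollary (via Remark \ref{rmkFilDRtoDR}) records the compatibility that the filtered map and $\pi^{\sm}_{\HT}$ agree on $\mX^{\tor,\sm}_{K^{p}}$. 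Second, we pull back along $h^{+}_{\Fl}:\Fl_{G,\mu}\times[\mA^{1}/\GG_{m}]\to\Fl^{\pdR/\CC_{p},+}_{G,\mu}$. Since $g^{+}_{\Fl}\circ h^{+}_{\Fl}$ equals $h\circ\pr_{1}$, where $h:\Fl_{G,\mu}\to\Fl^{\dR/\CC_{p}}_{G,\mu}$ is the map of Proposition \ref{propCartesianLA0}, the total fiber product becomes
\[
\bigl(\Fl_{G,\mu}\times_{\Fl^{\dR/\CC_{p}}_{G,\mu}}\mX^{\tor,\sm}_{K^{p}}\bigr)\times[\mA^{1}/\GG_{m}]^{2}.
\]
Proposition \ref{propCartesianLA0} identifies the first factor with $\mX^{\tor,\la,0}_{K^{p}}$, with the projections $\pi^{\la,0}_{\HT}$ and $\pi^{\la,0}_{\sm}$.

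Finally, I would check that under this identification the maps match those in the statement: the top map is $\pi^{\la,0}_{\HT}\times\pi^{\la,0}_{\sm}$, and the left map, which defines $h^{\la,0,++}$ and $\pi^{\pdR,++}_{\GM\HT}$, is the induced one. This is essentially bookkeeping with the pasting law for Cartesian squares in the $\infty$-topos of solid stacks.

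The main obstacle I expect is the compatibility step: verifying that $\pi^{\pdR}_{\HT}\circ h^{\sm,+}$ equals $\pi^{\sm}_{\HT}\circ\pr_{1}$ as maps to $\Fl^{\dR/\CC_{p}}_{G,\mu}$, coherently and not just on points. I would argue this as in Corollary \ref{corXLa0WithTwoFlag}, using the fact that $\pi^{\pdR}_{\HT}$ is constructed from the right square of Corollary \ref{corCartesianFlandSMGeneral}(1) together with the factorization $h^{\sm,+}:\mX^{\tor,\sm}_{K^{p}}\times[\mA^{1}/\GG_{m}]\to(\mX^{\tor}_{K^{p}})^{\pdR/\CC_{p},+,\sm}_{\log}\to(\mX^{\tor}_{K^{p}})^{\pdR/\CC_{p},\sm}_{\log}\times[\mA^{1}/\GG_{m}]$ of Remark \ref{rmkFilDRtoDR}.
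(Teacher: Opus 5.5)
Your argument is correct and is essentially the paper's proof. The paper deduces the statement directly from Corollary \ref{corXLa0WithTwoFlag} together with Lemma \ref{lemSmDRStackProperty}~(1); that corollary is exactly Proposition \ref{propCartesianLA0} combined with the factorization through Remark \ref{rmkFilDRtoDR}, which you correctly identify as the key compatibility, so your two-step base change and pasting of Cartesian squares amounts to the same argument (and you could shortcut your first base change by quoting Corollary \ref{corXLa0WithTwoFlag} instead of re-deriving it).
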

\begin{proof}
This follows immediately from Corollary \ref{corXLa0WithTwoFlag} and Lemma \ref{lemSmDRStackProperty} (1).
\end{proof}
\begin{notation}\label{notationFilterCompleteBiFilter}
We will write \[
(\mX^{\tor,\la,0}_{K^{p}})^{\pdR/\CC_{p},\hpp}_{\log}:=
(\mX^{\tor,\la,0}_{K^{p}})^{\pdR/\CC_{p},++}_{\log}\times_{[\mA^{1}/\GG_{m}]^{2}}[\hat{\mA}^{1}/\GG_{m}]^{2},
\] and we will write \( h^{\la,0,\hpp}:=h^{\la,0,++}|_{[\hat{\mA}^{1}/\GG_{m}]^{2}} \). Similarly, \[ \pi^{\pdR,\hpp}_{\GM\HT}:=\pi^{\pdR,++}_{\GM\HT}|_{[\hat{\mA}^{1}/\GG_{m}]^{2}}, h^{\hat{+}}_{\Fl}:=h^{+}_{\Fl}|_{[\hat{\mA}^{1}/\GG_{m}]},h^{\sm,\hat{+}}:=h^{\sm,+}|_{[\hat{\mA}^{1}/\GG_{m}]}. \]
We write \[
(\mX^{\tor,\la,0}_{K^{p}})^{\pdR/\CC_{p}}_{\log}:=(\mX^{\tor,\la,0}_{K^{p}})^{\pdR/\CC_{p},++}_{\log}\times_{[\mA^{1}/\GG_{m}]^{2}}[\GG_{m}/\GG_{m}]^{2}.
\]
\end{notation}

\subsubsection{Differential operators}
As a consequence of Corollary \ref{CordfnBiFilAgLa0}, we can construct various (algebraic log) differential operators from the dual BGG complexes on \( \Fl_{G,\mu} \)
and \( \mX^{\tor}_{\Kpp} \) (Proposition \ref{propFilterBGG} and \ref{propBGGFilSV}). 
For this purpose, we will need the following lemma which can be understood as a certain K\"unneth formula.
\begin{lem}\label{lemKunnFlSm}\label{propNablaKWasPullBackDR}
Let \( V \in \QCoh(
\Fl_{G,\mu})\) and let \( W \in \QCoh(\mX^{\tor,\sm}_{K^{p}}) \). 

(1) The natural morphism in \( \QCoh(\Fl_{G,\mu}^{\pdR/\CC_{p},\hat{+}}\times  (\mX^{\tor}_{K^{p}})^{\pdR/\CC_{p},{\hat{+}},\sm}_{\log}) \)
\[
(h^{\hat{+}}_{\Fl})_{*}(V)\boxtimes (h^{\sm,\hat{+}})_{*}(W)\to (h^{\hat{+}}_{\Fl}\times h^{\sm,\hat{+}})_{*}(V\boxtimes W)
\] is an isomorphism.

(2) The natural morphism in \( \QCoh((\mX^{\tor,\la,0}_{K^{p}})^{\pdR/\CC_{p},\hpp}_{\log}) \) 
\begin{align*}
(\pi^{\pdR,\hpp}_{\GM\HT})^{*}\left(
(h^{\hat{+}}_{\Fl})_{*}(V)\boxtimes (h^{\sm,\hat{+}})_{*}(W)\right)\to\\ (h^{\la,0,\hpp})_{*}\left(
	(\pi^{\la,0}_{\HT})^{*}(V)\otimes (\pi^{\la,0}_{\sm})^{*}(W)
\right)
\end{align*}
is an isomorphism.

(3) Let \( M\in\QCoh(\FLT{T}^{\dR/\CC_{p}}) \).
The natural morphism over \( (\mX^{\tor}_{K^{p}})^{\pdR/\CC_{p},\hp,\sm}_{\log} \) \[
h^{\sm,+}_{*}\left(
	V\otimes (\pi^{\sm}_{\HT})^{*}(M)
\right)
\to 
h^{\sm,+}_{*}(V)\otimes (\pi^{\pdR}_{\HT})^{*}(M)
\] is an isomorphism, where the morphisms are as in \[
\pi^{\sm}_{\HT}:\mX^{\tor,\sm}_{K^{p}}\times [\mA^{1}/\GG_{m}]
\xrightarrow{h^{\sm,+}}(\mX^{\tor}_{K^{p}})^{\pdR/\CC_{p},\hp,\sm}_{\log}
\xrightarrow{\pi^{\pdR}_{\HT}}\Fl^{dR/\CC_{p}}_{G,\mu}.
\]
\end{lem}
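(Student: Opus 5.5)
The plan is to prove all three statements by reducing to the affinoid, filtered-complete situation, where the relevant stacks become ``affine'' (Lemma \ref{lemLogDRStackIsAffine} and Lemma \ref{lemQCohSMsv} (3)). In that situation, $h_{*}$ is just the forgetful functor from modules over a filtered complete algebra to modules over a de Rham algebra. The failure in the non-complete case comes from the infinite-dimensional fibres of $h$: the fibres are formal neighbourhoods $\hat{\Delta}$, and $h_{*}$ does not commute with external tensor products or base change without completion. So the key is to work throughout in $\widehat{\Fil}^{\ZZ}$ and to use completed tensor products.

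For (1), localize on $\Fl_{G,\mu}$ and on $\mX^{\tor,\sm}_{K^{p}}$ to affinoids $\Spa(B,B^{+})\subset\Fl_{G,\mu}$ and chartable $U^{\sm}$ with $U_{K_{p}}=\Spa(A_{K_{p}},\ldots)$. This is legitimate because the covers $h^{\hat{+}}$ are compatible with open immersions (Lemma \ref{lemBasicLogDRSta} (3)), and both sides commute with restriction to opens.
\begin{itemize}
\item By Lemma \ref{lemLogDRStackIsAffine}, $\QCoh(\Fl^{\pdR/\CC_{p},\hat{+}}_{G,\mu}|_{B})\cong\Mod_{\dR(B)}(\widehat{\Fil}^{\ZZ})$. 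By Lemma \ref{lemQCohSMsv} (3), the analogous statement holds for $\dR_{\log}(A^{\sm})$ on the other factor.
\item In these terms, $h_{*}$ is the forgetful functor from $B$-modules (resp. $A^{\sm}$-modules).
\item The product stack is then affine over $\dR(B)\hatotimes\dR_{\log}(A^{\sm})$. This uses Lemma \ref{lemSpfCommuteWithLimit} together with the fact that the Čech nerves of the two groupoids multiply.
\item Both sides of (1) are then the completed tensor product $V\hatotimes W$, viewed as a module over $\dR(B)\hatotimes\dR_{\log}(A^{\sm})$.
\end{itemize}
The only content is that the external product of forgetful functors is the forgetful functor of the product, which is formal once affineness of the product is established.

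For (2), restrict Corollary \ref{CordfnBiFilAgLa0} to $[\hat{\mA}^{1}/\GG_{m}]^{2}$ to get a Cartesian square. Base change along $\pi^{\pdR,\hpp}_{\GM\HT}$ would then give the result, but $h_{\Fl}^{\hat{+}}\times h^{\sm,\hat{+}}$ is not prim, so Lemma \ref{lemSmBaseChange} (2) is not available directly. Instead I again compute locally in the affine picture. Over $U^{\sm}$ with image in $\Spa(B)$, the stack $(\mX^{\tor,\la,0}_{K^{p}})^{\pdR,\hpp}_{\log}$ should be affine with ring
\[
(\dR(B)\hatotimes\dR_{\log}(A^{\sm}))\hatotimes_{\dR(B)^{\dagger}}\ldots\,.
\]
Concretely, I would identify the fibre product with the pushout along the ring of functions on $\Fl^{\dR}$. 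Here Proposition \ref{propCartesianLA0} together with Lemma \ref{lemClaConcentraDeg0} gives
\[
\mO^{\la,0}_{K^{p}}\cong\mO^{\sm}\hatotimes_{\mO_{\Fl}^{\dR\text{-}\mathrm{desc}}}\mO_{\Fl}
\]
locally. With this, the pullback of the forgetful module $V\hatotimes W$ is $V\hatotimes_{B}\mO^{\la,0}\hatotimes_{A^{\sm}}W$ after filtered completion. This is exactly $h^{\la,0,\hpp}_{*}$ of the right-hand side, because $h^{\la,0,\hpp}$ is likewise the forgetful functor from $\mO^{\la,0}$-modules.

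The main obstacle is precisely this identification: showing that the fibre product in Definition \ref{dfnBiFilAgLa0}, restricted to the complete locus, is again ``affine'' with the expected completed pushout ring. The map to $\Fl^{\dR}$ involves the dagger algebra $\mO\{\ldots\}^{\dagger}$ rather than a formal completion. I would handle this by first pulling everything back along the $!$-surjection $h^{\sm,\hat{+}}$ on the second factor. That pullback reduces to the Cartesian square of Corollary \ref{corXLa0WithTwoFlag}, which is prim-free but has affine-proper fibres $\Fl\to\Fl^{\dR}$ (Theorem \ref{thmAnDRStackGeneral} (3)). For those fibres, base change holds by Lemma \ref{lemSmBaseChange} (2) via cohomological co-smoothness (Proposition \ref{propCartesianLA0}). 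Then I would descend, using that the filtration on the $\Fl$-factor is complete, so that the inclusion $\mO\{\ldots\}^{\dagger}\subset\mO[[\ldots]]$ from Lemma \ref{lemDiagonalFlIsEasy} becomes harmless after completion.

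Part (3) is the projection formula. Since $h^{\sm,+}$ factors through $\pi^{\pdR}_{\HT}$ and $M$ is pulled back from $\Fl^{\dR/\CC_{p}}$, it follows from Lemma \ref{lemSmBaseChange} (2) applied to the co-smooth map from Proposition \ref{propCartesianLA0}, or, failing that, by the same local affine computation as above.
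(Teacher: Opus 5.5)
The genuine gap is in (2), which is the real content of the lemma.

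Your treatment of (1), and of (3) via your fallback, is a legitimate alternative to the paper's route. In the filtered-complete affine picture of Lemma~\ref{lemLogDRStackIsAffine} and Lemma~\ref{lemQCohSMsv}~(3), \(h_{*}\) is restriction of scalars along \(\dR_{\log}(A^{\sm})\to A^{\sm}\). So the K\"unneth map and the projection formula become formal, granting affineness of the product (your \v{C}ech-nerve argument plausibly gives this). Your first justification of (3) is wrong, though: Lemma~\ref{lemSmBaseChange}~(2) needs the map you push forward along to be prim. Here \(h^{\sm,\hat{+}}\) is suave but not prim, and \(M\) is not perfect.

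Now (2). Suppose \((\mX^{\tor,\la,0}_{K^{p}})^{\pdR/\CC_{p},\hpp}_{\log}\) were locally affine with ring \(R_{Y}\). Then (2) for \(V=\mO\), \(W=\mO\) says exactly that \((B\hatotimes A^{\sm})\hatotimes_{\dR(B)\hatotimes\dR_{\log}(A^{\sm})}R_{Y}\cong\mO^{\la,0}_{K^{p}}\). You flag this as the obstacle but never prove it, and your workaround does not close it:
\begin{itemize}
\item Pulling back along \(h^{\sm,\hat{+}}\) is fine by smooth base change. But you must still commute \((h^{\hat{+}}_{\Fl})_{*}\) with pullback along a base change of the co-smooth map \(\pi^{\sm}_{\HT}\).
\item Proper base change does not apply there, because \(h^{\hat{+}}_{\Fl}\) is not prim. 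The paper stresses this is exactly why (2) is not a formal consequence of the Cartesian square.
\item Co-smoothness of \(\beta_{\Fl}:\Fl_{G,\mu}\to\Fl_{G,\mu}^{\dR}\) only gives base change for \(\beta_{\Fl,*}=g_{\Fl,*}\circ h_{\Fl,*}\). You cannot recover \(h_{\Fl,*}\) from that.
\item The claim that ``\(\mO\{\cdot\}^{\dagger}\subset\mO[[\cdot]]\) becomes harmless after completion'' is the unproved step itself, not an argument.
\end{itemize}

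The missing idea is to use completeness to check everything on bigraded pieces; this is how the paper proves all three parts. First pull back along the \(!\)-surjection \(h^{\la,0,\hpp}\), whose pullback is conservative. Then pull back along \((B\GG_{m})^{2}\to[\hat{\mA}^{1}/\GG_{m}]^{2}\), which is conservative on complete objects by Lemma~\ref{lemPerfComplexA1Gm}~(1). Over \((B\GG_{m})^{2}\), the Cartesian square of Corollary~\ref{CordfnBiFilAgLa0} and Lemma~\ref{lemAlgebHTStack} identify the bi-filtered stack with the quotient of \(\mX^{\tor,\la,0}_{K^{p}}\times(B\GG_{m})^{2}\) by the formal completion of \((\pi^{\la,0}_{\HT})^{*}T_{\Fl_{G,\mu}}\{-1\}\oplus(\pi^{\la,0}_{\sm})^{*}T^{\sm}_{\mX^{\tor}_{K^{p}},\log}\{-1\}\). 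Smooth base change then computes \(\gr^{(i,j)}\) of both sides as \((\pi^{\la,0}_{\HT})^{*}(V\otimes\Sym^{i}\Omega^{1}_{\Fl_{G,\mu}})\otimes(\pi^{\la,0}_{\sm})^{*}(W\otimes\Sym^{j}\Omega^{1,\sm}_{\mX^{\tor}_{K^{p}},\log})\), compatibly with the natural map.

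Even in your affine language, the ring identity above must be proved this way. You would show \(\gr R_{Y}\cong\gr\bigl(\dR(B)\hatotimes\dR_{\log}(A^{\sm})\bigr)\otimes_{B\otimes A^{\sm}}\mO^{\la,0}_{K^{p}}\), and that needs exactly this Hodge-stack description.
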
 
\begin{rmk}
As a corollary of Lemma \ref{lemKunnFlSm} (2), differential operators on \( \Fl_{G,\mu} \)
and on \( \mX^{\tor,\sm}_{K^{p}} \) will induces differential operators on \( \mX^{\tor,\la,0}_{K^{p}} \).
\end{rmk}
\begin{proof}
(1)
By Lemma \ref{lemBasicLogDRSta} (1), it suffices to verify that the natural morphism becomes an isomorphism after pulling back along \( h^{\hat{+}}_{\Fl}\times h^{\sm,\hat{+}} \).
By Lemma \ref{lemPerfComplexA1Gm} (1), it further suffices to verify it after pulling back along \(
(B\GG_{m})^{2}\to [\hat{\mA}^{1}/\GG_{m}]^{2}.
\) 
Then by Lemma \ref{lemAlgebHTStack}, we have \[
\Fl_{G,\mu}^{\alg-\mrm{Hodge}/\CC_{p}}=
\Fl_{G,\mu}^{\pdR/\CC_{p},\hat{+}}|_{B\GG_{m}}\cong (\Fl_{G,\mu}\times B\GG_{m})/\hat{T}_{\Fl_{G,\mu}}\{-1\},
\] where \( \hat{T}_{\Fl_{G,\mu}}\{-1\} \) denotes the formal completion of 
\( T_{\Fl_{G,\mu}}\{-1\} \) at the zero section.

Similarly, let \( (\mX^{\tor}_{K^{p}})^{\alg-\mrm{Hodge}/\CC_{p},\sm}_{\log}:=
(\mX^{\tor}_{K^{p}})^{\pdR/\CC_{p},\hat{+},\sm}_{\log}
|_{B\GG_{m}} \). By applying Lemma \ref{lemAlgebHTStack} to \( \mX^{\tor}_{\Kpp} \) and taking limit along \( K_{p} \), by Lemma \ref{lemSmDRStackProperty}, we have \[(\mX^{\tor}_{K^{p}})^{\alg-\mrm{Hodge}/\CC_{p},\sm}_{\log}
\cong (\mX^{\tor,\sm}_{K^{p}}\times B\GG_{m})/\hat{T}^{\sm}_{\mX^{\tor}_{K^{p}},\log}\{-1\},
\] with \( \hat{T}^{\sm}_{\mX^{\tor}_{K^{p}},\log}\{-1\}:=\varprojlim_{K_{p}}\hat{T}_{\mX^{\tor}_{\Kpp},\log}\{-1\} \).

Therefore, 
\begin{align}\label{alignRHSQuotientVB}
	&\Fl_{G,\mu}^{\alg-\mrm{Hodge}/\CC_{p}}\times 
(\mX^{\tor}_{K^{p}})^{\alg-\mrm{Hodge}/\CC_{p},\sm}_{\log}\cong \\
&
\left((\Fl_{G,\mu}\times B\GG_{m})\times
(\mX^{\tor,\sm}_{K^{p}}\times B\GG_{m})\right)/\left(p_{1}^{*}(T_{\Fl_{G,\mu}}\{-1\})\oplus p_{2}^{*}(T^{\sm}_{\mX^{\tor}_{K^{p}},\log}\{-1\})\right)^{\wedge}
\end{align} where \( (-)^{\wedge} \) denotes the formal completion along the zero section.

Thus using Lemma \ref{lemSmBaseChange} (1),
\begin{align*}
(h^{\hat{+}}_{\Fl})^{*}(h^{\hat{+}}_{\Fl})_{*}(V)|_{\Fl_{G,\mu}\times B\GG_{m}}\cong 
(h^{\hat{+}}_{\Fl}|_{B\GG_{m}})^{*}(h^{\hat{+}}_{\Fl}|_{B\GG_{m}})_{*}(V)\\
\in \QCoh(\Fl_{G,\mu}\times B\GG_{m})\cong \gr^{\ZZ}(\QCoh(\Fl_{G,\mu}))
\end{align*}
for \( h^{\hat{+}}_{\Fl}|_{B\GG_{m}}:\Fl_{G,\mu}\times B\GG_{m}\to \Fl_{G,\mu}^{\alg-\mrm{Hodge}/\CC_{p}} \),
where the isomorphism is given by Lemma \ref{lemSmBaseChange} as \( B\GG_{m}\to \mA^{1}/\GG_{m} \) is cohomologically smooth. As a consequence, 
\[
\gr^{i}((h^{\hat{+}}_{\Fl})^{*}(h^{\hat{+}}_{\Fl})_{*}(V))\cong \Sym^{i}(\Omega^{1}_{\Fl_{G,\mu}})\otimes V\in \QCoh(\Fl_{G,\mu}).
\] 
Similarly, \[
\gr^{j}((h^{\hat{+}}_{\Fl})^{*}(h^{\hat{+}}_{\Fl})_{*}(W))\cong \Sym^{j}(\Omega^{1,\sm}_{\mX^{\tor}_{K^{p}},\log})\otimes W\in \QCoh(\mX^{\tor,\sm}_{K^{p}}).
\] with \( \Omega^{1,\sm}_{\mX^{\tor}_{K^{p}},\log}:=\pi^{\sm}_{K_{p}}\Omega^{1}_{\mX^{\tor}_{\Kpp},\log} \).

Similarly, by Corollary \ref{corFilterdatA1modGm}, we can identify \[\QCoh(
(\Fl_{G,\mu}\times B\GG_{m})\times
(\mX^{\tor,\sm}_{K^{p}}\times B\GG_{m}))\cong \gr^{\ZZ\times\ZZ} \QCoh(\Fl_{G,\mu}\times \mX^{\tor,\sm}_{K^{p}})
\] where \( \gr^{\ZZ\times\ZZ}(-):=\mrm{Fun}((\ZZ\times\ZZ)^{ds},-) \) with \((\ZZ\times\ZZ)^{ds}\) is the discrete category corresponds to the set \(\ZZ\times\ZZ\). Then again by Lemma \ref{lemSmBaseChange} (1) and (\ref{alignRHSQuotientVB}), \[
\gr^{(i,j)}((h^{\hat{+}}_{\Fl}\times h^{\sm,\hat{+}})_{*}(V\boxtimes W))\cong (\Sym^{i}(\Omega^{1}_{\Fl_{G,\mu}})\otimes W)\boxtimes (\Sym^{j}(\Omega^{1,\sm}_{\mX^{\tor}_{K^{p}},\log})\otimes W).
\] This shows that the natural morphism is an isomorphism on \( \gr^{(i,J)} \) for any \( i,j \), and thus is an isomorphism.

(2) As in the proof of (1), it suffices to verify that the natural morphism becomes an isomorphism after pulling back to \( \mX^{\tor,\la,0}_{K^{p}}\times (B\GG_{m})^{2} \). Using (\ref{alignRHSQuotientVB}) and the Cartesian diagram in Corollary \ref{CordfnBiFilAgLa0}, we know that 
the right vertical arrow admits a splitting, and thus so does the left vertical arrow, which implies that
\begin{align*}
&(\mX^{\tor,\la,0}_{K^{p}})^{\pdR/\CC_{p},++}_{\log}|_{(B\GG_{m})^{2}}
\cong \\&\left(\mX^{\tor,\la,0}_{K^{p}}\times (B\GG_{m})^{2}\right)/\left(
(\pi^{\la,0}_{\HT})^{*}(\Omega^{1}_{\Fl_{G,\mu}})
\oplus (\pi^{\la,0}_{\sm})^{*}(\Omega^{1,\sm}_{\mX^{\tor}_{K^{p}},\log})
\right)^{\wedge}.
\end{align*}
By Corollary \ref{corFilterdatA1modGm}, we can identify \[\QCoh
(\mX^{\tor,\la,0}_{K^{p}}\times (B\GG_{m})^{2})\cong \gr^{\ZZ\times\ZZ} \QCoh(\mX^{\tor,\la,0}_{K^{p}}),
\] and then by Lemma \ref{lemSmBaseChange} and the argument in (1), 

\begin{align*}&
\gr^{(i,j)}\left((h^{\la,0,\hpp})^{*}(h^{\la,0,\hpp})_{*}
	\left((\pi^{\la,0}_{\HT})^{*}(V)\otimes (\pi^{\la,0}_{\sm})^{*}(W)\right)
\right)\\
&\cong (\pi^{\la,0}_{\HT})^{*}(V\otimes \Sym^{i}\Omega^{1}_{\Fl_{G,\mu}})\otimes
(\pi^{\la,0}_{\sm})^{*}(W\otimes \Sym^{j}\Omega^{1,\sm}_{\mX^{\tor}_{K^{p}},\log})\\
& \cong \gr^{(i,j)}\left((h^{\la,0,\hpp})^{*}
(\pi^{\pdR,\hpp}_{\GM\HT})^{*}\left(
(h^{\hat{+}}_{\Fl})_{*}(V)\boxtimes (h^{\sm,\hat{+}})_{*}(W)\right)\right)
\end{align*} where the last isomorphism is given by the proof of (1). This shows that the natural morphism is indeed an isomorphism.

(3) Again it suffices to verify the isomorphism after applying \( \gr^{\bullet} \). The proof is straight-forward using the description of \( (\mX^{\tor}_{K^{p}})^{\alg-\mrm{Hodge}/\CC_{p}}_{\log} \) above.
\end{proof}
We will also need an analytic version. For this, we work away from the toroidal boundaries:
\begin{lem}\label{lemDiffOpAlgvsAn}
(1)
We have a commutative diagram of solid stacks: 
\[
\begin{tikzcd}
	{\mX^{\la,0}_{K^{p}}\times_{\CC_{p}} [\hat{\mA}^{1}/\GG_{m}]^{2}}
\arrow[r,"\pi^{\la,0}_{\HT}\times\pi^{\la,0}_{\sm}"]
\arrow[d,"h^{\la,0,\hpp}"]
&
{(\Fl_{G,\mu}\times_{\CC_{p}} [\hat{\mA}^{1}/\GG_{m}])}
\times_{\CC_{p}}  
{(\mX^{\sm}_{K^{p}}\times_{\CC_{p}} [\hat{\mA}^{1}/\GG_{m}])}
\arrow[d,"h_{\Fl}^{\hat{+}}\times h^{\sm,\hat{+}}"]
\arrow[dd,"\beta_{\Fl}^{\hat{+}}\times \beta^{\sm,\hat{+}}",bend left=83]
\\ 
(\mX^{\la,0}_{K^{p}})^{\pdR/\CC_{p},\hpp}
\arrow[r,"\pi^{\pdR,\hpp}_{\GM\HT}"] \arrow[d,"g^{\la,0,\hpp}"]
& \Fl_{G,\mu}^{\pdR/\CC_{p},\hat{+}}\times_{\CC_{p}}  \mX_{K^{p}}^{\pdR/\CC_{p},{\hat{+}},\sm}\arrow[d,"g_{\Fl}^{\hat{+}}\times g^{\sm,\hat{+}}"]
\\
\mX^{\dR,\sm}_{K^{p}}\arrow[r,"\pi_{\HT}^{\dR}\times 1"]
&
\Fl_{G,\mu}^{\dR}\times_{\bar{\QQ}_{p}} \mX^{\dR,\sm}_{K^{p}},
\end{tikzcd}
\] in which both squares are Cartesian. 
Here 
\( \mX^{\dR,\sm}_{K^{p}}:=\varprojlim_{K_{p}}\mX_{\Kpp}^{\dR} \), and
\( \beta_{\Fl}^{\hat{+}} \) (resp. \( \beta^{\sm,\hat{+}} \)) is induced by \( \beta_{\Fl}:\Fl_{G,\mu}\to \Fl_{G,\mu}^{\dR} \) (resp. \( \beta^{\sm}:\mX^{\sm}_{K^{p}}\to \mX^{\dR,\sm}_{K^{p}} \)). 
Let \( \beta^{\la,0} \) be the composition \[ \beta^{\la,0}:\mX^{\la,0}\xrightarrow{\pi^{\la,0}_{\sm}}\mX^{\sm}_{K^{p}}\xrightarrow{\beta^{\sm}}\mX^{\dR,\sm}_{K^{p}}, \] and then \( g^{\la,0,\hpp}\circ h^{\la,0,\hpp}\cong \beta^{\la,0}\circ p_{1} \).


(2)
For any \( V\in \QCoh(\Fl_{G,\mu}) \)
and \( W\in \QCoh(\mX^{\sm}_{K^{p}}) \), then we have the following natural commutative diagram
\[
\begin{tikzcd}[column sep=-0.1cm]
g^{\la,0,\hpp}_{*} 
(\pi^{\pdR,\hp\hp}_{\GM\HT})^{*}\left(
h^{\hp}_{\Fl,*}(V)\boxtimes h^{\sm,\hp}_{*}(W)\right)\arrow[r]\arrow[d]
& (\pi^{\dR}_{\HT})^{*}
\beta_{\Fl,*}(V)\otimes \beta^{\sm,\hat{+}}_{*}(W)\arrow[d]
\\
(\beta^{\la,0}\circ p_{1})_{*}\left(
	(\pi^{\la,0}_{\HT})^{*}(V)\otimes (\pi^{\la,0}_{\sm})^{*}(W)
\right)\arrow[r]
&
\beta^{\la,0}_{*}\left(
	(\pi^{\la,0}_{\HT})^{*}(V)\otimes (\pi^{\la,0}_{\sm})^{*}(W)
\right)
\end{tikzcd}
\] where the left vertical arrow is as in Lemma \ref{lemKunnFlSm} (2). Moreover, all the arrows are isomorphisms.
\end{lem}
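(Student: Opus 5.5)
For (1), I would build the diagram column by column and check Cartesianness square by square. The top square is Corollary \ref{CordfnBiFilAgLa0} restricted to \([\hat{\mA}^{1}/\GG_{m}]^{2}\) and to the open part \(\mX^{\sm}_{K^{p}}\subset\mX^{\tor,\sm}_{K^{p}}\). Away from the boundary the log structure is trivial, so \(\mX^{\pdR/\CC_{p},\hat{+},\sm}_{K^{p}}\) is the usual filtered complete algebraic de Rham stack. The maps \(g_{\Fl}^{\hat{+}}\) and \(g^{\sm,\hat{+}}\) are the compositions
\[
X^{\pdR,\hat{+}}\to X^{\pdR}\times[\hat{\mA}^{1}/\GG_{m}]\to X^{\pdR}\to X^{\dR},
\]
using Remark \ref{rmkFilDRtoDR} and Proposition \ref{propAnToAlgDRJuan}, taken in the limit over \(K_{p}\) for the Shimura factor. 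I would define \(g^{\la,0,\hpp}\) as the base change of \(g^{\sm,\hat{+}}\) along the projection to the second factor. By Definition \ref{dfnBiFilAgLa0}, \((\mX^{\la,0}_{K^{p}})^{\pdR,\hpp}\) is \(\Fl^{\pdR,\hat{+}}\times_{\Fl^{\dR}}\mX^{\pdR,\hat{+},\sm}_{K^{p}}\), which is the fibre product of the right column with \(\pi_{\HT}^{\dR}\times1\). Concretely,
\[
(\Fl^{\pdR,\hat{+}}\times\mX^{\pdR,\hat{+},\sm})\times_{\Fl^{\dR}\times\mX^{\dR,\sm}}\mX^{\dR,\sm}\cong\Fl^{\pdR,\hat{+}}\times_{\Fl^{\dR}}\mX^{\pdR,\hat{+},\sm},
\]
where the map \(\mX^{\pdR,\hat{+},\sm}\to\Fl^{\dR}\) factors as \(\mX^{\pdR,\sm}\to\mX^{\dR,\sm}\xrightarrow{\pi^{\dR}_{\HT}}\Fl^{\dR}\). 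That this agrees with the \(\pi^{\pdR}_{\HT}\) used in Definition \ref{dfnBiFilAgLa0} follows from the uniqueness of extensions into de Rham stacks, as in Lemma \ref{lemTwoBdRCoincide}. This makes the bottom square Cartesian by construction. The identity \(g^{\la,0,\hpp}\circ h^{\la,0,\hpp}\cong\beta^{\la,0}\circ p_{1}\) then follows by chasing the outer right arrow \(\beta_{\Fl}^{\hat{+}}\times\beta^{\sm,\hat{+}}\) through the two squares. The \(\Fl\)-component is eliminated by the fibre product, so only \(\beta^{\sm}\circ\pi^{\la,0}_{\sm}\) survives.

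For (2), the top horizontal arrow is base change along the bottom Cartesian square applied to \(h^{\hp}_{\Fl,*}(V)\boxtimes h^{\sm,\hp}_{*}(W)\). The base change statement I need is
\[
(\pi^{\dR}_{\HT}\times1)^{*}(g_{\Fl}^{\hat{+}}\times g^{\sm,\hat{+}})_{*}\cong g^{\la,0,\hpp}_{*}(\pi^{\pdR,\hpp}_{\GM\HT})^{*}.
\]
I would get it from Lemma \ref{lemSmBaseChange} (2): \(g\) is prim by Proposition \ref{propAnToAlgDRJuan} (1), and this primness persists in the limit over \(K_{p}\) by Lemma \ref{lemUniformAffineProper}. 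Next, Lemma \ref{lemKunnethProper} splits \((g_{\Fl}\times g^{\sm})_{*}\) into a box product. After that, \(g_{\Fl,*}^{\hat{+}}h^{\hp}_{\Fl,*}=\beta^{\hat{+}}_{\Fl,*}\), and the forgetting of the filtration turns this into \(\beta_{\Fl,*}(V)\). Finally, restricting along the diagonal-type map \(\pi_{\HT}^{\dR}\times1\) gives \((\pi^{\dR}_{\HT})^{*}\beta_{\Fl,*}(V)\otimes\beta^{\sm,\hat{+}}_{*}(W)\).

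The left vertical arrow is \(g^{\la,0,\hpp}_{*}\) applied to the isomorphism of Lemma \ref{lemKunnFlSm} (2), combined with the identity from (1); it is therefore an isomorphism. The bottom arrow comes from forgetting the two \([\hat{\mA}^{1}/\GG_{m}]\)-directions, i.e. \(p_{1,*}\) on the trivial filtration. Because \(\mO\to\widehat{j}_{*}\mO\) is an isomorphism (Lemma \ref{lemPerfComplexA1Gm} (2)), I expect \(p_{1,*}p_{1}^{*}\cong\id\), which makes the bottom arrow an isomorphism. The right vertical arrow is the projection formula for \(\beta^{\la,0}\), built from the Cartesian square of Proposition \ref{propCartesianLA0} restricted to the open part. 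It is an isomorphism by proper base change along the co-smooth map \(h=\beta_{\Fl}\), which Proposition \ref{propCartesianLA0} provides. Commutativity is then a diagram chase of unit maps.

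The main obstacle is the base change step defining the top horizontal arrow. Proposition \ref{propAnToAlgDRJuan} gives primness of \(g\) only at finite level and for the unfiltered stack. I would have to pass to the filtered complete version and to the limit over \(K_{p}\). My first attempt would reduce to finite level using Lemma \ref{lemQCohSMsv} (1) and check the filtered case on \(\gr\), as in the proof of Lemma \ref{lemKunnFlSm}.
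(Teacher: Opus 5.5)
Your outline is sound, and it departs from the paper in two places. In (1) you read off the lower square as Cartesian directly from Definition \ref{dfnBiFilAgLa0}, using the graph identity
\[
\bigl(\Fl^{\pdR/\CC_{p},\hat{+}}_{G,\mu}\times_{\CC_{p}}\mX^{\pdR/\CC_{p},\hat{+},\sm}_{K^{p}}\bigr)\times_{\Fl^{\dR}_{G,\mu}\times_{\bar{\QQ}_{p}}\mX^{\dR,\sm}_{K^{p}}}\mX^{\dR,\sm}_{K^{p}}\cong\Fl^{\pdR/\CC_{p},\hat{+}}_{G,\mu}\times_{\Fl^{\dR/\CC_{p}}_{G,\mu}}\mX^{\pdR/\CC_{p},\hat{+},\sm}_{K^{p}},
\]
and then obtain the outer square by pasting with Corollary \ref{CordfnBiFilAgLa0}. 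The paper argues in the opposite direction:
\begin{itemize}
\item the outer square is Cartesian by Corollary \ref{corCartesianFlandSMGeneral} (2), so ultimately by Theorem \ref{thmLAstrucGMHTper};
\item the upper square is Cartesian by Corollary \ref{CordfnBiFilAgLa0};
\item the lower square then follows by descent along the $!$-surjection $h^{\hat{+}}_{\Fl}\times h^{\sm,\hat{+}}$ (Lemma \ref{lemBasicLogDRSta}).
\end{itemize}
Your route is more elementary. Both routes need the compatibility of the two maps to $\Fl^{\dR}_{G,\mu}$; the paper asserts it, and you justify it by uniqueness of maps into de Rham stacks.

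In (2), the obstacle you single out never has to be faced. You already show that three of the four arrows are isomorphisms: the left vertical one (Lemma \ref{lemKunnFlSm} (2)), the bottom one, and the right vertical one. Since the square commutes, the top arrow is then an isomorphism automatically. This is exactly the paper's argument, so you need no base change for $g^{\hat{+}}_{\Fl}\times g^{\sm,\hat{+}}$ and no primness of the filtered complete, infinite-level $g$.

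For the bottom arrow, cite Corollary \ref{corFilterdatA1modGm} directly. There $p_{1}^{*}$ gives the trivial filtration and $p_{1,*}$ is $\Fil^{0}$, so $\mF\to p_{1,*}p_{1}^{*}\mF$ is an isomorphism for every $\mF$, not only for $\mO$.

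For the right vertical arrow, the paper applies Lemma \ref{lemKunnethProper} to the outer square of (1). You instead use Proposition \ref{propCartesianLA0} plus projection formulas, which also requires $\beta^{\sm}$ to be prim. One caution here. The $h$ of Proposition \ref{propCartesianLA0} is $\Fl_{G,\mu}\to\Fl^{\dR/\CC_{p}}_{G,\mu}$, not $\beta_{\Fl}$. Because $\CC_{p}\otimes_{\bar{\QQ}_{p}}\CC_{p}\neq\CC_{p}$, the pullback of $\beta_{\Fl,*}V$ to $\Fl^{\dR/\CC_{p}}_{G,\mu}$ is not $h_{*}V$ in general. Your computation therefore yields $\beta^{\sm}_{*}\bigl((\pi^{\sm}_{\HT})^{*}h_{*}V\otimes W\bigr)$. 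This matches the displayed corner only if the tensor product there is taken $\CC_{p}$-balanced, for instance by working with de Rham stacks relative to $\CC_{p}$. The paper's K\"unneth step has the same feature, since $\Fl_{G,\mu}\times_{\CC_{p}}\mX^{\sm}_{K^{p}}$ is not a product over $\bar{\QQ}_{p}$. This point therefore concerns the statement rather than your strategy.
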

\begin{rmk}
For comparing with the Fontaine operator (\cite[Theorem \ref{2-thmFontainEqDDbar}]{Jiang2025HMF}), it is better to use the algebraic version, while for proving locally analytic Jacquet-Langlands correspondence (\cite[Theorem \ref{2-thmJacquetLanglandsLA}]{Jiang2025HMF}),
one has to work with the analytic version. Lemma \ref{lemDiffOpeToLogStack} (2) shows that the two versions will give the same constructions.
\end{rmk}
\begin{proof}
(1)
The morphism \( g^{\hat{+}}_{\Fl} \)
is defined to be the composition \[
\Fl_{G,\mu}^{\pdR/\CC_{p},\hat{+}}
\to \Fl_{G,\mu}^{\pdR/\CC_{p}}\times [\hat{\mA}^{1}/\GG_{m}]
\to \Fl_{G,\mu}^{\pdR/\CC_{p}}
\to \Fl_{G,\mu}^{\dR},
\] where the first morphism 
is as in Remark \ref{rmkFilDRtoDR}, and the last is as in Proposition \ref{propAnToAlgDRJuan} (1).
\( g^{\sm,\hat{+}} \) is defined similarly, and \( g^{\la,0,\hpp} \)
is induced by \( g^{\sm,\hat{+}} \).
Then the lower diagram commutes by Definition \ref{dfnBiFilAgLa0}.

By Corollary \ref{corCartesianFlandSMGeneral} (2), the larger square is Cartesian. By Corollary \ref{CordfnBiFilAgLa0}, the upper square is Cartesian, whose vertical arrows are \( ! \)-surjections by Lemma \ref{lemBasicLogDRSta}. These imply that the lower square is also Cartesian.

(2) The left vertical arrow is an isomorphism by Lemma \ref{lemKunnFlSm} (2). The right vertical arrow is an isomorphism by Lemma \ref{lemKunnethProper} as \( X\to X^{\dR} \) is cohomologically co-smooth by Theorem \ref{thmAnDRStackGeneral} (3) and Proposition \ref{propAffineProperCover}.
We are left to shown that the lower horizontal arrow is an isomorphism. This follows from  Corollary \ref{corFilterdatA1modGm}, which implies that for any solid stack \( X \) and \( \mF\in\QCoh(X) \) with \( p_{1}:X\times [\hat{\mA}^{1}/\GG_{m}]\to X \), the natural morphism \(\mF\to  p_{1,*}p_{1}^{*}(\mF) \) is an isomorphism.
\end{proof}

\subsubsection{Bernstein-Gelfand-Gelfand-Fontaine complexes and questions}\label{subsubsecBGGFcomplex}
The main source of differential operators on \( \Fl_{G,\mu} \) and \( \mX^{\tor}_{\Kpp} \) is provided by dual BGG complexes (Corollary \ref{corFilterDualBGG} \& Proposition \ref{propBGGFilSV}). Motivated by Example \ref{egRecoverPanConstruction} and \cite{Pan2209.06II}, we conjecture that the following complex should be related to the Fontaine operator. 
We fix a Borel subgroup \( B\subset P_{\mu} \). Let \( {}^{M_{\mu}}W \)
be the set of Kostant representatives for \( W/W_{M_{\mu}} \).
\begin{dfn}[Bernstein-Gelfand-Gelfand-Fontaine complex]
	\label{dfnBGGFcomplex}
Let \( \alpha,\beta\in X^{*}(G)^{+} \). Consider 
\begin{itemize}
\item (Proposition \ref{propBGGFilSVsm}) 
The dual BGG complex associated to \( V_{\alpha} \) 
over \( \mX^{\tor,\sm}_{K^{p}} \) 
\[
\left[M^{0}\to M^{1}\cdots \to M^{\dim(\fn)} \right],
\] with \( M^{i}\cong \bigoplus_{w\in {}^{M_{\mu}}W,l(w)=i}M_{w} \) in cohomological degree \( i \), and \[ M_{w}:=(h^{\sm,\hp})_{*}(\mcal{M}_{\dR}^{c}(W_{w\cdot \alpha}))\{-(w\cdot\alpha)(\mu)\}, \] where \( \mcal{M}_{\dR}^{c} \) is the de Rham \( M^{c} \)-torsor on \( \mX^{\tor,\sm}_{K^{p}} \). 
For \( l(w')=l(w)+1 \), we denote by \( \bar{\theta}_{w\to w'}:M_{w}\to M_{w'} \) the corresponding component of the differential \( M^{l(w)}\to M^{l(w')} \).

\item (Corollary \ref{corFilterDualBGG}) The dual BGG complex associated to \( V_{\beta}^{\vee} \) over \( \Fl_{G,\mu} \)
\[\left[N^{0}\to N^{1}\cdots \to N^{\dim(\fn)} \right],
\] with \( N^{i}\cong \bigoplus_{w\in {}^{M_{\mu}}W,l(w)=i}
N_{w}
 \) in cohomological degree \( i \), and \[
 N_{w}:=
(h_{\Fl}^{\hp})_{*}(\mcal{M}_{\mu}(W^{\vee}_{w\cdot \beta}))\{-(w\cdot\beta)(\mu)\},
\] where \( \mcal{M}_{\mu} \)
is the Hodge-Tate \( M_{\mu} \)-torsor on \( \Fl_{G,\mu} \).
For \( l(w')=l(w)+1 \), we denote by \( \theta_{w\to w'}:N_{w}\to N_{w'} \) the corresponding component of the differential \( N^{l(w)}\to N^{l(w')} \).
\end{itemize} 

We define the \emph{Bernstein-Gelfand-Gelfand-Fontaine complex} of weight \( (\alpha,\beta) \) to be \[ \mrm{BGGF}^{\hpp}_{\alpha,\beta}\in\mrm{Ch}(\QCoh(\mX^{\tor,\la,0}_{K^{p}})^{\pdR,\hpp}_{\log}), \] whose \( i \)-th term is \[F^{i}:=\bigoplus_{w\in {}^{M_{\mu}}W,l(w)=i}F_{w},\text{ with }
F_{w}:=(\pi^{\pdR,\hpp}_{\GM\HT})^{*}\left(
    N_{w}\boxtimes M_{w}
\right),
\] and for \( l(w')=l(w)+1 \), we define the differential \( F_{w}\to F_{w'} \) to be induced by \( \bar{\theta}_{w\to w'}\boxtimes \theta_{w\to w'} \).

Let \( \mrm{BGGF}_{\alpha,\beta}^{++}:=\hat{i}_{*}(\mrm{BGGF}_{\alpha,\beta}^{\hpp}) \) 
and \( \mrm{BGGF}_{\alpha,\beta}:=i^{*}(\mrm{BGGF}_{\alpha,\beta}^{++}) \),
where \[ \hat{i}:(\mX^{\tor,\la,0}_{K^{p}})^{\pdR/\CC_{p},\hpp}_{\log}\hookrightarrow (\mX^{\tor,\la,0}_{K^{p}})^{\pdR/\CC_{p},++}_{\log},
\]
and \[ i:(\mX^{\tor,\la,0}_{K^{p}})^{\pdR/\CC_{p}}_{\log}\hookrightarrow (\mX^{\tor,\la,0}_{K^{p}})^{\pdR/\CC_{p},++}_{\log}. \]
\end{dfn}
\begin{rmk}
The parameter \( \alpha \) should determine the automorphic property, and different choices of \( \beta \) are related by translation functors. 
\end{rmk}
\begin{rmk}
When the Levi \( M_{\mu} \) is abelian, one can show that the complex \( \mrm{BGGF}_{\alpha,\beta} \) is indeed induced by the Fontaine operators on the Galois side as in \cite{Pan2209.06II}. 
However, in the general case, the precise relation is not clear to the author.
\end{rmk}
We believe that answering the following questions should be useful for proving classicality and for understanding the conjectural locally analytic \( p \)-adic Langlands correspondence. 
\begin{question}\label{conjFontaineComplexIsAutomorphic}
Let \( \alpha,\beta\in X^{*}(G)^{+} \). 
Let \( S \) be a finite set of primes containing those where \( G \) or \( K^{p} \) ramifies, and let \( \TT^{S} \) be the spherical Hecke algebra of \( G \) away from  \( S \) over \( \QQ \). 

We ask in what generality the following statements hold:

(1) (Classicality) \( R\Gamma((\mX^{\tor,\la,0}_{K^{p}})^{\pdR/\CC_{p}}_{\log},\mrm{BGGF}_{\alpha,\beta}) \) is automorphic as a \( \TT^{S} \)-module. 
More precisely, for any maximal ideal \( \mfk{a}\subset \TT^{S} \), if \[ R\Gamma((\mX^{\tor,\la,0}_{K^{p}})^{\pdR/\CC_{p}}_{\log},\mrm{BGGF}_{\alpha,\beta})_{\mfk{a}}\ne 0, \]
then 
\[
\varinjlim_{K_{p}}R\Gamma_{\et}(X_{\Kpp,\bar{E}},\unl{V_{\alpha}})_{\mfk{a}}\ne 0.
\] In particular,
\( \mfk{a} \) is associated to a C-algebraic automorphic representation of \( G(\mA_{f}) \) that is unramified away from \( S \) 
(\cite[Definition 3.1.2]{BuzzardGee2014conjectural}). 

(2) (Spectral decomposition) \( R\Gamma((\mX^{\tor,\la,0}_{K^{p}})^{\pdR/\CC_{p}}_{\log},\mrm{BGGF}_{\alpha,\beta}) \) admits a spectral decomposition: \[
R\Gamma((\mX^{\tor,\la,0}_{K^{p}})^{\pdR/\CC_{p}}_{\log},\mrm{BGGF}_{\alpha,\beta})\cong \bigoplus_{\mfk{a}\subset \TT^{S}}R\Gamma((\mX^{\tor,\la,0}_{K^{p}})^{\pdR/\CC_{p}}_{\log},\mrm{BGGF}_{\alpha,\beta})_{\mfk{a}}.
\] 

(3) (Locality) Assume that \( \mfk{a} \)
corresponds to a C-algebraic \emph{cuspidal} automorphic representation \( \pi \)
that admits a Galois representation \( \rho_{\pi}:\Gal_{\QQ}\to {}^{C}G(\bar{\QQ}_{p}) \) as in \cite[Conjecture 5.3.4]{BuzzardGee2014conjectural}. Then 
there exists \( \Pi^{\la}_{\mu,(\alpha,\beta)}(\rho_{\pi})\in D^{\la}(G(\QQ_{p}))^{\heartsuit} \), such that \[
R\Gamma((\mX^{\tor,\la,0}_{K^{p}})^{\pdR/\CC_{p}}_{\log},\mrm{BGGF}_{\alpha,\beta})_{\mfk{a}}\left[\dim(X_{K})\right]\cong 
\Pi^{\la}_{\mu,(\alpha,\beta)}(\rho_{\pi})
\otimes (\pi_{f}^{p})^{K^{p}}. 
\] 
Moreover, \( \Pi^{\la}_{\mu,(\alpha,\beta)}(\rho_{\pi}) \) only depends on \( \mu \), \( (\alpha,\beta) \) and \( \rho_{\pi}|_{\Gal_{\QQ_{p}}} \). 

(4) (Semisimplicity) For \( \mfk{a} \) in (3), the action of \( \TT^{S} \) on \[ R\Gamma((\mX^{\tor,\la,0}_{K^{p}})^{\pdR/\CC_{p}}_{\log},\mrm{BGGF}_{\alpha,\beta})_{\mfk{a}} \] factors through \( \TT^{S}/\mfk{a} \).
\end{question}

\begin{rmk}
In the case of modular curves, the part (1) was affirmed by \cite{Pan2209.06II}. 
Subsequent generalizations of Pan's work (\cite{QiuSu2025locallyanalyticvectorscompleted}, \cite{Matsumoto2025classicalitytheoremapplicationsautomorphy}) extended the results to the cases for quaternionic Shimura curves and for Shimura surfaces associated to rank \( 2 \) unramified unitary groups. 

The parts (3) and (4) were affirmed in the case of modular curves in \cite{Pan2209.06II} away from the Steinberg case. 
\cite{QiuSu2025locallyanalyticvectorscompleted} extended these results to the setting of quaternionic Shimura curves, including the Steinberg case. 

In a companion paper (\cite{Jiang2025HMF}), we will give an affirmative answer to Question \ref{conjFontaineComplexIsAutomorphic} (1) in the setting of Hilbert modular varieties using a locally analytic Jacquet-Langlands correspondence.
\end{rmk}
\begin{eg}\label{egShimuraSetFontaine}
If the varieties \( X_{K} \) are Shimura sets, then \[ R\Gamma((\mX^{\tor,\la,0}_{K^{p}})^{\pdR/\CC_{p}}_{\log},\mrm{BGGF}_{\alpha,\beta}) \] 
calculates the (derived) locally algebraic vectors of completed cohomology. Then 
Question \ref{conjFontaineComplexIsAutomorphic} has an affirmative answer by \cite[Theorem 0.5]{Emerton06}.
\end{eg}
\begin{rmk}[Relation to local Langlands correspondence]
Assuming (3), the construction \( \rho_{\pi}|_{\Gal_{\QQ_{p}}}\mapsto \Pi^{\la}_{\mu,(\alpha,\alpha)}(\rho_{\pi}) \) 
should be closely related to the conjectural \emph{locally analytic \( p \)-adic local Langlands correspondence} \( \Pi^{\la}(\rho_{\pi}|_{\Gal_{\QQ_{p}}}) \), where \( \alpha \) is determined by the Hodge-Tate weights of \( \rho_{\pi} \) as in (1) such that the \(  \Pi^{\la}_{\mu,(\alpha,\alpha)}(\rho_{\pi})\ne 0  \).

We expect that when \( M_{\mu} \) is abelian, we have 
\( \Pi^{\la}_{\mu,(\alpha,\alpha)}(\rho_{\pi})\cong \Pi^{\la}(\rho_{\pi}|_{\Gal_{\QQ_{p}}}) \).  This fails in general, as can be seen by Example \ref{egShimuraSetFontaine}; nevertheless, we hope that \( \Pi^{\la}_{\mu,(\alpha,\alpha)}(\rho_{\pi}) \) yields an interesting sub-representation of \( \Pi^{\la}(\rho_{\pi}|_{\Gal_{\QQ_{p}}}) \). 
\end{rmk}

\subsection{\texorpdfstring{\(\BdR^{+}\)}{BdR+}-infinite-level Shimura varieties}\label{subsecBdRSV}
We will explain how to interpretate the logarithmic Riemann-Hilbert correspondence of \cite{DLLZ2022logarithmicJAMS} in terms of de Rham stacks. We start by constructing a \( \BdR^{+} \)-thickening of \( \mX^{\tor}_{K^{p}} \) of Lemma \ref{lemTorCompacAsTateStack}. 

As in the construction of \(\mX^{\tor}_{K^{p}}\) in Lemma \ref{lemTorCompacAsTateStack},
the following argument can be greatly simplified if we assume that the \(\mX^{\tor,\diamond}_{K^{p}}\) is represented by a perfectoid space with infinitely divisible ramifications at the toroidal boundary. 

\begin{lem}\label{lemBdRXtorKp}
Let \((U_{K_{p,0}},i:U_{K_{p,0}}\to\mbb{B}^{n},U_{K_{p}},U_{K_{p},\infty},\ti{U}_{\infty},\Gamma_{n})\) as in Construction \ref{constructionUChartTower} and Lemma \ref{lemTorCompacAsTateStack}. 
We can take \(U_{K_{p,0}}\) to be affinoid, and \(\ti{U}_{\infty}\cong \Spa(B_{\infty},B_{\infty}^{+})\).
Then we define the solid stacks \begin{align*}
\BdR^{+}/t^{i}(\ti{U})^{+}&:=\left[
	\AnSpec^{+}(\BdR^{+}(B_{\infty})/t^{i})/\underline{\Gamma_{n}}
\right],\\
\OBdRlog^{+}/\Fil^{i}(\ti{U})^{+}&:=\left[
	\AnSpec^{+}(\OBdRlog^{+}(B_{\infty})/\Fil^{i})/\underline{\Gamma_{n}}
\right],
\end{align*}
where the RHS are as in Definition \ref{dfnSpfFunctor}.

Then 

(1) \(\BdR^{+}/t^{i}(\ti{U})^{+}\) and \(\OBdRlog^{+}/\Fil^{i}(\ti{U})^{+}\) are independent of choice of the toric chart \(i\).

(2) For \(U_{K_{p,0}}^{(1)}\subset U_{K_{p,0}}^{(2)}\), the corresponding construction induces open immersions \(\BdR^{+}/t^{i}(\ti{U}^{(1)})^{+}\hookrightarrow \BdR^{+}/t^{i}(\ti{U}^{(2)})^{+}\) and \(\OBdRlog^{+}/\Fil^{i}(\ti{U}^{(1)})^{+}\hookrightarrow \OBdRlog^{+}/\Fil^{i}(\ti{U}^{(2)})^{+}\).

In particular, we can glue \(\BdR^{+}/t^{i}(\ti{U})^{+}\)
to obtain a Tate stack \(\BdR^{+}/t^{i}(\mX^{\tor}_{K^{p}})^{+}\), which we refer to as the \emph{\(i\)-truncated \(\BdR^{+}\)-infinite-level Shimura varieties} of tame level \(K^{p}\). 

(3) 
We define \[
\BdR^{+}(\mX^{\tor}_{K^{p}})^{+}:=\varinjlim_{i}\BdR^{+}/t^{i}(\mX^{\tor}_{K^{p}})^{+},\;\OBdR^{+}(\mX^{\tor}_{K^{p}})^{+}:=\varinjlim_{i}\OBdR^{+}/\Fil^{i}(\mX^{\tor}_{K})^{+}.
\] We write \( \BdR^{+}(\mX^{\tor}_{K^{p}}) \)
and \( \OBdR^{+}(\mX^{\tor}_{K^{p}}) \)
for their restrictions to \( [\GG_{m}/\GG_{m}]\subset [\mA^{1}/\GG_{m}] \).
Then there is a commutative diagram \[
\begin{tikzcd}[column sep=0.3cm]
\mX_{K^{p}}^{\tor}\times{ [\mA^{1}/\GG_{m}]}\arrow[r]\arrow[rr,bend left,"\pi_{\sm}"] &
\OBdRlog^{+}(\mX^{\tor}_{K^{p}})^{+}
\arrow[r,"\ti{\pi}_{\sm}"]\arrow[d] & \mX^{\tor,\sm}_{K^{p},\bar{\QQ}_{p}}\times { [\mA^{1}/\GG_{m}]}\arrow[d,"h^{\sm}"]\\
&
\BdR^{+}(\mX^{\tor}_{K^{p}})^{+}
\arrow[r,"\pi_{\sm}^{\pdR}"] & (\mX^{\tor}_{K^{p},\bar{\QQ}_{p}})^{\pdR/\bar{\QQ}_{p},+,\sm}_{\log}\arrow[r,"s"]&{\mA^{1}/\GG_{m}},
\end{tikzcd}
\] 
where the square becomes Cartesian after restricting to \( [\hat{\mA}^{1}/\GG_{m}] \).
\end{lem}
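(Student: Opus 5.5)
The proof mirrors that of Lemma \ref{lemTorCompacAsTateStack} (3), with the perfectoid spaces replaced by their \( \BdR^{+} \)-thickenings. Every step is reduced to the torsor and open-immersion statements of Lemma \ref{lemBdRTorsor} and to the pushout of Lemma \ref{lemOBdRIsPushoutOfBdR}.

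\textbf{Step (1): independence of the chart.} Given two toric charts \(i_{1},i_{2}\), I form the diamond fiber product \(\ti{U}^{(12)}_{\infty}:=\ti{U}^{(1)}_{\infty}\times_{U_{K_{p}}}\ti{U}^{(2)}_{\infty}\), taken as a limit over \(K_{p}\) as in that proof. Because the log monoids are divisible, this is a perfectoid space, and both projections \(\ti{U}^{(12)}_{\infty}\to\ti{U}^{(j)}_{\infty}\) are pro-finite-\'etale \(\Gamma_{n}\)-torsors.

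Lemma \ref{lemBdRTorsor} (1) then shows that the induced maps on \(\AnSpec(\BdR^{+}(-)/t^{i})\) and on \(\AnSpec(\OBdRlog^{+}(-)/\Fil^{i})\) are \(\unl{\Gamma_{n}}\)-torsors. For the \(\OBdRlog^{+}\) case I use the pro-Kummer-\'etale \(\OBdRlog^{+}\) together with the pushout square of Lemma \ref{lemOBdRIsPushoutOfBdR}. These statements are compatible with the Rees construction, so they hold for \(\AnSpec^{+}\) over \([\mA^{1}/\GG_{m}]\).

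Quotienting by \(\unl{\Gamma_{n}}\times\unl{\Gamma_{n}}\), exactly as in Lemma \ref{lemTorCompacAsTateStack} (3), gives canonical isomorphisms between the two quotient stacks. These isomorphisms satisfy the cocycle condition because they arise from a triple fiber product in the same way.

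\textbf{Step (2): open immersions and gluing.} For \(U^{(1)}_{K_{p,0}}\subset U^{(2)}_{K_{p,0}}\), I choose the chart on \(U^{(1)}\) to be the restriction of the chart on \(U^{(2)}\); Step (1) allows this choice. Then \(\ti{U}^{(1)}_{\infty}\subset\ti{U}^{(2)}_{\infty}\) is an open immersion of affinoid perfectoids, which is \(\Gamma_{n}\)-stable. Lemma \ref{lemBdRTorsor} (2) makes the maps on \(\AnSpec^{+}(\BdR^{+}/t^{i})\) and \(\AnSpec^{+}(\OBdRlog^{+}/\Fil^{i})\) open immersions, and these descend to the \(\unl{\Gamma_{n}}\)-quotients. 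Gluing along these open immersions over the cover of Lemma \ref{lemOpenSetsFromFulltoSM} (1) defines \(\BdR^{+}/t^{i}(\mX^{\tor}_{K^{p}})^{+}\).

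\textbf{Step (3): the diagram.} I construct it chartwise and \(\Gamma_{n}\)-equivariantly, then check that the construction glues by the independence in Steps (1) and (2).

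Over a chart, fix a level \(K_{p}\). Lemma \ref{lemBdRMapToLogDRStack}, applied to the pro-Kummer-\'etale map \(\ti{U}_{\infty}\to U_{K_{p}}\), yields a square
\[
\OBdRlog^{+}(\ti{U}_{\infty})^{+}\to U_{K_{p}}\times[\mA^{1}/\GG_{m}],\qquad \BdR^{+}(\ti{U}_{\infty})^{+}\to (U_{K_{p}})^{\pdR,+}_{\log},
\]
which becomes Cartesian over \([\hat{\mA}^{1}/\GG_{m}]\). These maps are \(\Gamma_{n}\)-invariant by functoriality, so they factor through the quotients. They are compatible in \(K_{p}\) by functoriality of \(\dR_{\log}\) (Lemma \ref{lemBasicLogDRSta} (3)), so passing to the limit gives maps to \(U^{\sm}\times[\mA^{1}/\GG_{m}]\) and \((\ti{U})^{\pdR,+,\sm}_{\log}\).

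The maps land over \(\bar{\QQ}_{p}\) rather than \(\CC_{p}\) because \(\BdR^{+}\) is only a \(\bar{\QQ}_{p}\)-algebra. The finite-level varieties descend to finite extensions \(L\) of \(E_{\p}\), and each \(L\) embeds canonically into \(\BdR^{+}\) (Hensel's lemma).

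\textbf{Main obstacle.} The hardest step is Cartesianness after restricting to \([\hat{\mA}^{1}/\GG_{m}]\) \emph{in the limit over \(K_{p}\)}. At each finite level \(K_{p}\) it is Lemma \ref{lemBdRMapToLogDRStack}. However, \(\OBdRlog^{+}(B_{\infty})\) is computed using the finite-level \(A_{K_{p}}\), so I must show that \(\BdR^{+}(B_{\infty})\hatotimes_{\dR_{\log}(A_{K_{p}})}A_{K_{p}}\) is independent of \(K_{p}\).

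I would derive this from log \'etaleness of the transition maps. Lemma \ref{lemBasicLogDRSta} (3) says the squares relating \(U_{K_{p}'}\) to \(U_{K_{p}}\) are Cartesian, so base change along \(\dR_{\log}(A_{K_{p}})\to\dR_{\log}(A_{K_{p}'})\) gives \(A_{K_{p}'}\). Hence \(\BdR^{+}(B_{\infty})\hatotimes_{\dR_{\log}(A^{\sm})}A^{\sm}\) agrees with the finite-level pushout. Combined with Lemma \ref{lemQCohSMsv} (3) and Lemma \ref{lemSmDRStackProperty} (1), this identifies the fiber product with \(\OBdRlog^{+}(\ti{U})^{+}\) over \([\hat{\mA}^{1}/\GG_{m}]\).
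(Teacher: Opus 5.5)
Your proposal is correct and follows the paper's own route: parts (1) and (2) reduce, as in the proof of Lemma \ref{lemTorCompacAsTateStack} (3), to the torsor and open-immersion statements of Lemma \ref{lemBdRTorsor}, and part (3) comes from Lemma \ref{lemBdRMapToLogDRStack} applied chartwise. Your extra check that \(\OBdRlog^{+}(B_{\infty})\) does not depend on the finite level \(K_{p}\) (via log \'etaleness, Lemma \ref{lemBasicLogDRSta} (3), and the base-change square of Lemma \ref{lemSmDRStackProperty} (1)) correctly fills in the passage to the limit over \(K_{p}\), which the paper leaves implicit.
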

\begin{proof}
For (1) and (2),
following the proof of Lemma \ref{lemTorCompacAsTateStack}, it reduces to Lemma \ref{lemBdRTorsor}.
For (3), we use Lemma \ref{lemBdRMapToLogDRStack}. 
\end{proof}
\begin{dfn}\label{dfnPeriodShvSV}
(1)
We define 
\begin{align*}
\BdRKp^{+}&:=(\pi_{\sm}^{\pdR})_{*}(\mO_{\BdR^{+}(\mX^{\tor}_{K^{p}})^{+}})\in \QCoh((\mX^{\tor}_{K^{p},\bar{\QQ}_{p}})^{\pdR/\bar{\QQ}_{p},\hat{+},\sm}_{\log}),\\
\OBdRlogKp^{+}&:=(\ti{\pi}_{\sm})_{*}(\mO_{\OBdRlog^{+}(\mX^{\tor}_{K^{p}})^{+}})
\in \QCoh(\mX^{\tor,\sm}_{K^{p},\bar{\QQ}_{p}}\times [\hat{\mA}^{1}/\GG_{m}]),
\end{align*} and for \(a\le b\), put \( \BdRKp^{[a,b]}:=t^{a}\BdRKp^{+}/t^{b}\BdRKp^{+} \).
We further define \[\BdRKp:=\BdRKp^{+}[1/t],\;
\OBdRlogKp:=\OBdRlogKp^{+}[1/t]
\]
and \(\OBdRlogKp^{[a,b]}:=\Fil^{a}(\OBdRlogKp)/\Fil^{b}(\OBdRlogKp)\).

(2) We define 
\begin{align*}
\BdR^{\la}&:=\BdR^{R-\la}\in\QCoh((\mX^{\tor}_{K^{p},\bar{\QQ}_{p}})^{\pdR/\bar{\QQ}_{p},\hat{+},\sm}_{\log}),\\
\OBdRlogKp^{\la}&:=\OBdRlogKp^{R-\la}\in\QCoh(\mX^{\tor,\sm}_{K^{p},\bar{\QQ}_{p}}\times[\hat{\mA}^{1}/\GG_{m}]),
\end{align*}
where \((-)^{R-\la}\) is as in Notation \ref{notationRlaConvention}. 

For any \( E_{\p}\subset L\subset \bar{\QQ}_{p}\), by abuse of notation, we will denote by  
the same notation the push-forward of the corresponding sheaves to \( \mX^{\tor,\sm}_{K^{p},L} \times [\hat{\mA}^{1}/\GG_{m}] \) 
or \( (\mX^{\tor}_{K^{p},L})_{\log}^{\pdR/L,\hat{+},\sm} \).  
%
\end{dfn}

\begin{lem}\label{lemBdRLaProperty}
Let \( L\subset\bar{\QQ}_{p} \) 
be a finite extension of \( E_{\p} \). 
We denote by \( \hat{h}^{\sm} \) the following composition
\[ \hat{h}^{\sm}:\mX^{\tor,\sm}_{K^{p}}\times [\hat{\mA}^{1}/\GG_{m}]\to \mX^{\tor,\sm}_{K^{p},L}\times [\hat{\mA}^{1}/\GG_{m}]\xrightarrow{\hat{h}^{\sm}_{L}} (\mX_{K^{p},L}^{\tor})^{\pdR/L,\hat{+},\sm}_{\log}. \]

Then \( \BdRKp \), \( \BdRKp^{\la} \), \( \BdRKp^{R-\widetilde{K_{p}}-\sm} \), \(D_{\sen,L}(\BdRKp^{\la})\)
are complete with respect to \( t^{\bullet} \), and the graded pieces are as follows: for \( i\in\ZZ \),  \begin{align*}
\gr^{i}_{t^{\bullet}}\BdRKp&\cong \hat{h}^{\sm}_{*}\hat{\mO}_{K^{p}}\{i\}(i),
&\gr^{i}_{t^{\bullet}}\BdRKp^{R-\widetilde{K_{p}}-\sm}\cong \hat{h}^{\sm}_{*}\mO^{R-\widetilde{K_{p}}-\sm}_{K^{p}}\{i\}(i),
\\
\gr^{i}_{t^{\bullet}}\BdRKp^{\la}&\cong \hat{h}^{\sm}_{*}\mO^{\la}_{K^{p}}\{i\}(i),
&\gr^{i}_{t^{\bullet}}D_{\sen,L}(\BdRKp^{\la})\cong \hat{h}^{\sm}_{*}D_{\sen,L}(\mO^{\la}_{K^{p}})\{i\}(i),
\end{align*}
Here \( \{i\} \) 
denotes the twist of the filtration (Notation \ref{notationTwistByAGm}), \( (i) \) denotes the Tate twist, and \( D_{\sen,L} \) 
is as in Definition \ref{dfnDsenFunctor}.

Moreover, applying \( (\hat{h}^{\sm}_{L})^{*} \), we have 
\begin{align*}
(\hat{h}^{\sm}_{L})^{*}\BdRKp\cong \OBdRlogKp,
(\hat{h}^{\sm}_{L})^{*}\BdRKp^{R-\widetilde{K_{p}}-\sm}\cong \OBdRlogKp^{R-\widetilde{K_{p}}-\sm},\\
(\hat{h}^{\sm}_{L})^{*}\BdRKp^{\la}\cong \OBdRlogKp^{\la}.
\end{align*}
\end{lem}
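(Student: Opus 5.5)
The plan is to reduce everything to a local computation over good chartable affinoid opens $U^{\sm}$ (Definition \ref{dfnChartableAffOp}), using the chart $\ti{U}\cong[\ti{U}_{\infty}/\unl{\Gamma_{n}}]$ of Lemma \ref{lemBdRXtorKp}. There the filtered complete stack $(\ti{U}_{L})^{\pdR/L,\hat{+},\sm}_{\log}$ is affine by Lemma \ref{lemQCohSMsv} (3). Under this identification $\BdRKp$ corresponds to $R\Gamma(\Gamma_{n},\BdR^{+}(B_{\infty})[1/t])$, viewed as a filtered complete $\dR_{\log}(A^{\sm}_{L})$-module. This follows from Lemma \ref{lemBdRMapToLogDRStack} at each finite level together with $*$-descent along the $\Gamma_{n}$-torsor.

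\textbf{Step 1: completeness and graded pieces of $\BdRKp$.} Completeness holds because $\BdRKp^{+}=\varprojlim_{i}\BdRKp^{+}/t^{i}$ by construction. For the graded pieces I apply Lemma \ref{lemGrBdRonDRStack} chartwise: $\gr^{i}_{t^{\bullet}}$ of the $\Gamma_{n}$-equivariant object is $f_{*}\mO\{i\}(i)$. After taking $\Gamma_{n}$-invariants and gluing via Lemma \ref{lemAnalyticVSproket} and the definition $\hat{\mO}_{K^{p}}=\pi^{\sm}_{*}\mO$, this gives $\hat{h}^{\sm}_{*}\hat{\mO}_{K^{p}}\{i\}(i)$. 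Here I also use Lemma \ref{lemPerfComplexA1Gm} to pass between the $[\hat{\mA}^{1}/\GG_{m}]$ and $[\mA^{1}/\GG_{m}]$ versions.

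\textbf{Step 2: the derived functors.} The functors $(-)^{R-\widetilde{K_{p}}-\sm}$, $(-)^{R-\la}$ and $D_{\sen,L}$ are exact. Smooth vectors are a filtered colimit of group cohomologies; locally analytic vectors are smooth vectors after tensoring with $\mO_{G,1}$; and $D_{\sen,L}$ is a composite of these. I therefore apply them to the fiber sequences $\Fil^{i+1}\to\Fil^{i}\to\gr^{i}$. Since $\hat{h}^{\sm}_{*}$ and the twists $\{i\}$ are $G(\QQ_{p})$-equivariantly linear, the graded pieces become $\hat{h}^{\sm}_{*}\mO^{?}_{K^{p}}\{i\}(i)$ with $\mO^{?}\in\{\mO^{R-\widetilde{K_{p}}-\sm},\mO^{\la},D_{\sen,L}(\mO^{\la})\}$. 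For $D_{\sen,L}$, the Tate twist $(i)$ only changes the $\Gamma_{L}$-action and commutes with the functor.

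\textbf{Step 3: completeness after applying these functors (expected main obstacle).} These functors commute with finite limits, but not obviously with $\varprojlim_{i}$, because the colimits over $K_{p}$ and over $L'$ do not commute with inverse limits. My plan is to prove completeness through the vanishing $\Rlim_{i}\Fil^{i}=0$, checked on graded pieces. I would first treat the truncations $\BdRKp^{[a,b]}$: they are finite extensions of the graded pieces, so the functors preserve them. Then I would pass to $\BdRKp^{+}$ using the uniformity supplied by Theorem \ref{thmOlaConcentrated0}, namely concentration in degree $0$ and the arithmetic Sen operator for each $\gr^{i}\cong\mO^{\la}(i)$. The key input is that $t$ acts on $D_{\sen}$ with Sen weight shifted by $1$, which should make the relevant $\varprojlim^{1}$ terms vanish. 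If that fails, I would instead define the functors in the filtered sense, as Theorem \ref{thmReformulateRiemmanHilbPerfdCaseIntro} suggests ("taken in the filtered sense"), that is, as the filtered completion of the levelwise application, in which case completeness holds by definition.

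\textbf{Step 4: the pullback along $(\hat{h}^{\sm}_{L})^{*}$.} The last statement is base change along the square of Lemma \ref{lemBdRXtorKp} (3), which is Cartesian over $[\hat{\mA}^{1}/\GG_{m}]$. The map $h^{\sm}$ is a suave $!$-cover (Lemma \ref{lemSmDRStackProperty}), so smooth base change (Lemma \ref{lemSmBaseChange} (1)) gives $(\hat{h}^{\sm}_{L})^{*}\BdRKp^{+}\cong\OBdRlogKp^{+}$. Inverting $t$ commutes with pullback. The smooth and locally analytic versions then follow because $(\hat{h}^{\sm}_{L})^{*}$ is, via Lemma \ref{lemQCohSMsv} (3), the functor $-\hatotimes_{\dR_{\log}(A^{\sm}_{L})}A^{\sm}_{L}$. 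This is computed by a finite Koszul/Čech-type resolution (Remark \ref{rmkResolveObyDX}) whose terms carry the trivial $G(\QQ_{p})$-action, so it commutes with $(-)^{R-\widetilde{K_{p}}-\sm}$ and $(-)^{R-\la}$.
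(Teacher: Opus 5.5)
Your route is the paper's: localize to a chart and use the affine description of Lemma \ref{lemQCohSMsv} (3), where $\BdRKp$ becomes $(R\Gamma(\unl{\Gamma_{n}},\BdR(B_{\infty})),t^{\bullet})$. Completeness of $\BdRKp$ then comes from the construction and its graded pieces from Lemma \ref{lemGrBdRonDRStack}. You push the graded pieces through the exact functors, and obtain $(\hat{h}^{\sm}_{L})^{*}\BdRKp\cong\OBdRlogKp$ by smooth base change along the square of Lemma \ref{lemBdRXtorKp} (3). Two points need fixing.

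\textbf{Step 4.} The justification you give fails as stated. Remark \ref{rmkResolveObyDX} resolves $\mO_{X}$ as a $D_{X,\log}$-module. It therefore computes $\RHom_{D_{X,\log}}(\mO_{X},-)$, i.e.\ de Rham cohomology, and not the base change $-\hatotimes_{\dR_{\log}(A^{\sm}_{L})}A^{\sm}_{L}$.

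The base change admits no finite description of that kind. A finite resolution of $A^{\sm}_{L}$ by twists of free $\dR_{\log}(A^{\sm}_{L})$-modules would force $A^{\sm}_{L}\hatotimes_{\dR_{\log}(A^{\sm}_{L})}A^{\sm}_{L}$ to have only finitely many nonzero graded pieces. But by Lemma \ref{lemDiagonalLogProperty} (1) these pieces are the $\Sym^{i}$ of the log differentials, nonzero for every $i\ge 0$.

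The finiteness has to sit on the group side, as in the paper:
\begin{itemize}
\item By the Lazard--Serre resolution, $R\Gamma(K,-)$ is a finite limit for small compact open $K$.
\item Hence $(-)^{R-\widetilde{K_{p}}-\sm}$ is a filtered colimit of finite limits, and it commutes with the exact, colimit-preserving functor $-\hatotimes_{\dR_{\log}(A^{\sm}_{L})}A^{\sm}_{L}$.
\item The locally analytic case follows from $(-)^{R-\la}=(-)^{R-\widetilde{K_{p}}-\sm}\circ(-\otimes_{L}\mO_{G^{c},1})$.
\end{itemize}

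\textbf{Step 3.} The $\varprojlim^{1}$/Sen-weight argument is unnecessary, and as sketched it is not substantiated. In the paper all these functors are taken in the filtered complete category $\Mod_{\dR_{\log}(A^{\sm}_{L})}(\widehat{\Fil}^{\ZZ}(D(L_{\square})))$, where colimits are completed, so $t^{\bullet}$-completeness holds by construction. This is your fallback, and it is the paper's convention. The forgetful functor to $\widehat{\Fil}^{\ZZ}(D(L_{\square}))$ is conservative and compatible with these functors, which gives the graded pieces as in your Step 2.

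Relatedly, in Step 2 the reason $\hat{h}^{\sm}_{*}$ commutes with these functors is not "equivariant linearity". It is that $\hat{h}^{\sm}_{*}$ is the forgetful functor in the affine picture (Lemma \ref{lemLogDRStackIsAffine} (2)), and so preserves limits and colimits.
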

\begin{proof}
We can localize to the open subspace \( (\ti{U}_{L})^{\pdR/L,\hat{+},\sm}_{\log}\subset (\mX_{K^{p},L}^{\tor})^{\pdR/L,\sm}_{\log} \), and reduce to the setting of Lemma \ref{lemQCohSMsv} (3). Then \[\QCoh((\ti{U}_{L})^{\pdR/L,\hat{+},\sm}_{\log})
\cong \Mod_{\dR_{\log}(A^{\sm}_{L})}(\widehat{\Fil}^{\ZZ}(D(L_{\square}))),\]
and by construction in Lemma \ref{lemBdRXtorKp}, \( \BdRKp|_{(\ti{U}_{L})^{\pdR/L,\hat{+},\sm}_{\log}} \) corresponds to \( (R\Gamma(\unl{\Gamma}_{n},\BdR(U_{\infty})),t^{\bullet}) \) 
regarded as a filtered complete \( \dR_{\log}(A^{\sm}_{L}) \)-module. 

\( \BdRKp \) 
is complete with respect to \( t^{\bullet} \) by definition.  
The description of \( \gr^{i}_{t^{\bullet}}\BdRKp \) 
follows from Lemma \ref{lemGrBdRonDRStack}. \( (\hat{h}^{\sm}_{L})^{*}\BdRKp\cong \OBdRKp \)
follows from applying smooth base change (Lemma \ref{lemSmBaseChange}) to 
the Cartesian square in Lemma \ref{lemBdRXtorKp} (3).

For the other statement, 
note that the forgetful functor \[ \Mod_{\dR_{\log}(A^{\sm}_{L})}(\widehat{\Fil}^{\ZZ}(D(L_{\square})))\to \widehat{\Fil}^{\ZZ}(D(L_{\square})) \]
is conservative, and compatible with the functors \( (-)^{R-\la},\;(-)^{R-\widetilde{K_{p}}-\sm},\;D_{\sen,L}(-) \). Thus the \( t^{\bullet} \)-completeness and the description of various \( \gr^{i}_{t^{\bullet}} \) follows from the corresponding statements for \( \BdRKp \).

Finally, 
\( (\hat{h}_{L}^{\sm})^{*} \) corresponds to the base change \( -\hatotimes_{\dR_{\log}(A^{\sm}_{L})}A^{\sm}_{L} \). This functor commutes with \( (-)^{R-\widetilde{K_{p}}-\sm} \) since the latter can be calculated by Lazard-Serre's resolution (see \cite[Theorem 5.7]{JRC2021solid}).
It also commutes with \( (-)^{R-\la} \), since \( (-)^{R-\la}=(-)^{R-\widetilde{K_{p}}-\sm}\circ (-\otimes_{L}\mO_{G^{c},1}) \). 
\end{proof}
\begin{cor}\label{corAdmitFontaineBdRLa}
The natural morphism \[ D_{\sen,L}(\BdRKp^{\la})\otimes_{s^{*}(L_{\infty}((t)))}s^{*}(B_{\dR})\to \BdRKp^{\la} \] 
is an isomorphism, where \( s \) 
denotes the structure map \[
s:(\mX^{\tor}_{K^{p},L})^{\pdR/L,\hat{+},\sm}_{\log}\to [\hat{\mA}^{1}/\GG_{m}].
\]
\end{cor}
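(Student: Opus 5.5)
The plan is to reduce the statement to the graded pieces and then to the degree-zero Sen-theoretic statement for $\mO^{\la}_{K^{p}}$ in Theorem \ref{thmOlaConcentrated0}.

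First, localize. Restrict to the open subspaces $(\ti{U}_{L})^{\pdR/L,\hat{+},\sm}_{\log}$ attached to good chartable affinoids $U^{\sm}$ that descend to $L$; these cover $(\mX^{\tor}_{K^{p},L})^{\pdR/L,\hat{+},\sm}_{\log}$. By Lemma \ref{lemQCohSMsv} (3), a quasi-coherent sheaf there is a filtered complete $\dR_{\log}(A^{\sm}_{L})$-module. The forgetful functor to $\widehat{\Fil}^{\ZZ}(D(L_{\square}))$ is conservative and commutes with base change along $s^{*}$. So it suffices to check the claim on underlying filtered complete $L$-modules.

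Second, show that both sides are $t$-adically complete. This holds for the target $\BdRKp^{\la}$ by Lemma \ref{lemBdRLaProperty}. For the source, $D_{\sen,L}(\BdRKp^{\la})$ is complete by the same lemma, and $-\otimes_{L_{\infty}((t))}B_{\dR}$ is taken in the filtered complete sense, so the completed tensor product is complete by construction. The map is filtered, so it suffices to show it is an isomorphism on $\gr^{i}_{t^{\bullet}}$ for every $i\in\ZZ$.

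Third, compute the graded pieces:
\begin{itemize}
\item Since $\gr^{\bullet}$ is symmetric monoidal (Definition \ref{dfnFiltraObj}), the graded piece of the source is
\[ \gr^{i}D_{\sen,L}(\BdRKp^{\la})\otimes_{L_{\infty}}\CC_{p}\cong \hat{h}^{\sm}_{*}D_{\sen,L}(\mO^{\la}_{K^{p}})\{i\}(i)\otimes_{L_{\infty}}\CC_{p}. \]
This uses $\gr^{\bullet}L_{\infty}((t))=L_{\infty}[t^{\pm1}]$ and $\gr^{\bullet}B_{\dR}=\CC_{p}[t^{\pm1}]$.
\item By Lemma \ref{lemBdRLaProperty}, the graded piece of the target is $\hat{h}^{\sm}_{*}\mO^{\la}_{K^{p}}\{i\}(i)$.
\item The map on $\gr^{i}$ is the Tate twist by $(i)$ of the natural map $D_{\sen,L}(\mO^{\la}_{K^{p}})\hatotimes_{L_{\infty}}\CC_{p}\to\mO^{\la}_{K^{p}}$. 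It commutes with $\hat{h}^{\sm}_{*}$ because $\hat{h}^{\sm}_{*}$ is $\QCoh$-linear for pull-backs from $\AnSpec(L)$, by the projection formula, Lemma \ref{lemProjectionFormuaForPerf}. The Tate twist only changes the Galois action by a character and commutes with $D_{\sen,L}$: use $D_{\sen,L}(V(i))\cong D_{\sen,L}(V)\otimes D_{\sen,L}(\CC_{p}(i))$ together with Lemma \ref{lemDsenCommuteWithFlatBC}.
\end{itemize}

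Finally, on each good chartable affinoid, Theorem \ref{thmOlaConcentrated0} says that $\mO^{\la}_{K^{p}}(U^{\sm})$ admits an arithmetic Sen operator in the sense of Definition \ref{dfnAdmitsSen}. That is exactly the statement that the natural map above is an isomorphism. This concludes the argument.

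The main obstacle is the interchange of $D_{\sen,L}$, which involves $R\Gamma(H_{L},-)$ and then locally analytic vectors, with $\gr^{i}_{t^{\bullet}}$ and with completed tensor products. For the interchange with $\gr^{i}$, I would use that $D_{\sen,L}$ is exact on fibre sequences, as a composition of exact functors on stable categories, so it commutes with the finite quotients $t^{a}/t^{b}$. It also commutes with the countable limits defining completeness, which I would justify via the Lazard–Serre resolution as in the proof of Lemma \ref{lemBdRLaProperty}. I also need that $D_{\sen,L}$ of a tensor of the form $M\otimes_{L_{\infty}}\CC_{p}$ is $M$ for the relevant non-Banach modules. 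I would deduce this from Lemma \ref{lemDsenCommuteWithFlatBC} by passing to filtered colimits of Banach pieces, since $\mO^{\la}$ is an LB-type colimit.
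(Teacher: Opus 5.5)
Your proposal is correct and follows the paper's own route. You localize to the affine description of Lemma \ref{lemQCohSMsv} (3) and Lemma \ref{lemLogDRStackIsAffine}, use filtered completeness to reduce to graded pieces computed by Lemma \ref{lemBdRLaProperty}, and conclude from the arithmetic Sen operator statement of Theorem \ref{thmOlaConcentrated0}. The interchange of $D_{\sen,L}$ with $\gr^{i}_{t^{\bullet}}$ that you flag as the main obstacle is already handled by Lemma \ref{lemBdRLaProperty}, which gives $\gr^{i}_{t^{\bullet}}D_{\sen,L}(\BdRKp^{\la})$ directly, so no separate argument is needed there.
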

\begin{proof}
We can again localize as in the proof of Lemma \ref{lemBdRLaProperty}, and apply Lemma \ref{lemLogDRStackIsAffine}. Then \(( s|_{\ti{U}_{L}})^{*} \) can be described as  
\[( s|_{\ti{U}_{L}})^{*}:\widehat{\Fil}^{\ZZ}(D(L_{\square}))\to \Mod_{\dR_{\log}(A^{\sm}_{L})}(\widehat{\Fil}^{\ZZ}(D(L_{\square}))),\;M\mapsto \dR_{\log}(A^{\sm}_{L})\otimes M. \] 
Thus it suffices to show that 
\[ ( s|_{\ti{U}_{L}})_{*}(D_{\sen,L}(\BdRKp^{\la}))\otimes_{L_{\infty}((t))}B_{\dR}\to  ( s|_{\ti{U}_{L}})_{*}(\BdRKp^{\la})  \] 
is an isomorphism. By filter completeness, it suffices to compare both sides after taking \( \gr^{*}(-) \). 
By Lemma \ref{lemBdRLaProperty}, \[ i^{*}(( s|_{\ti{U}_{L}})_{*}(D_{\sen,L}(\BdRKp^{\la})))\cong D_{\sen,L}(\mO^{\la}_{K^{p}})((t)),i^{*}(( s|_{\ti{U}_{L}})_{*}(\BdRKp^{\la}))\cong \mO^{\la}_{K^{p}}((t)), \] 
where the right hand sides are graded with the degree. 
We also have \( \;i^{*}(B_{\dR})\cong \CC_{p}((t)) \). Thus we conclude by Theorem \ref{thmOlaConcentrated0}.
\end{proof}

\subsection{Logarithmic Riemann-Hilbert correspondence}\label{subsecRiemannHilbert}
The following is a reformulation of logarithmic Riemann-Hilbert correspondence of \cite{DLLZ2022logarithmicJAMS} over Shimura varieties:

\begin{thm}\label{thmReformulateRHCorr}
For any \(V\in \Rep(G^{c})\), then we have a \( G(\QQ_{p})^{\sm}\times \unl{\Gal_{E_{\p}}} \)-equivariant isomorphism in \( \QCoh((\mX^{\tor}_{K^{p},E_{\p}})^{\pdR/E_{\p},\hat{+},\sm}_{\log} ) \)  \begin{align*}
(\pi^{\sm,\pdR}_{K_{p}})^{*}\bar{\mcal{G}}^{c}_{\dR,\Kpp}(V)\otimes s^{*}(B_{\dR})&\cong \RHom_{\fg^{c}}(V^{\vee},\BdRKp^{\la}),\\
(\pi^{\sm,\pdR}_{K_{p}})^{*}\bar{\mcal{G}}^{c}_{\dR,\Kpp}(V)\otimes_{E_{\p}} E_{\p,\infty}&\cong \RHom_{\fg^{c}}(V^{\vee},E_{0}(D_{\sen,E_{\p}}(\BdRKp^{\la}))),
\end{align*}
where the morphisms are as in
\[ [\hat{\mA}^{1}/\GG_{m}]_{E_{\p}}\xleftarrow{s} (\mX^{\tor}_{K^{p},E_{\p}})^{\pdR/E_{\p},\hat{+},\sm}_{\log}
\xrightarrow{\pi^{\sm,\pdR}_{K_{p}}} (\mX^{\tor}_{K^{p}K_{p},E_{\p}})^{\pdR/E_{\p},\hat{+}}_{\log}, \]
\(\bar{\mcal{G}}^{c}_{\dR,\Kpp}\) is 
as in Lemma \ref{lemDRtorsorDescends}, and \( E_{0}\circ D_{\sen,E_{\p}} \) 
is as in Definition \ref{dfnDsenFunctor} and Definition \ref{dfnE0functor}.
\end{thm}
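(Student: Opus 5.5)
The plan is to deduce both isomorphisms from the logarithmic Riemann--Hilbert correspondence of \cite{DLLZ2022logarithmicJAMS} at the level of the period sheaf $\OBdRlog$. We then descend along the suave $!$-cover $\hat{h}^{\sm}_{E_{\p}}:\mX^{\tor,\sm}_{K^{p},E_{\p}}\times[\hat{\mA}^{1}/\GG_{m}]\to(\mX^{\tor}_{K^{p},E_{\p}})^{\pdR/E_{\p},\hat{+},\sm}_{\log}$ (Lemma \ref{lemSmDRStackProperty}). We work locally on a good chartable affinoid $\ti{U}$, where Lemma \ref{lemQCohSMsv} (3) identifies quasi-coherent sheaves with filtered complete $\dR_{\log}(A^{\sm})$-modules. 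We also use Lemma \ref{lemQCohSMsv} (1) to reduce to a finite level $\Kpp$, so that $\bar{\mcal{G}}^{c}_{\dR,\Kpp}(V)$ becomes the filtered log connection $(D_{\dR,\Kpp}(V),\nabla)$ of Lemma \ref{lemDRtorsorDescends}.

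\emph{Step 1: a completed comparison.} The logarithmic Riemann--Hilbert correspondence gives a filtered, $\nabla$-compatible, $G(\QQ_{p})\times\Gal_{E_{\p}}$-equivariant isomorphism over $\mX^{\tor}_{K^{p}}$:
\[
D_{\dR,\Kpp}(V)\otimes\OBdRlog\cong \mcal{M}_{B,\Kpp}(V)\otimes\OBdRlog .
\]
The pro-Kummer \'etale local system $\mcal{M}_{B,\Kpp}(V)$ is canonically trivialized on $\mX^{\tor}_{K^{p}}$ with fiber $V$, and $G(\QQ_{p})$ acts through $V$. Push this forward by $\ti{\pi}_{\sm}$ and pass from $\OBdRlog$-modules to $\BdR$-modules via Lemma \ref{lemBdRXtorKp} (3). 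This yields an equivariant isomorphism on the smooth log de Rham stack:
\[
(\pi^{\sm,\pdR}_{K_{p}})^{*}\bar{\mcal{G}}^{c}_{\dR,\Kpp}(V)\otimes\BdRKp\cong V\otimes\BdRKp ,
\]
with $G(\QQ_{p})$ acting diagonally on the right and only on $\BdRKp$ on the left. To justify this, pull back by $\hat{h}^{\sm}_{E_{\p}}$, use $(\hat{h}^{\sm})^{*}\BdRKp\cong\OBdRlogKp$ from Lemma \ref{lemBdRLaProperty}, and note that the descent data agree because both sides are $\nabla$-compatible.

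\emph{Step 2: locally analytic vectors and Lie algebra invariants.} Apply $(-)^{R-\la}$. Since $V$ is algebraic and $(\pi^{\sm,\pdR}_{K_{p}})^{*}\bar{\mcal{G}}^{c}_{\dR,\Kpp}(V)$ is $G(\QQ_{p})$-smooth and finite, the functor commutes with tensoring by both; this uses the Lazard--Serre description already invoked in Lemma \ref{lemBdRLaProperty}. Next take $\RHom_{\fg^{c}}(V^{\vee},-)$ on both sides. On the left, $\fg^{c}$ acts only on $\BdRKp^{\la}$, and smooth vectors are $\fg$-invariants. We therefore need
\[
R\Gamma(\fg^{c},\BdRKp^{\la})\cong \BdRKp^{R-\sm}\cong s^{*}B_{\dR}.
\]
The first isomorphism comes from \cite{JRC2021solid}. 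The second reduces, via the $t$-adic filtration and Lemma \ref{lemBdRLaProperty}, to the statement that $(\hat{\mO}_{K^{p}})^{R-\sm}$ equals $\CC_{p}$ on $\mX^{\tor,\sm}_{K^{p}}$; in other words, $\hat{\mO}_{K^{p}}$ descends as $\mO^{\sm}_{K^{p}}\hatotimes\CC_{p}$. On the right we obtain $\RHom_{\fg^{c}}(V^{\vee},V\otimes\BdRKp^{\la})$, which we compare with the left by the adjunction $V^{\vee}\otimes V\to\mathbf{1}$. Together these give the first isomorphism.

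\emph{Step 3: the Sen/$E_{0}$ version.} Corollary \ref{corAdmitFontaineBdRLa} gives $\BdRKp^{\la}\cong D_{\sen,E_{\p}}(\BdRKp^{\la})\otimes s^{*}B_{\dR}$. The left side $(\pi^{\sm,\pdR}_{K_{p}})^{*}\bar{\mcal{G}}^{c}_{\dR,\Kpp}(V)$ is defined over $E_{\p}$ with trivial Galois action, so applying $D_{\sen,E_{\p}}$ to the first isomorphism gives the object tensored with $E_{\p,\infty}((t))$, on which $\Theta$ acts by $t\partial_{t}$. Applying $E_{0}$ (Definition \ref{dfnE0functor}) kills every $t^{i}$ with $i\neq0$, since there $\Theta$ acts by the scalar $i$, and keeps the weight-zero part; this recovers the object tensored with $E_{\p,\infty}$. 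Here $E_{0}$ commutes with $\RHom_{\fg^{c}}(V^{\vee},-)$ because the $\fg^{c}$- and $\Theta$-actions commute. The main obstacle is this last step, and specifically checking that $E_{0}$ behaves correctly on the $t$-adically completed filtered objects. I would handle it graded piece by graded piece: for each $\gr^{i}$ with $i\neq0$ the Sen operator is an isomorphism shifted by $i$, and I would then control the limit using completeness together with the lemma computing $E_{0}$ as a colimit of $M/^{\mbb{L}}Y^{n}[-1]$.
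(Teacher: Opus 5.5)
Your architecture (trivialize $\bar{\mcal{G}}^{c}_{\dR,\Kpp}(V)\otimes\BdRKp\cong V\otimes\BdRKp$ at infinite level, take derived smooth vectors, then apply $E_{0}\circ D_{\sen,E_{\p}}$) is reasonable. It is how the paper handles the local analogue (Proposition \ref{propRHCorrLocalSV}). But it funnels everything into the trivial-coefficient statement $\BdRKp^{R-\sm}\cong s^{*}B_{\dR}$, and your justification of that statement is wrong.

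You reduce it, via the $t$-adic filtration and Lemma \ref{lemBdRLaProperty}, to $(\hat{\mO}_{K^{p}})^{R-\sm}\cong\mO^{\sm}_{K^{p}}$. That is false. On a chart, $(\hat{\mO}_{K^{p}})^{R-\sm}$ is $\varinjlim_{K_{p}}R\Gamma_{\proket}(U_{K_{p}},\hat{\mO})$ (Lemma \ref{lemAnalyticVSproket} and Hochschild--Serre). By the logarithmic Hodge--Tate computation of $R\nu_{*}\hat{\mO}$, its $j$-th cohomology is $\varinjlim_{K_{p}}H^{0}(U_{K_{p}},\Omega^{j}_{\log})(-j)$, which is nonzero for $0<j\le\dim X_{K}$.

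Worse, the isomorphism you want does not respect the $t$-adic filtrations at all. On one side, $\gr^{i}_{t^{\bullet}}(s^{*}B_{\dR})$ is the structure sheaf of the de Rham stack tensored with $\CC_{p}\{i\}(i)$. On the other, $\gr^{i}_{t^{\bullet}}(\BdRKp^{R-\sm})\cong\hat{h}^{\sm}_{*}(\hat{\mO}_{K^{p}})^{R-\sm}\{i\}(i)$. Already $\hat{h}^{\sm}_{*}\mO^{\sm}_{K^{p}}$ is not the structure sheaf: its pull-back along $\hat{h}^{\sm}$ is the jet algebra, with Hodge graded pieces $\Sym^{n}\Omega^{1,\sm}_{\log,K^{p}}$. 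The Hodge--Tate cohomology of $\hat{\mO}$ and these jets only recombine into $\mO\hatotimes B_{\dR}$ through the Poincar\'e lemma, which your argument never invokes.

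The missing input is Proposition \ref{propPushForwardOBdR} (equivalently Corollary \ref{corCompareStructureSheaf}): $R\nu_{*}\OBdRlog\cong\mO\hatotimes B_{\dR}$, compatibly with $\Fil^{\bullet}$ and connections. To get $\BdRKp^{R-\sm}\cong s^{*}B_{\dR}$ you should:
pull back along the suave cover $\hat{h}^{\sm}$;
use $(\hat{h}^{\sm})^{*}\BdRKp^{R-\widetilde{K_{p}}-\sm}\cong\OBdRlogKp^{R-\widetilde{K_{p}}-\sm}$ (Lemma \ref{lemBdRLaProperty});
compute the latter chartwise as $\Gamma_{n}\times\widetilde{K_{p}}$-cohomology, i.e.\ as $\varinjlim_{K_{p}}R\nu_{K_{p},*}\OBdRlog$;
and descend via the connection (Lemma \ref{lemQCohSMsv}).

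The paper does exactly this, but with coefficients from the start. It identifies $R\nu_{K_{p},*}(\unl{V}\otimes\OBdRlog)\cong\mcal{G}^{c}_{\dR,K_{p}}(V)\otimes B_{\dR}$ with $(V\otimes\OBdRlogKp)^{R-\widetilde{K_{p}}-\sm}\cong\RHom_{\fg^{c}}(V^{\vee},\OBdRlogKp^{\la})$ by \cite[Theorem 5.5]{JRC2021solid}, then descends along $\hat{h}^{\sm}$. Your Step 1 is therefore bypassed.

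Two smaller repairs are needed.
In Step 2 you must apply $R\Gamma(\fg^{c},-)$ (derived smooth vectors) to both sides, not $\RHom_{\fg^{c}}(V^{\vee},-)$. The right side then becomes $R\Gamma(\fg^{c},V\otimes\BdRKp^{\la})=\RHom_{\fg^{c}}(V^{\vee},\BdRKp^{\la})$.
In Step 3, $E_{0}\circ D_{\sen,E_{\p}}$ commutes with $\RHom_{\fg^{c}}(V^{\vee},-)$ because the latter is a finite limit (finite Chevalley--Eilenberg complex), not merely because the actions commute. The limit issue you flag also disappears if, as the paper does, you test the comparison map after pulling back to $\mX^{\tor,\sm}_{K^{p},E_{\p}}\times B\GG_{m}$. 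There both sides become Hodge-graded sums with finitely many terms in each degree, and Lemma \ref{lemDsenCommuteWithFlatBC} applies.
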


Recall the following fact about \(\OBdRlog\):
\begin{prop}[{\cite[Proposition 6.16]{Scholze13}, \cite[Lemma 3.3.2]{DLLZ2022logarithmicJAMS}}]
	\label{propPushForwardOBdR}
Let \(X\) be a log smooth rigid variey over \(L\subset\bar{\QQ}_{p}\).
Consider \( \nu:X_{\CC_{p},\proket}\to X_{\CC_{p},\an} \). Then for any \(a<b\), we have an isomorphism compatible with connections \[R\nu_{*}(\Fil^{a}\OBdRlog/\Fil^{b}\OBdRlog)
\cong \mO_{X}\hatotimes_{L}(t^{a}B_{\dR}^{+}/t^{b}B_{\dR}^{+}).
\] 
\end{prop}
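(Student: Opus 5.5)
The plan is to argue locally on $X_{\an}$, reduce by dévissage along the filtration to the graded pieces, and compute the graded pieces by an explicit toric computation, following \cite[Proposition 6.16]{Scholze13} and \cite[Lemma 3.3.2]{DLLZ2022logarithmicJAMS}.

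\textbf{Step 1: construct the map and reduce to graded pieces.} There is a natural map $\mO_{X}\hatotimes_{L}(t^{a}B_{\dR}^{+}/t^{b}B_{\dR}^{+})\to R\nu_{*}(\Fil^{a}\OBdRlog/\Fil^{b}\OBdRlog)$, since $\OBdRlog^{+}$ contains both $\mO_{X}$ and $B_{\dR}^{+}$. It is compatible with connections, because $\nabla$ kills $B_{\dR}^{+}$ and restricts to $d$ on $\mO_{X}$. Both sides are finite extensions along the $t$-adic, respectively $\Fil^{\bullet}$, filtrations. Multiplication by $t$ identifies $\gr^{i}$ with $\gr^{0}(i)$ on both sides. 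So by induction on $b-a$ it suffices to show that
\[
\mO_{X}\hatotimes_{L}\CC_{p}\to R\nu_{*}\gr^{0}\OBdRlog
\]
is an isomorphism. The question is local on $X$, so we may assume $X=\Spa(A,A^{+})$ admits a (log) toric chart $X\to \mbb{B}^{n}$ as in Notation \ref{notationToricChartTower}.

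\textbf{Step 2: explicit description over the perfectoid cover.} Pass to the pro-Kummer \'etale $\Gamma$-torsor $\ti{X}_{\infty}\to X$ obtained by pulling back $\mbb{B}^{n}_{\infty}$, where $\Gamma\cong\hat{\ZZ}(1)^{n}$ up to a finite part. Over $\ti{X}_{\infty}=\Spa(B_{\infty},B_{\infty}^{+})$ we use the standard presentation
\[
\OBdRlog^{+}(B_{\infty})\cong \BdR^{+}(B_{\infty})[[y_{1},\ldots,y_{n}]],
\]
with $y_{i}=\log([T_{i}^{\flat}]/T_{i})$ in the log case; in the case of a trivial log structure one uses $T_{i}-[T_{i}^{\flat}]$ instead. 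This presentation is filtered for $\Fil^{r}=\sum_{j}t^{r-j}(y)^{j}$. Then $\gr^{0}\OBdRlog(B_{\infty})\cong B_{\infty}[V_{1},\ldots,V_{n}]$ with $V_{i}=y_{i}/t$. A generator $\gamma_{j}\in\Gamma$ acts through its action on $B_{\infty}$ together with $V_{j}\mapsto V_{j}+1$ (up to a sign/Tate twist convention). By the almost purity computation underlying Lemma \ref{lemAnalyticVSproket}, $R\nu_{*}$ of this sheaf is computed by the continuous cohomology $R\Gamma(\Gamma,B_{\infty}[V_{1},\ldots,V_{n}])$.

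\textbf{Step 3: the group cohomology computation.} We compute $R\Gamma(\Gamma,B_{\infty}[V_{1},\ldots,V_{n}])$ one variable at a time via a Künneth/Hochschild–Serre argument. Decompose $B_{\infty}=\hat{\bigoplus}$ of $A\hatotimes\CC_{p}$ and the non-integral-exponent parts $A\cdot T^{k/m}$. On the non-integral parts, $\gamma-1$ is invertible with bounded inverse even after adjoining the polynomial variables. On the integral part $A\hatotimes\CC_{p}[V]$, the cohomology of $\ZZ_{p}$ acting by $V\mapsto V+1$ is $A\hatotimes\CC_{p}$ in degree $0$ and vanishes in degree $1$: the operator $\gamma-1$ is the difference operator, which is surjective on polynomials and has kernel the constants. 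The expected output is $\mO_{X}\hatotimes\CC_{p}$ concentrated in degree $0$, compatibly with $\nabla$.

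\textbf{Main obstacle.} I expect the hardest point to be the topological control in Step 3. One must show that the inverse of the difference operator on the non-integral parts is uniformly bounded after adjoining $V_{1},\ldots,V_{n}$, so that the decomposition survives completion. One also has to handle the log/Kummer part of $\Gamma$ near the boundary. I would follow \cite[Lemma 3.3.2]{DLLZ2022logarithmicJAMS}, which treats exactly this via the Kummer tower and shows the required vanishing holds almost integrally with controlled constants. Finally, one must glue the local isomorphisms: they are independent of the chosen chart, since the map of Step 1 is canonical, so they descend to a global isomorphism on $X_{\an}$.
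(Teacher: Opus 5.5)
Your proposal is correct and is essentially the paper's argument: the paper simply cites \cite[Proposition 6.16]{Scholze13} and \cite[Lemma 3.3.2]{DLLZ2022logarithmicJAMS} for the isomorphism, and their proofs are the d\'evissage to $\gr^{0}$ plus the toric Galois-cohomology computation you outline. One minor caveat: for an arbitrary log smooth $X$ the cited proof uses toric charts $X\to\Spa(L\langle P\rangle)$ with $P$ a general toric monoid, not only $\mbb{B}^{n}$-charts, although your computation goes through verbatim with $\Gamma=\Hom(P^{\mathrm{gp}},\hat{\ZZ}(1))$. The paper obtains compatibility with connections exactly as in your Step 1, from the existence of the natural map $\mO_{X}\hatotimes_{L}(t^{a}B_{\dR}^{+}/t^{b}B_{\dR}^{+})\to R\nu_{*}(\Fil^{a}\OBdRlog/\Fil^{b}\OBdRlog)$, which is horizontal because $\nabla$ is $\BdR^{+}$-linear and extends the log differential on $\mO_{X}$.
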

\begin{proof}
The isomorphism in \cite[Lemma 3.3.2]{DLLZ2022logarithmicJAMS} is compatible with connections, 
because there is a natural map from the RHS to the LHS that is compatible with connections.
\end{proof}
\begin{cor}\label{corCompareStructureSheaf}
Consider \( \nu_{K_{p}}:\mX^{\tor}_{\Kpp,\proket}\to \mX^{\tor}_{\Kpp,\an} \). 
We will work with sheaves valued in \( \widehat{\Fil}^{\ZZ}(D(E_{\p,\square})) \). 

Then for any \(V\in\Rep(G^{c})\), we have a filtered \( \Gal_{E_{\p}} \)-equivariant  isomorphism that is compatible with connections  \[
\mcal{G}_{\dR,K_{p}}^{c}(V)\otimes_{E_{\p}}B_{\dR}\cong R\nu_{K_{p},*}(\unl{V}\otimes\OBdRlog),
\] where \( \unl{V} \) comes from descent along the pro-Kummer \'etale tower \(\{\mX^{\tor}_{K^{p}K_{p}',E_{\p}}\}\), and 
\( \mcal{G}_{\dR,K_{p}}^{c} \) is as in Notation \ref{notationDRtorsor}. 
\end{cor}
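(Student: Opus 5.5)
The plan is to derive Corollary \ref{corCompareStructureSheaf} from the logarithmic Riemann--Hilbert correspondence of \cite{DLLZ2022logarithmicJAMS} together with Proposition \ref{propPushForwardOBdR}, in three steps.

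First, recall the defining property of the canonical extension. For $V\in\Rep(G^{c})$ we have the pro-Kummer \'etale local system $\unl{V}=\mcal{M}_{B,\Kpp}(V)$ on $\mX^{\tor}_{\Kpp}$, as in the proof of Lemma \ref{lemTorCompacAsTateStack} (5). The log Riemann--Hilbert functor of \cite[Theorem 3.2.7, \S 4]{DLLZ2022logarithmicJAMS} sends $\unl{V}$ to a filtered log connection, and for Shimura varieties \cite[Theorem 4.6.1]{DLLZ2022logarithmicJAMS} identifies this with $(D_{\dR,K}(V),\nabla_{\dR,K})$. That is the object $\mcal{G}^{c}_{\dR,K_{p}}(V)$ of Notation \ref{notationDRtorsor}, restricted from $\bar{\mcal{G}}^{c,+}_{\dR,K}$ via Lemma \ref{lemDRtorsorDescends}.

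Concretely, that theorem gives a filtered isomorphism of $\OBdRlog$-modules on the pro-Kummer \'etale site, compatible with connections and Galois action:
\[\unl{V}\otimes\OBdRlog\cong \nu_{K_{p}}^{*}\mcal{G}^{c}_{\dR,K_{p}}(V)\otimes_{\mO}\OBdRlog .\]
Here the filtration on the right is the tensor product of the Hodge filtration $\Fil^{\bullet}_{\mu^{-1}}$ with $\Fil^{\bullet}\OBdRlog$. Via Tannakian formalism it suffices to have this functorially in $V$, as an exact $\otimes$-functor; DLLZ provide exactly that.

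Second, apply $R\nu_{K_{p},*}$ and use the projection formula, which is valid because $\mcal{G}^{c}_{\dR,K_{p}}(V)$ is a filtered vector bundle. This reduces the claim to computing $R\nu_{K_{p},*}\OBdRlog$ as a filtered complete object. Since both sides are filtered complete, I would argue on the truncations $\Fil^{a}/\Fil^{b}$ and pass to the limit. Proposition \ref{propPushForwardOBdR} gives
\[R\nu_{*}(\Fil^{a}\OBdRlog/\Fil^{b}\OBdRlog)\cong \mO\hatotimes_{E_{\p}}t^{a}B_{\dR}^{+}/t^{b}B_{\dR}^{+},\]
compatibly with connections. Taking the filtered tensor product with $\mcal{G}^{c}_{\dR,K_{p}}(V)$ and then $\Rlim$ over $b$ and $\varinjlim$ over $a$, using that $R\nu_{*}$ commutes with limits and that the filtrations are finite on the vector bundle factor, yields the filtered isomorphism $\mcal{G}^{c}_{\dR,K_{p}}(V)\otimes_{E_{\p}}B_{\dR}\cong R\nu_{K_{p},*}(\unl{V}\otimes\OBdRlog)$.

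Third, check equivariance and the connection. Galois equivariance is inherited from the $\Gal_{E_{\p}}$-equivariance of the DLLZ comparison, since everything is defined over $E_{\p}$. Compatibility with connections follows from the remark in the proof of Proposition \ref{propPushForwardOBdR}: the map is induced by a natural map from the right-hand side.

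The main obstacle is the filtration bookkeeping. The convolution filtration on $\mcal{G}^{c}_{\dR,K_{p}}(V)\otimes B_{\dR}$ must match the filtration induced on $R\nu_{*}$, which requires the DLLZ isomorphism to be strictly filtered. I would check this on graded pieces, where it reduces to the Hodge--Tate decomposition $\gr\OBdRlog\cong\hat{\mO}\otimes\Sym\Omega^{1}_{\log}(-1)$ and the vanishing of higher $R\nu_{*}\hat{\mO}(j)$ for $j\neq 0$. This is exactly the computation underlying Proposition \ref{propPushForwardOBdR}.
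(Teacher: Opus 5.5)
Your proposal is correct and follows the paper's proof: the paper likewise takes the filtered, connection-compatible comparison $\unl{V}\otimes\OBdRlog\cong\mcal{G}^{c}_{\dR,K_{p}}(V)\otimes_{\mO}\OBdRlog$ from \cite[Theorem 5.3.1]{DLLZ2022logarithmicJAMS}. It then applies $R\nu_{K_{p},*}$ with the projection formula and concludes by Proposition \ref{propPushForwardOBdR}. Your extra filtration bookkeeping is unnecessary, because that comparison is already an isomorphism of sheaves valued in $\widehat{\Fil}^{\ZZ}(D(E_{\p,\square}))$; also, your aside that higher $R\nu_{*}\hat{\mO}(j)$ vanishes for $j\neq 0$ is wrong as stated ($R^{i}\nu_{*}\hat{\mO}(j)\cong\Omega^{i}_{\log}(j-i)$), though your argument does not depend on it.
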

\begin{proof}
    By \cite[Theorem 5.3.1]{DLLZ2022logarithmicJAMS}, we have an isomorphism as pro-Kummer \'etale sheaves (valued in \( \widehat{\Fil}^{\ZZ}(D(E_{\p,\square})) \)) on \( \mX^{\tor}_{\Kpp,E_{\p}} \)  \[
\unl{V}\otimes \OBdRlog \cong \mcal{G}_{\dR,K_{p}}^{c}(V)\otimes_{\mO_{\mX^{\tor}_{\Kpp}}} \OBdRlog
\] that is compatible with log connections. 
Then applying \( R\nu_{K_{p},*} \) 
on both sides, we obtain 
\begin{align*}
R\nu_{K_{p},*}(\unl{V}\otimes \OBdRlog)\cong \mcal{G}_{\dR,K_{p}}^{c}(V)\otimes_{\mO_{\mX^{\tor}_{\Kpp}}}R\nu_{K_{p},*}(\OBdRlog)\\\cong 
\mcal{G}_{\dR,K_{p}}^{c}(V)\otimes_{E_{\p}}B_{\dR},
\end{align*}
where the last isomorphism is given by Proposition \ref{propPushForwardOBdR}.
\end{proof}

\begin{proof}[Proof of Theorem \ref{thmReformulateRHCorr}]

Let \((U_{K_{p,0}},i:U_{K_{p,0}}\to\mbb{B}^{n},U_{K_{p}},U_{K_{p},\infty},\ti{U}_{\infty},\Gamma_{n})\) as in Construction \ref{constructionUChartTower} and Lemma \ref{lemTorCompacAsTateStack}, where \(U_{K_{p,0}}\subset\mX^{\tor}_{K_{p,0}}\) descends to \( U_{K_{p,0},L} \) over \(L\subset \CC_{p}\) for a finite extension \(L/E_{\p}\).
Assume that \(U_{K_{p,0},L}\cong \Spa(A_{K_{p},L},A_{K_{p},L}^{+})\), and let \(U_{K_{p},L}\subset \mX^{\tor}_{\Kpp,L}\)
be the preimage of \( U_{K_{p,0},L}\subset \mX^{\tor}_{K^{p}K_{p,0},L} \).

Then by descent, for \(\nu_{K_{p}}:\mX^{\tor}_{\Kpp,\proket}\to \mX^{\tor}_{\Kpp,\an}\), \( R\nu_{K_{p},*}(V\otimes\OBdRlog)|_{U_{K_{p}}} \) 
coincides with 
the analytic sheafification of the presheaf valued in \( \widehat{\Fil}^{\bullet}(D(L_{\square})) \) 
\begin{align*}
&(U'_{K_{p},L}\subset U_{K_{p},L})\mapsto R\Gamma(U_{K_{p},\proket}',\unl{V}\otimes\OBdRlog)\\&\cong R\Gamma(\Gamma_{n}\times \widetilde{K_{p}},V\otimes\OBdRlog(\ti{U}'_{\infty}))\cong R\Gamma(\widetilde{K_{p}},V\otimes R\Gamma(\OBdRlog(\ti{U}')^{+})[1/t]),
\end{align*}
where \(U'_{K_{p}}\subset U_{K_{p}}\), \(\ti{U}'_{\infty}\subset \ti{U}_{\infty}\) and \(\OBdRlog(\ti{U}')^{+}\subset \OBdRlog(\ti{U})^{+}\) are the preimages of \(U'_{K_{p},L}\subset U_{K_{p},L}\).

By Proposition \ref{propPushForwardOBdR}, \( R\nu_{K_{p},*}(\unl{V}\otimes\OBdRlog)\cong\mcal{G}^{c}_{\dR,K_{p}}(V)\otimes_{L}B_{\dR}\). 
Taking colimit along \(K_{p}\) and gluing along \( U_{K_{p,0}} \), 
we have \[
(\pi^{\sm}_{K_{p}})_{*}(V\otimes\OBdRlogKp)^{R-\widetilde{K_{p}}-\sm}\cong (\pi^{\sm}_{K_{p}})_{*}(\mcal{G}^{c}_{\dR,\Kpp}(V))\otimes p_{2}^{*}(B_{\dR} )
\] as quasicoherent sheaves over \( \mX^{\tor}_{\Kpp,E_{\p}}\times [\hat{\mA}^{1}/\GG_{m}] \), which is compatible with connections.
Note that by \cite[Theorem 5.5]{JRC2021solid}, the LHS is isomorphic to \( (\pi^{\sm}_{K_{p}})_{*}\RHom_{\fg^{c}}(V^{\vee},\OBdRlogKp^{\la}) \). 

By Lemma \ref{lemBdRLaProperty} and Lemma \ref{lemQCohSMsv}, we know that this isomorphism descends to a \( G(\QQ_{p})^{\sm}\times \unl{\Gal_{E_{\p}}} \)-equivariant isomorphism on \( (\mX^{\tor}_{K^{p},E_{\p}})^{\pdR/E_{\p},\hat{+},\sm}_{\log} \)  
\[
\RHom_{\fg^{c}}(V^{\vee},\BdRKp^{\la})\cong (\pi^{\sm,\pdR}_{K_{p}})^{*}\bar{\mcal{G}}^{c}_{\dR,\Kpp}(V)\otimes s^{*}(B_{\dR}),
\] as desired. 

Since the isomorphism is \( \unl{\Gal_{E_{\p}}} \)-equivariant,  we can apply the functor \( D_{\sen,E_{\p}} \) on both sides.  We have \( E_{0}\circ D_{\sen,E_{\p}} \) 
commutes with \( \RHom_{\fg^{c}}(V^{\vee},-) \) 
since the latter is calculated by a finite resolution. 
On the other hand, we have a natural map 
\begin{align*}
&(\pi^{\sm,\pdR}_{K_{p}})^{*}\bar{\mcal{G}}^{c}_{\dR,\Kpp}(V)\otimes_{E_{\p}} E_{\p,\infty}\to\\
&E_{0}\circ D_{\sen,E_{\p}}\left((\pi^{\sm,\pdR}_{K_{p}})^{*}\bar{\mcal{G}}^{c}_{\dR,\Kpp}(V)\otimes s^{*}(B_{\dR})\right).
\end{align*}
To verify that it is an isomorphism, it suffices to verify after pulling back to \( \mX^{\tor,\sm}_{K^{p},E_{\p}}\times B\GG_{m} \).
Then the RHS is isomorphism to \[
\bigoplus_{i}\bigoplus_{j}E_{0}\circ D_{\sen,E_{\p}}(\gr^{j}(\mcal{G}^{d}_{\dR,\Kpp}(V))\otimes_{E_{\p}} \gr^{i-j}B_{\dR})\{i\}.
\] 
Note that the second direct sum is finite. We then conclude by Lemma \ref{lemDsenCommuteWithFlatBC}. Note that the sections of \( \gr^{j}(\mcal{G}^{d}_{\dR,\Kpp}(V)) \)
form Banach spaces, and are thus flat and nuclear by \cite[Corollary 3.16, Lemma 3.21]{JRC2021solid}.
\end{proof}


\subsubsection{Perfectoid case}

Assume that there exists a perfectoid space \( \mX_{K^{p}} \) such that 
\( \mX_{K^{p}}\sim \varprojlim_{K_{p}}\mX_{\Kpp} \) (\cite[Definition 2.4.1]{SW13}, \cite[Definition 4.4.2]{BoxerPilloni2021higherColeman}). This is known for example if \( (G,X) \) 
is of abelian type (\cite{HansenJohansson2020perfectoid}, \cite[Proposition 4.4.53]{BoxerPilloni2021higherColeman}).

By Theorem \ref{thmAnDRStackGeneral} (4), and the construction of \( \mX_{K^{p}} \) in Lemma \ref{lemTorCompacAsTateStack}, the Tate stack \( \mX_{K^{p}} \) 
is precisely the perfectoid space \( \mX_{K^{p}} \) regarded as a Tate stack.
By Theorem \ref{thmAnDRStackGeneral} (5), we have \( \mX_{K^{p}}^{\dR}\cong (\mX_{K^{p}}^{\sm})^{\dR}\cong \varprojlim_{K_{p}}\mX_{\Kpp}^{\dR} \). So
\begin{lem}\label{lemBasicDRAlgDRandSM}
(1) We have the following diagram \[
\begin{tikzcd}
\mX_{K^{p}}^{\sm}\arrow[r]\arrow[d]
& \mX_{K^{p},\bar{\QQ}_{p}}^{\pdR/\bar{\QQ}_{p},\sm}\arrow[r,"g_{K^{p}}"]\arrow[d]
& \mX_{K^{p}}^{\dR}\arrow[d]\\
\mX_{\Kpp}\arrow[r]
& \mX_{\Kpp,\bar{\QQ}_{p}}^{\pdR/\bar{\QQ}_{p}}\arrow[r,"g_{\Kpp}"]
& \mX_{\Kpp}^{\dR},
\end{tikzcd}
\] where both squares are Cartesian, and \( \mX_{K^{p},\bar{\QQ}_{p}}^{\pdR/\bar{\QQ}_{p},\sm}:=\varprojlim_{K_{p}}\mX_{\Kpp,\bar{\QQ}_{p}}^{\pdR/\bar{\QQ}_{p}} \), which is an open subspace of \( (\mX^{\tor}_{K^{p},\bar{\QQ}_{p}})^{\pdR/\bar{\QQ}_{p},\sm}_{\log} \)  in Definition \ref{dfnSMratConstruction}.

(2) \( g_{K^{p}}^{*} \) is fully faithful and the natural morphism \( \mrm{id}\to (g_{K^{p}})_{*}\circ g_{K^{p}}^{*} \) is an equivalence. 
\end{lem}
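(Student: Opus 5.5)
For (1), the plan is to build the diagram level by level and then pass to the limit. For each \(K_{p}\), the lower row \(\mX_{\Kpp}\to \mX^{\pdR/\bar{\QQ}_{p}}_{\Kpp,\bar{\QQ}_{p}}\xrightarrow{g_{\Kpp}}\mX^{\dR}_{\Kpp}\) is the factorization of Lemma \ref{lemBasicLogDRSta} (2) and Proposition \ref{propAnToAlgDRJuan}. It is natural in \(K_{p}\), since the transition maps \(\mX_{K^{p}K_{p}'}\to \mX_{\Kpp}\) are finite \'etale and all three constructions are functorial.

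The first step is to show that for \(K_{p}'\subset K_{p}\) both squares
\[
\mX_{K^{p}K_{p}'}\to \mX^{\pdR}_{K^{p}K_{p}'}\to \mX^{\dR}_{K^{p}K_{p}'}
\quad\text{over}\quad
\mX_{\Kpp}\to \mX^{\pdR}_{\Kpp}\to \mX^{\dR}_{\Kpp}
\]
are Cartesian. For the left square this is Lemma \ref{lemBasicLogDRSta} (3) applied to a finite \'etale (in particular log \'etale) map, restricted to \([\GG_{m}/\GG_{m}]\). For the outer rectangle, I use that \(\mX_{K^{p}K_{p}'}\to \mX_{\Kpp}\) is \'etale, hence \(\dagger\)-formally smooth. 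Then Theorem \ref{thmAnDRStackGeneral} (1) and (2) give \(\mX_{K^{p}K_{p}'}\cong \mX^{\dR}_{K^{p}K_{p}'}\times_{\mX^{\dR}_{\Kpp}}\mX_{\Kpp}\); this is the same argument used in the proof of Theorem \ref{thmAnDRStackGeneral} (5).

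Next I deduce that the right square is Cartesian. The map \(\mX_{\Kpp}\to \mX^{\pdR}_{\Kpp}\) is a \(!\)-surjection (Lemma \ref{lemBasicLogDRSta} (1)), and being Cartesian can be tested after a \(!\)-surjective base change. Indeed, pulling back the comparison map \(\mX^{\pdR}_{K^{p}K_{p}'}\to \mX^{\dR}_{K^{p}K_{p}'}\times_{\mX^{\dR}_{\Kpp}}\mX^{\pdR}_{\Kpp}\) along this surjection gives an isomorphism, by the two Cartesian squares just obtained. Since we are in an \(\infty\)-topos, the comparison map is then an isomorphism.

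Finally I take \(\varprojlim_{K_{p}'}\). Limits commute with fiber products, so both squares of (1) are Cartesian. Here the top right corner is identified with \(\varprojlim\mX^{\dR}_{K^{p}K_{p}'}\), which is \(\mX^{\dR}_{K^{p}}\) by Theorem \ref{thmAnDRStackGeneral} (5) as recalled just before the lemma. The statement that \(\mX^{\pdR,\sm}_{K^{p}}\) is an open subspace of the log version is obtained by taking the limit of the open immersions \(\mX^{\pdR}_{\Kpp}\subset(\mX^{\tor}_{\Kpp})^{\pdR}_{\log}\) (Lemma \ref{lemBasicLogDRSta} (3) for open immersions).

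For (2), I plan to reduce to finite level via Lemma \ref{lemUniformAffineProper}, in the form of Lemma \ref{lemQCohSMsv} (1), which gives \(\QCoh(\mX^{\pdR,\sm}_{K^{p}})\cong\varinjlim\QCoh(\mX^{\pdR}_{\Kpp})\). The analogous statement for \(\mX^{\dR}_{K^{p}}=\varprojlim\mX^{\dR}_{\Kpp}\) should also hold: the Cartesian squares make the tower uniformly affine proper after the \(!\)-cover \(\mX_{\Kpp}\to\mX^{\dR}_{\Kpp}\), which is affine proper by Theorem \ref{thmAnDRStackGeneral} (3) and descendable by Corollary \ref{corXdRDescendableCover}.

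The main obstacle is that \(g_{K^{p}}^{*}\) is not literally a colimit of the finite-level \(g_{\Kpp}^{*}\) unless \(g_{*}\) commutes with the transition pullbacks. I would handle this by base change. The maps \(g_{\Kpp}\) are cohomologically co-smooth (Proposition \ref{propAnToAlgDRJuan} (1)), so Lemma \ref{lemSmBaseChange} (2) applies to the Cartesian right square, giving \(g_{K^{p}*}\circ\pi^{*}\cong\pi^{*}\circ g_{\Kpp,*}\). Combined with \(g_{\Kpp,*}g_{\Kpp}^{*}\cong\mrm{id}\) (Proposition \ref{propAnToAlgDRJuan} (2)), this yields \(\mrm{id}\cong g_{K^{p}*}g_{K^{p}}^{*}\) on the generating objects \(\pi^{*}\mF\). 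Since these generate under colimits, and \(g_{K^{p}*}\) preserves colimits (it is prim by base change), the identity \(\mrm{id}\cong g_{K^{p}*}g^{*}_{K^{p}}\) holds everywhere. Full faithfulness of \(g_{K^{p}}^{*}\) follows formally from the unit being an equivalence.
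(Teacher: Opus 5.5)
Your proposal is correct and is essentially the paper's argument with more detail. Part (1) comes from the triviality of relative algebraic and analytic de Rham stacks along the étale transition maps, followed by passage to the limit; part (2) comes from Proposition \ref{propAnToAlgDRJuan} combined with base change along the Cartesian right square.

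Two small remarks. First, the isomorphism \(X\cong X^{\dR/X'}\) for an étale map \(X\to X'\) should be justified by Theorem \ref{thmAnDRStackGeneral} (3): the diagonal is open and closed, so \(\Delta(X)^{\dagger}\cong X\). Part (2) of that theorem, which you cite, requires a monomorphism and so does not apply to finite étale covers. Second, since \(g_{K^{p}}\) is prim, the projection formula of Lemma \ref{lemSmBaseChange} (2) gives \(g_{K^{p},*}g_{K^{p}}^{*}M\cong g_{K^{p},*}(\mO)\otimes M\). You therefore only need \(\mO\cong g_{K^{p},*}\mO\), which follows by base change, and the generation argument can be dropped.
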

\begin{proof}
(1) follows from Theorem \ref{thmAnDRStackGeneral} (3) and Lemma \ref{lemBasicLogDRSta} which implies that for \( X\to X' \) \'etale, \( X^{\dR/X'}\cong X^{\pdR/X'}\cong X' \).  
(2) follows from  Proposition \ref{propAnToAlgDRJuan}.
\end{proof}
\begin{cor}\label{corQCohOnXdR}
We have isomorphisms in \( \mrm{Pr}^{L} \)  \[
\QCoh(\mX^{\dR}_{K^{p}})\cong \varinjlim^{*}_{K_{p}}\QCoh(\mX^{\dR}_{\Kpp})
\] \[
\QCoh(\mX^{\dR}_{\Kpp})\cong \varinjlim_{E_{\p}\subset L\subset\bar{\QQ}_{p}}^{*}\QCoh(\mX^{\dR}_{\Kpp,L}),
\]
where the transition maps are given by \( (-)^{*} \),
and the second colimit is taken over all the finite extension of \( E_{\p} \). 
\end{cor}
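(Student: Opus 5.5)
For the first isomorphism, I would follow the argument of Lemma \ref{lemUniformAffineProper}, with the tower \( \{\mX^{\dR}_{\Kpp}\}_{K_{p}} \) in place of a uniformly affine proper tower. Since \( \mX_{K^{p}}^{\dR}\cong\varprojlim_{K_{p}}\mX^{\dR}_{\Kpp} \) (Theorem \ref{thmAnDRStackGeneral} (5)), the plan is to fix a base level \( K_{p,0} \) and the prim \( ! \)-cover \( \mX_{K^{p}K_{p,0}}\to\mX^{\dR}_{K^{p}K_{p,0}} \) (Corollary \ref{corXdRDescendableCover}). Pulling this cover back along the tower, Lemma \ref{lemBasicDRAlgDRandSM} (1) (the étale case, where \( X^{\dR/X'}\cong X' \)) identifies the pullback to \( \mX^{\dR}_{\Kpp} \) with \( \mX_{\Kpp} \). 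The pullback to \( \mX^{\dR}_{K^{p}} \) is then \( \mX^{\sm}_{K^{p}} \). The same holds for every term of the Čech nerve, whose terms are built from \( (\Delta(\mX_{K^{p}K_{p,0}})^{\dagger})^{n} \) base changed along the tower.

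Locally on \( \mX_{K^{p}K_{p,0}} \), each Čech term is affine, with affine proper transition maps: \( \Delta^{\dagger} \) is affine proper over \( X \) (proof of Theorem \ref{thmAnDRStackGeneral} (3)), and the finite étale transition maps are affine proper. The resulting tower is therefore uniformly affine proper in the sense of Definition \ref{dfnUniformAffineProper}. Lemma \ref{lemUniformAffineProper} then gives \( \QCoh \) of each Čech term of the limit as \( \varinjlim^{*} \) over \( K_{p} \). I would then commute the \( \varinjlim^{!}_{\Delta^{op}} \) (coming from \( ! \)-descent along the prim cover) with \( \varinjlim^{*}_{K_{p}} \), exactly as in step (2) of that proof. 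This uses proper base change (Lemma \ref{lemSmBaseChange} (2)) to see that the \( ! \)-pushforwards are compatible with the \( * \)-pullbacks, and it uses the fact that the base-changed maps remain prim \( ! \)-covers.

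For the second isomorphism, I would use \( \mX^{\dR}_{\Kpp}=\mX^{\dR}_{\Kpp,E_{\p}} \), so that base change gives \( \mX^{\dR}_{\Kpp,L}\cong \mX^{\dR}_{\Kpp}\times_{\AnSpec(E_{\p})}\AnSpec(L) \). By Theorem \ref{thmAnDRStackGeneral} (6), \( \AnSpec(\bar{\QQ}_{p})=\varprojlim_{L}\AnSpec(L) \) is a limit of finite étale, affine proper maps. Here \( \mX^{\dR}_{\Kpp} \) should be read as relative to \( \CC_{p} \), which is the same as over \( \bar{\QQ}_{p} \). Then \( \mX^{\dR}_{\Kpp}\cong\varprojlim_{L}\mX^{\dR}_{\Kpp,L} \) is the base change of an affine tower \( \AnSpec(L) \) with proper transition maps. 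The situation is therefore case (1) of the proof of Lemma \ref{lemUniformAffineProper}: \( \QCoh \) of each is \( \mrm{CAlg}_{L} \)-modules in \( \QCoh(\mX^{\dR}_{\Kpp,E_{\p}}) \), and the colimit over \( L \) gives modules over \( \bar{\QQ}_{p}=\varinjlim L \).

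I expect the main obstacle to be the first isomorphism. The difficulty is verifying that the Čech nerve of \( \mX_{K^{p}K_{p,0}}\to\mX^{\dR}_{K^{p}K_{p,0}} \) is really a uniformly affine proper tower, with a single \( ! \)-descent cover working at all levels. The same step requires checking that \( ! \)-descent commutes with the filtered colimit in \( \mrm{Pr}^{L} \). For this I would localize to affinoids with toric charts, where the terms are explicit dagger algebras \( A_{K_{p}}\{Y_{1},\ldots,Y_{n}\}^{\dagger} \) as in Lemma \ref{lemDiagonalLocally}.
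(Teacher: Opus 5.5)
Your proposal is correct and follows the paper's own argument. The paper likewise gets both isomorphisms from Lemma \ref{lemUniformAffineProper}, with Corollary \ref{corXdRDescendableCover} and Theorem \ref{thmVariousDescent} (3) supplying the universal \(!\)-descent cover; that cover pulls back up the tower to \(\mX_{\Kpp}\), exactly as you show via Lemma \ref{lemBasicDRAlgDRandSM} (1). Your treatment of the second isomorphism is just case (1) of the same lemma, applied to the affine tower \(\AnSpec(L)\).

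Two small corrections. First, \(\mX^{\dR}_{\Kpp}\) must be the absolute de Rham stack. By Theorem \ref{thmAnDRStackGeneral} (1) and (6) it equals \(\mX^{\dR}_{\Kpp,E_{\p}}\times_{E_{\p}}\bar{\QQ}_{p}\), and this identification is what (6) is needed for. It is neither \(\mX^{\dR}_{\Kpp,E_{\p}}\) itself nor the relative stack over \(\CC_{p}\), which is its further base change to \(\CC_{p}\). Second, the exchange of \(\varinjlim^{!}_{\Delta^{op}}\) with \(\varinjlim^{*}_{K_{p}}\) that you flag as the main obstacle is formal, since it is a commutation of colimits in \(\mrm{Pr}^{L}\). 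No toric-chart computation is needed.
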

\begin{proof}
This follows from Corollary \ref{corXdRDescendableCover} by Theorem \ref{thmVariousDescent} (3) and Lemma \ref{dfnUniformAffineProper}.
\end{proof}

Recall from Lemma \ref{lemBdRXtorKp} that we have \( \BdR^{+}(\mX_{K^{p}})\to \mX_{K^{p},\bar{\QQ}_{p}}^{\pdR/\bar{\QQ}_{p},\sm} \), where the LHS is defined by Construction \ref{constructionBdROnDRStack}.
By Lemma \ref{lemTwoBdRCoincide}, this map is the unique extension of the natural map \( \mX_{K^{p}}\to \mX_{K^{p},\bar{\QQ}_{p}}^{\pdR/\bar{\QQ}_{p},\sm}  \). 
Similarly, by Definition \ref{dfnAnDRStack}, the composition \[
 h:\BdR^{+}(\mX_{K^{p}})\to \mX_{K^{p},\bar{\QQ}_{p}}^{\pdR/\bar{\QQ}_{p},\sm} \to \mX_{K^{p}}^{\dR}
\] is the unique extension of the natural map \( \mX_{K^{p}}\to\mX_{K^{p}}^{\dR} \). 
\begin{notation}\label{notationBdROnAnalyticDR}
Let \( h_{K^{p}}:\BdR^{+}(\mX_{K^{p}})\to  \mX_{K^{p}}^{\dR} \) 
be the unique extension of the natural map \( \mX_{K^{p}}\to\mX_{K^{p}}^{\dR} \),
where \( \BdR^{+}(\mX_{K^{p}}) \)  is defined by Construction \ref{constructionBdROnDRStack}.

Let \( \BdRKp^{+}:=(h_{K^{p}})_{*}(\mO_{\BdR^{+}(\mX_{K^{p}})}) \) equipped with the \( t \)-adic filtration, 
and \( \BdRKp:=\BdRKp^{+}[1/t]\in\widehat{\Fil}^{\ZZ}(\QCoh(\mX^{\dR}_{K^{p}})) \). Let \( \BdRKp^{\la}:=\BdRKp^{R-\la}\in \widehat{\Fil}^{\ZZ}(\QCoh(\mX^{\dR}_{K^{p}})) \) as in Notation \ref{notationRlaConvention},
and \( D_{\sen}(\BdRKp^{\la})\in \widehat{\Fil}^{\ZZ}(\QCoh(\mX^{\dR}_{K^{p}})) \) 
for \( D_{\sen} \) in Definition \ref{dfnDsenFunctor}. Note that all the operations are taken in the filtered complete category.

Here \( (-)^{R-\la} \) and \( D_{\sen}(-) \) are defined in the following precise sense:
by Corollary \ref{corQCohOnXdR}, it suffices to define both functors for the \( * \)-push-forward of \( \BdRKp \) to \( \mX^{\dR}_{\Kpp,L} \) for all \( K_{p} \) and \( L \). Then the groups \( K_{p} \)  
and \( \Gal_{L} \) acts trivially on \( \mX^{\dR}_{\Kpp,L} \), and thus \( (-)^{R-K_{p}-\la} \)
and \( D_{\sen,L}(-) \) can be defined.
\end{notation}
\begin{cor}\label{corReformulateRiemmanHilbPerfdCase}
Assume that there exists a perfectoid space \( \mX_{K^{p}} \) such that 
\( \mX_{K^{p}}\sim \varprojlim_{K_{p}}\mX_{\Kpp} \) (\cite[Definition 2.4.1]{SW13}, \cite[Definition 4.4.2]{BoxerPilloni2021higherColeman}).
 Then for any \( V\in\Rep(G^{c}) \), 
we have 
\begin{align*}
(\pi^{\dR}_{K_{p}})^{*}
\mcal{\bar{G}}^{c,\an}_{\dR,\Kpp}(V)\otimes_{\bar{\QQ}_{p}}B_{\dR}&\cong \RHom_{\fg^{c}}(V^{\vee},\BdRKp^{\la}),
\\
(\pi^{\dR}_{K_{p}})^{*}
\mcal{\bar{G}}^{c,\an}_{\dR,\Kpp}(V)&\cong \RHom_{\fg^{c}}(V^{\vee},E_{0}(D_{\sen}(\BdRKp^{\la})))
\end{align*} where \( \pi^{\dR}_{K_{p}}:\mX_{K^{p}}^{\dR}\to \mX_{\Kpp,E_{\p}}^{\dR} \), \( \mcal{\bar{G}}^{c,\an}_{\dR,\Kpp} \) 
is as in Proposition \ref{propAnalyticGMTheory}, and \( -\otimes_{E_{\p}}B_{\dR} \) 
is taken in the \( t^{\bullet} \)-complete sense. 
\end{cor}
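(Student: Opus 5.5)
The plan is to deduce both isomorphisms from Theorem \ref{thmReformulateRHCorr}, restricted to the open part $\mX^{\pdR/\bar{\QQ}_{p},\sm}_{K^{p},\bar{\QQ}_{p}}$ and to $[\GG_{m}/\GG_{m}]$, and then to push forward along $g_{K^{p}}:\mX^{\pdR/\bar{\QQ}_{p},\sm}_{K^{p},\bar{\QQ}_{p}}\to \mX^{\dR}_{K^{p}}$ of Lemma \ref{lemBasicDRAlgDRandSM}.

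First I identify the period sheaves. By Lemma \ref{lemTwoBdRCoincide} and the uniqueness in Notation \ref{notationBdROnAnalyticDR}, $h_{K^{p}}$ factors as $g_{K^{p}}\circ\pi^{\pdR}_{\sm}$, with $\pi^{\pdR}_{\sm}$ as in Lemma \ref{lemBdRXtorKp} restricted to the non-boundary part. Hence the sheaf $\BdRKp^{+}$ on $\mX^{\dR}_{K^{p}}$ is $g_{K^{p},*}$ of the algebraic one of Definition \ref{dfnPeriodShvSV}, after forgetting the filtration coming from $[\hat{\mA}^{1}/\GG_{m}]$ and keeping only $t^{\bullet}$. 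I then check that $g_{K^{p},*}$ commutes with $(-)^{R-\la}$, with $D_{\sen}$, with $E_{0}$, and with $\RHom_{\fg^{c}}(V^{\vee},-)$. By Corollary \ref{corQCohOnXdR}, all these functors are computed level by level on $\mX^{\dR}_{\Kpp,L}$. There the groups act trivially on the base, so each functor is a finite limit or filtered colimit of continuous cochain constructions, coming from the Lazard--Serre resolution, the finite Chevalley--Eilenberg complex, and colimits over $L$ and $K_{p}$. Since $g_{K^{p},*}$ is exact and preserves these limits and colimits (by the cartesian squares in Lemma \ref{lemBasicDRAlgDRandSM} (1) and Proposition \ref{propAnToAlgDRJuan}), it commutes with all of them.

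Second, I apply $g_{K^{p},*}$ to the first isomorphism of Theorem \ref{thmReformulateRHCorr}, after base change to $\bar{\QQ}_{p}$. The left side becomes $g_{K^{p},*}g_{K^{p}}^{*}(\pi^{\dR}_{K_{p}})^{*}\mcal{\bar{G}}^{c,\an}_{\dR,\Kpp}(V)\otimes B_{\dR}$. Here I use that $g_{\Kpp}^{*}\mcal{\bar{G}}^{c,\an}_{\dR,\Kpp}\cong\mcal{\bar{G}}^{c}_{\dR,\Kpp}|$, which holds by construction in the proof of Proposition \ref{propAnalyticGMTheory} together with Proposition \ref{propAnToAlgDRJuan} (3). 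By Lemma \ref{lemBasicDRAlgDRandSM} (2), $g_{K^{p},*}g_{K^{p}}^{*}\cong\id$, and the projection formula (Lemma \ref{lemProjectionFormuaForPerf}) handles the vector bundle factor against the completed tensor with $B_{\dR}$. The $t$-completed tensor is then controlled gradedwise through Lemma \ref{lemBdRLaProperty}.

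Third, I obtain the second isomorphism in the same way from the $E_{0}\circ D_{\sen,E_{\p}}$ version in Theorem \ref{thmReformulateRHCorr}. I pass to the colimit over finite $L/E_{\p}$, so that $D_{\sen}$ replaces $D_{\sen,E_{\p}}$, and the factor $\otimes_{E_{\p}}E_{\p,\infty}$ becomes $\otimes\bar{\QQ}_{p}$, which is absorbed because $\mX^{\dR}_{K^{p}}$ already lives over $\bar{\QQ}_{p}$ by Theorem \ref{thmAnDRStackGeneral} (6).

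The main obstacle is the commutation of $g_{K^{p},*}$ with $(-)^{R-\la}$ and $D_{\sen}$ in the filtered complete, non-proper setting. If the direct argument fails, I will first try to reduce to graded pieces: both sides are $t$-complete, and by Lemma \ref{lemBdRLaProperty} the graded pieces are pushforwards of $\mO^{\la}_{K^{p}}$, which are concentrated in degree $0$ by Theorem \ref{thmOlaConcentrated0}. On these graded pieces the commutation reduces to flat base change of Banach spaces, which is Lemma \ref{lemDsenCommuteWithFlatBC}.
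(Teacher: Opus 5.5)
Your proposal follows essentially the same route as the paper: deduce the statement from Theorem \ref{thmReformulateRHCorr} by pushing forward along the algebraic-to-analytic de Rham map $g$, identify the torsors via Proposition \ref{propAnToAlgDRJuan} together with $g_{*}g^{*}\cong\id$, and check level by level (Corollary \ref{corQCohOnXdR}) that $g_{*}$ commutes with $(-)^{R-\la}$, $D_{\sen}$ and $E_{0}$; the only difference is that the paper reduces to $t^{\bullet}$-graded pieces first, where you keep this as a fallback. The one point to state precisely is the commutation of $g_{*}$ with colimits and with $-\hatotimes\mO_{G^{c},1}$, since the latter enters $(-)^{R-\la}$ and is not covered by your list of limits and colimits: it comes from the cohomological co-smoothness of $g$ in Proposition \ref{propAnToAlgDRJuan} (1), i.e.\ $g_{*}\cong g_{!}(-\otimes\mbb{P}_{g}(\mO))$ together with the projection formula of Lemma \ref{lemSmBaseChange} (2), and not from exactness or the cartesian squares alone.
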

\begin{rmk}
The slogan is that the de Rham torsor \(\bar{\mcal{G}}^{c,\an}_{\dR,\Kpp}\)
can be reconstructed from \(\BdR^{+}(\mX_{K^{p}})\to \mX_{K^{p}}^{\dR}\). This is a bit surprising as one can recover information about connections from perfectoid data.
\end{rmk}
\begin{proof}
Comparing with Theorem \ref{thmReformulateRHCorr}, by Proposition \ref{propAnToAlgDRJuan} (by putting \( L=\bar{\QQ}_{p} \)) and the constructiong of \( \mcal{\hat{G}}^{c,\an}_{\dR,K} \) in the proof of Proposition \ref{propAnalyticGMTheory},
it suffices to show that \( E_{0}(D_{\sen}(\BdRKp^{\la})) \) in the setting of Theorem \ref{thmReformulateRHCorr} push-forward to \( E_{0}(D_{\sen}(\BdRKp^{\la})) \) over \( \mX_{K^{p}}^{\dR} \). Since both sides are filtered complete, it suffices to study the \( t^{\bullet} \)-graded piece. 
By Corollary \ref{corQCohOnXdR} and Lemma \ref{lemQCohSMsv}, it suffices to compare both sides along \[
g_{K,L}:\mX_{K,L}^{\pdR/L}\to \mX_{K,L}^{\dR/L},
\] and we want to show that \[ (g_{K,L})_{*}(E_{0}\circ D_{\sen,L}({\mO}_{K^{p}}^{\la}(i)))\cong E_{0}\circ D_{\sen,L}(((g_{K,L})_{*}(\hat{\mO}_{K^{p}}))^{R-\la}(i)). \] 

By projection formula and the cohomological co-smoothness of \( g_{K,L} \) (Proposition \ref{propAnToAlgDRJuan}), \( (g_{K,L})_{*}(\hat{\mO}_{K^{p}})\otimes \mO_{G^{c},1}\cong (g_{K,L})_{*}(\hat{\mO}_{K^{p}}\otimes \mO_{G^{c},1}) \). Since derived smooth vectors can be calculated by  Lazard-Serre's resolution (see \cite[Theorem 5.7]{JRC2021solid}),
and \( (g_{K,L})_{!} \) 
commutes with colimits, we know that \( ((g_{K,L})_{*}(\hat{\mO}_{K^{p}}))^{R-\la}\cong (g_{K,L})_{*}(\mO^{\la}_{K^{p}}) \). \( (g_{K,L})_{*} \) 
also commutes with limits, and thus commutes with \( R\Gamma(H_{L},-) \). The commutation with \( \Gamma_{L}^{\la} \) is proven similarly. Thus  \( D_{\sen,L}(((g_{K,L})_{*}(\hat{\mO}_{K^{p}}))^{R-\la})\cong (g_{K,L})_{*}(D_{\sen,L}(\mO^{\la}_{K^{p}})) \).
Finally again by commutation with limits, \( (g_{K,L})_{*} \) 
commutes with \( E_{0} \). 
\end{proof}

\subsection{Local Analogue}\label{subsecLocalAnalogueLASv}
The results in this section have local analogue, i.e. for local Shimura varieties. 
The set-up is as follows:
\begin{notation}
Let \((G,b,\mu)\)
be a basic local Shimura datum as in \cite[Definition 24.1.1]{ScholzeWeinstein2020berkeley}.
Let \( E/\QQ_{p} \) be the associated reflex field. 
Let \(\mcal{M}:=\mcal{M}(G,b,\mu)\)
be the associated local Shimura variety representing \[
(S\to\Spd(\CC_{p}))\mapsto \{\mcal{G}_{1}\to \mcal{G}_{b}:\text{modification supported at \( S^{\#} \) and bounded by \( \mu \)}\}
\] where \( \mcal{G}_{1} \) 
denotes the trivial \( G \)-torsor on \( \mX_{\rF}(S) \),  
and \( \mcal{G}_{b} \) denotes the \( G \)-torsor on \( \mX_{\rF}(S) \) corresponding to \( b\in B(G) \) by \cite{Fargues2020Gtorseurs}, \cite{Anschutz2019reductiveFF}. 

Then
\( \mcal{M} \) is representable by a perfectoid space over \( \Spa(\CC_{p},\mO_{\CC_{p}}) \) by \cite[Theorem D]{SW13}, which carries an action of \((G(\QQ_{p}),*_{G})\times (G_{b}(\QQ_{p}),*_{G_{b}})\times \Gal_{E}\),
and we have two Hodge-Tate period maps \[\begin{tikzcd}
& \mcal{M}\arrow[dl,"\pi_{HT,G}=\pi_{GM,G_{b}}"']\arrow[dr,"\pi_{HT,G_{b}}=\pi_{GM,G}"] & \\
\Fl_{G,\mu}=\Fl_{G_{b},\mu^{-1}}^{\std} & & \Fl_{G_{b},\mu^{-1}}=\Fl_{G,\mu}^{std},
\end{tikzcd}\]
where
\( \pi_{\HT,G}=\pi_{\GM,G_{b}} \) is a \( \unl{G(\QQ_{p})} \)-equivariant  pro\'etale \( \unl{G_{b}(\QQ_{p})} \)-torsor, 
and \( \pi_{\HT,G_{b}}=\pi_{\GM,G} \) is a \( \unl{G_{b}(\QQ_{p})} \)-equivariant  pro\'etale \( \unl{G(\QQ_{p})} \)-torsor. 

For any open compact subgroup \( K_{1}\subset G(\QQ_{p}) \) (resp. \( K_{b}\subset G_{b}(\QQ_{p}) \) ), the diamond \( \mcal{M}/K_{1}\)  (resp. \( \mcal{M}/K_{b} \) )
is represented by a smooth analytic adic space \( \mcal{M}_{K}(G,b,\mu) \) (resp. \( \mcal{M}_{K_{b}}(G_{b},1,\mu^{-1}) \) ) over \( \CC_{p} \). 
\end{notation}
\begin{dfn}\label{dfnSMRealizeLocalSV}
We define solid stacks
	\[\mcal{M}^{G-\sm}:=\varprojlim_{K_{1}\subset G(\QQ_{p})}\mcal{M}_{K_{1}}(G,b,\mu),\;\mcal{M}^{G_{b}-\sm}:=\varprojlim_{K_{b}\subset G_{b}}\mcal{M}_{K_{b}}(G_{b},1,\mu^{-1}).\]
\end{dfn}
\begin{thm}[\cite{DospinescuJuan2024jacquet}]
\label{thmDosJuanLSV}
(1) Let \( U\subset \mcal{M}_{K}(G,b,\mu) \) be an open affinoid subspace admitting an étale map to \( \mbb{T}^{d} \) that factors as a finite composition of rational localizations and finite étale maps. Let \( \ti{U}\subset \mcal{M} \) be the preimage of \( U \). Then \( \hat{\mO}_{\mcal{M}}(U)^{R-G(\QQ_{p})-\la} \) is concentrated in degree \( 0 \).  We write \( \mO^{G(\QQ_{p})-\la}_{\mcal{M}}:=\hat{\mO}_{\mcal{M}}^{R-G(\QQ_{p})-\la} \). Then we have an identification \( \mO^{G(\QQ_{p})-\la}_{\mcal{M}}= \mO^{G_{b}(\QQ_{p})-\la}_{\mcal{M}} \)  
as subsheaves of \( \hat{\mO}_{\mcal{M}} \). 

(2)
Consider the actions of \((\fg\otimes\Fl_{G,\mu},*_{G})\)
and of \((\fg\otimes \mO_{\Fl_{G_{b},\mu^{-1}}},*_{G_{b}})\) on \(\mO_{\mcal{M}}^{\la}\). 
Then
the actions of \(\pi_{\HT,G}^{*}(\fn_{\mu}^{0})\) and \(\pi_{\HT,G_{b}}^{*}(\fn_{\mu^{-1}}^{0})\) are zero on \(\mO_{\mcal{M}}^{\la}\), which induces an action of \(\pi_{\HT,G}^{*}(\mfk{m}_{\mu}^{0})\) and \(\pi_{\HT,G_{b}}^{*}(\mfk{m}_{\mu^{-1}}^{0})\) on \(\mO_{\mcal{M}}^{\la}\). Moreover,
we have an identification of the vector bundles \[\pi_{\HT,G}^{*}(\mfk{m}_{\mu}^{0})\cong \pi_{\HT,G_{b}}^{*}(\mfk{m}_{\mu^{-1}}^{0}), \]
which is compatible with their actions on \(\mO_{\mcal{M}}^{\la}\). 
\end{thm}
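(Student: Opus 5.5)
The plan is to transport the argument of \S\ref{subsecLocallyAnalyticShimuraVar} to the local setting, with the key difference that the two towers play symmetric roles. Both $\mcal{M}\to\mcal{M}_{K_1}(G,b,\mu)$ and $\mcal{M}\to\mcal{M}_{K_b}(G_b,1,\mu^{-1})$ are pro\'etale torsors between a perfectoid space and smooth rigid spaces. Consequently the chart-by-chart description used in Lemma \ref{lemTorCompacAsTateStack} and Lemma \ref{lemAnalyticVSproket} applies verbatim, with no boundary. For a chart $U$ as in (1), this gives $R\Gamma(\ti U,\hat\mO_{\mcal M})\cong R\Gamma(\ti U_{\proket},\hat\mO)$.

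\textbf{Part (1), concentration in degree $0$.} Following \cite{Juan2022.09locallyShi} (as summarized in Theorem \ref{thmOlaConcentrated0}), I would apply geometric Sen theory to the $\unl{G(\QQ_p)}$-torsor $\pi_{\GM,G}=\pi_{\HT,G_b}:\mcal M\to\Fl^{\std}_{G,\mu}$. Restricted to $\ti U\to U$, this is a pro\'etale $K_1$-torsor over a smooth affinoid. Its geometric Sen operator is identified, through $\pi_{\HT,G}$, with the map $\fn^0_\mu$-part of $\fg^0$. The relative Sen theory of \cite{Juan2022.05GeoSen} then computes $(\hat\mO(\ti U)\hatotimes\mO_{G,1})^{R-K_1-\sm}$ as a Koszul complex for a nilpotent action. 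After the decompletion provided by the Sen theory, this complex is acyclic outside degree $0$ by the same computation as Lemma \ref{lemClaConcentraDeg0}.

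\textbf{Part (1), independence of the group.} To identify $\mO^{G(\QQ_p)-\la}$ with $\mO^{G_b(\QQ_p)-\la}$, I would first show that $\mO^{G-\la}$ is $G_b$-locally analytic. The $G_b(\QQ_p)$-action on $\mcal M$ is induced by the smooth rigid quotient $\mcal M/K_1$. Hence $\mO^{G-\la}$ is generated, over $\pi_{\HT,G}^{-1}\mO_{\Fl_{G,\mu}}$, by $\mO_{\mcal M_{K_1}}$ and the twisted periods coming from the Hodge--Tate filtration. Both $\pi_{\HT,G}^*\mO_{\Fl_{G,\mu}}=\pi_{\GM,G_b}^*\mO_{\Fl^{\std}_{G_b,\mu^{-1}}}$ and the smooth functions are visibly $G_b$-locally analytic. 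The reverse inclusion follows by symmetry, exchanging $(G,b,\mu)\leftrightarrow(G_b,1,\mu^{-1})$. Equality as subsheaves of $\hat\mO_{\mcal M}$ then follows, since both are maximal subsheaves characterized by analyticity of the orbit map.

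\textbf{Part (2).} The vanishing of the $\fn^0$-actions is the local analogue of \cite[Corollary 6.2.12]{Juan2022.09locallyShi}, and it comes out of the Sen-operator computation above, once for each tower. For the identification of the $\mm^0$-bundles, I would use the Hodge--Tate comparison, i.e.\ the local analogue of Theorem \ref{thmIdenTwoTorsorsOverLa}. Over $\mcal M$, the $M_\mu$-torsor $\pi_{\HT,G}^*\mcal M_\mu$ is identified with $\pi_{\GM,G}^*\mcal M^{\std}_\mu=\pi_{\HT,G_b}^*\mcal M_{\mu^{-1}}$, using $M_{\mu}\cong M_{\mu^{-1}}$ for the inner form $G_b$. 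Taking adjoint bundles then gives $\pi^*_{\HT,G}\mm^0_\mu\cong\pi^*_{\HT,G_b}\mm^0_{\mu^{-1}}$.

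\textbf{Main obstacle.} I expect the hardest step to be compatibility of this identification with the two horizontal actions. I would handle it as in Lemma \ref{lemCompareHorPilloniAndPan}, by computing both actions through $ev_1$ on $\mO_{G,1}$ and $\mO_{G_b,1}$, where each horizontal action equals minus the right-translation action. I would then reduce to showing that both agree with the geometric Sen operator of $\mcal M$ viewed as a $v$-torsor under $G\times G_b$ over the period domain. This reduction amounts to the local GMHT Cartesian square, the analogue of Proposition \ref{propLAvsSmoothSV}.
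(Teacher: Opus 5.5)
The paper does not prove this statement. It imports it from \cite[Corollary 5.1.9 \& Theorem 5.1.10]{DospinescuJuan2024jacquet}, and the local results after it (Lemma \ref{lemHTPeriodFactorLocal}, Theorem \ref{thmGMHTPeriMapLocal}) are built on it. Running geometric Sen theory on each tower is the right general idea. However, the step you yourself identify as the main obstacle is not handled, for three reasons.
\begin{itemize}
\item There is no ``geometric Sen operator of $\mcal M$ as a $v$-torsor under $G\times G_b$ over the period domain''. $\Per_{G,b,\mu}$ has dimension $\dim M_\mu+2\dim N_\mu=\dim G$, whereas $\mcal M$ is pro-\'etale over $\mcal M_{K_1}(G,b,\mu)$, which has dimension $\dim N_\mu$. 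So $\mcal M\to\Per_{G,b,\mu}$ is nowhere near a torsor, and there is nothing to compare the two actions against.
\item The $ev_1$ argument of Lemma \ref{lemCompareHorPilloniAndPan} compares two descriptions of the horizontal action of a single group. It gives no bridge between $\mO_{G,1}$ and $\mO_{G_b,1}$.
\item The local GMHT Cartesian square cannot simply be quoted. In the paper, $\mcal M^{\la}$ carries both actions only through Lemma \ref{lemHTPeriodFactorLocal}, which uses part (1). The key isomorphism $\mO^{\la}_{\mcal M}\otimes_{\mO_{\mcal M^{G-\sm}}}\mO^{\la}_{\mcal M}\cong\mO^{\la}_{\mcal M}\{(\fg^0/\fn^0_\mu)^\vee\}^\dagger$ is taken from the proof of the very theorem cited here.
\end{itemize}

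Part (1) has a matching hole. That $\mO^{G(\QQ_p)-\la}_{\mcal M}$ is (overconvergently) generated by $\mO_{\mcal M_{K_1}}$, Hodge--Tate coordinates and twisted periods does not follow from $G_b(\QQ_p)$ acting on $\mcal M/K_1$. It is the one-tower structure theorem, the local analogue of Corollary \ref{corVBCla=Ola} and Proposition \ref{propLAvsSmoothSV}. Passing $G_b$-analyticity from generators to their completion also needs uniform radii.

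More importantly, you check $G_b$-analyticity only for Hodge--Tate functions and for $\mO_{\mcal M_{K_1}}$. The latter is not ``visible'': it comes from the \'etaleness and $G_b$-equivariance of $\pi_{\GM,G}$. You skip the twisted periods, which carry all the content. Their $G_b$-analyticity is exactly where the comparison $\pi_{\HT,G}^*\mcal M_\mu(-1)\cong\pi_{\GM,G}^*\mcal M^{\std}_\mu=\pi_{\HT,G_b}^*\mcal M_{\mu^{-1}}$ must enter, as a $G_b$-equivariant isomorphism descended to $\mcal M^{G-\la}$. You only bring it in for (2).

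A non-circular route is as follows. Since $\pi_{\HT,G_b}=\pi_{\GM,G}$ already factors through $\mcal M_{K_1}$, the GMHT map and the square $\mcal M^{G-\la}\cong\mcal M^{G-\sm}\times_{\Per^{\dR/\Fl_{G_b,\mu^{-1}}}_{G,b,\mu}}\Per_{G,b,\mu}$ come from $G$-Sen theory alone, and both are $G_b$-equivariant. This gives (1) by symmetry. For (2), both horizontal actions are continuous derivations that kill:
\begin{itemize}
\item $\mO_{\mcal M^{G-\sm}}$ (for $G_b$, because $\pi_{\HT,G_b}^*\p^0_{\mu^{-1}}$ kills functions pulled back along the \'etale map $\pi_{\GM,G}$);
\item functions pulled back from both flag varieties.
\end{itemize}
On the twisted periods, the two actions act through $\mm$ on the two sides of $i_{\mrm{RH}}$. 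Comparing them there gives the compatibility, up to a sign that has to be built into the identification of the $\mm^0$-bundles.
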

\begin{proof}
	These are shown in \cite[Corollary 5.1.9 \& Theorem 5.1.10]{DospinescuJuan2024jacquet}
\end{proof}



\begin{dfn}\label{dfnLArealization}
Following Definition \ref{dfnLocallyAnSV},
we define \[
\mcal{M}^{G_{b}(\QQ_{p})-\la}:=\unl{\AnSpec}_{\mcal{M}^{G_{b}-\sm}}(\mO^{\la}_{\mcal{M}}),\;\mcal{M}^{G(\QQ_{p})-\la}:=\unl{\AnSpec}_{\mcal{M}^{G-\sm}}(\mO^{\la}_{\mcal{M}}).
\]
\end{dfn}
\begin{lem}\label{lemHTPeriodFactorLocal}
(1)
We have a natural isomorphism \(\mcal{M}^{G_{b}(\QQ_{p})-\la}\cong \mcal{M}^{G(\QQ_{p})-\la}\), and
we will denote them unambiguously as \(\mcal{M}^{\la}\). 

(2) \( \pi_{HT,G}=\pi_{GM,G_{b}} \) 
and \( \pi_{HT,G_{b}}=\pi_{GM,G} \) canonically
factor through \( \mcal{M}\to\mcal{M}^{\la} \). 
\end{lem}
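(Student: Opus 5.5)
The plan is to mirror the global argument of Proposition \ref{propdRstackOfShLA} and Proposition \ref{propLAvsSmoothSV}, using Theorem \ref{thmDosJuanLSV} as the replacement for geometric Sen theory. Both $\mcal{M}^{G_{b}(\QQ_{p})-\la}$ and $\mcal{M}^{G(\QQ_{p})-\la}$ are relative spectra $\unl{\AnSpec}$ of the same sheaf $\mO^{\la}_{\mcal{M}}$, by Theorem \ref{thmDosJuanLSV} (1). They are relative, however, over different bases $\mcal{M}^{G_{b}-\sm}$ and $\mcal{M}^{G-\sm}$. So for (1) I would construct maps in both directions over the common cover $\mcal{M}$ and check that they are mutually inverse.

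\textbf{Charts.} Work locally on a chartable affinoid $U\subset\mcal{M}_{K_{1}}(G,b,\mu)$ as in Theorem \ref{thmDosJuanLSV} (1), with preimage $\ti{U}\subset\mcal{M}$. Over $U$ the stack $\mcal{M}^{G(\QQ_{p})-\la}$ is $\AnSpec(A^{\la})$, where $A^{\la}=\hat{\mO}_{\mcal{M}}(\ti{U})^{R-G-\la}$ is concentrated in degree $0$. Its analytic structure is induced from $(A_{K_{1}},A_{K_{1}}^{+})_{\square}$, as in Lemma \ref{lemAffProperProp} (2) and (3), and the ring is bounded by Remark \ref{rmkLAsvIsEssTate}.

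\textbf{Comparing the smooth functions.} The key input is that the $G$-smooth functions $\varinjlim_{K_{1}}\mO(U_{K_{1}})$ and the $G_{b}$-smooth functions both lie in $\mO^{\la}_{\mcal{M}}$, because smooth vectors are locally analytic. By Theorem \ref{thmDosJuanLSV} (1), $\mO^{G-\la}=\mO^{G_{b}-\la}$ inside $\hat{\mO}_{\mcal{M}}$. Hence the $G_{b}$-smooth ring maps into $\mO^{G-\la}_{\mcal{M}}(\ti{U})$, and $\mO^{\la}$ becomes an algebra over both smooth structure sheaves. This yields a morphism $\mcal{M}^{G(\QQ_{p})-\la}\to\mcal{M}^{G_{b}-\sm}$. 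The universal property of $\unl{\AnSpec}$ then lifts it to $\mcal{M}^{G(\QQ_{p})-\la}\to\mcal{M}^{G_{b}(\QQ_{p})-\la}$, using $\mO^{\la}\to$ the structure sheaf. The symmetric argument gives the inverse map.

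\textbf{Main obstacle: analytic structures.} The subtle step is that the two solid analytic ring structures on $A^{\la}$, induced from the $G$-side and from the $G_{b}$-side finite levels, agree. I would handle this as in the proof of Lemma \ref{lemHTPeriodFactor}. Since $\mcal{M}$ is analytic over $\CC_{p}$, both structures are determined by the $\dagger$-reduction, i.e.\ by the uniform completion. By \cite[Proposition 2.6.16]{Juan2024analyticdeRham}, each is the structure induced from $(\CC_{p},\mO_{\CC_{p}})_{\square}$ together with the power-bounded elements. These coincide because both rings embed into the same $\hat{\mO}_{\mcal{M}}(\ti{U})$. Gluing over a cover by such charts, together with the commutation of $\unl{\AnSpec}$ with base change, gives (1).

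\textbf{Part (2).} The period map $\pi_{\HT,G}$ is $G(\QQ_{p})$-equivariant to $\Fl_{G,\mu}$, on which $G(\QQ_{p})$ acts locally analytically. So the argument of Lemma \ref{lemHTPeriodFactor} applies verbatim. On an affinoid $V=\Spa(B,B^{+})\subset\Fl_{G,\mu}$, the pullback $B\to\hat{\mO}_{\mcal{M}}(\ti{U})$ factors through $\mO^{G-\la}$. Properness of $\Fl_{G,\mu}$ then extends the map $\Spa(B,\mO_{\CC_{p}})\to\Fl_{G,\mu}$, which gives the factorization through $\mcal{M}^{G(\QQ_{p})-\la}$.

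For $\pi_{\HT,G_{b}}$ the same argument uses the $G_{b}$-local analyticity, producing a factorization through $\mcal{M}^{G_{b}(\QQ_{p})-\la}$, which is identified with the former by (1). Canonicity follows because $\mcal{M}\to\mcal{M}^{\la}$ is a $!$-surjection. This holds since it is essentially affine proper and descendable, exactly as for $\pi^{\sm}$ in Lemma \ref{lemOpenSetsFromFulltoSM}; consequently the factorization is unique.
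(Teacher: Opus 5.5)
Your overall route matches the paper's. Part (1) rests on the equality $\mO^{G(\QQ_p)-\la}_{\mcal M}=\mO^{G_b(\QQ_p)-\la}_{\mcal M}$ of Theorem~\ref{thmDosJuanLSV}~(1), and part (2) is the argument of Lemma~\ref{lemHTPeriodFactor}. However, you leave implicit the second input the paper uses for (1): the locales of opens underlying $\mcal{M}^{G-\sm}$ and $\mcal{M}^{G_b-\sm}$ agree. Both are $|\mcal M|\cong\varprojlim_{K_1}|\mcal{M}_{K_1}(G,b,\mu)|\cong\varprojlim_{K_b}|\mcal{M}_{K_b}(G_b,1,\mu^{-1})|$.

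This is needed because the two relative spectra are $\unl{\AnSpec}$ of sheaves over a priori different bases. Until the opens are matched, your sentence that $\mO^{\la}$ is an algebra over both smooth structure sheaves has no meaning. Concretely, to get a map $\mcal{M}^{G(\QQ_p)-\la}\to\mcal{M}_{K_b}(G_b,1,\mu^{-1})$ over a $G$-side chart $\ti U$, you must cover $\ti U$ by opens $\ti U\cap\pi_{K_b}^{-1}(V)$ with $V$ affinoid. These must be opens of $\mcal{M}^{G(\QQ_p)-\la}$, i.e.\ pulled back from some finite $G$-level. Only then does the ring map $\mO(V)\to\mO^{\la}_{\mcal M}(\ti U\cap\pi_{K_b}^{-1}(V))$ define a map to $\mcal{M}_{K_b}$. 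This holds because every quasi-compact open of $|\mcal M|$ comes from a finite stage of either tower (a cofiltered limit of spectral spaces along spectral maps). Once you state it, your gluing step goes through, and so does your comparison of analytic structures, which likewise needs a $G$-chart and a $G_b$-chart on the same open of $|\mcal M|$.

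Your uniqueness argument in (2) does not work as stated. It does not follow ``exactly as for $\pi^{\sm}$'' that $\mcal{M}\to\mcal{M}^{\la}$ is essentially affine proper and descendable. For $\pi^{\sm}$ the target is a finite level, while here it is not. Descendability of a composite $A^{\sm}\to A^{\la}\to\hat A$ does not pass to the second map: the composite $k\to k[x]\to k$ is the identity, but $k[x]\to k$ is not descendable.

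You also do not need this claim. As in Lemma~\ref{lemHTPeriodFactor}, canonicity comes from the injectivity of $A^{\la}\hookrightarrow\hat A$. Moreover, given (1), part (2) needs no local analyticity at all. $\pi_{\HT,G_b}=\pi_{\GM,G}$ factors through $\pi_{\GM,G,K_1}$ on $\mcal{M}_{K_1}(G,b,\mu)$, hence through $\mcal{M}^{\la}\to\mcal{M}^{G-\sm}$. Symmetrically, $\pi_{\HT,G}=\pi_{\GM,G_b}$ factors through $\mcal{M}^{\la}\cong\mcal{M}^{G_b(\QQ_p)-\la}\to\mcal{M}^{G_b-\sm}$.
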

\begin{proof}
(1) follows from Theorem \ref{thmDosJuanLSV} and the identification of the locale defined by the open subspaces of \( \varprojlim_{K_{b}\subset G_{b}(\QQ_{p})}|\mcal{M}_{K_{b}}(G_{b},1,\mu^{-1})| \) 
and that defined by \( \varprojlim_{K_{1}\subset G(\QQ_{p})}|\mcal{M}_{K_{1}}(G,b,\mu)| \), 
as both are homeomorphic to \( |\mcal{M}| \). 
(2) follows the same proof of Lemma \ref{lemHTPeriodFactor}
\end{proof}
\begin{dfn}\label{dfnGrothendieck-Messing-Hodge-TateperioddomainLocal}
We define the \emph{Grothendieck-Messing-Hodge-Tate period domain} as \[
\Per_{G,b,\mu}:=\Per(G,b,\mu):=M_{\mu}\backslash
\left((N_{\mu}\backslash G_{\CC_{p}})(-1)\times (N_{\mu^{-1}}\backslash G_{b,\CC_{p}})\right),
\] where \((-1)\) is as in Theorem \ref{thmIdenTwoTorsorsOverLa}.

As in Definition \ref{dfnGMHTperDomain}, we have an action of \( G\times G_{b} \) on \( \Per_{G,b,\mu} \), and we have natural morphisms \[
\Fl_{G,\mu}\xleftarrow{\pi^{per}_{\GM,G_{b}}=\pi^{per}_{\HT,G}} \Per_{G,b,\mu}\xrightarrow{\pi^{per}_{\GM,G}=\pi^{per}_{\HT,G_{b}}} \Fl_{G_{b},\mu^{-1}}.
\] 
\end{dfn}
\begin{thm}\label{thmGMHTPeriMapLocal}
(1)
There is a natural \emph{Grothendieck-Messing-Hodge-Tate period map} \[
\pi_{\GM\HT}^{\la}:\mcal{M}^{\la}\to \Per_{G,b,\mu},
\] such that we have a commutative diagram \[
\begin{tikzcd}
\mcal{M}_{K_{1}}(G,b,\mu)\arrow[d,"\pi_{\GM,G,K_{1}}"]
& 
\mcal{M}^{\la}\arrow[l,"\pi_{K_{1}}^{\la}"]
\arrow[r,"\pi_{K_{b}}^{\la}"]\arrow[d,"\pi_{\GM\HT}^{\la}"]
& 
\mcal{M}_{K_{b}}(G_{b},1,\mu^{-1})\arrow[d,"\pi_{\GM,G_{b},K_{b}}"]
\\
\Fl_{G_{b},\mu^{-1}}
& 
\Per_{G,b,\mu}\arrow[l,"\pi^{per}_{\GM,G}"]\arrow[r,"\pi^{per}_{\GM,G_{b}}"]
& \Fl_{G,\mu}.
\end{tikzcd}
\]

(2) We have the following Cartesian diagram of Tate stacks \[
\begin{tikzcd}
\mcal{M}^{\la}\arrow[r,"\pi^{\la}_{\GM\HT}"]\arrow[d,"\pi^{\la}_{\sm}"]
&  \Per_{G,b,\mu}\arrow[d]\arrow[r] & \Fl_{G,\mu}\arrow[d]
\\
\mcal{M}^{G-\sm}\arrow[r,"\pi^{G-\sm}_{\GM\HT}"]\arrow[d]
&  \Per_{G,b,\mu}^{\dR/\Fl_{G_{b},\mu^{-1}}}\arrow[d]\arrow[r] & \Fl_{G,\mu}/(\fg^{0}/\fn_{\mu}^{0})^{\dagger}
\\
\mcal{M}^{\dR}\arrow[r,"\pi^{\dR}_{\GM\HT}"]
& \Per_{G,b,\mu}^{\dR}.
\end{tikzcd}
\] 
\end{thm}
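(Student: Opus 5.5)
The plan is to transport the global argument (Theorem \ref{thmLAstrucGMHTper}, via Propositions \ref{propLAvsSmoothSV} and \ref{propSMdRTorLog}) to the local setting. In the local case the role of the de Rham torsor on finite-level Shimura varieties is played by the other Hodge--Tate period map. First I construct $\pi^{\la}_{\GM\HT}$ as in Definition \ref{dfnGMHTperMap}. On $\mcal{M}$ we have the two period maps $\pi_{\HT,G}$ and $\pi_{\HT,G_{b}}$, which by Lemma \ref{lemHTPeriodFactorLocal} (2) factor through $\mcal{M}^{\la}$. They give a $P_{\mu}$-reduction of the trivial $G$-torsor and a $P_{\mu^{-1}}$-reduction of the trivial $G_{b}$-torsor. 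The required identification $i$ of the associated $M_{\mu}$-torsors, with the Tate twist $(-1)$, comes from the modification $\mcal{G}_{1}\to\mcal{G}_{b}$ restricted to the formal neighbourhood of $S^{\#}$. Its graded pieces yield the Hodge--Tate comparison on the perfectoid $\mcal{M}$, exactly as in Theorem \ref{thmIdenTwoTorsorsOverLa}. I then descend it from $\mcal{M}$ to $\mcal{M}^{\la}$ by the same locally analytic vectors argument used there: trivialize on good affinoids and compare $(\pi_{\la,*}(-))^{R-\la}$ on both sides. The tuple $(\mscr{P}_{\mu},\mscr{P}_{\mu^{-1}},i)$ gives the map to $\Per_{G,b,\mu}$, and commutativity of (1) is tautological from Definition \ref{dfnGrothendieck-Messing-Hodge-TateperioddomainLocal}, since $\pi_{\GM,G,K_{1}}$ is by definition $\pi_{\HT,G_{b}}$ descended along $K_{1}$.

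For (2), the upper-right square is Cartesian for the same reason as in the proof of Proposition \ref{propLAvsSmoothSV}. It is the $M_{\mu}$-quotient of the evident Cartesian square for $\mcal{M}_{\mu}(-1)\times \mcal{M}_{\mu^{-1}}$, combined with Lemma \ref{lemCompareDRStac} (3). For the upper-left square I run the geometric Sen argument with respect to the $G(\QQ_{p})$-tower. Here the analogues of Theorem \ref{thmMainThmGeomSenVB} and Corollary \ref{corVBCla=Ola} on $\mcal{M}\to \Fl_{G,\mu}$ are supplied by Theorem \ref{thmDosJuanLSV}; in particular $\fn^{0}_{\mu}$ acts trivially and $\mO^{\la}_{\mcal{M}}$ is concentrated in degree $0$. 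These give
$$\mO^{\la}_{\mcal{M}}\otimes_{\mO^{G-\sm}}\mO^{\la}_{\mcal{M}}\cong \mO^{\la}_{\mcal{M}}\{(\fg^{0}/\fn^{0}_{\mu})^{\vee}\}^{\dagger}.$$
From this, as in Proposition \ref{propdRstackOfShLA}, the map $\pi^{G-\sm}_{\GM\HT}$ exists and factors through $\Per^{\dR/\Fl_{G_{b},\mu^{-1}}}_{G,b,\mu}$. The big square is then Cartesian by $!$-descent along the $!$-surjection $\mcal{M}^{\la}\to\mcal{M}^{G-\sm}$; this is essentially affine proper and descendable by Theorem \ref{thmAnDRStackGeneral} (5) applied to the tower $\mcal{M}_{K_{1}}$. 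It follows that the upper-left square is Cartesian.

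For the lower square I use the analytic Grothendieck--Messing statement in place of Proposition \ref{propAnalyticGMTheory}. The finite-level period map $\pi_{\GM,G,K_{1}}:\mcal{M}_{K_{1}}\to \Fl_{G_{b},\mu^{-1}}$ is \'etale, being the quotient of a pro\'etale $G(\QQ_{p})$-torsor over a smooth base by an open compact subgroup. Hence $\mcal{M}_{K_{1}}\cong \mcal{M}_{K_{1}}^{\dR}\times_{\Fl^{\dR}_{G_{b},\mu^{-1}}}\Fl_{G_{b},\mu^{-1}}$, using Theorem \ref{thmAnDRStackGeneral} (3) and the fact that for étale $X\to X'$ one has $X^{\dR/X'}\cong X$. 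Passing to the limit over $K_{1}$, and using Theorem \ref{thmAnDRStackGeneral} (1) and (5) to identify $\mcal{M}^{\dR}\cong\varprojlim \mcal{M}_{K_{1}}^{\dR}$ (as $\mcal{M}$ is perfectoid), gives a Cartesian square with $\mcal{M}^{G-\sm}$ over $\Fl_{G_{b},\mu^{-1}}$. Composing with the evident Cartesian square relating $\Per^{\dR/\Fl_{G_{b},\mu^{-1}}}_{G,b,\mu}$ and $\Per^{\dR}_{G,b,\mu}$ over $\Fl_{G_{b},\mu^{-1}}\to\Fl^{\dR}_{G_{b},\mu^{-1}}$ then gives the lower square. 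Note that no $G_{b}$-quotient appears because the $G$-torsor is globally trivial.

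The main obstacle is the construction and descent of the identification $i$, i.e.\ the local Hodge--Tate comparison on $\mcal{M}$ and its compatibility with both group actions. I would first try to extract it directly from the modification on $\BdR^{+}$, since in the local case no Riemann--Hilbert input is available. A second delicate point is checking that the $G$-side geometric Sen input of \cite{DospinescuJuan2024jacquet} really provides the degree-$0$ concentration and the $\fn^{0}$-triviality needed to copy Corollary \ref{corVBCla=Ola}.
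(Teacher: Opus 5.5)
Your overall architecture matches the paper's. The paper cites \cite{DospinescuJuan2024jacquet} for (1), and your construction of $i$ from the modification is a reasonable reconstruction of what is done there. For (2) the paper reruns the proof of Theorem \ref{thmLAstrucGMHTper}. Your use of the étaleness of $\mcal{M}_{K_{1}}\to\Fl_{G_{b},\mu^{-1}}$, together with Theorem \ref{thmAnDRStackGeneral} (3) and (5), is the right local substitute for Proposition \ref{propAnalyticGMTheory} in the lower square. The upper-right square and the $!$-descent step go exactly as in Proposition \ref{propLAvsSmoothSV}.

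The step that does not go through as written is the one the paper singles out as the key step, namely the isomorphism
\[
\mO^{\la}_{\mcal{M}}\otimes_{\mO_{\mcal{M}^{G-\sm}}}\mO^{\la}_{\mcal{M}}\cong \mO^{\la}_{\mcal{M}}\{(\fg^{0}/\fn^{0}_{\mu})^{\vee}\}^{\dagger}.
\]
You derive it from Theorem \ref{thmDosJuanLSV}, but that theorem only gives degree-zero concentration of $\mO^{\la}_{\mcal{M}}$ and the vanishing of the $\fn^{0}_{\mu}$-action. Neither fact singles out $\fn^{0}_{\mu}$: both persist if $\fn^{0}_{\mu}$ is replaced by any smaller sub-bundle. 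The displayed isomorphism, by contrast, pins $\fn^{0}_{\mu}$ down exactly, since it identifies the conormal of the diagonal with $(\fg^{0}/\fn^{0}_{\mu})^{\vee}$. So those two facts alone cannot imply it.

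Compare the global case. Copying Corollary \ref{corVBCla=Ola}, which is what $\fn^{0}$-triviality buys you, only identifies $\mO^{\la}$ as the value on $C^{\la}$ of the functor of Lemma \ref{lemVBnToVB}. The passage to the key isomorphism in Proposition \ref{propdRstackOfShLA} additionally uses the decompletion statement of Theorem \ref{thmMainThmGeomSenVB} (2), applied to $C^{\la}$, which gives $(\pi^{\la}_{\sm})^{*}\mO^{\la}\cong(\pi^{\la}_{\HT})^{*}C^{\la}$. Its local analogue is not among the statements of Theorem \ref{thmDosJuanLSV}.

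The paper does not derive this either: it imports the isomorphism directly as (5.13) in the proof of \cite[Theorem 5.1.10]{DospinescuJuan2024jacquet}. With that citation in place of your appeal to Theorem \ref{thmDosJuanLSV}, your argument is complete.
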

\begin{proof}
(1) is shown in \cite[Proof of Theorem 5.1.10]{DospinescuJuan2024jacquet}.
The proof of (2) is identical to that of Theorem \ref{thmLAstrucGMHTper}. 
The key step is to prove the isomorphism \[
\mO^{\la}_{\mcal{M}}\otimes_{\mO_{\mcal{M}^{G-\sm}}}\mO^{\la}_{\mcal{M}}\cong \mO^{\la}_{\mcal{M}}\{(\fg^{0}/\fn_{\mu}^{0})^{\vee}\}^{\dagger},
\] as was used in the proof of Proposition \ref{propLAvsSmoothSV},
which is proven as (5.13) in the proof of Theorem 5.1.10 in \cite{DospinescuJuan2024jacquet}.
\end{proof}
We also have Riemann-Hilbert correspondence as in Corollary \ref{corReformulateRiemmanHilbPerfdCase}.
\begin{notation}\label{notationBdROnAnalyticDRLocal}
Let \( h_{K^{p}}:\BdR^{+}(\mcal{M})\to  \mcal{M}^{\dR} \) 
be the unique extension of the natural map \( \mcal{M}\to\mcal{M}^{\dR} \),
where \( \BdR^{+}(\mcal{M}) \)  is defined by Construction \ref{constructionBdROnDRStack}.

Let \( \BdRX{\mcal{M}}^{+}:=(h_{K^{p}})_{*}(\mO_{\BdR^{+}(\mcal{M})}) \) equipped with the \( t \)-adic filtration, 
and \( \BdRX{\mcal{M}}:=\BdRX{\mcal{M}}^{+}[1/t]\in\widehat{\Fil}^{\ZZ}(\QCoh(\mcal{M}^{\dR})) \). Let \( \BdRX{\mcal{M}}^{G-\la}:=\BdRX{\mcal{M}}^{R-G(\QQ_{p})-\la}\in \widehat{\Fil}^{\ZZ}(\QCoh(\mcal{M}^{\dR})) \), 
and \( D_{\sen}(\BdRX{\mcal{M}}^{G-\la})\in \widehat{\Fil}^{\ZZ}(\QCoh(\mcal{M}^{\dR})) \) 
for \( D_{\sen} \) in Definition \ref{dfnDsenFunctor}. 
We also define \( \BdRX{\mcal{M}}^{G_{b}-\la}:=\BdRX{\mcal{M}}^{R-G_{b}(\QQ_{p})-\la}\in \widehat{\Fil}^{\ZZ}(\QCoh(\mcal{M}^{\dR})) \). 
Here \( (-)^{R-\la} \) and \( D_{\sen}(-) \) are as in Notation \ref{notationBdROnAnalyticDR}.
\end{notation}
\begin{lem}\label{lemBdRLATwosides}
We have an identification \( \BdRX{\mcal{M}}^{G(\QQ_{p})-\la}= \BdRX{\mcal{M}}^{G_{b}(\QQ_{p})-\la} \)  
as subsheaves of \( \BdRX{\mcal{M}} \), and
we will denote them unambiguously as \(\BdRX{\mcal{M}}^{\la}\). 

Moreover, \( \BdRX{\mcal{M}}^{\la} \) is complete with respect to \( t^{\bullet} \), and  \( \gr^{i}_{t^{\bullet}}\BdRX{\mcal{M}}^{\la}\cong \mO_{\mcal{M}}^{\la}(i) \) for \( i\in\ZZ \). 
Here we regard \( \mO_{\mcal{M}}^{\la} \) 
as an object in \( \mcal{M}^{\dR} \) 
via \( * \)-push-forward. 
\end{lem}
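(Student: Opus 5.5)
The proof reduces both claims to the structure-sheaf statement of Theorem \ref{thmDosJuanLSV} (1), using the $t$-adic filtration on $\BdRX{\mcal{M}}$ and the fact that derived locally analytic vectors are exact and commute with the relevant limits.

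First, I would establish the graded pieces of $\BdRX{\mcal{M}}$ itself. As in Lemma \ref{lemGrBdRonDRStack} and Construction \ref{constructionBdROnDRStack}, $\BdRX{\mcal{M}}^{+}=\varprojlim_{n}(h_{K^{p}})_{*}\mO_{\AnSpec(\BdR^{+}/t^{n})}$, and $t^{i}\BdR^{+}/t^{i+1}\cong \mO(i)$ on each affinoid perfectoid. Since $h_{*}$ is exact on the fiber sequences $t^{i}\BdR^{+}/t^{n}\to \BdR^{+}/t^{n}\to\BdR^{+}/t^{i}$, this gives $\gr^{i}_{t^{\bullet}}\BdRX{\mcal{M}}\cong \beta_{*}\hat{\mO}_{\mcal{M}}(i)$ for $\beta:\mcal{M}\to\mcal{M}^{\dR}$, where $\hat{\mO}_{\mcal{M}}$ denotes the completed structure sheaf. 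Inverting $t$ then yields every $i\in\ZZ$.

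Next, I would apply $(-)^{R-G-\la}$ in the filtered complete sense, as in Notation \ref{notationBdROnAnalyticDR}. This functor is exact, and it commutes with $\varprojlim_{n}$ and with $\beta_{*}$: both are limits, and $(-)^{R-\la}$ is $-\hatotimes\mO_{G,1}$ followed by derived smooth vectors, which are computed by a finite Lazard--Serre complex, the same argument used in Lemma \ref{lemBdRLaProperty} and Corollary \ref{corReformulateRiemmanHilbPerfdCase}. Hence $\BdRX{\mcal{M}}^{G-\la}$ is $t^{\bullet}$-complete with $\gr^{i}\cong \beta_{*}(\hat{\mO}_{\mcal{M}}^{R-G-\la})(i)=\mO^{\la}_{\mcal{M}}(i)$. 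By Theorem \ref{thmDosJuanLSV} (1) this is concentrated in degree $0$, and the same holds for $G_{b}$.

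For the equality of subsheaves I would argue by induction on $t$-adic truncations. On $t^{a}\BdR^{+}/t^{b}$, both $R$-$G$-$\la$ and $R$-$G_{b}$-$\la$ vectors are concentrated in degree $0$ by the dévissage above. Both are subobjects of $t^{a}\BdR^{+}/t^{b}$, with graded pieces equal to $\mO^{\la}_{\mcal{M}}(i)$ as subsheaves of $\hat{\mO}_{\mcal{M}}(i)$ by Theorem \ref{thmDosJuanLSV} (1). The point to check is that they coincide as subobjects and not merely up to isomorphism. The plan is to show that the $G_{b}$-locally analytic vectors of $t^{a}\BdR^{+}/t^{b}$ are $G$-locally analytic. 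For that, I would use that $\mO^{\la}_{\mcal{M}}$ is stable under both Lie algebra actions, and run a five-lemma argument on the injection of the intersection into each side: by induction on $b-a$ this injection is an isomorphism on gradeds, hence an isomorphism. Passing to the limit over $b$ and the colimit over $a$ gives the statement.

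The main obstacle is exactly this identification. Locally analytic vectors of an extension need not be the extension of locally analytic vectors, so this dévissage needs care; the intended fix is that degree-$0$ concentration forces the long exact sequences to be short exact, so the dévissage goes through.
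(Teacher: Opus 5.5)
\textbf{The subsheaf identification has a genuine gap.} Your computation of the graded pieces and of $t$-completeness agrees with the paper's proof, which only says that everything follows from $\gr^{i}_{t^{\bullet}}\BdRX{\mcal{M}}\cong\hat{\mO}_{\mcal{M}}(i)$ and Theorem \ref{thmDosJuanLSV} (1). One small correction: commutation with $\varprojlim_{n}$ comes from the filtered-complete convention of Notation \ref{notationBdROnAnalyticDR}, not from the Lazard--Serre complex. The colimit over $K$ and the operation $-\hatotimes\mO_{G,1}$ do not commute with inverse limits in general.

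Put $W:=t^{a}\BdR^{+}/t^{b}$, $W':=t^{a+1}\BdR^{+}/t^{b}$, $A:=H^{0}(W^{R-G-\la})$ and $B:=H^{0}(W^{R-G_{b}-\la})$. Degree-$0$ concentration does make both dévissage sequences short exact. So by induction $A\cap W'=B\cap W'$, and $A$ and $B$ have the same image $\mO^{\la}_{\mcal{M}}(a)$ in $W/W'$. This does not force $A=B$. Two subobjects with equal graded pieces can differ by a map $\mO^{\la}_{\mcal{M}}(a)\to W'/(A\cap W')$; already in $k^{2}$ filtered by $ke_{1}$, the lines $ke_{2}$ and $k(e_{1}+e_{2})$ have the same graded pieces.

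Your five-lemma argument on $A\cap B\hookrightarrow B$ needs $\gr^{a}(A\cap B)=\mO^{\la}_{\mcal{M}}(a)$. That means every section of $\mO^{\la}_{\mcal{M}}(a)$ must have a lift that is \emph{simultaneously} $G$- and $G_{b}$-locally analytic. Having a $G$-la lift and a $G_{b}$-la lift separately gives nothing, and the simultaneous lift is the statement being proved. Stability of $\mO^{\la}_{\mcal{M}}$ under both Lie algebras does not help, because $\fg$ does not act on $B$.

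\textbf{The fix.} Apply the five lemma to the natural transformation $(-)^{R-G_{b}-\la}\to\id$, not to an intersection. With its residual $G_{b}$-action, $W^{R-G-\la}$ sits in a fiber sequence $W'^{R-G-\la}\to W^{R-G-\la}\to\mO^{G-\la}_{\mcal{M}}(a)$.
\begin{itemize}
\item Theorem \ref{thmDosJuanLSV} (1) identifies the last term $G_{b}$-equivariantly with $\hat{\mO}_{\mcal{M}}^{R-G_{b}-\la}(a)$. By idempotence of $(-)^{R-G_{b}-\la}$, this term is derived $G_{b}$-locally analytic.
\item By exactness and induction on $b-a$, the map $(W^{R-G-\la})^{R-G_{b}-\la}\to W^{R-G-\la}$ is an isomorphism. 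This is just closure of locally analytic objects under extensions.
\item Functoriality then gives $W^{R-G-\la}\cong(W^{R-G-\la})^{R-G_{b}-\la}\to W^{R-G_{b}-\la}$, compatibly with the maps to $W$. Symmetrically there is a map $W^{R-G_{b}-\la}\to W^{R-G-\la}$ over $W$.
\item Both objects are subsheaves of $W$ concentrated in degree $0$, so $A\subseteq B\subseteq A$.
\end{itemize}
Passing to $\varprojlim_{b}$ and $\varinjlim_{a}$ in the filtered-complete sense gives the equality of subsheaves of $\BdRX{\mcal{M}}$.
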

\begin{proof}
The description of the  graded pieces 
follows from \( \gr^{i}_{t^{\bullet}}(\BdRX{\mcal{M}})\cong \hat{\mO}_{\mcal{M}} \). 
The rest follows from Theorem \ref{thmDosJuanLSV} (1).
\end{proof}
\begin{prop}\label{propRHCorrLocalSV}
For any \( V\in\Rep(G) \), 
we have a \( G_{b}(\QQ_{p})^{\la} \)-equivariant isomorphism 
\begin{align*}
((V,*_{G_{b}})\otimes \mO_{\mcal{M}^{\dR}})\otimes_{E_{\p}}B_{\dR}&\cong \RHom_{\fg}((V^{\vee},*_{G}),(\BdRX{\mcal{M}}^{\la},*_{G})),
\\
(V,*_{G_{b}})\otimes \mO_{\mcal{M}^{\dR}}&\cong \RHom_{\fg}((V^{\vee},*_{G}),(E_{0}(D_{\sen}(\BdRX{\mcal{M}}^{\la})),*_{G}))
\end{align*} where the factor \( V \) on the LHS is regarded as a representation of \( G_{b} \), and \( V^{\vee} \) on the RHS is regarded as a representation of \( G \), and
\( -\otimes_{E_{\p}}B_{\dR} \) 
is taken in the \( t^{\bullet} \)-complete sense. 
\end{prop}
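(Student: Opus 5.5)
The strategy is to run the argument of Corollary \ref{corReformulateRiemmanHilbPerfdCase} in the local setting. The one change is that the finite-level de Rham torsor is replaced by the torsor coming from \(\pi_{\GM,G}=\pi_{\HT,G_{b}}\). In the local setting \(\mcal{M}\to\mcal{M}_{K_{1}}(G,b,\mu)\) is a pro\'etale \(K_{1}\)-torsor of perfectoid over smooth. So by Theorem \ref{thmAnDRStackGeneral} (5) we have \(\mcal{M}^{\dR}\cong\varprojlim_{K_{1}}\mcal{M}_{K_{1}}^{\dR}\). Arguing as in Corollary \ref{corQCohOnXdR}, \(\QCoh(\mcal{M}^{\dR})\) is the colimit of \(\QCoh(\mcal{M}^{\dR}_{K_{1},L})\) over \(K_{1}\) and finite \(L/E\). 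It therefore suffices to produce compatible isomorphisms after \(*\)-pushforward to each \(\mcal{M}^{\dR}_{K_{1},L}\).

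\textbf{Step 1: the finite-level de Rham local system.} On \(\mcal{M}_{K_{1}}\), the \(G\)-local system \(\unl{V}\) obtained from the \(K_{1}\)-torsor (that is, \(V\) with \(*_{G}\)) is de Rham. Its associated filtered vector bundle with connection is \(V\otimes\mO\), with filtration given by \(\pi_{\GM,G}\) and with connection making the constant \(G_{b}\)-structure \((V,*_{G_{b}})\) flat. This is the standard description of the de Rham period of the universal modification \(\mcal{G}_{1}\to\mcal{G}_{b}\): after inverting \(t\), \(\mcal{G}_{1}\otimes B_{\dR}\cong\mcal{G}_{b}\otimes B_{\dR}\), and \(\mcal{G}_{b}\) is trivialized as a \(G_{b}\)-torsor on the nose.

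Concretely, on the level of \(\OBdR\) we get a \(\Gal_{E}\)-equivariant isomorphism \(\unl{V}\otimes\OBdR\cong(V,*_{G_{b}})\otimes\OBdR\) compatible with connections. This is the local analogue of \cite[Theorem 5.3.1]{DLLZ2022logarithmicJAMS}. Applying Proposition \ref{propPushForwardOBdR} as in Corollary \ref{corCompareStructureSheaf}, and then descending through the algebraic de Rham stack via Lemma \ref{lemDRcomplexVSpushforward} and Proposition \ref{propAnToAlgDRJuan}, we get \(R\nu_{*}(\unl{V}\otimes\OBdR)\cong (V,*_{G_{b}})\otimes\mO\otimes B_{\dR}\) over \(\mcal{M}_{K_{1}}^{\dR}\). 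The vector bundle is trivial because the \(G_{b}\)-structure is constant.

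\textbf{Step 2: pass to derived locally analytic vectors.} Following the proof of Theorem \ref{thmReformulateRHCorr}, we take \(R\Gamma(K_{1},-)\) along the pro\'etale tower and a colimit over \(K_{1}\). By \cite[Theorem 5.5]{JRC2021solid}, the left side becomes \(\RHom_{\fg}((V^{\vee},*_{G}),\BdRX{\mcal{M}}^{G-\la})\). By Lemma \ref{lemBdRLATwosides}, this equals \(\BdRX{\mcal{M}}^{\la}\), which is what makes the resulting isomorphism \(G_{b}(\QQ_{p})^{\la}\)-equivariant.

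To commute \(*\)-pushforward along \(g:\mcal{M}^{\pdR}_{K_{1},L}\to\mcal{M}^{\dR}_{K_{1},L}\) with \((-)^{R-\la}\), \(R\Gamma(H_{L},-)\) and \(E_{0}\), we use cohomological co-smoothness of \(g\) exactly as in the proof of Corollary \ref{corReformulateRiemmanHilbPerfdCase}. This gives the first isomorphism.

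\textbf{Step 3: the second isomorphism.} We apply \(E_{0}\circ D_{\sen}\) to the first isomorphism, which is legitimate by its \(\unl{\Gal_{E}}\)-equivariance. This functor commutes with \(\RHom_{\fg}(V^{\vee},-)\), since the latter is computed by a finite Chevalley--Eilenberg resolution. On the other side we must show \(E_{0}(D_{\sen}(\mO\otimes B_{\dR}))\cong\mO\). Checking on \(t\)-graded pieces, this reduces to Lemma \ref{lemDsenCommuteWithFlatBC} and the computation of \(E_{0}\) on pieces of \(\Theta\)-weight \(i\). Only weight \(0\) survives, since \(\gr^{i}B_{\dR}=\CC_{p}(i)\) and \(\Theta\) acts by \(i\). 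The Banach flatness and nuclearity needed here hold as in Theorem \ref{thmReformulateRHCorr}.

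The main obstacle is Step 1. The global proof relied on \cite{DLLZ2022logarithmicJAMS} together with the canonical extension. Here we need the corresponding comparison for local Shimura varieties, including equivariance for both \(G_{b}(\QQ_{p})\) and \(\Gal_{E}\). I would first try to derive it directly from the moduli description: the tautological modification identifies \(\mcal{G}_{1}\) and \(\mcal{G}_{b}\) over \(\BdR(\mcal{M})\). This gives an isomorphism over \(\BdR^{+}(\mcal{M})\to\mcal{M}^{\dR}\) which is manifestly \(G\times G_{b}\)-equivariant. One then descends it as in Lemma \ref{lemTwoBdRCoincide}, avoiding \(\OBdR\) altogether.
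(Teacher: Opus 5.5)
Your proposal is correct and is essentially the paper's argument. The one difference is where the twist by \(V\) is removed.

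The trivial representation is handled in the paper exactly as you do. It applies Proposition \ref{propPushForwardOBdR} on \(\mcal{M}_{K_{1}}(G,b,\mu)\) and runs it through the proof of Theorem \ref{thmReformulateRHCorr}.

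For general \(V\), the paper uses what you call your fallback, applied directly at infinite level on \(\mcal{M}^{\dR}\):
\begin{itemize}
\item The moduli interpretation gives a \(G(\QQ_{p})\times G_{b}(\QQ_{p})\)-equivariant isomorphism \((V,*_{G})\otimes\BdRX{\mcal{M}}\cong(V,*_{G_{b}})\otimes\BdRX{\mcal{M}}\).
\item Taking \((-)^{R-G_{b}(\QQ_{p})-\la}\) and applying Lemma \ref{lemBdRLATwosides} yields \((V,*_{G})\otimes\BdRX{\mcal{M}}^{\la}\cong(V,*_{G_{b}})\otimes\BdRX{\mcal{M}}^{\la}\).
\item On the right-hand side, \(\fg\) acts only on the second factor. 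Hence \(\RHom_{\fg}((V^{\vee},*_{G}),\BdRX{\mcal{M}}^{\la})\cong(V,*_{G_{b}})\otimes R\Gamma(\fg,\BdRX{\mcal{M}}^{\la})\), which reduces the statement to the trivial case.
\end{itemize}

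So your primary route's finite-level comparison \(\unl{V}\otimes\OBdR\cong(V,*_{G_{b}})\otimes\OBdR\), compatible with connection and filtration, is never needed. This is the step you flagged as the main obstacle. Untwisting on \(\BdRX{\mcal{M}}\) at infinite level is cheaper, and it makes the \(G\times G_{b}\)-equivariance automatic.

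One correction: this route does not avoid \(\OBdR\) altogether. The trivial-\(V\) case still rests on Proposition \ref{propPushForwardOBdR}.
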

\begin{proof}
The case where \( V \) is the trivial representation is given by Proposition \ref{propPushForwardOBdR} applied to \( \mcal{M}_{K_{1}}(G,b,\mu) \), following the proof of Theorem \ref{thmReformulateRHCorr}. For general \( V \), 
by the moduli interpretation of \( \mcal{M} \), 
we have a \( G_{b}(\QQ_{p})\times G(\QQ_{p}) \)-equivariant isomorphism \[
(V,*_{G})\otimes \BdRX{M}\cong (V,*_{G_{b}})\otimes \BdRX{M}.
\] 
Taking \( (-)^{R-G_{b}(\QQ_{p})-\la} \) on both sides, we obtain by Lemma \ref{lemBdRLATwosides}
that \[
(V,*_{G})\otimes \BdRX{M}^{\la}\cong (V,*_{G_{b}})\otimes \BdRX{M}^{\la}.
\] We then reduce to the case where \( V \) is trivial. 
\end{proof}

\printbibliography

\end{document}